\newcounter{mnotecount}[section]
\newtheorem{thm}{Theorem}[section]
\newtheorem{definition}{Definition}[section]
\newtheorem{Lemma}[thm]{Lemma}
\newtheorem{remark}{Remark}[section]
\newtheorem{theorem}[thm]{Theorem}
\newtheorem{proposition}[thm]{Proposition}
\newtheorem{corollary}[thm]{Corollary}
\numberwithin{equation}{section}
\newcommand{\beq}{\begin{equation}}
\newcommand{\eeq}{\end{equation}}
\newcommand{\ben}{\begin{eqnarray}}
\newcommand{\een}{\end{eqnarray}}
\newcommand{\beno}{\begin{eqnarray*}}
\newcommand{\eeno}{\end{eqnarray*}}
\newcommand{\bu}{\mathbf{u}}
\newcommand{\bv}{\mathbf{v}}
\newcommand{\bF}{\mathbf{F}}
\newcommand{\bG}{\mathbf{G}}
\newcommand{\bW}{\mathbf{W}}
\newcommand{\bU}{\mathbf{U}}
\newcommand{\bB}{\mathbf{B}}
\newcommand{\bQ}{\mathbf{Q}}
\newcommand{\bA}{\mathbf{A}}
\newcommand{\Bm}{\mathbf{m}}
\newcommand{\bw}{\mathbf{w}}
\newcommand{\bE}{\mathbf{E}}
\newcommand{\bx}{\mathbf{x}}
\newcommand{\by}{\mathbf{y}}
\newcommand{\sstar}{s}
\numberwithin{equation}{section}
\begin{document}
\title[Relativistic Euler equations]{Strichartz estimates and low regularity solutions of 3D relativistic Euler equations}

\subjclass[2010]{Primary 35A01, 35Q75, 35R05}

\author{Huali Zhang}
\address{School of Mathematics, Hunan University, Changsha 410082, CHINA.}
\email{hualizhang@hnu.edu.cn}

\date{\today}

\keywords{relativistic Euler equations, low regularity well-posedness, Strichartz estimate, wave-transport system.}

\begin{abstract}
We study the low regularity well-posedness for Cauchy problem of 3D relativistic Euler equations. Firstly, we introduce a new decomposition for relativistic velocity and derive new transport equations for vorticity, which both play a crucial role in energy and Strichartz estimates. According to Smith-Tataru's approach, we then establish a Strichartz estimate of linear wave equations endowed with the acoustic metric. This leads us to prove a complete local well-posedness result if the initial logarithmic enthalpy, velocity, and modified vorticity $(h_0, \bu_0, \bw_0) \in H^s \times H^s \times H^{s_0} (2<s_0<s)$. Therefore, we give an affirmative answer to "Open Problem D" proposed by Disconzi.

Moreover, for $(h_0,{\bu}_0,\bw_0) \in H^{2+} \times H^{2+} \times H^2$, by frequency truncation, there is a stronger Strichartz estimate for solutions on a short-time-interval. By semi-classical analysis and induction method, these solutions can be extended from short time intervals to a regular time interval, and a uniform Strichartz estimate with loss of derivatives can be obtained. This allows us to prove the local well-posedness of 3D relativistic equations if $(h_0,{\bu}_0,\bw_0) \in H^{2+} \times H^{2+} \times H^2$.

\end{abstract}

\maketitle
\newpage

\tableofcontents

\newpage

\section{Introduction}
In the paper, we study the low regularity well-posedness theory for the Cauchy problem of relativistic Euler equations in the Minkowski background $\mathbb{R}^{1+3}$. The relativistic Euler equation describes the motion of a relativistic fluid, which is an essential model in mathematical general relativity. To start our problem, let us introduce the formulation of relativistic Euler equations in mathematics.

The fluid state is represented by the energy density $\varrho$, and the relativistic velocity $\bu=(u^0,u^1,u^2,u^3)^{\textrm{T}}$. The velocity is assumed to be a forward time-like vector field, normalized by\footnote{We also use the Greek indices $\alpha,\beta,\gamma,\cdots$ take on the values $0,1,2,3$, while the Latin spatial indices $a,b,c,\cdots,i,j,k,\cdots$ take on the values $1,2,3$. Repeated indices are summed over (from $0$ to $3$ if they are Greek, and from $1$ to $3$ if they are Latin). Greek and Latin indices are lowered and raised with the Minkowski metric $\Bm$ and its inverse $\Bm^{-1}$. We will use this convention throughout the paper unless otherwise indicated.} 
\begin{equation}\label{muu}
u^\alpha u_\alpha =-1,
\end{equation}
where $u_{\alpha }= m_{\alpha \beta} u^\beta$, and $\Bm=(m_{\alpha \beta})_{4\times 4}$ is the Minkowski metric. Due to the constraint \eqref{muu}, so there only three of the quantities $u^1,u^2,u^3$
are independent. We denote
\begin{equation}\label{rev}
	\mathring{\bu}=(u^1,u^2,u^3).
\end{equation}
Therefore, we can also write the velocity as
\begin{equation}\label{reva}
	\bu=(u^0,\mathring{\bu})^{\mathrm{T}}.
\end{equation}
An appropriate equation of state (please c.f. Rendall \cite{Ren}) is of the form
\begin{equation}\label{mo3}
	p=p(\varrho)=\varrho^{\vartheta}, \quad \mathrm{where} \ \vartheta>1 \ \mathrm{is} \ \mathrm{a}\ \mathrm{constant}.
\end{equation}
Let $\mathcal{Q}$ be the stress energy tensor, which is defined by
\begin{equation*}
	\mathcal{Q}^{\alpha \beta}=(p+\varrho)u^\alpha u^\beta+ p m^{\alpha \beta}.
\end{equation*}
Then the motion of the relativistic Euler equations can be described by 
\begin{equation}\label{mo1}
	\partial_{\alpha} \mathcal{Q}^{\alpha \beta}=0.
\end{equation}
Projecting \eqref{mo1} into the subspace
parallel and orthogonal to $u^\alpha$ yields (please c.f. Disconzi-Ifrim-Tataru \cite{DIT} or Oliynyk \cite{Todd1,Todd2})
\begin{equation}\label{OREE}
	\begin{cases}
		u^\kappa\partial_\kappa \varrho+(p+\varrho) \partial_\kappa u^\kappa=0,
		\\
		(p+\varrho)u^\kappa\partial_\kappa u^\alpha + (m^{\alpha \kappa}+u^{\alpha}u^{\kappa}) \partial_\kappa p=0.
	\end{cases}
\end{equation}
To study the Strichartz estimate and low regularity solutions, a good start is to see the specific structure of \eqref{OREE}, ie. a wave-transport system \cite{DS}. To get the wave-transport structure, we first introduce some good variables as follows.
\subsection{Definitions and good variables}
Let us introduce some variables--acoustic speed, number density, enthalpy, and the modified vorticity.
\begin{definition}\cite{DS}
We denote the acoustic speed $c_s$
\begin{equation}\label{mo4}
	c_s=\sqrt{\frac{dp}{d\varrho}},
\end{equation}
the number density $q$
\begin{equation}\label{mo5}
	p+\varrho=q\frac{d\varrho}{dq},
\end{equation}
and the enthalpy per particle $H$
\begin{equation}\label{mo6}
	H=\frac{p+\varrho}{q}.
\end{equation}
The vanishing of $c_s$ causes a degeneracy in the system, so we restrict our attention to solution in which
\begin{equation}\label{mo7}
	0<c_s \leq 1.
\end{equation}
\end{definition}
\begin{definition}\cite{DS}
	Let $\bar{H}>0$ be a fixed constant value of the enthalpy. We denote the (dimensionless) logarithmic enthalpy $h$
	\begin{equation*}
		h=\log\frac{H}{\bar{H}}.
	\end{equation*}
To be simple, we assume $\bar{H}=1$. Then we have
\begin{equation}\label{mo9}
	h=\log{H}.
\end{equation}
\end{definition}
Under the settings \eqref{mo4}, \eqref{mo5}, \eqref{mo6}, and \eqref{mo9}, then $\varrho, p$, and $c_s$ are functions of $h$. To be simple, we record
\begin{equation}\label{und}
\varrho=\varrho(h), \quad p=p(h), \quad c_s=c_s(h).
\end{equation}
In terms of these variables, after calculations (please refer Lemma \ref{equ0} below), the relativistic Euler equations \eqref{OREE} becomes
\begin{equation}\label{REE}
	\begin{cases}
		u^\kappa \partial_\kappa h=-c^2_s \partial_\kappa u^\kappa,
		\\
		u^\kappa \partial_\kappa u^\alpha =- (m^{\alpha \kappa}+u^{\alpha}u^{\kappa}) \partial_\kappa h.
	\end{cases}
\end{equation}
\begin{definition}
Let $H$ and $h$ be stated in \eqref{mo6}- \eqref{mo9}. For any given one-form $\bA=(A^0,A^1,A^2,A^3)^\mathrm{T}$, define the $\bu$-vorticity\footnote{In relativistic fluids, the vorticity $w_{\alpha \beta}$ is defined by $w_{\alpha \beta}=\partial_\alpha u_\beta- \partial_\beta u_\alpha$(cf. Choquet-Bruhat \cite{Bru} ). Our purpose here is to find a good structure of the $\bu$-vorticity and its higher derivatives, please see Lemma \ref{tr0} and Lemma \ref{VC}.}
\begin{equation}\label{VAd}
	\mathrm{vort}^\alpha(\bA)= -\epsilon^{\alpha \beta \gamma \delta}u_{\beta}\partial_{\gamma} A_\delta,
\end{equation}
where $\epsilon_{\alpha \beta \gamma \delta}$ is the fully antisymmetric symbol normalized by $\epsilon_{0123}=1$. We also denote the modified vorticity $\bw=(w^0,w^1,w^2,w^3)^{\mathrm{T}}$ by
\begin{equation}\label{VVd}
  {w}^\alpha=\mathrm{vort}^\alpha(H \bu)=-\epsilon^{\alpha \beta \gamma \delta}u_{\beta}\partial_{\gamma}(Hu_\delta)=-\epsilon^{\alpha \beta \gamma \delta}\mathrm{e}^{h}u_{\beta}\partial_{\gamma}u_\delta.
\end{equation}
\end{definition}
\begin{definition}
We define the acoustical metric $g=(g_{\alpha\beta})_{4\times 4}$ and its inverse $g^{-1}=(g^{\alpha\beta})_{4\times 4}$ as \footnote{In this setting, $g$ is a Lorentz metric and $g^{00}=-1$.}
\begin{equation}\label{AMd}
\begin{split}
  g_{\alpha\beta}&=\Omega^{-1} \left(  c_s^{-2}m_{\alpha\beta}+(c_s^{-2}-1)u_\alpha u_\beta \right)    ,
  \\
  g^{\alpha\beta}&=\Omega \left(  c_s^{2}m^{\alpha\beta}+(c_s^2-1)u^\alpha u^\beta  \right)  ,
\end{split}
\end{equation}
where $\Omega$ is defined by
\begin{equation}\label{AMd2}
	\begin{split}
		\Omega=\left(   c_s^{2}+(1-c_s^{2})(u^0)^2  \right)^{-1}.
	\end{split}
\end{equation}
\end{definition}
\begin{definition}
Let $\bu$ be the relativistic velocity and $h$ the logarithmic enthalpy. Let $\bw$ be set in \eqref{VVd}. We denote the fluid variables $\bW=(W^0,W^1,W^2,W^3)^{\mathrm{T}}$ and $\bG=(G^0,G^1,G^2,G^3)^{\mathrm{T}}$ by
\begin{equation}\label{MFd}
	\begin{split}
		W^\alpha = & \mathrm{vort}^{\alpha}(\mathrm{e}^{h}\bw)= -\epsilon^{\alpha \beta \gamma \delta}u_{\beta}\partial_{\gamma}w_\delta+c_s^{-2}\epsilon^{\alpha\beta\gamma\delta}u_{\beta}w_{\delta}\partial_{\gamma}h,
		\\
		G^\alpha = & \mathrm{vort}^{\alpha}\bW= -\epsilon^{\alpha \beta \gamma \delta}u_{\beta}\partial_{\gamma}W_\delta.
	\end{split}
\end{equation}
\end{definition}
\begin{definition}
Let $\bu$ be the relativistic velocity and $h$ the logarithmic enthalpy. Let $\bw, \bW$ and $\bG$ be defined in \eqref{VVd} and \eqref{MFd}. We denote
	\begin{equation}\label{MFda}
		\mathring{\bw}=(w^1,w^2,w^3)^{\mathrm{T}}, \quad \mathring{\bW}=(W^1,W^2,W^3)^{\mathrm{T}}, \quad \mathring{\bG}=(G^1,G^2,G^3)^{\mathrm{T}}.
	\end{equation}
\end{definition}
We are now ready to introduce a wave-transport system of \eqref{REE}.
\subsection{Wave-transport system}
\begin{Lemma}\label{WT}
	Let $(h,\bu)$ be a solution of \eqref{REE}. Let $\bw$ and $\bW$ be defined in \eqref{VVd} and \eqref{MFd}. Then System \eqref{REE} can be written as
	\begin{equation}\label{WTe}
		\begin{cases}
			\square_g h=D,
			\\
			\square_g u^\alpha=- c_s^2\Omega \mathrm{e}^{-h}W^\alpha+Q^\alpha,
			\\
			u^\kappa \partial_\kappa w^\alpha = -u^\alpha w^\kappa \partial_\kappa h+ w^\kappa \partial_\kappa u^\alpha- w^\alpha \partial_\kappa u^\kappa,
			\\
			\partial_{ \alpha } w^\alpha= -w^\kappa \partial_\kappa h,
		\end{cases}
	\end{equation}
	where the operator $\square_g=g^{\beta \gamma}\partial^2_{\beta \gamma}$, quantities $D$, and $Q^{\alpha}$ are respectively given by
	\begin{equation}\label{err1}
			D=\Omega \left\{ (1-c^2_s)u^\beta \partial_{\beta}u^\kappa  \partial_{\kappa} h +  u^\beta \partial_{\kappa} u^\kappa \partial_{\beta}(c^2_s)
			- c^2_s  u^\beta \partial_{\kappa} u^\kappa \partial_\beta h
			-c^2_s \partial_\kappa  u^\beta \partial_{\beta}  u^\kappa \right\},
	\end{equation}
	and
		\begin{equation}\label{err}
		\begin{split}
				Q^\alpha=&\Omega \big\{  u^\beta \partial_{\beta} u^\kappa \partial_{\kappa} u^\alpha
			+ u^\beta \partial_{\beta} (m^{\alpha \kappa}+u^{\alpha}u^{\kappa}) \partial_\kappa h-(m^{\alpha \kappa}+ u^{\alpha}u^{\kappa}) \partial_{ \beta} u^\beta \partial_{\kappa} (c^2_s)
			\\
			& -(m^{\alpha \kappa}+ u^{\alpha}u^{\kappa}) \partial_{\kappa} u^\beta  \partial_{\beta} h+c^2_s \epsilon^{\alpha \beta \gamma \delta}\mathrm{e}^{-h} w_\delta \partial_{ \beta} u_\gamma
			+
			c^2_s \partial_\beta u^\beta \partial^\alpha h
			-c^2_s \partial^\beta u^\alpha \partial_\beta h
			\\
			& +c^2_s u^{\alpha} u^\kappa \partial_{\beta} u^\beta  \partial_{\kappa} h
			+c^2_s u^{\alpha} \partial_{\beta}  u^{\kappa} \partial_{\kappa} u^\beta
			-c^2_s u^\beta \partial_{\beta} u^\kappa \partial_{\kappa} u^\alpha
			- c^2_s  u^\beta u^\kappa \partial_{\beta} u^\alpha \partial_{\kappa} h \big\}.
		\end{split}
	\end{equation}
\end{Lemma}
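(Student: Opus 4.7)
The plan is to derive each of the four equations in \eqref{WTe} by algebraic manipulations of the two transport-type equations in \eqref{REE}: the two wave equations will emerge from combining a material derivative of one equation with a spacetime divergence of the other, while the vorticity transport and its divergence identity will come from applying $u^\kappa\partial_\kappa$ and $\partial_\alpha$ to the definition \eqref{VVd} of $\bw$.

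\textbf{Wave equation for $h$.} I would first apply the material derivative $u^\beta\partial_\beta$ to the continuity-type equation $u^\kappa\partial_\kappa h = -c_s^2\partial_\kappa u^\kappa$, producing the second derivative $u^\beta u^\kappa\partial_\beta\partial_\kappa h$ on the left and the mixed term $-c_s^2 u^\beta\partial_\beta\partial_\kappa u^\kappa$ on the right. Separately I would take the spacetime divergence $\partial_\alpha$ of the momentum equation, producing $u^\kappa\partial_\kappa\partial_\alpha u^\alpha$ on the left and $-m^{\alpha\kappa}\partial_\alpha\partial_\kappa h - u^\alpha u^\kappa\partial_\alpha\partial_\kappa h$ (plus first-derivative terms) on the right. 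Using the second identity to eliminate the mixed second derivative $u^\kappa\partial_\kappa\partial_\alpha u^\alpha = u^\beta\partial_\beta\partial_\kappa u^\kappa$ from the first leaves precisely $[c_s^2 m^{\alpha\beta} + (c_s^2-1)u^\alpha u^\beta]\partial_\alpha\partial_\beta h$ on the left, which by \eqref{AMd} equals $\Omega^{-1}\square_g h$. Multiplying by $\Omega$ and grouping the residual first-derivative terms then produces $\square_g h = D$, and a direct comparison with \eqref{err1} verifies the precise form of $D$.

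\textbf{Wave equation for $u^\alpha$.} The strategy is parallel but more delicate because of the free vector index. I would apply $u^\beta\partial_\beta$ to the momentum equation to produce $u^\beta u^\kappa\partial_\beta\partial_\kappa u^\alpha$, and separately apply $c_s^2(m^{\alpha\beta} + u^\alpha u^\beta)\partial_\beta$ (or an equivalent combination chosen to match the acoustical metric) to the continuity equation to produce a Minkowski-type second derivative of $u$ in the form $c_s^2 m^{\alpha\beta}\partial_\beta\partial_\kappa u^\kappa$. The principal obstacle is that this is a \emph{divergence-type} second derivative of $u$ rather than a $\square$-type derivative acting on $u^\alpha$; the discrepancy $m^{\alpha\beta}\partial_\beta\partial_\kappa u^\kappa - m^{\beta\kappa}\partial_\beta\partial_\kappa u^\alpha = \partial_\kappa(\partial^\alpha u^\kappa - \partial^\kappa u^\alpha)$ is a curl-type quantity which, via the $\epsilon$--$\delta$ identity and the defining formulas \eqref{VVd}--\eqref{MFd}, reproduces exactly the modified vorticity $W^\alpha$ and delivers the stated source $-c_s^2\Omega e^{-h}W^\alpha$. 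All remaining first-derivative nonlinearities are then collected into $Q^\alpha$ as in \eqref{err}.

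\textbf{Vorticity transport and its divergence.} The key preliminary identity is $u^\kappa\partial_\kappa(e^h u_\delta) = -\partial_\delta e^h$, which follows from the momentum equation of \eqref{REE} together with the normalization $u^\alpha u_\alpha = -1$ (so that $u_\alpha\partial_\kappa u^\alpha = 0$). For the transport equation I would differentiate the definition \eqref{VVd} with $u^\kappa\partial_\kappa$, commute the material derivative past $u_\beta$ (using the momentum equation again) and past $\partial_\gamma$ (using the identity above), and exploit the antisymmetry of $\epsilon^{\alpha\beta\gamma\delta}$ against the symmetry of second partials to discard residual terms; rearranging what remains yields the vortex-stretching-plus-dilation formula \eqref{WTe}$_3$. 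For the divergence identity, taking $\partial_\alpha$ of \eqref{VVd} immediately gives a second-derivative term that vanishes by $\epsilon$-antisymmetry, a linear-in-$\partial h$ piece that reproduces $+w^\kappa\partial_\kappa h$, and a quadratic piece $-\epsilon^{\alpha\beta\gamma\delta}e^h\partial_\alpha u_\beta\partial_\gamma u_\delta$. The crucial observation is that the enthalpy-weighted vorticity two-form $\omega_{\alpha\beta} = \partial_\alpha(e^h u_\beta) - \partial_\beta(e^h u_\alpha)$ satisfies $u^\alpha\omega_{\alpha\beta}=0$ (again by the momentum equation), hence $\omega\wedge\omega = 0$; this collapses the quadratic piece to $-2 w^\kappa\partial_\kappa h$, producing the advertised divergence identity $\partial_\alpha w^\alpha = -w^\kappa\partial_\kappa h$.
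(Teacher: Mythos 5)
Your plan is correct, and for the two wave equations it is essentially the route the paper takes in Appendix A: material derivative of one equation of \eqref{REE}, substitution of the other to trade the mixed second derivative, and the curl decomposition of Lemma \ref{OE} (in the form \eqref{OE00}) to convert the divergence-type second derivative $m^{\alpha\kappa}\partial_{\kappa\beta}u^{\beta}$ into $\partial^{\beta}\partial_{\beta}u^{\alpha}$ plus the $W^{\alpha}$ source. Where you genuinely diverge is in the vorticity identities. For the transport equation you organize the computation around $u^{\kappa}\partial_{\kappa}(\mathrm{e}^{h}u_{\delta})=-\partial_{\delta}\mathrm{e}^{h}$, whereas the paper first applies $\partial_{\gamma}$ to the momentum equation and then contracts with $\epsilon^{\alpha\beta\gamma\delta}\mathrm{e}^{h}u_{\beta}$; these are equivalent, though be aware that your closing phrase ``rearranging what remains'' still requires the full decomposition of Lemma \ref{OE} (as in \eqref{a23}--\eqref{a31}), not just $\epsilon$-antisymmetry. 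For the divergence identity your argument is both different from and sharper than the paper's: you correctly observe that the quadratic term $-\epsilon^{\alpha\beta\gamma\delta}\mathrm{e}^{h}\partial_{\alpha}u_{\beta}\partial_{\gamma}u_{\delta}$ does \emph{not} vanish, and your Pfaffian argument ($u^{\alpha}\omega_{\alpha\beta}=0$ forces $\det\omega=0$, hence $\epsilon^{\alpha\beta\gamma\delta}\omega_{\alpha\beta}\omega_{\gamma\delta}=0$, hence the quadratic term equals $-2w^{\kappa}\partial_{\kappa}h$) yields exactly the stated $\partial_{\alpha}w^{\alpha}=-w^{\kappa}\partial_{\kappa}h$. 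The paper's own computation \eqref{a33} simply drops that quadratic term and ends with $+w^{\alpha}\partial_{\alpha}h$, which contradicts \eqref{CEQ0}; your route repairs this, so it is worth writing out in full rather than treating it as a remark.
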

\begin{remark}
The system \eqref{WTe} is first proposed by Disconzi-Speck \cite{DS} (please see Theorem 3.1 and set $S=0$). We also give a derivation for it, i.e. Lemma \ref{WT} in Appendix \ref{appa}.
\end{remark}
\begin{remark}
	For brevity, we set $\bQ=(Q^0,Q^1,Q^2,Q^3)^{\mathrm{T}}$. By \eqref{err1} and \eqref{err}, we can also write $Q^\alpha=Q_{1 \kappa }^{\alpha \beta \gamma } \partial_\beta h \partial_\gamma u^\kappa+Q_{2\kappa  \delta}^{\alpha \beta \gamma } \partial_\beta u^\kappa \partial_\gamma u^\delta +Q_{3}^{\alpha \beta \gamma } \partial_\beta h \partial_\gamma h $, $D=D_{1 \kappa }^{\beta \gamma } \partial_\beta h \partial_\gamma u^\kappa+D_{2\kappa  \delta}^{ \beta \gamma } \partial_\beta u^\kappa \partial_\gamma u^\delta +D_{3}^{ \beta \gamma } \partial_\beta h \partial_\gamma h $. Here $Q_{1 \kappa }^{\alpha \beta \gamma }$, $Q_{2\kappa  \delta}^{\alpha \beta \gamma }$, $Q_{3}^{\alpha \beta \gamma }$, $D_{1 \kappa }^{\beta \gamma }$, $D_{2\kappa  \delta}^{ \beta \gamma }$, and $D_{3}^{ \beta \gamma }$ are smooth functions of $\bu$ and $h$, and they are uniquely determined by \eqref{err}.
\end{remark}
\begin{remark}
	Under the normalized condition \eqref{mo6} and \eqref{mo9}, the system \eqref{WTe} is derived from \eqref{REE}, with \eqref{VVd}-\eqref{MFd}, and using this convention, the  systems \eqref{OREE}, \eqref{REE}, and \eqref{WTe}, as well as \eqref{QHl} below, are equivalent.
\end{remark}
Seeing \eqref{WTe}, we also need a decomposition for the velocity as follows.
\begin{definition}
	Let $\bu$ be the relativistic velocity and $h$ the logarithmic enthalpy. Let $\bw$ and $\bW$ be defined in \eqref{VVd} and \eqref{MFd}. Set $\mathrm{I}$ be the identity operator. Introduce a composition of $\bu$ by
	\begin{equation}\label{De}
		u^\alpha=u^\alpha_{+}+u^\alpha_{-},
	\end{equation}
	where $\bu_{-}=(u_{-}^0,u_{-}^1,u_{-}^2,u_{-}^3)^{\mathrm{T}}$ is denoted by
	\begin{equation}\label{De0}
		\begin{split}
			\mathbf{P} u_{-}^\alpha& = \mathrm{e}^{-h}W^\alpha, \quad \mathbf{P}=\mathrm{I}-(m^{\beta \gamma}+2u^{\beta}u^{\gamma}) \partial^2_{\beta \gamma}.
		\end{split}
	\end{equation}
\end{definition}
\begin{remark}
	\begin{enumerate}
		\item
		By \eqref{De0} and \eqref{muu}, then $\mathbf{P}$ is a space-time elliptic operator on $[0,T]\times \mathbb{R}^{3}$. For any function $f$ defined on $[0,T]\times \mathbb{R}^{3}$, by a simple extension technique, we can derive that
		\begin{equation}\label{ellip}
			\| f \|_{L_{[0,T]}^2 H_x^a} \lesssim \|\mathbf{P} f \|_{L_{[0,T]}^2 H_x^{a-2}}, \quad a \in \mathbb{R}.
		\end{equation}
		This will be proved in Lemma \ref{app1} below. Therefore, we can record $\mathbf{P}^{-1}$ as the inverse operator for $\mathbf{P}$, and also
		\begin{equation}\label{uf}
			\begin{split}
				u^\alpha_{-}=& \mathbf{P}^{-1}(\mathrm{e}^{-h}W^\alpha).
			\end{split}
		\end{equation}
		\item This construction for $\mathbf{P}$ is new in the relativistic Euler equations. For non-relativistic compressible Euler equations, referring the work \cite{WQEuler, AZ}, $\mathbf{P}$ is the space-elliptic operator $\mathrm{I}-\Delta$ ($\Delta=\partial_1^2+\partial_2^2+\partial_3^2$). While, in relativistic Euler equations, if we take $\mathbf{P}$ as a space-elliptic operator, it doesn't work. We make a short explanation as follows.

		If $\mathbf{P}$ is a space-elliptic elliptic operator, then
		\begin{equation*}
			\begin{split}
				\square_g \bu_{+}=& \square_{{g}} \bu - \square_g \bu_{-}
				\\
				=& \underbrace{- c_s^2\Omega \mathrm{e}^{-h}W^\alpha-\Omega c_s^2 m^{\alpha \beta} \partial^2_{ \alpha \beta} \bu_{-}}_{= \psi}
				-\Omega ( c_s^2 -1 ) u^\alpha u^\beta \partial^2_{ \alpha \beta} \bu_{-}+ \mathrm{l.o.t.},
			\end{split}
		\end{equation*} 
		For $\bu \in H^{s_0+1}$, then $d^2\bu \in H^{s_0-1}$ and $\bu_{+}\in H^{s_0}$. This implies $\bu \in H^{s_0}$, which has $(s-s_0)$-order loss of derivatives. 
		
		If $\mathbf{P}$ is defined as in \eqref{De0}, then $\psi\approx u^\alpha u^\beta \partial^2_{ \alpha \beta} \bu_{-}$. Moreover, we can calculate (see Lemma \ref{um} below) 
		\begin{equation*}
			u^\alpha u^\beta \partial^2_{ \alpha \beta} \bu_{-}\approx g^{0\alpha} \partial_{ \alpha } \mathbf{T} \bu_{-} + \nabla \mathbf{T} \bu_{-}+ \mathrm{l.o.t.}
		\end{equation*}
		By Duhamel's principles, we only need to check if $\mathbf{T} \bu_{-}\in L^2_t H^s_x$ holds or not. Very fortunately, combining \eqref{De0}-\eqref{ellip}, together with the transport equation for $\bW$, we can prove $\mathbf{T} \bu_{-}\in L^2_t H^s_x$. Therefore, we have $\bu_{+} \in H^s$, and then $\bu \in H^s$. In a word, there is no loss of derivatives if we define $\mathbf{P}$ as in \eqref{De0}.
		
	\end{enumerate}
\end{remark}
Our goal in this paper is to give a complete well-posedness result of rough solutions for the Cauchy problem of \eqref{REE}, and answer the open question "Problem D" proposed by Disconzi \cite{Dis}. Set the initial data
\begin{equation}\label{REEi}
	(h,\bu)|_{t=0}=(h_0,\bu_0), \qquad  \bu_0=(u_{0}^0,u_{0}^1,u_{0}^2,u_{0}^3)^\mathrm{T}.
\end{equation}
By \eqref{muu}, $\bu_0$ should satisfy
\begin{equation}\label{REEia}
	u^\alpha_0 u_{0\alpha}=-1,
\end{equation}
where
\begin{equation}\label{REEib}
	u_{0\alpha}=m_{\alpha \beta}u^\beta_0.
\end{equation}
\subsection{History and motivation}
In the non-relativistic case, namely, compressible Euler equations, the local well-posedness result began with Majda's work \cite{M}. In \cite{M}, the initial velocity, density and entropy belong to $H^{s}(\mathbb{R}^n), s>1+\frac{n}{2}$ and the density bounded away from vacuum. If the flow is isentropic and irrotational, the compressible Euler equations can be written as a quasilinear wave system. A general quasilinear wave equation takes the form
\begin{equation}\label{qwe}
	\begin{cases}
		&\square_{h(\phi)} \phi=q(d \phi, d \phi), \quad (t,x) \in \mathbb{R}^+ \times \mathbb{R}^{n},
		\\
		& (\phi, \partial_t \phi)|_{t=0}=(\phi_0, \phi_1)\in H^s(\mathbb{R}^n) \times H^{s-1}(\mathbb{R}^n),
	\end{cases}
\end{equation}
where $\phi$ is a scalar function, $h(\phi)$ is a Lorentzian metric depending on $\phi$, $d=(\partial_t, \partial_1, \partial_2, \cdots, \partial_n)$, and $q$ is quadratic in $d \phi$. 
According to classical energy methods and Sobolev imbeddings, Hughes-Kato-Marsden \cite{HKM} proved the local well-posedness of the problem \eqref{qwe} for $ s>\frac{n}{2}+1$. On the other side, Lindblad \cite{L} constructed some counterexamples for \eqref{qwe} when $s=2,n=3$. For $s={\frac{7}{4}}, n=2$, a 2D counterexample was constructed by Ohlman \cite{Oman}. There is a gap between the result \cite{HKM} and \cite{L,Oman}. To lower the regularity of the initial data, 
a classical idea is to prove Strichartz estimates of solutions. There are several steps to obtain the sharp Strichartz estimates for \eqref{qwe}. The first step is to consider the the wave equation with variable coefficients
\begin{equation}\label{qw0}
	\square_{h(t,x)}\phi=0,
\end{equation}
and then exploit it to obtain the low regularity solutions of \eqref{qwe}. For smooth coefficients $h$, Kapitanskij \cite{Kap} and Mockenhaupt-Seeger-Sogge \cite{MSS} both proved the Strichartz estimates for \eqref{qw0}. For rough coefficients $h$, Smith \cite{Sm} firstly proved the Strichartz eatimate for $h \in C^2$ in two or three dimensions. Inspired by \cite{Sm}, Tataru \cite{T2} established the Strichartz estimates in all dimensions. However, for $h \in C^\alpha$ ($0<\alpha < 2$), the Strichartz estimates fails, one can see the counterexample constructed by Smith-Sogge \cite{SS}.

The second step was independently achieved by Bahouri-Chemin \cite{BC2} and Tataru \cite{T1}, who established the local well-posedness of \eqref{qwe} with $s > \frac{n}{2} + \frac{7}{8}, n=2$ or $s > \frac{n}{2} + \frac{3}{4}, n\geq3$. Shortly afterward, Tataru \cite{T3} relaxed the Sobolev indices $s>\frac{n+1}{2}+\frac{1}{6}, n \geq 3$. Additionally, Smith-Tataru \cite{ST0} showed that the $\frac{1}{6}$ loss is sharp for general variable coefficients $h$. Thus, to improve the above results, one needs to exploit a new way or structure of Equation \eqref{qwe}.

By finding a good decomposition of curvature and vector-field approach, Klainerman-Rodnianski \cite{KR2} realized the next key progress in 3D rough solutions of \eqref{qwe} with $s>2+\frac{2-\sqrt{3}}{2}$.
Based on \cite{KR2}, Geba \cite{Geba} also studied the local well-posedness of \eqref{qwe} in two dimensions for $s > \frac{7}{4} + \frac{5-\sqrt{22}}{4}$. By using wave packets of localization to represent solutions of a linear equation, a sharp result was proved by Smith-Tataru \cite{ST}, who established the local well-posedness of \eqref{qwe} if $s>\frac{7}{4}, n=2$ or $s>2, n=3$ or $s>\frac{n+1}{2}, 4 \leq n \leq 5$. An alternative proof of the 3D case in \cite{ST} was also obtained through a vector-field approach by Wang \cite{WQSharp}.


There are also other two important quasilinear wave models, namely, Einstein vacuum and time-like minimal surface equations. In the aspect of Einstein vacuum equations, the results depend on the choice of gauge. In Yang-Mills gauge, Klainerman-Rodnianski-Szeftel \cite{KR1} solved the well-known $L^2$ conjecture. In wave gauge, the equation is well-posed in $H^{2+}$ and ill-posed in $H^{2}$, cf. Klainerman-Rodnianski \cite{KR} and Ettinger-Lindblad \cite{EL}. In CMC guage, the system is well-posed in $H^{2+}$, please refer Andersson-Moncrief \cite{AM} and Q. Wang \cite{WQRough}. For the time-like minimal surface equations, which satisfy null conditions, Ai-Ifrim-Tataru \cite{AIT} obtained a substantial improvement, namely by $\frac18$ derivatives in two space dimensions and by $\frac14$ derivatives in higher dimensions.

If the flow is rotational or isentropic, the compressible Euler equations can be written as a coupled wave-transport system, which is proposed by Luk-Speck \cite{LS2}. Based on Luk-Speck's work, the first study of rough solutions was independently obtained by  Disconzi-Luo-Mazzone-Speck \cite{DLS} and Wang \cite{WQEuler}. Disconzi-Luo-Mazzone-Speck \cite{DLS} proved the existence and uniqueness of solutions of compressible Euler equations if\footnote{Here $S_0$ is the initial entropy.} $(\bv_0,\rho_0,\bw_0,S_0)\in H^{2+}(\mathbb{R}^3) \times H^{2+}(\mathbb{R}^3) \times H^{2+}(\mathbb{R}^3) \times H^{3+}(\mathbb{R}^3)$ with an additional H\"older condition $\mathrm{curl} \bw_0, \Delta S_0\in C^{\sigma}, 0<\sigma<1$. Wang \cite{WQEuler} established the existence and uniqueness of solutions of isentropic compressible Euler equations if $(\bv_0,\rho_0,\bw_0) \in H^{s}(\mathbb{R}^3) \times H^{s}(\mathbb{R}^3) \times H^{s_0}(\mathbb{R}^3)$, where $2<s_0<s$. The work \cite{DLS} and \cite{WQEuler} both are based on classical vector-fields approach. In a different view, using Smith-Tataru's method \cite{ST}, together with semi-classical analysis and Tao's frequency envelope, Andersson-Zhang \cite{AZ} proved a complete local well-posedness of 3D compressible Euler equations with relaxed assumptions $(\bv_0,\rho_0,\bw_0)\in H^{2+}(\mathbb{R}^3) \times H^{2+}(\mathbb{R}^3) \times H^{2}(\mathbb{R}^3)$ or $(\bv_0,\rho_0,\bw_0,S_0)\in H^{\frac52}(\mathbb{R}^3) \times H^{\frac52}(\mathbb{R}^3) \times H^{\frac32+}(\mathbb{R}^3)\times H^{\frac52+}(\mathbb{R}^3)$. We should also mention some results in two dimensions. Zhang \cite{Z1,Z2} proved the well-posedness of solutions of 2D isentropic compressible Euler equations when $(\bv_0, \rho_0, \bw_0) \in H^{\frac74+}(\mathbb{R}^2), \nabla \bw_0 \in L^\infty(\mathbb{R}^2) $ or $(\bv_0, \rho_0, \bw_0) \in H^{\frac74+}(\mathbb{R}^2) \times H^{\frac74+}(\mathbb{R}^2) \times H^2(\mathbb{R}^2)$. In the reverse direction, there are two ill-posedness results if $(\bv_0, \rho_0) \in \dot{H}^{2}(\mathbb{R}^3)$ or $(\bv_0, \rho_0) \in \dot{H}^{\frac74}(\mathbb{R}^2)$ with smooth vorticity, please refer the joint works by An-Chen-Yin \cite{ACY, ACY1}.

In the study of relativistic Euler equations, Makino-Ukai \cite{MU1, MU2} proved the Cauchy problem of local well-posedness of the 3D relativistic Euler equations when $s\geq 3$ by studying the symmetric hyperbolic structure. For an overview of the standard theory, we refer to the relevant chapter in Choquet-Bruhat's book \cite{Bru}. With vacuum, the pioneering result was obtained by Rendall \cite{Ren} and Guo-Tahvildar-Zadeh \cite{GT}. By proposing a new symmetrization, LeFloch-Ukai \cite{LU} established a local-in-time existence result for solutions in $n$-D containing vacuum states when $s>1+\frac{n}{2}$. Recently, Disconzi-Speck \cite{DS} proposed a new formulation of covariant wave equations coupled to transport equations and two transport-div-curl systems for the 3D relativistic Euler equations. On the basis of \cite{DLS} and \cite{DS}, Yu \cite{Yu} proved the existence and uniqueness of rough solutions of 3D relativistic Euler equations if $(\bu_0,h_0,\bw_0,S_0)\in H^{2+} \times H^{2+} \times H^{2+}\times H^{3+}$, and $\mathrm{vort} \bw_0, \Delta S_0 \in C^\sigma (0<\sigma<1)$. Very recently, Disconzi \cite{Dis} proposed some open problems, one is about establishing the well-posedness of low regularity solutions for the 3D relativistic Euler equations with $(h_0,\bu_0,\bw_0) \in H^s \times H^s \times H^{s_0} (2<s_0<s)$. We note that there are also some well-posed results on free boundary problems due to Jang-LeFloch- Masmoudi \cite{JLM}, Had$\check{\mathrm{z}}$i$\acute{\mathrm{c}}$-Shkoller-Speck \cite{HSS}, Disconzi-Ifrim-Tataru \cite{DIT}, and Miao-Shahshahani \cite{MS} and so on.

Strongly motivated by these insightful works, we further study the Hadamard well-posedness of rough solutions for 3D relativistic Euler equations, and give a definite answer to the open question "Problem D" in \cite{Dis}. Firstly, we prove the local well-posedness of solutions for $(h_0,{\bu}_0,\bw_0)\in H^{2+} \times H^{2+} \times H^{s_0}$ ($2<s_0<s$). In the proof, we also obtain: (i) a Strichartz estimate of linear wave equations endowed with the acoustic metric, (ii) a type of Strichartz estimate for solutions with a regularity of the velocity $2+$, enthalpy $2+$, and modified vorticity $\frac{3}{2}+$. Secondly, taking $(h_0, {\bu}_0, \bw_0) \in H^{2+} \times H^{2+} \times H^{2}$, then a stronger Strichartz estimates on a short time-interval hold. By summing up these short time intervals to a regular time-interval, a Strichartz estimate with loss of derivatives can be obtained\footnote{This argument is introduced by Ai-Ifrim-Tataru \cite{AIT} in the study of time-like minimal surfaces.}. So we prove the local well-posedness of solutions of 3D relativistic Euler equations if $(h_0, {\bu}_0, \bw_0) \in H^{2+} \times H^{2+} \times H^{2}$. 
\subsection{Statement of result}
Before state our results precisely, let us introduce some notations and functional spaces as follows. Let $\nabla=(\partial_{1}, \partial_{2}, \partial_{3})^\mathrm{T}$, $d=(\partial_t, \partial_{1}, \partial_{2}, \partial_{3})^\mathrm{T}$, $\left< \xi \right>=(1+|\xi|^2)^{\frac{1}{2}}, \ \xi \in \mathbb{R}^3$. Denote by $\left< \nabla \right>$ the corresponding Bessel potential multiplier. Define the operator
\begin{equation}\label{opt}
	\mathbf{T}=\frac{1}{u^0}u^\kappa \partial_\kappa=\partial_t+ \frac{u^i}{u^0}\partial_i.
\end{equation}
and
\begin{equation*}
	\Delta=\partial^2_{1}+\partial^2_{2}+\partial^2_{3}, \quad \Lambda_x=(-\Delta)^{\frac{1}{2}}.
\end{equation*}

Let $\zeta$ be a smooth function with support in the shell $\{ \xi\in \mathbb{R}^3: \frac{1}{2} \leq |\xi| \leq 2 \}$. Here, $\xi$ denotes the variable of the spatial Fourier transform. Let $\zeta$ also satisfy the
condition $\sum_{k \in \mathbb{Z}} \zeta(2^{k}\xi)=1$. Let ${P}_j$ be the Littlewood-Paley operator with frequency $2^j, j \in \mathbb{Z}$ (cf. Bahouri-Chemin-Danchin \cite{BCD}, page 78),
\begin{equation}\label{Dej}
	{P}_j f = \int_{\mathbb{R}^3} \mathrm{e}^{-\mathrm{i}x\cdot \xi} \zeta(2^{-j}\xi) \hat{f}(\xi)d\xi.
\end{equation}
For $f \in H^s(\mathbb{R}^3)$, let
\begin{align*}
	\|f\|_{H^s}= \|f\|_{L^2(\mathbb{R}^3)}+\|f\|_{\dot{H}^s(\mathbb{R}^3)},
\end{align*}
with the homogeneous norm $\|f\|^2_{\dot{H}^s} = {\sum_{j \geq -1}} 2^{2js}\|{P}_j f\|^2_{L^2(\mathbb{R}^3)} $. We shall also make use of the homogenous Besov norm
\begin{align*}
	\|f\|^r_{\dot{B}^s_{p,r}} = {\sum_{j \geq -1}}2^{jsr}\|{P}_j f\|^r_{L^p(\mathbb{R}^3)}.
\end{align*}
To avoid confusion, when the function $f$ is related to both time and space variables, we use the notation $\|f\|_{H_x^s}=\|f(t,\cdot)\|_{H^s(\mathbb{R}^3)}$.

Assume there holds\footnote{In the paper, $c_s|_{t=0}>c_0$ represents that it's far away from vacuum. $c_s<1$ is from \eqref{mo7}.}
\begin{equation}\label{HEw}
	|h_0,u_0^0-1,\mathring{\bu}_0 | \leq C_0, \qquad u_0^0 \geq 1, \qquad c_0<c_s|_{t=0}<1 .
\end{equation}
where $C_0, c_0 > 0 $ are positive constants. In the following, constants $C$ depending only on $C_0, c_0$ shall be called universal. Unless otherwise stated, all constants that appear are universal in this sense.
The notation $X \lesssim Y$ means $X \leq CY$, where $C$ is a universal constant, possibly depending on $C_0, c_0$. Similarly, we write  $X \simeq Y$ when $C_1 Y \leq X \leq C_2Y$, with $C_1$ and $C_2$ universal constants, and $X \ll Y$ when $X \leq CY$ for a sufficiently large constant $C$. The universal constant may change from line to line.

Let $s_0$ and $s$ satisfy $2< s_0 < s\leq \frac{5}{2}$.
We also set
\begin{equation}\label{a1}
 \delta\in (0, s-2),\quad \delta_0 \in (0, s_0-2), \quad \delta_1=\frac{\sstar-2}{40}.
\end{equation}
We use three small parameters
\begin{equation}\label{a0}
	0 < 	\epsilon_3 \ll \epsilon_2 \ll \epsilon_1 \ll \epsilon_0 \ll 1.
\end{equation}

Our first result is stated as follows.
\begin{theorem}\label{dingli}
	Let $2< s_0 < s\leq \frac{5}{2}$.  Consider the Cauchy problem \eqref{REE}-\eqref{REEi}. Let $\bw$ be defined in \eqref{VVd}.
	For any given initial data $(h_0, \bu_0, \bw_0)$ satisfies \eqref{REEia}, \eqref{HEw}, and for any $M_0>0$ such that\footnote{Throughout our paper, $\bu \in H^s_x$ means $\bu-(1,0,0,0)^{\mathrm{T}} \in H^s_x$.}
	\begin{equation}\label{chuzhi1}
		\| (h_0,{\bu}_0)\|_{H^{s}}  + \| \bw_0\|_{H^{s_0}}
		\leq M_0,
	\end{equation}
	where
		\begin{equation}\label{A01}
	\bw_0=\mathrm{vort}(\mathrm{e}^{h_0} \bu_0),
	\end{equation}
	there exist positive constants $T>0$
	and $M_1>0$ ($T$ and ${M}_1$ depends on $C_0, c_0, s, s_0, {M}_0$) such that \eqref{REE}-\eqref{REEi} has a unique solution $(h, \bu,\bw)$ on $[0,T]\times \mathbb{R}^3$. Moreover,
	the following statements hold.
	\begin{enumerate}
		\item \label{point:1}
		The solutions  $(h, \bu, \bw)$ satisfy the energy estimate
		\begin{equation}\label{A02}
			\begin{split}
				&\|( h, \bu )\|_{L^\infty_{[0,T]} H_x^s}+ \|\bw\|_{L^\infty_{[0,T]} H_x^{s_0}} \leq M_1,
				\\
				& \|(\partial_t {\bu}, \partial_t h)\|_{L^\infty_{[0,T]}H_x^{s-1}}+\|\partial_t\bw\|_{L^\infty_{[0,T]}H_x^{s_0-1}} \leq M_1,
			\end{split}
		\end{equation}
		and
		\begin{equation*}
		\|h,u^0-1,\mathring{\bu}\|_{L^\infty_{ [0,{T}] \times \mathbb{R}^3}} \leq 1+C_0,  \qquad u^0 \geq 1.
		\end{equation*}
		\item \label{point:2}
		The solution also satisfies the Strichartz estimate
		\begin{equation}\label{SSr}
			\|(d\bu, dh, d\bu_{+})\|_{L^2_{[0,T]}L_x^\infty}+ \|d\bu, dh\|_{L^2_{[0,T]} \dot{B}^{s_0-2}_{\infty,2}}+ \|\nabla \bu_{+}\|_{L^2_{[0,T]} \dot{B}^{s_0-2}_{\infty,2}} \leq M_1,
		\end{equation}
		where $\bu_+$ is denoted in \eqref{De}-\eqref{De0}.
		\item \label{point:3}
		For any $1 \leq r \leq s+1$, and for each $t_0 \in [0,T)$, the linear equation
		\begin{equation}\label{linear}
			\begin{cases}
				& \square_g f=0, \qquad (t,x) \in (t_0,T]\times \mathbb{R}^3,
				\\
				&(f,\partial_t f)|_{t=t_0}=(f_0,f_1) \in H^r(\mathbb{R}^3) \times H^{r-1}(\mathbb{R}^3),
			\end{cases}
		\end{equation}
		admits a solution $f \in C([0,T],H_x^r) \times C^1([0,T],H_x^{r-1})$ and the following estimates hold:
		\begin{equation*}
			\| f\|_{L_{[0,T]}^\infty H_x^r}+ \|\partial_t f\|_{L_{[0,T]}^\infty H_x^{r-1}} \leq  C_{M_0}( \|f_0\|_{H_x^r}+ \|f_1\|_{H_x^{r-1}} ).
		\end{equation*}
		Additionally, the following estimates hold, provided $k<r-1$,
		\begin{equation}\label{SE1}
			\| \left<\nabla \right>^k f\|_{L^2_{[0,T]}L^\infty_x} \leq  C_{M_0}( \|f_0\|_{H_x^r}+ \|f_1\|_{H_x^{r-1}} ),
		\end{equation}
		and the same estimates hold with $\left< \nabla \right>^k$ replaced by $\left< \nabla \right>^{k-1}d$. Here, $C_{M_0}$ is a constant depending on $C_0, c_0, s, s_0$ and $M_0$.
		\item \label{point:4}
		the map from $(h_0,{\bu}_0,\bw_0) \in H^s \times H^s \times H^{s_0}$  to $(h,{\bu},\bw)(t,\cdot) \in C([0,T];H_x^s \times H_x^s \times H_x^{s_0})$ is continuous.
	\end{enumerate}
\end{theorem}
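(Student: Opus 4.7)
The plan is to adapt the Andersson--Zhang framework for the classical compressible Euler equations to the relativistic setting, using the wave--transport reformulation in Lemma \ref{WT}. I would first produce smooth approximate solutions by mollifying the initial data $(h_0^\epsilon, \bu_0^\epsilon, \bw_0^\epsilon)$ and invoking Majda's classical symmetric hyperbolic theory, then pass to the limit using uniform bounds from a bootstrap of items \eqref{point:1}--\eqref{point:2}. Item \eqref{point:4}, continuous dependence, would follow at the end by a standard Bona--Smith type comparison argument that uses the tame estimates together with the Strichartz control established along the way.

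The heart of the proof is a bootstrap: assuming a smooth solution exists on $[0, T]$ and satisfies \eqref{point:1}--\eqref{point:2} with constant $2M_1$, I would close to the constant $M_1$ for $T = T(M_0)$ sufficiently small. The energy side combines three ingredients. First, standard wave--equation energy estimates applied to $\square_g h = D$ and to the equations for $u^\alpha$ in \eqref{WTe}, with the variable--coefficient remainders controlled by the Strichartz norm $\|(d\bu, dh)\|_{L^2_t L^\infty_x}$. Second, transport estimates for $\bw$ using the third and fourth lines of \eqref{WTe}, coupled to an elliptic-type identity
\[
\|\bw\|_{H^{s_0}_x} \lesssim \|\mathrm{vort}\,\bw\|_{H^{s_0-1}_x} + \text{l.o.t.},
\]
which must be established via a decomposition of the four-vector $\bw$ into divergence and vort parts, analogous to Lemmas \ref{HD} and \ref{VC}. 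Third, elliptic estimates for $\bu_-$ via \eqref{uf} using invertibility of $\mathbf{P}$ (Lemma \ref{app1}), which gives $\|\bu_-\|_{H^{s_0}_x} \lesssim \|\mathrm{e}^{-h}\bW\|_{H^{s_0-2}_x}$ and thereby transfers the regularity of $\bw$ into the wave variable $\bu_+ = \bu - \bu_-$.

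The Strichartz side is the main obstacle. For the acoustical wave equation \eqref{linear} with metric coefficients $g$ only at $L^\infty_t H^s_x$ regularity ($s > 2$), I would carry out a Smith--Tataru paradifferential reduction followed by a wave--packet parametrix construction, yielding item \eqref{point:3} and therefore the generic bound $\|(d\bu, dh)\|_{L^2_t L^\infty_x} + \|(d\bu, dh)\|_{L^2_t \dot{B}^{s_0-2}_{\infty,2}} \lesssim M_1$ by Duhamel applied to $\square_g h = D$ and the equations for $u^\alpha$. The improved Strichartz bound on $\bu_+$ in \eqref{SSr} exploits the favorable structure \eqref{wrtu}
\[
\square_g u^\alpha_+ = \Omega(c_s^2+1)\, u^\beta u^\gamma \partial_{\beta\gamma} u^\alpha_- + Q^\alpha,
\]
whose right-hand side is, up to a lower-order quadratic error, a spacetime derivative of $\mathbf{T}\bu_-$. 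The modified Duhamel principle (Lemma \ref{LD}) then converts this source into a Strichartz bound for $d\bu_+$ at regularity $\dot{B}^{s_0-2}_{\infty,2}$ with no derivative loss, because the elliptic gain from $\mathbf{P}^{-1}$ makes $\bu_-$ smoother than $\bu$.

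The main obstacle will be constructing the Strichartz parametrix at the relativistic acoustical metric, which is subtler than in the non-relativistic case since $u^0 = \sqrt{1+|\mathring{\bu}|^2}$ enters nonlinearly both in $g$ and in the coefficients of $\mathbf{P}$, requiring the new elliptic estimates of Lemma \ref{app1} and the new vorticity decomposition of Lemmas \ref{HD} and \ref{VC}. Once these are in place, the scheme of \cite{WQEuler, AZ} can be pushed through. Uniqueness follows by estimating the difference of two solutions in a weak norm such as $L^2_x \times H^{-1}_x$ via a Grönwall argument that absorbs the variable coefficients using the Strichartz control obtained above.
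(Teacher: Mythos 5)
Your proposal is correct and follows essentially the same route as the paper: the wave--transport reformulation, the decomposition $\bu=\bu_++\bu_-$ with the elliptic operator $\mathbf{P}$, the modified Duhamel principle for the improved $\bu_+$ Strichartz bound, Smith--Tataru characteristic-hypersurface Strichartz estimates inside a bootstrap, and a regularization (Bona--Smith/frequency-envelope) argument for continuous dependence. The only points you gloss over are (i) the scaling-plus-physical-localization reduction to small compactly supported data, which is what actually makes the parametrix bootstrap applicable, and (ii) the delicate spacetime integration by parts needed in the transport estimate for $\bG=\mathrm{vort}(\mathrm{vort}(\mathrm{e}^h\bw))$, whose source contains a full gradient $\partial^\alpha\Gamma$ that would lose a derivative if estimated naively.
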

\begin{remark}
	Compared with Yu's result \cite{Yu}, there is no H\"older condition on $\mathrm{vort}\bw$ in Theorem \ref{dingli}. Moreover, We provide the proof of Hadamard well-posedness for the relativistic Euler equations with rough data.
\end{remark}
Our second result is as follows.
\begin{theorem}\label{dingli2}
	Let $2<\sstar<\frac52$. Let $\bw$ be defined in \eqref{VVd}. Consider the Cauchy problem \eqref{REE}-\eqref{REEi}. Assume that \eqref{HEw} holds. For any $M_*>0$, if
	\begin{equation}\label{chuzhi3}
		\|h_0\|_{H_x^{\sstar}} + \|\bu_0\|_{H_x^{\sstar}}+ \|\bw_0\|_{H_x^{2}}
		\leq M_*,
	\end{equation}
	where
	\begin{equation}\label{A04}
	\bw_0=\mathrm{vort}(\mathrm{e}^{h_0} \bu_0),
	\end{equation}
	then there exist positive constants $T^*>0$ and ${M}_{2}>0$ ($T^{*}$ and ${M}_{2}$ depends on $C_0, c_0, s, M_{*}$) such that \eqref{REE}-\eqref{REEi} has a unique solution $(h,\bu,\bw)$ on $[0,T]\times \mathbb{R}^3$. To be precise,
	\begin{enumerate}
		\item \label{point:d3:1}		
		the solution $(h, \bu, \bw)$ satisfies the energy estimate
		\begin{equation}\label{A03}
			\begin{split}
				&\|(h,{\bu})\|_{L^\infty_{[0,T^*]}H_x^s}+ \|\bw\|_{L^\infty_{[0,T^*]}H_x^{2}}  \leq {M}_{2},
				\\
				& \|(\partial_t {\bu}, \partial_t h)\|_{L^\infty_{[0,T^*]}H_x^{s-1}}+\|\partial_t\bw\|_{L^\infty_{[0,T^*]}H_x^{1}} \leq {M}_{2},
			\end{split}
		\end{equation}
		and
		\begin{equation}\label{dA04}
		\|h,u^0-1,\mathring{\bu}, h\|_{L^\infty_{ [0,{T}^*] \times \mathbb{R}^3}} \leq 2+C_0, \qquad u^0 \geq 1.
		\end{equation}
		\item \label{point:d3:2}	
		the solution $(h,\bu)$ satisfies the Strichartz estimate
		\begin{equation}\label{A05}
			\|(d\bu, dh)\|_{L^2_{ [0,{T}^*]} L_x^\infty} \leq {M}_{2}.
		\end{equation}	
		\item \label{point:d3:3}
		for any $\frac{s}{2} \leq r \leq 3$, and for each $t_0 \in [0,T^*]$, the linear wave equation \eqref{linear} has a unique solution on $[0,T^*]$. Moreover, for $a\leq r-\frac{s}{2}$, the solution satisfies
		\begin{equation}\label{tu02}
			\begin{split}
				&\|\left< \nabla \right>^{a-1} d{f}\|_{L^2_{[0,T^*]} L^\infty_x}
				\leq  {M}_3 (\|{f}_0\|_{{H}_x^r}+ \|{f}_1 \|_{{H}_x^{r-1}}),
				\\
				&\|{f}\|_{L^\infty_{[0,T^*]} H^{r}_x}+ \|\partial_t {f}\|_{L^\infty_{[0,T^*]} H^{r-1}_x} \leq {M}_3 (\| {f}_0\|_{H_x^r}+ \| {f}_1\|_{H_x^{r-1}}).
			\end{split}
		\end{equation}
		Here ${M}_3>0$ depends on $C_0, c_0, s, {M}_*$.
		\item \label{point:d3:4}
		the map from $(h_0, \bu_0, \bw_0) \in H^s \times H^s \times H^{2}$  to $(h, \bu,\bw)(t,\cdot) \in C([0,T^*];H_x^s \times H_x^s \times H_x^{2})$ is continuous.
	\end{enumerate}
\end{theorem}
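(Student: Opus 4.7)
The plan is to obtain Theorem \ref{dingli2} from Theorem \ref{dingli} by a frequency-localized approximation combined with a Strichartz estimate with loss of derivatives, in the spirit of Ai-Ifrim-Tataru \cite{AIT} and the author's joint work Andersson-Zhang \cite{AZ}. First I would regularize the initial data at dyadic frequency scales: for each $N\in 2^{\mathbb{Z}_+}$, set $(h_0^{(N)},\bu_0^{(N)},\bw_0^{(N)})=P_{\leq N}(h_0,\bu_0,\bw_0)$, satisfying by Bernstein $\|\bw_0^{(N)}\|_{H^{s_0}}\lesssim N^{s_0-2}M_*$ for any choice $s_0\in(2,s)$. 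Theorem \ref{dingli} then produces a smooth solution $(h^{(N)},\bu^{(N)},\bw^{(N)})$ on some interval $[0,T_N]$ together with the Strichartz bound \eqref{SSr} and the linear estimates \eqref{SE1}; the central task is to promote these to a common lifespan $T^*$ and uniform bounds \eqref{A03}-\eqref{A05}.

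The main engine is the Strichartz inequality \eqref{tu02} with loss $\delta_1$. On any short sub-interval $I\subset[0,T^*]$ of length comparable to a negative power of $N$ on which the solution is a priori controlled, Theorem \ref{dingli}\eqref{point:3} applied to $\square_g f=0$ yields a frequency-localized bound for $P_N f$. Partitioning $[0,T^*]$ into such sub-intervals, summing the dyadic bounds in $\ell^2$, and trading the extra regularity $s-2$ in the interval count for derivatives, one obtains an inequality of the form \eqref{tu02} in which the admissible range $a\leq r-s/2$ is dictated by the parameter $\delta_1$ defined in \eqref{a1}. Because the wave equations for $h$ and $\bu_+$ in \eqref{WTe}-\eqref{wrtu} carry inhomogeneities that are either quadratic in $(d\bu,dh)$ or, after the elliptic gain $\bu_-=\mathbf{P}^{-1}(e^{-h}\bW)$, of lower order in frequency, this derivative loss is tolerable for the bootstrap.

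With the lossy Strichartz estimate in hand, I would close a bootstrap on \eqref{A03}-\eqref{A05} by induction on $N$. The wave energy estimates for \eqref{WTe} use the Strichartz control of $(d\bu,dh)$ to maintain $(h,u^0-1,\mathring{\bu})$ in $H^s$; the $H^2$ bound on $\bw$ follows from the transport-div-vort system for $\bw$ in \eqref{WTe} combined with the elliptic identity $\|\bw\|_{H^2}\lesssim \|\bw\|_{L^2}+\|\mathrm{vort}\,\bw\|_{H^1}+\|\mathrm{vort}\,(\mathrm{vort}\,\bw)\|_{L^2}$, whose derivation relies on the structural analysis of $\bw,\bW,\bG$ in \eqref{MFd} and the second-order vorticity identity of Lemma \ref{VC}. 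Passage to the limit $N\to\infty$ is then accomplished by a Cauchy argument in a weak topology such as $C([0,T^*];L^2\times L^2\times L^2)$ applied to the quasilinear difference equation for two approximants and interpolation against the uniform high-norm bounds; continuity of the flow map \eqref{point:d3:4} follows from the same stability estimate applied to arbitrary pairs of data, combined with a frequency envelope argument ensuring uniform control of the tails.

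The hard part will be calibrating the loss $\delta_1$ in \eqref{a1} against the $H^2$ energy estimate for the four-vector modified vorticity $\bw$. In the non-relativistic setting this estimate reduces to a Hodge decomposition that absorbs a small derivative loss, but here the $\bu$-vorticity \eqref{VAd} couples $\bw$ to $\bu$, so commutator terms arising when one differentiates \eqref{WTe} twice involve factors of $\partial^2 \bu$ that, by classical Kato-Ponce, would outstrip the available regularity. Arranging every such commutator so that one factor is redirected onto a Strichartz norm controlled by \eqref{A05}, rather than a high-derivative energy term, is the delicate bookkeeping that fixes the precise parameter hierarchy \eqref{a0} and the split between $\delta$ and $\delta_1$ in \eqref{a1}.
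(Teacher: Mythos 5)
Your proposal follows essentially the same route as the paper: frequency-truncate the data, solve on $N$-dependent short time intervals where the truncated vorticity norm (growing like a power of $N$) is still manageable, sum the frequency-localized Strichartz estimates over these intervals to obtain the lossy estimate \eqref{tu02} on a uniform interval, close the $H^s\times H^2$ energies via the transport--div--vort structure of $\bw,\bW,\bG$, and pass to the limit with a frequency-envelope argument. This matches the paper's proof via Propositions \ref{DDL2}--\ref{DDL3} and the induction over the intervals $[0,T^*_j]$ with $T^*_j\sim 2^{-\delta_1 j}$, including your correct identification of the calibration of $\delta_1$ against the vorticity energy as the delicate point.
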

\begin{remark}
	\begin{enumerate}
			\item
	Controlling the regularity of velocity and density, some good structure of \eqref{REE} may allow us lowering the regularity of the vorticity. In Theorem \ref{dingli2}, the regularity of vorticity is $2$. So it provides us $(s_0-2)$-order ($s_0>2$) regularity improvement of the vorticity compared with Theorem \ref{dingli}. 
		\item  Whether the results in Theorem \ref{dingli}, and \ref{dingli2} are sharp remains a challenging problem. We may expect that the local well-posedness of solutions for \eqref{REE} also hold when $(h_0, \bu_0, \bw_0) \in H^{2+} \times H^{2+} \times H^{\frac32+}$.
	\end{enumerate}
\end{remark}

\begin{remark}
	\begin{enumerate}
			\item
		By comparison, \eqref{SE1} is better than \eqref{tu02}. There is a Strichartz estimate with a loss of derivatives  in \eqref{SE1}, which is proved by extending the short (semi-classical) time-interval to a regular time-interval by summing up estimates on these short time-intervals. The idea is introduced by Ai-Ifrim-Tataru \cite{AIT} in time-like minimal surfaces, and developed by Andersson-Zhang \cite{AZ} in 3D compressible Euler equations.
		\item The estimate \eqref{SE1} is a key observation to prove the continuous dependence of solutions, as well as Theorem \ref{dingli2}.
	\end{enumerate}
\end{remark}

\subsection{Strategy of the proof} We provide a strategy of proofs
for Theorem \ref{dingli} and Theorem \ref{dingli2}.
\subsubsection{Outline proof of Theorem \ref{dingli}} 
The main idea is as follows.

\textit{$\bullet$ Step 1: Energy estimates}. For $\bu$ and $h$, we use the hyperbolic system to get its energy(see Theorem \ref{VHE}) 
\begin{equation*}
	\| (h,\bu) \|_{H^s_x} \lesssim \| (h_0, \bu_0) \|_{H^s} \exp\left( \int^t_0 \| dh,d\bu \|_{L^\infty_x} \right).
\end{equation*} 
To bound $\|\bw \|_{H^{s_0}_x}$, we only need to consider $\|\bw\|_{L^2_x}$ and $\|\bw\|_{\dot{H}^{s_0}_x}$. Recalling the second equation in System \eqref{WTe}, by using classical method, we therefore get
\begin{equation*}
	\| \bw \|_{L^2_x} \lesssim \| \bw_0 \|_{L^2} \exp\left( \int^t_0 \| dh,d\bu \|_{L^\infty_x} \right).
\end{equation*} 
However, the bound of $\|\bw\|_{\dot{H}^{s_0}_x}$ is much difficult, for there are two terms $\nabla^3 \bu$ and $\nabla^3 h$ if we operate spatial second-order derivatives on the third equation in \eqref{WTe}. Fortunately, we find a good decomposition for $\partial^\gamma \partial_\gamma \bu$ (see Lemma \ref{HD} for details):
\begin{equation*}
	\partial^\gamma \partial_\gamma \bu^\alpha \approx \mathrm{vort}^\alpha \bw+ \partial^\alpha (\partial_\gamma u^\gamma)  -u^\kappa \partial_\kappa ( u^\gamma \partial_\gamma u^\alpha- u^\alpha \partial_\gamma u^\gamma ) + (d\bu, dh) \cdot (du, dh),
\end{equation*}
whose role is very similar to Hodge decomposition in non-relativistic fluid dynamics. This allows us to derive a good transport equation for $\bG$ (see Definition \eqref{MFd}):
\begin{equation}\label{i00}
	u^\kappa \partial_\kappa (G^\alpha-F^\alpha)=\partial^\alpha \Gamma+ E^\alpha.
\end{equation}
Above,
\begin{equation*}
	\begin{split}
		& \Gamma \approx d\bu \cdot d\bw,
		\\
		& \bF\approx (d\bu,dh)\cdot d\bw + (d\bu,dh)\cdot (d\bu,dh)\cdot \bw, 
		\\
		& \bE\approx (d\bu,dh)\cdot d^2\bw + (d\bu,dh)\cdot (d\bu,dh)\cdot (d\bu,dh)\cdot \bw
		\\
		& \quad \quad  + (d\bu,dh)\cdot \bw \cdot (d^2\bu,d^2h)+ (d\bu,dh)\cdot (d\bu,dh)\cdot d\bw.
	\end{split}
\end{equation*}
See Lemma \ref{VC} for details. So we transfer our goal to bound $\|\bG\|_{\dot{H}^{s_0-2}_x}$. Since \eqref{i00}, we can see
\begin{equation}\label{i01}
	\frac{d}{dt} \| \Lambda^{s_0-2}_x (\bG-\bF)\|^2_{L^2_x}= \textrm{H}_1+ \textrm{good \ terms},
\end{equation}
where $\textrm{H}_1$ is the most difficult term 
\begin{equation}\label{i02}
	\textrm{H}_1=\int^t_0 \int_{\mathbb{R}^3}   \partial^\alpha \left\{  \Lambda_x^{s_0-2}  \big(  (u^0)^{-1}\Gamma \big) \right\}  \cdot \Lambda_x^{s_0-2} \left(  G_\alpha - F_\alpha  \right)   dxd\tau,
\end{equation}
and "good terms" can be estimated by product and commutator estimates. To calculate $\textrm{H}_1$, we divide \eqref{i02} into two parts
\begin{equation*}
	\textrm{H}_1=\underbrace{ \int_{\mathbb{R}^3} \partial_\alpha (\Lambda_x^{s_0-2} \big( (u^0)^{-1} \Gamma\big))\cdot \Lambda_x^{s_0-2}G^\alpha dx }_{\equiv \mathrm{H}_{11}} \underbrace{-\int_{\mathbb{R}^3} \partial_\alpha (\Lambda_x^{s_0-2} \big( (u^0)^{-1} \Gamma\big))\cdot \Lambda_x^{s_0-2}F^\alpha dx }_{\equiv \mathrm{H}_{12}}.
\end{equation*}
For $\mathrm{H}_{11}$, integrating it parts, which yields
\begin{align}
		\nonumber \mathrm{H}_{11}
		=&\int_{\mathbb{R}^3} \partial_\alpha (\Lambda_x^{s_0-2} \big( (u^0)^{-1} \Gamma\big) \cdot \Lambda_x^{s_0-2}G^\alpha ) dx-  \int_{\mathbb{R}^3}\Lambda_x^{s_0-2} \big( (u^0)^{-1} \Gamma\big) \cdot \Lambda_x^{s_0-2}(\partial_\alpha G^\alpha) dx
		\\
		\label{i06}
		=&\frac{d}{dt}\int_{\mathbb{R}^3} \Lambda_x^{s_0-2} \big( (u^0)^{-1} \Gamma\big)\cdot \Lambda_x^{s_0-2}G^0 dx
		-    \int_{\mathbb{R}^3}\Lambda_x^{s_0-2} \big( (u^0)^{-1} \Gamma\big) \cdot \Lambda_x^{s_0-2}(\partial_\alpha G^\alpha) dx .
\end{align}
For the first term in \eqref{i06}, it's a lower order term compared with the left-hand side of \eqref{i01}. So we can put it into the left-hand side of \eqref{i01}. For the second term in \eqref{i06}, we can bound it by product and commutator estimates.

Similarly, we can calculate $\mathrm{H}_{12}$ by
\begin{equation}\label{i08}
	\begin{split}
		\mathrm{H}_{12}=&	- \frac{d}{dt} \int_{\mathbb{R}^3} \Lambda_x^{s_0-2} \big( (u^0)^{-1} \Gamma\big) \cdot \Lambda_x^{s_0-2}F^0 dx +  \int_{\mathbb{R}^3}  \Lambda_x^{s_0-2} \big( (u^0)^{-1} \Gamma\big) \cdot \Lambda_x^{s_0-2}(\partial_{\alpha} F^{\alpha}) dx.
	\end{split}
\end{equation}
For the first term in \eqref{i08}, it's also a lower order term compared with the left-hand side of \eqref{i01}. So we can also put it into the left-hand side of \eqref{i01}. For the second term in \eqref{i08}, a delicate analysis can lead us to get the desired bounds. Summing up the above outcome, and using Young inequality, Gronwall inequality, we can obtain the following desired energy estimate
\begin{equation}\label{i09}
	E_s(t) \leq E_0  \exp \left(    5 M(t) \cdot \mathrm{e}^{5 M(t)} \right),
\end{equation}
where
\begin{equation*}
	\begin{split}
		& E_0= C \left( \|(h_0,\mathring{\bu}_0)\|^2_{H^s} +\|\bw_0\|^2_{H^{s_0}}+\|(h_0,\mathring{\bu}_0)\|^{10}_{H^s}+\|\bw_0\|^{10}_{H^{s_0}} \right),
		\\
		& M(t)=   {\int^t_0} (\|d\bu, dh\|_{L^\infty_x}+\|d\bu, dh\|_{\dot{B}^{s_0-2}_{\infty,2}})  d\tau.
	\end{split}
\end{equation*}
See Section \ref{sec3.1} for details.

Therefore, the bound of total energy $E_s(t)$ depends on the Strichartz estimate for
\begin{equation}\label{i10}
	{\int^t_0} (\|d\bu, dh\|_{L^\infty_x}+\|d\bu, dh\|_{\dot{B}^{s_0-2}_{\infty,2}})  d\tau.
\end{equation}

\textit{$\bullet$ Step 2: Reduction to a existence result with small, smooth, and supported initial data}. For general large data, it's hard to prove \eqref{i10}. For small data problem, we always have some idea to start. Since the system \eqref{WTe} has finite propagation speed, by a standard compactness method, scaling and localization technique, we can reduce Theorem \ref{dingli} to the following statement. For any smooth, supported data $(h_0, \bu_0, \bw_0)$ satisfying
\begin{equation*}
	\begin{split}
		&\|h_0\|_{H^s} + \|\bu_0 \|_{H^s} + \|\bw_0\|_{H^{s_0}} \leq \epsilon_3,
	\end{split}
\end{equation*}
then the Cauchy problem \eqref{WTe} admits a smooth solution $(h,\bu,\bw)$ on $[-1,1] \times \mathbb{R}^3$, which has the following properties:

$\mathrm{(i)}$ energy estimates
\begin{equation}\label{pre3}
	\begin{split}
		&\|h\|_{L^\infty_{[-1,1]} H_x^{s}}+\| \bu \|_{L^\infty_{[-1,1]} H_x^{s}} + \| \bw\|_{L^\infty_{[-1,1]} H_x^{s_0}}  \leq \epsilon_2.
	\end{split}
\end{equation}

$\mathrm{(ii)}$ dispersive estimate for $h$ and $\bu$
\begin{equation}\label{pre4}
	\|d h, d \bu\|_{L^2_{[-1,1]} C^\delta_x}+\| d h, \nabla \bu_{+}, d \bu\|_{L^2_{[-1,1]} \dot{B}^{s_0-2}_{\infty,2}} \leq \epsilon_2,
\end{equation}

$\mathrm{(iii)}$ dispersive estimate for the linear equation

Let $f$ satisfy
the equation
\begin{equation}\label{pre50}
	\begin{cases}
		& \square_{g} f=0, \qquad (t,x) \in [-1,1]\times \mathbb{R}^3,
		\\
		&f(t_0,\cdot)=f_0 \in H_x^r(\mathbb{R}^3), \quad \partial_t f(t_0,\cdot)=f_1 \in H_x^{r-1}(\mathbb{R}^3).
	\end{cases}
\end{equation}
For each $1 \leq r \leq s+1$, the Cauchy problem \eqref{pre50} is well-posed in $H_x^r \times H_x^{r-1}$, and the following estimate holds:
\begin{equation}\label{pre5}
	\|\left< \nabla \right>^k f\|_{L^2_{[-1,1]} L^\infty_x} \lesssim  \| f_0\|_{H^r}+ \| f_1\|_{H^{r-1}},\quad k<r-1.
\end{equation}
See Section \ref{sec4.1} and Section \ref{sec4.2} for the complete proof.

\textit{$\bullet$ Step 3: Reduction to a bootstrap argument}. When the problem is reduced to small smooth supported initial data $(h_0, \bu_0, \bw_0)$, then we can consider it as a perturbation problem around Minkowski space. Following \cite{ST}, we further reduce the proof of \eqref{pre3}-\eqref{pre5} to: there is a continuous functional $\Re: \mathcal{H} \rightarrow \mathbb{R}^{+}$, satisfying $\Re(0,\mathbf{e},\mathbf{0})=0$ (for $\mathbf{0}$ and $\mathbf{e}$, see \eqref{0e} below), so that for each $(h, \bu, \bw) \in \mathcal{H}$ satisfying $\Re(h, \bu, \bw) \leq 2 \epsilon_1$ the following hold:

\quad $\mathrm{(i)}$ the function $h, \bu$, and $\bw$ satisfies $\Re(h,\bu,\bw) \leq \epsilon_1$;

\quad $\mathrm{(ii)}$ the estimates \eqref{pre3}-\eqref{pre4} and \eqref{pre5} both hold.

By using energy estimates in Theorem \ref{VE}, the estimate \eqref{pre3} follows from \eqref{pre4}. Hence, the proof of $\Re(h, \bu, \bw) \leq \epsilon_1$ and \eqref{pre4}-\eqref{pre5} are the core. See Section \ref{sec4.3} for details.

{\textit{$\bullet$ Step 4: Definition of $\Re$ and regularity of characteristic null surface}.}

Following \cite{ST}, for $\theta \in \mathbb{S}^2$, $\bx \in \mathbb{R}^3$, and let $\Sigma_{\theta, r}$ be the flowout of the set $\theta \cdot \bx = r-2$ along the null geodesic flow with respect of $\mathbf{g}$ in the direction $\theta$ at $t=-2$. Here $\mathbf{g}$ is a localization of $g$ (see in \eqref{AMd3}). Let $\bx'_{\theta}$ be given orthonormal coordinates on the hyperplane in $\mathbb{R}^3$ perpendicular to $\theta$. Then, $\Sigma_{\theta,r}$ is of the form
\begin{equation*}
\Sigma_{\theta,r}=\left\{ (t,\bx): \theta\cdot \bx-\phi_{\theta, r}=0  \right\}
\end{equation*}
for a smooth function $\phi_{\theta, r}(t,\bx'_{\theta})$.
For a given $\theta$, the family $\{\Sigma_{\theta,r}, \ r \in \mathbb{R}\}$ defines a foliation of $[-2,2]\times \mathbb{R}^3$, by characteristic hypersurfaces with respect to $\mathbf{g}$.

We define (see \eqref{500} for details)
\begin{equation}\label{dG}
\Re= \sup_{\theta, r} \vert\kern-0.25ex\vert\kern-0.25ex\vert d \phi_{\theta,r}-dt\vert\kern-0.25ex\vert\kern-0.25ex\vert_{s_0,2,{\Sigma_{\theta,r}}}.
\end{equation}
For simplicity, let us suppose $\theta=(0,0,1)$ and $r=0$. In this case, we set $\Sigma=\Sigma_{\theta,r}$ and $\phi=\phi_{\theta,r}$. To prove $\Re \leq \epsilon_1$ and catch the geometry properties of $\Sigma$, let us introduce a null frame along $\Sigma$ as follows. Let
\begin{equation*}
V=(dr)^*,
\end{equation*}
where $r$ is the foliation of $\Sigma$, and where $*$ denotes the identification of covectors and vectors induced by $\mathbf{g}$. Define a null frame with respect to $\mathbf{g}$ by setting
\begin{equation*}
l=\left< dt,dx_3-d\phi\right>^{-1}_{\mathbf{g}} \left( dx_3-d \phi \right)^*, \quad \underline{l}=l+2\partial_t,
\end{equation*}
and choosing orthonormalized vector fields tangent to the fixed-time slice $\Sigma^t$ of $\Sigma$.

Based on this null frame, the bound of $\Re$ relies on the connection coefficients on $\Sigma$. Based on the well-known Raychaudhuri equation\footnote{It is introduced by Christodoulou-Klainerman \cite{CK} and Klainerman \cite{K}.}, together with the bound of $\vert\kern-0.25ex\vert\kern-0.25ex\vert (h, \bu )\vert\kern-0.25ex\vert\kern-0.25ex\vert_{s_0,2,{\Sigma}}$ and $\vert\kern-0.25ex\vert\kern-0.25ex\vert \bW  \vert\kern-0.25ex\vert\kern-0.25ex\vert_{s_0-1,2,{\Sigma}}$ (see Corollary \ref{vte} and Lemma \ref{te20}), then $\Re\lesssim \epsilon_2$ is established in Proposition \ref{r2}. See Section \ref{rch} for details.

\textit{$\bullet$ Step 5: Strichartz estimates}. In this step, our goal is to derive the Strichartz estimate \eqref{pre4}. A classical idea is to use the Strichartz estimate of linear wave and Duhamel's principle. Only using System \eqref{WTe}, the classical Duhamel's principle, and \eqref{pre50}-\eqref{pre5}, are not sufficient for us to prove \eqref{pre4}. This pushes us to introduce a modified Duhamel's principle (see Lemma \ref{LD} below). By Lemma \ref{LD}, we are able to establish the following Strichartz estimate for a non-homogeneous linear wave equation. For any $1 \leq r \leq s+1$, for each $t_0 \in [-2,2)$, then the Cauchy problem
\begin{equation}\label{pre7}
	\begin{cases}
		& \square_{\mathbf{g}} f=\mathbf{g}^{0\alpha} \partial_\alpha F, \qquad (t,x) \in (t_0,2]\times \mathbb{R}^3,
		\\
		&(f,\partial_t f)|_{t=t_0}=( f_0,F(t_0,\cdot) ),
	\end{cases}
\end{equation}
admits a solution $f \in C([-2,2],H_x^r) \times C^1([-2,2],H_x^{r-1})$ and the following estimates holds, provided $k<r-1$,
\begin{equation}\label{pre6}
	\| \left<\nabla \right>^k f, \left<\nabla \right>^{k-1} df\|_{L^2_{[-2,2]}L^\infty_x} \lesssim \|f_0\|_{H_x^r}+\| F \|_{L^1_{[-2,2]}H_x^r}.
\end{equation}
Seeing \eqref{WTe} again, $h$ satisfies a good wave equation, while $\bu$ does not. So we find a good component\footnote{This is inspired by Wang's work \cite{WQEuler} on 3D non-relativistic compressible Euler equations.} $\bu_{+}$ instead of $\bu$. Precisely, the good wave equation for $h$ and $\bu_{+}$ take the form (see Lemma \ref{um} below):
\begin{equation}\label{pre8}
	\begin{cases}
		&{\square}_g h \approx (d\bu, dh) \cdot (d\bu,dh),
		\\
		&{\square}_g \bu_{+}\approx g^{0\alpha} \partial_{ \alpha } ( \mathbf{T} \bu_{-})+\nabla ( \mathbf{T} \bu_{-}) + (d\bu, dh) \cdot (d\bu,dh) + \textrm{lower\ order\ terms}.
	\end{cases}
\end{equation}
Since \eqref{pre8}, $\mathbf{T} \bu_{-}$ is the possible trouble term. Fortunately, using the elliptic setting for $\bu_{-}$ (see \eqref{uf}) and the transport structure for $\bW$ (see Lemma \ref{tr0}), then a new space-time elliptic estimate for $\mathbf{T} \bu_{-}$ (see Lemma \ref{tu} below) holds. This combing with \eqref{pre7}, \eqref{pre6}, and \eqref{pre8}, we therefore obtain the desired Strichartz estimate
\begin{equation*}
	\|d h, d \bu\|_{L^2_{[-2,2]} C^\delta_x}+\| d h, d \bu_{+}, d \bu\|_{L^2_{[-1,1]} \dot{B}^{s_0-2}_{\infty,2}} \leq \epsilon_2.
\end{equation*}
 For the entire proof, we refer the readers to Section \ref{SEs}.

\textit{$\bullet$ Step 6: The proof of \eqref{pre7}-\eqref{pre6}}. Through the above steps, we have obtained sufficient regularity of the characteristic hypersurfaces to control the characteristic hypersurfaces. This allows us to use the idea of constructing approximate solutions by wave-packets \cite{ST,Sm}. The whole proof is presented in Section \ref{spp}.

\textit{$\bullet$ Step 7: Continuous dependence.} 
In the proof of continuous dependence, for quasilinear system  \eqref{WTe} with rough data, there is loss of derivatives if we only use the Strichartz estimates of solutions. To overcome this difficulty, using frequency envelope approach, together with the key Strichartz estimate \eqref{linear}-\eqref{SE1}, we can prove the continuous dependence. We refer Section \ref{cd} for details.


\subsubsection{Outline proof of Theorem \ref{dingli2}} The main idea is as follows.

$\bullet \ \textit{Step 1: Energy estimates}$. By delicate calculations, we find that the bound of $\|  h\|_{L^\infty_{[0,t]}H_x^{\sstar}}+\| \bu\|_{L^\infty_{[0,t]}H_x^{\sstar}}+\|\bw\|_{L^\infty_{[0,t]} H_x^{2}}$ is controlled by the quantity
 	\begin{equation*}
 	\tilde{E}_0  \exp \left\{    5{\int^t_0} \|(dh,d\bu)\|_{L^\infty_x}   d\tau \cdot \exp \big(  5{\int^t_0} \|(dh,d\bu)\|_{L^\infty_x}   d\tau \big)\right\},
 \end{equation*}
 where $\tilde{E}_0$ is determined by the initial norm $\|h_0\|_{H^s}$, $\|{\bu}_0\|_{H^s}$, and $\|\bw_0\|_{H^{2}}$.  We refer the reader to Section \ref{sec3.2}.

 So the main goal is to control the following Strichartz estimate 
 \begin{equation}\label{pqqq}
 	\int^t_0 \|dh, d\bu\|_{ L_x^\infty} d\tau.
 \end{equation}

 $\bullet \ \textit{Step 2: Constructing a strong solution as a limit of smooth solutions}$.
 Since the initial data is not so smooth, we apply a mollifier to $(h_{0},\bu_0,\bw_0)$. Let $P_j=\sum_{j' \leq j}P_{j'}$ and $P_{j'}$ is a LP operator with frequency supports $\{ 2^{j'-3} \leq |\xi| \leq 2^{j'+3} \}$. We introduce the smooth sequence $(h_{0j},\bu_{0j})$ by
 \begin{equation*}
 	\begin{split}
 		& \mathring{\bu}_{0j}=P_{\leq j}\mathring{\bu}_0, \quad u^0_{0j}=\sqrt{1+|\mathring{\bu}_{0j}|^2 },
 		\\
 		& h_{0j}=P_{\leq j}h_0, \quad \bu_{0j}=(u^0_{0j},\mathring{\bu}_{0j}),
 	\end{split}
 \end{equation*}
Considering \eqref{VVd}, so we define
 \begin{equation*}
 	\bw_{0j}=\mathrm{vort}( \mathrm{e}^{h_{0j}} \bu_{0j}).
 \end{equation*}
Therefore, $\left\{ (h_{0j},\bu_{0j}, \bw_{0j}) \right\}_{j \in \mathbb{Z}}$ is a smooth initial sequence.

 Considering above initial datum $(h_{0j},\bu_{0j}, \bw_{0j})$, there exists positive numbers $T^*$, ${M}_2$, and $M_3$ ($T^*$, ${M}_2$ and ${M}_3$ only depends on $C_0, c_0, s$ and $M_*$) such that a sequence solutions of \eqref{WTe} satisfy $(h_{j},\bu_{j})\in C([0,T^*];H_x^s)\cap C^1([0,T^*];H_x^{s-1})$, $\bw_{j}\in C([0,T^*];H_x^2) \cap C^1([0,T^*];H_x^1)$. Moreover, for all $j\geq 1$, $h_j$ and $\bu_j$ satisfy a uniform Strichartz estimate
 \begin{equation}\label{prec1}
 	\|dh_j, d\bu_j\|_{L^2_{[0,T^*]}L_x^\infty} \leq {M}_{2}.
 \end{equation}
 Additionally, for $\frac{s}{2} \leq r \leq 3$, considering
 \begin{equation}\label{prec2}
 	\begin{cases}
 		\square_{{g}_j} f_j=0, \qquad [0,T^*]\times \mathbb{R}^3,
 		\\
 		(f_j,\partial_t f_j)|_{t=0}=(f_{0j},f_{1j}),
 	\end{cases}
 \end{equation}
 where $(f_{0j},f_{1j})=(P_{\leq j}f_0,P_{\leq j}f_1)$ and $(f_0,f_1)\in H_x^r \times H^{r-1}_x$, there is a unique solution $f_j$ on $[0,T^*]\times \mathbb{R}^3$ such that for $a\leq r-\frac{s}{2}$,
 \begin{equation}\label{prec4}
 	\begin{split}
 		&\|\left< \nabla \right>^{a-1} d{f}_j\|_{L^2_{[0,T^*]} L^\infty_x}
 		\leq  M_3(\|{f}_0\|_{{H}_x^r}+ \|{f}_1 \|_{{H}_x^{r-1}}),
 		\\
 		&\|{f}_j\|_{L^\infty_{[0,T^*]} H^{r}_x}+ \|\partial_t {f}_j\|_{L^\infty_{[0,T^*]} H^{r-1}_x} \leq  M_3 (\| {f}_0\|_{H_x^r}+ \| {f}_1\|_{H_x^{r-1}}).
 	\end{split}
 \end{equation}
Based on \eqref{prec1}-\eqref{prec4}, we are able to obtain a strong solution of Theorem \ref{dingli2} as a limit of the sequence $(h_{j},\bu_{j}, \bw_j)$. Please refer Section \ref{keypra} for details.

It remains for us to prove \eqref{prec1}-\eqref{prec4}.

$\bullet \ \textit{Step 3: Obtaining a sequence of solutions on small time-intervals}$. Note $\|{\bw}_{0j}\|_{H^{2+r}}\lesssim 2^{jr}$($r \geq 0$). By using compactness, scaling, and physical localization, we shall get a sequence of small initial datum. By using Proposition \ref{DDL3}, we can prove the existence of small solutions with these small datum. On one hand, Proposition \ref{DDL3} can be derived from \eqref{pre3}-\eqref{pre4}. See Section \ref{pps} for details.

On the other hand, returning back from small solutions to large solutions $(h_{j},\bu_{j},\bw_{j})$, then the time of existence of solutions $(h_{j},\bu_{j},\bw_{j})$ depends on $j\in \mathbb{Z}^+$. Because the norm $\|{\bw}_{0j}\|_{H^{2+}}$ depending on $j$. Moreover, we precisly calculate $T^*_j=2^{-\delta_1j} [E(0)]^{-1}$($\delta_1=\frac{\sstar-2}{40}$ and $E(0)=C(M_*+M_*^5) $, c.f \eqref{pu0} and \eqref{DTJ}). On $[0, T^*_j]$, the solutions $h_j$ and $\bu_j$ yields a higher-order Strichartz estimates,
\begin{equation}\label{prec}
	\|\left< \nabla \right>^{a-1} dh_j, \left< \nabla \right>^{a-1} d\bu_{j}\|_{L^2_{[0,T^*_j]} L^\infty_x} \leq C(1+E(0)), \quad a<\sstar-1.
\end{equation}
Therefore, by choosing a suitable number $a=9\delta_1$, for high frequency $k\geq j$, then we get
\begin{equation}\label{prec0}
	\|P_k d h_j, P_k d\bu_{j}\|_{L^2_{[0,T^*_j]} L^\infty_x} \leq C(1+E^3(0))2^{-\delta_{1} k} 2^{-7\delta_{1} j}.
\end{equation}
Decompose
\begin{equation*}
	\begin{split}
		d \bu_j= P_{\geq j}d \bu_j+ \textstyle{\sum}^{j-1}_{k=1} \textstyle{\sum}_{m=k}^{j-1} P_k  (d\bu_{m+1}-d\bu_m)+ \textstyle{\sum}^{j-1}_{k=1}P_k d \bu_k .
	\end{split}
\end{equation*}
Then we need to see if there is a similar estimates for $P_k  (d\bu_{m+1}-d\bu_m)$. By using a Strichartz estimates for linear wave, for $k<j$, we shall also obtain
\begin{equation}\label{prec00}
	\|P_k (dh_{m+1}-dh_m), P_k (d\bu_{m+1}-d\bu_m)\|_{L^2_{[0,T^*_{m+1}]} L^\infty_x} \leq C(1+E^3(0))2^{-\delta_{1} k} 2^{-6\delta_{1} m}.
\end{equation}
We prove \eqref{prec0}-\eqref{prec00} in Section \ref{esest}.

$\bullet \ \textit{Step 4: Extending a sequence of solutions to a regular time-interval}$. Compared \eqref{prec} with \eqref{prec0}-\eqref{prec00}, there is a loss of derivatives in \eqref{prec0}-\eqref{prec00}. The benifit relies on some decay estimates \eqref{prec0}-\eqref{prec00} on frequency number $j$ and $k$. Inspired by Bahouri-Chemin \cite{BC2}, Tataru \cite{T1} and Ai-Ifrim-Tataru \cite{AIT}, if we can always extend it with a length $\approx T_j^*$ of time intervals, and with a number $\approx (T_j^*)^{-1}$, then we shall sum these estimates up and obtain a uniform Strichartz estimates $\|d h_j, d\bu_{j}\|_{L^1_{[0,T^*]} L^\infty_x}$ and $\|d h_j, d\bu_{j}\|_{L^2_{[0,T^*]} L^\infty_x}$ on a regular time-interval $[0,T^*]$. In fact, we can extend it in this way from $[0,T^*_j]$ to $[0,T^*_{N_0}]$. We first extend it from $[0,T^*_j]$ to $[0,T^*_{j-1}]$. In this process, the growth in Strichartz and energy estimates can be calculated precisely. Then we conclude it by induction method. Finally, we get
\begin{equation*}
	T^*=2^{-\delta_{1} N_0} [E(0)]^{-1},
\end{equation*}
where $N_0$ is an fixed integer depending on $c_0, s, C_0$ and $M_*$(see \eqref{pp8} below). Furthermore,
\begin{equation*}
	\| h_j \|_{L^\infty_{[0,T^*]}H_x^s}+ \| \bu_j \|_{L^\infty_{[0,T^*]}H_x^s} + \| \bw_j \|_{L^\infty_{[0,T^*]}H_x^2},
\end{equation*}
and
\begin{equation*}
	\|d h_j\|_{L^2_{[0,T^*]} L^\infty_x} + \|d\bu_{j}\|_{L^2_{[0,T^*]} L^\infty_x},
\end{equation*}
are uniformly bounded (See \eqref{kz65}-\eqref{kz66}). Please refer Subsection \ref{finalk} for details.

$\bullet \ \textit{Step 5: Strichartz estimates for linear wave on the regular time interval}$. 
When the solutions in Step 4 are extended, then the corresponding solutions of linear wave are also extended on the same time-interval. By using a similar idea, summing up these bounds, we can prove \eqref{prec2}-\eqref{prec4}. We refer the readers to Section \ref{finalq} for the whole proof.
\subsection{Organization of the rest paper}
In the rest of the paper, Section \ref{sec:preliminaries} mainly introduce some basic key lemmas, including commutator and product estimates; modified Duhamel's principle; new transport equations and some elliptic estimates. Next, in Section \ref{sec:energyest}, we mainly prove two key energy theorems, i.e. Theorem \ref{VE} and Theorem \ref{VEt}. In Section \ref{Sec4}, we give a precise proof of the existence and uniqueness of solutions for Theorem \ref{dingli}. Then we give a precise proof for Proposition \ref{p4} in Section \ref{secp4}. In Section \ref{spp}, we give a proof for Proposition \ref{r5}. The next Section \ref{cd} give a proof of continuous dependence of solutions in Theorem \ref{dingli}. Additionally, Section \ref{Sec5} presents the proof of the well-posedness of solutions in Theorem \ref{dingli2}. Finally, we prove some auxiliary lemmas and give a guidance of notations in Section \ref{Ap}.
\section{Preliminaries}\label{sec:preliminaries}
\subsection{Commutator estimates and product estimates}
We start by recalling some commutator and product estimates, which have been proved in references \cite{KP,ST,WQRough,ML, AZ}.
\begin{Lemma}\label{jh}\cite{KP}
	Let $ a \geq 0$. Then for any scalar function $f_1, f_2$, we have
	\begin{equation*}
		\|\Lambda_x^a(f_1f_2)-(\Lambda_x^a f_1)f_2\|_{L_x^2} \lesssim \|\Lambda_x^{a-1}f_1\|_{L^{2}_x}\|\nabla f_2\|_{L_x^\infty}+ \|f_1\|_{L^p_x}\|\Lambda_x^a f_2\|_{L_x^{q}},
	\end{equation*}
	where $\frac{1}{p}+\frac{1}{q}=\frac{1}{2}$.
\end{Lemma}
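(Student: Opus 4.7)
My plan is to prove this via Bony's paraproduct decomposition combined with a frequency-localized commutator estimate. I would first write $f_1 f_2 = T_{f_2}(f_1) + T_{f_1}(f_2) + R(f_1, f_2)$, where $T_g h = \sum_k S_{k-2}g \cdot P_k h$ is the paraproduct (with $S_{k-2}$ the low-frequency cutoff at scale $2^{k-2}$) and $R$ the high-high resonant remainder. Plugging this into the commutator yields
\[
\Lambda_x^a(f_1 f_2) - (\Lambda_x^a f_1) f_2 = \bigl[\Lambda_x^a, T_{f_2}\bigr]f_1 + \bigl(T_{f_2}\Lambda_x^a f_1 - (\Lambda_x^a f_1)f_2\bigr) + \Lambda_x^a T_{f_1} f_2 + \Lambda_x^a R(f_1, f_2),
\]
and I would bound the four pieces separately, matching the low-high piece to the first term on the right-hand side of the lemma and the other three to the second term.

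The central estimate is the paraproduct commutator $[\Lambda_x^a, T_{f_2}] f_1$. On a single dyadic block the multiplier is $|\eta+\xi|^a - |\eta|^a$, restricted to $|\xi| \ll |\eta| \sim 2^k$; a first-order Taylor expansion in $\xi/|\eta|$ factors out $|\xi| \cdot 2^{k(a-1)}$, and the resulting symbol has a convolution kernel whose $L^1$-norm is controlled by $2^{k(a-1)}\|\nabla S_{k-2} f_2\|_{L^\infty}$. Each dyadic block is therefore bounded by $C 2^{k(a-1)}\|\nabla f_2\|_{L^\infty}\|P_k f_1\|_{L^2}$, and an $\ell^2$-summation in $k$ combined with the Littlewood--Paley characterization of $\dot{H}^{a-1}$ produces exactly $\|\Lambda_x^{a-1} f_1\|_{L^2_x}\|\nabla f_2\|_{L^\infty_x}$.

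The remaining three pieces do not require cancellation. For $T_{f_2}\Lambda_x^a f_1 - (\Lambda_x^a f_1)f_2$, the low-frequency truncation $S_{k-2}-\mathrm{Id}$ acts only on $f_2$ and, by Bernstein, produces a contribution of the same form as the first term. For $\Lambda_x^a T_{f_1} f_2$ and $\Lambda_x^a R(f_1, f_2)$, the derivative $\Lambda_x^a$ falls on $f_2$; almost-orthogonality of frequency supports lets me apply a Littlewood--Paley square-function bound together with H\"older in the dual pair $(p, q)$ with $\tfrac{1}{p}+\tfrac{1}{q} = \tfrac{1}{2}$, producing $\|f_1\|_{L^p_x}\|\Lambda_x^a f_2\|_{L^q_x}$. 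The main obstacle is the symbol-level bound for the paraproduct commutator: one must control the kernel of $|\eta+\xi|^a - |\eta|^a$ with enough precision to extract exactly one derivative onto $f_2$ while preserving $\ell^2$-summability in $k$. Once that kernel estimate is in place, the rest of the proof is essentially bookkeeping via H\"older and Littlewood--Paley.
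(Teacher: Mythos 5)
The paper does not prove this lemma at all: it is quoted verbatim from Kato--Ponce \cite{KP}, so there is no in-paper argument to compare against. Your paraproduct route is the standard modern proof of that commutator estimate (the original \cite{KP} argument instead works with $J^s=(1-\Delta)^{s/2}$ and leans on Coifman--Meyer multiplier theory rather than an explicit Bony decomposition), and the overall architecture --- isolate the low-high commutator $[\Lambda_x^a,T_{f_2}]f_1$, gain one derivative from the symbol difference $|\eta+\xi|^a-|\eta|^a$ on $|\xi|\ll|\eta|$, and dump everything else into the H\"older term --- is the right one.

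There is, however, one step that fails as written: your treatment of the second piece $T_{f_2}\Lambda_x^a f_1-(\Lambda_x^a f_1)f_2=-\sum_k(\mathrm{Id}-S_{k-2})f_2\cdot P_k\Lambda_x^a f_1$. Bernstein does give the blockwise bound $\|(\mathrm{Id}-S_{k-2})f_2\|_{L^\infty}\|P_k\Lambda_x^a f_1\|_{L^2}\lesssim\|\nabla f_2\|_{L^\infty_x}\|P_k\Lambda_x^{a-1}f_1\|_{L^2_x}$, but these blocks are \emph{not} almost orthogonal (the high frequencies of $f_2$ beat against $P_k\Lambda_x^a f_1$ to produce arbitrary output frequencies), so the triangle inequality only yields the $\ell^1$ sum $\sum_k\|P_k\Lambda_x^{a-1}f_1\|_{L^2_x}$, i.e.\ a $\dot B^{a-1}_{2,1}$ norm that is strictly stronger than $\|\Lambda_x^{a-1}f_1\|_{L^2_x}$. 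This piece is exactly $-T_{\Lambda_x^a f_1}f_2-R(\Lambda_x^a f_1,f_2)$ and should instead be absorbed into the \emph{second} term of the lemma by moving the weight $2^{ka}$ onto $f_2$ (via $\|S_{k-2}\Lambda_x^a f_1\|_{L^p}\lesssim 2^{ka}\|f_1\|_{L^p}$, valid for $a\ge 0$), precisely as you already do for $\Lambda_x^aT_{f_1}f_2$ and $\Lambda_x^aR(f_1,f_2)$; so the repair is internal to your framework. Be aware also that the final summation for the high-low and resonant pieces is not pure ``bookkeeping'' when $q>2$: the bound $(\sum_k\|P_k\Lambda_x^a f_2\|_{L^q}^2)^{1/2}\lesssim\|\Lambda_x^a f_2\|_{L^q}$ is false there (the embedding $L^q\hookrightarrow\dot B^0_{q,2}$ requires $q\le 2$), so at that endpoint you must put $\ell^\infty_k$ on the $f_2$ factor and $\ell^2_k$ on the $f_1$ factor, or invoke the Coifman--Meyer theorem outright.
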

\begin{Lemma}\label{cj}\cite{KP}
	Let $a \geq 0$. For any scalar function $f_1$ and $f_2$, we have
	\begin{equation*}
		\|f_1 f_2\|_{H_x^a} \lesssim \|f_1\|_{L_x^{\infty}}\| f_2\|_{H_x^a}+ \|f_2\|_{L_x^{\infty}}\| f_1 \|_{H_x^a}.
	\end{equation*}
\end{Lemma}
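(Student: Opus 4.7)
Since this estimate is a classical Moser--Kato--Ponce product inequality attributed to \cite{KP}, my plan is to outline the standard Littlewood--Paley proof rather than derive it from scratch. Writing $\|f_1 f_2\|_{H^a_x} \simeq \|f_1 f_2\|_{L^2_x} + \|f_1 f_2\|_{\dot H^a_x}$, the $L^2$ part is immediate from H\"older: $\|f_1 f_2\|_{L^2_x} \leq \|f_1\|_{L^\infty_x}\|f_2\|_{L^2_x}$, which is already controlled by the right-hand side. The content is therefore in the homogeneous piece, which I would treat using Bony's decomposition
\begin{equation*}
f_1 f_2 \;=\; T_{f_1} f_2 \,+\, T_{f_2} f_1 \,+\, R(f_1, f_2),
\end{equation*}
where $T_{f_1} f_2 := \sum_k S_{k-N} f_1 \cdot P_k f_2$ is the low--high paraproduct, $T_{f_2} f_1$ is its transpose, and $R(f_1,f_2) := \sum_{|j-k|\leq N} P_j f_1 \cdot P_k f_2$ collects the high--high interactions.

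For the paraproduct $T_{f_1} f_2$, each summand is Fourier supported in an annulus of radius $\sim 2^k$, so almost orthogonality together with the trivial bound $\|S_{k-N} f_1\|_{L^\infty_x} \leq \|f_1\|_{L^\infty_x}$ gives
\begin{equation*}
\|T_{f_1} f_2\|_{\dot H^a_x}^2 \;\lesssim\; \sum_k 2^{2ak}\,\|S_{k-N} f_1\|_{L^\infty_x}^2 \|P_k f_2\|_{L^2_x}^2 \;\lesssim\; \|f_1\|_{L^\infty_x}^2\|f_2\|_{\dot H^a_x}^2,
\end{equation*}
and the estimate on $T_{f_2} f_1$ is symmetric, contributing $\|f_2\|_{L^\infty_x}\|f_1\|_{\dot H^a_x}$. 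These two pieces already match both terms on the right-hand side of the lemma.

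The main obstacle is the high--high remainder $R(f_1,f_2)$: here the output frequency $n$ may be much smaller than the input frequencies $j\sim k$, so Fourier support alone does not enforce a clean $\ell^2$-sum. I would instead use the pointwise bound $\|P_j f_1 \cdot \tilde P_j f_2\|_{L^2_x}\leq \|f_1\|_{L^\infty_x}\|\tilde P_j f_2\|_{L^2_x}$ together with the constraint $j\geq n-C$ (imposed by the Fourier support of $P_n R$) to get
\begin{equation*}
\|P_n R(f_1,f_2)\|_{L^2_x} \;\lesssim\; \|f_1\|_{L^\infty_x}\sum_{j\geq n-C}\|\tilde P_j f_2\|_{L^2_x},
\end{equation*}
and close the estimate by a Schur-type Cauchy--Schwarz with weight $2^{-\epsilon(j-n)/2}$ for some $0<\epsilon<2a$, which crucially uses $a>0$ and converts the right-hand side into $\|f_1\|_{L^\infty_x}\|f_2\|_{\dot H^a_x}$. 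For the boundary case $a=0$ no paraproduct decomposition is needed, as the bound reduces directly to H\"older. Summing the three contributions yields the claimed Moser inequality.
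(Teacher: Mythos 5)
The paper does not prove this lemma; it is quoted as a classical Moser-type product estimate with a citation to \cite{KP}, so there is no in-paper argument to compare against. Your outline is the standard and correct Littlewood--Paley proof: the two paraproducts are handled by almost orthogonality plus the uniform $L^\infty$ bound on $S_{k-N}$, and the high--high remainder is closed for $a>0$ by the geometric-weight Cauchy--Schwarz (Schur/Young on $\ell^2$) exactly as you describe, with the $a=0$ endpoint reducing to H\"older. No gaps.
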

\begin{Lemma}\label{jh0}\cite{AZ}
	Let $F(u)$ be a smooth function of $u$, $F(0)=0$ and $u \in L^\infty_x$. For any $a \geq 0$, we have
	\begin{equation*}
		\|F(u)\|_{H_x^a} \lesssim  \|u\|_{H_x^{a}}(1+ \|u\|_{L^\infty_x}).
	\end{equation*}
\end{Lemma}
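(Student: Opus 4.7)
The plan is to exploit the factorization $F(u)=u\,G(u)$, valid because $F(0)=0$, where $G(y):=\int_0^1 F'(\tau y)\,d\tau$ is smooth in $y$. Since $u\in L^\infty_x$, the composition $G(u)$ is bounded with $\|G(u)\|_{L^\infty_x}\le \sup_{|y|\le \|u\|_{L^\infty_x}}|G(y)|$, a quantity controlled by a constant depending only on $F$ and $\|u\|_{L^\infty_x}$ (and in particular by $C(1+\|u\|_{L^\infty_x})$, which is the form needed for the claim). Applying the Kato--Ponce product estimate (Lemma \ref{cj}) gives
\[
\|F(u)\|_{H^a_x}=\|u\,G(u)\|_{H^a_x}\;\lesssim\; \|u\|_{H^a_x}\|G(u)\|_{L^\infty_x}+\|u\|_{L^\infty_x}\|G(u)\|_{H^a_x},
\]
and the first summand is already of the desired form.

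The second summand is the main obstacle, because iterating the factorization on $G$ does not work: $G(0)=F'(0)$ need not vanish, so $G(u)\notin L^2_x$ in general and its inhomogeneous $H^a_x$ norm is a priori infinite. My plan is to sidestep this by arguing directly on $F(u)$ via Littlewood--Paley. Writing $S_k u:=\sum_{j\le k}P_j u$ for the low-frequency truncation and using $F(0)=0$, the telescoping identity
\[
F(u)=\sum_{k\ge -1}\bigl[F(S_{k+1}u)-F(S_k u)\bigr]=\sum_{k\ge -1}P_k u\cdot m_k,\qquad m_k:=\int_0^1 F'\!\left(S_k u+\tau P_k u\right)d\tau,
\]
holds pointwise, with every multiplier $m_k$ uniformly bounded in $L^\infty_x$ by $\sup_{|y|\le \|u\|_{L^\infty_x}}|F'(y)|$, hence by $C(1+\|u\|_{L^\infty_x})$.

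The final step is to read off the $H^a_x$ norm from this representation. For each dyadic frequency $j$, I would apply $P_j$ to the identity above and use Bernstein's inequality together with the near-orthogonality of the frequency blocks to bound $\|P_j F(u)\|_{L^2_x}$ in terms of $\|P_j u\|_{L^2_x}$ and a standard tail from the multipliers $m_k$; squaring, weighting by $2^{2ja}$, and summing over $j$ then yields $\|F(u)\|_{H^a_x}\lesssim (1+\|u\|_{L^\infty_x})\|u\|_{H^a_x}$. As an alternative route I might prove the estimate first for integer $a$ using the chain rule (writing $\partial^\alpha F(u)$ as a polynomial in the derivatives of $u$ with coefficients $F^{(k)}(u)$ bounded by constants depending on $\|u\|_{L^\infty_x}$) and then pass to fractional $a$ by complex interpolation, choosing whichever path matches the argument in \cite{AZ} most closely.
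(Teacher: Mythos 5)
The paper does not actually prove this lemma --- it is quoted from \cite{AZ} --- so there is no in-paper argument to compare against; I can only assess your proposal on its own terms. Your main route is correct and is the standard one for Moser-type composition estimates: the first attempt via $F(u)=uG(u)$ and Lemma \ref{cj} is rightly abandoned (since $G(0)=F'(0)\neq 0$ in general, $\|G(u)\|_{H^a_x}$ is not finite), and the Meyer telescoping decomposition $F(u)=\sum_k m_k P_k u$ with $m_k=\int_0^1 F'(S_ku+\tau P_ku)\,d\tau$ is exactly the right substitute. The two points you gesture at ("near-orthogonality" and "a standard tail") are the whole content of the proof and do work: for $k\gtrsim j$ one uses $\|P_j(m_kP_ku)\|_{L^2}\le\|m_k\|_{L^\infty}\|P_ku\|_{L^2}$ and sums the resulting convolution kernel $2^{(j-k)a}\mathbf{1}_{j-k\le O(1)}$, while for $k\ll j$ one needs $\|\nabla^N m_k\|_{L^\infty}\lesssim 2^{kN}$ (which follows from the chain rule and Bernstein applied to $S_ku$, $P_ku$) to get the decay $\|P_j(m_kP_ku)\|_{L^2}\lesssim 2^{-(j-k)N}\|P_ku\|_{L^2}$ with $N>a$; the $L^2$ part of the inhomogeneous norm is immediate from $|F(u)|\le\sup|F'|\,|u|$.

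Two caveats. First, the constant produced by this argument depends on $\sup_{|y|\le\|u\|_{L^\infty_x}}|F^{(l)}(y)|$ for $l$ up to about $\lceil a\rceil+1$, so the literal factor $(1+\|u\|_{L^\infty_x})$ in the statement is only meaningful under the paper's convention that $\|u\|_{L^\infty_x}\le C_0$ and constants may depend on $C_0$; your parenthetical acknowledges this, and your proof is no less rigorous than the statement itself. Second, your fallback route (integer $a$ by the chain rule, then "complex interpolation") has a genuine gap: $u\mapsto F(u)$ is nonlinear, so one cannot interpolate the estimate between integer orders in the naive way; you would have to freeze $u$ and interpolate a linear auxiliary operator, which reintroduces the $\|G(u)\|_{H^a_x}$ problem. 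Stick with the Littlewood--Paley route.
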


\begin{Lemma}\label{ps}\cite{ST}
	Suppose that $0 \leq r, r' < \frac{3}{2}$ and $r+r' > \frac{3}{2}$. Then for any scalar function $f_1$ and $f_2$, the following estimate holds:
	\begin{equation*}
		\|f_1f_2\|_{H^{r+r'-\frac{3}{2}}(\mathbb{R}^3)} \leq C_{r,r'} \|f_1\|_{H^{r}(\mathbb{R}^3)}\|f_2\|_{H^{r'}(\mathbb{R}^3)}.
	\end{equation*}
	If $-r \leq r' \leq r$ and $r>\frac{3}{2}$ then
	\begin{equation*}
		\|f_1 f_2\|_{H^{r'}(\mathbb{R}^3)} \leq C_{r,r'} \|f_1\|_{H^{r}(\mathbb{R}^3)}\|f_2\|_{H^{r'}(\mathbb{R}^3)}.
	\end{equation*}
\end{Lemma}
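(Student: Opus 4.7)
The plan is to prove both product estimates by a standard Littlewood--Paley/Bony paraproduct decomposition. Writing $f_i=\sum_{j\geq -1}P_jf_i$, decompose
\begin{equation*}
f_1f_2 \;=\; \sum_{j\leq k-2}P_jf_1\,P_kf_2 \;+\; \sum_{k\leq j-2}P_jf_1\,P_kf_2 \;+\; \sum_{|j-k|\leq 1}P_jf_1\,P_kf_2,
\end{equation*}
i.e.\ a low-high paraproduct, a high-low paraproduct, and a diagonal remainder. Throughout I introduce the $\ell^2$ sequences $a_j=2^{jr}\|P_jf_1\|_{L^2}$ and $b_k=2^{kr'}\|P_kf_2\|_{L^2}$, whose $\ell^2$ norms are $\|f_1\|_{\dot H^r}$ and $\|f_2\|_{\dot H^{r'}}$.

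For the first inequality (with $0\leq r,r'<\tfrac{3}{2}$ and $r+r'>\tfrac32$), the off-diagonal pieces are handled via Bernstein's inequality: in the low-high case $j\leq k-2$, the product is frequency-localized at scale $2^k$, and
\begin{equation*}
\|P_jf_1\,P_kf_2\|_{L^2} \;\leq\; \|P_jf_1\|_{L^\infty}\|P_kf_2\|_{L^2} \;\lesssim\; 2^{j(3/2-r)}2^{-kr'}a_jb_k.
\end{equation*}
Multiplying by $2^{k(r+r'-3/2)}$ produces the factor $2^{(j-k)(3/2-r)}$, which decays in $k-j$ because $r<\tfrac32$, so Schur's test yields an $\ell^2$ bound by $\|f_1\|_{\dot H^r}\|f_2\|_{\dot H^{r'}}$. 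The high-low piece is symmetric and uses $r'<\tfrac32$. The low-frequency pieces and the $L^2$-part of the $H^s$-norm follow from Bernstein at unit frequency.

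The diagonal sum is the step where the condition $r+r'>\tfrac32$ enters decisively. The product $P_jf_1\,P_jf_2$ has Fourier support in $\{|\xi|\lesssim 2^j\}$, not localized at $2^j$, so I project onto each shell $2^l$ with $l\leq j+C$. Bernstein and H\"older give
\begin{equation*}
\|P_l(P_jf_1\,P_jf_2)\|_{L^2} \;\lesssim\; \|P_jf_1\|_{L^2}\|P_jf_2\|_{L^\infty} \;\lesssim\; 2^{j(3/2-r-r')}a_jb_j,
\end{equation*}
and multiplying by $2^{l(r+r'-3/2)}$ yields $\sum_{j\geq l}2^{(l-j)(r+r'-3/2)}a_jb_j$, a discrete convolution with kernel summable thanks to $r+r'-\tfrac32>0$; Young's inequality closes the estimate. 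This is the main obstacle: one must see that the paraproducts impose $r,r'<\tfrac32$ individually, while the remainder alone forces $r+r'>\tfrac32$, and both pairs of hypotheses appear exactly in the statement.

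For the second inequality ($r>\tfrac32$, $-r\leq r'\leq r$), the situation is simpler because $H^r\hookrightarrow L^\infty$ makes $f_1$ bounded, so the diagonal condition is free. The low-high paraproduct $\sum_{j\leq k-2}P_jf_1\,P_kf_2$ is controlled by $\|f_1\|_{L^\infty}\|f_2\|_{H^{r'}}\lesssim \|f_1\|_{H^r}\|f_2\|_{H^{r'}}$; the high-low paraproduct, where $r'$ may be negative, is handled by transferring derivatives onto $f_1$ using that $P_jf_1$ sits at frequency $2^j$ and $r'\geq -r$ permits summation; the remainder is handled as above, now with $r+r'\geq 0>-\tfrac32+\tfrac32$, which again converges because $r>\tfrac32$. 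Combining the three pieces and adding the $L^2$-component completes the proof.
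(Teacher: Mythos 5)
The paper does not prove this lemma at all: it is quoted verbatim from Smith--Tataru \cite{ST} and used as a black box, so there is no in-paper argument to compare against. Your paraproduct proof is the standard one and is essentially correct: the Bernstein computations for the two off-diagonal paraproducts correctly isolate the hypotheses $r<\tfrac32$ and $r'<\tfrac32$, and the remainder estimate correctly isolates $r+r'>\tfrac32$ via Young's inequality applied to the $\ell^1$ sequence $a_jb_j$ convolved with the decaying kernel $2^{(l-j)(r+r'-\frac32)}\mathbf{1}_{l\le j+C}$. Three small points you should tighten. First, in the second inequality the hypothesis $r'\ge -r$ is not really what makes the high-low paraproduct work (that piece only needs $r>\tfrac32$, since $\sum_{k\le j-2}2^{k(3/2-r')}b_k\lesssim 2^{j(3/2-r')}\|b\|_{\ell^2}$ and the exponents collapse to $2^{j(3/2-r)}a_j$); it is the remainder that needs $r+r'\ge 0$, where after Bernstein on the output block one gets $2^{l(3/2-r)}\sum_{m\ge -C}2^{-m(r+r')}a_{l+m}b_{l+m}$, summable in $\ell^2_l$ precisely because $r+r'\ge0$ and $r>\tfrac32$. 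Second, your parenthetical ``$r+r'\ge 0>-\tfrac32+\tfrac32$'' is garbled ($-\tfrac32+\tfrac32=0$), though the intended point is recoverable. Third, the dismissal of the low-frequency/$L^2$ component of the inhomogeneous norm in the first inequality deserves a line: with $0\le r,r'<\tfrac32$ one cannot always place $f_1,f_2$ in dual Lebesgue spaces by Sobolev embedding (e.g.\ $r$ near $0$), so the $L^2$ bound should be extracted from the same block-by-block estimates at $l=-1$ rather than from a separate H\"older argument. None of these affects the validity of the approach.
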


\begin{Lemma}\label{lpe}\cite{WQRough}
	Let $0 \leq a <1 $. Then
	\begin{equation*}
		\|\Lambda_x^a(f_1f_2)\|_{L^{2}_x(\mathbb{R}^3)} \lesssim \|f_1\|_{\dot{B}^{a}_{\infty,2}(\mathbb{R}^3)}\|f_2\|_{L^2_x(\mathbb{R}^3)}+ \|f_1\|_{L^\infty_x(\mathbb{R}^3)}\|f_2\|_{\dot{H}^a_x(\mathbb{R}^3)}.
	\end{equation*}
\end{Lemma}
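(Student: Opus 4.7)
The plan is to apply Bony's paraproduct decomposition
\[
f_1 f_2 = T_{f_1} f_2 + T_{f_2} f_1 + R(f_1, f_2),
\]
where $T_{f_1} f_2 = \sum_j S_{j-3}f_1 \cdot P_j f_2$, $T_{f_2}f_1 = \sum_j S_{j-3}f_2 \cdot P_j f_1$, $R(f_1,f_2) = \sum_{|j-k|\le 2} P_j f_1 \cdot P_k f_2$, and $S_j = \sum_{k<j} P_k$. The essential Fourier-support observation is that the $j$-th summand of each paraproduct is localized in the dyadic annulus $|\xi| \sim 2^j$, so that $\Lambda_x^a$ acts as a multiplier of size $2^{aj}$ on each piece, whereas the $j$-th summand of the remainder is localized only in the ball $|\xi| \lesssim 2^j$.

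The two paraproducts are the easy pieces. For $T_{f_1} f_2$, the annular support allows almost-orthogonality in $L^2_x$, and $\|S_{j-3} f_1\|_{L^\infty_x} \lesssim \|f_1\|_{L^\infty_x}$ uniformly in $j$ yields
\[
\|\Lambda_x^a T_{f_1} f_2\|_{L^2_x}^2 \lesssim \sum_j 2^{2aj} \|S_{j-3} f_1\|_{L^\infty_x}^2 \|P_j f_2\|_{L^2_x}^2 \lesssim \|f_1\|_{L^\infty_x}^2 \|f_2\|_{\dot H^a_x}^2 ,
\]
while for $T_{f_2} f_1$, the same argument with roles swapped and $\|S_{j-3} f_2\|_{L^2_x} \le \|f_2\|_{L^2_x}$ gives
\[
\|\Lambda_x^a T_{f_2} f_1\|_{L^2_x}^2 \lesssim \|f_2\|_{L^2_x}^2 \sum_j 2^{2aj}\|P_j f_1\|_{L^\infty_x}^2 = \|f_2\|_{L^2_x}^2 \|f_1\|_{\dot B^a_{\infty,2}}^2 .
\]
These already produce the two terms on the right-hand side of the lemma.

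The main obstacle is the remainder $R(f_1,f_2)$, since its summands are not annularly localized and a direct orthogonality argument fails. I would decompose in the output frequency: writing $R = \sum_j P_j f_1 \cdot \tilde P_j f_2$ with $\tilde P_j = P_{j-1} + P_j + P_{j+1}$, only indices $j \ge l - O(1)$ contribute to $P_l R$, giving
\[
\|P_l R\|_{L^2_x} \lesssim \sum_{j \ge l - O(1)} \|P_j f_1\|_{L^\infty_x}\|\tilde P_j f_2\|_{L^2_x}.
\]
I would then apply Cauchy--Schwarz with a geometric weight $2^{\epsilon(l-j)/2}$ for some $0 < \epsilon < 2a$, swap the order of summation between $j$ and $l$, and arrive at
\[
\|\Lambda_x^a R(f_1,f_2)\|_{L^2_x}^2 \lesssim \sum_j 2^{2aj} \|P_j f_1\|_{L^\infty_x}^2 \|\tilde P_j f_2\|_{L^2_x}^2 \lesssim \|f_2\|_{L^2_x}^2 \|f_1\|_{\dot B^a_{\infty,2}}^2.
\]
The delicate point is the convergence of the two geometric series arising after the swap (one in $j \ge l$, one in $l \le j$), which uses $a > 0$; the endpoint $a = 0$ is handled trivially by H\"older's inequality since $\|f_1 f_2\|_{L^2_x} \le \|f_1\|_{L^\infty_x}\|f_2\|_{L^2_x}$, which is already the second term on the right. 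Summing the three contributions completes the argument.
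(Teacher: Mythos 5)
Your proof is correct. The paper itself gives no proof of this lemma — it is quoted verbatim from \cite{WQRough} — so there is nothing internal to compare against; your argument is the standard Bony paraproduct proof one would find in that reference. All three pieces are handled properly: the two paraproducts give exactly the two terms on the right-hand side via annular almost-orthogonality, and for the remainder you correctly identify that the ball (rather than annulus) localization forces the sum over $j\ge l-O(1)$, where the weighted Cauchy--Schwarz and the swap of summation genuinely need $a>0$, with the $a=0$ endpoint disposed of by H\"older as you note.
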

\begin{Lemma}\label{wql}\cite{WQRough}
	Let $0 < a <1 $. Then
	\begin{equation*}
		\|\Lambda_x^a(f_1f_2f_3)\|_{L^{2}_x(\mathbb{R}^3)} \lesssim \|f_i\|_{H^{1+a}_x(\mathbb{R}^3)}\textstyle{\prod_{j\neq i}}\|f_j\|_{H^1_x(\mathbb{R}^3)}.
	\end{equation*}
\end{Lemma}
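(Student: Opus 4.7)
The plan is a dyadic Littlewood--Paley decomposition together with a case analysis on which factor carries the highest frequency; a judicious choice of H\"older exponents in each case closes the estimate using Sobolev embeddings in $\mathbb{R}^3$.

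By symmetry I may assume the $H^{1+a}$ factor is $f_1$, and prove
\begin{equation*}
\|\Lambda_x^a(f_1 f_2 f_3)\|_{L^2} \lesssim \|f_1\|_{H^{1+a}}\|f_2\|_{H^1}\|f_3\|_{H^1}.
\end{equation*}
Let $\Delta_k$ and $S_k$ denote the standard Littlewood--Paley projector and partial sum. Decompose
\begin{equation*}
f_1 f_2 f_3 = G_A + G_B + G_C,
\end{equation*}
where $G_A = \sum_{k\ge -1} \Delta_k f_1 \cdot S_k f_2 \cdot S_k f_3$ is the piece in which $f_1$ carries the dominant frequency, and $G_B$, $G_C$ are the analogous sums in which $f_2$ or $f_3$ dominates. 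Since $\|\Lambda_x^a F\|_{L^2}^2 \simeq \sum_k 2^{2ak}\|\Delta_k F\|_{L^2}^2$ and the Fourier support of the $k$-th summand of $G_\ast$ concentrates near frequency $2^k$, it is enough to control $\sum_k 2^{2ak}\|\Delta_k G_\ast\|_{L^2}^2$ for $\ast \in \{A, B, C\}$.

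For $G_A$ I would use H\"older with $L^2 \times L^6 \times L^6$: the Sobolev embedding $H^1 \hookrightarrow L^6$ gives $\|S_k f_i\|_{L^6} \lesssim \|f_i\|_{H^1}$ for $i = 2, 3$, while $\|\Delta_k f_1\|_{L^2} \leq c_k 2^{-(1+a)k}\|f_1\|_{H^{1+a}}$ with $(c_k) \in \ell^2$. After multiplication by $2^{ak}$ this produces the uniformly bounded factor $2^{-k}$ (for $k \geq -1$), and $\ell^2$-summability of $(c_k)$ closes the sum. For $G_B$ (and $G_C$ by symmetry) the right H\"older combination is $L^6 \times L^2 \times L^3$: $\|S_k f_1\|_{L^6} \lesssim \|f_1\|_{H^1}$, $\|\Delta_k f_2\|_{L^2} \leq c_k 2^{-k}\|f_2\|_{H^1}$, and $\|S_k f_3\|_{L^3} \lesssim \|f_3\|_{H^{1/2}} \leq \|f_3\|_{H^1}$ via $H^{1/2} \hookrightarrow L^3$ in three dimensions. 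The resulting overall factor $2^{(a-1)k}$ is summable in $k \geq -1$ precisely because $a < 1$.

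The main obstacle is the choice of H\"older exponents for $G_B$ and $G_C$ in the regime $a > 1/2$: the naive combination $L^\infty \times L^2 \times L^\infty$ fails since in three dimensions neither $f_2$ nor $f_3$ lies in $L^\infty$. The combination $L^6 \times L^2 \times L^3$ is tailored precisely to the integrability that $H^1$ and $H^{1+a}$ supply, with the two Sobolev embeddings $H^1 \hookrightarrow L^6$ and $H^{1/2} \hookrightarrow L^3$ as the critical inputs. The hypothesis $a < 1$ is exactly what guarantees that $2^{(a-1)k}$ decays at high frequency; at the endpoint $a = 1$ the estimate fails without additional regularity on $f_2$ or $f_3$, consistent with the statement.
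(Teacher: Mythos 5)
Your overall strategy (trilinear Littlewood--Paley decomposition by dominant frequency, then H\"older plus Sobolev embedding on each piece) is the standard one, and the paper offers no proof to compare against since the lemma is only quoted from \cite{WQRough}. But as written your argument has a genuine gap, concentrated in the H\"older step: neither of your exponent triples sums to $\tfrac12$. Indeed $L^2\times L^6\times L^6$ gives $\tfrac12+\tfrac16+\tfrac16=\tfrac56$, so the product only lands in $L^{6/5}$, and $L^6\times L^2\times L^3$ gives $\tfrac16+\tfrac12+\tfrac13=1$, so the product only lands in $L^1$; in neither case do you control the $L^2$ norm you need. Recovering $L^2$ by Bernstein on the output block $\Delta_j$ costs $2^{j}$ in the first case and $2^{3j/2}$ in the second. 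For $G_A$ this is harmless (your factor $2^{-k}$ absorbs the $2^{j}$ because $j\le k+O(1)$), but for $G_B$ and $G_C$ it turns your margin $2^{(a-1)k}$ into $2^{(a+1/2)k}$, which diverges. A second, related inaccuracy: the $k$-th summand of $G_\ast$ is Fourier-supported in the ball $\{|\xi|\lesssim 2^k\}$, not in an annulus, so $\Delta_j G_\ast$ receives contributions from all $k\gtrsim j-O(1)$ and the square-function reduction requires the off-diagonal summation --- which is exactly where the high-high-to-low interaction lives.

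This last point is not merely technical. In $G_B$ (and $G_C$) the factor carrying the extra $a$ derivatives sits at low frequency, so it cannot supply the $2^{-ak}$ decay needed to pay for $\Lambda_x^a$ at output frequencies $2^j\le 2^k$; the only gain available is from the $H^1\times H^1$ product of the remaining two factors, which in $\mathbb{R}^3$ is generically no better than $H^{1/2}$. Concretely, take $f_1\in C_c^\infty$ with $f_1\equiv1$ on $B(0,\tfrac12)$ and $f_2=f_3=f$ with $f(x)=|x|^{-1/2}\bigl(\log(e/|x|)\bigr)^{-3/4}\chi(x)$, $\chi\in C_c^\infty(B(0,\tfrac12))$: then $f\in H^1(\mathbb{R}^3)$, yet $f_1f_2f_3=f^2=|x|^{-1}\bigl(\log(e/|x|)\bigr)^{-3/2}\chi^2$ satisfies $\widehat{f^2}(\xi)\sim|\xi|^{-2}(\log|\xi|)^{-3/2}$ and hence lies outside $\dot H^a$ for every $a>\tfrac12$. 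So no assignment of H\"older/Sobolev exponents can close $G_B$, $G_C$ in the range $\tfrac12<a<1$, and the stated generality of the lemma should itself be re-examined there. For $0<a<\tfrac12$ your scheme can be repaired: estimate the $k$-th summand of $G_B$ by $\|S_kf_1\|_{L^\infty}\|\Delta_kf_2\|_{L^3}\|S_kf_3\|_{L^6}$ (exponents $0+\tfrac13+\tfrac16=\tfrac12$), using Bernstein in the forms $\|S_kf_1\|_{L^\infty}\lesssim 2^{(1/2-a)k}\|f_1\|_{H^{1+a}}$ and $\|\Delta_kf_2\|_{L^3}\lesssim 2^{k/2}\|\Delta_kf_2\|_{L^2}$; this yields $2^{ak}\|\cdot\|_{L^2}\lesssim c_k$ with $(c_k)\in\ell^2$, and the off-diagonal $j$-summation closes via the convolution factor $2^{a(j-k)}$.
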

\begin{Lemma}\cite{AZ}\label{LPE}
	Let $0 < a <1 $. Let $b> a$ and sufficiently close to $a$. Then
	\begin{equation}\label{HF}
		\| f_1f_2\|_{\dot{B}^{a}_{\infty, 2}(\mathbb{R}^3)} \lesssim \|f_2\|_{L^{\infty}_x} \|f_1\|_{\dot{B}^{a}_{\infty, 2}(\mathbb{R}^3)}+\|f_2\|_{C^{b}(\mathbb{R}^3)} \|f_1\|_{L^\infty}.
	\end{equation}
\end{Lemma}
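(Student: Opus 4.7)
The plan is to prove Lemma~\ref{LPE} via Bony's paraproduct decomposition
\[
f_1 f_2 = T_{f_1} f_2 + T_{f_2} f_1 + R(f_1, f_2),
\]
where $T_{g} f = \sum_{j} S_{j-N} g \cdot P_j f$ and $R(f_1,f_2) = \sum_{|j-k|\leq N} P_j f_1 \cdot P_k f_2$, and to estimate each of the three pieces in $\dot B^{a}_{\infty,2}$ separately. The key frequency localization facts I would use are: (i) each dyadic piece $P_j(T_g f)$ is essentially supported at frequency $\sim 2^j$, so $P_\ell (T_g f) \approx S_{\ell-N} g \cdot P_\ell f$; (ii) each dyadic piece $P_j f_2$ satisfies the Hölder bound $\|P_j f_2\|_{L^\infty} \lesssim 2^{-jb}\|f_2\|_{C^b}$; and (iii) the spectrum of $P_j f_1 \cdot P_k f_2$ with $|j-k|\leq N$ lies in a ball of radius $\sim 2^{\max(j,k)}$, so $P_\ell R(f_1,f_2)$ only picks up contributions with $j,k \gtrsim \ell$.

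For the first paraproduct $T_{f_2} f_1$, step~1 bounds $\|S_{j-N} f_2\|_{L^\infty} \leq \|f_2\|_{L^\infty}$ pointwise and uses the near-diagonal frequency localization to conclude
\[
\|T_{f_2} f_1\|^2_{\dot B^{a}_{\infty,2}} \lesssim \textstyle\sum_j 2^{2ja}\|f_2\|^2_{L^\infty}\|P_j f_1\|^2_{L^\infty} \lesssim \|f_2\|^2_{L^\infty} \|f_1\|^2_{\dot B^{a}_{\infty,2}},
\]
which produces exactly the first term on the right-hand side of \eqref{HF}.

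For the second paraproduct $T_{f_1} f_2$, step~2 puts $f_1$ in $L^\infty$ via $\|S_{j-N}f_1\|_{L^\infty} \leq \|f_1\|_{L^\infty}$ and applies the Hölder bound $\|P_j f_2\|_{L^\infty}\lesssim 2^{-jb}\|f_2\|_{C^b}$, yielding
\[
\|T_{f_1}f_2\|^2_{\dot B^a_{\infty,2}} \lesssim \|f_1\|^2_{L^\infty}\|f_2\|^2_{C^b}\textstyle\sum_{j} 2^{2j(a-b)} \lesssim \|f_1\|^2_{L^\infty}\|f_2\|^2_{C^b},
\]
where the geometric series converges \emph{precisely because $b>a$}. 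For the remainder, step~3 uses fact (iii) together with the same two pointwise bounds to get
\[
\|P_\ell R(f_1,f_2)\|_{L^\infty} \lesssim \|f_1\|_{L^\infty}\|f_2\|_{C^b}\textstyle\sum_{j\gtrsim \ell} 2^{-jb} \lesssim 2^{-\ell b}\|f_1\|_{L^\infty}\|f_2\|_{C^b},
\]
after which summing $\sum_\ell 2^{2\ell a}\cdot 2^{-2\ell b}$ once more converges thanks to $b>a$. Adding the three contributions gives \eqref{HF}.

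The main obstacle is not algebraic but conceptual: one must place the derivative cost \emph{on the right factor}, so that the two terms in \eqref{HF} are asymmetric. The paraproduct $T_{f_2} f_1$ naturally respects the Besov regularity of $f_1$, while $T_{f_1}f_2$ and the remainder force regularity onto $f_2$ with no spare room for summation; this is exactly why the hypothesis $b>a$ (not merely $b\geq a$) enters the statement, and why the weaker norm $\|f_2\|_{\dot B^a_{\infty,2}}$ cannot appear in the second term. Once this asymmetry is recognized, all three bounds are straightforward Littlewood--Paley computations.
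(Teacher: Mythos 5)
Your argument is correct: the Bony decomposition with the three estimates you describe is the standard (and essentially the only) way to prove this product estimate, and the asymmetry you identify --- Besov regularity riding on $f_1$ through $T_{f_2}f_1$, H\"older regularity of $f_2$ absorbing the paraproduct $T_{f_1}f_2$ and the remainder, with $b>a$ supplying the convergent geometric factor $\sum_\ell 2^{2\ell(a-b)}$ --- is exactly the right mechanism. Note that this paper states the lemma as a citation to \cite{AZ} and gives no proof of its own, so there is nothing to compare against here; the only point worth making explicit in a write-up is that with the paper's convention $\|f\|^2_{\dot B^{a}_{\infty,2}}=\sum_{j\geq -1}2^{2ja}\|P_jf\|^2_{L^\infty}$ the low-frequency block $j=-1$ should be handled by $\|P_{-1}f_2\|_{L^\infty}\lesssim\|f_2\|_{C^b}$ directly, after which all three geometric sums converge as you claim.
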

\begin{Lemma}\cite{AZ}\label{ceR}
	Let $0 \leq a <1$ and $\bu=(u^0,u^1,u^2,u^3)^{\mathrm{T}}$. Denote the Riesz operator $\mathbf{R} =\nabla^2(-\Delta)^{-1}$. Then
	\begin{equation*}
		\| [\mathbf{R}, (u^0)^{-1}u^i \partial_i ]f\|_{\dot{H}^{a}_x(\mathbb{R}^3)} \lesssim \| \bu\|_{\dot{B}^{1+a}_{\infty, \infty}} \|f\|_{L^2_{x}(\mathbb{R}^3)}+ \| \bu\|_{\dot{B}^1_{\infty, \infty}} \|f\|_{\dot{H}^a_x(\mathbb{R}^3)}.
	\end{equation*}
\end{Lemma}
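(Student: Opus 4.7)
First, since $\mathbf{R}=\nabla^2(-\Delta)^{-1}$ is a Fourier multiplier of order zero, it commutes with every constant-coefficient derivative, so with $v^i:=(u^0)^{-1}u^i$ we have $[\mathbf{R},(u^0)^{-1}u^i\partial_i]f=[\mathbf{R},v^i]\partial_i f$. The normalization $m_{\alpha\beta}u^\alpha u^\beta=-1$ forces $u^0=\sqrt{1+|\mathring{\bu}|^2}\geq 1$, so composing with the smooth map $y\mapsto y^i/\sqrt{1+|y|^2}$ (a Moser-type estimate in Besov norms) gives $\|v\|_{\dot B^s_{\infty,\infty}}\lesssim\|\bu\|_{\dot B^s_{\infty,\infty}}(1+\|\bu\|_{L^\infty})^N$ for $s\in\{1,1+a\}$. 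Hence it suffices to prove the stated estimate with $v$ in place of $\bu$ on the right-hand side.

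Next apply Bony's paraproduct decomposition to both $v^i\partial_i f$ and $v^i\mathbf{R}\partial_i f=v^i\partial_i\mathbf{R}f$; using that $[\mathbf{R},S_j]=[\mathbf{R},P_j]=0$, one regroups
\[
[\mathbf{R},v^i]\partial_i f=\underbrace{\sum_k[\mathbf{R},S_{k-3}v^i]\,P_k\partial_i f}_{(\mathrm I)}+\underbrace{\sum_k[\mathbf{R},P_k v^i]\,S_{k-3}\partial_i f}_{(\mathrm{II})}+\underbrace{\sum_{|k-k'|\leq 2}[\mathbf{R},P_k v^i]\,P_{k'}\partial_i f}_{(\mathrm{III})}.
\]
Piece $(\mathrm I)$ is the paraproduct commutator. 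A first-order Taylor expansion of the smooth degree-zero Riesz symbol gives $|m(\xi)-m(\xi-\eta)|\lesssim 2^{-k}|\eta|$ on the relevant Fourier supports, hence the Calder\'on-type gain
\[
\|[\mathbf{R},S_{k-3}v^i]\,P_k\partial_i f\|_{L^2}\lesssim 2^{-k}\|\nabla v\|_{L^\infty}\cdot 2^k\|P_k f\|_{L^2}=\|\nabla v\|_{L^\infty}\|P_k f\|_{L^2}.
\]
Squaring and summing with weight $2^{2ka}$ in $\ell^2_k$ yields $\|(\mathrm I)\|_{\dot H^a}\lesssim\|\nabla v\|_{L^\infty}\|f\|_{\dot H^a}$, which is absorbed by the term $\|\bu\|_{\dot B^1_{\infty,\infty}}\|f\|_{\dot H^a}$ on the right-hand side.

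For $(\mathrm{II})$ and $(\mathrm{III})$ it is the high-frequency character of $P_k v^i$, rather than the commutator structure, that supplies the smallness. Bound each piece trivially by $\|P_k v^i\|_{L^\infty}\|S_{k-3}\partial_i f\|_{L^2}$ (resp.\ $\|P_{\sim k}\partial_i f\|_{L^2}$), then apply $\|P_k v^i\|_{L^\infty}\leq 2^{-(1+a)k}\|v\|_{\dot B^{1+a}_{\infty,\infty}}$ and Bernstein for $\partial_i$. For $(\mathrm{II})$, interchanging the order of summation produces a geometric factor $\sum_{k\geq j+3}2^{-2k}\simeq 2^{-2j}$ that absorbs the $2^{2j}$ from $\|P_j\partial_i f\|_{L^2}^2$, giving $\|(\mathrm{II})\|_{\dot H^a}\lesssim\|v\|_{\dot B^{1+a}_{\infty,\infty}}\|f\|_{L^2}$. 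For $(\mathrm{III})$ with $a>0$, Young's convolution inequality applied to the weighted sum $2^{ja}\sum_{k\geq j}2^{-ak}\|P_k f\|_{L^2}$ (convolution with the $\ell^1$ sequence $(2^{-al}\mathbf{1}_{l\geq 0})_l$) gives the same bound; at the endpoint $a=0$ one instead invokes the classical Calder\'on first commutator estimate $\|[\mathbf{R},v]\partial f\|_{L^2}\lesssim\|\nabla v\|_{L^\infty}\|f\|_{L^2}$ applied directly to circumvent the paraproduct. The principal technical obstacle is the symbol gain for $(\mathrm I)$: making the expansion $[\mathbf{R},S_{k-3}v]=O(2^{-k}\nabla v)$ rigorous requires Coifman--Meyer / Calder\'on symbolic calculus and careful treatment of the Riesz symbol near the origin in Fourier space, while the remainder of the argument is routine Littlewood--Paley bookkeeping.
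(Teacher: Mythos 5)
The paper does not prove this lemma: it is imported verbatim from \cite{AZ}, so there is no in-paper argument to compare against. Judged on its own terms, your architecture (reduce to $[\mathbf{R},v^i]\partial_i f$ with $v^i=(u^0)^{-1}u^i$, Bony trichotomy, symbol gain on the low--high commutator, direct Besov bounds on the high--low and high--high pieces) is the standard route and is the same skeleton used in \cite{AZ}. Pieces $(\mathrm{II})$ and $(\mathrm{III})$ (for $a>0$) are handled correctly and do produce $\|v\|_{\dot B^{1+a}_{\infty,\infty}}\|f\|_{L^2}$.

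The gap is in piece $(\mathrm I)$ (and in your $a=0$ fallback for $(\mathrm{III})$). Your first-order kernel/symbol expansion yields $\|(\mathrm I)\|_{\dot H^a}\lesssim\|\nabla v\|_{L^\infty}\|f\|_{\dot H^a}$, and you then assert this is ``absorbed by'' $\|\bu\|_{\dot B^1_{\infty,\infty}}\|f\|_{\dot H^a}$. That inequality goes the wrong way: since $P_j$ annihilates constants, $2^j\|P_jv\|_{L^\infty}\lesssim\|\nabla v\|_{L^\infty}$, i.e.\ $\|v\|_{\dot B^1_{\infty,\infty}}\lesssim\|\nabla v\|_{L^\infty}$ and the Zygmund-type norm $\dot B^1_{\infty,\infty}$ is strictly weaker than $W^{1,\infty}$. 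So you have proved the lemma with $\|\nabla\bu\|_{L^\infty}$ in the second slot, not with $\|\bu\|_{\dot B^1_{\infty,\infty}}$ --- and the whole point of stating the lemma with the Besov seminorm is that in this low-regularity setting $\nabla\bu\in L^\infty$ is not free. To close this you must run the commutator gain block-by-block in $v$, i.e.\ estimate $\sum_{j\le k-3}\|[\mathbf{R}\tilde P_k,P_jv]P_k\partial_i f\|_{L^2}$ using $2^j\|P_jv\|_{L^\infty}\le\|v\|_{\dot B^1_{\infty,\infty}}$ together with a mechanism (higher-order symbol expansion, or trading between the two terms on the right-hand side) that makes the $j$-sum converge; the naive first-order bound gives a factor independent of $j$ and the sum over $j\le k-3$ diverges. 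This resummation is exactly the delicate step of the lemma, and your write-up does not address it. The same issue infects the endpoint $a=0$ of $(\mathrm{III})$, where Calder\'on's first commutator theorem again only delivers $\|\nabla v\|_{L^\infty}$ (or $\|\nabla v\|_{BMO}$), neither of which is controlled by $\|v\|_{\dot B^1_{\infty,\infty}}$. A smaller point: the Moser-type bound $\|v\|_{\dot B^{1+a}_{\infty,\infty}}\lesssim\|\bu\|_{\dot B^{1+a}_{\infty,\infty}}(1+\|\bu\|_{L^\infty})^N$ in the homogeneous H\"older-type scale is asserted without proof and, via $\nabla F(\bu)=F'(\bu)\nabla\bu$, also quietly invokes $\|\nabla\bu\|_{L^\infty}$; it should either be justified or the reduction avoided.
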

\begin{Lemma}\cite{AZ}\label{YR}
	Let $2<s_1 \leq s_2$. Let $\mathbf{T}$ be defined in \eqref{opt}. Then
	\begin{equation*}
		\{ 2^{(s_1-1)j}\| [P_j, \mathbf{T}]f\|_{\dot{H}^{s_2-s_1}_x(\mathbb{R}^3)} \}_{l^2_j} \lesssim  \| \nabla \bu\|_{L^\infty_x }\|f\|_{\dot{H}^{s_2-1}_x(\mathbb{R}^3)}+\| \bu \|_{H^{s_2}}  \| f\|_{{H}^{1}}.
	\end{equation*}
\end{Lemma}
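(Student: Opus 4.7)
Since $P_j$ is a spatial Fourier multiplier, $[P_j,\partial_t]=0$, so setting $V^i:=u^i/u^0$ reduces the estimate to one for $[P_j,V^i\partial_i]f$. My first step will be to apply the composition estimate (Lemma~\ref{jh0}) together with $u^0\geq 1$ from \eqref{HEw} to obtain $\|V\|_{H^{s_2}}\lesssim\|\bu\|_{H^{s_2}}$ and $\|\nabla V\|_{L^\infty}\lesssim\|\nabla\bu\|_{L^\infty}$, so I can work with $V$ throughout. Since $P_j g$ is frequency-localized at $2^j$, $2^{(s_1-1)j}\|P_jg\|_{\dot H^{s_2-s_1}}\simeq 2^{(s_2-1)j}\|P_jg\|_{L^2}$, and the claim becomes
$$\bigl\|\bigl\{2^{(s_2-1)j}[P_j,V^i\partial_i]f\bigr\}\bigr\|_{\ell^2_j L^2_x}\lesssim \|\nabla V\|_{L^\infty}\|f\|_{\dot H^{s_2-1}} + \|V\|_{H^{s_2}}\|f\|_{H^1}.$$

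The plan is to split the commutator via Bony's paraproduct. Writing $V^i\partial_i f = T_{V^i}\partial_i f + T_{\partial_i f}V^i + R(V^i,\partial_i f)$ and treating $V^i\partial_i P_j f$ the same way, $[P_j,V^i\partial_i]f$ decomposes into the paraproduct commutator $[P_j,T_{V^i}]\partial_i f$, plus the low-high difference $P_j T_{\partial_i f}V^i - T_{\partial_i P_j f}V^i$, plus the resonant difference $P_j R(V^i,\partial_i f)-R(V^i,\partial_i P_j f)$. For the paraproduct commutator, I will use the classical Coifman--Meyer kernel argument: Taylor-expanding the kernel of $P_j$ against the low-frequency symbol $T_{V^i}$ yields the standard gain
$$\|[P_j,T_{V^i}]\partial_i f\|_{L^2}\lesssim 2^{-j}\|\nabla V\|_{L^\infty}\|\tilde P_j\partial_i f\|_{L^2}\simeq \|\nabla V\|_{L^\infty}\|\tilde P_j f\|_{L^2},$$
where $\tilde P_j$ denotes a fattened Littlewood--Paley projector. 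Multiplying by $2^{(s_2-1)j}$ and taking the $\ell^2_j$ norm then produces the first term on the right, $\|\nabla V\|_{L^\infty}\|f\|_{\dot H^{s_2-1}}$.

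For the low-high and resonant pieces, I will exploit that the output frequency is slaved to the high frequency of $V$, so only $P_{\gtrsim j}V$ contributes after $P_j$. Writing $\|P_{\sim k}V\|_{L^2}=2^{-s_2 k}a_k$ with $(a_k)\in\ell^2_k$ of norm $\|V\|_{\dot H^{s_2}}$, and pairing with a Bernstein bound on the low-frequency factor of $\partial_i f$, a Cauchy--Schwarz summation over the dyadic scale should produce a bound by $\|V\|_{H^{s_2}}\|f\|_{H^1}$; the condition $s_2\geq s_1>2$ will be essential in providing the negative power of $2^j$ needed to offset the Bernstein losses. The main obstacle will be this low-high term: because $f$ lies only in $H^1$, the naive Bernstein bound on $\|P_{<j}\partial f\|_{L^\infty}$ is lossy, and to close the estimate one must either extract extra cancellation from the subtraction $P_j T_{\partial_i f}V^i-T_{\partial_i P_j f}V^i$ (which, despite living at different frequencies, behaves effectively like a genuine commutator) or apply a refined tame Kato--Ponce-type pairing that exploits the full $H^{s_2}$-regularity of $V$. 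Once this delicate piece is in hand, combining it with the paraproduct commutator estimate yields the desired inequality, mirroring the corresponding non-relativistic commutator lemma in \cite{AZ}.
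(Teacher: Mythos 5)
The paper itself offers no proof of Lemma \ref{YR} --- it is imported verbatim from \cite{AZ} --- so there is nothing in-paper to compare against and your proposal must stand on its own. The easy parts are fine: the reduction to the spatial operator $V^i\partial_i$ with $V^i=u^i/u^0$, the composition bounds for $V$, and the paraproduct commutator $[P_j,T_{V^i}]\partial_i f$, whose kernel estimate does yield the contribution $\|\nabla\bu\|_{L^\infty}\|f\|_{\dot H^{s_2-1}}$. One caveat even here: the blanket identification $2^{(s_1-1)j}\|\cdot\|_{\dot H^{s_2-s_1}}\simeq 2^{(s_2-1)j}\|\cdot\|_{L^2}$ is not legitimate for the pieces $T_{\partial_i P_j f}V^i$ and $R(V^i,\partial_i P_j f)$, which are \emph{not} frequency-localized at $2^j$; since $s_2\geq s_1$, the $\dot H^{s_2-s_1}$ norm weights their high output frequencies more heavily, so these must be tracked dyadically rather than absorbed into the reduction.

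The genuine gap is exactly the block you flag and do not close: $P_j\bigl(\tilde P_j V\cdot\partial_i S_{j-5}f\bigr)$, high-frequency velocity against low-frequency $f$ (here $\tilde P_j$ is a fattened projector). Neither of your two proposed rescues works. There is no cancellation to extract: in this regime $T_{\partial_i P_j f}V^i$ involves only $P_j f$, which is frequency-disjoint from $S_{j-5}f$, so the subtraction contributes nothing and you face the raw product. And every H\"older pairing loses half a derivative: with $\|\tilde P_j V\|_{L^2}=2^{-s_2 j}a_j$, $(a_j)\in\ell^2$, one gets
$2^{(s_2-1)j}\|\tilde P_j V\|_{L^2}\|\partial S_j f\|_{L^\infty}\lesssim 2^{-j}a_j\cdot 2^{3j/2}\|f\|_{H^1}=2^{j/2}a_j\|f\|_{H^1}$,
which is not $\ell^2_j$-summable; $L^3$--$L^6$ pairings give the same $2^{j/2}$, and interpolating against $\|\tilde P_j V\|_{L^\infty}\lesssim 2^{-j}\|\nabla V\|_{L^\infty}$ produces the exponent $(s_2-2)-\theta(s_2-\tfrac52)$, which is positive for every $\theta\in[0,1]$ when $s_2>2$. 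The deficit appears intrinsic rather than an artifact: taking $\bu$ to be a single wave packet at frequency $2^N$ concentrated on the set where $|\nabla f|$ attains its Bernstein-saturated maximum, with $f$ at frequency $2^{3N/4}$, the left-hand side exceeds the stated right-hand side by a growing power of $2^N$. So the proposal cannot be completed along the indicated lines; you need to return to \cite{AZ} for the precise hypotheses (the last factor should carry more regularity than $H^1$, e.g.\ $H^{3/2}$ or a $\|df\|_{L^\infty}$-type norm, or the argument must use extra structure of the specific $f$ to which the lemma is applied).
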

\begin{Lemma}\cite{AZ}\label{ce}
	Let $0 < a <1 $ and $\bu=(u^0,u^1,u^2,u^3)^{\mathrm{T}}$. Then
	\begin{equation*}
		\| [\Lambda^a_x, (u^0)^{-1}u^i \partial_i ]f\|_{L^2_x(\mathbb{R}^3)} \lesssim \|\nabla \bu\|_{L^\infty_x} \|f\|_{\dot{H}^{a}_{x}(\mathbb{R}^3)}.
	\end{equation*}
\end{Lemma}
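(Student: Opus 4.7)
The plan is to first observe that $\Lambda_x^a$ and $\partial_i$ commute as Fourier multipliers, which allows me to pull $\partial_i$ outside the commutator:
\[
[\Lambda_x^a, (u^0)^{-1}u^i \partial_i] f = [\Lambda_x^a, V^i]\partial_i f, \qquad V^i := (u^0)^{-1} u^i.
\]
The task then reduces to establishing a Coifman--Meyer--style commutator bound
\[
\bigl\|[\Lambda_x^a, V] g\bigr\|_{L^2_x} \lesssim \|\nabla V\|_{L^\infty_x} \|g\|_{\dot H_x^{a-1}}, \qquad 0<a<1,
\]
and applying it with $V=V^i$, $g=\partial_i f$, since $\|\partial_i f\|_{\dot H^{a-1}} \lesssim \|f\|_{\dot H^a}$ by $L^2$-boundedness of the Riesz transform and $\|\nabla V^i\|_{L^\infty} \lesssim \|\nabla\bu\|_{L^\infty}$ by the chain rule together with the bootstrap bound $u^0\geq 1$ (the factor $(u^0)^{-1}$ is Lipschitz in $\bu$ with universal constants).

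For the auxiliary commutator estimate I would use Bony's paraproduct decomposition
\[
V g = T_V g + T_g V + R(V,g).
\]
The ``low-high'' piece $[\Lambda_x^a, T_V]g$ is treated by the classical paradifferential commutator calculation: since the symbol $|\xi|^a$ has gradient of order $|\xi|^{a-1}$, one obtains $\|[\Lambda_x^a, T_V]g\|_{L^2}\lesssim \|\nabla V\|_{L^\infty}\|g\|_{\dot H^{a-1}}$. For the ``high-low'' and diagonal contributions, I would pair the terms $\Lambda_x^a(T_g V + R(V,g))$ with the analogous paraproduct pieces of $V\cdot\Lambda_x^a g$, decompose both dyadically, and use Bernstein's inequality together with the elementary bound $2^{-k}\|\nabla P_k V\|_{L^\infty}\lesssim \|\nabla V\|_{L^\infty}$ to extract $\|\nabla V\|_{L^\infty}$; summing the resulting geometric series in $k$ via Cauchy--Schwarz produces the $\dot H^{a-1}$ norm of $g$.

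The main obstacle is that the naive application of Lemma~\ref{jh} produces an error term $\|f_1\|_{L^p}\|\Lambda_x^a f_2\|_{L^q}$ that cannot be absorbed into $\|\nabla V\|_{L^\infty}$ alone; one genuinely needs the refined paraproduct analysis sketched above in order to retain only the Lipschitz norm of $V$ on the right-hand side. Once that refined commutator bound is in place, substituting $g=\partial_i f$ and collecting the Riesz-transform bound with the chain-rule estimate for $V^i$ yields the stated inequality.
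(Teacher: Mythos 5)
This lemma is imported from \cite{AZ}; the present paper gives no proof of it, so there is nothing in the source to compare against, and I can only assess your argument on its merits. Your reduction $[\Lambda_x^a, V^i\partial_i]f=[\Lambda_x^a,V^i]\partial_i f$, the bounds $\|\partial_i f\|_{\dot H^{a-1}_x}\lesssim\|f\|_{\dot H^a_x}$ and $\|\nabla V^i\|_{L^\infty_x}\lesssim\|\nabla\bu\|_{L^\infty_x}$ (using $u^0\ge 1$), and the treatment of the low--high piece $[\Lambda^a_x,T_V]g$ are all correct; you have also correctly diagnosed that Lemma \ref{jh} alone cannot give the stated bound.

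The gap is in the high--high (remainder) interaction, which is precisely where the content of the lemma lies. With $g=\partial_i f$ and $d_k=2^{k(a-1)}\|P_kg\|_{L^2_x}$ (so $\sum_k d_k^2\approx\|f\|^2_{\dot H^a_x}$), the term $R(V,\Lambda^a_xg)=\sum_k\widetilde P_kV\,P_k\Lambda^a_xg$ satisfies only $\|\widetilde P_kV\,P_k\Lambda^a_xg\|_{L^2_x}\lesssim\|\nabla V\|_{L^\infty_x}d_k$, and each summand is frequency-supported in the ball $\{|\xi|\lesssim2^k\}$, not an annulus. For output frequencies $2^j$ with $j\ll k$ there is no gain: $\Lambda^a_x$ sits on the high-frequency factor $P_kg$, so Bernstein gives nothing, and $\|\widetilde P_kV\|_{L^2_x}$ is not controlled by $\|\nabla V\|_{L^\infty_x}$. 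Hence $\|P_jR(V,\Lambda^a_xg)\|_{L^2_x}\lesssim\|\nabla V\|_{L^\infty_x}\sum_{k\ge j-C}d_k$ with no decay in $k-j$, and square-summing in $j$ requires $\ell^1$ control of $\{d_k\}$, i.e.\ $\|f\|_{\dot B^a_{2,1}}$ rather than $\|f\|_{\dot H^a_x}$. This is exactly the endpoint $s_1+s_2=0$ of Bony's remainder estimate ($V\in\dot B^1_{\infty,\infty}$, $\Lambda^a_xg\in\dot B^{-1}_{2,2}$), where $R$ maps only into $\dot B^0_{2,\infty}\not\hookrightarrow L^2$; the ``geometric series summed via Cauchy--Schwarz'' you invoke does not exist for this piece. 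Pairing it with $\Lambda^a_xR(V,g)$ to form $\sum_k[\Lambda^a_x,\widetilde P_kV]P_kg$ does not help, since the half of each commutator in which $\Lambda^a_x$ acts inside is exactly the problematic one. Closing this term requires genuinely more than paraproducts and Bernstein --- either a Calder\'on-commutator-type argument exploiting that $V$ is Lipschitz (strictly stronger than membership in $\dot B^1_{\infty,\infty}$), or an extra lower-order term on the right-hand side of the kind that appears in the companion estimates of Lemma \ref{ceR} and Lemma \ref{YR}. You should consult the proof in \cite{AZ} to see how this interaction is actually handled there.
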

\begin{Lemma}\cite{AZ}\label{yx}
	Let $2<s_1\leq s_2$. Let $\mathbf{g}$ be a Lorentz metric and $\mathbf{g}^{00}=-1$. We have
	\begin{equation}\label{YX}
		\begin{split}
			\left\{ 2^{(s_1-1)j}\|[\square_{\mathbf{g}}, P_j]f\|_{\dot{H}_x^{s_2-s_1}} \right\}_{l^2_j} \lesssim & \ \| d f\|_{L_x^\infty} \|d \mathbf{g}\|_{\dot{H}_x^{s_2-1}}+\| d \mathbf{g}\|_{L_x^\infty}\|d f\|_{\dot{H}_x^{s_2-1}}.
		\end{split}
	\end{equation}
\end{Lemma}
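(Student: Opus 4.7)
The plan is to reduce the left-hand side of \eqref{YX} to a paradifferential commutator estimate between $d\mathbf{g}$ and $df$. Since $P_j$ is a spatial Littlewood-Paley projector, it commutes with every partial derivative $\partial_\alpha$, $\alpha=0,\dots,3$. Combined with the hypothesis $\mathbf{g}^{00}=-1$, so that $[P_j,\mathbf{g}^{00}]\equiv 0$, this gives
\[
[\square_{\mathbf{g}}, P_j]f=-\!\!\sum_{(\beta,\gamma)\ne(0,0)}\![P_j,\mathbf{g}^{\beta\gamma}]\partial_\beta\partial_\gamma f,
\]
in which every surviving pair has at least one spatial index. Using $\mathbf{g}^{\beta\gamma}=\mathbf{g}^{\gamma\beta}$, I arrange the spatial index as $\beta=i$ and move one $\partial_i$ outside the commutator:
\[
[P_j,\mathbf{g}^{i\gamma}]\partial_i\partial_\gamma f=\partial_i\bigl([P_j,\mathbf{g}^{i\gamma}]\partial_\gamma f\bigr)-[P_j,\partial_i \mathbf{g}^{i\gamma}]\partial_\gamma f.
\]
This reduces the task to estimating commutators of the form $[P_j,G]\partial_\gamma f$, where $G=\mathbf{g}^{i\gamma}$ in the first term and $G=\partial_i\mathbf{g}^{i\gamma}$ in the second.

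I would then Bony-decompose $G\cdot\partial_\gamma f$ into low-$G$/high-$\partial_\gamma f$, high-$G$/low-$\partial_\gamma f$, and resonant pieces relative to the frequency scale $2^j$. For the low-high piece the classical one-commutator estimate
\[
\|[P_j,P_{<j-N}G]\,P_{\sim j}\partial_\gamma f\|_{L^2}\lesssim 2^{-j}\|\nabla G\|_{L^\infty}\,\|P_{\sim j}\partial_\gamma f\|_{L^2}
\]
applies, producing a factor $\|\nabla G\|_{L^\infty}$ paired against an $\ell^2_j$-square-summable quantity in $f$. For the other two pieces I would use H\"older's inequality, $\|P_k G\cdot P_{\lesssim k}\partial_\gamma f\|_{L^2}\lesssim \|P_k G\|_{L^2}\|P_{\lesssim k}\partial_\gamma f\|_{L^\infty}$ with $k\gtrsim j$. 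Multiplying by the weight $2^{(s_1-1)j}$, taking the relevant $\dot H_x^\sigma$ norm (with $\sigma=s_2-s_1+1$ for the $\partial_i$ piece and $\sigma=s_2-s_1$ for the other, so that $(s_1-1)j+\sigma j$ matches the correct orders of $d\mathbf{g}$ and $df$), and assembling the $\ell^2_j$-sum (via Cauchy-Schwarz to exchange summations in the high-$G$ case), the low-high contribution yields the term $\|d\mathbf{g}\|_{L^\infty}\|df\|_{\dot H^{s_2-1}}$, while the high-low and resonant contributions yield $\|df\|_{L^\infty}\|d\mathbf{g}\|_{\dot H^{s_2-1}}$.

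The main technical obstacle is the careful paradifferential bookkeeping, especially keeping track of time versus spatial derivatives on both $\mathbf{g}$ and $f$, and verifying that the $\ell^2_j$-summation is consistent with the Bony frequency decomposition and with the purely spatial norms $\dot H^{s_2-1}_x$ and $L^\infty_x$ on the right-hand side. The hypothesis $\mathbf{g}^{00}=-1$ is genuinely indispensable here: without it, $\square_{\mathbf{g}}$ would contain $\partial_t^2$ with a non-constant coefficient, and the commutator would inherit $\partial_t^2 f$, which cannot be absorbed into the purely spatial Sobolev norms appearing on the right-hand side of \eqref{YX}.
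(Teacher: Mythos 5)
Your strategy is correct and is essentially the standard argument for this estimate; note that the paper itself gives no proof of Lemma \ref{yx} (it is quoted from \cite{AZ}), so there is nothing internal to compare against, but your reduction --- killing the $(0,0)$ term via $\mathbf{g}^{00}=-1$, pulling one spatial derivative out of each surviving commutator so that only $d f$ and $d\mathbf{g}$ remain, and then running a Bony decomposition with the classical one-derivative commutator gain $2^{-j}\|\nabla G\|_{L^\infty}$ for the low--high piece and H\"older plus an off-diagonal $\ell^2$ summation for the high--low and resonant pieces --- is exactly how this is proved in the cited reference. The exponent bookkeeping ($2^{(s_1-1)j}\cdot 2^{(s_2-s_1)j}=2^{(s_2-1)j}$ against $L^2$) checks out, and your closing observation about the indispensability of $\mathbf{g}^{00}=-1$ is accurate.
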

We next introduce a identity, which is very useful in our paper.
\begin{Lemma}[\cite{DS}, Lemma 4.1]\label{OE}
	Let $\bA=(A^0,A^1,A^2,A^3)^{\mathrm{T}}$ be a one-form. Let $\mathrm{vort}^\alpha(\bA)$ be defined in \eqref{VAd}. Then the following equality
	\begin{equation*}
		\partial_\alpha A_\beta - \partial_\beta A_\alpha = \epsilon_{\alpha\beta\gamma\delta}u^\gamma \mathrm{vort}^\delta(\bA)+ u_\alpha u^\kappa \partial_\beta A_\kappa - u_\beta u^\kappa \partial_\alpha A_\kappa+ u_\beta u^\kappa \partial_\kappa A_\alpha- u_\alpha u^\kappa \partial_\kappa A_\beta,
	\end{equation*}
	holds.
\end{Lemma}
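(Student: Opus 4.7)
My plan is to start from the right-hand side of the claimed identity and reduce it to $\partial_\alpha A_\beta - \partial_\beta A_\alpha$ by expanding the contraction of two Levi-Civita symbols. The only non-trivial term is $\epsilon_{\alpha\beta\gamma\delta}u^\gamma \mathrm{vort}^\delta(\bA)$; the other four terms on the right-hand side are already in the form in which they will appear in the final expression, so their role is to cancel pieces produced by the Levi-Civita contraction.

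Substituting the definition $\mathrm{vort}^\delta(\bA)=-\epsilon^{\delta\mu\nu\rho}u_\mu\partial_\nu A_\rho$ from \eqref{VAd}, the term to simplify is
\begin{equation*}
\epsilon_{\alpha\beta\gamma\delta}u^\gamma \mathrm{vort}^\delta(\bA)
= -\epsilon_{\alpha\beta\gamma\delta}\epsilon^{\delta\mu\nu\rho}u^\gamma u_\mu\partial_\nu A_\rho.
\end{equation*}
With the Minkowski convention $\epsilon_{0123}=1$ (so that $\epsilon^{0123}=-1$), the contracted Levi-Civita identity reads
\begin{equation*}
\epsilon_{\alpha\beta\gamma\delta}\epsilon^{\delta\mu\nu\rho}
= \delta^\mu_\alpha\delta^\nu_\beta\delta^\rho_\gamma
- \delta^\mu_\alpha\delta^\nu_\gamma\delta^\rho_\beta
+ \delta^\mu_\beta\delta^\nu_\gamma\delta^\rho_\alpha
- \delta^\mu_\beta\delta^\nu_\alpha\delta^\rho_\gamma
+ \delta^\mu_\gamma\delta^\nu_\alpha\delta^\rho_\beta
- \delta^\mu_\gamma\delta^\nu_\beta\delta^\rho_\alpha.
\end{equation*}
Contracting the six terms with $u^\gamma u_\mu\partial_\nu A_\rho$, the four terms which do not contract the index $\gamma$ into the index $\mu$ will produce precisely the combination $u_\alpha u^\kappa\partial_\beta A_\kappa - u_\alpha u^\kappa\partial_\kappa A_\beta + u_\beta u^\kappa\partial_\kappa A_\alpha - u_\beta u^\kappa\partial_\alpha A_\kappa$, while the two remaining terms (the fifth and sixth above) collapse via $u^\gamma u_\gamma=-1$, guaranteed by \eqref{muu}, to $-\partial_\alpha A_\beta+\partial_\beta A_\alpha$.

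Restoring the overall minus sign and rearranging, the right-hand side of the lemma becomes exactly $\partial_\alpha A_\beta - \partial_\beta A_\alpha$, which proves the identity. The only point where care is required is the sign convention: one must keep track of the fact that raising all four indices of the Minkowski Levi-Civita symbol introduces an extra minus, and that the reordering $\epsilon^{\delta\mu\nu\rho}=-\epsilon^{\mu\nu\rho\delta}$ is needed to match the standard generalized Kronecker delta expansion used above; beyond this bookkeeping the argument is purely algebraic.
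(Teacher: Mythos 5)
Your computation is correct, and the sign bookkeeping checks out: with $\epsilon_{0123}=1$ in Minkowski signature the two minus signs (one from $\det \Bm=-1$ in the tensor contraction, one from cycling $\delta$ to the front) cancel, giving $\epsilon_{\alpha\beta\gamma\delta}\epsilon^{\delta\mu\nu\rho}=\delta^\mu_\alpha\delta^\nu_\beta\delta^\rho_\gamma-\cdots$, exactly the six-term expansion you wrote and the same identity the paper itself uses in \eqref{Or} when proving Lemma \ref{HD}. The paper offers no proof of this statement — it is quoted from \cite{DS}, Lemma 4.1 — so your direct Levi--Civita contraction, with the four mixed terms cancelling the explicit terms on the right-hand side and the two $u^\gamma u_\gamma=-1$ terms producing $\partial_\alpha A_\beta-\partial_\beta A_\alpha$, is a valid self-contained verification consistent with the paper's conventions.
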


\begin{remark}
	Let $(h, \bu)$ be a solution of \eqref{REE}. Let $\bw$ be defined in \eqref{VVd}. By Lemma \ref{OE}, we can get
	\begin{equation}\label{OE00}
		\partial_\alpha u_\beta - \partial_\beta u_\alpha = \epsilon_{\alpha\beta\gamma\delta}\mathrm{e}^{-h}u^\gamma w^\delta- u_\beta  \partial_\alpha h + u_\alpha  \partial_\beta h,
	\end{equation}
	and
	\begin{equation}\label{cr04}
		\partial_\alpha w_\beta - \partial_\beta w_\alpha = \epsilon_{\alpha \beta \gamma \delta}u^\gamma \mathrm{vort}^\delta(\bw)-u_\alpha u^\gamma \partial_\gamma w_\beta  + u_\alpha u^\gamma \partial_\beta w_\gamma + u_\beta u^\gamma \partial_\gamma w_\alpha - u_\beta u^\gamma \partial_\alpha w_\gamma .
	\end{equation}
	Additionally, we have
	\begin{equation}\label{cra0}
		w^\kappa \partial_\kappa u_\alpha= w^\kappa \partial_\alpha u_\kappa- u_\alpha w^\kappa \partial_\kappa h,
	\end{equation}
	and
	\begin{equation}\label{cra1}
		\epsilon^{\alpha \beta \gamma \delta} \partial_\gamma u_\delta =\mathrm{e}^{-h}w^\alpha u^\beta- \mathrm{e}^{-h}u^\alpha w^\beta- \epsilon^{\alpha \beta \gamma \delta}u_\delta \partial_\gamma h .
	\end{equation}
\end{remark}
In the following, we transfer to introduce a modified Duhamel's principle for acoustic metric.
\subsection{Modified Duhamel's principle}
\begin{Lemma}\label{LD}
	Let the metric $g$ is defined in \eqref{AMd}. If $f$ is the solution of
	\begin{equation*}
		\begin{cases}
			\square_{g}f=0, \qquad t> \tau,
			\\
			(f,\partial_t f)|_{t=\tau} =(F(\tau,x), 0),
		\end{cases}
	\end{equation*}
	then
	\begin{equation*}
		\phi(t,x)=\int^t_0 f(t,x;\tau)d\tau,
	\end{equation*}
	solves the linear wave equation
	\begin{equation}\label{ZH}
		\begin{cases}
			\square_{g}\phi=g^{0\alpha}\partial_\alpha F,
			\\
			(\phi, \partial_t \phi)|_{t=0}=(0,F(0,x)).
		\end{cases}
	\end{equation}
\end{Lemma}
\begin{proof}
	We first note $\square_g=g^{\alpha \beta}\partial^2_{\alpha \beta}$ and $g^{00}=-1$.  By direct calculations, we get
	\begin{equation}\label{d1}
		\begin{split}
			\partial_t \phi &= \int^t_0 \partial_t f(t,x;\tau)d\tau+f(t,x;t)
			\\
			&=\int^t_0 \partial_t f(t,x;\tau)d\tau+F(t,x).
		\end{split}
	\end{equation}
	Taking the operator $\partial_t$ on \eqref{d1}, we have
	\begin{equation}\label{d2}
		\begin{split}
			-\partial^2_t \phi &= -\int^t_0 \partial^2_t f(t,x;\tau)d\tau-\partial_t f(t,x;\tau)|_{\tau=t}-\partial_t F
			\\
			&= -\int^t_0 \partial^2_t f(t,x;\tau)d\tau-\partial_t F.
		\end{split}
	\end{equation}
	Taking the operator $\partial_i$ on \eqref{d2}, it follows
	\begin{equation}\label{d3}
		\begin{split}
			\partial_i \partial_t \phi &= \int^t_0 \partial_i \partial_t f(t,x;\tau)d\tau+\partial_{i} F.
		\end{split}
	\end{equation}
	On the other hand, we also have
	\begin{equation}\label{d4}
		\partial_{ij} \phi= \int^t_0  \partial_{ij} f(t,x;\tau)d\tau,
	\end{equation}
	Combining \eqref{d2}, \eqref{d3}, and \eqref{d4}, we can derive that
	\begin{equation*}
		\begin{split}
			\square_{g}\phi(t,x)&=\int^t_0 \square_{g}f(t,x;\tau)d\tau-g^{0i}\partial_{i}F(t,x)+\partial_t F(t,x)
			\\
			&= g^{0\alpha}\partial_{\alpha} F.
		\end{split}
	\end{equation*}
	Furthermore, we can verify the initial data as
	\begin{equation*}
		(\phi, \partial_t \phi)|_{t=0}=(0, F(0,x)).
	\end{equation*}
	At this stage, we complete the proof of Lemma \ref{LD}.
\end{proof}
\begin{remark}
Due to the acoustical metric, so we derive this modified Duhamel's principle.
\end{remark}
\subsection{Hyperbolic formulations and new transport system}
Let us introduce a symmetric hyperbolic structure of \eqref{mo3}-\eqref{OREE}, which will be used to derive energy estimates of the velocity and enthalpy.
\begin{Lemma}[\cite{Bru},Theorem 13.1]\label{QH}
	Let $(\varrho, \bu)$ be a solution of \eqref{mo3}-\eqref{OREE}. Denote $\bU=(p(h),u^1,u^2,u^3)^{\mathrm{T}}$. Then System \eqref{OREE} can be reduced to the following symmetric hyperbolic equation:
	\begin{equation}\label{QHl}
		A^\alpha(\bU) \partial_\alpha \bU=0,
	\end{equation}
	where
	\begin{equation}\label{A0}
		A^0=
		\left(
		\begin{array}{cccc}
			(\varrho+p)^{-2}\varrho'(p)u^0 & -(\varrho+p)^{-1}\frac{u^1}{u_0} & -(\varrho+p)^{-1}\frac{u^2}{u_0} & -(\varrho+p)^{-1}\frac{u^3}{u_0}   \\
			-(\varrho+p)^{-1}\frac{u^1}{u_0} & u^0(1+\frac{u^1u_1}{u^0u_0}) & u^0\frac{u^1 u_2}{u^0u_0} & u^0\frac{u^1u_3}{u^0u_0} \\
			-(\varrho+p)^{-1}\frac{u^2}{u_0} & u^0\frac{u^1 u_3}{u^0u_0} & u^0(1+\frac{u^2 u_2}{u^0u_0}) & u^0\frac{u^2u_3}{u^0u_0} \\
			-(\varrho+p)^{-1}\frac{u^3}{u_0} & u^0\frac{u^1u_3}{u^0u_0} & u^0\frac{u^2u_3}{u^0u_0} & u^0(1+\frac{u^3u_3}{u^0u_0})
		\end{array}
		\right ),
	\end{equation}
	\begin{equation}\label{A1}
		A^1=
		\left(
		\begin{array}{cccc}
			(\varrho+p)^{-2}\varrho'(p)u^1 & -(\varrho+p)^{-1} & 0 & 0   \\
			-(\varrho+p)^{-1} & u^1(1+\frac{u^1u_1}{u^0u_0}) & u^1\frac{u^1 u_2}{u^0u_0} & u^1\frac{u^1u_3}{u^0u_0} \\
			0 & u^1\frac{u^1 u_3}{u^0u_0} & u^1(1+\frac{u^2 u_2}{u^0u_0}) & u^1\frac{u^2u_3}{u^0u_0} \\
			0 & u^1\frac{u^1u_3}{u^0u_0} & u^1\frac{u^2u_3}{u^0u_0} & u^1(1+\frac{u^3u_3}{u^0u_0})
		\end{array}
		\right ),
	\end{equation}
	\begin{equation}\label{A2}
		A^2=
		\left(
		\begin{array}{cccc}
			(\varrho+p)^{-2}\varrho'(p)u^2 & 0 & -(\varrho+p)^{-1} & 0   \\
			0 & u^2(1+\frac{u^1u_1}{u^0u_0}) & u^2\frac{u^1 u_2}{u^0u_0} & u^2\frac{u^1u_3}{u^0u_0} \\
			-(\varrho+p)^{-1} & u^2\frac{u^1 u_3}{u^0u_0} & u^2(1+\frac{u^2 u_2}{u^0u_0}) & u^2\frac{u^2u_3}{u^0u_0} \\
			0 & u^2\frac{u^1u_3}{u^0u_0} & u^2\frac{u^2u_3}{u^0u_0} & u^2(1+\frac{u^3u_3}{u^0u_0})
		\end{array}
		\right ),
	\end{equation}
	\begin{equation}\label{A3}
		A^3=
		\left(
		\begin{array}{cccc}
			(\varrho+p)^{-2}\varrho'(p)u^1 & 0 & 0 &  -(\varrho+p)^{-1}   \\
			0 & u^3(1+\frac{u^1u_1}{u^0u_0}) & u^3\frac{u^1 u_2}{u^0u_0} & u^3\frac{u^1u_3}{u^0u_0} \\
			0 & u^3\frac{u^1 u_3}{u^0u_0} & u^3(1+\frac{u^2 u_2}{u^0u_0}) & u^3\frac{u^2u_3}{u^0u_0} \\
			-(\varrho+p)^{-1} & u^3\frac{u^1u_3}{u^0u_0} & u^3\frac{u^2u_3}{u^0u_0} & u^3(1+\frac{u^3u_3}{u^0u_0})
		\end{array}
		\right ).
	\end{equation}
\end{Lemma}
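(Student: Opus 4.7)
The strategy is to convert the four equations in \eqref{OREE} into a $4\times 4$ first-order system for the unknowns $\bU=(p,u^1,u^2,u^3)^{\mathrm{T}}$. The constraint \eqref{muu} makes $u^0$ a dependent variable, so we expect to use (i) the three spatial momentum equations $\alpha = 1,2,3$, (ii) the continuity equation with $\varrho$ expressed through $p$ via $\partial_\kappa\varrho=\varrho'(p)\partial_\kappa p$, and (iii) the differential consequence of \eqref{muu}, namely $u_\alpha \partial_\beta u^\alpha=0$, which yields
\begin{equation*}
\partial_\beta u^0 = -\frac{u_i}{u_0}\partial_\beta u^i,
\end{equation*}
so that any occurrence of $\partial_\beta u^0$ in \eqref{OREE} can be eliminated in favor of $\partial_\beta u^i$. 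This is the single identity that will produce the characteristic $-u^i/u_0$ and $u^i u_j/(u^0 u_0)$ factors appearing throughout $A^0,\dots,A^3$.

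\textbf{Row~0 (the continuity equation).} Substituting $\partial_\kappa\varrho=\varrho'(p)\partial_\kappa p$, splitting $\partial_\kappa u^\kappa=\partial_0 u^0+\partial_i u^i$, and replacing $\partial_0 u^0$ using the identity above, the first equation of \eqref{OREE} becomes a linear combination of $\{\partial_\alpha p,\partial_\alpha u^i\}$. Dividing by $(\varrho+p)^2$ normalizes the coefficient of $\partial_0 p$ to the value $(\varrho+p)^{-2}\varrho'(p)u^0$ that appears in $A^0_{00}$; the remaining coefficients give exactly the first row of each $A^\alpha$, in particular the $-(\varrho+p)^{-1}u^i/u_0$ entries in $A^0$ and the $-(\varrho+p)^{-1}$ entry of $A^i$ in position $(0,i)$.

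\textbf{Rows $1,2,3$ (spatial momentum).} For $\alpha=i\in\{1,2,3\}$, start from $(\varrho+p)u^\kappa\partial_\kappa u^i+(m^{i\kappa}+u^i u^\kappa)\partial_\kappa p=0$, expand $u^\kappa\partial_\kappa u^i=u^0\partial_0 u^i+u^j\partial_j u^i$, multiply through so the $\partial_0 p$ coefficient is $-(\varrho+p)^{-1}u^i/u_0$ (matching the symmetric partner from Row~0), and then substitute $\partial_\alpha u^0=-u_j/u_0\,\partial_\alpha u^j$ in every place where $u^i u^\kappa\partial_\kappa p$ would otherwise need a partner expression. After this rearrangement, the coefficient of $\partial_0 u^j$ in the $i$-th equation is exactly $u^0(\delta_{ij}+u^i u_j/(u^0 u_0))$, and the coefficient of $\partial_k u^j$ becomes $u^k(\delta_{ij}+u^i u_j/(u^0 u_0))$ --- i.e. rows $1,2,3$ of $A^0,\dots,A^3$ respectively. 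Symmetry of each matrix then follows by direct inspection: the scalar top-left block has already matched its row/column through the Row~0 normalization, and the $3\times 3$ lower-right block is manifestly symmetric in $(i,j)$.

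\textbf{Positivity of $A^0$.} To finish and justify the "symmetric hyperbolic" label, I would check that $A^0$ is positive definite under the physical hypotheses ($\varrho>0$, $0<c_s\le 1$, $u^0\ge 1$). Schur-decomposing $A^0$ with respect to the $(p,u^i)$ split, the lower-right block
\begin{equation*}
B_{ij}= u^0\!\left(\delta_{ij}+\frac{u^i u_j}{u^0 u_0}\right)=u^0\delta_{ij}-\frac{u^i u^j}{u^0}
\end{equation*}
is positive definite by the normalization $(u^0)^2=1+|\mathring{\bu}|^2$, and the Schur complement reduces to an expression controlled by $c_s^{-2}$, which is bounded below by \eqref{mo7}. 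The main bookkeeping obstacle is keeping track of sign conventions ($u_0=-u^0$, $u_i=u^i$) while verifying that the $(0,i)$ and $(i,0)$ entries in each $A^\alpha$ truly coincide: once one performs the $\partial_0 u^0$-elimination consistently in both the continuity and momentum equations, symmetry emerges automatically, and no further identities beyond \eqref{muu} and $\partial\varrho=\varrho'(p)\partial p$ are needed.
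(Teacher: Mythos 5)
The paper itself offers no derivation here --- it defers entirely to Choquet-Bruhat (\cite{Bru}, Theorem 13.1) --- so your proposal is judged on its own merits, and it has one genuine gap in the momentum rows. The spatial momentum equation
\begin{equation*}
(\varrho+p)\bigl(u^0\partial_0 u^i+u^j\partial_j u^i\bigr)+\partial_i p+u^iu^\kappa\partial_\kappa p=0
\end{equation*}
contains \emph{no} derivative of $u^0$ whatsoever: the transport term only differentiates $u^i$, and the pressure terms carry no velocity derivatives. Hence the substitution $\partial_\alpha u^0=-\tfrac{u_j}{u_0}\partial_\alpha u^j$, which you invoke to generate the off-diagonal entries $u^\alpha\tfrac{u^iu_j}{u^0u_0}$ of the velocity block, has nothing to act on in this equation, and the step "after this rearrangement, the coefficient of $\partial_0 u^j$ is exactly $u^0(\delta_{ij}+u^iu_j/(u^0u_0))$" does not follow. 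Your proposed normalization is also internally inconsistent: forcing the $\partial_0 p$ coefficient to equal $-(\varrho+p)^{-1}u^i/u_0$ means multiplying $E^i$ by $\bigl((\varrho+p)(u^0)^2\bigr)^{-1}$, which puts $1/u^0$ on the diagonal of the velocity block rather than $u^0\bigl(1+\tfrac{u^iu_i}{u^0u_0}\bigr)=\tfrac{(u^0)^2-(u^i)^2}{u^0}$; these coincide only when $\sum_{j\neq i}(u^j)^2=0$.

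The missing idea is that the $i$-th row is not a rescaling of $E^i$ alone but the combination $\tfrac{1}{\varrho+p}\bigl(E^i-\tfrac{u^i}{u^0}E^0\bigr)$, where the constraint identity is applied \emph{inside} $E^0$ (since $u^\kappa\partial_\kappa u^0=\tfrac{u^j}{u^0}u^\kappa\partial_\kappa u^j$). This produces the coefficient $(\varrho+p)u^\alpha\bigl(\delta_{ij}-\tfrac{u^iu^j}{(u^0)^2}\bigr)$ of $\partial_\alpha u^j$ and, from $m^{i\kappa}-\tfrac{u^i}{u^0}m^{0\kappa}$, the correct pressure cross-terms ($\tfrac{u^i}{u^0}=-\tfrac{u^i}{u_0}$ for $\kappa=0$ and $\delta_{ik}$ for $\kappa=k$, matching the $(0,j)$ entries of Row~0 up to an overall sign convention of the type one should not lose sleep over, given the evident transcription slips in \eqref{A1}--\eqref{A3}). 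Your Row~0 treatment and the positive-definiteness check of $A^0$ (eigenvalues $u^0$, $u^0$, $1/u^0$ for the lower block, and a Schur complement controlled by $c_s^{-2}(u^0)^2-|\mathring{\bu}|^2>0$) are fine; only the construction of rows $1$--$3$ needs to be repaired as above.
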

We next derive some good transport equations for $\bw$.
\begin{Lemma}\label{tr0}
Let $\bw$ and $\bW$ be defined in \eqref{VVd} and \eqref{MFd}. Then we have
	\begin{align}\label{CEQ}
			u^\kappa \partial_\kappa w^\alpha =& -u^\alpha w^\kappa \partial_\kappa h+ w^\kappa \partial_\kappa u^\alpha- w^\alpha \partial_\kappa u^\kappa,
			\\\label{CEQ0}
			\partial_\alpha w^\alpha=&-w^\kappa \partial_\kappa h,
			\\\label{CEQ1}
			u^\kappa \partial_\kappa {W}^\alpha
			=& {W}^\kappa   \partial_\kappa u^\alpha   -{2} {W}^\alpha \partial_\kappa u^\kappa+ u^\alpha {W}^\beta  u^\kappa \partial_\kappa u_\beta \nonumber
			-{2} \epsilon^{\alpha \beta \gamma \delta}u_\beta \partial_\delta u^\kappa \partial_\gamma w_\kappa
			\\
			&  -{2}\mathrm{e}^{-h}  w^\alpha w^\kappa \partial_\kappa h -  \epsilon^{\alpha \beta \gamma \delta} c^{-2}_s u_\beta w_\delta   \partial_\gamma u^\kappa \partial_\kappa h
			- \epsilon^{\kappa \beta \gamma \delta}c^{-2}_s u_\beta w_\delta \partial_\gamma h \partial_\kappa u^\alpha
			\\
			& + \epsilon^{\alpha \beta \gamma \delta} c^{-2}_s u_\beta w_\delta \partial_\gamma h \partial_\kappa u^\kappa
			+( c^{-2}_s +2 ) \epsilon^{\alpha \beta \gamma \delta} u_\beta w^\kappa \partial_\delta u_\kappa \partial_\gamma h. \nonumber
	\end{align}
\end{Lemma}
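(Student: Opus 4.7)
Equations \eqref{CEQ} and \eqref{CEQ0} coincide with the third and fourth equations of the wave--transport system \eqref{WTe} already stated in Lemma \ref{WT}, and so require no further argument. The substantive content of the lemma is \eqref{CEQ1}, which I would establish by applying the material derivative $u^\kappa\partial_\kappa$ to the defining expression
\[
W^\alpha = -\epsilon^{\alpha\beta\gamma\delta} u_\beta \partial_\gamma w_\delta + c_s^{-2}\epsilon^{\alpha\beta\gamma\delta} u_\beta w_\delta \partial_\gamma h
\]
and then systematically reducing each resulting term via \eqref{REE}, the transport law \eqref{CEQ}, and the algebraic identities \eqref{OE00}--\eqref{cra1} that follow from Lemma \ref{OE}.

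For the first piece I would commute $u^\kappa\partial_\kappa$ past $\partial_\gamma$, producing a commutator contribution $-\epsilon^{\alpha\beta\gamma\delta}u_\beta(\partial_\gamma u^\kappa)\partial_\kappa w_\delta$. The factor $\partial_\kappa w_\delta$ I would split into its symmetric and antisymmetric parts in $(\kappa,\delta)$, using \eqref{cr04} on the antisymmetric half to extract the $\mathrm{vort}^\delta(\bw)$ contribution together with transport-type corrections. Into the remaining $u^\kappa\partial_\kappa w_\delta$ piece I would substitute \eqref{CEQ}, and into $u^\kappa\partial_\kappa u_\beta$ the momentum equation from \eqref{REE}. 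For the $c_s^{-2}$-piece, the continuity equation $u^\kappa\partial_\kappa h = -c_s^2 \partial_\kappa u^\kappa$ of \eqref{REE} converts $c_s^{-2}u^\kappa\partial_\kappa h$ into $-\partial_\kappa u^\kappa$ and delivers the $c_s^{-2}$-weighted divergence terms on the right of \eqref{CEQ1}; the remaining $\epsilon$-contractions involving $\partial u$ are then rearranged into the stated form using \eqref{cra0}--\eqref{cra1}.

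The main obstacle I anticipate is bookkeeping rather than a single conceptual difficulty: after these substitution stages one generates on the order of a dozen $\epsilon$-contractions of $u_\beta$, $w_\delta$, $\partial h$ and $\partial u$, which must be collapsed onto the ten explicit terms on the right of \eqref{CEQ1}. This will require repeated use of the antisymmetry of $\epsilon^{\alpha\beta\gamma\delta}$, of the normalization $u^\beta u_\beta=-1$ (which on differentiation gives $u^\beta\partial_\kappa u_\beta=0$), and of the orthogonality $u_\alpha w^\alpha=0$ which follows from \eqref{VVd} by the antisymmetry of $\epsilon$ in $(\alpha,\beta)$. I would organize the verification by partitioning the output into transport-type corrections linear in $\partial u$, the $c_s^{-2}$-weighted corrections, and the purely quadratic terms in $(w,\partial h)$, and match each group separately against \eqref{CEQ1}; the coefficient $-2$ appearing in front of $\mathrm{vort}^\delta(\bw)$ and of $W^\alpha\partial_\kappa u^\kappa$ will arise from combining the commutator term with the $w^\alpha\partial_\kappa u^\kappa$ contribution already present in \eqref{CEQ}.
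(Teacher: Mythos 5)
Your plan for \eqref{CEQ1} is essentially the paper's own proof: the paper applies $\partial_\gamma$ to the transport equation for $w_\delta$, contracts with $\epsilon^{\alpha\beta\gamma\delta}u_\beta$, sets $\overline{W}^\alpha=-\epsilon^{\alpha\beta\gamma\delta}u_\beta\partial_\gamma w_\delta$, and then reduces the nine resulting terms exactly as you describe — \eqref{cr04} to extract the vorticity of $\bw$ from the antisymmetric part of $\partial_\kappa w_\delta$, the momentum equation for every occurrence of $u^\kappa\partial_\kappa u_\beta$, the continuity equation to trade $c_s^{-2}u^\kappa\partial_\kappa h$ for $-\partial_\kappa u^\kappa$, and $u^\beta\partial u_\beta=0$, $u_\alpha w^\alpha=0$ for the cancellations. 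The relation $W^\alpha=\overline W^\alpha+\epsilon^{\alpha\beta\gamma\delta}c_s^{-2}u_\beta w_\delta\partial_\gamma h$ is then used at the end to convert $\overline W$ into $W$, which is the same reorganization of the $c_s^{-2}$-piece that you propose to handle by differentiating it directly. Your identification of where the factors $-2$ come from is also consistent with the paper's bookkeeping.

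The gap is in your first sentence. You dispose of \eqref{CEQ} and \eqref{CEQ0} by citing Lemma \ref{WT}, but within this paper that is circular: the appendix proof of Lemma \ref{WT} derives only the two wave equations of \eqref{WTe}, and the transport equations for $\bw$ that appear there are established precisely in the proof of Lemma \ref{tr0}. Since your derivation of \eqref{CEQ1} then substitutes \eqref{CEQ} into $u^\kappa\partial_\kappa w_\delta$, the circularity is not harmless. You need to prove \eqref{CEQ} and \eqref{CEQ0} directly: for \eqref{CEQ}, apply $\partial_\gamma$ to the momentum equation in \eqref{REE}, contract with $\epsilon^{\alpha\beta\gamma\delta}\mathrm{e}^{h}u_\beta$ so that the Hessian of $h$ and all terms carrying a repeated $u$ in antisymmetrized slots drop out, and reduce the remaining gradient terms with \eqref{OE00} and \eqref{cra1}; for \eqref{CEQ0}, compute $\partial_\alpha w^\alpha$ directly from \eqref{VVd}, noting that the second-derivative terms vanish by antisymmetry of $\epsilon^{\alpha\beta\gamma\delta}$ and only the term where $\partial_\alpha$ hits $\mathrm{e}^{h}$ survives.
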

\begin{remark}
	We prove Lemma \ref{tr0} in Appendix \ref{appa}.
\end{remark}
	We next introduce a decomposition for $\partial_{\gamma} \partial^{\gamma} \bu$, whose role is very similar to Hodge's decomposition in non-relativistic fluids.
\begin{Lemma}\label{HD}
Let $(h,\bu)$ be a solution of \eqref{REE}. Then we have the following formula
\begin{equation}\label{HDe}
\begin{split}
  \partial_{\gamma}(\partial^{\gamma} u^\alpha) = &\mathrm{vort}^\alpha ( \mathrm{vort} \bu)+ \partial^\alpha (\partial_\gamma u^\gamma)
   -u^\kappa \partial_\kappa ( u^\gamma \partial_\gamma u^\alpha- u^\alpha \partial_\gamma u^\gamma )
  \\
  & +2u_\beta \partial_\gamma u^\alpha \partial^\beta u^\gamma - 2u_\beta \partial_\gamma u^\gamma \partial^\beta u^\alpha- u^\alpha \partial_\gamma u_\beta \partial^\gamma u^\beta
 + u^\gamma \partial_\gamma u_\beta \partial^\alpha u^\beta.
\end{split}
\end{equation}
\end{Lemma}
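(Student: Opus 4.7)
The plan is to read \eqref{HDe} as the relativistic analogue of the classical vector identity $\Delta \bv = \nabla(\nabla\cdot \bv) - \nabla\times(\nabla \times \bv)$, now formulated in Minkowski space under the timelike normalization $u^\alpha u_\alpha = -1$. All the manipulations are purely algebraic; the hypothesis that $(h,\bu)$ solves \eqref{REE} enters only through the consequence $u^\kappa \partial_\beta u_\kappa = \tfrac12\partial_\beta(u^\kappa u_\kappa) = 0$ of \eqref{muu}.

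The starting point is the tautological decomposition
\begin{equation*}
\partial^\gamma \partial_\gamma u^\alpha = \partial^\alpha(\partial_\gamma u^\gamma) + \partial^\gamma(\partial_\gamma u^\alpha - \partial^\alpha u_\gamma),
\end{equation*}
which relies only on $\partial^\gamma \partial^\alpha u_\gamma = \partial^\alpha \partial_\gamma u^\gamma$ and produces the divergence term $\partial^\alpha(\partial_\gamma u^\gamma)$ of \eqref{HDe}. I then invoke Lemma \ref{OE} with $\bA = \bu$ and drop the two terms killed by $u^\kappa \partial u_\kappa = 0$ to obtain
\begin{equation*}
\partial_\gamma u_\mu - \partial_\mu u_\gamma = \epsilon_{\gamma\mu\beta\delta} u^\beta \mathrm{vort}^\delta(\bu) + u_\mu u^\kappa \partial_\kappa u_\gamma - u_\gamma u^\kappa \partial_\kappa u_\mu.
\end{equation*}
Raising $\mu\to\alpha$ with $m^{\alpha\mu}$ and applying $\partial^\gamma$, a Leibniz expansion yields four families of terms: (i) the curl-of-curl piece $m^{\alpha\mu}\epsilon_{\gamma\mu\beta\delta}u^\beta \partial^\gamma\mathrm{vort}^\delta(\bu)$; (ii) a quadratic piece $m^{\alpha\mu}\epsilon_{\gamma\mu\beta\delta}\partial^\gamma u^\beta \,\mathrm{vort}^\delta(\bu)$; (iii) the derivative of $u^\alpha u^\kappa \partial_\kappa u_\gamma$; and (iv) the derivative of $u_\gamma u^\kappa \partial_\kappa u^\alpha$.

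Family (i) supplies $\mathrm{vort}^\alpha(\mathrm{vort}\,\bu)$: permuting the four indices of $\epsilon_{\gamma\mu\beta\delta}$ against the definition \eqref{VAd} and moving between vectors and one-forms with $m_{\delta\rho}$ identifies this piece with $\mathrm{vort}^\alpha(\mathrm{vort}\,\bu)$ up to the sign determined by the conventions $\epsilon_{0123}=1$ and $\det\Bm = -1$. Families (iii) and (iv) each split by Leibniz into a piece where $\partial^\gamma$ hits an outer $u$ and a piece where $\partial^\gamma$ hits the inner $\partial u$; using $u_\gamma\partial^\gamma = u^\gamma\partial_\gamma$, the latter two combine exactly into the transport term $-u^\kappa \partial_\kappa(u^\gamma \partial_\gamma u^\alpha - u^\alpha \partial_\gamma u^\gamma)$ of \eqref{HDe}, while the former produce cubic monomials.

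The principal obstacle is matching the remaining quadratic residue, namely the cubic monomials from (iii)--(iv) together with Family (ii), against $2u_\beta\partial_\gamma u^\alpha\partial^\beta u^\gamma - 2u_\beta\partial_\gamma u^\gamma \partial^\beta u^\alpha - u^\alpha \partial_\gamma u_\beta \partial^\gamma u^\beta + u^\gamma \partial_\gamma u_\beta \partial^\alpha u^\beta$. For Family (ii) I would re-expand $\mathrm{vort}^\delta(\bu) = -\epsilon^{\delta\sigma\tau\omega}u_\sigma \partial_\tau u_\omega$ and evaluate the double $\epsilon$-contraction via the Minkowski identity $\epsilon^{\alpha\beta\gamma\delta}\epsilon_{\mu\nu\rho\delta}=-\delta^{\alpha\beta\gamma}_{\mu\nu\rho}$ (generalized Kronecker delta). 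The resulting $3\times 3$ determinant of deltas expands into six monomials; repeated use of $u^\kappa\partial u_\kappa = 0$ eliminates the spurious ones, and after relabelling dummy indices the survivors combine with the cubic output of (iii)--(iv) to produce precisely the four claimed quadratic corrections. Sign tracking under the stated conventions is the only delicate point; the remainder is systematic, if lengthy, bookkeeping.
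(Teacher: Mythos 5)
Your strategy is genuinely different from the paper's. You start from the Helmholtz-type splitting $\partial^\gamma\partial_\gamma u^\alpha=\partial^\alpha(\partial_\gamma u^\gamma)+\partial^\gamma(\partial_\gamma u^\alpha-\partial^\alpha u_\gamma)$ and feed Lemma~\ref{OE} into the antisymmetric part, whereas the paper expands $\mathrm{vort}^\alpha(\mathrm{vort}\,\bu)=\epsilon^{\alpha\beta\gamma\delta}\epsilon_{\delta\eta\mu\nu}u_\beta\partial_\gamma(u^\eta\partial^\mu u^\nu)$ through the double-$\epsilon$ contraction and then solves for $\partial^\gamma\partial_\gamma u^\alpha$ using $u_\beta u^\beta=-1$ (equations \eqref{Or}--\eqref{Or6}). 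Your reduction of families (iii)--(iv) to the transport term $-u^\kappa\partial_\kappa(u^\gamma\partial_\gamma u^\alpha-u^\alpha\partial_\gamma u^\gamma)$ plus cubic monomials checks out, and in principle either route works; yours outsources part of the $\epsilon$-algebra to Lemma~\ref{OE} at the price of still needing the contraction identity for Family (ii).

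As written, however, the proposal does not prove the lemma. The entire content of \eqref{HDe} is the precise list of lower-order terms with their signs, and that is exactly the step you defer as ``systematic bookkeeping.'' Moreover, the one definite intermediate identification you assert fails under the stated conventions: with $\epsilon_{0123}=1$ and indices raised by $\Bm$ one has $\epsilon^{0123}=-1$ and $\epsilon^{\alpha\beta\gamma\delta}\epsilon_{\eta\mu\nu\delta}=-\delta^{\alpha\beta\gamma}_{\eta\mu\nu}$; since $\epsilon_{\gamma\mu\beta\delta}=\epsilon_{\mu\beta\gamma\delta}$ and $\mathrm{vort}^\alpha(\mathrm{vort}\,\bu)=-m^{\alpha\mu}\epsilon_{\mu\beta\gamma\delta}u^\beta\partial^\gamma\mathrm{vort}^\delta(\bu)$, your Family (i) equals $-\mathrm{vort}^\alpha(\mathrm{vort}\,\bu)$, not $+\mathrm{vort}^\alpha(\mathrm{vort}\,\bu)$ as \eqref{HDe} demands. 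You must either locate a compensating sign elsewhere in your expansion or conclude that the sign in \eqref{HDe} needs adjusting; be warned that the paper's own derivation is not a reliable arbiter here, because the Leibniz expansion in the later lines of \eqref{Or} carries the opposite overall sign from the $\delta$-expansion on its first line. Until the quadratic residue is actually computed and matched term by term --- and this sign conflict is resolved --- the argument is a plan, not a proof.
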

\begin{proof}
Firstly, we have $\mathrm{vort}^\alpha(\mathrm{vort} \bu)
=\epsilon^{\alpha\beta\gamma\delta}\epsilon_{\delta \eta \mu \nu} u_\beta \partial_\gamma\left( u^{\eta} \partial^{\mu} u^{\nu} \right)$. Thus, we have
\begin{align}\label{Or}
 & \mathrm{vort}^\alpha(\mathrm{vort} \bu) \nonumber
  \\ \nonumber
  =& (-\delta^\alpha_{\eta} \delta^\gamma_{\mu}  \delta^\beta_{\nu} + \delta^\alpha_{\eta} \delta^\beta_{\mu} \delta^\gamma_{\nu}
  - \delta^\beta_{\eta} \delta^\alpha_{\mu} \delta^\gamma_{\nu}
  +\delta^\beta_{\eta} \delta^\gamma_{\mu} \delta^\alpha_{\nu}
  - \delta^\gamma_{\eta} \delta^\beta_{\mu} \delta^\alpha_{\nu}+\delta^\gamma_{\eta} \delta^\alpha_{\mu} \delta^\beta_{\nu} ) u_\beta \partial_\gamma\left( u^{\eta} \partial^{\mu} u^{\nu} \right)
  \\ \nonumber
   = &  u_\beta \partial_\gamma u^\alpha \partial^\gamma  u^\beta-u_\beta \partial_\gamma u^\alpha \partial^\beta  u^\gamma+u_\beta \partial_\gamma u^\beta \partial^\alpha  u^\gamma
  - u_\beta \partial_\gamma u^\beta \partial^\gamma  u^\alpha
  \\ \nonumber
  & + u_\beta \partial_\gamma u^\gamma \partial^\beta  u^\alpha - u_\beta \partial_\gamma u^\gamma \partial^\alpha  u^\beta
   + u_\beta u^\alpha \partial^\gamma(\partial_\gamma u^\beta)-u_\beta u^\alpha \partial^\beta(\partial_\gamma u^\gamma )
  \\ \nonumber
  & + u_\beta u^\beta \partial^\alpha (\partial_\gamma u^\gamma)
   - u_\beta u^\beta \partial^\gamma(\partial_\gamma u^\alpha)
  +  u_\beta u^\gamma \partial^\beta (\partial_\gamma u^\alpha) -  u_\beta u^\gamma \partial^\alpha (\partial_\gamma u^\beta)
  \\ \nonumber
  = &  -u_\beta \partial_\gamma u^\alpha \partial^\beta  u^\gamma  + u_\beta \partial_\gamma u^\gamma \partial^\beta  u^\alpha
 + u_\beta u^\alpha \partial^\gamma(\partial_\gamma u^\beta)
 -u_\beta u^\alpha \partial^\beta(\partial_\gamma u^\gamma )
  \\
  &  + u_\beta u^\beta \partial^\alpha (\partial_\gamma u^\gamma)
   - u_\beta u^\beta \partial^\gamma(\partial_\gamma u^\alpha)
  +  u_\beta u^\gamma \partial^\beta (\partial_\gamma u^\alpha) -  u_\beta u^\gamma \partial^\alpha (\partial_\gamma u^\beta),
\end{align}
where we use the fact $u_\beta \partial_\gamma u^\beta=u^\beta \partial_\gamma u_\beta=\frac{1}{2}\partial_\gamma(u^\beta u_\beta)= 0$. By a direct calculation for
\begin{equation*}
  \begin{split}
  u_\beta u^\alpha \partial^\gamma(\partial_\gamma u^\beta)=& u^\alpha \left\{ \partial^\gamma\partial_\gamma(u_\beta u^\beta)-(\partial^\gamma\partial_\gamma u_\beta) u^\beta- 2\partial_\gamma u_\beta \partial^\gamma u^\beta  \right\}
  \\
  =&-u_\beta u^\alpha \partial^\gamma(\partial_\gamma u^\beta)- 2u^\alpha \partial_\gamma u_\beta \partial^\gamma u^\beta,
  \end{split}
\end{equation*}
we then have
\begin{equation}\label{Or1}
  u_\beta u^\alpha \partial^\gamma(\partial_\gamma u^\beta)=- u^\alpha \partial_\gamma u_\beta \partial^\gamma u^\beta.
\end{equation}
Thirdly, we can compute out
\begin{align}\label{Or2}
  -u_\beta u^\alpha \partial^\beta(\partial_\gamma u^\gamma )=&-u^\alpha u_\beta \partial^\beta(\partial_\gamma u^\gamma ) \nonumber
  \\ \nonumber
  =& u^\kappa \partial_\kappa \left(  -u^\alpha  \partial_\gamma u^\gamma  \right)+ u^\kappa \partial_\kappa u^\alpha \partial_\gamma u^\gamma \nonumber
  \\
  =& u^\kappa \partial_\kappa \left(  -u^\alpha  \partial_\gamma u^\gamma  \right)+ u_\kappa \partial^\kappa u^\alpha \partial_\gamma u^\gamma.
\end{align}
By using $u_\beta u^\beta=-1$, we get
\begin{align}\label{Or3}
  u_\beta u^\beta \partial^\alpha (\partial_\gamma u^\gamma)=& \partial^\alpha (-\partial_\gamma u^\gamma),
\\ \label{Or4}
  - u_\beta u^\beta \partial^\gamma(\partial_\gamma u^\alpha) = & \partial^\gamma(\partial_\gamma u^\alpha).
\end{align}
Also, we can deduce
\begin{equation}\label{Or5}
  u_\beta u^\gamma \partial^\beta (\partial_\gamma u^\alpha)=u^\kappa \partial_\kappa (u^\gamma \partial_\gamma u^\alpha)-u^\kappa\partial_\kappa u^\gamma \partial_\gamma u^\alpha,
\end{equation}
and
\begin{align}\label{Or6}
  -  u_\beta u^\gamma \partial^\alpha (\partial_\gamma u^\beta)=& u^\gamma \partial_\gamma ( -  u_\beta \partial^\alpha u^\beta )+ u^\gamma \partial_\gamma u_\beta \partial^\alpha u^\beta \nonumber
  \\
  = & u^\kappa \partial_\kappa ( -  u_\beta \partial^\alpha u^\beta )+ u^\gamma \partial_\gamma u_\beta \partial^\alpha u^\beta \nonumber
  \\
  = & u^\gamma \partial_\gamma u_\beta \partial^\alpha u^\beta.
\end{align}
Inserting \eqref{Or1}, \eqref{Or2}, \eqref{Or3}, \eqref{Or4}, \eqref{Or5}, \eqref{Or6} to \eqref{Or}, we can obtain \eqref{HDe}.
\end{proof}

\begin{Lemma}\label{VC}
Let $(h,\bu)$ be a solution of \eqref{REE}. Let $\bG=(G^0,G^1,G^2,G^3)^{\mathrm{T}}$ be defined in \eqref{MFd}. Then $\bG$ satisfies the following transport equation
\begin{equation}\label{SDe}
  u^\kappa \partial_\kappa (G^\alpha-F^\alpha)=\partial^\alpha \Gamma+ E^\alpha.
\end{equation}
Above, the scalar function $\Gamma$ is given by
\begin{equation}\label{YXg}
  \Gamma= -  2\partial^{\gamma} w^\kappa \partial_\gamma u_\kappa+2 w^\lambda \mathrm{vort}_\lambda \bw-\mathrm{e}^{-h}w_\kappa {W}^\kappa  ,
\end{equation}
and the vector-functions $\bF=(F^0,F^1,F^2,F^3)^\mathrm{T}$ and $\bE=(E^0,E^1,E^2,E^3)^\mathrm{T}$ defined by
\begin{equation}\label{YX0}
\begin{split}
  F^\alpha=&-2\epsilon^{\alpha \beta \gamma \delta} c_s^{-2} u_\beta \partial_\gamma h W_\delta-2u^\alpha \partial^\gamma w^\lambda \partial_\gamma u_\lambda+2 u_\lambda \partial_\gamma u^\gamma \partial^\alpha w^\lambda - 2c_s^{-2}\partial_\lambda h \partial^\alpha w^\lambda,
\end{split}
\end{equation}
and\footnote{Calculating $w^\kappa W^\kappa=- \epsilon^{\kappa \beta \gamma \delta}w_\kappa u_\beta \partial_\gamma w_\delta$, then $\Gamma$ only includes the type $d\bu\cdot d\bw$. By \eqref{YX0}, $\bF$ contains the term $(d\bu,dh)\cdot d\bw$, and $(d\bu,dh)\cdot (d\bu,dh) \cdot \bw$. Similarly, since \eqref{YX1}, $\bE$ includes the term
	$(d\bu,dh)\cdot d^2\bw, (d\bu,dh)\cdot (d\bu,dh) \cdot (d\bu,dh) \cdot \bw , (d\bu,dh)\cdot \bw \cdot (d^2\bu,d^2h)$, and $(d\bu,dh)\cdot (d\bu,dh) \cdot d\bw$. }
\begin{equation}\label{YX1}
\begin{split}
 E^\alpha=& \epsilon^{\alpha \beta \gamma \delta} u_\beta \partial_\gamma u^\kappa \partial_\kappa {W}_{\delta}
  - \epsilon^{\alpha \beta \gamma \delta} u^\kappa \partial_\kappa u_\beta \partial_\gamma {W}_\delta
   + 2\epsilon^{\alpha \beta \gamma \delta} c_s^{-2} u^\kappa \partial_\kappa u_\beta \partial_\gamma h W_\delta
  \\
  & + 2\epsilon^{\alpha \beta \gamma \delta} c_s^{-2} u_\beta u^\kappa  \partial_\gamma h \partial_\kappa W_\delta
  + 2\epsilon^{\alpha \beta \gamma \delta} u_\beta \partial_\kappa u^\kappa  \partial_\gamma W_\delta
   - \epsilon^{\alpha \beta \gamma \delta}u_\beta \partial_\kappa u_\delta \partial_\gamma {W}^\kappa
   \\
   & + \mathrm{e}^{-h} {W}^\kappa ( -u_\kappa u^\gamma \partial_\gamma w^\alpha
   + u_\kappa u^\gamma \partial^\alpha w_\gamma + u^\alpha u^\gamma \partial_\gamma w_\kappa - u^\alpha u^\gamma \partial_\kappa w_\gamma)
   \\
   &	+  \mathrm{e}^{-h} w_\kappa {W}^\kappa \partial^\alpha h  -  \mathrm{e}^{-h} w_\kappa \partial^\alpha{W}^\kappa
   -\epsilon_{\kappa \ \gamma \delta}^{ \ \alpha}  \epsilon^{\delta \beta \mu \nu} c_s^{-2}  \mathrm{e}^{-h}  u^\gamma  u_\beta w_\nu {W}^\kappa \partial_\mu h
   \\
   & +\epsilon^{\alpha \beta \gamma \delta} {W}^\kappa \partial_\kappa u_\beta \partial_\gamma u_\delta
    - 2\mathrm{e}^{-h}w^\alpha {W}^\kappa \partial_\kappa h
   - \epsilon^{\alpha \beta \gamma \delta}u_\beta \partial_\kappa u_\delta \partial_\gamma {W}^\kappa
   \\
   & +\epsilon^{\alpha \beta \gamma \delta} {W}^\kappa \partial_\kappa u_\beta \partial_\gamma u_\delta+4\partial^\gamma u_{\kappa}\partial^{\alpha} \partial_{\gamma} w^\kappa
   +2  u^\beta  \partial^{\gamma} w^\kappa  \partial_\beta u^\alpha \partial_{\gamma} u_\kappa
  \\
  & -2  u_\beta \partial_\gamma u^{\alpha}\partial^{\gamma} w^\kappa \partial^{\beta} u_\kappa
   -2  u_\beta  u^{\alpha} \partial_\gamma(\partial^{\gamma} w^\kappa) \partial^{\beta} u_\kappa
    -2u^\beta u^{\alpha} \partial_{\gamma} u_\kappa \partial_\beta (\partial^{\gamma} w^\kappa)
   \\
   & +2  u_\beta \partial_\gamma u^{\alpha}\partial^{\beta} w^\kappa \partial^{\gamma} u_\kappa
   + 2 u_\beta  u^{\alpha} \partial^{\gamma} u_\kappa \partial_\gamma(\partial^{\beta} w^\kappa)
   \\
   & - 2 u^{\alpha}u^\kappa w^\lambda \partial_\lambda h \partial_\gamma( \partial^{\gamma} u_\kappa )
   +2 u^{\alpha} w^\lambda \partial_\lambda u^\kappa  \partial_\gamma( \partial^{\gamma} u_\kappa )
   - 2 u^{\alpha}w^\kappa \partial_\lambda u^\lambda  \partial_\gamma( \partial^{\gamma} u_\kappa )
  \\
  & +  ( 4u_\beta \partial_\gamma u_\lambda \partial^\beta u^\gamma - 4u_\beta \partial_\gamma u^\gamma \partial^\beta u_\lambda- 2u_\lambda \partial_\gamma u_\beta \partial^\gamma u^\beta +2 u^\gamma \partial_\gamma u_\beta \partial_\lambda u^\beta ) \partial^\alpha w^\lambda
  \\
  & -2 c_s^{-2} \partial_\lambda u^\kappa \partial_\kappa h \partial^\alpha w^\lambda
  +6c_s^{-1}c'_s  \partial_\kappa u^\kappa  \partial_\lambda h \partial^\alpha w^\lambda+2c_s^{-2}  u^\kappa \partial_\lambda h   \partial_\kappa(\partial^\alpha w^\lambda)
  \\
  & -2 w^\lambda \partial^\alpha (\mathrm{vort}_\lambda \bw)  + 2 u^\kappa  ( u^\gamma \partial_\gamma u_\lambda- u_\lambda \partial_\gamma u^\gamma  ) \partial_\kappa ( \partial^\alpha w^\lambda)
  -  2\partial^{\alpha} u_\kappa \partial^\gamma \partial_{\gamma} w^\kappa
  \\
  &
  -2 u_\beta \partial_\gamma u^{\gamma}\partial^{\beta} w^\kappa \partial^{\alpha} u_\kappa
  -2  u_\beta  u^{\gamma} \partial^{\alpha} u_\kappa \partial^\beta(\partial_{\gamma} w^\kappa)
  +2   u^\kappa u^{\gamma} w^\lambda \partial_\lambda h \partial_\gamma( \partial^{\alpha} u_\kappa )
  \\
  & -2 u^{\gamma} w^\lambda \partial_\lambda u^\kappa \partial_\gamma( \partial^{\alpha} u_\kappa )
  +2  u^{\gamma} w^\kappa \partial_\lambda u^\lambda \partial_\gamma( \partial^{\alpha} u_\kappa )
  - 2u^{\kappa}\partial_\kappa u^\gamma \partial_{\gamma} u_\beta \partial^{\alpha} w^\beta
  \\
  & +  2u_\beta  u^{\gamma} \partial^\alpha(\partial_{\gamma} w^\kappa) \partial^{\beta} u_\kappa
  -  2u^{\kappa}u^\gamma \partial_{\gamma} u_\beta \partial^{\alpha}(\partial_\kappa w^\beta)
  +2u_\beta \partial_\gamma u^{\gamma}\partial^{\alpha} w^\kappa \partial^{\beta} u_\kappa
  \\
  &
   -2 \epsilon^{\alpha \beta \gamma \delta}  \epsilon_{\delta}^{ \ \eta \mu \nu} c_s^{-3} c'_s u_\beta  u_\eta w_\nu \partial_\gamma h    \partial_\mu  u^\kappa \partial_\kappa h + \epsilon^{\alpha \beta \gamma \delta}  \epsilon_{\delta}^{ \ \eta \mu \nu} c_s^{-2} u_\beta \partial_\gamma  u_\eta w_\nu   \partial_\mu u^\kappa \partial_\kappa h
  \\
  & + \epsilon^{\alpha \beta \gamma \delta}  \epsilon_{\delta}^{ \ \eta \mu \nu} c_s^{-2} u_\beta  u_\eta \partial_\gamma w_\nu   \partial_\mu u^\kappa \partial_\kappa h +  \epsilon^{\alpha \beta \gamma \delta}  \epsilon_{\delta}^{ \ \eta \mu \nu} c_s^{-2} u_\beta  u_\eta  w_\nu  \partial_\kappa h \partial_\gamma \partial_\mu u^\kappa
  \\
  & + \epsilon^{\alpha \beta \gamma \delta}  \epsilon_{\delta}^{ \ \eta \mu \nu} c_s^{-2} u_\beta  u_\eta w_\nu   \partial_\mu u^\kappa \partial_\gamma \partial_\kappa h - 2 \epsilon^{\alpha \beta \gamma \delta} \mathrm{e}^{-h} u_\beta    w_\delta w^\kappa \partial_\kappa h \partial_\gamma h
  \\
  & + 2 \epsilon^{\alpha \beta \gamma \delta} \mathrm{e}^{-h} u_\beta  w^\kappa \partial_\gamma w_\delta \partial_\kappa h
   + 2 \epsilon^{\alpha \beta \gamma \delta} \mathrm{e}^{-h} u_\beta  w_\delta \partial_\gamma w^\kappa \partial_\kappa h
  \\
  & + 2 \epsilon^{\alpha \beta \gamma \delta} \mathrm{e}^{-h}  u_\beta  w_\delta w^\kappa \partial_\gamma \partial_\kappa h
   -2 \epsilon^{\alpha \beta \gamma \delta} \epsilon^{\kappa \eta \mu \nu} c_s^{-3} c'_s u_\beta   u_\eta w_\nu \partial_\gamma h \partial_\mu h \partial_\kappa u_\delta
   \\
   &  + \epsilon^{\alpha \beta \gamma \delta} \epsilon^{\kappa \eta \mu \nu}  c_s^{-2} u_\beta w_\nu \partial_\gamma  u_\eta  \partial_\mu h \partial_\kappa u_\delta
   + \epsilon^{\alpha \beta \gamma \delta} \epsilon^{\kappa \eta \mu \nu} c_s^{-2} u_\eta u_\beta \partial_\gamma w_\nu \partial_\mu h \partial_\kappa u_\delta
  \\
  & + \epsilon^{\alpha \beta \gamma \delta} \epsilon^{\kappa \eta \mu \nu} c_s^{-2} u_\beta  u_\eta w_\nu \partial_\kappa u_\delta \partial_\gamma \partial_\mu h
   + \epsilon^{\alpha \beta \gamma \delta} \epsilon^{\kappa \eta \mu \nu} c_s^{-2} u_\beta   u_\eta w_\nu \partial_\mu h \partial_\gamma \partial_\kappa u_\delta
   \\
  & + 2 \epsilon^{\alpha \beta \gamma \delta} \epsilon_{\delta}^{\ \eta \mu \nu} c_s^{-3} c'_s u_\beta u_\eta w_\nu \partial_\gamma h   \partial_\mu h \partial_\kappa u^\kappa - \epsilon^{\alpha \beta \gamma \delta} \epsilon_{\delta}^{\ \eta \mu \nu} c_s^{-2} u_\beta \partial_\gamma  u_\eta w_\nu \partial_\mu h \partial_\kappa u^\kappa
   \\
   & - \epsilon^{\alpha \beta \gamma \delta} \epsilon_{\delta}^{\ \eta \mu \nu} c_s^{-2}  u_\beta u_\eta \partial_\gamma  w_\nu \partial_\mu h \partial_\kappa u^\kappa   - \epsilon^{\alpha \beta \gamma \delta} \epsilon_{\delta}^{\ \eta \mu \nu} c_s^{-2}  u_\beta u_\eta w_\nu \partial_\kappa u^\kappa \partial_\gamma  \partial_\mu h
   \\
   & - \epsilon^{\alpha \beta \gamma \delta} \epsilon_{\delta}^{\ \eta \mu \nu} c_s^{-2} u_\beta u_\eta w_\nu \partial_\mu h  \partial_\gamma \partial_\kappa u^\kappa
    + 2 \epsilon^{\alpha \beta \gamma \delta} \epsilon_{\delta}^{\ \eta \mu \nu} c_s^{-3} c'_s u_\beta  u_\eta w^\kappa \partial_\gamma h \partial_\nu u_\kappa \partial_\mu h
    \\
    & -\epsilon^{\alpha \beta \gamma \delta} \epsilon_{\delta}^{\ \eta \mu \nu}  ( c^{-2}_s +2 ) u_\beta w^\kappa \partial_\gamma   u_\eta  \partial_\nu u_\kappa \partial_\mu h
   -\epsilon^{\alpha \beta \gamma \delta} \epsilon_{\delta}^{\ \eta \mu \nu} ( c^{-2}_s +2 ) u_\eta u_\beta \partial_\gamma   w^\kappa \partial_\nu u_\kappa \partial_\mu h
   \\
   & -\epsilon^{\alpha \beta \gamma \delta} \epsilon_{\delta}^{\ \eta \mu \nu} ( c^{-2}_s +2 ) u_\beta  u_\eta w^\kappa \partial_\mu h \partial_\gamma \partial_\nu u_\kappa
   -\epsilon^{\alpha \beta \gamma \delta} \epsilon_{\delta}^{\ \eta \mu \nu} ( c^{-2}_s +2 )  u_\beta u_\eta w^\kappa \partial_\nu u_\kappa \partial_\gamma \partial_\mu h,
 \end{split}
\end{equation}
\end{Lemma}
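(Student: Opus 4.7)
The plan is to compute $u^\kappa\partial_\kappa G^\alpha$ directly from the definition $G^\alpha=-\epsilon^{\alpha\beta\gamma\delta}u_\beta\partial_\gamma W_\delta$, and reorganize the resulting expression so that the top-order pieces fall into one of three shapes: a material derivative of $F^\alpha$ (to be absorbed on the left), a pure gradient $\partial^\alpha\Gamma$, or an error $E^\alpha$ that is at most quadratic in two derivatives of $\bw$ with all other factors tamable by the energy norms. Concretely, I would first hit $G^\alpha$ with $u^\kappa\partial_\kappa$, splitting the result as
\begin{equation*}
u^\kappa\partial_\kappa G^\alpha=-\epsilon^{\alpha\beta\gamma\delta}(u^\kappa\partial_\kappa u_\beta)\partial_\gamma W_\delta-\epsilon^{\alpha\beta\gamma\delta}u_\beta\,u^\kappa\partial_\kappa\partial_\gamma W_\delta,
\end{equation*}
replace $u^\kappa\partial_\kappa u_\beta$ by $-(m_\beta{}^\kappa+u_\beta u^\kappa)\partial_\kappa h$ via \eqref{REE}, and commute $[u^\kappa\partial_\kappa,\partial_\gamma]=-\partial_\gamma u^\kappa\partial_\kappa$ so as to expose $\partial_\gamma(u^\kappa\partial_\kappa W_\delta)$ in the second piece.

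Into that exposed transport derivative I would substitute the identity \eqref{CEQ1} for $u^\kappa\partial_\kappa W^\delta$ from Lemma \ref{tr0}. This generates, among many quadratic terms, a contribution $-2\epsilon^{\alpha\beta\gamma\delta}u_\beta\partial_\gamma(\epsilon^{\delta\sigma\mu\nu}u_\sigma\partial_\nu u^\kappa\partial_\mu w_\kappa)$, which after contracting the two $\epsilon$'s and using Lemma \ref{HD} to rewrite $\partial^\gamma\partial_\gamma w^\kappa$ via $\mathrm{vort}^\kappa(\mathrm{vort}\,\bw)$ and $\partial^\kappa(\partial_\gamma w^\gamma)$, produces precisely the two prototypical top-order structures I need: $-2\partial^\alpha(\partial^\gamma w^\kappa\partial_\gamma u_\kappa)$ (gradient, contributing to $\Gamma$) plus error terms where the $\partial^\alpha$ has been distributed onto a factor of $\bu$ (these collect into $E^\alpha$). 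The same contraction-and-Hodge trick applied to the $c_s^{-2}$-weighted piece yields $-2c_s^{-2}\partial_\lambda h\,\partial^\alpha w^\lambda$ (absorbed into $F^\alpha$ since it is, modulo gradients, controllable) together with the relation \eqref{CEQ0} used to eliminate $\partial_\gamma w^\gamma$ in favor of $w^\kappa\partial_\kappa h$.

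The accounting step then gathers all the truly third-order-in-$w$ quantities. The terms of the form $u^\kappa\partial_\kappa\partial^\alpha(\cdots)$ and $\partial^\alpha(\cdots)$ are collected into $F^\alpha$ and $\Gamma$ respectively; here the scalar $\Gamma=-2\partial^\gamma w^\kappa\partial_\gamma u_\kappa+2w^\lambda\mathrm{vort}_\lambda\bw-e^{-h}w_\kappa W^\kappa$ comes out naturally, its first summand from the calculation just sketched, its second from reorganizing $\partial_\gamma W_\delta$ via the decomposition \eqref{cr04} of $\partial_\alpha w_\beta-\partial_\beta w_\alpha$ applied to $w$, and its third from the algebraic identity \eqref{OE00} applied to combine $\epsilon$-contractions of $W$ against $w$. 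Every residual term either has a factor $u^\gamma\partial_\gamma u_\beta$ that can be swapped via \eqref{REE}, or carries only quadratic derivatives that can be listed among the summands of $E^\alpha$ given in \eqref{YX1}.

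The hard part is \emph{not} the conceptual structure but the bookkeeping: one must keep track of roughly forty distinct terms arising from expanding products of two $\epsilon$-symbols via $\epsilon^{\alpha\beta\gamma\delta}\epsilon_{\delta\eta\mu\nu}$ as a signed sum of Kronecker deltas, and verify that each genuinely third-order piece either falls inside $\partial^\alpha\Gamma$, inside $u^\kappa\partial_\kappa F^\alpha$ (where the $F^\alpha$ in \eqref{YX0} is crafted so its transport derivative cancels the offending term), or is in fact of lower order due to an algebraic cancellation coming from $u_\beta u^\beta=-1$ (and hence $u_\beta\partial_\gamma u^\beta=0$) or from the antisymmetry of $\epsilon$. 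The rest is routine: one differentiates \eqref{YX0}, expands everything, and checks the identity term-by-term against \eqref{YX1}.
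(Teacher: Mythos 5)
Your proposal follows essentially the same route as the paper: apply the material derivative to $G^\alpha$ (equivalently, hit the transport equation \eqref{CEQ1} for $\bW$ with $-\epsilon^{\alpha\beta\gamma\delta}u_\beta\partial_\gamma$), commute to expose $u^\kappa\partial_\kappa G^\alpha$ plus commutator terms, substitute \eqref{CEQ1}, contract the double $\epsilon$-symbols into Kronecker deltas, and sort the output into $\partial^\alpha\Gamma$, $u^\kappa\partial_\kappa F^\alpha$, and $E^\alpha$, which is exactly the paper's computation of $\mathrm{R}_0,\dots,\mathrm{R}_8$. One correction to your plan: Lemma \ref{HD} is applied to $\partial_\gamma\partial^\gamma u_\lambda$ --- in the dangerous term $-\partial^\alpha w^\lambda\,\partial_\gamma\partial^\gamma u_\lambda$ arising from the contraction of $\mathrm{R}_3$ --- and not to $\partial^\gamma\partial_\gamma w^\kappa$; the lemma's derivation uses $u_\beta u^\beta=-1$ and \eqref{REE}, so it is specific to $\bu$, while the occurrences of $\partial^\gamma\partial_\gamma w^\kappa$ (always multiplied by $d\bu$) need no decomposition and are simply retained in $E^\alpha$. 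With that adjustment your bookkeeping scheme reproduces the paper's proof.
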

\begin{proof}
Operating \eqref{CEQ1} with $-\epsilon^{\alpha \beta \gamma \delta}u_\beta \partial_\gamma$, we can derive
\begin{equation}\label{CEq3}
\begin{split}
 & -\epsilon^{\alpha \beta \gamma \delta}u_\beta \partial_\gamma( u^\kappa \partial_\kappa {W}_\delta)
 \\
 =&\underbrace{ -\epsilon^{\alpha \beta \gamma \delta}u_\beta \partial_\gamma( {W}^\kappa \partial_\kappa u_\delta) }_{\equiv \mathrm{R}_1}
 \underbrace{-\epsilon^{\alpha \beta \gamma \delta}u_\beta \partial_\gamma(u_\delta {W}^\kappa u^\lambda \partial_\lambda u_\kappa) }_{\equiv \mathrm{R}_2}
+ \underbrace{2\epsilon^{\alpha \beta \gamma \delta}u_\beta \partial_\gamma\left( \partial_\kappa u^\kappa {W}_\delta \right) }_{\equiv \mathrm{R}_0}
 \\
 &+\underbrace{ 2\epsilon^{\alpha \beta \gamma \delta}\epsilon_{\delta}^{\ \eta \mu \nu}u_\beta \partial_\gamma \big( u_{\eta}\partial_{\mu} w^\kappa \partial_{\nu} u_\kappa \big)  }_{\equiv \mathrm{R}_3}
 + \underbrace{ \epsilon^{\alpha \beta \gamma \delta}  \epsilon_{\delta}^{ \ \eta \mu \nu} u_\beta \partial_\gamma( c_s^{-2} u_\eta w_\nu   \partial_\mu u^\kappa \partial_\kappa h) }_{\equiv \mathrm{R}_4}
 \\
 & + \underbrace{ 2 \epsilon^{\alpha \beta \gamma \delta} u_\beta \partial_\gamma ( \mathrm{e}^{-h}  w_\delta w^\kappa \partial_\kappa h) }_{\equiv \mathrm{R}_5}
 + \underbrace{  \epsilon^{\alpha \beta \gamma \delta} \epsilon^{\kappa \eta \mu \nu} u_\beta \partial_\gamma ( c_s^{-2} u_\eta w_\nu \partial_\mu h \partial_\kappa u_\delta) }_{\equiv \mathrm{R}_6}
 \\
 &  \underbrace{  - \epsilon^{\alpha \beta \gamma \delta} \epsilon_{\delta}^{\ \eta \mu \nu} u_\beta \partial_\gamma ( c_s^{-2} u_\eta w_\nu \partial_\mu h \partial_\kappa u^\kappa ) }_{\equiv \mathrm{R}_7}
 \underbrace{  -\epsilon^{\alpha \beta \gamma \delta} \epsilon_{\delta}^{\ \eta \mu \nu} u_\beta \partial_\gamma \{   ( c^{-2}_s +2 ) u_\eta w^\kappa \partial_\nu u_\kappa \partial_\mu h \} }_{\equiv \mathrm{R}_8}.
 \end{split}
\end{equation}
By the chain rule, the left side of \eqref{CEq3} is
\begin{align}\label{LCEq}
  -\epsilon^{\alpha \beta \gamma \delta}u_\beta \partial_\gamma( u^\kappa \partial_\kappa {W}_\delta)&=
  -\epsilon^{\alpha \beta \gamma \delta} u_\beta \partial_\gamma u^\kappa \partial_\kappa {W}_{\delta}
  -\epsilon^{\alpha \beta \gamma \delta} u_\beta u^\kappa \partial_\kappa (\partial_\gamma {W}_\delta) \nonumber
  \\
  &=-\epsilon^{\alpha \beta \gamma \delta} u_\beta \partial_\gamma u^\kappa \partial_\kappa {W}_{\delta}
  - u^\kappa \partial_\kappa (\epsilon^{\alpha \beta \gamma \delta} u_\beta \partial_\gamma {W}_\delta)
  + \epsilon^{\alpha \beta \gamma \delta} u^\kappa \partial_\kappa u_\beta \partial_\gamma {W}_\delta \nonumber
  \\
  &=  u^\kappa \partial_\kappa G^\alpha- \epsilon^{\alpha \beta \gamma \delta} u_\beta \partial_\gamma u^\kappa \partial_\kappa {W}_{\delta}
  + \epsilon^{\alpha \beta \gamma \delta} u^\kappa \partial_\kappa u_\beta \partial_\gamma {W}_\delta.
\end{align}
From \eqref{CEq3} and \eqref{LCEq}, we can see
\begin{equation}\label{RCEq}
	\begin{split}
		u^\kappa \partial_\kappa G^\alpha
		&=  \sum^{8}_{a=0}\mathrm{R}_a+ \epsilon^{\alpha \beta \gamma \delta} u_\beta \partial_\gamma u^\kappa \partial_\kappa {W}_{\delta}
		- \epsilon^{\alpha \beta \gamma \delta} u^\kappa \partial_\kappa u_\beta \partial_\gamma {W}_\delta.
	\end{split}
\end{equation}
While, there are some difficult terms in $\mathrm{R}_0, \mathrm{R}_1, \cdots, \mathrm{R}_8$. So we need to discuss them in a proper way. For $\mathrm{R}_0$, by the chain rule and \eqref{REE}, we have
\begin{equation}\label{cr00}
\begin{split}
  \mathrm{R}_0 =& -2\epsilon^{\alpha \beta \gamma \delta}u_\beta \partial_\gamma\left(  c^{-2}_su^\kappa \partial_\kappa h {W}_\delta \right)
  \\
  =& 4 \epsilon^{\alpha \beta \gamma \delta}c_s^{-3}c'_s u_\beta u^\kappa \partial_\gamma h  \partial_\kappa h W_\delta
   - 2\epsilon^{\alpha \beta \gamma \delta} c_s^{-2} u_\beta u^\kappa \partial_\kappa h \partial_\gamma W_\delta
   -2\epsilon^{\alpha \beta \gamma \delta} c_s^{-2} u_\beta u^\kappa \partial_\kappa(\partial_\gamma h) W_\delta
   \\
   =&- 4 \epsilon^{\alpha \beta \gamma \delta}c_s^{-1}c'_s u_\beta  \partial_\gamma h  \partial_\kappa u^\kappa W_\delta
   - 2\epsilon^{\alpha \beta \gamma \delta} c_s^{-2} u_\beta u^\kappa \partial_\kappa h \partial_\gamma W_\delta
   -2\epsilon^{\alpha \beta \gamma \delta} c_s^{-2} u_\beta u^\kappa \partial_\kappa(\partial_\gamma h) W_\delta .
\end{split}
\end{equation}
Noting the last right term in \eqref{cr00}, it can be written as
\begin{equation}\label{cr01}
  \begin{split}
    -2\epsilon^{\alpha \beta \gamma \delta} c_s^{-2} u_\beta u^\kappa \partial_\kappa(\partial_\gamma h) W_\delta
   =&u^\kappa \partial_\kappa ( -2\epsilon^{\alpha \beta \gamma \delta} c_s^{-2} u_\beta \partial_\gamma h W_\delta )
   -4\epsilon^{\alpha \beta \gamma \delta} c_s^{-3}c'_s u^\kappa u_\beta \partial_\kappa h \partial_\gamma h W_\delta
   \\
   & + 2\epsilon^{\alpha \beta \gamma \delta} c_s^{-2} u^\kappa \partial_\kappa u_\beta \partial_\gamma h W_\delta
   + 2\epsilon^{\alpha \beta \gamma \delta} c_s^{-2} u_\beta u^\kappa  \partial_\gamma h \partial_\kappa W_\delta.
  \end{split}
\end{equation}
Gathering \eqref{cr00} and \eqref{cr01}, we then get
\begin{equation}\label{R0}
\begin{split}
  \mathrm{R}_0
  =& u^\kappa \partial_\kappa ( -2\epsilon^{\alpha \beta \gamma \delta} c_s^{-2} u_\beta \partial_\gamma h W_\delta )
    + 2\epsilon^{\alpha \beta \gamma \delta} c_s^{-2} u^\kappa \partial_\kappa u_\beta \partial_\gamma h W_\delta
   \\
   & + 2\epsilon^{\alpha \beta \gamma \delta} c_s^{-2} u_\beta u^\kappa  \partial_\gamma h \partial_\kappa W_\delta
  - 2\epsilon^{\alpha \beta \gamma \delta} c_s^{-2} u_\beta u^\kappa \partial_\kappa h \partial_\gamma W_\delta
  \\
  =& u^\kappa \partial_\kappa ( -2\epsilon^{\alpha \beta \gamma \delta} c_s^{-2} u_\beta \partial_\gamma h W_\delta )
  + 2\epsilon^{\alpha \beta \gamma \delta} c_s^{-2} u^\kappa \partial_\kappa u_\beta \partial_\gamma h W_\delta
  \\
  & + 2\epsilon^{\alpha \beta \gamma \delta} c_s^{-2} u_\beta u^\kappa  \partial_\gamma h \partial_\kappa W_\delta
  + 2\epsilon^{\alpha \beta \gamma \delta} u_\beta \partial_\kappa u^\kappa  \partial_\gamma W_\delta.
\end{split}
\end{equation}
For $\mathrm{R}_1$, we have
\begin{align}\label{R1a}
  \mathrm{R}_1&=-\epsilon^{\alpha \beta \gamma \delta}u_\beta \partial_\gamma( {W}^\kappa \partial_\kappa u_\delta) \nonumber
  \\
  &=-\epsilon^{\alpha \beta \gamma \delta}u_\beta \partial_\gamma {W}^\kappa \partial_\kappa u_\delta-\epsilon^{\alpha \beta \gamma \delta}u_\beta {W}^\kappa \partial_\gamma(\partial_\kappa u_\delta) \nonumber
  \\
  &=-\epsilon^{\alpha \beta \gamma \delta}u_\beta  \partial_\kappa u_\delta \partial_\gamma {W}^\kappa+{W}^\kappa \partial_\kappa(-\epsilon^{\alpha \beta \gamma \delta}u_\beta \partial_\gamma u_\delta)  +  \epsilon^{\alpha \beta \gamma \delta} {W}^\kappa \partial_\kappa u_\beta \partial_\gamma u_\delta \nonumber
  \\
  &= {W}^\kappa \partial_\kappa(\mathrm{vort}^\alpha \bu)-\epsilon^{\alpha \beta \gamma \delta}u_\beta  \partial_\kappa u_\delta \partial_\gamma {W}^\kappa+\epsilon^{\alpha \beta \gamma \delta} {W}^\kappa \partial_\kappa u_\beta \partial_\gamma u_\delta \nonumber
  \\
  &= \mathrm{e}^{-h} {W}^\kappa \partial_\kappa w^\alpha- \mathrm{e}^{-h}w^\alpha {W}^\kappa \partial_\kappa h
  - \epsilon^{\alpha \beta \gamma \delta}u_\beta \partial_\kappa u_\delta \partial_\gamma {W}^\kappa+\epsilon^{\alpha \beta \gamma \delta} {W}^\kappa \partial_\kappa u_\beta \partial_\gamma u_\delta .
\end{align}
On the other hand, we also have
\begin{equation}\label{R1b}
	\begin{split}
		\mathrm{e}^{-h} {W}^\kappa \partial_\kappa w^\alpha= &\mathrm{e}^{-h} {W}^\kappa ( \partial_\kappa w^\alpha- \partial^\alpha w_\kappa)+ \mathrm{e}^{-h} {W}^\kappa  \partial^\alpha w_\kappa
	\end{split}
\end{equation}
Inserting \eqref{cr04} to \eqref{R1b}, it yields
\begin{equation}\label{R1c}
	\begin{split}
	 \mathrm{e}^{-h} {W}^\kappa \partial_\kappa w^\alpha
	= &\mathrm{e}^{-h} {W}^\kappa ( \epsilon_{\kappa \ \gamma \delta}^{ \ \alpha} u^\gamma \mathrm{vort}^\delta \bw-u_\kappa u^\gamma \partial_\gamma w^\alpha
     + u_\kappa u^\gamma \partial^\alpha w_\gamma
	 \\
	 & \ + u^\alpha u^\gamma \partial_\gamma w_\kappa - u^\alpha u^\gamma \partial_\kappa w_\gamma)+ \mathrm{e}^{-h} {W}^\kappa  \partial^\alpha w_\kappa
	\\
	= & \mathrm{e}^{-h} {W}^\kappa  \epsilon_{\kappa \ \gamma \delta}^{ \ \alpha} u^\gamma \mathrm{vort}^\delta \bw + \mathrm{e}^{-h} {W}^\kappa ( -u_\kappa u^\gamma \partial_\gamma w^\alpha
	+ u_\kappa u^\gamma \partial^\alpha w_\gamma
	 + u^\alpha u^\gamma \partial_\gamma w_\kappa
	 \\
	 & \ - u^\alpha u^\gamma \partial_\kappa w_\gamma)
	  + \partial^\alpha( \mathrm{e}^{-h} {W}^\kappa  w_\kappa )
	 +  \mathrm{e}^{-h} w_\kappa {W}^\kappa \partial^\alpha h  -  \mathrm{e}^{-h} w_\kappa \partial^\alpha{W}^\kappa   .
	\end{split}
\end{equation}
Seeing from \eqref{MFd}, we can derive that
\begin{equation}\label{R1d}
	\begin{split}
		\mathrm{e}^{-h} {W}^\kappa  \epsilon_{\kappa \ \gamma \delta}^{ \ \alpha} u^\gamma \mathrm{vort}^\delta \bw=& \mathrm{e}^{-h} {W}^\kappa  \epsilon_{\kappa \ \gamma \delta}^{ \ \alpha} u^\gamma (W^\delta- c_s^{-2} \epsilon^{\delta \beta \mu \nu}u_\beta w_\nu \partial_\mu )
		\\
		=& -\epsilon_{\kappa \ \gamma \delta}^{ \ \alpha}  \epsilon^{\delta \beta \mu \nu} c_s^{-2}  \mathrm{e}^{-h}  u^\gamma  u_\beta w_\nu {W}^\kappa \partial_\mu h.
	\end{split}
\end{equation}
Combing \eqref{R1a}, \eqref{R1d}, and \eqref{R1c}, we get
\begin{equation}\label{R1}
	\begin{split}
		\mathrm{R}_1 = & -\epsilon_{\kappa \ \gamma \delta}^{ \ \alpha}  \epsilon^{\delta \beta \mu \nu} c_s^{-2}  \mathrm{e}^{-h}  u^\gamma  u_\beta w_\nu {W}^\kappa \partial_\mu h
		+ \mathrm{e}^{-h} {W}^\kappa ( -u_\kappa u^\gamma \partial_\gamma w^\alpha
		+ u_\kappa u^\gamma \partial^\alpha w_\gamma )
		\\
		&	+ \mathrm{e}^{-h} {W}^\kappa ( u^\alpha u^\gamma \partial_\gamma w_\kappa - u^\alpha u^\gamma \partial_\kappa w_\gamma)	+  \mathrm{e}^{-h} w_\kappa {W}^\kappa \partial^\alpha h  -  \mathrm{e}^{-h} w_\kappa \partial^\alpha{W}^\kappa
		\\
		& + \partial^\alpha( \mathrm{e}^{-h} {W}^\kappa  w_\kappa )
		- \epsilon^{\alpha \beta \gamma \delta}u_\beta \partial_\kappa u_\delta \partial_\gamma {W}^\kappa+\epsilon^{\alpha \beta \gamma \delta} {W}^\kappa \partial_\kappa u_\beta \partial_\gamma u_\delta .
	\end{split}
\end{equation}
As for $\mathrm{R}_2$, we have
\begin{align}\label{R2}
  \mathrm{R}_2=& - \epsilon^{\alpha \beta \gamma \delta}u_\beta \partial_\gamma(u_\delta {W}^\kappa u^\lambda \partial_\lambda u_\kappa)
  \nonumber
  \\
  = &- \epsilon^{\alpha \beta \gamma \delta}u_\beta \partial_\gamma u_\delta {W}^\kappa u^\lambda \partial_\lambda u_\kappa- \epsilon^{\alpha \beta \gamma \delta}u_\beta u_\delta \partial_\gamma{W}^\kappa u^\lambda \partial_\lambda u_\kappa
  \nonumber
  \\
  &- \epsilon^{\alpha \beta \gamma \delta}u_\beta u_\delta {W}^\kappa \partial_\gamma u^\lambda \partial_\lambda u_\kappa
  - \epsilon^{\alpha \beta \gamma \delta}u_\beta u_\delta {W}^\kappa \partial_\gamma u^\lambda \partial_\gamma(\partial_\lambda u_\kappa)
  \nonumber
\\
=& (- \epsilon^{\alpha \beta \gamma \delta}u_\beta  \partial_\gamma u_\delta)   {W}^\kappa ( u^\lambda \partial_\lambda u_\kappa )
\nonumber
\\
=& \mathrm{e}^{-h}w^\alpha   {W}^\kappa ( -\partial_\kappa h - u_\kappa u^\lambda \partial_\lambda h ) \nonumber
\\
=& -\mathrm{e}^{-h}w^\alpha   {W}^\kappa  \partial_\kappa h.
\end{align}
For $\mathrm{R}_3$, we deduce that
\begin{equation*}
  \begin{split}
  \mathrm{R}_3=&2\epsilon^{\alpha \beta \gamma \delta}\epsilon_{\delta \eta \mu \nu}u_\beta \partial_\gamma \big( u^{\eta} \partial^{\mu} w^\kappa \partial^{\nu} u_\kappa \big)
  \\
  =& -2u_\beta \partial_\gamma \big( u^{\eta}\partial^{\mu} w^\kappa \partial^{\nu} u_\kappa \big) \left( \delta^\alpha_{\eta} \delta^\gamma_{\mu} \delta^\beta_{\nu}- \delta^\alpha_{\eta} \delta^{\beta}_{\mu}\delta^\gamma_{\nu}
 + \delta^\beta_{\eta} \delta^\alpha_{\mu} \delta^\gamma_{\nu}-\delta^\beta_{\eta}\delta^\gamma_{\mu}\delta^\alpha_{\nu}
   + \delta^\gamma_{\eta} \delta^\beta_{\mu} \delta^\alpha_{\nu}- \delta^\gamma_{\eta} \delta^\alpha_{\mu} \delta^\beta_{\nu} \right)
  \\
  =& -2   \underbrace{ u_\beta \partial_\gamma ( u^{\alpha}\partial^{\gamma} w^\kappa \partial^{\beta} u_\kappa ) }_{\equiv \mathrm{R}_{31}}
   +2  \underbrace{  u_\beta \partial_\gamma ( u^{\alpha}\partial^{\beta} w^\kappa \partial^{\gamma} u_\kappa ) }_{\equiv \mathrm{R}_{32}}  -2 \underbrace{ u_\beta \partial_\gamma ( u^{\beta}\partial^{\alpha} w^\kappa \partial^{\gamma} u_\kappa ) }_{\equiv \mathrm{R}_{33}}
   \\
   & + 2  \underbrace{ u_\beta \partial_\gamma ( u^{\beta}\partial^{\gamma} w^\kappa \partial^{\alpha} u_\kappa ) }_{\equiv \mathrm{R}_{34}}
    -2 \underbrace{ u_\beta \partial_\gamma ( u^{\gamma}\partial^{\beta} w^\kappa \partial^{\alpha} u_\kappa ) }_{\equiv \mathrm{R}_{35}}
   + 2 \underbrace{ u_\beta \partial_\gamma ( u^{\gamma}\partial^{\alpha} w^\kappa \partial^{\beta} u_\kappa ) }_{\equiv \mathrm{R}_{36}}.
  \end{split}
\end{equation*}
So we have
\begin{equation}\label{r3e}
	\mathrm{R}_3=-2\mathrm{R}_{31}+2\mathrm{R}_{32}-2\mathrm{R}_{33}+2\mathrm{R}_{34}-2\mathrm{R}_{35}+2\mathrm{R}_{36}.
\end{equation}
Let us now calculate $\mathrm{R}_{31}, \mathrm{R}_{32}, \cdots$, and $\mathrm{R}_{36}$. For $\mathrm{R}_{31}$, we firstly have
\begin{equation}\label{cr02}
  \mathrm{R}_{31}=  u_\beta \partial_\gamma u^{\alpha}\partial^{\gamma} w^\kappa \partial^{\beta} u_\kappa
                    +  u_\beta  u^{\alpha} \partial_\gamma(\partial^{\gamma} w^\kappa) \partial^{\beta} u_\kappa
                   +  u_\beta  u^{\alpha}\partial^{\gamma} w^\kappa \partial_\gamma( \partial^{\beta} u_\kappa ).
\end{equation}
Secondly, we can also get
\begin{align}\label{cr03}
   u_\beta  u^{\alpha}\partial^{\gamma} w^\kappa \partial_\gamma( \partial^{\beta} u_\kappa )
  =& u^{\alpha}\partial^{\gamma} w^\kappa u^\beta \partial_\beta( \partial_{\gamma} u_\kappa ) \nonumber
  \\
  =& u^\beta \partial_\beta \big(   u^{\alpha}\partial^{\gamma} w^\kappa \partial_{\gamma} u_\kappa \big)
   -  u^\beta  \partial^{\gamma} w^\kappa  \partial_\beta u^\alpha \partial_{\gamma} u_\kappa
    - u^\beta u^{\alpha} \partial_{\gamma} u_\kappa \partial_\beta (\partial^{\gamma} w^\kappa) \nonumber
   \\
   =&u^\kappa \partial_\kappa \big(   u^{\alpha}\partial^{\gamma} w^\lambda \partial_{\gamma} u_\lambda \big)-  u^\beta  \partial^{\gamma} w^\kappa  \partial_\beta u^\alpha \partial_{\gamma} u_\kappa
    - u^\beta u^{\alpha} \partial_{\gamma} u_\kappa \partial_\beta (\partial^{\gamma} w^\kappa).
\end{align}
Substituting \eqref{cr03} to \eqref{cr02}, it follows
\begin{equation}\label{R31}
  \begin{split}
  \mathrm{R}_{31}=& u^\kappa \partial_\kappa \big(   u^{\alpha}\partial^{\gamma} w^\lambda \partial_{\gamma} u_\lambda \big)
  -  u^\beta  \partial^{\gamma} w^\kappa  \partial_\beta u^\alpha \partial_{\gamma} u_\kappa
  +  u_\beta \partial_\gamma u^{\alpha}\partial^{\gamma} w^\kappa \partial^{\beta} u_\kappa
                   \\
                   & +   u_\beta  u^{\alpha} \partial_\gamma(\partial^{\gamma} w^\kappa) \partial^{\beta} u_\kappa-u^\beta u^{\alpha} \partial_{\gamma} u_\kappa \partial_\beta (\partial^{\gamma} w^\kappa)  .
  \end{split}
\end{equation}
For $\mathrm{R}_{32}$, we can compute it by
\begin{equation}\label{R320}
  \begin{split}
  \mathrm{R}_{32}=& u_\beta \partial_\gamma u^{\alpha}\partial^{\beta} w^\kappa \partial^{\gamma} u_\kappa
                    +  u_\beta  u^{\alpha} \partial_\gamma(\partial^{\beta} w^\kappa) \partial^{\gamma} u_\kappa
                   +  u^{\alpha}u_\beta\partial^{\beta} w^\kappa \partial_\gamma( \partial^{\gamma} u_\kappa ).
  \end{split}
\end{equation}
Using \eqref{CEQ}, we can rewrite \eqref{R320} as
\begin{equation}\label{R32}
  \begin{split}
  \mathrm{R}_{32}=&  u_\beta \partial_\gamma u^{\alpha}\partial^{\beta} w^\kappa \partial^{\gamma} u_\kappa
                    +  u_\beta  u^{\alpha} \partial^{\gamma} u_\kappa \partial_\gamma(\partial^{\beta} w^\kappa)
                    -  u^{\alpha}u^\kappa w^\lambda \partial_\lambda h \partial_\gamma( \partial^{\gamma} u_\kappa )
                   \\
                   & + u^{\alpha} w^\lambda \partial_\lambda u^\kappa  \partial_\gamma( \partial^{\gamma} u_\kappa )
- u^{\alpha}w^\kappa \partial_\lambda u^\lambda  \partial_\gamma( \partial^{\gamma} u_\kappa ).
  \end{split}
\end{equation}
For $\mathrm{R}_{33}$ is the most difficult term, we postpone to consider it. We turn to calculate $\mathrm{R}_{34}$ now. Using the chain rule, $u_\beta u^{\beta}=-1$, and $u_\beta \partial_\gamma u^{\beta}=0$, we have
\begin{align}\label{R340}
  \mathrm{R}_{34}=&u_\beta \partial_\gamma ( u^{\beta}\partial^{\gamma} w^\kappa \partial^{\alpha} u_\kappa ) \nonumber
  \\
  =&  u_\beta \partial_\gamma u^{\beta}  \partial^{\gamma} w^\kappa \partial^{\alpha} u_\kappa
   +  u_\beta u^{\beta}  \partial^\gamma ( \partial_{\gamma} w^\kappa )\partial^{\alpha} u_\kappa
   +  u_\beta u^{\beta}  \partial^\gamma w^\kappa \partial_{\gamma} (\partial^{\alpha} u_\kappa) \nonumber
  \\
  =& -  \partial_\gamma (\partial^{\gamma} w^\kappa) \partial^{\alpha} u_\kappa -  \partial^{\gamma} w^\kappa \partial_{\gamma} ( \partial^\alpha u_\kappa).
\end{align}
On the other hand, we can rewrite
\begin{equation}\label{R341}
   -  \partial^{\gamma} w^\kappa \partial_{\gamma} ( \partial^\alpha u_\kappa)=  \partial^\alpha (-\partial_\gamma u_\kappa \partial^{\gamma} w^\kappa ) +  \partial_\gamma u_\kappa \partial^\alpha(\partial^{\gamma} w^\kappa).
\end{equation}
Inserting \eqref{R341} to \eqref{R340}, we obtain
\begin{equation}\label{R34}
  \mathrm{R}_{34}=
   \partial^\alpha (-  \partial^{\gamma} w^\kappa \partial_\gamma u_\kappa)-  \partial^{\alpha} u_\kappa \partial^\gamma \partial_{\gamma} w^\kappa
                     +  \partial_\gamma u_\kappa \partial^\alpha \partial^{\gamma} w^\kappa.
\end{equation}
For $\mathrm{R}_{35}$, by the chain rule and \eqref{CEQ}, we have
\begin{align}\label{R35}
  \mathrm{R}_{35}=& u_\beta \partial_\gamma ( u^{\gamma}\partial^{\beta} w^\kappa \partial^{\alpha} u_\kappa ) \nonumber
  \\
  =&  u_\beta \partial_\gamma u^{\gamma}\partial^{\beta} w^\kappa \partial^{\alpha} u_\kappa
                    + u_\beta  u^{\gamma} \partial^\beta(\partial_{\gamma} w^\kappa) \partial^{\alpha} u_\kappa
                   +  u_\beta\partial^{\beta} w^\kappa u^{\gamma}\partial_\gamma( \partial^{\alpha} u_\kappa ) \nonumber
\\
=&  u_\beta \partial_\gamma u^{\gamma}\partial^{\beta} w^\kappa \partial^{\alpha} u_\kappa
                    +  u_\beta  u^{\gamma} \partial^{\alpha} u_\kappa \partial^\beta(\partial_{\gamma} w^\kappa)
                    -  u^\kappa u^{\gamma} w^\lambda \partial_\lambda h \partial_\gamma( \partial^{\alpha} u_\kappa )   \nonumber
                   \\
                   & + u^{\gamma} w^\lambda \partial_\lambda u^\kappa \partial_\gamma( \partial^{\alpha} u_\kappa )
                    - u^{\gamma} w^\kappa \partial_\lambda u^\lambda \partial_\gamma( \partial^{\alpha} u_\kappa ).
\end{align}
For $\mathrm{R}_{36}$, a direct calculation tells us
\begin{align}\label{cr05}
  \mathrm{R}_{36}=& u_\beta \partial_\gamma ( u^{\gamma}\partial^{\alpha} w^\kappa \partial^{\beta} u_\kappa ) \nonumber
 \\
  =& u_\beta \partial_\gamma u^{\gamma}\partial^{\alpha} w^\kappa \partial^{\beta} u_\kappa
                   +  u_\beta  u^{\gamma} \partial^\alpha(\partial_{\gamma} w^\kappa) \partial^{\beta} u_\kappa
                   +  u^\gamma \partial^{\alpha} w^\kappa u_{\beta}\partial^\beta( \partial_{\gamma} u_\kappa ).
\end{align}
Note
\begin{align}\label{cr06}
   u^\gamma \partial^{\alpha} w^\kappa u_{\beta}\partial^\beta( \partial_{\gamma} u_\kappa )
  =&  u^\gamma \partial^{\alpha} w^\beta u^{\kappa}\partial_\kappa( \partial_{\gamma} u_\beta ) \nonumber
  \\
  =& u^{\kappa}\partial_\kappa ( u^\gamma \partial^{\alpha} w^\beta \partial_{\gamma} u_\beta)
   - u^{\kappa}\partial_\kappa u^\gamma \partial_{\gamma} u_\beta \partial^{\alpha} w^\beta
   -  u^{\kappa}u^\gamma \partial_{\gamma} u_\beta \partial^{\alpha}(\partial_\kappa w^\beta).
\end{align}
Combining \eqref{cr05} with \eqref{cr06}, we therefore have
\begin{equation}\label{R36}
  \begin{split}
  \mathrm{R}_{36}= & u^{\kappa}\partial_\kappa ( u^\gamma \partial^{\alpha} w^\beta \partial_{\gamma} u_\beta)- u^{\kappa}\partial_\kappa u^\gamma \partial_{\gamma} u_\beta \partial^{\alpha} w^\beta +  u_\beta  u^{\gamma} \partial^\alpha(\partial_{\gamma} w^\kappa) \partial^{\beta} u_\kappa
   \\
   &-  u^{\kappa}u^\gamma \partial_{\gamma} u_\beta \partial^{\alpha}(\partial_\kappa w^\beta)
  +u_\beta \partial_\gamma u^{\gamma}\partial^{\alpha} w^\kappa \partial^{\beta} u_\kappa .
  \end{split}
\end{equation}
It still remains for us to calculate $\mathrm{R}_{33}$. Using the chain rule, $u_\beta u^{\beta}=-1$, and $u_\beta \partial_\gamma u^{\beta}=0$, it follows that
\begin{align}\label{cr07}
  \mathrm{R}_{33} =&  u_\beta \partial_\gamma ( u^{\beta}\partial^{\alpha} w^\kappa \partial^{\gamma} u_\kappa ) \nonumber
  \\
  =& u_\beta \partial_\gamma  u^{\beta}  \partial^{\alpha} w^\kappa \partial^{\gamma} u_\kappa
  +  u_\beta  u^{\beta}   \partial^\gamma u_{\kappa}\partial^{\alpha} (\partial_{\gamma} w^\kappa )
                   + u_\beta  u^{\beta}  \partial^\alpha w^\kappa  \partial_{\gamma}(\partial^{\gamma} u_\kappa)
                   \nonumber
    \\
    =& - \partial^\gamma u_{\kappa}\partial^{\alpha} \partial_{\gamma} w^\kappa -   \partial^\alpha w^\lambda  \partial_{\gamma}\partial^{\gamma} u_\lambda.
\end{align}
By using \eqref{HDe} and $u_\beta \partial_\lambda u^\beta=0$, we can compute out
\begin{align}\label{cr08}
    &-   \partial^\alpha w^\lambda  \partial_{\gamma}(\partial^{\gamma} u_\lambda) \nonumber
    \\
  =& -   \partial^\alpha w^\lambda  \big\{ \mathrm{vort}_\lambda ( \mathrm{vort} \bu)+ \partial_\lambda (\partial_\gamma u^\gamma)
  -u^\kappa \partial_\kappa (u^\gamma \partial_\gamma u_\lambda- u_\lambda \partial_\gamma u^\gamma  )
  \nonumber
  \\
  &  +2u_\beta \partial_\gamma u_\lambda \partial^\beta u^\gamma - 2u_\beta \partial_\gamma u^\gamma \partial^\beta u_\lambda- u_\lambda \partial_\gamma u_\beta \partial^\gamma u^\beta
   + u^\gamma \partial_\gamma u_\beta \partial_\lambda u^\beta \big\}
  \nonumber
  \\
  =& -   \partial^\alpha w^\lambda  \big\{ \mathrm{vort}_\lambda ( \mathrm{vort} \bu)+ \partial_\lambda (\partial_\gamma u^\gamma) \}
  +  u^\kappa \partial_\kappa \big \{  \partial^\alpha w^\lambda ( u^\gamma \partial_\gamma u_\lambda- u_\lambda \partial_\gamma u^\gamma  ) \big \} \nonumber
  \\
  &
    -  ( 2u_\beta \partial_\gamma u_\lambda \partial^\beta u^\gamma - 2u_\beta \partial_\gamma u^\gamma \partial^\beta u_\lambda- u_\lambda \partial_\gamma u_\beta \partial^\gamma u^\beta + u^\gamma \partial_\gamma u_\beta \partial_\lambda u^\beta ) \partial^\alpha w^\lambda \nonumber
    \\
   &  -u^\kappa  ( u^\gamma \partial_\gamma u_\lambda+ u_\lambda \partial_\gamma u^\gamma  ) \partial_\kappa \partial^\alpha w^\lambda .
\end{align}
By using \eqref{VAd}, we have
\begin{equation}\label{cr09}
	 \partial^\alpha w^\lambda \{ \mathrm{vort}_\lambda ( \mathrm{vort} \bu) \}=\partial^\alpha ( w^\lambda \mathrm{vort}_\lambda \bw)-  w^\lambda \partial^\alpha (\mathrm{vort}_\lambda \bw).
\end{equation}
Using \eqref{REE}, we also derive
\begin{align}\label{cr10}
    \partial^\alpha w^\lambda  \{  \partial_\lambda (\partial_\gamma u^\gamma) \}
  =& -  \partial^\alpha w^\lambda \partial_\lambda (c^{-2}_s u^\kappa \partial_\kappa h) \nonumber
  \\
  =&   \partial^\alpha w^\lambda  \big \{ - c_s^{-2} \partial_\lambda u^\kappa \partial_\kappa h +c_s^{-3}c'_s \partial_\lambda h u^\kappa \partial_\kappa h- c_s^{-2}  u^\kappa \partial_\kappa (\partial_\lambda h) \big\} \nonumber
  \\
  =&
  - c_s^{-2} \partial_\lambda u^\kappa \partial_\kappa h \partial^\alpha w^\lambda
 -c_s^{-3}c'_s  u^\kappa \partial_\kappa h \partial_\lambda h \partial^\alpha w^\lambda \nonumber
  \\
  & - u^\kappa \partial_\kappa (c_s^{-2}\partial_\lambda h \partial^\alpha w^\lambda  )+c_s^{-2}  \partial_\lambda h  u^\kappa \partial_\kappa(\partial^\alpha w^\lambda)+u^\kappa \partial_\kappa (c_s^{-2}) \partial_\lambda h \partial^\alpha w^\lambda
  \nonumber
  \\
  =&
  - c_s^{-2} \partial_\lambda u^\kappa \partial_\kappa h \partial^\alpha w^\lambda
  +3c_s^{-1}c'_s  \partial_\kappa u^\kappa  \partial_\lambda h \partial^\alpha w^\lambda \nonumber
  \\
  & + u^\kappa \partial_\kappa (-c_s^{-2}\partial_\lambda h \partial^\alpha w^\lambda  )+c_s^{-2}  \partial_\lambda h  u^\kappa \partial_\kappa(\partial^\alpha w^\lambda) .
\end{align}
Combining \eqref{cr07}, \eqref{cr08}, \eqref{cr09}, and  \eqref{cr10}, we obtain
\begin{equation}\label{R33}
  \begin{split}
  \mathrm{R}_{33} =&  u^\kappa \partial_\kappa \left \{  \partial^\alpha w^\lambda ( u^\gamma \partial_\gamma u_\lambda- u_\lambda \partial_\gamma u^\gamma  + c_s^{-2}\partial_\lambda h ) \right \}
    - \partial^\gamma u_{\kappa}\partial^{\alpha} \partial_{\gamma} w^\kappa
    \\
    & -  ( 2u_\beta \partial_\gamma u_\lambda \partial^\beta u^\gamma - 2u_\beta \partial_\gamma u^\gamma \partial^\beta u_\lambda- u_\lambda \partial_\gamma u_\beta \partial^\gamma u^\beta + u^\gamma \partial_\gamma u_\beta \partial_\lambda u^\beta ) \partial^\alpha w^\lambda
    \\
    & + c_s^{-2} \partial_\lambda u^\kappa \partial_\kappa h \partial^\alpha w^\lambda
    -3c_s^{-1}c'_s  \partial_\kappa u^\kappa  \partial_\lambda h \partial^\alpha w^\lambda-c_s^{-2}  u^\kappa \partial_\lambda h   \partial_\kappa(\partial^\alpha w^\lambda)
    \\
    & -\partial^\alpha ( w^\lambda \mathrm{vort}_\lambda \bw)+  w^\lambda \partial^\alpha (\mathrm{vort}_\lambda \bw)  -u^\kappa  ( u^\gamma \partial_\gamma u_\lambda- u_\lambda \partial_\gamma u^\gamma  ) \partial_\kappa ( \partial^\alpha w^\lambda) .
  \end{split}
\end{equation}
Combining \eqref{r3e}, \eqref{R31}, \eqref{R32}, \eqref{R33}, \eqref{R34}, \eqref{R35} and \eqref{R36}, we have
\begin{equation}\label{R3}
  \begin{split}
  \mathrm{R}_{3}
  =&\partial^\alpha \Gamma+ u^\kappa \partial_\kappa\left(  -2u^\alpha \partial^\gamma w^\lambda \partial_\gamma u_\lambda+2 u_\lambda \partial_\gamma u^\gamma \partial^\alpha w^\lambda - 2c_s^{-2}\partial_\lambda h \partial^\alpha w^\lambda \right)
   +2  u^\beta  \partial^{\gamma} w^\kappa  \partial_\beta u^\alpha \partial_{\gamma} u_\kappa
  \\
  & -2  u_\beta \partial_\gamma u^{\alpha}\partial^{\gamma} w^\kappa \partial^{\beta} u_\kappa
   -2  u_\beta  u^{\alpha} \partial_\gamma(\partial^{\gamma} w^\kappa) \partial^{\beta} u_\kappa-2u^\beta u^{\alpha} \partial_{\gamma} u_\kappa \partial_\beta (\partial^{\gamma} w^\kappa)
  \\
  & +2  u_\beta \partial_\gamma u^{\alpha}\partial^{\beta} w^\kappa \partial^{\gamma} u_\kappa
  + 2 u_\beta  u^{\alpha} \partial^{\gamma} u_\kappa \partial_\gamma(\partial^{\beta} w^\kappa)
  - 2 u^{\alpha}u^\kappa w^\lambda \partial_\lambda h \partial_\gamma( \partial^{\gamma} u_\kappa )
  \\
  & +2 u^{\alpha} w^\lambda \partial_\lambda u^\kappa  \partial_\gamma( \partial^{\gamma} u_\kappa )
  - 2 u^{\alpha}w^\kappa \partial_\lambda u^\lambda  \partial_\gamma( \partial^{\gamma} u_\kappa )
  +4\partial^\gamma u_{\kappa}\partial^{\alpha} \partial_{\gamma} w^\kappa
  \\
  & +  ( 4u_\beta \partial_\gamma u_\lambda \partial^\beta u^\gamma - 4u_\beta \partial_\gamma u^\gamma \partial^\beta u_\lambda- 2u_\lambda \partial_\gamma u_\beta \partial^\gamma u^\beta +2 u^\gamma \partial_\gamma u_\beta \partial_\lambda u^\beta ) \partial^\alpha w^\lambda
  \\
  & -2 c_s^{-2} \partial_\lambda u^\kappa \partial_\kappa h \partial^\alpha w^\lambda
  +6c_s^{-1}c'_s  \partial_\kappa u^\kappa  \partial_\lambda h \partial^\alpha w^\lambda+2c_s^{-2}  u^\kappa \partial_\lambda h   \partial_\kappa(\partial^\alpha w^\lambda)
  \\
  & + 2 \partial^\alpha w^\lambda \mathrm{vort}_\lambda \bw  + 2 u^\kappa  ( u^\gamma \partial_\gamma u_\lambda- u_\lambda \partial_\gamma u^\gamma  ) \partial_\kappa ( \partial^\alpha w^\lambda)
   -  2\partial^{\alpha} u_\kappa \partial^\gamma \partial_{\gamma} w^\kappa
  \\
  &
  -2 u_\beta \partial_\gamma u^{\gamma}\partial^{\beta} w^\kappa \partial^{\alpha} u_\kappa
  -2  u_\beta  u^{\gamma} \partial^{\alpha} u_\kappa \partial^\beta(\partial_{\gamma} w^\kappa)
   +2   u^\kappa u^{\gamma} w^\lambda \partial_\lambda h \partial_\gamma( \partial^{\alpha} u_\kappa )
  \\
  & -2 u^{\gamma} w^\lambda \partial_\lambda u^\kappa \partial_\gamma( \partial^{\alpha} u_\kappa )
  +2  u^{\gamma} w^\kappa \partial_\lambda u^\lambda \partial_\gamma( \partial^{\alpha} u_\kappa )
   - 2u^{\kappa}\partial_\kappa u^\gamma \partial_{\gamma} u_\beta \partial^{\alpha} w^\beta
   \\
   & +  2u_\beta  u^{\gamma} \partial^\alpha(\partial_{\gamma} w^\kappa) \partial^{\beta} u_\kappa
  -  2u^{\kappa}u^\gamma \partial_{\gamma} u_\beta \partial^{\alpha}(\partial_\kappa w^\beta)
  +2u_\beta \partial_\gamma u^{\gamma}\partial^{\alpha} w^\kappa \partial^{\beta} u_\kappa.
 \end{split}
\end{equation}
It remains for us to consider $\mathrm{R}_4, \mathrm{R}_5, \mathrm{R}_6, \mathrm{R}_7$ and $\mathrm{R}_8$. They don't have derivatives loss given energy estimates, so it's enough for us to use chain rules. For $\mathrm{R}_4$, we have
\begin{equation}\label{R4}
	\begin{split}
	\mathrm{R}_4=
	&  -2 \epsilon^{\alpha \beta \gamma \delta}  \epsilon_{\delta}^{ \ \eta \mu \nu} c_s^{-3} c'_s u_\beta  u_\eta w_\nu \partial_\gamma h    \partial_\mu  u^\kappa \partial_\kappa h + \epsilon^{\alpha \beta \gamma \delta}  \epsilon_{\delta}^{ \ \eta \mu \nu} c_s^{-2} u_\beta \partial_\gamma  u_\eta w_\nu   \partial_\mu u^\kappa \partial_\kappa h
	\\
	& + \epsilon^{\alpha \beta \gamma \delta}  \epsilon_{\delta}^{ \ \eta \mu \nu} c_s^{-2} u_\beta  u_\eta \partial_\gamma w_\nu   \partial_\mu u^\kappa \partial_\kappa h +  \epsilon^{\alpha \beta \gamma \delta}  \epsilon_{\delta}^{ \ \eta \mu \nu} c_s^{-2} u_\beta  u_\eta  w_\nu  \partial_\kappa h \partial_\gamma \partial_\mu u^\kappa
	\\
	& + \epsilon^{\alpha \beta \gamma \delta}  \epsilon_{\delta}^{ \ \eta \mu \nu} c_s^{-2} u_\beta  u_\eta w_\nu   \partial_\mu u^\kappa \partial_\gamma \partial_\kappa h .
	\end{split}
\end{equation}
Similarly, we can compute out
\begin{equation}\label{R5}
	\begin{split}
		\mathrm{R}_5=&  - 2 \epsilon^{\alpha \beta \gamma \delta} \mathrm{e}^{-h} u_\beta    w_\delta w^\kappa \partial_\kappa h \partial_\gamma h
		+ 2 \epsilon^{\alpha \beta \gamma \delta} \mathrm{e}^{-h} u_\beta  w^\kappa \partial_\gamma w_\delta \partial_\kappa h
		\\
		& + 2 \epsilon^{\alpha \beta \gamma \delta} \mathrm{e}^{-h} u_\beta  w_\delta \partial_\gamma w^\kappa \partial_\kappa h
		+ 2 \epsilon^{\alpha \beta \gamma \delta} \mathrm{e}^{-h}  u_\beta  w_\delta w^\kappa \partial_\gamma \partial_\kappa h,
	\end{split}
\end{equation}
and
\begin{equation}\label{R6}
	\begin{split}
		\mathrm{R}_6=& \epsilon^{\alpha \beta \gamma \delta} \epsilon^{\kappa \eta \mu \nu} u_\beta \partial_\gamma ( c_s^{-2} u_\eta w_\nu \partial_\mu h \partial_\kappa u_\delta)
		\\
		=& -2 \epsilon^{\alpha \beta \gamma \delta} \epsilon^{\kappa \eta \mu \nu} c_s^{-3} c'_s u_\beta   u_\eta w_\nu \partial_\gamma h \partial_\mu h \partial_\kappa u_\delta  + \epsilon^{\alpha \beta \gamma \delta} \epsilon^{\kappa \eta \mu \nu}  c_s^{-2} u_\beta w_\nu \partial_\gamma  u_\eta  \partial_\mu h \partial_\kappa u_\delta
		\\
		& + \epsilon^{\alpha \beta \gamma \delta} \epsilon^{\kappa \eta \mu \nu} c_s^{-2} u_\eta u_\beta \partial_\gamma w_\nu \partial_\mu h \partial_\kappa u_\delta
		+ \epsilon^{\alpha \beta \gamma \delta} \epsilon^{\kappa \eta \mu \nu} c_s^{-2} u_\beta  u_\eta w_\nu \partial_\kappa u_\delta \partial_\gamma \partial_\mu h
		\\
		& + \epsilon^{\alpha \beta \gamma \delta} \epsilon^{\kappa \eta \mu \nu} c_s^{-2} u_\beta   u_\eta w_\nu \partial_\mu h \partial_\gamma \partial_\kappa u_\delta,
	\end{split}
\end{equation}
and
\begin{equation}\label{R7}
	\begin{split}
		\mathrm{R}_7=& - \epsilon^{\alpha \beta \gamma \delta} \epsilon_{\delta}^{\ \eta \mu \nu} u_\beta \partial_\gamma ( c_s^{-2} u_\eta w_\nu \partial_\mu h \partial_\kappa u^\kappa )
		\\
		=& 2 \epsilon^{\alpha \beta \gamma \delta} \epsilon_{\delta}^{\ \eta \mu \nu} c_s^{-3} c'_s u_\beta u_\eta w_\nu \partial_\gamma h   \partial_\mu h \partial_\kappa u^\kappa - \epsilon^{\alpha \beta \gamma \delta} \epsilon_{\delta}^{\ \eta \mu \nu} c_s^{-2} u_\beta \partial_\gamma  u_\eta w_\nu \partial_\mu h \partial_\kappa u^\kappa
		\\
		& - \epsilon^{\alpha \beta \gamma \delta} \epsilon_{\delta}^{\ \eta \mu \nu} c_s^{-2}  u_\beta u_\eta \partial_\gamma  w_\nu \partial_\mu h \partial_\kappa u^\kappa   - \epsilon^{\alpha \beta \gamma \delta} \epsilon_{\delta}^{\ \eta \mu \nu} c_s^{-2}  u_\beta u_\eta w_\nu \partial_\kappa u^\kappa \partial_\gamma  \partial_\mu h
		\\
		& - \epsilon^{\alpha \beta \gamma \delta} \epsilon_{\delta}^{\ \eta \mu \nu} c_s^{-2} u_\beta u_\eta w_\nu \partial_\mu h  \partial_\gamma \partial_\kappa u^\kappa,
	\end{split}
\end{equation}
and
\begin{equation}\label{R8}
	\begin{split}
		\mathrm{R}_8=& -\epsilon^{\alpha \beta \gamma \delta} \epsilon_{\delta}^{\ \eta \mu \nu} u_\beta \partial_\gamma \{   ( c^{-2}_s +2 ) u_\eta w^\kappa \partial_\nu u_\kappa \partial_\mu h \}
		\\
		=& 2 \epsilon^{\alpha \beta \gamma \delta} \epsilon_{\delta}^{\ \eta \mu \nu} c_s^{-3} c'_s u_\beta  u_\eta w^\kappa \partial_\gamma h \partial_\nu u_\kappa \partial_\mu h  -\epsilon^{\alpha \beta \gamma \delta} \epsilon_{\delta}^{\ \eta \mu \nu}  ( c^{-2}_s +2 ) u_\beta w^\kappa \partial_\gamma   u_\eta  \partial_\nu u_\kappa \partial_\mu h
		\\
		& -\epsilon^{\alpha \beta \gamma \delta} \epsilon_{\delta}^{\ \eta \mu \nu} ( c^{-2}_s +2 ) u_\eta u_\beta \partial_\gamma   w^\kappa \partial_\nu u_\kappa \partial_\mu h  -\epsilon^{\alpha \beta \gamma \delta} \epsilon_{\delta}^{\ \eta \mu \nu} ( c^{-2}_s +2 ) u_\beta  u_\eta w^\kappa \partial_\mu h \partial_\gamma \partial_\nu u_\kappa
		\\
		& -\epsilon^{\alpha \beta \gamma \delta} \epsilon_{\delta}^{\ \eta \mu \nu} ( c^{-2}_s +2 )  u_\beta u_\eta w^\kappa \partial_\nu u_\kappa \partial_\gamma \partial_\mu h .
	\end{split}
\end{equation}
Substituting \eqref{R0}, \eqref{R1}, \eqref{R2}, \eqref{R3}, \eqref{R4}, \eqref{R5}, \eqref{R6}, \eqref{R7}, and \eqref{R8} to \eqref{RCEq}, we therefore get \eqref{SDe}.
\end{proof}
We turn to introduce a good wave formulation for $\bu_{+}$ as follows.
\begin{Lemma}\label{um}
	Let $\bu_{+}$ be defined in \eqref{uf}. Then $\bu_{+}$ satisfies the following wave equation
	\begin{equation}\label{fdr}
	\begin{split}
		\square_{\mathbf{g}} \bu_{+}
		=& \mathbf{g}^{0\alpha} \partial_\alpha \mathbf{Z}+ \bQ + \bB,
	\end{split}
\end{equation}
where $\bQ=(Q^0,Q^1,Q^2,Q^3)^{\mathrm{T}}$ has defined in \eqref{err}, $\mathbf{Z}=(Z^0,Z^1,Z^2,Z^3)^{\mathrm{T}}$ and $\mathbf{B}=(B^0,B^1,B^2,B^3)^{\mathrm{T}}$ are given by
\begin{equation}\label{fdwZ}
	\begin{split}
		\mathbf{Z}= & -\Omega (u^0)^2  (c_s^2+1) \mathbf{T} \bu_{-},
	\end{split}
\end{equation}
and
\begin{equation}\label{fdwB}
	\begin{split}
		\mathbf{B}=& -((u^0)^{-1}u^i+\mathbf{g}^{0i})\partial_i \mathbf{Z}
		- \mathbf{T}  u^0 \{  \Omega(c_s^2+1) u^\gamma \partial_{\gamma}\bu_{-} \}
		\\
		& \ -  u^\beta \partial_{\beta}\{   \Omega(c_s^2+1) u^\gamma \} \partial_{\gamma}\bu_{-} + \Omega c^2_s u_{-}^\alpha.
	\end{split}
\end{equation}
\end{Lemma}
\begin{proof}
	For $\bu=\bu_{+} + \bu_{-}$, so we have
		\begin{equation}\label{B1}
		\square_{g} u^\alpha= \square_{g} u_{+}^\alpha + \square_{g} u_{-}^\alpha.
	\end{equation}
On the other hand, by \eqref{WTe} and \eqref{De0}, we know
	\begin{equation}\label{B2}
	\begin{split}
		& \square_g u^\alpha=-\Omega c_s^2\mathrm{e}^{-h}W^\alpha+Q^\alpha,
	\end{split}
\end{equation}
and
	\begin{equation}\label{B3}
	\begin{split}
		& ( \mathrm{I}+ \mathrm{P}^{\beta \gamma}\partial^2_{\beta \gamma} ) u_{-}^\alpha = -\mathrm{e}^{-h}W^\alpha.
	\end{split}
\end{equation}
Due to \eqref{B1} and \eqref{B2}, it yields
\begin{equation}\label{B4}
	\square_{g} u_{+}^\alpha + \square_{g} u_{-}^\alpha = \Omega c^2_s ( \mathrm{I}+ \mathrm{P}^{\beta \gamma}\partial^2_{\beta \gamma} ) u_{-}^\alpha + Q^\alpha.
\end{equation}
By using \eqref{De0} again, we have
\begin{equation}\label{B5}
	\mathrm{P}^{\beta \gamma}\partial_{\beta \gamma}^2u^\alpha_{-}= m^{ \beta \gamma }\partial^2_{ \beta \gamma} u^\alpha_{-}+2 u^\beta u^\gamma \partial^2_{\beta \gamma} u^\alpha_{-} .
\end{equation}
Inserting \eqref{B5} to \eqref{B4}, we infer
\begin{equation}\label{B7}
	\square_{g} u_{+}^\alpha + \square_{g} u_{-}^\alpha = \Omega c^2_s \left( m^{ \beta \gamma }\partial^2_{ \beta \gamma} u^\alpha_{-}+2 u^\beta u^\gamma \partial^2_{\beta \gamma} u^\alpha_{-} \right) + \Omega c^2_s u_{-}^\alpha  + Q^\alpha .
\end{equation}
Also, from the expression of $g$, we have
	\begin{equation}\label{B9}
		\square_{g} u_{-}^\alpha= \Omega c_s^2 m^{ \beta \gamma }\partial^2_{ \beta \gamma}  u_{-}^\alpha+\Omega (c_s^2-1)u^\beta u^\gamma  \partial^2_{ \beta \gamma} u_{-}^\alpha,
	\end{equation}
Substituting \eqref{B9} to \eqref{B7}, one can see
		\begin{equation}\label{UM}
		\square_g u^\alpha_{+}= \Omega (c_s^2+1) u^\beta u^\gamma \partial^2_{\beta \gamma}u^\alpha_{-}+ \Omega c^2_s u_{-}^\alpha  + {Q}^\alpha.
	\end{equation}
Referring Lemma \ref{LD}, \eqref{UM} is not so good for us to use this modified Duhamel's principle. 
So we need to rewrite \eqref{UM} in another way. Write \eqref{UM} as
	\begin{equation}\label{fda}
		\begin{split}
			\square_{\mathbf{g}} \bu_{+}=& \Omega(c_s^2+1) u^\beta u^\gamma \partial^2_{\beta \gamma}\bu_{-}+ \bQ + \Omega c^2_s u_{-}^\alpha
			\\
			=& \mathbf{g}^{00} (\mathbf{g}^{00})^{-1} \Omega(c_s^2+1)  u^\beta u^\gamma \partial^2_{\beta \gamma}\bu_{-}+ \bQ + \Omega c^2_s u_{-}^\alpha.
		\end{split}
	\end{equation}
A simple calculation tells us
	\begin{equation}\label{fdc}
		\begin{split}
			& (\mathbf{g}^{00})^{-1} \Omega(c_s^2+1)  u^\beta u^\gamma \partial^2_{\beta \gamma}\bu_{-}
			\\
			=& u^\beta \partial_{\beta}\{  (\mathbf{g}^{00})^{-1} \Omega(c_s^2+1) u^\gamma \partial_{\gamma}\bu_{-} \} - u^\beta \partial_{\beta}\{  (\mathbf{g}^{00})^{-1} \Omega(c_s^2+1) u^\gamma \} \partial_{\gamma}\bu_{-}
			\\
			=&  \mathbf{T}\{  u^0 (\mathbf{g}^{00})^{-1} \Omega(c_s^2+1) u^\gamma \partial_{\gamma}\bu_{-} \}
			-\mathbf{T}  u^0 \{ (\mathbf{g}^{00})^{-1} \Omega(c_s^2+1) u^\gamma \partial_{\gamma}\bu_{-} \}
			\\
			& - u^\beta \partial_{\beta}\{  (\mathbf{g}^{00})^{-1} \Omega(c_s^2+1) u^\gamma \} \partial_{\gamma}\bu_{-} .
		\end{split}
	\end{equation}
	Substituting \eqref{fdc} to \eqref{fda}, we have
	\begin{equation}\label{fdr1}
		\begin{split}
			\square_{\mathbf{g}} \bu_{+}=& \Omega(c_s^2+1) u^\beta u^\gamma \partial_{\beta \gamma}\bu_{-}+ \bQ + \Omega c^2_s u_{-}^\alpha
			\\
			=& \mathbf{g}^{00} \mathbf{T}\{  u^0 (\mathbf{g}^{00})^{-1} \Omega(c_s^2+1) u^\gamma \partial_{\gamma}\bu_{-} \}+ \bQ + \Omega c^2_s u_{-}^\alpha
			\\
			& - \mathbf{T}  u^0 \{  \Omega(c_s^2+1) u^\gamma \partial_{\gamma}\bu_{-} \}
		 - u^\beta \partial_{\beta}\{   \Omega(c_s^2+1) u^\gamma \} \partial_{\gamma}\bu_{-} .
		\end{split}
	\end{equation}
Consider $\mathbf{g}^{00}=-1$. For any function $F$, we can also derive that
	\begin{equation}\label{wform3}
		\mathbf{g}^{0\alpha} \partial_\alpha F= -\partial_t F+ \mathbf{g}^{0i} \partial_i F=\mathbf{g}^{00} \mathbf{T}F-\left( (u^0)^{-1}u^i+\mathbf{g}^{0i}\right) \partial_i F.
	\end{equation}
Seeing from \eqref{wform3} and \eqref{fdr1}, so we have
	\begin{equation*}
	\begin{split}
		\square_{\mathbf{g}} \bu_{+}
		=& \mathbf{g}^{0\alpha} \partial_\alpha \{  u^0 (\mathbf{g}^{00})^{-1} \Omega(c_s^2+1) u^\gamma \partial_{\gamma}\bu_{-} \}
		+ \bQ + \Omega c^2_s u_{-}^\alpha
		\\
		&-\left( (u^0)^{-1}u^i+\mathbf{g}^{0i}\right) \partial_i  \{  u^0 (\mathbf{g}^{00})^{-1} \Omega(c_s^2+1) u^\gamma \partial_{\gamma}\bu_{-} \}  
		\\
		& - \mathbf{T}  u^0 \{  \Omega(c_s^2+1) u^\gamma \partial_{\gamma}\bu_{-} \}
		-  u^\beta \partial_{\beta}\{  \Omega(c_s^2+1) u^\gamma \} \partial_{\gamma}\bu_{-}
		\\
		=& \mathbf{g}^{0\alpha} \partial_\alpha \{ - ( u^0 )^2 \Omega(c_s^2+1) \mathbf{T} \bu_{-} \}
		+ \bQ + \Omega c^2_s u_{-}^\alpha
		\\
		&+ \left( (u^0)^{-1}u^i+\mathbf{g}^{0i}\right) \partial_i  \{  u^0  \Omega(c_s^2+1) u^\gamma \partial_{\gamma}\bu_{-} \}  
		\\
		& - \mathbf{T}  u^0 \{  \Omega(c_s^2+1) u^\gamma \partial_{\gamma}\bu_{-} \}
		 -  u^\beta \partial_{\beta}\{  \Omega(c_s^2+1) u^\gamma \} \partial_{\gamma}\bu_{-}.
	\end{split}
\end{equation*}
Noting \eqref{fdwZ} and \eqref{fdwB}, so we have proved \eqref{fdr}.
\end{proof}
\begin{remark}
By \eqref{fdr}, thus $\bu_+$ satisfies a better wave equation than $\bu$. Moreover, \eqref{fdr} plays a key role in Strichartz estimates for $\bu_{+}$.
\end{remark}
\subsection{Elliptic estimates}
In this subsection, we mainly prove some useful elliptic estimates. We first derive introduce a space-time elliptic estimate.
\subsubsection{Space-time elliptic estimate}
\begin{Lemma}\label{tu}
Let $(h,\bu)$ be a solution of \eqref{WTe}. Let $\bu_{-}$, $\mathbf{P}$, and $\mathbf{T}$ be defined in \eqref{De0}, \eqref{uf}, \eqref{opt}. Then $\mathbf{T}\bu_{-}$ satisfies
	\begin{equation}\label{tue}
		\begin{split}
			\mathbf{P} ( \mathbf{T} u^\alpha_{-})=&(u^0)^{-1} \mathrm{e}^{-h}c^2_s W^\alpha \partial_\kappa u^\kappa+ (u^0)^{-1} \mathrm{e}^{-h}  \big\{  {W}^\kappa   \partial_\kappa u^\alpha
			-{2} {W}^\alpha \partial_\kappa u^\kappa+ u^\alpha {W}^\beta  u^\kappa \partial_\kappa u_\beta
			\\
			& -{2} \epsilon^{\alpha \beta \gamma \delta}u_\beta \partial_\delta u^\kappa \partial_\gamma w_\kappa
			- \epsilon^{\kappa \beta \gamma \delta}c^{-2}_s u_\beta w_\delta \partial_\gamma h \partial_\kappa u^\alpha -  \epsilon^{\alpha \beta \gamma \delta} c^{-2}_s u_\beta w_\delta   \partial_\gamma u^\kappa \partial_\kappa h
			\\
			& -{2}\mathrm{e}^{-h}  w^\alpha w^\kappa \partial_\kappa h
			+ \epsilon^{\alpha \beta \gamma \delta} c^{-2}_s u_\beta w_\delta \partial_\gamma h \partial_\kappa u^\kappa
			+( c^{-2}_s +2 ) \epsilon^{\alpha \beta \gamma \delta} u_\beta w^\kappa \partial_\delta u_\kappa \partial_\gamma h \big\}
			\\
			& -(u^0)^{-1}u^\kappa \partial_\kappa (m^{\beta \gamma}+2u^\beta u^\gamma)\partial^2_{\beta \gamma}u^\alpha_{-}
			+ (m^{\beta \gamma}+2u^\beta u^\gamma) \partial_\kappa u^\alpha_{-} \partial^2_{\beta \gamma} ( \frac{u^\kappa}{u^0} )
			\\
			& + (m^{\beta \gamma}+2u^\beta u^\gamma) \partial_\gamma u^\kappa \partial^2_{\beta \kappa} u^\alpha_{-}
			+ (m^{\beta \gamma}+2u^\beta u^\gamma) \partial_\beta u^\kappa \partial^2_{\kappa \gamma} u^\alpha_{-} .
		\end{split}
	\end{equation}
	Moreover, the following estimate holds:
	\begin{equation}\label{tue2}
		\| \mathbf{T} \bu_{-} \|_{L^2_t H_x^s} \lesssim  \| \bw \|_{L^2_t H^{\frac32+}_x } ( \|(u^0-1, \mathring{\bu}, h) \|_{L^\infty_t H_x^{s}}+\|(u^0-1, \mathring{\bu}, h) \|^2_{L^\infty_t H_x^{s}}).
	\end{equation}
\end{Lemma}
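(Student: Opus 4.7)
The plan is to derive \eqref{tue} via the identity
$\mathbf{P}(\mathbf{T}u^\alpha_{-}) = \mathbf{T}(\mathbf{P}u^\alpha_{-}) + [\mathbf{P},\mathbf{T}]u^\alpha_{-}$,
using the defining relation $\mathbf{P}u^\alpha_{-}=\mathrm{e}^{-h}W^\alpha$ from \eqref{De0} together with the transport equations for $h$ and $W^\alpha$ that are already in hand. For the first piece, I would write $\mathbf{T}(\mathrm{e}^{-h}W^\alpha) = -\mathrm{e}^{-h}(\mathbf{T}h)W^\alpha + \mathrm{e}^{-h}\mathbf{T}W^\alpha$, then substitute $\mathbf{T}h = -(u^0)^{-1}c_s^2\partial_\kappa u^\kappa$ (from the first equation of \eqref{REE}) and $\mathbf{T}W^\alpha = (u^0)^{-1}u^\kappa\partial_\kappa W^\alpha$ with $u^\kappa\partial_\kappa W^\alpha$ replaced by the right-hand side of \eqref{CEQ1} in Lemma \ref{tr0}. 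These substitutions produce exactly the first two lines of \eqref{tue}, including the long bracketed expression.

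For the commutator $[\mathbf{P},\mathbf{T}]u^\alpha_{-}$, I would expand by Leibniz: with $\mathbf{P}=-(m^{\beta\gamma}+2u^\beta u^\gamma)\partial^2_{\beta\gamma}$ and $\mathbf{T}=(u^\kappa/u^0)\partial_\kappa$, the third-order terms on $u^\alpha_{-}$ cancel by equality of mixed partials. The remaining four terms are: one in which $\mathbf{T}$ falls on the coefficient $m^{\beta\gamma}+2u^\beta u^\gamma$, one in which both $\partial_{\beta\gamma}$ derivatives hit the coefficient $u^\kappa/u^0$, and two mixed terms where one derivative from $\mathbf{P}$ falls on each factor. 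Splitting $\partial(u^\kappa/u^0) = (u^0)^{-1}\partial u^\kappa - (u^0)^{-2}u^\kappa\partial u^0$ in the two mixed terms and re-absorbing the resulting Jacobian-type factors into the state-dependent pieces yields the last four lines of \eqref{tue}.

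To prove \eqref{tue2}, I would invoke the uniform ellipticity of $\mathbf{P}$ (Lemma \ref{app1}) to get $\|\mathbf{T}\bu_{-}\|_{H^s_x}\lesssim \|\mathbf{P}(\mathbf{T}\bu_{-})\|_{H^{s-2}_x}$ up to lower-order corrections controlled by the state. It then suffices to bound each term on the right-hand side of \eqref{tue} in $H^{s-2}_x$. Each algebraic term is schematically $W^\alpha$ or $w^\alpha$ (possibly multiplied by itself) times one derivative of $(\bu,h)$ times a smooth function of $(u^0-1,\mathring{\bu},h)$. Since $\bw\in H^{3/2+}_x$ implies $W\in H^{1/2+}_x$, Lemma \ref{ps} at the endpoint $\tfrac12 + (s-1) - \tfrac32 = s-2$ gives the linear bound $\|\bw\|_{H^{3/2+}_x}\|(u^0-1,\mathring{\bu},h)\|_{H^s_x}$, while Lemmas \ref{cj} and \ref{jh0} absorb the smooth composition factors and produce the quadratic term. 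For the four commutator terms in the last two lines of \eqref{tue}, I use that $\bu_{-}=\mathbf{P}^{-1}(\mathrm{e}^{-h}W^\alpha)$ gains two derivatives, so when $\bw\in H^{3/2+}_x$ one has $\partial^2\bu_{-}\in H^{1/2+}_x$, and Lemma \ref{ps} then controls the product $\partial\bu\cdot\partial^2\bu_{-}$ in $H^{s-2}_x$ by $\|\bw\|_{H^{3/2+}_x}\|(u^0-1,\mathring{\bu},h)\|_{H^s_x}$, with Lemma \ref{cj} producing the quadratic contribution when the smooth coefficient is expanded.

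The main obstacle is the tightness of the product estimates when $s$ is only slightly above $2$ and the vorticity sits at the critical threshold $H^{3/2+}$. In particular, the terms with two derivatives of $\bu_{-}$ must consume the full elliptic gain of $\mathbf{P}^{-1}$, and the allocation of derivatives between the low-regularity factor $\bw$ and the higher-regularity factors $(\mathring{\bu},h)$ must be done precisely at the sharp Sobolev-product endpoint so that no loss of derivatives occurs when closing the estimate.
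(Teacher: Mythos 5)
Your proposal is correct and follows essentially the same route as the paper: the identity \eqref{tue} is obtained exactly as you describe, by applying $\mathbf{T}$ to $\mathbf{P}u^\alpha_{-}=\mathrm{e}^{-h}W^\alpha$, substituting the transport equations \eqref{REE} and \eqref{CEQ1} for $\mathbf{T}h$ and $\mathbf{T}W^\alpha$, and expanding the commutator $[\mathbf{T},\mathbf{P}]u^\alpha_{-}$ by Leibniz. The estimate \eqref{tue2} is likewise proved in the paper by the same combination of the ellipticity of $\mathbf{P}$, the product estimates of Lemma \ref{ps} at the endpoint you identify, and the elliptic gain $\|\nabla\bu_{-}\|_{H^{s-1}_x}\lesssim(1+\|h\|_{H^{\frac32+}_x})\|\bw\|_{H^{s-1}_x}$ for the commutator terms involving second derivatives of $\bu_{-}$.
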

\begin{proof}
	Recall $\mathbf{T}=(u^0)^{-1} u^\kappa \partial_\kappa $ in \eqref{opt}. Operating $\mathbf{T}$ on \eqref{uf}, we get
	\begin{equation*}
		\begin{split}
			\mathbf{T} ( \mathbf{P} u^\alpha_{-} ) = & \mathbf{T} (\mathrm{e}^{-h} W^\alpha)= - \mathrm{e}^{-h}W^\alpha \mathbf{T}h+\mathrm{e}^{-h} \mathbf{T} W^\alpha.
		\end{split}
	\end{equation*}
	By using \eqref{REE} and \eqref{CEQ1}, we have
	\begin{equation}\label{c8}
		\begin{split}
			\mathbf{T} ( \mathbf{P} u^\alpha_{-} )
			=&(u^0)^{-1} \mathrm{e}^{-h}c^2_s W^\alpha \partial_\kappa u^\kappa+ (u^0)^{-1} \mathrm{e}^{-h}  \big\{  {W}^\kappa   \partial_\kappa u^\alpha
			-{2} {W}^\alpha \partial_\kappa u^\kappa+ u^\alpha {W}^\beta  u^\kappa \partial_\kappa u_\beta
			\\
			& -{2} \epsilon^{\alpha \beta \gamma \delta}u_\beta \partial_\delta u^\kappa \partial_\gamma w_\kappa
			- \epsilon^{\kappa \beta \gamma \delta}c^{-2}_s u_\beta w_\delta \partial_\gamma h \partial_\kappa u^\alpha -  \epsilon^{\alpha \beta \gamma \delta} c^{-2}_s u_\beta w_\delta   \partial_\gamma u^\kappa \partial_\kappa h
			\\
			& -{2}\mathrm{e}^{-h}  w^\alpha w^\kappa \partial_\kappa h
			+ \epsilon^{\alpha \beta \gamma \delta} c^{-2}_s u_\beta w_\delta \partial_\gamma h \partial_\kappa u^\kappa
			+( c^{-2}_s +2 ) \epsilon^{\alpha \beta \gamma \delta} u_\beta w^\kappa \partial_\delta u_\kappa \partial_\gamma h \big\}.
		\end{split}
	\end{equation}
	By the commutation rule, we get
	\begin{equation}\label{c9}
		\mathbf{T} ( \mathbf{P} u^\alpha_{-} ) =\mathbf{P}  (\mathbf{T} u^\alpha_{-} )+ [\mathbf{T}, \mathbf{P}]u^\alpha_{-}.
	\end{equation}
	By \eqref{De0}, we see $\mathbf{P}=\mathrm{I}-(m^{\beta \gamma}+2u^\beta u^\gamma)\partial_{\beta}\partial_{\gamma}$. So we can compute
	\begin{equation}\label{C10}
		\begin{split}
			[\mathbf{T}, \mathbf{P}]u^\alpha_{-}
			=& (u^0)^{-1}u^\kappa \partial_\kappa (m^{\beta \gamma}+2u^\beta u^\gamma)\partial^2_{\beta \gamma}u^\alpha_{-}
			-(m^{\beta \gamma}+2u^\beta u^\gamma) \partial_\kappa u^\alpha_{-} \partial^2_{\beta \gamma}\{ (u^0)^{-1} u^\kappa \}
			\\
			& -(m^{\beta \gamma}+2u^\beta u^\gamma) \partial_\gamma u^\kappa \partial^2_{\beta \kappa} u^\alpha_{-}
			-(m^{\beta \gamma}+2u^\beta u^\gamma) \partial_\beta u^\kappa \partial^2_{\kappa \gamma} u^\alpha_{-}.
		\end{split}
	\end{equation}
	Combining \eqref{c8}, \eqref{c9}, and \eqref{C10}, we can obtain \eqref{tue}. It remains for us to estimate \eqref{tue2}.

	By the space-time elliptic estimates \eqref{ellip}, and \eqref{tue}, \eqref{MFd}, and Lemma \ref{ps}, we have
	\begin{equation}\label{bsl}
		\begin{split}
			&\| \mathbf{T}  \bu_{-} \|_{L^2_t H_x^s}
			\\
			\lesssim & \| \bW \cdot d \bu \|_{L^2_t H_x^{s-2}}+ \| d\bw \cdot d\bu \|_{L^2_t H_x^{s-2}}+\| \bw\cdot d\bu \cdot dh \|_{L^2_t H_x^{s-2}}
			\\
			& + \|\nabla^2 \bu \cdot \nabla \bu_{-} \|_{L^2_t H_x^{s-2}}+ \|\nabla^2 \bu_{-} \cdot \nabla \bu \|_{L^2_t H_x^{s-2}}
			\\
			\lesssim & \| d\bw \cdot d\bu \|_{L^2_t H_x^{s-2}}+\| \bw\cdot d\bu \cdot dh \|_{L^2_t H_x^{s-2}}
			+ \|\nabla^2 \bu \cdot \nabla \bu_{-} \|_{L^2_t H_x^{s-2}}+ \|\nabla^2 \bu_{-} \cdot \nabla \bu \|_{L^2_t H_x^{s-2}}
			\\
			\lesssim & \| \bw \|_{L^2_t H_x^{\frac32+} } \|d\bu \|_{H_x^{s-1}}+\| \bw \|_{L^2_t H_x^{\frac32+}}\|d\bu \|_{H_x^{s-1}}\|dh \|_{H_x^{s-1}}
			+ \| \nabla \bu \|_{H_x^{s-1} } \|\nabla \bu_{-} \|_{L^2_t H_x^{s-1}}.
		\end{split}
	\end{equation}
	By using \eqref{De0}, Sobolev imbeddings, and elliptic estimates, we can derive that
	\begin{equation}\label{ETAs}
		\begin{split}
			\| \nabla \bu_{-} \|_{L^2_t H_x^{s-1}} \leq  \| \mathrm{e}^{-h} \bW \|_{L^2_t H_x^{s-2}} &\lesssim  (1+\|h\|_{H_x^{\frac{3}{2}+}} )\| \bw \|_{L^2_t H_x^{s-1}}.
		\end{split}
	\end{equation}
	Combining \eqref{bsl}, \eqref{ETAs}, we therefore conclude
	\begin{equation*}
		\begin{split}
			\| \mathbf{T}  \bu_{-} \|_{L^2_t H_x^s}
			\lesssim & \| \bw \|_{L^2_t H_x^{\frac32+} } ( \|(u^0-1, \mathring{\bu}, h) \|_{H_x^{s}}+\|(u^0-1, \mathring{\bu}, h) \|^2_{H_x^{s}}).
		\end{split}
	\end{equation*}
	At this stage, we have finished the proof of Lemma \ref{tu}.
\end{proof}

\subsubsection{Space elliptic estimates for vorticity}
Let us introduce a decomposition of $\Delta \bw$ and $\Delta \bW$. It plays an important role in the energy part of $\bw$. Using the following decomposition and \eqref{MFd}, the energy bound of $\bw$ can be transferred to estimate $\bW$ and $\bG$ through $\mathrm{vort}\bw$ and $\mathrm{vort}\bW$. 
\begin{Lemma}\label{OE1}
	Let the one-form $\bw$ be defined in \eqref{VVd}. Then we have
	\begin{equation}\label{OEe}
		\begin{split}
			\partial_j w_i - \partial_i w_j = & \epsilon_{ji\gamma\delta}u^\gamma \mathrm{vort}^\delta \bw- u_j \partial_i u^\kappa w_\kappa + u_i \partial_j u^\kappa w_\kappa  - u_i u_j w^\kappa \partial_\kappa h
			\\
			& + u_i w^\kappa \partial_\kappa u_j-u_i w_j \partial_\kappa u^\kappa
			+ u_j u_i w^\kappa \partial_\kappa h - u_j w^\kappa \partial_\kappa u_i+ u_j w_i \partial_\kappa u^\kappa.
		\end{split}
	\end{equation}
\end{Lemma}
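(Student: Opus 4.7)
The plan is to specialize the general one-form identity in Lemma \ref{OE} to $\bA = \bw$ with spatial indices $(\alpha,\beta) = (j,i)$, and then convert the resulting $u^\kappa\partial w_\kappa$ and $u^\kappa\partial_\kappa w$ expressions into the precise right-hand side of \eqref{OEe} using two auxiliary ingredients: the transport equation \eqref{CEQ} for $\bw$, and the orthogonality $u^\kappa w_\kappa = 0$.

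First I would apply Lemma \ref{OE} directly to obtain
\begin{equation*}
\partial_j w_i - \partial_i w_j = \epsilon_{ji\gamma\delta}u^\gamma\,\mathrm{vort}^\delta\bw + u_j u^\kappa \partial_i w_\kappa - u_i u^\kappa \partial_j w_\kappa + u_i u^\kappa \partial_\kappa w_j - u_j u^\kappa \partial_\kappa w_i.
\end{equation*}
Next, I handle the two material-derivative terms $u_i u^\kappa \partial_\kappa w_j$ and $-u_j u^\kappa \partial_\kappa w_i$ by invoking the index-lowered version of \eqref{CEQ}, namely $u^\kappa \partial_\kappa w_\alpha = -u_\alpha w^\kappa \partial_\kappa h + w^\kappa \partial_\kappa u_\alpha - w_\alpha \partial_\kappa u^\kappa$. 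Substituting yields exactly the block
\begin{equation*}
-u_i u_j w^\kappa \partial_\kappa h + u_i w^\kappa \partial_\kappa u_j - u_i w_j \partial_\kappa u^\kappa + u_j u_i w^\kappa \partial_\kappa h - u_j w^\kappa \partial_\kappa u_i + u_j w_i \partial_\kappa u^\kappa
\end{equation*}
appearing in \eqref{OEe}. (The two $u_iu_j w^\kappa\partial_\kappa h$ terms are opposite in sign and would cancel, but are kept separately to match the statement verbatim.)

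For the remaining two terms $u_j u^\kappa \partial_i w_\kappa - u_i u^\kappa \partial_j w_\kappa$, I rely on the orthogonality $u^\kappa w_\kappa = 0$. This follows immediately from the definition \eqref{VVd}: since $w^\alpha = -\epsilon^{\alpha\beta\gamma\delta}u_\beta \partial_\gamma(Hu_\delta)$, the contraction $u_\alpha w^\alpha$ carries $u_\alpha u_\beta$ symmetrized against the antisymmetric $\epsilon^{\alpha\beta\gamma\delta}$ and hence vanishes. Differentiating $u^\kappa w_\kappa = 0$ in the $\partial_i$ and $\partial_j$ directions gives $u^\kappa \partial_i w_\kappa = -\partial_i u^\kappa \cdot w_\kappa$ and $u^\kappa \partial_j w_\kappa = -\partial_j u^\kappa \cdot w_\kappa$. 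Multiplying by $u_j$ and $-u_i$ respectively produces $-u_j \partial_i u^\kappa w_\kappa$ and $+u_i \partial_j u^\kappa w_\kappa$, matching the last two pieces needed.

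Collecting everything yields \eqref{OEe} exactly. There is no analytical obstacle here; the identity is purely algebraic, being a direct specialization of Lemma \ref{OE} rewritten by the transport equation \eqref{CEQ} and the orthogonality relation. The main point requiring care is sign-tracking through the index permutations of $\epsilon$ and the commutation of $u^\kappa$ past $\partial_i, \partial_j$ versus $\partial_\kappa$; beyond this, the computation is mechanical.
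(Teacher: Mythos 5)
Your proof is correct and follows essentially the same route as the paper's: apply Lemma \ref{OE} to $\bw$ with spatial indices, use $u^\kappa w_\kappa=0$ to convert $u_ju^\kappa\partial_i w_\kappa - u_iu^\kappa\partial_j w_\kappa$ into $-u_j\partial_i u^\kappa w_\kappa + u_i\partial_j u^\kappa w_\kappa$, and substitute the (index-lowered) transport equation \eqref{CEQ} for the material-derivative terms. The only addition is your explicit justification of $u^\kappa w_\kappa=0$ via antisymmetry of $\epsilon$, which the paper uses implicitly.
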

\begin{proof}
	By Lemma \ref{OE}, we have
	\begin{equation}\label{OE0}
		\partial_j w_i - \partial_i w_j = \epsilon_{ji\gamma\delta}u^\gamma \mathrm{vort}^\delta \bw+ u_j u^\kappa \partial_i w_\kappa - u_i u^\kappa \partial_j w_\kappa +u_i u^\kappa \partial_\kappa w_j -u_j u^\kappa \partial_\kappa w_i .
	\end{equation}
	For $u^\kappa w_\kappa=0$, we can obtain
	\begin{equation}\label{OE2}
		u_j u^\kappa \partial_i w_\kappa - u_i u^\kappa \partial_j w_\kappa= -u_j \partial_i u^\kappa  w_\kappa + u_i \partial_j u^\kappa  w_\kappa.
	\end{equation}
	Substituting \eqref{OE2} and \eqref{CEQ} to \eqref{OE0}, it follows
	\begin{equation*}
		\begin{split}
			\partial_j w_i - \partial_i w_j = & \epsilon_{ji\gamma\delta}u^\gamma \mathrm{vort}^\delta\bw- u_j \partial_i u^\kappa w_\kappa + u_i \partial_j u^\kappa w_\kappa  - u_i u_j w^\kappa \partial_\kappa h
			\\
			&  + u_i w^\kappa \partial_\kappa u_j-u_i w_j \partial_\kappa u^\kappa
			+ u_j u_i w^\kappa \partial_\kappa h - u_j w^\kappa \partial_\kappa u_i+ u_j w_i \partial_\kappa u^\kappa.
		\end{split}
	\end{equation*}
	So we have proved \eqref{OEe}. 
\end{proof}

\begin{Lemma}\label{C}
	Let $W$ be defined in \eqref{MFd}. Then we have
	\begin{equation}\label{c2}
		\begin{split}
			\partial_i W_j - \partial_j W_i=&\epsilon_{ij\gamma\delta}u^\gamma \mathrm{vort}^\delta \bW- u_i W_\kappa \partial_ju^\kappa +u_j W_\kappa \partial_i u^\kappa
			\\
			&+ u_j \big\{ {W}^\kappa   \partial_\kappa u_i   -{2} {W}_i \partial_\kappa u^\kappa+ u_i {W}^\beta  u^\kappa \partial_\kappa u_\beta
			-{2} \epsilon_{i}^{ \ \beta \gamma \delta}u_\beta \partial_\delta u^\kappa \partial_\gamma w_\kappa
			\\
			&  -{2}\mathrm{e}^{-h}  w_i w^\kappa \partial_\kappa h -  \epsilon_i^{ \ \beta \gamma \delta} c^{-2}_s u_\beta w_\delta   \partial_\gamma u^\kappa \partial_\kappa h
			- \epsilon^{\kappa \beta \gamma \delta}c^{-2}_s u_\beta w_\delta \partial_\gamma h \partial_\kappa u_i
			\\
			& + \epsilon_i^{ \ \beta \gamma \delta} c^{-2}_s u_\beta w_\delta \partial_\gamma h \partial_\kappa u^\kappa
			+( c^{-2}_s +2 ) \epsilon_i^{\ \beta \gamma \delta} u_\beta w^\kappa \partial_\delta u_\kappa \partial_\gamma h \big\}
			\\
			&-u_i \big\{ {W}^\kappa   \partial_\kappa u_j   -{2} {W}_j \partial_\kappa u^\kappa+ u_j {W}^\beta  u^\kappa \partial_\kappa u_\beta
			-{2} \epsilon_{j}^{ \ \beta \gamma \delta}u_\beta \partial_\delta u^\kappa \partial_\gamma w_\kappa
			\\
			&  -{2}\mathrm{e}^{-h}  w_j w^\kappa \partial_\kappa h -  \epsilon_j^{ \ \beta \gamma \delta} c^{-2}_s u_\beta w_\delta   \partial_\gamma u^\kappa \partial_\kappa h
			- \epsilon^{\kappa \beta \gamma \delta}c^{-2}_s u_\beta w_\delta \partial_\gamma h \partial_\kappa u_j
			\\
			& + \epsilon_j^{ \ \beta \gamma \delta} c^{-2}_s u_\beta w_\delta \partial_\gamma h \partial_\kappa u^\kappa
			+( c^{-2}_s +2 ) \epsilon_j^{\ \beta \gamma \delta} u_\beta w^\kappa \partial_\delta u_\kappa \partial_\gamma h \big\} .
		\end{split}
	\end{equation}
\end{Lemma}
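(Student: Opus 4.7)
The plan is to apply the Choquet-Bruhat identity (Lemma \ref{OE}) with $\bA = \bW$ and then eliminate the transport derivatives $u^\kappa \partial_\kappa W_i$ and $u^\kappa \partial_\kappa W_j$ using the transport equation \eqref{CEQ1} for $\bW$ derived in Lemma \ref{tr0}. Specifically, taking the spatial index choices $\alpha=i$, $\beta=j$ in Lemma \ref{OE} gives
\begin{equation*}
\partial_i W_j - \partial_j W_i = \epsilon_{ij\gamma\delta}u^\gamma \mathrm{vort}^\delta(\bW) + u_i u^\kappa \partial_j W_\kappa - u_j u^\kappa \partial_i W_\kappa + u_j u^\kappa \partial_\kappa W_i - u_i u^\kappa \partial_\kappa W_j.
\end{equation*}
This already produces the first term on the right of \eqref{c2}.

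Next I would simplify the two middle terms $u_i u^\kappa \partial_j W_\kappa - u_j u^\kappa \partial_i W_\kappa$. The key observation is $u^\kappa W_\kappa = 0$, which follows directly from the definition \eqref{MFd} of $\bW$ because $W^\alpha$ is built from $\epsilon^{\alpha\beta\gamma\delta}u_\beta(\cdots)$ and contracting $u_\alpha \epsilon^{\alpha\beta\gamma\delta}u_\beta$ vanishes by antisymmetry in $\alpha\beta$. Differentiating $u^\kappa W_\kappa = 0$ gives $u^\kappa \partial_j W_\kappa = -W_\kappa \partial_j u^\kappa$ and similarly with $i$. This turns the two middle terms into exactly $-u_i W_\kappa \partial_j u^\kappa + u_j W_\kappa \partial_i u^\kappa$, matching the second line of \eqref{c2}.

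For the remaining transport pieces $u_j u^\kappa \partial_\kappa W_i - u_i u^\kappa \partial_\kappa W_j$, I would substitute \eqref{CEQ1} (lowered with the Minkowski metric) to replace $u^\kappa \partial_\kappa W_i$ and $u^\kappa \partial_\kappa W_j$ by the explicit collection of terms involving $\bW$, $\bw$, $d\bu$, $dh$, $c_s^{-2}$, and $\mathrm{e}^{-h}$. After this substitution, $u_j$ multiplies the full right-hand side of \eqref{CEQ1}$|_{\alpha=i}$ and $-u_i$ multiplies the right-hand side of \eqref{CEQ1}$|_{\alpha=j}$, which is exactly the block written in the last four lines of \eqref{c2}.

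The whole proof is therefore essentially a two-step algebraic substitution: the $\bu$-orthogonality of $\bW$ handles the "projection" terms from Lemma \ref{OE}, and the evolution equation \eqref{CEQ1} handles the transport terms. The only real obstacle is bookkeeping: one must verify carefully that the lowering of indices in \eqref{CEQ1}, and the factors $u_j$ and $-u_i$, reproduce the signs and the $\epsilon_{i}^{\ \beta\gamma\delta}$, $\epsilon_{j}^{\ \beta\gamma\delta}$ tensor structures in \eqref{c2} exactly. No cancellations, integration by parts, or elliptic/Sobolev estimates are needed—this is purely an identity following from \eqref{OE}, \eqref{CEQ1}, and $u^\kappa W_\kappa = 0$.
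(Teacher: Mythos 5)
Your proposal is correct and follows essentially the same route as the paper: the paper likewise applies Lemma \ref{OE} to $\bW$ with spatial indices, uses $u_\kappa W^\kappa=0$ (verified from \eqref{MFd} by antisymmetry, exactly as you argue) to convert the projection terms into $-u_iW_\kappa\partial_j u^\kappa+u_jW_\kappa\partial_i u^\kappa$, and then substitutes the transport equation \eqref{CEQ1} into $u_ju^\kappa\partial_\kappa W_i-u_iu^\kappa\partial_\kappa W_j$. No gaps.
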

\begin{proof}
	Firstly, using Lemma \ref{OE}, we can derive
	\begin{align}\label{c3}
		\partial_i W_j - \partial_j W_i=\epsilon_{ij\gamma\delta}u^\gamma \mathrm{vort}^\delta \bW+ u_i u^\kappa \partial_jW_\kappa-u_j u^\kappa \partial_i W_\kappa
		+ u_j u^\kappa \partial_\kappa W_i-u_i u^\kappa \partial_\kappa W_j.
	\end{align}
	Secondly, from the definition of $\bW$, it yields
	\begin{equation}\label{uc}
		u_\kappa W^\kappa =\epsilon^{\kappa\alpha\beta\gamma}u_\alpha u_\kappa( - \partial_\beta w_\gamma+c^{-2}w_{\gamma}\partial_{\beta}h )= 0.
	\end{equation}
	By \eqref{uc}, we infer
	\begin{align}\label{c5}
		u_i u^\kappa \partial_jW_\kappa-u_j u^\kappa \partial_i W_\kappa=& -u_i \partial_j u^\kappa W_\kappa+ u_j \partial_i  u^\kappa W_\kappa.
	\end{align}
	Substituting \eqref{c5} and \eqref{CEQ1} to \eqref{c3}, we can obtain \eqref{c2}.
\end{proof}
We are ready to introduce a decomposition of $\Delta \bw$.
\begin{Lemma}\label{vor1}
	Let $(h,\bu)$ be a solution of \eqref{REE}. Let $\bw$ be defined in \eqref{VVd}. Then we have
	\begin{equation}\label{d5}
		\begin{split}
			\left(  \delta^{mj}+ (u^0)^{-2}u^m  u^j  \right)  \partial_j \partial_m w_i=&  (u^0)^{-2}u^m  u^j \partial_j ( \epsilon_{mi\gamma\delta}u^\gamma \mathrm{vort}^\delta\bw ) + \partial^j( \epsilon_{ji\gamma\delta}u^\gamma \mathrm{vort}^\delta \bw)
			\\
			& - \partial^j( u_j \partial_i u^\kappa w_\kappa)
			-\partial_i ( w^\kappa \partial_\kappa h )
			 + \partial^j( u_i \partial_j u^\kappa w_\kappa )
			\\
			& - \partial^j( u_i u_j w^\kappa \partial_\kappa h )
		 + \partial^j( u_i w^\kappa \partial_\kappa u_j )
			 - \partial^j( u_i w_j \partial_\kappa u^\kappa )
			 \\
			 & + \partial^j( u_j u_i w^\kappa \partial_\kappa h )
		 - \partial^j( u_j w^\kappa \partial_\kappa u_i )
		 + \partial_j ( u_j w_i \partial_\kappa u^\kappa )
		   \\
		   &   + \partial_i( w_m  \partial_t \left\{ (u^0)^{-1}u^m  \right\} )
			 -\partial_i( (u^0)^{-2}u^m  u^j) \partial_j w_m
			\\
			& - \partial_i (  (u^0)^{-2} u^m u_m w^\kappa \partial_\kappa h )
		 - \partial_i ( (u^0)^{-2}u^m w_m \partial_\kappa u^\alpha )
			\\
			& - \partial_i(  (u^0)^{-2}u^m w_m \partial_\kappa u^\kappa )
			  + (u^0)^{-2}u^m  u^j \partial_j \big\{  - u_m \partial_i u^\kappa w_\kappa
			\\
			& + u_i \partial_m u^\kappa w_\kappa
			 - u_i u_m w^\kappa \partial_\kappa h
			+ u_i w^\kappa \partial_\kappa u_m
			 -u_i w_m \partial_\kappa u^\kappa
			\\
			& + u_m u_i w^\kappa \partial_\kappa h - u_m w^\kappa \partial_\kappa u_i+ u_m w_i \partial_\kappa u^\kappa \big\}.
		\end{split}	
	\end{equation}
\end{Lemma}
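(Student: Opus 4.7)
The strategy is to express the weighted second-order operator $A^{mj}\partial_j\partial_m w_i$ on the left, with $A^{mj}:=\delta^{mj}+(u^0)^{-2}u^m u^j$, as a linear combination of first derivatives of $\mathrm{vort}\bw$, a pure $\partial_i$-derivative of a scalar, and purely algebraic lower-order terms in $\bu,h,\bw$. The two structural constraints that drive the reduction are the divergence identity \eqref{CEQ0} and the orthogonality $u_\alpha w^\alpha=0$, which forces $w^0=(u^0)^{-1}u^m w_m$.

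The starting point is the purely algebraic identity
\begin{equation*}
\partial_j \partial_m w_i = \partial_i \partial_j w_m + \partial_j(\partial_m w_i - \partial_i w_m).
\end{equation*}
Contracting against $A^{mj}$ splits the left-hand side into a \emph{curl-type} piece $A^{mj}\partial_j(\partial_m w_i - \partial_i w_m)$ and a \emph{divergence-type} piece $A^{mj}\partial_i\partial_j w_m$. Direct substitution of Lemma \ref{OE1} into the curl-type piece reproduces both $\epsilon$-contractions with $\mathrm{vort}^\delta\bw$ appearing in \eqref{d5}, together with all the algebraic remainders $\partial^j(u_j\partial_i u^\kappa w_\kappa)$, $\partial^j(u_i w^\kappa\partial_\kappa u_j)$, $\partial_j(u_j w_i\partial_\kappa u^\kappa)$ and their $(u^0)^{-2}u^m u^j\partial_j(\cdots)$-weighted analogues listed in the second and last blocks of the target formula.

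For the divergence-type piece, I commute $\partial_i$ outside via the product rule,
\begin{equation*}
A^{mj}\partial_i \partial_j w_m = \partial_i\bigl(A^{mj}\partial_j w_m\bigr) - \partial_i(A^{mj})\,\partial_j w_m,
\end{equation*}
which immediately accounts for the isolated term $-\partial_i((u^0)^{-2}u^m u^j)\partial_j w_m$ in \eqref{d5} (note that $\delta^{mj}$ is constant). The inner scalar $A^{mj}\partial_j w_m$ splits as $\partial_m w^m + (u^0)^{-2}u^m u^j\partial_j w_m$. I simplify the Euclidean piece $\partial_m w^m$ by combining \eqref{CEQ0} (i.e., $\partial_t w^0 + \partial_m w^m = -w^\kappa\partial_\kappa h$) with $w^0=(u^0)^{-1}u^m w_m$, which produces both the term $-\partial_i(w^\kappa\partial_\kappa h)$ and a contribution of the form $\pm\partial_i(w_m\partial_t\{(u^0)^{-1}u^m\})$. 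The weighted piece $(u^0)^{-2}u^m u^j\partial_j w_m$ is rewritten via the decomposition $u^j\partial_j=u^\kappa\partial_\kappa-u^0\partial_t$ followed by substitution of the transport equation \eqref{CEQ} for $u^\kappa\partial_\kappa w_m$, which yields the three $\partial_i(\cdots)$-terms with $(u^0)^{-2}u^m$-prefactors on the right-hand side.

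The main technical task is the bookkeeping of the residual $\partial_t w_m$ contributions generated by both substitutions in the last step: a term $-(u^0)^{-1}u^m\partial_t w_m$ appears from expanding $\partial_t w^0$ via the orthogonality relation, and an identical term appears from expanding $u^j\partial_j w_m = u^\kappa\partial_\kappa w_m - u^0\partial_t w_m$ in the weighted piece. These must be combined and eliminated, either by reabsorbing them into $\partial_t w^0$ through $w^0=(u^0)^{-1}u^m w_m$ or by one further application of the transport equation \eqref{CEQ}, so that no bare time derivative of $w_m$ survives on the right-hand side. Once this cancellation is verified and all index relabellings are performed uniformly, summing the curl-type and divergence-type contributions delivers \eqref{d5}.
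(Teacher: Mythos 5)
Your proposal is correct and follows essentially the same route as the paper's proof: both decompose the second-order expression into an antisymmetric (curl) part handled by Lemma \ref{OE1} and a divergence part handled by \eqref{CEQ0} together with $u_\kappa w^\kappa=0$ and the transport equation \eqref{CEQ}, the only difference being that you contract against $\delta^{mj}+(u^0)^{-2}u^m u^j$ from the outset whereas the paper starts from $\Delta w_i$ and moves the emergent cross term $(u^0)^{-2}u^m u^j\partial_j\partial_m w_i$ to the left at the end. The cancellation of the residual $(u^0)^{-1}u^m\partial_t w_m$ contributions that you flag does occur exactly as you describe (the two contributions carry opposite signs), and it corresponds to the paper's single substitution of \eqref{CEQ} into $\partial_t w^0$ in \eqref{d8}--\eqref{d10}.
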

	\begin{proof}
		To prove \eqref{d5}, we derive the formulation of $\Delta w_i$ by
		\begin{align}\label{d7}
			\Delta w_i=& \partial^j(\partial_j w_i- \partial_i w_j)+ \partial_i \partial^j w_j \nonumber
			\\
			=& \partial^j(\partial_j w_i- \partial_i w_j)+ \partial_i ( \partial^\kappa w_\kappa- \partial_t w^0 ) \nonumber
			\\
			=& \partial^j(\partial_j w_i- \partial_i w_j)- \partial_i ( w^\kappa \partial_\kappa h+ \partial_t w^0 ).
		\end{align}
		For $u_\kappa w^\kappa=0$, so we get
		\begin{equation}\label{w0d}
			w^0=-(u^0)^{-1}u^i w_i.
		\end{equation}
By \eqref{w0d}, we can calculate
		\begin{align}\label{d8}
			\partial_t w^0= \partial_t \{ -(u^0)^{-1}u^i w_i) \}
			= - w_i  \partial_t \left\{ (u^0)^{-1}u^i  \right\} - (u^0)^{-1}u^i \partial_t w_i.
		\end{align}
		By using \eqref{CEQ}, we infer
		\begin{equation}\label{d9}
			\partial_t w_i=-(u^0)^{-1}u^j \partial_j w_i	-(u^0)^{-1} u_i w^\kappa \partial_\kappa h+ (u^0)^{-1} w_i \partial_\kappa u^\alpha- (u^0)^{-1} w_i \partial_\kappa u^\kappa.
		\end{equation}
		Substituting \eqref{d9} to \eqref{d8}, it follows
		\begin{equation}\label{d10}
			\begin{split}
				\partial_t w^0=& - w_i  \partial_t \left\{ (u^0)^{-1}u^i  \right\}
				+ (u^0)^{-2}u^m  u^j \partial_j w_m	+ (u^0)^{-2} u^m u_m w^\kappa \partial_\kappa h
				\\
				& + (u^0)^{-2}u^m w_m \partial_\kappa u^\alpha + (u^0)^{-2}u^m w_m \partial_\kappa u^\kappa.
			\end{split}
		\end{equation}
		Inserting \eqref{OEe} and \eqref{d10} to \eqref{d7}, we obtain
		\begin{equation}\label{d11}
			\begin{split}
				\Delta w_i=&  \partial^j( \epsilon_{ji\gamma\delta}u^\gamma \mathrm{vort}^\delta \bw) - \partial^j( u_j \partial_i u^\kappa w_\kappa) + \partial^j( u_i \partial_j u^\kappa w_\kappa )
				\\
				&  - \partial^j( u_i u_j w^\kappa \partial_\kappa h )
				+ \partial^j( u_i w^\kappa \partial_\kappa u_j ) - \partial^j( u_i w_j \partial_\kappa u^\kappa )
				\\
				& + \partial^j( u_j u_i w^\kappa \partial_\kappa h ) - \partial^j( u_j w^\kappa \partial_\kappa u_i ) + \partial^j ( u_j w_i \partial_\kappa u^\kappa )
				\\
				& -\partial_i ( w^\kappa \partial_\kappa h ) + \partial_i( w_m  \partial_t \left\{ (u^0)^{-1}u^m  \right\} )
				-\partial_i( (u^0)^{-2}u^m  u^j \partial_j w_m )
				\\	
				&- \partial_i (  (u^0)^{-2} u^m u_m w^\kappa \partial_\kappa h )
				- \partial_i ( (u^0)^{-2}u^m w_m \partial_\kappa u^\alpha ) - \partial_i(  (u^0)^{-2}u^m w_m \partial_\kappa u^\kappa ).
			\end{split}	
		\end{equation}
		Calculate $-\partial_i( (u^0)^{-2}u^m  u^j \partial_j w_m )$ by
		\begin{align}\label{d12}
			&-\partial_i( (u^0)^{-2}u^m  u^j \partial_j w_m ) \nonumber
			\\
			= &- (u^0)^{-2}u^m  u^j \partial_i\partial_j w_m
			-\partial_i( (u^0)^{-2}u^m  u^j) \partial_j w_m, \nonumber
			\\
			= &- (u^0)^{-2}u^m  u^j \partial_j \partial_m w_i - (u^0)^{-2}u^m  u^j \partial_j ( \partial_i w_m - \partial_m w_i)
			  -\partial_i( (u^0)^{-2}u^m  u^j) \partial_j w_m \nonumber
			\\
			 = &- (u^0)^{-2}u^m  u^j \partial_j \partial_m w_i -\partial_i( (u^0)^{-2}u^m  u^j) \partial_j w_m
			 + (u^0)^{-2}u^m  u^j \partial_j ( \epsilon_{mi\gamma\delta}u^\gamma \mathrm{vort}^\delta\bw ) \nonumber
			 \\
			 &  + (u^0)^{-2}u^m  u^j \partial_j \{  - u_m \partial_i u^\kappa w_\kappa + u_i \partial_m u^\kappa w_\kappa  - u_i u_m w^\kappa \partial_\kappa h
			   + u_i w^\kappa \partial_\kappa u_m
			   \\  \nonumber
			   &   -u_i w_m \partial_\kappa u^\kappa
			 + u_m u_i w^\kappa \partial_\kappa h - u_m w^\kappa \partial_\kappa u_i+ u_m w_i \partial_\kappa u^\kappa \}.
		\end{align}
Inserting \eqref{d12} to \eqref{d11}, then we can obtain \eqref{d5}. Therefore, we have proved Lemma \ref{vor1}.
\end{proof}

\begin{Lemma}\label{vor2}
Let $(h,\bu)$ be a solution of \eqref{REE}. Let $\bw$, $\bW$, and $\mathring{\bW}$ be defined in \eqref{VVd}, \eqref{MFd}, \eqref{MFda}. Then we have the following estimate
	\begin{equation}\label{d17}
		\begin{split}
\| \nabla \mathring{ \bW } \|_{\dot{H}_x^{s_0-2}} \lesssim &	   \|\mathrm{vort} \bW \|_{\dot{H}_x^{s_0-2}}+  \| \bW \cdot (d\bu,dh) \|_{\dot{H}_x^{s_0-2}}
+   \| d\bw \cdot (d\bu,dh) \|_{\dot{H}_x^{s_0-2}}
\\
&	+  \| \bw \cdot d\bu \cdot dh \|_{\dot{H}_x^{s_0-2}}+  \| \bw \cdot \bw \cdot dh \|_{\dot{H}_x^{s_0-2}}.
		\end{split}
	\end{equation}
\end{Lemma}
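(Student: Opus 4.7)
My plan is to adapt the Hodge-type decomposition technique used in the proof of Lemma \ref{vor1} to the one-form $\bW$. I start from the identity
$$\Delta W_i = \partial^j(\partial_j W_i - \partial_i W_j) + \partial_i \partial^j W_j.$$
For the antisymmetric part I substitute the formula provided by Lemma \ref{C}, which expresses $\partial_j W_i - \partial_i W_j$ as $\epsilon_{ji\gamma\delta} u^\gamma \mathrm{vort}^\delta \bW$ plus a collection of quadratic and cubic terms in $\bW, \bw, d\bu, dh$ of exactly the shapes appearing on the right-hand side of \eqref{d17}. For the divergence term $\partial_i \partial^j W_j$, I exploit the constraint $u_\kappa W^\kappa = 0$ recorded in \eqref{uc}, which yields the algebraic identity $W^0 = -(u^0)^{-1} u^i W_i$. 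Using $\partial^j W_j = \partial_\alpha W^\alpha - \partial_t W^0$ together with the distributed time derivative of $(u^0)^{-1} u^i W_i$, any residual $\partial_t W_i$ is eliminated via the transport equation \eqref{CEQ1}, which converts it into a purely spatial expression modulo nonlinear terms.

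A bookkeeping step parallel to the one leading to \eqref{d5} then moves the resulting contribution $(u^0)^{-2} u^m u^j \partial_j \partial_m W_i$ to the left-hand side, producing
$$\bigl(\delta^{mj} + (u^0)^{-2} u^m u^j\bigr) \partial_j \partial_m W_i = \mathcal{R}_i,$$
where $\mathcal{R}_i$ consists of $\partial^j(\epsilon_{ji\gamma\delta} u^\gamma \mathrm{vort}^\delta \bW)$ together with first spatial derivatives of products matching the five templates $\bW \cdot d\bu$, $d\bw \cdot d\bu$, $d\bw \cdot dh$, $\bw \cdot d\bu \cdot dh$, and $\bw \cdot \bw \cdot dh$, each multiplied by a smooth function of $(\bu, h)$.

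The operator on the left-hand side is uniformly elliptic: since $u^0 \geq 1$ by \eqref{HEw}, the symbol $|\xi|^2 + (u^0)^{-2}|u \cdot \xi|^2$ dominates $|\xi|^2$. A standard variable-coefficient elliptic estimate, obtained by applying $\Lambda_x^{s_0 - 2}$ to the equation, pairing with $\Lambda_x^{s_0 - 2} W_i$, integrating by parts, and handling the commutator between $\Lambda_x^{s_0-2}$ and the coefficients $(u^0)^{-2} u^m u^j$ via the argument underlying Lemma \ref{ce}, delivers
$$\|\nabla \mathring{\bW}\|_{\dot{H}_x^{s_0-2}} \lesssim \|\mathcal{R}\|_{\dot{H}_x^{s_0-3}}.$$
Because every summand of $\mathcal{R}$ is a spatial derivative of either $\epsilon u \,\mathrm{vort}\,\bW$ or one of the five product templates above, and $\|\partial f\|_{\dot{H}_x^{s_0-3}} = \|f\|_{\dot{H}_x^{s_0-2}}$, the bound \eqref{d17} follows by invoking Lemma \ref{ps} and Lemma \ref{cj} to control each such product, absorbing all smooth $(\bu,h)$-factors through \eqref{HEw} and Lemma \ref{jh0}.

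The main technical obstacle lies in the bookkeeping required to derive the identity for $\mathcal{R}_i$: the cascade of substitutions from Lemma \ref{C} and \eqref{CEQ1} introduces many terms, and each must be verified to reduce to one of the five product templates on the right-hand side of \eqref{d17}, with its coefficient a smooth function of $(\bu, h)$ that is uniformly controlled by \eqref{HEw}. Once this list is assembled, the elliptic estimate and the product rules are routine.
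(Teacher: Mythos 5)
Your proposal is correct and follows essentially the same route as the paper: the identity $\Delta W_i=\partial^j(\partial_j W_i-\partial_i W_j)+\partial_i\partial^j W_j$, Lemma \ref{C} for the antisymmetric part, the constraint $u_\kappa W^\kappa=0$ giving $W^0=-(u_0)^{-1}u^iW_i$, and the transport equation \eqref{CEQ1} to eliminate $\partial_t W_i$ are exactly the ingredients of the paper's argument, and the resulting products all fall into the templates on the right of \eqref{d17}. The only point of divergence is how the dangerous second-order term is absorbed: you move $(u^0)^{-2}u^mu^j\partial_j\partial_m W_i$ to the left to form the elliptic operator $\Delta_H$ of \eqref{dh} and invoke a variable-coefficient elliptic estimate, whereas the paper keeps the plain Laplacian and closes at the level of norms, bounding $\|\partial_t W^0\|_{\dot{H}_x^{s_0-2}}$ by $\frac{|\mathring{\bu}|^2}{|u_0|^2}\|\nabla\mathring{\bW}\|_{\dot{H}_x^{s_0-2}}$ plus admissible terms and absorbing via $|\mathring{\bu}|^2=|u_0|^2-1$ and $1\le|u_0|\le 2+C_0$ --- two packagings of the same positivity fact, so either closes the proof.
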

\begin{proof}
	To derive the formulation of $\Delta W_i$, we calculate
	\begin{align}\label{d19}
		\Delta W_i=& \partial^j(\partial_j W_i- \partial_i W_j)+ \partial_i \partial^j W_j \nonumber
		\\\nonumber
		=& \partial^j(\partial_j W_i- \partial_i W_j)+ \partial_i ( \partial^\alpha W_\alpha) - \partial_i ( \partial^0 W_0 )
			\\
		=& \partial^j(\partial_j W_i- \partial_i W_j)+ \partial_i ( \partial^\alpha W_\alpha) - \partial_i ( \partial_t W^0 )  .
	\end{align}
By \eqref{d19}, we infer
	\begin{align}\label{d110}
	\| \nabla \mathring{\bW }\|_{\dot{H}_x^{s_0-2}}\leq & \sum_{i,j=1,2,3} \| \partial_j W_i- \partial_i W_j \|_{\dot{H}_x^{s_0-2}}+ \| \partial^\alpha W_\alpha \|_{\dot{H}_x^{s_0-2}} + \| \partial_t W^0 \|_{\dot{H}_x^{s_0-2}}  .
	\end{align}
	By \eqref{c2}, it yields
		\begin{equation}\label{d111}
			\begin{split}
					& \textstyle{\sum_{i,j=1,2,3}} \| \partial_j W_i- \partial_i W_j \|_{\dot{H}_x^{s_0-2}}
					\\
					\leq &  C \|\mathrm{vort} \bW \|_{\dot{H}_x^{s_0-2}} +   C\| \bW \cdot d\bu \|_{\dot{H}_x^{s_0-2}}+   C\| d\bw \cdot (d\bu,dh) \|_{\dot{H}_x^{s_0-2}}
				\\
				& + C \| \bw \cdot d\bu \cdot dh \|_{\dot{H}_x^{s_0-2}}+  C \| \bw \cdot \bw \cdot dh \|_{\dot{H}_x^{s_0-2}}.
			\end{split}
	\end{equation}
	A direct computation tells us 	
	\begin{equation*}
		u_\alpha W^\alpha = -\epsilon^{\alpha \beta \gamma \delta}u_\alpha u_{\beta}\partial_{\gamma}w_\delta+c_s^{-2}\epsilon^{\alpha\beta\gamma\delta}u_\alpha u_{\beta}w_{\delta}\partial_{\gamma}h=0,
	\end{equation*}
	so we get
	\begin{equation}\label{d20}
		W^0=-(u_0)^{-1}u^i W_i.
	\end{equation}
Due to \eqref{d20}, so we have
	\begin{align}\label{d21}
		\partial_t W^0= & \partial_t \{ -(u_0)^{-1}u^i W_i) \} \nonumber
		\\
		= & - W_i  \partial_t \left\{ (u_0)^{-1}u^i  \right\} - (u_0)^{-1}u^i \partial_t W_i.
	\end{align}
	By using \eqref{d21}, we can obtain
	\begin{equation}\label{d125}
		\begin{split}
			\|	\partial_t W^0 \|_{\dot{H}_x^{s_0-2}} \leq \| \bW \cdot d\bu \|_{\dot{H}_x^{s_0-2}}+ \frac{| \mathring{\bu}|}{|u_0|} \|\partial_t \mathring{\bW} \|_{\dot{H}_x^{s_0-2}}.
		\end{split}
	\end{equation}
By using \eqref{CEQ1} and \eqref{REE}, we get
\begin{equation}\label{d22}
	\begin{split}
	 \partial_t {W}_i
	=& -(u_0)^{-1}u^j \partial_j {W}_i+ {W}^\kappa   \partial_\kappa u_i   -{2}(u_0)^{-1} {W}_i \partial_\kappa u^\kappa
	 -  \epsilon_{i}^{\ \beta \gamma \delta} c^{-2}_s (u_0)^{-1} u_\beta w_\delta   \partial_\gamma u^\kappa \partial_\kappa h
	\\
	& + (u_0)^{-1} u_i {W}^\beta  u^\kappa \partial_\kappa u_\beta  -{2}  \epsilon_{i}^{\ \beta \gamma \delta}(u_0)^{-1} u_\beta \partial_\delta u^\kappa \partial_\gamma w_\kappa
	   -{2}\mathrm{e}^{-h} (u_0)^{-1} w_i w^\kappa \partial_\kappa h
	\\
	& - \epsilon^{\kappa \beta \gamma \delta}c^{-2}_s (u_0)^{-1} u_\beta w_\delta \partial_\gamma h \partial_\kappa u_i
 + \epsilon_i^{\ \beta \gamma \delta} c^{-2}_s (u_0)^{-1} u_\beta w_\delta \partial_\gamma h \partial_\kappa u^\kappa
	\\
	& +( c^{-2}_s +2 ) \epsilon_i^{\ \beta \gamma \delta} (u_0)^{-1} u_\beta w^\kappa \partial_\delta u_\kappa \partial_\gamma h
	\\
	=& -(u_0)^{-1}u^j \partial_j {W}_i+ {W}^\kappa   \partial_\kappa u_i   -{2}(u_0)^{-1} {W}_i \partial_\kappa u^\kappa
	-  \epsilon_{i}^{\ \beta \gamma \delta} c^{-2}_s (u_0)^{-1} u_\beta w_\delta   \partial_\gamma u^\kappa \partial_\kappa h
	\\
	& - (u_0)^{-1} u_i {W}^\beta  \partial_\beta h  -{2}  \epsilon_{i}^{\ \beta \gamma \delta}(u_0)^{-1} u_\beta \partial_\delta u^\kappa \partial_\gamma w_\kappa
	-{2}\mathrm{e}^{-h} (u_0)^{-1} w_i w^\kappa \partial_\kappa h
	\\
	& - \epsilon^{\kappa \beta \gamma \delta}c^{-2}_s (u_0)^{-1} u_\beta w_\delta \partial_\gamma h \partial_\kappa u_i
	+ \epsilon_i^{\ \beta \gamma \delta} c^{-2}_s (u_0)^{-1} u_\beta w_\delta \partial_\gamma h \partial_\kappa u^\kappa
	\\
	& +( c^{-2}_s +2 ) \epsilon_i^{\ \beta \gamma \delta} (u_0)^{-1} u_\beta w^\kappa \partial_\delta u_\kappa \partial_\gamma h.
	\end{split}	
\end{equation}
Due to \eqref{d22}, we have
\begin{equation}\label{d126}
	\begin{split}
 \|\partial_t \mathring{\bW} \|_{\dot{H}_x^{s_0-2}} \leq & \frac{| \mathring{\bu}|}{|u_0|}  \| \nabla \mathring{ \bW } \|_{\dot{H}_x^{s_0-2}} +   C\| \bW \cdot (d\bu,dh) \|_{\dot{H}_x^{s_0-2}}+   C\| d\bw \cdot (d\bu,dh) \|_{\dot{H}_x^{s_0-2}}
 \\
 & + C \| \bw \cdot d\bu \cdot dh \|_{\dot{H}_x^{s_0-2}}+  C \| \bw \cdot \bw \cdot dh \|_{\dot{H}_x^{s_0-2}}.
	\end{split}
\end{equation}
Combining \eqref{d125} and \eqref{d126}, it follows
\begin{equation}\label{d127}
	\begin{split}
		\|	\partial_t W^0 \|_{\dot{H}_x^{s_0-2}} \leq & \frac{| \mathring{\bu}|^2}{|u_0|^2}  \| \nabla \mathring{ \bW } \|_{\dot{H}_x^{s_0-2}} +   C\| \bW \cdot (d\bu,dh) \|_{\dot{H}_x^{s_0-2}}+   C\| d\bw \cdot (d\bu,dh) \|_{\dot{H}_x^{s_0-2}}
		\\
		& + C \| \bw \cdot d\bu \cdot dh \|_{\dot{H}_x^{s_0-2}}+  C \| \bw \cdot \bw \cdot dh \|_{\dot{H}_x^{s_0-2}}.
	\end{split}
\end{equation}
On the other hand, we can also derive
\begin{align}\label{d24}
	\partial_\alpha W^\alpha =
	& \partial_\alpha \left(
	-\epsilon^{\alpha \beta \gamma \delta} u_{\beta}\partial_{\gamma}w_\delta+c_s^{-2}\epsilon^{\alpha\beta\gamma\delta} u_{\beta}w_{\delta}\partial_{\gamma}h \right) \nonumber
	\\
	=& -\epsilon^{\alpha \beta \gamma \delta} \partial_\alpha   u_{\beta}\partial_{\gamma}w_\delta
	+c_s^{-2}\epsilon^{\alpha\beta\gamma\delta} w_{\delta} \partial_\alpha u_{\beta} \partial_{\gamma}h
	+c_s^{-2}\epsilon^{\alpha\beta\gamma\delta}  u_{\beta} \partial_\alpha w_{\delta} \partial_{\gamma}h.
\end{align}
Due to \eqref{d24}, we get
\begin{equation}\label{d124}
	\begin{split}
		\|	\partial_\alpha W^\alpha \|_{\dot{H}_x^{s_0-2}} \leq C \| d\bw \cdot (d\bu,dh) \|_{\dot{H}_x^{s_0-2}}
		 + C \| \bw \cdot d\bu \cdot dh \|_{\dot{H}_x^{s_0-2}}.
	\end{split}
\end{equation}
Inserting \eqref{d111}, \eqref{d127}, and \eqref{d124} to \eqref{d110}, so we get
\begin{equation}\label{d25}
	\begin{split}
\| \nabla \mathring{ \bW } \|_{\dot{H}_x^{s_0-2}} \leq &	\frac{| \mathring{\bu}|^2}{|u_0|^2}  \| \nabla \mathring{ \bW } \|_{\dot{H}_x^{s_0-2}} +  C\| \mathrm{vort}\bW \|_{\dot{H}_x^{s_0-2}}+  C\| \bW \cdot (d\bu,dh) \|_{\dot{H}_x^{s_0-2}}
\\
& +   C\| d\bw \cdot (d\bu,dh) \|_{\dot{H}_x^{s_0-2}}
	 + C \| \bw \cdot d\bu \cdot dh \|_{\dot{H}_x^{s_0-2}}+  C \| \bw \cdot \bw \cdot dh \|_{\dot{H}_x^{s_0-2}}.
	\end{split}
\end{equation}
By \eqref{muu}, we can see
\begin{equation}\label{d26}
	\begin{split}
| \mathring{\bu}|^2=|u_0|^2-1.
	\end{split}
\end{equation}
Using \eqref{d25} and \eqref{d26}, we have
\begin{equation*}
	\begin{split}
\frac{1}{|u_0|^2}	\| \nabla \mathring{ \bW } \|_{\dot{H}_x^{s_0-2}} \leq &	   C\| \mathrm{vort}\bW \|_{\dot{H}_x^{s_0-2}}+  C\| \bW \cdot (d\bu,dh) \|_{\dot{H}_x^{s_0-2}}
	 +   C\| d\bw \cdot (d\bu,dh) \|_{\dot{H}_x^{s_0-2}}
	\\
	&	+ C \| \bw \cdot d\bu \cdot dh \|_{\dot{H}_x^{s_0-2}}+  C \| \bw \cdot \bw \cdot dh \|_{\dot{H}_x^{s_0-2}}.
	\end{split}
\end{equation*}
For $ 1\leq |u_0| \leq 2+C_0$, we can obtain \eqref{d17}.
\end{proof}

\section{Energy estimates}\label{sec:energyest}
In this part, our goal is to prove the energy estimates for Theorem \ref{dingli} and \ref{dingli2}. Firstly, we introduce a classical energy estimate for density and velocity.
\begin{theorem}[Classical energy estimate,\cite{Bru}]\label{VHE}
	Let $(h,\bu)$ be a solution of \eqref{REE}. Then for any $a\geq 0$, we have
\begin{equation}\label{VHe}
 \| h\|^2_{H_x^a}+ \|\mathring{\bu}\|^2_{H_x^a} \lesssim  \left( \|h_0\|^2_{H^a}+ \|\mathring{\bu}_0\|^2_{H^a} \right) \exp( {\int^t_0} \|d\mathring{\bu}, dh\|_{L^\infty_x}d\tau), \quad t \in [0,T].
\end{equation}
\end{theorem}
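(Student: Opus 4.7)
The plan is to exploit the symmetric hyperbolic formulation from Lemma \ref{QH}, namely $A^\alpha(\bU)\partial_\alpha \bU = 0$ with $\bU = (p(h), u^1, u^2, u^3)^{\mathrm{T}}$, and carry out a standard fractional-order energy estimate. Concretely, I would apply $\Lambda_x^a$ to the system to obtain
\begin{equation*}
A^0 \partial_t(\Lambda_x^a \bU) + A^i \partial_i(\Lambda_x^a \bU) = -[\Lambda_x^a, A^\alpha]\partial_\alpha \bU,
\end{equation*}
then take the $L^2_x$ inner product of both sides with $\Lambda_x^a \bU$. Since each $A^\alpha$ is symmetric and $A^0$ is uniformly positive definite under the hypotheses \eqref{HEw} and the normalization \eqref{muu}, an integration by parts in the $A^i\partial_i$ term gives
\begin{equation*}
\tfrac{1}{2}\tfrac{d}{dt}\langle A^0 \Lambda_x^a \bU, \Lambda_x^a \bU \rangle_{L^2_x} = \tfrac{1}{2}\langle (\partial_t A^0 + \partial_i A^i)\Lambda_x^a \bU, \Lambda_x^a \bU\rangle_{L^2_x} - \langle \Lambda_x^a \bU, [\Lambda_x^a, A^\alpha]\partial_\alpha \bU \rangle_{L^2_x}.
\end{equation*}

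For the first term on the right, $\partial_t A^\alpha$ and $\partial_i A^i$ are smooth nonlinear functions of $\bU$ multiplied by $d\bU$, hence pointwise controlled by $\|d\mathring{\bu}, dh\|_{L^\infty_x}$ up to a universal constant depending on $C_0, c_0$ via \eqref{HEw}. The commutator term is estimated entry-by-entry using the Kato--Ponce estimate (Lemma \ref{jh}):
\begin{equation*}
\|[\Lambda_x^a, A^\alpha]\partial_\alpha \bU\|_{L^2_x} \lesssim \|\nabla A^\alpha\|_{L^\infty_x}\|\Lambda_x^{a-1}\partial_\alpha \bU\|_{L^2_x} + \|\partial_\alpha \bU\|_{L^\infty_x}\|\Lambda_x^{a} A^\alpha\|_{L^2_x},
\end{equation*}
where Lemma \ref{jh0} handles the nonlinear dependence $A^\alpha = A^\alpha(\bU)$; both summands are absorbed into $\|d\mathring{\bu}, dh\|_{L^\infty_x}(\|h\|_{H_x^a} + \|\mathring{\bu}\|_{H_x^a})$. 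The $L^2$ endpoint ($a=0$) is treated in parallel and is simpler, since no commutator enters.

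Combining these ingredients, the energy $E^a(t) := \langle A^0 \Lambda_x^a \bU, \Lambda_x^a \bU\rangle_{L^2_x} + \|\bU\|_{L^2_x}^2$ satisfies the differential inequality $\tfrac{d}{dt} E^a \lesssim \|d\mathring{\bu}, dh\|_{L^\infty_x} E^a$, and Gronwall's inequality produces the exponential factor in \eqref{VHe}. The final step is the equivalence $E^a(t) \simeq \|h\|_{H_x^a}^2 + \|\mathring{\bu}\|_{H_x^a}^2$, which follows from the uniform positive definiteness of $A^0$ together with the smoothness and invertibility of $p(h)$ near $h=0$ (translating $\|p(h)\|_{H_x^a}$ back to $\|h\|_{H_x^a}$ via Lemma \ref{jh0}), all with constants depending only on the universal parameters $C_0, c_0$. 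The only delicate bookkeeping is ensuring that the commutator and nonlinear-composition estimates do not introduce higher powers of $\bU$ in $H_x^a$ on the right-hand side; this is where \eqref{HEw} and Lemma \ref{jh0} are crucially used, but otherwise the argument is a direct adaptation of the classical Friedrichs symmetric hyperbolic energy method found in Bruhat \cite{Bru}.
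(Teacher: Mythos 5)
Your proposal is correct and follows exactly the route the paper takes: the paper simply cites the classical Friedrichs energy method for the symmetric hyperbolic system \eqref{QHl} together with \eqref{und} and Lemma \ref{jh0}, and your argument is the standard fleshed-out version of that (symmetrizer energy, Kato--Ponce commutator, Gronwall, then converting $p(h)$ back to $h$). No gaps; the only cosmetic caveat is that Gronwall produces a constant inside the exponential, consistent with the paper's implicit convention in \eqref{VHe}.
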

\begin{remark}
(1)	Due to \eqref{muu}, we can get
	\begin{equation}\label{u00}
		u^0=\sqrt{(u^1)^2+(u^2)^2+(u^3)^2+1}, \quad u_0=-\sqrt{(u^1)^2+(u^2)^2+(u^3)^2+1}.
	\end{equation}
	By using classical energy estimates for the symmetric hyperbolic system \eqref{QHl}, and combining with \eqref{und}, Lemma \ref{jh0}, we can obtain \eqref{VHe}.

(2) For $a\geq 1$, using \eqref{QHl} and \eqref{VHe} again, we therefore have
\begin{equation}\label{VHea}
	\|(\partial_t  h, \partial_t \mathring{\bu})\|_{H_x^{a-1}} \lesssim \|(\nabla  h,\nabla \mathring{\bu})\|_{H_x^{a-1}} .
\end{equation}
By \eqref{u00} and \eqref{VHea}, it follows
\begin{equation}\label{VHeb}
	\|\partial_t  u^0\|_{H_x^{a-1}} \lesssim \|\partial_t \mathring{\bu}\|_{H_x^{a-1}} \lesssim \|(\nabla  h,\nabla \mathring{\bu})\|_{H_x^{a-1}} .
\end{equation}
\end{remark}
\subsection{Energy estimate for Theorem \ref{dingli}}\label{sec3.1}
\begin{theorem}[Energy estimate: type 1]\label{VE}
Let $(h,\bu)$ be a solution of \eqref{REE}. Let $\bw$ be defined in \eqref{VVd} and $\bw$ satisfy \eqref{CEQ}-\eqref{CEQ0}. For $2<s_0<s\leq \frac52$, the following energy estimate
\begin{equation}\label{WHa}
	 E_s(t) \leq E_0  \exp \left(    5 M(t) \cdot \mathrm{e}^{5 M(t)} \right),
\end{equation}
holds, where
\begin{equation}\label{WL}
\begin{split}
	& M(t)=   {\int^t_0} (\|d\bu, dh\|_{L^\infty_x}+\|d\bu, dh\|_{\dot{B}^{s_0-2}_{\infty,2}})  d\tau,
	\\
  & E_s(t) =  \|h\|^2_{H_x^s} + \|(\mathring{\bu}, u^0-1)\|^2_{H_x^s} +\|\bw\|^2_{H_x^{s_0}},
  \\
  & E_0= C \left( \|(h_0,\mathring{\bu}_0)\|^2_{H^s} +\|\bw_0\|^2_{H^{s_0}}+\|(h_0,\mathring{\bu}_0)\|^{10}_{H^s}+\|\bw_0\|^{10}_{H^{s_0}} \right).
  \end{split}
\end{equation}
Above, $C$ is also a universal constant.
\end{theorem}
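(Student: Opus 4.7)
My plan is to split the target energy $E_s(t)$ into the fluid part $\|h\|_{H_x^s}+\|(\mathring{\bu}, u^0-1)\|_{H_x^s}$ and the vorticity part $\|\bw\|_{H_x^{s_0}}$, and control them separately. For the fluid part, I apply Theorem~\ref{VHE} directly to the symmetric hyperbolic formulation of Lemma~\ref{QH}, which yields
\begin{equation*}
\|h\|_{H_x^s}+\|\mathring{\bu}\|_{H_x^s} \lesssim \bigl(\|h_0\|_{H^s}+\|\mathring{\bu}_0\|_{H^s}\bigr)\exp\Bigl(\int_0^t \|d\bu, dh\|_{L^\infty_x}\,d\tau\Bigr),
\end{equation*}
and then use $u^0=\sqrt{1+|\mathring{\bu}|^2}$ together with Lemma~\ref{jh0} to transfer this into a bound on $u^0-1$.

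The harder piece is $\|\bw\|_{H_x^{s_0}}$ with $s_0\in(2,5/2]$. A direct Sobolev-level transport estimate on \eqref{CEQ} does not close: the right-hand side $\bw\cdot d\bu$ would require $d\bu$ at regularity $\dot H^{s_0}$, which lies above the available $H^{s-1}$ with $s\leq 5/2<s_0+1$. Instead, I will transfer the top-order derivatives of $\bw$ onto the higher vorticities $\bW$ and $\bG$ through the elliptic decompositions of Section~\ref{sec:preliminaries}. Schematically,
\begin{equation*}
\|\bw\|_{\dot H_x^{s_0}} \lesssim \|\mathrm{vort}\,\bw\|_{\dot H_x^{s_0-1}}+\mathrm{l.o.} \lesssim \|\bW\|_{\dot H_x^{s_0-1}}+\mathrm{l.o.} \lesssim \|\bG\|_{\dot H_x^{s_0-2}}+\mathrm{l.o.},
\end{equation*}
where the first inequality inverts the uniformly elliptic operator $(\delta^{mj}+(u^0)^{-2}u^m u^j)\partial_m\partial_j$ coming from \eqref{d5} in Lemma~\ref{vor1}, the second uses the definition \eqref{MFd} of $\bW$ together with the product/Moser estimates of Lemmas~\ref{ps} and \ref{cj}, and the third applies \eqref{d17} from Lemma~\ref{vor2} combined with $\mathrm{vort}\bW=\bG$. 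The low-frequency piece $\|\bw\|_{L_x^2}$ is handled by the elementary transport estimate from \eqref{CEQ}--\eqref{CEQ0}.

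To close the loop I need an independent bound for $\|\bG\|_{\dot H_x^{s_0-2}}$. For this, I apply a standard transport energy identity at level $\dot H_x^{s_0-2}$ to the equation $u^\kappa\partial_\kappa(\bG-\bF)=\partial^\alpha\Gamma+E^\alpha$ of Lemma~\ref{VC}, i.e.\ \eqref{SDe}. The divergence form of $\partial^\alpha\Gamma$ is crucial: pairing with $\Lambda_x^{s_0-2}(\bG-\bF)$ and integrating by parts moves $\partial^\alpha$ onto the test function, so only $\|\Gamma\|_{\dot H_x^{s_0-1}}$ is required. The transport commutator $[\Lambda_x^{s_0-2},(u^0)^{-1}u^i\partial_i]$ is handled by Lemmas~\ref{ce} and \ref{YR}, while the products making up $\Gamma, E^\alpha, \bF$ are distributed through Lemmas~\ref{jh}, \ref{cj}, \ref{ps}, \ref{lpe}, \ref{wql} and \ref{LPE}. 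The Besov norm $\|d\bu, dh\|_{\dot B^{s_0-2}_{\infty,2}}$ appearing in $M(t)$ is exactly what is needed for high-high interactions such as $d\bu\cdot\partial^2\bw$ and $d\bu\cdot d\bu$, where $L_x^\infty$ control of $d\bu$ alone cannot absorb the full Sobolev product (this is the role of Lemma~\ref{LPE}).

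Putting the pieces together will yield a differential inequality of the form
\begin{equation*}
\frac{d}{dt}E_s(t) \lesssim \bigl(\|d\bu, dh\|_{L^\infty_x}+\|d\bu, dh\|_{\dot B^{s_0-2}_{\infty,2}}\bigr)\bigl(E_s(t)+E_s(t)^{5/2}\bigr),
\end{equation*}
and a Gr\"onwall-type iteration absorbing the polynomial nonlinearity produces the double-exponential bound \eqref{WHa}. The principal difficulty is the long list of terms in $E^\alpha$ carrying two top-order derivatives, such as $\partial^\gamma u_\kappa\,\partial_\gamma\partial^\alpha w^\kappa$ or $u_\beta u_\eta w_\nu\partial_\gamma\partial_\kappa h$: each must be split via Littlewood--Paley frequency decomposition and matched carefully against either the $L_x^\infty$ or the Besov regularity of $d\bu, dh$ so as not to lose a derivative, and this is where the Strichartz-type Besov component of $M(t)$ is indispensable.
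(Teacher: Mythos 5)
Your proposal is correct and follows essentially the same route as the paper: classical symmetric-hyperbolic energy estimates for $(h,\mathring{\bu},u^0-1)$, the elliptic chain $\|\bw\|_{\dot H^{s_0}}\lesssim\|\bW\|_{\dot H^{s_0-1}}\lesssim\|\bG\|_{\dot H^{s_0-2}}$ via Lemmas \ref{vor1}--\ref{vor2}, a transport energy identity for $\bG-\bF$ from \eqref{SDe} with integration by parts on the divergence term $\partial^\alpha\Gamma$ (absorbing the resulting $\alpha=0$ contribution into a modified energy and exploiting the antisymmetric structure of $\partial_\alpha G^\alpha$ and the wave equations for the second-derivative terms in $\partial_\alpha F^\alpha$), the Besov component of $M(t)$ for the high-high products, and Gr\"onwall. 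This matches the paper's proof of Theorem \ref{VE}.
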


\begin{proof}
To prove \eqref{WHa}, we still need to bound $\| u^0-1 \|_{{H}_x^{s}}$ and $\| \bw \|_{{H}_x^{s_0}}$. By using \eqref{u00} and \eqref{VHe}, we obtain
\begin{equation}\label{Wc0}
	\| u^0-1 \|^2_{{H}_x^{s}}(t) \lesssim  \|\mathring{\bu}\|^2_{H_x^{s}} \lesssim  \left( \|h_0\|^2_{H^{s}}+ \|\mathring{\bu}_0\|^2_{H^{s}} \right) \exp( {\int^t_0} \|d\mathring{\bu}, dh\|_{L^\infty_x}d\tau).
\end{equation}
For $\| \bw \|_{{H}_x^{s_0}}$, we consider
\begin{equation}\label{We0}
	\| \bw \|_{{H}_x^{s_0}}=\| \bw \|_{L_x^{2}}+ \| \bw \|_{\dot{H}_x^{s_0}}.
\end{equation}
So it's natural for us to utilize the transport structure of $\bw$ and the higher derivatives $\bW$ and $\bG$. Multiplying $(u^0)^{-1} w^\alpha$ on \eqref{CEQ} and integrating it on $[0,t] \times \mathbb{R}^3$, we have
\begin{equation}\label{We1}
\begin{split}
  \frac{d}{dt} \|\bw\|^2_{L_x^2} =& \int_{\mathbb{R}^3}  \partial_i ((u^0)^{-1} u^i) |\bw|^2 dx- \int_{\mathbb{R}^3} (u^0)^{-1} u^\alpha w^\alpha w^\kappa \partial_\kappa h  dx
  \\
  & + \int_{\mathbb{R}^3} (u^0)^{-1} w^\kappa \partial_\kappa u^\alpha w^\alpha dx- \int_{\mathbb{R}^3} (u^0)^{-1}  \partial_\kappa u^\kappa |\bw|^2 dx .
\end{split}
\end{equation}
By H\"older's inequality, we have
\begin{equation}\label{We2}
\begin{split}
  \frac{d}{dt} \|\bw\|^2_{L_x^2} \lesssim & ( \|dh \|_{L^\infty_x} + \| d\bu\|_{L^\infty_x})\| \bw\|^2_{L_x^2} .
\end{split}
\end{equation}
Integrating \eqref{We2} on the time interval $[0,t]$($t>0$), so we have
\begin{equation}\label{We3}
  \begin{split}
  \|\bw(t,\cdot)\|^2_{L_x^2} \lesssim & \|\bw_0\|^2_{L_x^2} + \int^t_0 \|dh , d\bu \|_{L^\infty_x} \|\bw\|^2_{L_x^2} d\tau.
\end{split}
\end{equation}
For $\| \bw \|_{\dot{H}^{s_0}}$, we notice
\begin{equation}\label{We4}
	\| \bw \|_{\dot{H}_x^{s_0}} = \| \Delta \bw \|_{\dot{H}_x^{s_0-2}}.
\end{equation}
Since $\bw=(w^0,\mathring{\bw})$, let us first consider $\|  \mathring{\bw} \|_{\dot{H}_x^{s_0}}$. Set the operator
\begin{equation}\label{dh}
	\Delta_{H} =(\delta^{mj}+(u^0)^{-2} u^mu^j )\partial_m \partial_j.
\end{equation}
By \eqref{muu}, $\Delta_{H}$ is also an elliptic operator on $\mathbb{R}^3$. Therefore, we deduce that
\begin{equation}\label{We5}
	\| \Delta_H \mathring{\bw} \|_{\dot{H}_x^{s_0-2}}  \lesssim	\| \Delta \mathring{\bw} \|_{\dot{H}_x^{s_0-2}}  \lesssim 	\| \Delta_H \mathring{\bw} \|_{\dot{H}_x^{s_0-2}} .
\end{equation}
Due t0 \eqref{d5}, we have
\begin{equation}\label{We6}
	\| \Delta_H \mathring{\bw} \|_{\dot{H}_x^{s_0-2}}
	\lesssim	
	\| \bu \cdot \mathrm{vort} \bw \|_{\dot{H}_x^{s_0-1}}
	+ \| \bw \cdot (d\bu,dh)\|_{\dot{H}_x^{s_0-1}}+  \| d\bu \cdot \nabla \mathring{\bw} \|_{\dot{H}_x^{s_0-2}}.
\end{equation}
Since \eqref{MFd}, \eqref{w0d}, and Lemma \ref{ps}, it follows
\begin{equation}\label{We7}
	\begin{split}
		\| \Delta_H \mathring{\bw} \|_{\dot{H}_x^{s_0-2}}  \lesssim	 & \|\bu \cdot \bW\|_{\dot{H}_x^{s_0-1}} + \| \bw \cdot (d\bu,dh)\|_{\dot{H}_x^{s_0-1}}+  \| d\bu \cdot \nabla \mathring{\bw} \|_{\dot{H}_x^{s_0-2}}
		\\
	\lesssim	& \|\bW\|_{\dot{H}_x^{s_0-1}}
	+ \| \mathring{\bw} \cdot (d\bu,dh)\|_{\dot{H}_x^{s_0-1}}+  \| d\bu \cdot \nabla \mathring{\bw} \|_{\dot{H}_x^{s_0-2}}
	\\
	\lesssim	 & \|\bW\|_{\dot{H}_x^{s_0-1}}
	+ \| \mathring{\bw} \|_{{H}_x^{\frac32+}}\| (d\bu,dh) \|_{{H}_x^{s_0-1}}  .
	\end{split}
\end{equation}
Combining \eqref{We5}, \eqref{We6} and \eqref{We7}, we infer
\begin{equation}\label{We8}
	\begin{split}
		\| \Delta \mathring{\bw} \|_{\dot{H}_x^{s_0-2}}
		\lesssim	 & \|\bW\|_{\dot{H}_x^{s_0-1}}
		+ \| \mathring{\bw} \|_{{H}_x^{\frac32+}}\| (d\bu,dh) \|_{{H}_x^{s_0-1}}.
	\end{split}
\end{equation}
Seeing from \eqref{w0d}, we obtain
\begin{equation}\label{We9}
	\begin{split}
		\| \Delta {w^0} \|_{\dot{H}_x^{s_0-2}}  \lesssim	 & \| \Delta \mathring{\bw} \|_{\dot{H}_x^{s_0-2}}+ \|\Delta \bu \cdot \mathring{\bw} \|_{\dot{H}_x^{s_0-2}} + \| \nabla \mathring{\bw} \cdot \nabla \bu \|_{\dot{H}_x^{s_0-2}}.
	\end{split}
\end{equation}
By H\"older's inequality, then \eqref{We9} becomes
\begin{equation}\label{We10}
	\begin{split}
		\| \Delta {w^0} \|_{\dot{H}_x^{s_0-2}}
	\lesssim & 	\| \Delta \mathring{\bw} \|_{\dot{H}_x^{s_0-2}} + \| \mathring{\bw} \|_{{H}_x^{\frac32+}}\| (d\bu,dh) \|_{{H}_x^{s_0-1}}  .
	\end{split}
\end{equation}
Inserting \eqref{We8} to \eqref{We10}, we have
\begin{equation}\label{We12}
	\begin{split}
		\| \Delta \bw \|_{\dot{H}_x^{s_0-2}}
		\lesssim	 & \|\bW\|_{\dot{H}_x^{s_0-1}}
		+  \| \mathring{\bw} \|_{{H}_x^{\frac32+}}\| (d\bu,dh) \|_{{H}_x^{s_0-1}} .
	\end{split}
\end{equation}
By using \eqref{MFd}, \eqref{CEQ}, and \eqref{w0d}, we can also prove
\begin{align}\label{We13}
	\|\bW\|_{\dot{H}_x^{s_0-1}} \lesssim	& \|\bu \cdot d \bw \|_{\dot{H}_x^{s_0-1}} + \|\bw \cdot d h\|_{\dot{H}_x^{s_0-1}}
	\nonumber
	\\
	\lesssim	& \|\nabla \bw \|_{\dot{H}_x^{s_0-1}} + \| \mathring{\bw} \cdot (d\bu, d h)\|_{\dot{H}_x^{s_0-1}}
	\nonumber
	\\
	\lesssim & \| \Delta \bw \|_{\dot{H}_x^{s_0-2}}
		+ \| \mathring{\bw} \|_{{H}_x^{\frac32+}} \| (d\bu,dh) \|_{{H}_x^{s_0-1}} .
\end{align}
Seeing from \eqref{We12} and \eqref{We13}, there is only a lower order term  in $\| \Delta \bw\|_{\dot{H}_x^{s_0-2}}$ and $\| \bW \|_{\dot{H}_x^{s_0-1}}$. So we transfer the goal to bound $\| \bW \|_{\dot{H}_x^{s_0-1}}$.

\textbf{Step 1: $\| \bW \|_{\dot{H}_x^{s_0-1}}$}. We also first consider $\| \mathring{\bW} \|_{\dot{H}_x^{s_0-1}}$. On one hand, we have
\begin{equation}\label{We14}
	\| \mathring{\bW} \|_{\dot{H}_x^{s_0-1}}=  \| \nabla \mathring{\bW} \|_{\dot{H}_x^{s_0-2}}.
\end{equation}
By \eqref{d17}, \eqref{We14} and \eqref{MFd}, we therefore get
\begin{equation}\label{Wc00}
	\begin{split}
\| \mathring{\bW} \|_{\dot{H}_x^{s_0-1}}\lesssim &	\|\bG \|_{\dot{H}_x^{s_0-2}}+  \| \bW \cdot (d\bu,dh) \|_{\dot{H}_x^{s_0-2}}
	+   \| d\bw \cdot (d\bu,dh) \|_{\dot{H}_x^{s_0-2}}
	\\
	&	+  \| \bw \cdot d\bu \cdot dh \|_{\dot{H}_x^{s_0-2}}+  \| \bw \cdot \bw \cdot dh \|_{\dot{H}_x^{s_0-2}}.
	\end{split}
\end{equation}
Using \eqref{MFd}, \eqref{d8}, \eqref{d9}, \eqref{VVd} and H\"older's inequality, then \eqref{Wc00} becomes
\begin{equation}\label{Wc01}
	\begin{split}
		\| \mathring{\bW} \|_{\dot{H}_x^{s_0-1}}\lesssim &	\|\bG \|_{\dot{H}_x^{s_0-2}}
		+   \| d\bw \cdot (d\bu,dh) \|_{\dot{H}_x^{s_0-2}}
			+  \| \bw \cdot d\bu \cdot dh \|_{\dot{H}_x^{s_0-2}}+  \| \bw \cdot \bw \cdot dh \|_{\dot{H}_x^{s_0-2}}
		\\
		\lesssim &	\|\bG \|_{\dot{H}_x^{s_0-2}}
		+   \| \nabla \bw \cdot (d\bu,dh) \|_{\dot{H}_x^{s_0-2}}
			+  \| \bw \cdot d\bu \cdot dh \|_{\dot{H}_x^{s_0-2}}
		\\
			\lesssim &	\|\bG \|_{\dot{H}_x^{s_0-2}}
		+   \|\bw \|_{{H}_x^{s_0-\frac12} } \| (d\bu,dh) \|_{{H}_x^{s_0-1}}(1+ \| (d\bu,dh) \|_{{H}_x^{s_0-1}} ).
	\end{split}
\end{equation}
Since \eqref{d20}, so it's easy for us to get
\begin{equation}\label{Wc02}
	\begin{split}
		\| {W}^0 \|_{\dot{H}_x^{s_0-1}}\lesssim &	\| \mathring{\bW} \|_{\dot{H}_x^{s_0-1}}.
	\end{split}
\end{equation}
Combining \eqref{Wc01} and \eqref{Wc02}, we have
\begin{equation}\label{Wc03}
	\begin{split}
		\| \bW \|_{\dot{H}_x^{s_0-1}}
		\lesssim &	\|\bG \|_{\dot{H}_x^{s_0-2}}
		+   \|\bw \|_{{H}_x^{s_0-\frac12} } \| (d\bu,dh) \|_{{H}_x^{s_0-1}}(1+ \| (d\bu,dh) \|_{{H}_x^{s_0-1}} ).
	\end{split}
\end{equation}
Due to \eqref{We4}, \eqref{We12} and \eqref{Wc03}, it yields
\begin{equation*}
	\begin{split}
		\| \bw \|_{\dot{H}_x^{s_0}}
		\leq &	C\|\bG \|_{\dot{H}_x^{s_0-2}}
		+   C\|\bw \|_{{H}_x^{s_0-\frac12} } \| (d\bu,dh) \|_{{H}_x^{s_0-1}}(1+ \| (d\bu,dh) \|_{{H}_x^{s_0-1}} ).
	\end{split}
\end{equation*}
By interpolation formula and Young's inequality, we get
\begin{equation}\label{Wc04}
	\begin{split}
		\| \bw \|_{\dot{H}_x^{s_0}}
		\leq &	C\|\bG \|_{\dot{H}_x^{s_0-2}}
		+   C\|\bw \|^{\frac12}_{{H}_x^{s_0-1} } \|\bw \|^{\frac12}_{{H}_x^{s_0} } \| (d\bu,dh) \|_{{H}_x^{s_0-1}}(1+ \| (d\bu,dh) \|_{{H}_x^{s_0-1}} )
		\\
		\leq &	C\|\bG \|_{\dot{H}_x^{s_0-2}}
		+   \frac{1}{10}\|\bw \|_{{H}_x^{s_0} }+ C\|\bw \|_{{H}_x^{s_0-1} } \| (d\bu,dh) \|^2_{{H}_x^{s_0-1}}(1+ \| (d\bu,dh) \|^2_{{H}_x^{s_0-1}} )
		\\
		\leq &	C\|\bG \|_{\dot{H}_x^{s_0-2}}
		+   \frac{1}{10}\|\bw \|_{{H}_x^{s_0} }+ C\|d\bu\|_{{H}_x^{s_0-1} } \| (d\bu,dh) \|^2_{{H}_x^{s_0-1}}(1+ \| (d\bu,dh) \|^2_{{H}_x^{s_0-1}} ).
	\end{split}
\end{equation}
Therefore, we turn to bound $\| \bG \|_{\dot{H}^{s_0-2}}$ as follows.

\textbf{Step 2: $\| \bG \|_{\dot{H}^{s_0-2}}$}. By \eqref{SDe}, we have
\begin{equation}\label{We15}
	\begin{split}
		(u^0)^{-1} u^\kappa \partial_\kappa \left(G^\alpha-F^\alpha \right)
		=&  (u^0)^{-1} E^\alpha
		+  \partial^\alpha \left( (u^0)^{-1} \Gamma \right)
		 -\Gamma  \partial^\alpha \left( (u^0)^{-1}\right) .
	\end{split}
\end{equation}
Operating $\Lambda_x^{s_0-2}$ on \eqref{We15}, we have
\begin{equation}\label{We16}
	\begin{split}
			& \partial_t \left(  \Lambda_x^{s_0-2} (G^\alpha-F^\alpha)  \right) +(u^0)^{-1} u^i \partial_i \left( \Lambda_x^{s_0-2}(G^\alpha-F^\alpha) \right)
		\\
		= & \partial^\alpha \left(  \Lambda_x^{s_0-2} ( (u^0)^{-1} \Gamma ) \right)  -[\Lambda_x^{s_0-2}, (u^0)^{-1} u^i \partial_i](G^\alpha-F^\alpha)		
		\\
		& + \Lambda_x^{s_0-2} \left( (u^0)^{-1} E^\alpha  \right)   -\Lambda_x^{s_0-2}\left\{ \Gamma  \partial^\alpha \left( (u^0)^{-1}\right)  \right\}.
	\end{split}
\end{equation}

Multiplying $\Lambda_x^{s_0-2}(G_\alpha-F_\alpha)$ on \eqref{We16} and integrating it on $\mathbb{R}^3$, we can derive that
\begin{equation}\label{We17}
\begin{split}
  \frac{d}{dt} \|\Lambda_x^{s_0-2}(\bG -\bF)\|^2_{L_x^2}=& \mathrm{H}_1+\mathrm{H}_2+\mathrm{H}_3+\mathrm{H}_4,
\end{split}
\end{equation}
where we record
\begin{align}\label{We18}
& \mathrm{H}_1= \int_{\mathbb{R}^3} \partial^\alpha \left\{ \Lambda_x^{s_0-2} \big( (u^0)^{-1} \Gamma\big) \right\} \cdot \Lambda_x^{s_0-2}(G_\alpha- F_\alpha) dx,
\\
\label{We19}
& \mathrm{H}_2={\int_{\mathbb{R}^3}} \Lambda_x^{s_0-2} \left\{ (u^0)^{-1} E^\alpha  \right\} \cdot   \Lambda_x^{s_0-2} (G_\alpha- F_\alpha) dx,
\\
\label{We20}
& \mathrm{H}_3=-{\int_{\mathbb{R}^3}} [\Lambda_x^{s_0-2}, (u^0)^{-1} u^i \partial_i](G^\alpha-F^\alpha) \cdot   \Lambda_x^{s_0-2} (G_\alpha- F_\alpha) dx,
\\
\label{We21}
& \mathrm{H}_4=-{\int_{\mathbb{R}^3}} \Lambda_x^{s_0-2}\left\{ \Gamma  \partial^\alpha \left( (u^0)^{-1}\right)  \right\}\cdot   \Lambda_x^{s_0-2} (G_\alpha- F_\alpha) dx.
\end{align}
For $\mathrm{H}_1$ is the most difficult term, so we will discuss it later.

\textit{The bound for $\mathrm{H}_2, \mathrm{H}_3, \mathrm{H}_4$.} To get the bound of $\mathrm{H}_2, \mathrm{H}_3, \mathrm{H}_4$, commutator estimates and H\"older's inequality are enough. By H\"older's inequality, we have
\begin{align}\label{We22}
	 | \mathrm{H}_2 | \leq & \|\bE\|_{\dot{H}_x^{s_0-2}} ( \|\bG\|_{\dot{H}_x^{s_0-2}}+ \|\bF\|_{\dot{H}_x^{s_0-2}}).
\end{align}
So we need to know the estimate of $\bF$, $\bE$ and $\bG$. Recalling \eqref{YX0}, and using H\"older's inequality, then it follows
\begin{align}\label{We23}
	\nonumber
	\|\bF\|_{\dot{H}_x^{s_0-2}} \lesssim &  \|(d\bu,dh)\cdot d\bw \|_{\dot{H}_x^{s_0-2}}+ \|(d\bu,dh)\cdot (d\bu,dh) \cdot \bw \|_{\dot{H}_x^{s_0-2}}
	\\
	\lesssim & \|\bw\|_{{H}^{s_0}_x} \| (d\bu, dh)\|_{{H}^{s_0-1}_x} (1+\| (d\bu, dh)\|_{{H}^{s_0-1}_x}).
\end{align}
Seeing from \eqref{YX1}, and using H\"older's inequality and Lemma \ref{lpe}, it yields
\begin{equation}\label{We24}
\begin{split}
	\|\bE\|_{\dot{H}_x^{s_0-2}} \lesssim &  \|(d\bu,dh)\cdot d^2\bw \|_{\dot{H}_x^{s_0-2}}+ \|(d\bu,dh)\cdot (d\bu,dh) \cdot (d\bu,dh) \cdot \bw \|_{\dot{H}_x^{s_0-2}}
	\\
	& + \|(d\bu,dh)\cdot \bw \cdot (d^2\bu,d^2h) \|_{\dot{H}_x^{s_0-2}}+ \|(d\bu,dh)\cdot (d\bu,dh) \cdot d\bw \|_{\dot{H}_x^{s_0-2}}
	\\
	\lesssim & ( \|(d\bu,dh)\|_{\dot{B}_{\infty,2}^{s_0-2}}+ \|(d\bu,dh)\|_{L^\infty_x} ) \| \bw\|_{H^{s_0}_x}+\|(d\bu,dh)\|_{H^{s_0-1}_x}  \| \bw\|_{H^{s_0}_x}
	\\
	& +\|(d\bu,dh)\|^2_{H^{s_0-1}_x}  \| \bw\|_{H^{s_0}_x}+\|(d\bu,dh)\|^3_{H^{s_0-1}_x}  \| \bw\|_{H^{s_0}_x}.
\end{split}
\end{equation}
By \eqref{MFd}, we compute out
\begin{align}\label{We25}
	& G^\alpha
	= \epsilon^{\alpha \beta \gamma \delta}\epsilon_{\delta}^{\ \eta \mu \nu} u_\beta \partial_\gamma \left( u_\eta \partial_\mu w_\nu
	- c^{-2}_s  u_\eta w_\mu \partial_\nu h  \right).
\end{align}
Due to \eqref{We25}, using H\"older's inequality, we infer
\begin{equation}\label{We29}
	\begin{split}
	\| \bG \|_{\dot{H}_x^{s_0-2}} \lesssim &\|d^2\bw \|_{\dot{H}_x^{s_0-2}}+ \|(dh,d\bu)\cdot d\bw \|_{\dot{H}_x^{s_0-2}}+ \|\bw \cdot (d^2\bu,d^2h) \|_{\dot{H}_x^{s_0-2}}
	\\
	& + \|(dh,d\bu)\cdot (dh,d\bu) \cdot \bw \|_{\dot{H}_x^{s_0-2}}
	\\
	\lesssim & 	\| \bw\|_{H^{s_0}_x} +\|(d\bu,dh)\|_{H^{s_0-1}_x}  \| \bw\|_{H^{s_0}_x}
+\|(d\bu,dh)\|^2_{H^{s_0-1}_x}  \| \bw\|_{H^{s_0}_x} .
	\end{split}
\end{equation}
Inserting \eqref{We23}, \eqref{We24}, \eqref{We29} to \eqref{We22}, we therefore obtain
\begin{equation}
	\begin{split}
| \mathrm{H}_2 |\lesssim	& ( \|(d\bu,dh)\|_{\dot{B}_{\infty,2}^{s_0-2}}+ \|(d\bu,dh)\|_{L^\infty_x} ) \| \bw\|^2_{H^{s_0}_x} (1+ \|(d\bu,dh)\|^2_{H^{s_0-1}_x} )
		\\
		& +(\|(d\bu,dh)\|_{H^{s_0-1}_x}+\|(d\bu,dh)\|^5_{H^{s_0-1}_x})  \| \bw\|^2_{H^{s_0}_x}.
	\end{split}
\end{equation}
By Lemma \ref{ce}, we can derive
\begin{align}\label{We26}
	\nonumber| \mathrm{H}_3 | \leq & \|d\bu\|_{L^\infty_x}  \|\bG-\bF\|^2_{\dot{H}_x^{s_0-2}}
	\\
	\lesssim & \|d\bu\|_{L^\infty_x} \| \bw\|^2_{H^{s_0}_x}(1 +\|(d\bu,dh)\|^2_{H^{s_0-1}_x}
	+\|(d\bu,dh)\|^4_{H^{s_0-1}_x} ).
\end{align}
For $\mathrm{H}_4$, seeing \eqref{YXg}, \eqref{We23}, \eqref{We29}, and using H\"older's inequality, we can show that
\begin{align}\label{We27}
\nonumber| \mathrm{H}_4 | \leq & \|\Gamma \cdot d\bu\|_{\dot{H}_x^{s_0-2}}  \|\bG-\bF\|_{\dot{H}_x^{s_0-2}}
	\\
		\lesssim & (\|d\bu\|_{\dot{B}_{\infty,2}^{s_0-2}}+ \|d\bu\|_{L^\infty_x}) \| \bw\|^2_{H^{s_0}_x}(1 +\|(d\bu,dh)\|^2_{H^{s_0-1}_x}
	+\|(d\bu,dh)\|^3_{H^{s_0-1}_x} ).
\end{align}
\textit{The bound for $\mathrm{H}_1$}. If we take the derivatives in $\Gamma$ directly, there will be a loss of derivative through H\"older's inequality. So we seek to capture the cancellation of highest derivatives from integrating by parts. Therefore, we divide $\mathrm{H}_1$ by two parts
\begin{equation}\label{We28}
  \begin{split}
  \mathrm{H}_1
  =& \underbrace{ \int_{\mathbb{R}^3} \partial_\alpha (\Lambda_x^{s_0-2} \big( (u^0)^{-1} \Gamma\big))\cdot \Lambda_x^{s_0-2}G^\alpha dx }_{\equiv \mathrm{H}_{11}} \underbrace{-\int_{\mathbb{R}^3} \partial_\alpha (\Lambda_x^{s_0-2} \big( (u^0)^{-1} \Gamma\big))\cdot \Lambda_x^{s_0-2}F^\alpha dx }_{\equiv \mathrm{H}_{12}}.
  \end{split}
\end{equation}
Consider
\begin{equation}\label{We30}
	\begin{split}
		\partial_\alpha G^\alpha= \partial_\alpha(-\epsilon^{\alpha\beta\gamma\delta}u_\beta \partial_\gamma W_\delta)
		= -\epsilon^{\alpha\beta\gamma\delta} \partial_\alpha u_\beta \partial_\gamma W_\delta.
	\end{split}
\end{equation}
For $\mathrm{H}_{11}$, integrating it parts, which yields
\begin{equation}\label{We60}
\begin{split}
  \mathrm{H}_{11}
  =&\int_{\mathbb{R}^3} \partial_\alpha (\Lambda_x^{s_0-2} \big( (u^0)^{-1} \Gamma\big) \cdot \Lambda_x^{s_0-2}G^\alpha ) dx-  \int_{\mathbb{R}^3}\Lambda_x^{s_0-2} \big( (u^0)^{-1} \Gamma\big) \cdot \Lambda_x^{s_0-2}(\partial_\alpha G^\alpha) dx
\\
  =&\frac{d}{dt}\int_{\mathbb{R}^3} \Lambda_x^{s_0-2} \big( (u^0)^{-1} \Gamma\big)\cdot \Lambda_x^{s_0-2}G^0 dx
  \underbrace{-    \int_{\mathbb{R}^3}\Lambda_x^{s_0-2} \big( (u^0)^{-1} \Gamma\big) \cdot \Lambda_x^{s_0-2}(\partial_\alpha G^\alpha) dx }_{\equiv \mathrm{H}_{111}}.
\end{split}
\end{equation}
By using \eqref{We30}, we infer
\begin{equation*}
	\mathrm{H}_{111}= \epsilon^{\alpha\beta\gamma\delta} \int_{\mathbb{R}^3} \Lambda_x^{s_0-2}\big( (u^0)^{-1} \Gamma\big) \cdot \Lambda_x^{s_0-2}(\partial_\alpha u_\beta \partial_\gamma W_\delta) dx .
\end{equation*}
Similarly, we can calculate $\mathrm{H}_{12}$ by
\begin{equation}\label{We61}
	\begin{split}
	\mathrm{H}_{12}=&	- \frac{d}{dt} \int_{\mathbb{R}^3} \Lambda_x^{s_0-2} \big( (u^0)^{-1} \Gamma\big) \cdot \Lambda_x^{s_0-2}F^0 dx +  \underbrace{\int_{\mathbb{R}^3}  \Lambda_x^{s_0-2} \big( (u^0)^{-1} \Gamma\big) \cdot \Lambda_x^{s_0-2}(\partial_{\alpha} F^{\alpha}) dx}_{\equiv \mathrm{H}_{121}}.
	\end{split}
\end{equation}
Inserting \eqref{We28}, \eqref{We60} and \eqref{We61} to \eqref{We17}, it follows
\begin{footnotesize}
	\begin{equation}\label{We62}
		\begin{split}
			\frac{d}{dt} \left( \|\Lambda_x^{s_0-2}(\bG -\bF)\|^2_{L_x^2}- \int_{\mathbb{R}^3} \Lambda_x^{s_0-2} \big( (u^0)^{-1} \Gamma\big) \cdot \Lambda_x^{s_0-2}(G^0-F^0) dx \right) =& \mathrm{H}_{111}+\mathrm{H}_{121}+\mathrm{H}_3+\mathrm{H}_4.
		\end{split}
	\end{equation}
\end{footnotesize}
On the right hand side of \eqref{We62}, we have estimated $\mathrm{H}_3, \mathrm{H}_4$ by \eqref{We26}-\eqref{We27}. It still remains for us to consider $\mathrm{H}_{111}$ and $\mathrm{H}_{121}$.

\textit{The bound for  $\mathrm{H}_{111}$.} Using Lemma \ref{lpe}, H\"older's inequality, \eqref{YXg} and \eqref{MFd}, we infer
\begin{equation}\label{We63}
	\begin{split}
		 | \mathrm{H}_{111} | \lesssim & \| \Gamma\|_{\dot{H}_x^{s_0-2}} \|d\bu \cdot d\bW\|_{\dot{H}_x^{s_0-2}}
		\\
		\lesssim & \| d\bu \cdot d\bw\|_{\dot{H}_x^{s_0-2}} ( \|d\bu \cdot ( d\bu,dh) \cdot d\bw \|_{\dot{H}_x^{s_0-2}}+\|d\bu \cdot d^2\bw \|_{\dot{H}_x^{s_0-2}}  +\|\bw \cdot d^2 h \|_{\dot{H}_x^{s_0-2}} )
		\\
		\lesssim & ( \|(d\bu,dh)\|_{\dot{B}_{\infty,2}^{s_0-2}}+ \|(d\bu,dh)\|_{L^\infty_x} )
		( \|(d\bu, dh)\|^2_{H^{s_0-1}_x} +  \| \bw\|^2_{H^{s_0}_x} ) \|(d\bu, dh)\|_{H^{s_0-1}_x}
		\\
		&   + ( \|(d\bu, dh)\|^2_{H^{s_0-1}_x} +  \| \bw\|^2_{H^{s_0}_x} ) \|(d\bu, dh)\|^3_{H^{s_0-1}_x}.
	\end{split}
\end{equation}
\textit{The bound for  $\mathrm{H}_{121}$.} Due to \eqref{YX0}, and $u_\lambda w^\lambda=0$, we can compute out
\begin{align*}
\nonumber	\partial_\alpha F^\alpha=& \partial_\alpha ( -2\epsilon^{\alpha \beta \gamma \delta} c_s^{-2} u_\beta \partial_\gamma h W_\delta-2u^\alpha \partial^\gamma w^\lambda \partial_\gamma u_\lambda+2 u_\lambda \partial_\gamma u^\gamma \partial^\alpha w^\lambda - 2c_s^{-2}\partial_\lambda h \partial^\alpha w^\lambda )
	\\
\nonumber	=& \partial_\alpha ( -2\epsilon^{\alpha \beta \gamma \delta} c_s^{-2} u_\beta \partial_\gamma h W_\delta-2u^\alpha \partial^\gamma w^\lambda \partial_\gamma u_\lambda-2 \partial^\alpha u_\lambda \partial_\gamma u^\gamma w^\lambda - 2c_s^{-2}\partial_\lambda h \partial^\alpha w^\lambda )
	\\
\nonumber	=&
 -2\epsilon^{\alpha \beta \gamma \delta} c_s^{-2} \partial_\alpha  u_\beta \partial_\gamma h W_\delta
  -2\epsilon^{\alpha \beta \gamma \delta} c_s^{-2} u_\beta \partial_\gamma h \partial_\alpha W_\delta
		-2 \partial_\alpha   w^\lambda \partial^\alpha u_\lambda \partial_\gamma u^\gamma
	\\
\nonumber
& -2 w^\lambda \partial_\gamma u^\gamma  \partial_\alpha  \partial^\alpha u_\lambda
 -2 w^\lambda \partial^\alpha u_\lambda \partial_\alpha  \partial_\gamma u^\gamma
-2 w^\lambda \partial^\alpha u_\lambda \partial_\gamma \partial_\alpha u^\gamma
	\\
\nonumber	&-2 \partial_\alpha  u^\alpha \partial^\gamma w^\lambda \partial_\gamma u_\lambda
-2u^\alpha \partial_\gamma u_\lambda \partial_\alpha  \partial^\gamma w^\lambda
-2u^\alpha \partial^\gamma w^\lambda \partial_\alpha   \partial_\gamma u_\lambda
		\\
		\nonumber
	&+4 c_s^{-3}c'_s \partial_\alpha h  \partial_\lambda h \partial^\alpha w^\lambda
	- 2c_s^{-2} \partial_\lambda h \partial_\alpha  \partial^\alpha w^\lambda
	- 2c_s^{-2} \partial_\alpha  \partial_\lambda h \partial^\alpha w^\lambda.
\end{align*}
To be simple, we record $\partial_\alpha F^\alpha$ as
\begin{align}\label{We64}
	\partial_\alpha F^\alpha=&  \Psi-2u^\alpha \partial^\gamma w^\lambda \partial_\alpha   \partial_\gamma u_\lambda - 2c_s^{-2} \partial_\alpha  \partial_\lambda h \partial^\alpha w^\lambda ,
\end{align}
where
\begin{equation}\label{We65}
	\begin{split}
		\Psi	=&
		-2\epsilon^{\alpha \beta \gamma \delta} c_s^{-2} \partial_\alpha  u_\beta \partial_\gamma h W_\delta
		-2\epsilon^{\alpha \beta \gamma \delta} c_s^{-2} u_\beta \partial_\gamma h \partial_\alpha W_\delta
		-2 \partial_\alpha   w^\lambda \partial^\alpha u_\lambda \partial_\gamma u^\gamma
		\\
		& -2 w^\lambda \partial_\gamma u^\gamma  \partial_\alpha  \partial^\alpha u_\lambda
		-2 w^\lambda \partial^\alpha u_\lambda \partial_\alpha  \partial_\gamma u^\gamma
		-2 w^\lambda \partial^\alpha u_\lambda \partial_\gamma \partial_\alpha u^\gamma
		-2 \partial_\alpha  u^\alpha \partial^\gamma w^\lambda \partial_\gamma u_\lambda
		\\
		&	-2u^\alpha \partial_\gamma u_\lambda \partial_\alpha  \partial^\gamma w^\lambda
		+4 c_s^{-3}c'_s \partial_\alpha h  \partial_\lambda h \partial^\alpha w^\lambda
		- 2c_s^{-2} \partial_\lambda h \partial_\alpha  \partial^\alpha w^\lambda .
	\end{split}
\end{equation}
Due to \eqref{We65} and \eqref{MFd}, using Lemma \ref{lpe} and H\"older's inequality, we can bound
\begin{equation}\label{We66}
	\begin{split}
	\|	\Psi \|_{\dot{H}^{s_0-2}_x} \lesssim & \| (dh,d\bu) \cdot d^2\bw \|_{\dot{H}^{s_0-2}_x}
	+ \| (dh,d\bu) \cdot (dh,d\bu) \cdot d\bw \|_{\dot{H}^{s_0-2}_x}
	\\
	& + \| (dh,d\bu) \cdot \bw \cdot (d^2h,d^2\bu)  \|_{\dot{H}^{s_0-2}_x}
	+ \| (dh,d\bu) \cdot \bw \cdot (dh,d\bu) \cdot (dh,d\bu)  \|_{\dot{H}^{s_0-2}_x}
	\\
	\lesssim & ( \|(d\bu,dh)\|_{\dot{B}_{\infty,2}^{s_0-2}}+ \|(d\bu,dh)\|_{L^\infty_x} )
	( \|(d\bu, dh)\|_{H^{s_0-1}_x} +  \| \bw\|_{H^{s_0}_x} )
	\\
	&   + ( \|(d\bu, dh)\|_{H^{s_0-1}_x} +  \| \bw\|_{H^{s_0}_x} ) ( \|(d\bu, dh)\|^2_{H^{s_0-1}_x} + \|(d\bu, dh)\|^3_{H^{s_0-1}_x} ),
	\end{split}
\end{equation}
For $\mathrm{H}_{121}$, using \eqref{We64} and \eqref{We65}, we have
\begin{footnotesize}
	\begin{equation}\label{We68}
		\begin{split}
			\mathrm{H}_{121}= & \underbrace{ \int_{\mathbb{R}^3}  \Lambda_x^{s_0-2} ( (u^0)^{-1}\Gamma ) \cdot \Lambda_x^{s_0-2} \Psi dx }_{\equiv \mathrm{H}_{12a}}
			- \underbrace{ 2\int_{\mathbb{R}^3}  \Lambda_x^{s_0-2} ( (u^0)^{-1}\Gamma )\cdot \Lambda_x^{s_0-2}( u^\alpha \partial^\gamma w^\lambda \partial_\alpha   \partial_\gamma u_\lambda) dx }_{\equiv \mathrm{H}_{12b}}
			\\
			& - \underbrace{ 2\int_{\mathbb{R}^3}  \Lambda_x^{s_0-2} ( (u^0)^{-1}\Gamma )\cdot \Lambda_x^{s_0-2}( c_s^{-2} \partial_\alpha  \partial_\lambda h \partial^\alpha w^\lambda) dx }_{\equiv \mathrm{H}_{12c}} .
		\end{split}
	\end{equation}
\end{footnotesize}
We now discuss $\mathrm{H}_{12a}, \mathrm{H}_{12b}$, and $\mathrm{H}_{12c}$ as follows. Note \eqref{YXg} and \eqref{MFd}. By H\"older's inequality, \eqref{We66}, and Lemma \ref{lpe}, it yields
\begin{equation}\label{We67}
	\begin{split}
	| \mathrm{H}_{12a} | \lesssim & \| \Gamma \|_{\dot{H}^{s_0-2}_x} \| \Psi \|_{\dot{H}^{s_0-2}_x}
	\\
	\lesssim &  ( \|(d\bu,dh)\|_{\dot{B}_{\infty,2}^{s_0-2}}+ \|(d\bu,dh)\|_{L^\infty_x} )
	( \|(d\bu, dh)\|^2_{H^{s_0-1}_x} +  \| \bw\|^2_{H^{s_0}_x} ) \|(d\bu, dh)\|_{H^{s_0-1}_x}
	\\
	&   + ( \|(d\bu, dh)\|^2_{H^{s_0-1}_x} +  \| \bw\|^2_{H^{s_0}_x} ) (\|(d\bu, dh)\|^3_{H^{s_0-1}_x} + \|(d\bu, dh)\|^4_{H^{s_0-1}_x}).
	\end{split}
\end{equation}
For $\mathrm{H}_{12b}$, we cannot bound it directly like $\mathrm{H}_{12a}$. We decompose $\mathrm{H}_{12b}$ by
\begin{footnotesize}
	\begin{equation}\label{We680}
		\begin{split}
			\mathrm{H}_{12b}
			=&	-2\int_{\mathbb{R}^3}  \Lambda_x^{s_0-2} ( (u^0)^{-1}\Gamma )\cdot \left\{  \Lambda_x^{s_0-2}( u^0 \partial^\gamma w^\lambda \partial_t   \partial_\gamma u_\lambda) + \Lambda_x^{s_0-2}( u^i \partial^\gamma w^\lambda \partial_i   \partial_\gamma u_\lambda) \right\} dx
			\\
			=& \underbrace{	-2\int_{\mathbb{R}^3}  \Lambda_x^{s_0-2} ( (u^0)^{-1}\Gamma )\cdot \Lambda_x^{s_0-2}( u^0 \partial^0 w^\lambda \partial^2_t   u_\lambda) dx }_{\equiv \mathrm{H}_{12b1}}
			\underbrace{ -2\int_{\mathbb{R}^3}  \Lambda_x^{s_0-2} ( (u^0)^{-1}\Gamma ) \cdot \Lambda_x^{s_0-2}( u^0 \partial^i w^\lambda \partial_t   \partial_i u_\lambda) dx }_{\equiv \mathrm{H}_{12b2}}
			\\
			&  \underbrace{ -2\int_{\mathbb{R}^3}  \Lambda_x^{s_0-2} ( (u^0)^{-1}\Gamma )\cdot \Lambda_x^{s_0-2}( u^i \partial^\gamma w^\lambda \partial_i   \partial_\gamma u_\lambda) dx }_{\equiv \mathrm{H}_{12b3}} .
		\end{split}
	\end{equation}
\end{footnotesize}
Due to \eqref{WTe}, we have
\begin{align}\label{We72}
	\partial^2_t u_\kappa=&g^{0i} \partial^2_{ti}u_\kappa+g^{ij}\partial^2_{ij}u_\kappa+ c_s^2\Omega \mathrm{e}^{-h}W_\kappa- Q_\kappa,
	\\
	\label{We73}
	\partial^2_t h=&g^{0i} \partial^2_{ti}h+g^{ij}\partial^2_{ij}h-D.
\end{align}
Inserting \eqref{We72} to $\mathrm{H}_{12b1}$, we derive that
\begin{footnotesize}
\begin{equation}\label{We74}
	\begin{split}
		& \mathrm{H}_{12b1}
		\\
		=& 	\underbrace{ -2\int_{\mathbb{R}^3}  \Lambda_x^{s_0-2} ((u^0)^{-1} \Gamma) \cdot \Lambda_x^{s_0-2}( u_0 \partial_t w^\lambda g^{0i} \partial^2_{ti}u_\lambda) dx }_{\equiv \mathrm{H}_{12ba}}
		  \underbrace{ -2\int_{\mathbb{R}^3}  \Lambda_x^{s_0-2} ((u^0)^{-1} \Gamma) \cdot \Lambda_x^{s_0-2}( u_0 \partial_t w^\lambda g^{ij}\partial^2_{ij}u_\lambda) dx }_{\equiv \mathrm{H}_{12bb}}
		 \\
		 &\underbrace{ + 2\int_{\mathbb{R}^3}  \Lambda_x^{s_0-2} ((u^0)^{-1} \Gamma)\cdot \Lambda_x^{s_0-2}( u_0 \partial_t w^\lambda Q_\lambda) dx }_{\equiv \mathrm{H}_{12bc}}
	  \underbrace{- 2\int_{\mathbb{R}^3}  \Lambda_x^{s_0-2} ((u^0)^{-1} \Gamma) \cdot \Lambda_x^{s_0-2}( u_0 \partial_t w^\lambda c_s^2\Omega \mathrm{e}^{-h}W_\lambda) dx }_{\equiv \mathrm{H}_{12bd}}.
	\end{split}
\end{equation}
\end{footnotesize}
For $\mathrm{H}_{12ba}$, we can compute out
\begin{footnotesize}
\begin{equation}\label{We74a}
	\begin{split}
	& \mathrm{H}_{12ba}
	\\
	=	& 	 -2\int_{\mathbb{R}^3}  \Lambda_x^{s_0-2} ((u^0)^{-1}\Gamma) \cdot \partial_{i} \Lambda_x^{s_0-2}( u_0 \partial_t w^\lambda g^{0i} \partial_{t}u_\lambda) dx
	 + 2\int_{\mathbb{R}^3}  \Lambda_x^{s_0-2} ((u^0)^{-1}\Gamma) \cdot \Lambda_x^{s_0-2}( \partial_{i} u_0 \partial_t w^\lambda g^{0i} \partial_{t}u_\lambda) dx
	\\
	& + 2\int_{\mathbb{R}^3}  \Lambda_x^{s_0-2} ((u^0)^{-1}\Gamma) \cdot \Lambda_x^{s_0-2}( u_0 \partial_{i} \partial_t w^\lambda g^{0i} \partial_{t}u_\lambda) dx + 2\int_{\mathbb{R}^3}  \Lambda_x^{s_0-2} ((u^0)^{-1}\Gamma)\cdot \Lambda_x^{s_0-2}( u_0  \partial_t w^\lambda \partial_{i} g^{0i} \partial_{t}u_\lambda) dx  .
	\end{split}
\end{equation}
\end{footnotesize}
For the first term in the right hand side of \eqref{We74a}, we can bound it by Plancherel's formula and H\"older's inequality. For the rest terms, we can estimate them by using H\"older's inequality and Lemma \ref{lpe}. Therefore, it follows
\begin{equation}\label{We75}
	\begin{split}
		| \mathrm{H}_{12ba} | \lesssim	& 	 \| \Gamma \|_{\dot{H}_x^{s_0-\frac32}}  \|d \bw  d\bu \|_{\dot{H}_x^{s_0-\frac32}}
		 + \| \Gamma \|_{\dot{H}_x^{s_0-2}}  \|d\bu \nabla d \bw  \|_{\dot{H}_x^{s_0-2}}
		 \\
		 & + \| \Gamma \|_{\dot{H}_x^{s_0-2}}  \|(d\bu,dh) \cdot d \bw  \cdot (d\bu,dh) \|_{\dot{H}_x^{s_0-2}}
		 \\
		 \lesssim & ( \|(d\bu,dh)\|_{\dot{B}_{\infty,2}^{s_0-2}}+ \|(d\bu,dh)\|_{L^\infty_x} )
		 ( \|(d\bu, dh)\|^2_{H^{s_0-1}_x} +  \| \bw\|^2_{H^{s_0}_x} ) \|(d\bu, dh)\|_{H^{s_0-1}_x}
		 \\
		 &   + ( \|(d\bu, dh)\|^2_{H^{s_0-1}_x} +  \| \bw\|^2_{H^{s_0}_x} ) (\|(d\bu, dh)\|^2_{H^{s_0-1}_x} + \|(d\bu, dh)\|^4_{H^{s_0-1}_x}).
	\end{split}
\end{equation}
Similarly, we can calculate $\mathrm{H}_{12bb}$ by
\begin{footnotesize}
\begin{equation*}
	\begin{split}
		\mathrm{H}_{12bb}=& -2\int_{\mathbb{R}^3}  \Lambda_x^{s_0-2} ( (u^0)^{-1}\Gamma ) \cdot \partial_{i}\Lambda_x^{s_0-2}( u_0 \partial_t w^\lambda g^{ij}\partial_{j}u_\lambda) dx
		 +2\int_{\mathbb{R}^3}  \Lambda_x^{s_0-2} ( (u^0)^{-1}\Gamma ) \cdot \Lambda_x^{s_0-2}( \partial_{i} u_0 \partial_t w^\lambda g^{ij}\partial_{j}u_\lambda) dx
		\\
		& +2\int_{\mathbb{R}^3}  \Lambda_x^{s_0-2} ( (u^0)^{-1}\Gamma ) \cdot \Lambda_x^{s_0-2}( u_0 \partial_{i} \partial_t w^\lambda g^{ij}\partial_{j}u_\lambda) dx
		 +2\int_{\mathbb{R}^3}  \Lambda_x^{s_0-2} ( (u^0)^{-1}\Gamma ) \cdot \Lambda_x^{s_0-2}( u_0 \partial_{i} \partial_t w^\lambda \partial_{i} g^{ij}\partial_{j}u_\lambda) dx .
	\end{split}
\end{equation*}
\end{footnotesize}
Due to Plancherel formula, H\"older's inequality, and Lemma \ref{lpe} again, we can bound $\mathrm{H}_{12bb}$ by
\begin{equation}\label{We76}
	\begin{split}
		| \mathrm{H}_{12bb} | \lesssim	& 	 \| \Gamma \|_{\dot{H}_x^{s_0-\frac32}}  \|d \bw  d\bu \|_{\dot{H}_x^{s_0-\frac32}}
		+ \| \Gamma \|_{\dot{H}_x^{s_0-2}}  ( \|d\bu d \bw  d\bu \|_{\dot{H}_x^{s_0-2}} + \|d\bu \nabla d \bw  \|_{\dot{H}_x^{s_0-2}} )
		\\
		\lesssim & ( \|(d\bu,dh)\|_{\dot{B}_{\infty,2}^{s_0-2}}+ \|(d\bu,dh)\|_{L^\infty_x} )
		( \|(d\bu, dh)\|^2_{H^{s_0-1}_x} +  \| \bw\|^2_{H^{s_0}_x} ) \|(d\bu, dh)\|_{H^{s_0-1}_x}
		\\
		&   + ( \|(d\bu, dh)\|^2_{H^{s_0-1}_x} +  \| \bw\|^2_{H^{s_0}_x} ) (\|(d\bu, dh)\|^2_{H^{s_0-1}_x} + \|(d\bu, dh)\|^4_{H^{s_0-1}_x}).
	\end{split}
\end{equation}
By H\"older's inequality and Lemma \ref{lpe}, we can estimate $\mathrm{H}_{12bc}$ and $\mathrm{H}_{12bd}$ by
\begin{equation}\label{We77}
	\begin{split}
		| \mathrm{H}_{12bc} | \lesssim &
	 \| \Gamma \|_{\dot{H}_x^{s_0-2}}  \|(d\bu,dh)\cdot (d\bu,dh)\cdot d \bw  \|_{\dot{H}_x^{s_0-2}}
	 \\
	 \lesssim & ( \|(d\bu, dh)\|^2_{H^{s_0-1}_x} +  \| \bw\|^2_{H^{s_0}_x} ) (\|(d\bu, dh)\|^3_{H^{s_0-1}_x}+\|(d\bu, dh)\|^4_{H^{s_0-1}_x}).
	\end{split}
\end{equation}
and
\begin{equation}\label{We78}
	\begin{split}
		| \mathrm{H}_{12bd} | \lesssim  &
		\| \Gamma \|_{\dot{H}_x^{s_0-2}}  ( \|d\bw d \bw  \|_{\dot{H}_x^{s_0-2}} + \|d \bu dh d \bw  \|_{\dot{H}_x^{s_0-2}} )
		\\
		\lesssim & ( \|d\bu\|_{\dot{B}_{\infty,2}^{s_0-2}}+ \|d\bu\|_{L^\infty_x} ) \|d\bw\|_{H^{s_0-2}_x} \|d\bw\|_{H^{\frac12}_x}\|d\bw\|_{H^{s_0-1}_x}
		\\
		&+( \|(d\bu, dh)\|^2_{H^{s_0-1}_x} +  \| \bw\|^2_{H^{s_0}_x} ) (\|(d\bu, dh)\|^3_{H^{s_0-1}_x}+\|(d\bu, dh)\|^4_{H^{s_0-1}_x})
		\\
		\lesssim & ( \|d\bu\|_{\dot{B}_{\infty,2}^{s_0-2}}+ \|d\bu\|_{L^\infty_x} ) (\|d\bu\|_{H^{s_0-1}_x}+\|d\bu\|_{H^{s_0-1}_x}\|dh\|_{H^{s_0-1}_x}) \|d\bw\|^2_{H^{s_0-1}_x}
		\\
		&+( \|(d\bu, dh)\|^2_{H^{s_0-1}_x} +  \| \bw\|^2_{H^{s_0}_x} ) (\|(d\bu, dh)\|^2_{H^{s_0-1}_x}+\|(d\bu, dh)\|^4_{H^{s_0-1}_x}).
	\end{split}
\end{equation}
To summarize \eqref{We74}, \eqref{We75}, \eqref{We76}, \eqref{We77}, and \eqref{We78}, we get
\begin{equation}\label{We79}
	\begin{split}
		| \mathrm{H}_{12b1} |
		\lesssim & ( \|d\bu\|_{\dot{B}_{\infty,2}^{s_0-2}}+ \|d\bu\|_{L^\infty_x} )(\|(d\bu,dh)\|^2_{H^{s_0-1}_x}+ \|\bw\|^2_{H^{s_0}_x}) \|(d\bu,dh)\|_{H^{s_0-1}_x}
		\\
		& +( \|d\bu\|_{\dot{B}_{\infty,2}^{s_0-2}}+ \|d\bu\|_{L^\infty_x} )(\|(d\bu,dh)\|^2_{H^{s_0-1}_x}+ \|\bw\|^2_{H^{s_0}_x}) \|(d\bu,dh)\|^2_{H^{s_0-1}_x}
		\\
		&+( \|(d\bu, dh)\|^2_{H^{s_0-1}_x} +  \| \bw\|^2_{H^{s_0}_x} ) (\|(d\bu, dh)\|^2_{H^{s_0-1}_x} + \|(d\bu, dh)\|^4_{H^{s_0-1}_x}).
	\end{split}
\end{equation}
By the chain rule, we are able to obtain
\begin{footnotesize}
	\begin{equation*}\label{We83}
		\begin{split}
			& \mathrm{H}_{12b2}
			\\
			=& 	- 2\int_{\mathbb{R}^3}  \Lambda_x^{s_0-2} ( (u^0)^{-1}\Gamma )\cdot \partial_i \Lambda_x^{s_0-2}( u^0 \partial^i w^\lambda \partial_t  u_\lambda) dx
			+ 2\int_{\mathbb{R}^3}  \Lambda_x^{s_0-2} ( (u^0)^{-1}\Gamma )\cdot \Lambda_x^{s_0-2}( \partial_i u^0 \partial^i w^\lambda \partial_t u_\lambda) dx
			\\
			& + 2\int_{\mathbb{R}^3}  \Lambda_x^{s_0-2} ( (u^0)^{-1}\Gamma ) \cdot \Lambda_x^{s_0-2}(  u^0 \partial_i \partial^i w^\lambda \partial_t  u_\lambda) dx
			,
		\end{split}
	\end{equation*}
\end{footnotesize}
and
\begin{footnotesize}
	\begin{equation*}\label{We84}
		\begin{split}
			& \mathrm{H}_{12b3}
			\\
			=& 	 - 2\int_{\mathbb{R}^3}  \Lambda_x^{s_0-2} ( (u^0)^{-1}\Gamma ) \cdot \partial_i \Lambda_x^{s_0-2}( u^i \partial^\gamma w^\lambda   \partial_\gamma u_\lambda) dx
			+ 2\int_{\mathbb{R}^3}  \Lambda_x^{s_0-2} ( (u^0)^{-1}\Gamma ) \cdot \Lambda_x^{s_0-2}( \partial_i  u^i \partial^\gamma w^\lambda   \partial_\gamma u_\lambda) dx
			\\
			& +  2\int_{\mathbb{R}^3}  \Lambda_x^{s_0-2} ( (u^0)^{-1}\Gamma ) \cdot \Lambda_x^{s_0-2}(   u^i \partial_i \partial^\gamma w^\lambda   \partial_\gamma u_\lambda) dx .
		\end{split}
	\end{equation*}
\end{footnotesize}
Therefore, using the Plancherel formula, H\"older's inequality and Lemma \ref{lpe} again, we obtain
	\begin{equation}\label{We85}
		\begin{split}
			& |\mathrm{H}_{12b2}|+	|\mathrm{H}_{12b3}|
			\\
			\lesssim & \| \Gamma \|_{\dot{H}_x^{s_0-\frac32}}  \|d \bw  d\bu \|_{\dot{H}_x^{s_0-\frac32}}
			+ \| \Gamma \|_{\dot{H}_x^{s_0-2}}  ( \|d\bu d \bw  d\bu \|_{\dot{H}_x^{s_0-2}} + \|d\bu \nabla d \bw  \|_{\dot{H}_x^{s_0-2}} )
			\\
			\lesssim & ( \|(d\bu,dh)\|_{\dot{B}_{\infty,2}^{s_0-2}}+ \|(d\bu,dh)\|_{L^\infty_x} )
			( \|(d\bu, dh)\|^2_{H^{s_0-1}_x} +  \| \bw\|^2_{H^{s_0}_x} ) \|(d\bu, dh)\|_{H^{s_0-1}_x}
			\\
			&   + ( \|(d\bu, dh)\|^2_{H^{s_0-1}_x} +  \| \bw\|^2_{H^{s_0}_x} ) (\|(d\bu, dh)\|^2_{H^{s_0-1}_x} + \|(d\bu, dh)\|^4_{H^{s_0-1}_x}) .
		\end{split}
	\end{equation}
Summarizing our outcome \eqref{We680}, \eqref{We79}, and \eqref{We85}, so it yields
\begin{equation}\label{We80}
	\begin{split}
		| \mathrm{H}_{12b} |
		\lesssim & ( \|d\bu\|_{\dot{B}_{\infty,2}^{s_0-2}}+ \|d\bu\|_{L^\infty_x} )(\|(d\bu,dh)\|^2_{H^{s_0-1}_x}+ \|\bw\|^2_{H^{s_0}_x}) \|(d\bu,dh)\|_{H^{s_0-1}_x}
		\\
		& +( \|d\bu\|_{\dot{B}_{\infty,2}^{s_0-2}}+ \|d\bu\|_{L^\infty_x} )(\|(d\bu,dh)\|^2_{H^{s_0-1}_x}+ \|\bw\|^2_{H^{s_0}_x}) \|(d\bu,dh)\|^2_{H^{s_0-1}_x}
		\\
		&+( \|(d\bu, dh)\|^2_{H^{s_0-1}_x} +  \| \bw\|^2_{H^{s_0}_x} ) (\|(d\bu, dh)\|^2_{H^{s_0-1}_x} + \|(d\bu, dh)\|^4_{H^{s_0-1}_x}).
	\end{split}
\end{equation}
We still need to bound $\mathrm{H}_{12c}$. A direct calculation tells us
\begin{footnotesize}
\begin{equation}\label{We82}
	\begin{split}
		\mathrm{H}_{12c}= & \underbrace{ -2\int_{\mathbb{R}^3}  \Lambda_x^{s_0-2} ( (u^0)^{-1}\Gamma ) \cdot \Lambda_x^{s_0-2}( c_s^{-2} \partial^2_t   h \partial_t w^0) dx }_{\equiv \mathrm{H}_{12c1}}
		\underbrace{- 2\int_{\mathbb{R}^3}  \Lambda_x^{s_0-2} ( (u^0)^{-1}\Gamma ) \cdot \Lambda_x^{s_0-2}( c_s^{-2} \partial_t \partial_i  h \partial_t w^i) dx }_{\equiv \mathrm{H}_{12c2}}
		\\
		&  \underbrace{ -2\int_{\mathbb{R}^3}  \Lambda_x^{s_0-2} ( (u^0)^{-1}\Gamma ) \cdot \Lambda_x^{s_0-2}( c_s^{-2} \partial_i  \partial_\lambda h \partial^i w^\lambda) dx  }_{\equiv \mathrm{H}_{12c3}}.
	\end{split}
\end{equation}
\end{footnotesize}
Substituting \eqref{We73} to $\mathrm{H}_{12c1}$, we have
\begin{footnotesize}
	\begin{equation*}
		\begin{split}
			\mathrm{H}_{12c1}= & -2\int_{\mathbb{R}^3}  \Lambda_x^{s_0-2} ( (u^0)^{-1}\Gamma) \cdot \Lambda_x^{s_0-2}( c_s^{-2} g^{0i} \partial^2_{ti}h \partial_t w^0) dx
			- 2\int_{\mathbb{R}^3}  \Lambda_x^{s_0-2} ( (u^0)^{-1}\Gamma) \cdot \Lambda_x^{s_0-2}( c_s^{-2} g^{ij}\partial^2_{ij}h \partial_t w^0) dx
			\\
			&+2\int_{\mathbb{R}^3}  \Lambda_x^{s_0-2} ( (u^0)^{-1}\Gamma) \cdot \Lambda_x^{s_0-2}( c_s^{-2} g^{ij} D \partial_t w^0 ) dx .
		\end{split}
	\end{equation*}
\end{footnotesize}
In a similar way on estimating $\mathrm{H}_{12ba}$, so we obtain
\begin{equation}\label{We86}
	\begin{split}
		| \mathrm{H}_{12c1} | \lesssim &  \| \Gamma \|_{\dot{H}_x^{s_0-\frac32}}  \|d \bw  dh \|_{\dot{H}_x^{s_0-\frac32}}
		+ \| \Gamma \|_{\dot{H}_x^{s_0-2}}  \|dh\nabla d \bw  \|_{\dot{H}_x^{s_0-2}}
		\\
		& + \| \Gamma \|_{\dot{H}_x^{s_0-2}}  \|(d\bu,dh)\cdot d \bw \cdot  (d\bu,dh) \|_{\dot{H}_x^{s_0-2}}
		\\
		\lesssim & ( \|(d\bu,dh)\|_{\dot{B}_{\infty,2}^{s_0-2}}+ \|(d\bu,dh)\|_{L^\infty_x} )
		( \|(d\bu, dh)\|^2_{H^{s_0-1}_x} +  \| \bw\|^2_{H^{s_0}_x} ) \|(d\bu, dh)\|_{H^{s_0-1}_x}
		\\
		&   + ( \|(d\bu, dh)\|^2_{H^{s_0-1}_x} +  \| \bw\|^2_{H^{s_0}_x} ) (\|(d\bu, dh)\|^2_{H^{s_0-1}_x} + \|(d\bu, dh)\|^4_{H^{s_0-1}_x}).
	\end{split}
\end{equation}
In a similar way on estimating $\mathrm{H}_{12bb}$, we infer
\begin{equation}\label{We87}
	\begin{split}
		& | \mathrm{H}_{12c2} |+ | \mathrm{H}_{12c3} |
		\\
		\lesssim &  \| \Gamma \|_{\dot{H}_x^{s_0-\frac32}}  \|d \bw  dh \|_{\dot{H}_x^{s_0-\frac32}}
		+ \| \Gamma \|_{\dot{H}_x^{s_0-2}}  ( \|dh\nabla d \bw  \|_{\dot{H}_x^{s_0-2}} +\|(d\bu,dh)\cdot d \bw \cdot  (d\bu,dh) \|_{\dot{H}_x^{s_0-2}} )
		\\
		\lesssim & ( \|(d\bu,dh)\|_{\dot{B}_{\infty,2}^{s_0-2}}+ \|(d\bu,dh)\|_{L^\infty_x} )
		( \|(d\bu, dh)\|^2_{H^{s_0-1}_x} +  \| \bw\|^2_{H^{s_0}_x} ) \|(d\bu, dh)\|_{H^{s_0-1}_x}
		\\
		&   + ( \|(d\bu, dh)\|^2_{H^{s_0-1}_x} +  \| \bw\|^2_{H^{s_0}_x} ) (\|(d\bu, dh)\|^2_{H^{s_0-1}_x} + \|(d\bu, dh)\|^4_{H^{s_0-1}_x}).
	\end{split}
\end{equation}
By summarizing \eqref{We82}, \eqref{We86}, and \eqref{We87}, we obtain
\begin{equation}\label{We89}
	\begin{split}
		| \mathrm{H}_{12c} |
		\lesssim & ( \|(d\bu,dh)\|_{\dot{B}_{\infty,2}^{s_0-2}}+ \|(d\bu,dh)\|_{L^\infty_x} )
		( \|(d\bu, dh)\|^2_{H^{s_0-1}_x} +  \| \bw\|^2_{H^{s_0}_x} ) \|(d\bu, dh)\|_{H^{s_0-1}_x}
		\\
		&   + ( \|(d\bu, dh)\|^2_{H^{s_0-1}_x} +  \| \bw\|^2_{H^{s_0}_x} ) (\|(d\bu, dh)\|^2_{H^{s_0-1}_x} + \|(d\bu, dh)\|^4_{H^{s_0-1}_x}).
	\end{split}
\end{equation}
Due to \eqref{We68}, \eqref{We67}, \eqref{We80}, and \eqref{We89}, it yields
\begin{equation}\label{We90}
	\begin{split}
		| \mathrm{H}_{121} |
		\lesssim & ( \|(d\bu,dh)\|_{\dot{B}_{\infty,2}^{s_0-2}}+ \|(d\bu,dh)\|_{L^\infty_x} )
		( \|(d\bu, dh)\|^2_{H^{s_0-1}_x} +  \| \bw\|^2_{H^{s_0}_x} ) \|(d\bu, dh)\|_{H^{s_0-1}_x}
		\\
		&   + ( \|(d\bu, dh)\|^2_{H^{s_0-1}_x} +  \| \bw\|^2_{H^{s_0}_x} ) (\|(d\bu, dh)\|^2_{H^{s_0-1}_x} + \|(d\bu, dh)\|^4_{H^{s_0-1}_x}).
	\end{split}
\end{equation}
To summarize \eqref{We62}, \eqref{We63}, \eqref{We90}, \eqref{We26}, and \eqref{We27}, we therefore have
\begin{equation}\label{We91}
	\begin{split}
		& \frac{d}{dt} \left( \|\Lambda_x^{s_0-2}(\bG -\bF)\|^2_{L_x^2}+ \int_{\mathbb{R}^3} \Lambda_x^{s_0-2} ( (u^0)^{-1}\Gamma ) \cdot \Lambda_x^{s_0-2}(G^0-F^0) dx \right)
		\\
		\lesssim & ( \|(d\bu,dh)\|_{\dot{B}_{\infty,2}^{s_0-2}}+ \|(d\bu,dh)\|_{L^\infty_x} )
		( \|(d\bu, dh)\|^2_{H^{s_0-1}_x} +  \| \bw\|^2_{H^{s_0}_x} ) ( 1+ \|(d\bu, dh)\|^2_{H^{s_0-1}_x})
		\\
		& + ( \|(d\bu,dh)\|_{\dot{B}_{\infty,2}^{s_0-2}}+ \|(d\bu,dh)\|_{L^\infty_x} )
		( \|(d\bu, dh)\|^2_{H^{s_0-1}_x} +  \| \bw\|^2_{H^{s_0}_x} ) \|(d\bu, dh)\|^3_{H^{s_0-1}_x}
			\\
		& + ( \|(d\bu,dh)\|_{\dot{B}_{\infty,2}^{s_0-2}}+ \|(d\bu,dh)\|_{L^\infty_x} )
		( \|(d\bu, dh)\|^2_{H^{s_0-1}_x} +  \| \bw\|^2_{H^{s_0}_x} ) \|(d\bu, dh)\|^4_{H^{s_0-1}_x}
		\\
		&   + ( \|(d\bu, dh)\|^2_{H^{s_0-1}_x} +  \| \bw\|^2_{H^{s_0}_x} ) (\|(d\bu, dh)\|^2_{H^{s_0-1}_x} + \|(d\bu, dh)\|^4_{H^{s_0-1}_x}).
	\end{split}
\end{equation}
Integrating on \eqref{We91} from $[0,t]$($t>0$), we obtain
\begin{equation}\label{We92}
	\begin{split}
		& \|\Lambda_x^{s_0-2}(\bG -\bF)(t,\cdot)\|^2_{L_x^2}+ {\int_{\mathbb{R}^3}} \Lambda_x^{s_0-2} ( (u^0)^{-1}\Gamma ) \cdot \Lambda_x^{s_0-2}(G^0-F^0) (t,\cdot)dx
		\\
		\lesssim & \|\Lambda_x^{s_0-2}(\bG -\bF)(0,\cdot)\|^2_{L_x^2}+ {\int_{\mathbb{R}^3}} \Lambda_x^{s_0-2} ( (u^0)^{-1}\Gamma ) \cdot \Lambda_x^{s_0-2}(G^0-F^0) (0,\cdot)dx
		\\
		 & + {\int^t_0} ( \|(d\bu,dh)\|_{\dot{B}_{\infty,2}^{s_0-2}}+ \|(d\bu,dh)\|_{L^\infty_x} )
		( \|(d\bu, dh)\|^2_{H^{s_0-1}_x} +  \| \bw\|^2_{H^{s_0}_x} ) d\tau
		\\
		& + {\int^t_0} ( \|(d\bu,dh)\|_{\dot{B}_{\infty,2}^{s_0-2}}+ \|(d\bu,dh)\|_{L^\infty_x} )
		( \|(d\bu, dh)\|^2_{H^{s_0-1}_x} +  \| \bw\|^2_{H^{s_0}_x} ) \|(d\bu, dh)\|^2_{H^{s_0-1}_x} d\tau
		\\
		& + {\int^t_0} ( \|(d\bu,dh)\|_{\dot{B}_{\infty,2}^{s_0-2}}+ \|(d\bu,dh)\|_{L^\infty_x} )
		( \|(d\bu, dh)\|^2_{H^{s_0-1}_x} +  \| \bw\|^2_{H^{s_0}_x} ) \|(d\bu, dh)\|^3_{H^{s_0-1}_x} d\tau
		\\
		& + {\int^t_0} ( \|(d\bu,dh)\|_{\dot{B}_{\infty,2}^{s_0-2}}+ \|(d\bu,dh)\|_{L^\infty_x} )
		( \|(d\bu, dh)\|^2_{H^{s_0-1}_x} +  \| \bw\|^2_{H^{s_0}_x} ) \|(d\bu, dh)\|^4_{H^{s_0-1}_x} d\tau
		\\
		&   + {\int^t_0} ( \|(d\bu, dh)\|^2_{H^{s_0-1}_x} +  \| \bw\|^2_{H^{s_0}_x} ) (\|(d\bu, dh)\|^2_{H^{s_0-1}_x} + \|(d\bu, dh)\|^4_{H^{s_0-1}_x}) d\tau.
	\end{split}
\end{equation}
For the left hand of \eqref{We92}, for $t> 0$, we observe that
\begin{equation}\label{We93}
	\begin{split}
		& \|\Lambda_x^{s_0-2}(\bG -\bF)(t,\cdot)\|^2_{L_x^2}+ {\int_{\mathbb{R}^3}} \Lambda_x^{s_0-2} ( (u^0)^{-1}\Gamma ) \cdot \Lambda_x^{s_0-2}(G^0-F^0)(t,\cdot) dx
		\\
		\geq & \|\bG(t,\cdot)\|^2_{\dot{H}^{s_0-2}_x} -\|\bF(t,\cdot)\|^2_{\dot{H}^{s_0-2}_x}- \| \Gamma(t,\cdot) \|_{\dot{H}^{s_0-2}_x} ( \|G^0(t,\cdot) \|_{\dot{H}^{s_0-2}_x}+\|F^0(t,\cdot) \|_{\dot{H}^{s_0-2}_x} )
		\\
		\geq & \|\bG(t,\cdot)\|^2_{\dot{H}^{s_0-2}_x} -C( \|(d\bu, dh)\|_{H^{s_0-1}_x}+ \|(d\bu, dh)\|^2_{H^{s_0-1}_x}) \|\bw  \|_{H^{s_0}_x} \|\bw \|_{H^{s_0-\frac12}_x}
		\\
		& -C( \|(d\bu, dh)(t,\cdot)\|^2_{H^{s_0-1}_x}+ \|(d\bu, dh)(t,\cdot)\|^4_{H^{s_0-1}_x}) \|\bw (t,\cdot) \|^2_{H^{s_0-\frac12}_x},
	\end{split}
\end{equation}
and
\begin{equation}\label{We94}
	\begin{split}
		& \|\Lambda_x^{s_0-2}(\bG -\bF)(0,\cdot)\|^2_{L_x^2}+ {\int_{\mathbb{R}^3}} \Lambda_x^{s_0-2} ( (u^0)^{-1}\Gamma ) \cdot \Lambda_x^{s_0-2}(G^0-F^0) (0,\cdot)dx
		\\
		\lesssim & \|(\mathring{\bu}_0, u_0^0-1)\|^2_{H_x^s} + \|h_0\|^2_{H_x^s} + \|\bw_0\|^2_{{H}^{s_0}_x}+\|(\mathring{\bu}_0, u_0^0-1)\|^6_{H_x^s} + \|h_0\|^6_{H_x^s} + \|\bw_0\|^6_{{H}^{s_0}_x} .
	\end{split}
\end{equation}
By the interpolation formula, it follows
\begin{equation}\label{We95}
	\|\bw (t,\cdot) \|_{H^{s_0-\frac12}_x} \leq \|\bw (t,\cdot) \|^{\frac12}_{H^{s_0-1}_x} \|\bw (t,\cdot) \|^{\frac12}_{H^{s_0}_x} .
\end{equation}
Substituting \eqref{We95} to \eqref{We93}, and using Young's inequality, we can show that\footnote{In \eqref{We96}, we give a coefficient $\frac{1}{10(1+C)}$ in $\frac{1}{10(1+C)}\|\bw (t,\cdot) \|^2_{H^{s_0}_x}$ by Young's inequality, where the constant $C$ is totally the same with in the right hand of \eqref{Wc04}.}
	\begin{equation}\label{We96}
		\begin{split}
			& \|\Lambda_x^{s_0-2}(\bG -\bF)\|^2_{L_x^2}+ {\int_{\mathbb{R}^3}} \Lambda_x^{s_0-2} ( (u^0)^{-1}\Gamma ) \cdot \Lambda_x^{s_0-2}(G^0-F^0) dx
			\\
			\geq & \|\bG\|^2_{\dot{H}^{s_0-2}_x}- \frac{1}{10(1+C)}\|\bw\|^2_{H^{s_0}_x}
			- C\left( \|(d\bu, dh)\|^4_{H^{s_0-1}_x}+ \|(d\bu, dh)\|^8_{H^{s_0-1}_x} \right) \|\bw\|^2_{H^{s_0-1}_x}
			\\
			\geq & \|\bG\|^2_{\dot{H}^{s_0-2}_x} - \frac{1}{10(1+C)}\|\bw\|^2_{H^{s_0}_x}
			- C\left( \|(d\bu, dh)\|^4_{H^{s_0-1}_x}+ \|(d\bu, dh)\|^{10}_{H^{s_0-1}_x} \right) .
		\end{split}
	\end{equation}
Above we use $\|\bw (t,\cdot) \|_{H^{s_0-1}_x} \lesssim \|d\bu (t,\cdot) \|_{H^{s_0-1}_x}+\|d\bu (t,\cdot) \|^2_{H^{s_0-1}_x}$. By \eqref{We92}, \eqref{We94} and \eqref{We96}, we have proved
\begin{equation}\label{We97}
	\begin{split}
		& \|\bG(t,\cdot)\|^2_{\dot{H}^{s_0-2}_x}- \frac{1}{10(1+C)}\|\bw (t,\cdot) \|^2_{H^{s_0}_x}
		\\
		\lesssim
		&  \|(\mathring{\bu}_0, u_0^0-1)\|^2_{H_x^s} + \|h_0\|^2_{H_x^s} + \|\bw_0\|^2_{{H}^{s_0}_x}+\|(\mathring{\bu}_0, u_0^0-1)\|^6_{H_x^s} + \|h_0\|^6_{H_x^s}
		\\
		& + \|\bw_0\|^6_{{H}^{s_0}_x}
	+  \|(d\bu, dh)(t,\cdot)\|^4_{H^{s_0-1}_x}+ \|(d\bu, dh)(t,\cdot)\|^{10}_{H^{s_0-1}_x}
		\\
		& + \textstyle{\int^t_0} ( \|(d\bu,dh)\|_{\dot{B}_{\infty,2}^{s_0-2}}+ \|(d\bu,dh)\|_{L^\infty_x} )
		( \|(d\bu, dh)\|^2_{H^{s_0-1}_x} +  \| \bw\|^2_{H^{s_0}_x} ) d\tau
		\\
		& + \textstyle{\int^t_0} ( \|(d\bu,dh)\|_{\dot{B}_{\infty,2}^{s_0-2}}+ \|(d\bu,dh)\|_{L^\infty_x} )
		( \|(d\bu, dh)\|^2_{H^{s_0-1}_x} +  \| \bw\|^2_{H^{s_0}_x} ) \|(d\bu, dh)\|^2_{H^{s_0-1}_x} d\tau
		\\
		& + \textstyle{\int^t_0} ( \|(d\bu,dh)\|_{\dot{B}_{\infty,2}^{s_0-2}}+ \|(d\bu,dh)\|_{L^\infty_x} )
		( \|(d\bu, dh)\|^2_{H^{s_0-1}_x} +  \| \bw\|^2_{H^{s_0}_x} ) \|(d\bu, dh)\|^3_{H^{s_0-1}_x} d\tau
		\\
		& + \textstyle{\int^t_0} ( \|(d\bu,dh)\|_{\dot{B}_{\infty,2}^{s_0-2}}+ \|(d\bu,dh)\|_{L^\infty_x} )
		( \|(d\bu, dh)\|^2_{H^{s_0-1}_x} +  \| \bw\|^2_{H^{s_0}_x} ) \|(d\bu, dh)\|^4_{H^{s_0-1}_x} d\tau
		\\
		&   + \textstyle{\int^t_0} ( \|(d\bu, dh)\|^2_{H^{s_0-1}_x} +  \| \bw\|^2_{H^{s_0}_x} ) (\|(d\bu, dh)\|^2_{H^{s_0-1}_x} + \|(d\bu, dh)\|^4_{H^{s_0-1}_x}) d\tau.
	\end{split}
\end{equation}
Due to \eqref{Wc04} and \eqref{We97}, we find that
\begin{equation}\label{We98}
	\begin{split}
		& \|\bw(t,\cdot)\|^2_{\dot{H}^{s_0}_x}- \frac{C}{10(1+C)}\|\bw (t,\cdot) \|^2_{H^{s_0}_x}-\frac{1}{10}\|\bw (t,\cdot) \|^2_{H^{s_0}_x}
		\\
		\lesssim
		& \|\mathring{\bu}_0\|^2_{H_x^s} + \|h_0\|^2_{H_x^s} + \|\bw_0\|^2_{{H}^{s_0}_x}+\|\mathring{\bu}_0\|^6_{H_x^s} + \|h_0\|^6_{H_x^s} + \|\bw_0\|^6_{{H}^{s_0}_x}
		\\
		& 	+  \|(d\bu, dh)(t,\cdot)\|^3_{H^{s_0-1}_x}
		+  \|(d\bu, dh)(t,\cdot)\|^4_{H^{s_0-1}_x}+ \|(d\bu, dh)(t,\cdot)\|^{10}_{H^{s_0-1}_x}
		\\
		& + {\int^t_0} ( \|(d\bu,dh)\|_{\dot{B}_{\infty,2}^{s_0-2}}+ \|(d\bu,dh)\|_{L^\infty_x} )
		( \|(d\bu, dh)\|^2_{H^{s_0-1}_x} +  \| \bw\|^2_{H^{s_0}_x} ) d\tau
		\\
		& + {\int^t_0} ( \|(d\bu,dh)\|_{\dot{B}_{\infty,2}^{s_0-2}}+ \|(d\bu,dh)\|_{L^\infty_x} )
		( \|(d\bu, dh)\|^2_{H^{s_0-1}_x} +  \| \bw\|^2_{H^{s_0}_x} ) \|(d\bu, dh)\|^2_{H^{s_0-1}_x} d\tau
		\\
		& + {\int^t_0} ( \|(d\bu,dh)\|_{\dot{B}_{\infty,2}^{s_0-2}}+ \|(d\bu,dh)\|_{L^\infty_x} )
		( \|(d\bu, dh)\|^2_{H^{s_0-1}_x} +  \| \bw\|^2_{H^{s_0}_x} ) \|(d\bu, dh)\|^3_{H^{s_0-1}_x} d\tau
		\\
		& + {\int^t_0} ( \|(d\bu,dh)\|_{\dot{B}_{\infty,2}^{s_0-2}}+ \|(d\bu,dh)\|_{L^\infty_x} )
		( \|(d\bu, dh)\|^2_{H^{s_0-1}_x} +  \| \bw\|^2_{H^{s_0}_x} ) \|(d\bu, dh)\|^4_{H^{s_0-1}_x} d\tau
		\\
		&   + {\int^t_0} ( \|(d\bu, dh)\|^2_{H^{s_0-1}_x} +  \| \bw\|^2_{H^{s_0}_x} ) (\|(d\bu, dh)\|^2_{H^{s_0-1}_x} + \|(d\bu, dh)\|^4_{H^{s_0-1}_x}) d\tau.
	\end{split}
\end{equation}
Above we also use $\|u_0^0-1\|_{H_x^s} \lesssim \|\mathring{\bu}_0\|_{H_x^s}$. Due to \eqref{QHl}, by a classical energy estimate, we infer
\begin{equation}\label{We100}
	\|(h, \mathring{\bu})\|^2_{{H}^{s}_x}(t) \lesssim \|(h_0, \mathring{\bu}_0)\|^2_{{H}^{s}_x}+ {\int^t_0}  \|(d\mathring{\bu},dh)\|_{L^\infty_x} \|(h, \mathring{\bu})\|^2_{{H}^{s}_x}(\tau) d\tau.
\end{equation}
By \eqref{We3}, \eqref{We98}, \eqref{We100}, and \eqref{Wc0}, we can conclude
\begin{equation}\label{We99}
	\begin{split}
		 & \|(h, \mathring{\bu})\|^2_{{H}^{s}_x}(t)+ \|u^0-1\|^2_{{H}^{s}_x}(t) + \|\bw\|^2_{{H}^{s_0}_x}(t)- \frac{C}{10(1+C)}\|\bw (t,\cdot) \|^2_{H^{s_0}_x}-\frac{1}{10}\|\bw (t,\cdot) \|^2_{H^{s_0}_x}
		\\
		\lesssim
		 & \|\mathring{\bu}_0\|^2_{H_x^s} + \|h_0\|^2_{H_x^s} + \|\bw_0\|^2_{{H}^{s_0}_x}+\|\mathring{\bu}_0\|^6_{H_x^s} + \|h_0\|^6_{H_x^s} + \|\bw_0\|^6_{{H}^{s_0}_x}
		\\
		& 	+  \textstyle{\sup}_{[0,t]} \|(d\bu, dh)\|^3_{H^{s_0-1}_x}
		+  \textstyle{\sup}_{[0,t]}\|(d\bu, dh)\|^4_{H^{s_0-1}_x}+ \textstyle{\sup}_{[0,t]}\|(d\bu, dh)\|^{10}_{H^{s_0-1}_x}
		\\
		& + {\int^t_0} ( \|(d\bu,dh)\|_{\dot{B}_{\infty,2}^{s_0-2}}+ \|(d\bu,dh)\|_{L^\infty_x} )
		( \|(d\bu, dh)\|^2_{H^{s_0-1}_x} +  \| \bw\|^2_{H^{s_0}_x} ) d\tau
		\\
		& + {\int^t_0} ( \|(d\bu,dh)\|_{\dot{B}_{\infty,2}^{s_0-2}}+ \|(d\bu,dh)\|_{L^\infty_x} )
		( \|(d\bu, dh)\|^2_{H^{s_0-1}_x} +  \| \bw\|^2_{H^{s_0}_x} ) \|(d\bu, dh)\|^2_{H^{s_0-1}_x} d\tau
		\\
		& + {\int^t_0} ( \|(d\bu,dh)\|_{\dot{B}_{\infty,2}^{s_0-2}}+ \|(d\bu,dh)\|_{L^\infty_x} )
		( \|(d\bu, dh)\|^2_{H^{s_0-1}_x} +  \| \bw\|^2_{H^{s_0}_x} ) \|(d\bu, dh)\|^3_{H^{s_0-1}_x} d\tau
		\\
		& + {\int^t_0} ( \|(d\bu,dh)\|_{\dot{B}_{\infty,2}^{s_0-2}}+ \|(d\bu,dh)\|_{L^\infty_x} )
		( \|(d\bu, dh)\|^2_{H^{s_0-1}_x} +  \| \bw\|^2_{H^{s_0}_x} ) \|(d\bu, dh)\|^4_{H^{s_0-1}_x} d\tau
		\\
		&   + {\int^t_0} ( \|(d\bu, dh)\|^2_{H^{s_0-1}_x} +  \| \bw\|^2_{H^{s_0}_x} ) (\|(d\bu, dh)\|^2_{H^{s_0-1}_x} + \|(d\bu, dh)\|^3_{H^{s_0-1}_x}) d\tau.
	\end{split}
\end{equation}
Since the left hand side of \eqref{We99} $\geq \frac12\|(h, \mathring{\bu})\|^2_{{H}^{s}_x}(t)+ \frac12\|u^0-1\|^2_{{H}^{s}_x}(t) + \frac12\|\bw\|^2_{{H}^{s_0}_x}(t)$, so we have
\begin{equation}\label{We103}
	\begin{split}
		& \|(h, \mathring{\bu})\|^2_{{H}^{s}_x}(t)+ \|u^0-1\|^2_{{H}^{s}_x}(t) + \|\bw\|^2_{{H}^{s_0}_x}(t)
		\\
		\lesssim
		& \|\mathring{\bu}_0\|^2_{H_x^s} + \|h_0\|^2_{H_x^s} + \|\bw_0\|^2_{{H}^{s_0}_x}+\|\mathring{\bu}_0\|^6_{H_x^s} + \|h_0\|^6_{H_x^s} + \|\bw_0\|^6_{{H}^{s_0}_x}
		\\
		& 	+  \textstyle{\sup}_{[0,t]} \|(d\bu, dh)(t,\cdot)\|^3_{H^{s_0-1}_x}
		+  \textstyle{\sup}_{[0,t]}\|(d\bu, dh)\|^4_{H^{s_0-1}_x}+ \textstyle{\sup}_{[0,t]}\|(d\bu, dh)\|^{10}_{H^{s_0-1}_x}
		\\
		& + {\int^t_0} ( \|(d\bu,dh)\|_{\dot{B}_{\infty,2}^{s_0-2}}+ \|(d\bu,dh)\|_{L^\infty_x} )
		( \|(d\bu, dh)\|^2_{H^{s_0-1}_x} +  \| \bw\|^2_{H^{s_0}_x} ) d\tau
		\\
		& + {\int^t_0} ( \|(d\bu,dh)\|_{\dot{B}_{\infty,2}^{s_0-2}}+ \|(d\bu,dh)\|_{L^\infty_x} )
		( \|(d\bu, dh)\|^2_{H^{s_0-1}_x} +  \| \bw\|^2_{H^{s_0}_x} ) \|(d\bu, dh)\|^2_{H^{s_0-1}_x} d\tau
		\\
		& + {\int^t_0} ( \|(d\bu,dh)\|_{\dot{B}_{\infty,2}^{s_0-2}}+ \|(d\bu,dh)\|_{L^\infty_x} )
		( \|(d\bu, dh)\|^2_{H^{s_0-1}_x} +  \| \bw\|^2_{H^{s_0}_x} ) \|(d\bu, dh)\|^3_{H^{s_0-1}_x} d\tau
		\\
		& + {\int^t_0} ( \|(d\bu,dh)\|_{\dot{B}_{\infty,2}^{s_0-2}}+ \|(d\bu,dh)\|_{L^\infty_x} )
		( \|(d\bu, dh)\|^2_{H^{s_0-1}_x} +  \| \bw\|^2_{H^{s_0}_x} ) \|(d\bu, dh)\|^4_{H^{s_0-1}_x} d\tau
		\\
		&   + {\int^t_0} ( \|(d\bu, dh)\|^2_{H^{s_0-1}_x} +  \| \bw\|^2_{H^{s_0}_x} ) (\|(d\bu, dh)\|^2_{H^{s_0-1}_x} + \|(d\bu, dh)\|^4_{H^{s_0-1}_x}) d\tau.
	\end{split}
\end{equation}
Taking $a=s$ in \eqref{VHe}, \eqref{VHea}, and \eqref{VHeb}, using \eqref{We103}, we have
\begin{equation*}
	\begin{split}
		 E_s(t)
		\lesssim
		& E_s(0)
		+ ( E_s(0)^{\frac32}+  E_s(0)^2+E_s(0)^{5} ) \exp (5\int^t_0 \|d\bu, dh\|_{L^\infty_x}d\tau)
		\\
		&+E_s(0)^3 + {\int^t_0} ( \|(d\bu,dh)\|_{\dot{B}_{\infty,2}^{s_0-2}}+ \|(d\bu,dh)\|_{L^\infty_x} )
		E_s(\tau) d\tau
		\\
		& + {\int^t_0} ( \|(d\bu,dh)\|_{\dot{B}_{\infty,2}^{s_0-2}}+ \|(d\bu,dh)\|_{L^\infty_x} ) E_s(0) \exp (\int^\tau_0 \|d\bu, dh\|_{L^\infty_x}dz)
		E_s(\tau) d\tau
		\\
		& + {\int^t_0} ( \|(d\bu,dh)\|_{\dot{B}_{\infty,2}^{s_0-2}}+ \|(d\bu,dh)\|_{L^\infty_x} ) E_s(0)^{\frac32} \exp (\frac32\int^\tau_0 \|d\bu, dh\|_{L^\infty_x}dz)
		E_s(\tau) d\tau
		\\
		& + {\int^t_0} ( \|(d\bu,dh)\|_{\dot{B}_{\infty,2}^{s_0-2}}+ \|(d\bu,dh)\|_{L^\infty_x} ) E_s(0)^2 \exp (2\int^\tau_0 \|d\bu, dh\|_{L^\infty_x}dz)
		E_s(\tau) d\tau
		\\
		&   + {\int^t_0} E_s(\tau) \left\{ E_s(0) \exp (\int^\tau_0 \|d\bu, dh\|_{L^\infty_x}dz)+ E_s(0)^{2} \exp (2\int^\tau_0 \|d\bu, dh\|_{L^\infty_x}dz) \right\} d\tau.
	\end{split}
\end{equation*}
Observing $E_s(0)^{\frac32}, E_s(0)^{2}, E_s(0)^{2} \leq \max\{E_s(0), E_s(0)^5\} $, so we have
\begin{equation*}
	\begin{split}
		E_s(t)
		\lesssim
		& E_s(0)+E_s(0)^5
		+ ( E_s(0)+E_s(0)^5 ) \exp (5\int^t_0 \|d\bu, dh\|_{L^\infty_x}d\tau)
		\\
		& + {\int^t_0} ( \|(d\bu,dh)\|_{\dot{B}_{\infty,2}^{s_0-2}}+ \|(d\bu,dh)\|_{L^\infty_x} )
		E_s(\tau) d\tau
		\\
		& + {\int^t_0} ( \|(d\bu,dh)\|_{\dot{B}_{\infty,2}^{s_0-2}}+ \|(d\bu,dh)\|_{L^\infty_x} ) E_s(0) \exp (\int^t_0 \|d\bu, dh\|_{L^\infty_x}dz)
		E_s(\tau) d\tau
		\\
		& + {\int^t_0} ( \|(d\bu,dh)\|_{\dot{B}_{\infty,2}^{s_0-2}}+ \|(d\bu,dh)\|_{L^\infty_x} ) E_s(0)^2 \exp (2\int^t_0 \|d\bu, dh\|_{L^\infty_x}dz)
		E_s(\tau) d\tau
		\\
		&   + {\int^t_0} E_s(\tau) \left\{ E_s(0) \exp (\int^t_0 \|d\bu, dh\|_{L^\infty_x}dz)+ E_s(0)^{2} \exp (2\int^t_0 \|d\bu, dh\|_{L^\infty_x}dz) \right\} d\tau.
	\end{split}
\end{equation*}
Therefore, \eqref{WHa}-\eqref{WL} can be immediately obtained after an application of Gronwall's inequality.
\end{proof}
With a relaxed regularity assumption on the vorticity, we can obtain another type of energy estimates.
\subsection{Energy estimate for Theorem \ref{dingli2}}\label{sec3.2}
\begin{theorem}[Energy estimate: type 2]\label{VEt}
	Let $(h,\bu)$ be a solution of \eqref{REE}. Let $\bw$ be defined in \eqref{VVd}. For $2<s\leq \frac52$, the following energy estimate hold:
	\begin{equation}\label{WH}
		\|\tilde{E}_s(t)\|_{H^{s}_x} \leq \tilde{E}_0  \exp \left\{    5{\int^t_0} \|(dh,d\bu)\|_{L^\infty_x}   d\tau \cdot \exp \big(  5{\int^t_0} \|(dh,d\bu)\|_{L^\infty_x}   d\tau \big)\right\},
	\end{equation}
	where
	\begin{equation}\label{WLa}
		\begin{split}
			\tilde{E}_s(t) =  & \|h\|^2_{H_x^s} + \|u^0-1\|^2_{H_x^s}+ \|\mathring{\bu}\|^2_{H_x^s} +\|\bw\|^2_{H_x^{2}},
			\\
			\tilde{E}_0= & C \left( \|(h_0,\mathring{\bu}_0)\|^2_{H^s} +\|\bw_0\|^2_{H^{2}}+\|(h_0,\mathring{\bu}_0)\|^{10}_{H^s}+\|\bw_0\|^{10}_{H^{2}} \right).
		\end{split}
	\end{equation}
Above, $C$ is an universal constant.
\end{theorem}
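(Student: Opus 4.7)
The plan is to follow the architecture of Theorem \ref{VE} with the simplification afforded by measuring the vorticity at integer regularity $H^2$: every appearance of the Besov norm $\dot{B}^{s_0-2}_{\infty,2}$ in that earlier proof collapses to $L^\infty$, which is exactly why only $\|(dh,d\bu)\|_{L^\infty_x}$ survives in \eqref{WH}. For the classical hyperbolic block, Theorem \ref{VHE} together with \eqref{u00} and \eqref{VHeb} immediately yields $\|h\|^2_{H^s_x}+\|\mathring{\bu}\|^2_{H^s_x}+\|u^0-1\|^2_{H^s_x} \lesssim \tilde{E}_0\exp(\int_0^t \|(dh,d\mathring{\bu})\|_{L^\infty_x}d\tau)$.

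For the vorticity I split $\|\bw\|_{H^2_x}=\|\bw\|_{L^2_x}+\|\bw\|_{\dot{H}^2_x}$. The $L^2$ piece comes from testing the transport equation \eqref{CEQ} against $(u^0)^{-1}w^\alpha$, producing the Gr\"onwall bound \eqref{We3}. For the $\dot{H}^2$ piece I reduce along the chain $\|\bw\|_{\dot{H}^2_x}\lesssim \|\bW\|_{\dot{H}^1_x}+\text{l.o.t.}\lesssim \|\bG\|_{L^2_x}+\text{l.o.t.}$, using Lemma \ref{vor1} together with \eqref{MFd} for the first step and Lemma \ref{vor2} for the second (both at $s_0=2$). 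All lower-order terms are products of $(d\bu,dh)$ in $L^\infty$ and $\bw$ (or its first derivative) in $L^2$ or $H^{\frac32+}$, absorbable by a small fraction of $\|\bw\|_{H^2_x}$ after interpolation and Young's inequality.

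The crux is therefore an $L^2$ energy estimate for the transport equation \eqref{SDe} satisfied by $\bG-\bF$. Pairing $(u^0)^{-1}(G_\alpha-F_\alpha)$ with \eqref{SDe} yields $\tfrac{d}{dt}\|\bG-\bF\|^2_{L^2_x}=\mathrm{H}_1+\mathrm{H}_2+\mathrm{H}_3+\mathrm{H}_4$, the analogues of \eqref{We18}--\eqref{We21} at $s_0-2=0$. The terms $\mathrm{H}_2,\mathrm{H}_3,\mathrm{H}_4$ are routine: H\"older's inequality and Sobolev embeddings deliver factors of $\|(dh,d\bu)\|_{L^\infty_x}$ multiplied by polynomials in $\tilde{E}_s$, with no fractional commutator needed since everything reduces to $L^2$ pairings.

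The main obstacle, exactly as in Theorem \ref{VE}, is handling $\mathrm{H}_1=\int \partial^\alpha((u^0)^{-1}\Gamma)(G_\alpha-F_\alpha)\,dx$ without losing a derivative. I would split $\mathrm{H}_1=\mathrm{H}_{11}+\mathrm{H}_{12}$, integrate by parts to transfer $\partial^\alpha$ onto $G_\alpha-F_\alpha$, and isolate a time-boundary contribution $\int (u^0)^{-1}\Gamma\,(G^0-F^0)\,dx$ to be moved to the left-hand side and reabsorbed by a small fraction of $\|\bw\|_{H^2_x}$, precisely as in \eqref{We60}--\eqref{We62} and \eqref{We93}--\eqref{We96}. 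The remaining bulk is controlled via the algebraic identities \eqref{We30} for $\partial_\alpha G^\alpha$ and \eqref{We64}--\eqref{We65} for $\partial_\alpha F^\alpha$; the worst pieces of $\partial_\alpha F^\alpha$, which nominally carry a second spatial derivative $\partial_\alpha\partial_\gamma u_\lambda$ or $\partial_\alpha\partial_\lambda h$, are defused by substituting the wave equations \eqref{WTe} as in \eqref{We72}--\eqref{We73} to rewrite $\partial_t^2$ in terms of mixed $g^{0i}$ and $g^{ij}$ derivative pairs, then integrating by parts once more to convert them into products of an $L^\infty$ factor with two $L^2$ factors. Adding up the classical estimate, the $L^2$ and $\dot{H}^2$ vorticity estimates, and invoking Gr\"onwall's inequality twice (the inner exponent controlling $\|\bw\|_{H^2_x}$ and the outer one closing the full energy $\tilde{E}_s$) then produces \eqref{WH}.
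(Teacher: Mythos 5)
Your proposal is correct and follows essentially the same route as the paper's proof: hyperbolic energy for $(h,\bu)$, the reduction chain $\|\bw\|_{\dot H^2_x}\lesssim\|\bW\|_{\dot H^1_x}\lesssim\|\bG\|_{L^2_x}$ via Lemmas \ref{vor1}--\ref{vor2}, an $L^2$ transport estimate for $\bG-\bF$ with the integration-by-parts/time-boundary treatment of the $\Gamma$-term and substitution of the wave equations to defuse $\partial_t^2$, followed by absorption and Gr\"onwall. You also correctly identify the only structural simplification over Theorem \ref{VE}, namely that the $\dot B^{s_0-2}_{\infty,2}$ norms degenerate to $L^\infty$ at integer regularity.
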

\begin{proof}
	For we have proved \eqref{VHe} and \eqref{Wc0}, so we only need to bound $\| \bw \|_{{H}_x^{2}}$. We consider
	\begin{equation}\label{Wd0}
		\| \bw \|_{{H}_x^{2}}=\| \bw \|_{L_x^{2}}+ \| \bw \|_{\dot{H}_x^{2}}.
	\end{equation}
	So it's natural for us to utilize the transport structure of $\bw$ and the higher derivatives $\bW$ and $\bG$. By \eqref{We3}, so we have
	\begin{equation}\label{Wd3}
		\begin{split}
			\|\bw(t,\cdot)\|^2_{L_x^2} \lesssim & \|\bw_0\|^2_{L_x^2} + \int^t_0 \|dh , d\bu \|_{L^\infty_x} \|\bw\|^2_{L_x^2} d\tau.
		\end{split}
	\end{equation}
	Note that
	\begin{equation}\label{Wd4}
		\| \bw \|_{\dot{H}_x^{2}} = \| \Delta \bw \|_{L_x^{2}}.
	\end{equation}
	Recall $\mathring{\bw}=(w^1,w^2,w^3)$. Let us first consider $	\|  \mathring{\bw} \|_{\dot{H}_x^{2}} $. Recall $\Delta_{H}$ as in \eqref{dh}. Then we get
	\begin{equation}\label{Wd5}
		\| \Delta_H \mathring{\bw} \|_{L_x^{2}}  \lesssim	\| \Delta \mathring{\bw} \|_{L_x^{2}}  \lesssim 	\| \Delta_H \mathring{\bw} \|_{L_x^{2}} .
	\end{equation}
	Due to \eqref{d5}, we have
	\begin{equation}\label{Wd6}
		\| \Delta_H \mathring{\bw} \|_{L_x^{2}}
		\lesssim	
		\| \bu \cdot \mathrm{vort} \bw \|_{\dot{H}_x^{1}}
		+ \| \bw \cdot (d\bu,dh)\|_{\dot{H}_x^{1}}+  \| d\bu \cdot \nabla \mathring{\bw} \|_{L_x^{2}}.
	\end{equation}
	Using \eqref{MFd}, \eqref{w0d}, and Lemma \ref{ps}, it follows
	\begin{equation}\label{Wd7}
		\begin{split}
			\| \Delta_H \mathring{\bw} \|_{L_x^{2}}  \lesssim	 & \|\bu \cdot \bW\|_{\dot{H}_x^{1}} + \| \bw \cdot (d\bu,dh)\|_{\dot{H}_x^{1}}+  \| d\bu \cdot \nabla \mathring{\bw} \|_{L_x^{2}}
			\\
			\lesssim	& \|\bW\|_{\dot{H}_x^{1}}
			+ \| \mathring{\bw} \cdot (d\bu,dh)\|_{\dot{H}_x^{1}}+  \| d\bu \cdot \nabla \mathring{\bw} \|_{L_x^{2}}
			\\
			\lesssim	 & \|\bW\|_{\dot{H}_x^{1}}
			+ \| \mathring{\bw} \|_{{H}_x^{\frac32}}\| (d\bu,dh) \|_{{H}_x^{s-1}}  .
		\end{split}
	\end{equation}
	Combining \eqref{Wd5}, \eqref{Wd6} and \eqref{Wd7}, we obtain
	\begin{equation}\label{Wd8}
		\begin{split}
			\| \Delta \mathring{\bw} \|_{L_x^{2}}
			\lesssim	 & \|\bW\|_{\dot{H}_x^{1}}
			+ \| \mathring{\bw} \|_{{H}_x^{\frac32}}\| (d\bu,dh) \|_{{H}_x^{s-1}}.
		\end{split}
	\end{equation}
	Seeing from \eqref{w0d}, we can obtain
	\begin{equation}\label{Wd9}
		\begin{split}
			\| \Delta {w^0} \|_{L_x^{2}}  \lesssim	 & \| \Delta \mathring{\bw} \|_{L_x^{2}}+ \|\Delta \bu \cdot \mathring{\bw} \|_{L_x^{2}} + \| \nabla \mathring{\bw} \cdot \nabla \bu \|_{L_x^{2}}.
		\end{split}
	\end{equation}
	By H\"older's inequality, then \eqref{Wd9} becomes
	\begin{equation}\label{Wd10}
		\begin{split}
			\| \Delta {w^0} \|_{L_x^{2}}
			\lesssim & 	\| \Delta \mathring{\bw} \|_{L_x^{2}} + \| \mathring{\bw} \|_{{H}_x^{\frac32}}\| (d\bu,dh) \|_{{H}_x^{s-1}}  .
		\end{split}
	\end{equation}
	Inserting \eqref{Wd8} to \eqref{Wd10}, we then derive that
	\begin{equation}\label{Wd12}
		\begin{split}
			\| \Delta \bw \|_{L_x^{2}}
			\lesssim	 & \|\bW\|_{\dot{H}_x^{1}}
			+  \| \mathring{\bw} \|_{{H}_x^{\frac32}}\| (d\bu,dh) \|_{{H}_x^{s-1}} .
		\end{split}
	\end{equation}
	By using \eqref{MFd}, \eqref{CEQ}, and \eqref{w0d}, we can also prove
	\begin{align}\label{Wd13}
			\|\bW\|_{\dot{H}_x^{1}} \lesssim	& \|\bu \cdot d \bw \|_{\dot{H}_x^{1}} + \|\bw \cdot d h\|_{\dot{H}_x^{1}}
			\nonumber
			\\
			\lesssim	& \|\nabla \bw \|_{\dot{H}_x^{1}} + \| \mathring{\bw} \cdot (d\bu, d h)\|_{\dot{H}_x^{1}}
			\nonumber
			\\
			\lesssim & \| \Delta \bw \|_{L_x^{2}}
			+ \| \mathring{\bw} \|_{{H}_x^{\frac32}} \| (d\bu,dh) \|_{{H}_x^{s-1}} .
	\end{align}
From \eqref{Wd12} and \eqref{Wd13}, there is only a lower order term between $\| \bW \|_{\dot{H}_x^{1}}$ and $\| \Delta \bw \|_{L_x^{2}}$. So we bound $\| \bW \|_{\dot{H}_x^{1}}$ as follows.

	\textbf{Step 1: $\| \bW \|_{\dot{H}_x^{1}}$}. We also first consider $\| \mathring{\bW} \|_{\dot{H}_x^{1}}$. On one hand, we have
	\begin{equation}\label{Wd14}
		\| \mathring{\bW} \|_{\dot{H}_x^{1}}=  \| \nabla \mathring{\bW} \|_{L^2_x}.
	\end{equation}
	By \eqref{d17}, \eqref{Wd14} and \eqref{MFd}, we therefore get
	\begin{equation}\label{Wf00}
		\begin{split}
			\| \mathring{\bW} \|_{\dot{H}_x^{1}}\lesssim &	\|\bG \|_{L^2_x}+  \| \bW \cdot (d\bu,dh) \|_{L^2_x}
			+   \| d\bw \cdot (d\bu,dh) \|_{L^2_x}
			\\
			&	+  \| \bw \cdot d\bu \cdot dh \|_{L^2_x}+  \| \bw \cdot \bw \cdot dh \|_{L^2_x}.
		\end{split}
	\end{equation}
	Using \eqref{MFd}, \eqref{d8}, \eqref{d9}, \eqref{VVd} and H\"older's inequality, then \eqref{Wf00} becomes
	\begin{equation}\label{Wf01}
		\begin{split}
			\| \mathring{\bW} \|_{\dot{H}_x^{1}}\lesssim &	\|\bG \|_{L^2_x}
			+   \| d\bw \cdot (d\bu,dh) \|_{L^2_x}
			+  \| \bw \cdot d\bu \cdot dh \|_{L^2_x}+  \| \bw \cdot \bw \cdot dh \|_{L^2_x}
			\\
			\lesssim &	\|\bG \|_{L^2_x}
			+   \| \nabla \bw \cdot (d\bu,dh) \|_{L^2_x}
			+  \| \bw \cdot d\bu \cdot dh \|_{L^2_x}
			\\
			\lesssim &	\|\bG \|_{L^2_x}
			+   \|\bw \|_{{H}_x^{\frac32} } \| (d\bu,dh) \|_{{H}_x^{s-1}}(1+ \| (d\bu,dh) \|_{{H}_x^{s-1}} ).
		\end{split}
	\end{equation}
	Since \eqref{d20}, we infer
	\begin{equation}\label{Wf02}
		\begin{split}
			\| {W}^0 \|_{\dot{H}_x^{1}}\lesssim &	\| \mathring{\bW} \|_{\dot{H}_x^{1}}.
		\end{split}
	\end{equation}
	Adding \eqref{Wf01} and \eqref{Wf02}, we have
	\begin{equation}\label{Wf03}
		\begin{split}
			\| \bW \|_{\dot{H}_x^{1}}
			\lesssim &	\|\bG \|_{L^2_x}
			+   \|\bw \|_{{H}_x^{\frac32} } \| (d\bu,dh) \|_{{H}_x^{s-1}}(1+ \| (d\bu,dh) \|_{{H}_x^{s-1}} ).
		\end{split}
	\end{equation}
	Combining \eqref{Wd12} with \eqref{Wf03}, it yields
	\begin{equation}\label{Wf05}
		\begin{split}
			\| \bw \|_{\dot{H}_x^{1}}
			\leq &	C\|\bG \|_{L^2_x}
			+   C\|\bw \|_{{H}_x^{\frac32} } \| (d\bu,dh) \|_{{H}_x^{s-1}}(1+ \| (d\bu,dh) \|_{{H}_x^{s-1}} ).
		\end{split}
	\end{equation}
	By interpolation formula and Young's inequality, we get
	\begin{equation}\label{Wf04}
		\begin{split}
			\| \bw \|_{\dot{H}_x^{s-1}}
			\leq &	C\|\bG \|_{L^2_x}
			+   C\|\bw \|^{\frac12}_{{H}_x^{1} } \|\bw \|^{\frac12}_{{H}_x^{2} } \| (d\bu,dh) \|_{{H}_x^{s-1}}(1+ \| (d\bu,dh) \|_{{H}_x^{s-1}} )
			\\
			\leq &	C\|\bG \|_{L^2_x}
			+   \frac{1}{10}\|\bw \|_{{H}_x^{2} }+ C\|\bw \|_{{H}_x^{s-1} } \| (d\bu,dh) \|^2_{{H}_x^{s-1}}(1+ \| (d\bu,dh) \|^2_{{H}_x^{s-1}} )
			\\
			\leq &	C\|\bG \|_{L^2_x}
			+   \frac{1}{10}\|\bw \|_{{H}_x^{2} }+ C\|d\bu\|_{{H}_x^{s-1} } \| (d\bu,dh) \|^2_{{H}_x^{s-1}}(1+ \| (d\bu,dh) \|^2_{{H}_x^{s-1}} ).
		\end{split}
	\end{equation}

	\textbf{Step 2: $\| \bG \|_{L^2_x}$}. By \eqref{We15},
	multiplying $ G_\alpha-F_\alpha $ on \eqref{We15} and integrating it on $\mathbb{R}^3$, we can derive that
	\begin{equation}\label{Wd17}
		\begin{split}
			\frac{d}{dt} \|\bG -\bF\|^2_{L_x^2}=& \mathrm{I}_0+\mathrm{I}_1+\mathrm{I}_2+\mathrm{I}_3,
		\end{split}
	\end{equation}
	where
	\begin{align}\label{Wd18}
			& \mathrm{I}_0= \int_{\mathbb{R}^3} \partial_i ( \frac{u^i}{u^0} )  (G^\alpha- F^\alpha)\cdot (G_\alpha- F_\alpha) dx,
			\\\label{Wd18a}
		& \mathrm{I}_1= \int_{\mathbb{R}^3} \partial^\alpha  \big( (u^0)^{-1} \Gamma\big) \cdot (G_\alpha- F_\alpha) dx,
		\\
		\label{Wd19}
		& \mathrm{I}_2={\int_{\mathbb{R}^3}}  (u^0)^{-1} E^\alpha \cdot    (G_\alpha- F_\alpha) dx,
		\\
		\label{Wd21}
		& \mathrm{I}_3=-{\int_{\mathbb{R}^3}} \Gamma  \partial^\alpha \left( (u^0)^{-1}\right) \cdot    (G_\alpha- F_\alpha) dx.
	\end{align}
	For $\mathrm{I}_1$ is the most difficult term, so we postpone to discuss it.

	\textit{The bound for $\mathrm{I}_0, \mathrm{I}_2, \mathrm{I}_3$.} To get the bound of $\mathrm{I}_0, \mathrm{I}_2, \mathrm{I}_3$, using the H\"older's inequality is enough. Due to H\"older's inequality, we get
	\begin{align}\label{Wd221}
	| \mathrm{I}_0 | \leq & \|d \bu\|_{L^\infty_x} ( \|\bG\|^2_{L^2_x}+ \|\bF\|^2_{L^2_x}).
\end{align}
and
	\begin{align}\label{Wd22}
		| \mathrm{I}_2 | \leq & \|\bE\|_{L^2_x} ( \|\bG\|_{L^2_x}+ \|\bF\|_{L^2_x}).
	\end{align}
	Recalling \eqref{YX0}, and using H\"older's inequality, then it follows
	\begin{align}\label{Wd23}
		\nonumber
		\|\bF\|_{L^2_x} \lesssim &  \|(d\bu,dh)\cdot d\bw \|_{L^2_x}+ \|(d\bu,dh)\cdot (d\bu,dh) \cdot \bw \|_{L^2_x}
		\\
		\lesssim & \|\bw\|_{{H}^{2}_x} \| (d\bu, dh)\|_{{H}^{s-1}_x} (1+\| (d\bu, dh)\|_{{H}^{s-1}_x}).
	\end{align}
	Seeing from \eqref{YX1}, and using H\"older's inequality, it yields
	\begin{equation}\label{Wd24}
		\begin{split}
			\|\bE\|_{L^2_x} \lesssim &  \|(d\bu,dh)\cdot d^2\bw \|_{L^2_x}+ \|(d\bu,dh)\cdot (d\bu,dh) \cdot (d\bu,dh) \cdot \bw \|_{L^2_x}
			\\
			& + \|(d\bu,dh)\cdot \bw \cdot (d^2\bu,d^2h) \|_{L^2_x}+ \|(d\bu,dh)\cdot (d\bu,dh) \cdot d\bw \|_{L^2_x}
			\\
			\lesssim & \|(d\bu,dh)\|_{L^\infty_x}  ( \| \bw\|_{H^{2}_x}+\|(d\bu,dh)\|_{H^{s-1}_x}  \| \bw\|_{H^{2}_x})
			\\
			& +\|(d\bu,dh)\|^2_{H^{s-1}_x}  \| \bw\|_{H^{2}_x}+\|(d\bu,dh)\|^3_{H^{s-1}_x}  \| \bw\|_{H^{2}_x}.
		\end{split}
	\end{equation}
	By \eqref{We25}, so we have
	\begin{equation}\label{Wd29}
		\begin{split}
			\| \bG \|_{L^2_x} \lesssim &\|d^2\bw \|_{L^2_x}+ \|(dh,d\bu)\cdot d\bw \|_{L^2_x}+ \|\bw \cdot (d^2\bu,d^2h) \|_{L^2_x}
			\\
			& + \|(dh,d\bu)\cdot (dh,d\bu) \cdot \bw \|_{L^2_x}
			\\
			\lesssim & 	\| \bw\|_{H^{2}_x} +\|(d\bu,dh)\|_{H^{s-1}_x}  \| \bw\|_{H^{2}_x}
			+\|(d\bu,dh)\|^2_{H^{s-1}_x}  \| \bw\|_{H^{2}_x} .
		\end{split}
	\end{equation}
	By using \eqref{Wd23},\eqref{Wd24}, \eqref{Wd29}, \eqref{Wd22}, and \eqref{Wd221}, we have
	\begin{equation}\label{Wd220}
		\begin{split}
			| \mathrm{I}_0 |+| \mathrm{I}_2 | \lesssim &  \|d\bu\|_{L^\infty_x} ( \|(d\bu,dh)\|^2_{H^{s-1}_x} + \| \bw\|^2_{H^{2}_x})(1 +\|(d\bu,dh)\|^2_{H^{s-1}_x}   )
			\\
			&+ \|d\bu\|_{L^\infty_x} ( \|(d\bu,dh)\|^2_{H^{s-1}_x} + \| \bw\|^2_{H^{2}_x})\|(d\bu,dh)\|^3_{H^{s-1}_x}
			\\
			& + ( \|(d\bu,dh)\|^2_{H^{s-1}_x} + \| \bw\|^2_{H^{2}_x})(\|(d\bu,dh)\|_{H^{s-1}_x} +\|(d\bu,dh)\|^2_{H^{s-1}_x}  )
			\\
			& + ( \|(d\bu,dh)\|^2_{H^{s-1}_x} + \| \bw\|^2_{H^{2}_x})(\|(d\bu,dh)\|^3_{H^{s-1}_x} +\|(d\bu,dh)\|^4_{H^{s-1}_x}  ) .
		\end{split}
	\end{equation}
	For $\mathrm{I}_3$, we can show that
	\begin{align}\label{Wd27}
		| \mathrm{I}_3 | \leq & \|\Gamma \cdot d\bu\|_{L^2_x}  \|\bG-\bF\|_{L^2_x}
		\lesssim  \|d\bu\|_{L^\infty_x} \| \bw\|^2_{H^{2}_x}(1 +\|(d\bu,dh)\|^2_{H^{s-1}_x}
		+\|(d\bu,dh)\|^3_{H^{s-1}_x} ).
	\end{align}
	\textit{The bound for $\mathrm{I}_1$}. Seeing \eqref{Wd18a}, and noting $\Gamma\approx d\bu \cdot d\bw$, there is a loss of derivative in $\partial^\alpha\Gamma$. So we seek to capture the cancellation of highest derivatives from integrating by parts. To do that, we divide $\mathrm{I}_1$ by two parts
	\begin{equation}\label{Wd28}
		\begin{split}
			\mathrm{I}_1
			=& \underbrace{ \int_{\mathbb{R}^3} \partial_\alpha ( (u^0)^{-1}\Gamma ) \cdot G^\alpha dx }_{\equiv \mathrm{I}_{11}} \underbrace{-\int_{\mathbb{R}^3} \partial_\alpha  ( (u^0)^{-1}\Gamma ) \cdot F^\alpha dx }_{\equiv \mathrm{I}_{12}}.
		\end{split}
	\end{equation}
	Integrating by parts on $\mathrm{I}_{11}$ and using \eqref{We30}, so we get
	\begin{equation}\label{Wd60}
		\begin{split}
			\mathrm{I}_{11}
			=&\int_{\mathbb{R}^3} \partial_\alpha (  (u^0)^{-1}\Gamma  \cdot G^\alpha ) dx-  \int_{\mathbb{R}^3}  (u^0)^{-1}\Gamma  \cdot \partial_\alpha G^\alpha dx
			\\
			=&\frac{d}{dt}\int_{\mathbb{R}^3}  (u^0)^{-1}\Gamma  \cdot G^0 dx
			 \underbrace{ -  \int_{\mathbb{R}^3}  (u^0)^{-1}\Gamma  \cdot \partial_\alpha G^\alpha dx }_{\equiv \mathrm{I}_{111}}.
		\end{split}
	\end{equation}
	Similarly, we calculate $\mathrm{I}_{12}$ as
	\begin{equation}\label{Wd61}
		\begin{split}
			\mathrm{I}_{12}=&	- \frac{d}{dt} \int_{\mathbb{R}^3}  (u^0)^{-1}\Gamma \cdot F^0 dx +  \underbrace{\int_{\mathbb{R}^3}   (u^0)^{-1}\Gamma \cdot \partial_{\alpha} F^{\alpha} dx}_{\equiv \mathrm{I}_{121}}.
		\end{split}
	\end{equation}
	Inserting \eqref{Wd28}, \eqref{Wd60} and \eqref{Wd61} to \eqref{Wd17}, it follows
	\begin{equation}\label{Wd62}
		\begin{split}
			\frac{d}{dt} \left( \|\bG -\bF\|^2_{L_x^2}+ \int_{\mathbb{R}^3}  (u^0)^{-1}\Gamma  \cdot (G^0-F^0) dx \right) =& \mathrm{I}_0+ \mathrm{I}_2+ \mathrm{I}_3+\mathrm{I}_{111}+\mathrm{I}_{121}.
		\end{split}
	\end{equation}
	Note \eqref{Wd220} and \eqref{Wd27}, so it only remains for us to bound $\mathrm{I}_{111}$ and $\mathrm{I}_{121}$.

	\textit{The bound for  $\mathrm{I}_{111}$.} By Lemma \ref{lpe}, H\"older's inequality, \eqref{YXg} and \eqref{MFd}, we infer
	\begin{equation}\label{Wd63}
		\begin{split}
			| \mathrm{I}_{111} | =& | \epsilon^{\alpha\beta\gamma\delta} \int_{\mathbb{R}^3}  (u^0)^{-1}\Gamma  \cdot (\partial_\alpha u_\beta \partial_\gamma W_\delta) dx |
			\\
			\lesssim & \| \Gamma\|_{L_x^2} \|d\bu \cdot d\bW\|_{L_x^2}
			\\
			\lesssim & \| d\bu \cdot d\bw\|_{L_x^2} ( \|d\bu \cdot ( d\bu,dh) \cdot d\bw \|_{L_x^2}+\|d\bu \cdot d^2\bw \|_{L_x^2}  +\|\bw \cdot d^2 h \|_{L_x^2} )
			\\
				\lesssim & \|(d\bu,dh)\|_{L^\infty_x}
			( \|(d\bu, dh)\|^2_{H^{s-1}_x} +  \| \bw\|^2_{H^{2}_x} ) \|(d\bu, dh)\|_{H^{s-1}_x}
			\\
			&   + ( \|(d\bu, dh)\|^2_{H^{s-1}_x} +  \| \bw\|^2_{H^{2}_x} ) \|(d\bu, dh)\|^3_{H^{s-1}_x}.
		\end{split}
	\end{equation}
	\textit{The bound for  $\mathrm{I}_{121}$.} Due to \eqref{We64}, using H\"older's inequality, we can bound $\Psi$ by
	\begin{equation}\label{Wd66}
		\begin{split}
			\|	\Psi \|_{L_x^2} \lesssim & \| (dh,d\bu) \cdot d^2\bw \|_{L_x^2}
			+ \| (dh,d\bu) \cdot (dh,d\bu) \cdot d\bw \|_{L_x^2}
			\\
			& + \| (dh,d\bu) \cdot \bw \cdot (d^2h,d^2\bu)  \|_{L_x^2}
			+ \| (dh,d\bu) \cdot \bw \cdot (dh,d\bu) \cdot (dh,d\bu)  \|_{L_x^2}
			\\
			\lesssim & \|(d\bu,dh)\|_{L^\infty_x}
			( \|(d\bu, dh)\|_{H^{s-1}_x} +  \| \bw\|_{H^{2}_x} )
			\\
			&   + ( \|(d\bu, dh)\|_{H^{s-1}_x} +  \| \bw\|_{H^{2}_x} ) ( \|(d\bu, dh)\|^2_{H^{s-1}_x} + \|(d\bu, dh)\|^3_{H^{s-1}_x} ),
		\end{split}
	\end{equation}
	Combining \eqref{Wd61} and \eqref{We64}, we have
\begin{footnotesize}
	\begin{equation}\label{Wd68}
		\begin{split}
			\mathrm{I}_{121}= & \underbrace{ \int_{\mathbb{R}^3}   (u^0)^{-1}\Gamma \cdot \Psi dx }_{\equiv \mathrm{I}_{12a}}
			- \underbrace{ 2\int_{\mathbb{R}^3}  (u^0)^{-1}\Gamma  \cdot ( u^\alpha \partial^\gamma w^\lambda \partial_\alpha   \partial_\gamma u_\lambda) dx }_{\equiv \mathrm{I}_{12b}}
			- \underbrace{ 2\int_{\mathbb{R}^3}  (u^0)^{-1}\Gamma \cdot ( c_s^{-2} \partial_\alpha  \partial_\lambda h \partial^\alpha w^\lambda) dx }_{\equiv \mathrm{I}_{12c}} .
		\end{split}
	\end{equation}
\end{footnotesize}	
	The above integrals will be discussed as follows. By \eqref{Wd66} and H\"older's inequality, we obtain
	\begin{equation}\label{Wd67}
		\begin{split}
			| \mathrm{I}_{12a} | \lesssim & \| \Gamma \|_{L^2_x} \| \Psi \|_{L^2_x}
			\\
			\lesssim &  \|(d\bu,dh)\|_{L^\infty_x}
			( \|(d\bu, dh)\|^2_{H^{s-1}_x} +  \| \bw\|^2_{H^{2}_x} ) \|(d\bu, dh)\|_{H^{s-1}_x}
			\\
			&   + ( \|(d\bu, dh)\|^2_{H^{s-1}_x} +  \| \bw\|^2_{H^{2}_x} ) (\|(d\bu, dh)\|^3_{H^{s-1}_x} + \|(d\bu, dh)\|^4_{H^{s-1}_x}).
		\end{split}
	\end{equation}
	For $\mathrm{I}_{12b}$, we may not bound it directly like $\mathrm{I}_{12a}$. We decompose
\begin{footnotesize}
	\begin{equation}\label{Wd680}
		\begin{split}
			\mathrm{I}_{12b}=&	-2\int_{\mathbb{R}^3} (u^0)^{-1} \Gamma\cdot ( u^0 \partial^\gamma w^\lambda \partial_t   \partial_\gamma u_\lambda) dx - 2\int_{\mathbb{R}^3}  (u^0)^{-1} \Gamma \cdot ( u^i \partial^\gamma w^\lambda \partial_i   \partial_\gamma u_\lambda) dx
			\\
			=& \underbrace{	-2\int_{\mathbb{R}^3}  (u^0)^{-1} \Gamma\cdot ( u^0 \partial^0 w^\lambda \partial^2_t   u_\lambda) dx }_{\equiv \mathrm{I}_{12b1}}
			\underbrace{ -2\int_{\mathbb{R}^3}  (u^0)^{-1} \Gamma \cdot ( u^0 \partial^i w^\lambda \partial_t   \partial_i u_\lambda) dx }_{\equiv \mathrm{I}_{12b2}}
			\underbrace{ -2\int_{\mathbb{R}^3}  (u^0)^{-1} \Gamma \cdot ( u^i \partial^\gamma w^\lambda \partial_i   \partial_\gamma u_\lambda) dx }_{\equiv \mathrm{I}_{12b3}} .
		\end{split}
	\end{equation}
\end{footnotesize}	
	Inserting \eqref{We72} to $\mathrm{I}_{12b1}$, we derive that
	\begin{equation}\label{Wd74}
		\begin{split}
			\mathrm{I}_{12b1}=& 	\underbrace{ -2\int_{\mathbb{R}^3}  (u^0)^{-1} \Gamma\cdot  u^0 \partial_t w^\lambda g^{0i} \partial^2_{ti}u_\lambda dx }_{\equiv \mathrm{I}_{12ba}}
			\underbrace{ -2\int_{\mathbb{R}^3}  (u^0)^{-1} \Gamma\cdot  u^0 \partial_t w^\lambda g^{ij}\partial^2_{ij}u_\lambda dx }_{\equiv \mathrm{I}_{12bb}}
			\\
			&\underbrace{ + 2\int_{\mathbb{R}^3}  (u^0)^{-1} \Gamma\cdot  u^0 \partial_t w^\lambda Q_\lambda dx }_{\equiv \mathrm{I}_{12bc}}
			\underbrace{- 2\int_{\mathbb{R}^3}  (u^0)^{-1} \Gamma \cdot  u^0 \partial_t w^\lambda c_s^2\Omega \mathrm{e}^{-h}W_\lambda dx }_{\equiv \mathrm{I}_{12bd}}.
		\end{split}
	\end{equation}
	For $\mathrm{I}_{12ba}$, we can compute out
	\begin{equation*}
		\begin{split}
			\mathrm{I}_{12ba}=	& 	 -2\int_{\mathbb{R}^3}   (u^0)^{-1} \Gamma \cdot \partial_{i} ( u^0 \partial_t w^\lambda g^{0i} \partial_{t}u_\lambda) dx
			+ 2\int_{\mathbb{R}^3}   (u^0)^{-1} \Gamma \cdot  \partial_{i} u^0 \partial_t w^\lambda g^{0i} \partial_{t}u_\lambda dx
			\\
			& + 2\int_{\mathbb{R}^3} (u^0)^{-1} \Gamma \cdot  u^0 \partial_{i} \partial_t w^\lambda g^{0i} \partial_{t}u_\lambda dx + 2\int_{\mathbb{R}^3} (u^0)^{-1} \Gamma \cdot  u^0  \partial_t w^\lambda \partial_{i} g^{0i} \partial_{t}u_\lambda dx  .
		\end{split}
	\end{equation*}
	Using the Plancherel formula and H\"older's inequality, we can bound $\mathrm{I}_{12ba}$ by
	\begin{equation}\label{Wd75}
		\begin{split}
			| \mathrm{I}_{12ba} | \lesssim	& 	 \| \Gamma \|_{\dot{H}_x^{\frac12}}  \|d \bw  d\bu \|_{\dot{H}_x^{\frac12}}
			+ \| \Gamma \|_{L^2_x}  \|d\bu \nabla d \bw  \|_{L^2_x}
			 + \| \Gamma \|_{L^2_x}  \|(d\bu,dh) \cdot d \bw  \cdot (d\bu,dh) \|_{L^2_x}
			\\
			\lesssim & \|(d\bu,dh)\|_{L^\infty_x}
			( \|(d\bu, dh)\|^2_{H^{s-1}_x} +  \| \bw\|^2_{H^{2}_x} ) \|(d\bu, dh)\|_{H^{s-1}_x}
			\\
			&   + ( \|(d\bu, dh)\|^2_{H^{s-1}_x} +  \| \bw\|^2_{H^{2}_x} ) (\|(d\bu, dh)\|^2_{H^{s-1}_x} + \|(d\bu, dh)\|^4_{H^{s-1}_x}).
		\end{split}
	\end{equation}
	Similarly, we can calculate $\mathrm{I}_{12bb}$ by
	\begin{equation*}
		\begin{split}
			\mathrm{I}_{12bb}=& -2\int_{\mathbb{R}^3}  (u^0)^{-1} \Gamma\cdot \partial_{i} ( u^0 \partial_t w^\lambda g^{ij}\partial_{j}u_\lambda) dx
			+2\int_{\mathbb{R}^3}  (u^0)^{-1} \Gamma\cdot  \partial_{i} u^0 \partial_t w^\lambda g^{ij}\partial_{j}u_\lambda dx
			\\
			& +2\int_{\mathbb{R}^3}  (u^0)^{-1} \Gamma\cdot  u^0 \partial_{i} \partial_t w^\lambda g^{ij}\partial_{j}u_\lambda dx
			+2\int_{\mathbb{R}^3}  (u^0)^{-1} \Gamma\cdot  u^0 \partial_{i} \partial_t w^\lambda \partial_{i} g^{ij}\partial_{j}u_\lambda dx .
		\end{split}
	\end{equation*}
	Using the Plancherel formula and H\"older's inequality again, we can bound $\mathrm{I}_{12bb}$ by
	\begin{equation}\label{Wd76}
		\begin{split}
			| \mathrm{I}_{12bb} | \lesssim	& 	 \| \Gamma \|_{\dot{H}_x^{\frac12}}  \|d \bw  d\bu \|_{\dot{H}_x^{\frac12}}
			+ \| \Gamma \|_{L^2_x}  ( \|d\bu d \bw  d\bu \|_{L^2_x} + \|d\bu \nabla d \bw  \|_{L^2_x} )
			\\
			\lesssim & \|(d\bu,dh)\|_{L^\infty_x}
			( \|(d\bu, dh)\|^2_{H^{s-1}_x} +  \| \bw\|^2_{H^{2}_x} ) \|(d\bu, dh)\|_{H^{s-1}_x}
			\\
			&   + ( \|(d\bu, dh)\|^2_{H^{s-1}_x} +  \| \bw\|^2_{H^{2}_x} ) (\|(d\bu, dh)\|^2_{H^{s-1}_x} + \|(d\bu, dh)\|^4_{H^{s-1}_x}).
		\end{split}
	\end{equation}
	By H\"older's inequality, we can estimate $\mathrm{I}_{12bc}$ and $\mathrm{I}_{12bd}$ by
	\begin{equation}\label{Wd77}
		\begin{split}
			| \mathrm{I}_{12bc} | \lesssim &
			\| \Gamma \|_{L^2_x}  \|(d\bu,dh)\cdot (d\bu,dh)\cdot d \bw  \|_{L^2_x}
			\\
			\lesssim & ( \|(d\bu, dh)\|^2_{H^{s-1}_x} +  \| \bw\|^2_{H^{2}_x} ) ( \|(d\bu, dh)\|^3_{H^{s-1}_x}+\|(d\bu, dh)\|^4_{H^{s-1}_x} ).
		\end{split}
	\end{equation}
	and
	\begin{equation}\label{Wd78}
		\begin{split}
			| \mathrm{I}_{12bd} | \lesssim  &
			\| \Gamma \|_{L^2_x}  ( \|d\bw d \bw  \|_{L^2_x} + \|d \bu dh d \bw  \|_{L^2_x} )
			\\
			\lesssim & \|d\bu\|_{L^\infty_x}  \|d\bw\|_{L^2_x} \|d\bw\|_{H^{\frac12}_x}\|d\bw\|_{H^{1}_x}
		+( \|(d\bu, dh)\|^2_{H^{s-1}_x} +  \| \bw\|^2_{H^{2}_x} ) \|(d\bu, dh)\|^3_{H^{s-1}_x}
			\\
			\lesssim & \|d\bu\|_{L^\infty_x}  (\|d\bu\|_{H^{s-1}_x}+\|(d\bu,dh)\|^2_{H^{s-1}_x}) \|d\bw\|^2_{H^{1}_x}
			\\
			& \quad +( \|(d\bu, dh)\|^2_{H^{s-1}_x} +  \| \bw\|^2_{H^{2}_x} ) \|(d\bu, dh)\|^4_{H^{s-1}_x}.
		\end{split}
	\end{equation}
	To summarize \eqref{Wd74}, \eqref{Wd75}, \eqref{Wd76}, \eqref{Wd77}, and \eqref{Wd78}, we get
	\begin{equation}\label{Wd79}
		\begin{split}
			| \mathrm{I}_{12b1} |
			\lesssim & \|d\bu\|_{L^\infty_x} (\|(d\bu,dh)\|^2_{H^{s-1}_x}+ \|\bw\|^2_{H^{2}_x}) ( \|(d\bu,dh)\|_{H^{s-1}_x}+ \|(d\bu,dh)\|^2_{H^{s-1}_x} )
			\\
			&+( \|(d\bu, dh)\|^2_{H^{s-1}_x} +  \| \bw\|^2_{H^{2}_x} ) (\|(d\bu, dh)\|^2_{H^{s-1}_x} + \|(d\bu, dh)\|^4_{H^{s-1}_x}).
		\end{split}
	\end{equation}
	By chain's rule, we are able to obtain
	\begin{equation}\label{Wd83}
		\begin{split}
			\mathrm{I}_{12b2}=& 	- 2\int_{\mathbb{R}^3}  (u^0)^{-1} \Gamma\cdot \partial_i ( u^0 \partial^i w^\lambda \partial_t  u_\lambda) dx
			\\
			&+ 2\int_{\mathbb{R}^3}  (u^0)^{-1} \Gamma\cdot  ( \partial_i u^0 \partial^i w^\lambda \partial_t u_\lambda +  u^0 \partial_i \partial^i w^\lambda \partial_t  u_\lambda) dx,
		\end{split}
	\end{equation}
	and
	\begin{equation}\label{Wd84}
		\begin{split}
			\mathrm{I}_{12b3}=& 	 - 2\int_{\mathbb{R}^3}  (u^0)^{-1} \Gamma\cdot \partial_i ( u^i \partial^\gamma w^\lambda   \partial_\gamma u_\lambda) dx
			\\
			&+ 2\int_{\mathbb{R}^3}  (u^0)^{-1} \Gamma \cdot  ( \partial_i  u^i \partial^\gamma w^\lambda   \partial_\gamma u_\lambda +  u^i \partial_i \partial^\gamma w^\lambda   \partial_\gamma u_\lambda ) dx .
		\end{split}
	\end{equation}
Seeing \eqref{Wd83}-\eqref{Wd84}, and using Plancherel formula and H\"older's inequality again, we infer
	\begin{equation}\label{Wd85}
		\begin{split}
			|\mathrm{I}_{12b2}|+	|\mathrm{H}_{12b3}|  \lesssim & \| \Gamma \|_{\dot{H}_x^{\frac12}}  \|d \bw  d\bu \|_{\dot{H}_x^{\frac12}}
			+ \| \Gamma \|_{L^2_x}  ( \|d\bu d \bw  d\bu \|_{L^2_x} + \|d\bu \nabla d \bw  \|_{L^2_x} )
			\\
			\lesssim & \|(d\bu,dh)\|_{L^\infty_x}
			( \|(d\bu, dh)\|^2_{H^{s-1}_x} +  \| \bw\|^2_{H^{2}_x} ) \|(d\bu, dh)\|_{H^{s-1}_x}
			\\
			&   + ( \|(d\bu, dh)\|^2_{H^{s-1}_x} +  \| \bw\|^2_{H^{2}_x} ) (\|(d\bu, dh)\|^2_{H^{s-1}_x} + \|(d\bu, dh)\|^4_{H^{s-1}_x}) .
		\end{split}
	\end{equation}
	Summarizing our outcome \eqref{Wd680}, \eqref{Wd79}, and \eqref{Wd85}, so it yields
	\begin{equation}\label{Wd80}
		\begin{split}
			| \mathrm{I}_{12b} |
			\lesssim & \|d\bu\|_{L^\infty_x} (\|(d\bu,dh)\|^2_{H^{s-1}_x}+ \|\bw\|^2_{H^{2}_x}) ( \|(d\bu,dh)\|_{H^{s-1}_x}
		+ \|(d\bu,dh)\|^2_{H^{s-1}_x})
			\\
			&+( \|(d\bu, dh)\|^2_{H^{s-1}_x} +  \| \bw\|^2_{H^{2}_x} ) (\|(d\bu, dh)\|^2_{H^{s-1}_x} + \|(d\bu, dh)\|^4_{H^{s-1}_x}).
		\end{split}
	\end{equation}
	We still need to bound $\mathrm{I}_{12c}$. A direct calculation tells us
	\begin{equation}\label{Wd82}
		\begin{split}
			\mathrm{I}_{12c}= & \underbrace{ -2\int_{\mathbb{R}^3}  (u^0)^{-1} \Gamma\cdot  c_s^{-2} \partial^2_t   h \partial_t w^0 dx }_{\equiv \mathrm{I}_{12c1}}
			\underbrace{- 2\int_{\mathbb{R}^3}  (u^0)^{-1} \Gamma\cdot c_s^{-2} \partial_t \partial_i  h \partial_t w^i dx }_{\equiv \mathrm{I}_{12c2}}
			 \\
			 & \ \underbrace{ -2\int_{\mathbb{R}^3}  (u^0)^{-1} \Gamma\cdot  c_s^{-2} \partial_i  \partial_\lambda h \partial^i w^\lambda dx  }_{\equiv \mathrm{I}_{12c3}}.
		\end{split}
	\end{equation}
	Substituting \eqref{We73} to $\mathrm{I}_{12c1}$, we have
	\begin{equation*}
		\begin{split}
			\mathrm{I}_{12c1}= & -2\int_{\mathbb{R}^3}  (u^0)^{-1} \Gamma\cdot  c_s^{-2} g^{0i} \partial^2_{ti}h \partial_t w^0 dx
			- 2\int_{\mathbb{R}^3}  (u^0)^{-1} \Gamma\cdot c_s^{-2} g^{ij}\partial^2_{ij}h \partial_t w^0 dx
			\\
			&\ +2\int_{\mathbb{R}^3}  (u^0)^{-1} \Gamma\cdot  c_s^{-2} g^{ij} D \partial_t w^0  dx .
		\end{split}
	\end{equation*}
	Using a similar way for controlling $\mathrm{I}_{12ba}$, so we obtain
	\begin{equation}\label{Wd86}
		\begin{split}
			| \mathrm{I}_{12c1} | \lesssim &  \| \Gamma \|_{\dot{H}_x^{\frac12}}  \|d \bw  dh \|_{\dot{H}_x^{\frac12}}
			+ \| \Gamma \|_{L^2_x}  \|dh\nabla d \bw  \|_{L^2_x}
			 + \| \Gamma \|_{L^2_x}  \|(d\bu,dh)\cdot d \bw \cdot  (d\bu,dh) \|_{L^2_x}
			\\
			\lesssim & \|(d\bu,dh)\|_{L^\infty_x}
			( \|(d\bu, dh)\|^2_{H^{s-1}_x} +  \| \bw\|^2_{H^{2}_x} ) \|(d\bu, dh)\|_{H^{s-1}_x}
			\\
			&   + ( \|(d\bu, dh)\|^2_{H^{s-1}_x} +  \| \bw\|^2_{H^{2}_x} ) (\|(d\bu, dh)\|^2_{H^{s-1}_x} + \|(d\bu, dh)\|^4_{H^{s-1}_x}).
		\end{split}
	\end{equation}
	Using a similar way for estimating $\mathrm{I}_{12bb}$, it yields
	\begin{equation}\label{Wd87}
		\begin{split}
			& | \mathrm{I}_{12c2} |+ | \mathrm{I}_{12c3} |
			\\
			\lesssim &  \| \Gamma \|_{\dot{H}_x^{\frac12}}  \|d \bw  dh \|_{\dot{H}_x^{\frac12}}
			+ \| \Gamma \|_{L^2_x}  ( \|dh\nabla d \bw  \|_{L^2_x} +\|(d\bu,dh)\cdot d \bw \cdot  (d\bu,dh) \|_{L^2_x} )
			\\
			\lesssim & \|(d\bu,dh)\|_{L^\infty_x}
			( \|(d\bu, dh)\|^2_{H^{s-1}_x} +  \| \bw\|^2_{H^{2}_x} ) \|(d\bu, dh)\|_{H^{s-1}_x}
			\\
			&   + ( \|(d\bu, dh)\|^2_{H^{s-1}_x} +  \| \bw\|^2_{H^{2}_x} ) (\|(d\bu, dh)\|^2_{H^{s-1}_x} + \|(d\bu, dh)\|^4_{H^{s-1}_x}).
		\end{split}
	\end{equation}
	By summarizing \eqref{Wd82}, \eqref{Wd86}, and \eqref{Wd87}, we obtain
	\begin{equation}\label{Wd89}
		\begin{split}
			| \mathrm{I}_{12c} |
			\lesssim & \|(d\bu,dh)\|_{L^\infty_x}
			( \|(d\bu, dh)\|^2_{H^{s-1}_x} +  \| \bw\|^2_{H^{2}_x} ) \|(d\bu, dh)\|_{H^{s-1}_x}
			\\
			&   + ( \|(d\bu, dh)\|^2_{H^{s-1}_x} +  \| \bw\|^2_{H^{2}_x} ) (\|(d\bu, dh)\|^2_{H^{s-1}_x} + \|(d\bu, dh)\|^4_{H^{s-1}_x}).
		\end{split}
	\end{equation}
Seeing \eqref{Wd68}, and combining \eqref{Wd67}, \eqref{Wd80} and \eqref{Wd89}, we infer
	\begin{equation}\label{Wd90}
		\begin{split}
			| \mathrm{I}_{121} |
			\lesssim & \|(d\bu,dh)\|_{L^\infty_x}
			( \|(d\bu, dh)\|^2_{H^{s-1}_x} +  \| \bw\|^2_{H^{2}_x} ) \|(d\bu, dh)\|_{H^{s-1}_x}
			\\
			&   + ( \|(d\bu, dh)\|^2_{H^{s-1}_x} +  \| \bw\|^2_{H^{2}_x} ) (\|(d\bu, dh)\|^2_{H^{s-1}_x} + \|(d\bu, dh)\|^4_{H^{s-1}_x}).
		\end{split}
	\end{equation}
	To summarize \eqref{Wd62}, \eqref{Wd220}, \eqref{Wd63}, \eqref{Wd90}, and \eqref{Wd27}, we therefore have
	\begin{equation}\label{Wd91}
		\begin{split}
			& \frac{d}{dt} \left( \|\bG -\bF\|^2_{L_x^2}+ \int_{\mathbb{R}^3}  (u^0)^{-1} \Gamma \cdot (G^0-F^0) dx \right)
			\\
			\lesssim & \|(d\bu,dh)\|_{L^\infty_x}
			( \|(d\bu, dh)\|^2_{H^{s-1}_x} +  \| \bw\|^2_{H^{2}_x} ) ( 1+ \|(d\bu, dh)\|^2_{H^{s-1}_x})
			\\
			& +  \|(d\bu,dh)\|_{L^\infty_x}
			( \|(d\bu, dh)\|^2_{H^{s-1}_x} +  \| \bw\|^2_{H^{2}_x} ) ( \|(d\bu, dh)\|^3_{H^{s-1}_x}
		+\|(d\bu, dh)\|^4_{H^{s-1}_x} )
			\\
			&   + ( \|(d\bu, dh)\|^2_{H^{s-1}_x} +  \| \bw\|^2_{H^{2}_x} ) (\|(d\bu, dh)\|^2_{H^{s-1}_x} + \|(d\bu, dh)\|^4_{H^{s_0-1}_x}).
		\end{split}
	\end{equation}
	Integrating \eqref{Wd91} from $0$ to $t$($t>0$), we obtain
	\begin{equation}\label{Wd92}
		\begin{split}
			& \|(\bG -\bF)(t,\cdot)\|^2_{L_x^2}+ {\int_{\mathbb{R}^3}}  (u^0)^{-1} \Gamma \cdot (G^0-F^0) (t,\cdot)dx
			\\
			\lesssim & \|(\bG -\bF)(0,\cdot)\|^2_{L_x^2}+  {\int_{\mathbb{R}^3}} (u^0)^{-1} \Gamma \cdot(G^0-F^0) (0,\cdot)dx
		 \\
		 & + {\int^t_0}  \|(d\bu,dh)\|_{L^\infty_x}
			( \|(d\bu, dh)\|^2_{H^{s-1}_x} +  \| \bw\|^2_{H^{2}_x} ) (1+\|(d\bu, dh)\|^2_{H^{s-1}_x}) d\tau
		\\
			& + {\int^t_0} \|(d\bu,dh)\|_{L^\infty_x}
			( \|(d\bu, dh)\|^2_{H^{s-1}_x} +  \| \bw\|^2_{H^{2}_x} ) (  \|(d\bu, dh)\|^3_{H^{s-1}_x} + \|(d\bu, dh)\|^4_{H^{s-1}_x}) d\tau
			\\
			&   + {\int^t_0} ( \|(d\bu, dh)\|^2_{H^{s-1}_x} +  \| \bw\|^2_{H^{2}_x} ) (\|(d\bu, dh)\|^2_{H^{s-1}_x} + \|(d\bu, dh)\|^4_{H^{s-1}_x}) d\tau.
		\end{split}
	\end{equation}
	For the left hand of \eqref{Wd92}, we can get a lower bound
	\begin{equation}\label{Wd93}
		\begin{split}
			& \|(\bG -\bF)(t,\cdot)\|^2_{L_x^2}+ {\int_{\mathbb{R}^3}}  (u^0)^{-1} \Gamma \cdot (G^0-F^0)(t,\cdot) dx
			\\
			\geq & \|\bG(t,\cdot)\|^2_{L^2_x} -\|\bF(t,\cdot)\|^2_{L^2_x}- \| \Gamma(t,\cdot) \|_{L^2_x} ( \|G^0(t,\cdot) \|_{L^2_x}+\|F^0(t,\cdot) \|_{L^2_x} )
			\\
			\geq & \|\bG(t,\cdot)\|^2_{L^2_x} - C( \|(d\bu, dh)(t,\cdot)\|_{H^{s-1}_x}+ \|(d\bu, dh)(t,\cdot)\|^2_{H^{s-1}_x}) \|\bw (t,\cdot) \|_{H^{2}_x} \|\bw (t,\cdot) \|_{H^{\frac32}_x}
			\\
			& \ -C( \|(d\bu, dh)(t,\cdot)\|^2_{H^{s-1}_x}+ \|(d\bu, dh)(t,\cdot)\|^4_{H^{s-1}_x}) \|\bw (t,\cdot) \|^2_{H^{\frac32}_x},
		\end{split}
	\end{equation}
	and an upper bound
	\begin{equation}\label{Wd94}
		\begin{split}
			& \|(\bG -\bF)(0,\cdot)\|^2_{L_x^2}+ {\int_{\mathbb{R}^3}} (u^0)^{-1} \Gamma \cdot (G^0-F^0) (0,\cdot)dx
			\\
			\lesssim & \|(\mathring{\bu}_0, u_0^0-1)\|^2_{H_x^s} + \|h_0\|^2_{H_x^s} + \|\bw_0\|^2_{{H}^{2}_x}+\|(\mathring{\bu}_0, u_0^0-1)\|^6_{H_x^s} + \|h_0\|^6_{H_x^s} + \|\bw_0\|^6_{{H}^{2}_x} .
		\end{split}
	\end{equation}
	By the interpolation formula, it follows
	\begin{equation}\label{Wd95}
		\|\bw (t,\cdot) \|_{H^{\frac32}_x} \leq \|\bw (t,\cdot) \|^{\frac12}_{H^{1}_x} \|\bw (t,\cdot) \|^{\frac12}_{H^{2}_x} .
	\end{equation}
	Substituting \eqref{Wd95} to \eqref{Wd93}, and using Young's inequality, we can show that\footnote{In \eqref{Wd96}, we give a coefficient $\frac{1}{10(1+C)}$ in $\frac{1}{10(1+C)}\|\bw (t,\cdot) \|^2_{H^{2}_x}$ by Young's inequality, where the constant $C$ is totally the same with in the right hand of \eqref{Wf04}.}
	\begin{equation}\label{Wd96}
		\begin{split}
			& \|\bG -\bF\|^2_{L_x^2}+ {\int_{\mathbb{R}^3}} (u^0)^{-1} \Gamma \cdot (G^0-F^0)dx
			\\
			\geq & \|\bG\|^2_{L^2_x} -\frac{1}{10(1+C)}\|\bw\|^2_{H^{2}_x}- C( \|(d\bu, dh)\|^4_{H^{s-1}_x}+ \|(d\bu, dh)\|^8_{H^{s-1}_x}) \|\bw\|^2_{H^{s-1}_x}
			\\
			\geq & \|\bG\|^2_{L^2_x} -\frac{1}{10(1+C)}\|\bw \|^2_{H^{2}_x}- C( \|(d\bu, dh)\|^4_{H^{s-1}_x}+ \|(d\bu, dh)\|^{10}_{H^{s-1}_x}) .
		\end{split}
	\end{equation}
	Above, we also use $\|\bw (t,\cdot) \|_{H^{1}_x} \approx \|d\bu (t,\cdot) \|_{H^{1}_x} \leq  \|d\bu (t,\cdot) \|_{H^{s-1}_x}$. Summing uo the outcome \eqref{Wd92}, \eqref{Wd94} and \eqref{Wd96}, we have proved
	\begin{equation}\label{Wd97}
		\begin{split}
			& \|\bG(t,\cdot)\|^2_{L^2_x}- \frac{1}{10(1+C)}\|\bw (t,\cdot) \|^2_{H^{2}_x}
			\\
			\lesssim
			&  \|(\mathring{\bu}_0, u_0^0-1)\|^2_{H_x^s} + \|h_0\|^2_{H_x^s} + \|\bw_0\|^2_{{H}^{2}_x}+\|(\mathring{\bu}_0, u_0^0-1)\|^6_{H_x^s} + \|h_0\|^6_{H_x^s}
			\\
			& + \|\bw_0\|^6_{{H}^{2}_x}
			+  \|(d\bu, dh)(t,\cdot)\|^4_{H^{s-1}_x}+ \|(d\bu, dh)(t,\cdot)\|^{10}_{H^{s-1}_x}
			\\
			& + {\int^t_0}  \|(d\bu,dh)\|_{L^\infty_x}
			( \|(d\bu, dh)\|^2_{H^{s-1}_x} +  \| \bw\|^2_{H^{2}_x} ) (1+ \|(d\bu, dh)\|^2_{H^{s-1}_x} ) d\tau
		\\
			& + {\int^t_0} \|(d\bu,dh)\|_{L^\infty_x}
			( \|(d\bu, dh)\|^2_{H^{s-1}_x} +  \| \bw\|^2_{H^{2}_x} ) (  \|(d\bu, dh)\|^3_{H^{s-1}_x}+\|(d\bu, dh)\|^4_{H^{s-1}_x}) d\tau
			\\
			&   + {\int^t_0} ( \|(d\bu, dh)\|^2_{H^{s-1}_x} +  \| \bw\|^2_{H^{2}_x} ) (\|(d\bu, dh)\|^2_{H^{s-1}_x} + \|(d\bu, dh)\|^4_{H^{s-1}_x}) d\tau.
		\end{split}
	\end{equation}
	Due to \eqref{Wf04}, \eqref{Wd97}, and $\|u_0^0-1\|_{H_x^s} \lesssim \|\mathring{\bu}_0\|_{H_x^s}$, we find that
	\begin{equation}\label{Wd98}
		\begin{split}
			& \|\bw(t,\cdot)\|^2_{\dot{H}^{2}_x}- \frac{C}{10(1+C)}\|\bw (t,\cdot) \|^2_{H^{2}_x}-\frac{1}{10}\|\bw (t,\cdot) \|^2_{H^{2}_x}
			\\
			\lesssim
			&    \|\mathring{\bu}_0\|^6_{H_x^s} + \|h_0\|^6_{H_x^s} + \|\bw_0\|^6_{{H}^{2}_x}
				+  \|(d\bu, dh)\|^3_{H^{s-1}_x} +  \|(d\bu, dh)\|^4_{H^{s-1}_x} + \|(d\bu, dh)\|^{10}_{H^{s-1}_x}
			\\
			& + \|h_0, \mathring{\bu}_0\|^2_{H_x^s} + {\int^t_0} \|(d\bu,dh)\|_{L^\infty_x}
			( \|(d\bu, dh)\|^2_{H^{s_0-1}_x} +  \| \bw\|^2_{H^{s_0}_x} ) (1+ \|(d\bu, dh)\|^2_{H^{s_0-1}_x} ) d\tau
			\\
			& + {\int^t_0}  \|(d\bu,dh)\|_{L^\infty_x}
			( \|(d\bu, dh)\|^2_{H^{s_0-1}_x} +  \| \bw\|^2_{H^{s_0}_x} ) ( \|(d\bu, dh)\|^3_{H^{s-1}_x} + \|(d\bu, dh)\|^4_{H^{s-1}_x} ) d\tau
			\\
			&   + \|\bw_0\|^2_{{H}^{2}_x}  + {\int^t_0} ( \|(d\bu, dh)\|^2_{H^{s-1}_x} +  \| \bw\|^2_{H^{2}_x} ) (\|(d\bu, dh)\|^2_{H^{s-1}_x} + \|(d\bu, dh)\|^4_{H^{s-1}_x}) d\tau,
		\end{split}
	\end{equation}
	By \eqref{Wd3}, \eqref{Wd98}, \eqref{We100}, and \eqref{Wc0}, it follows
	\begin{equation}\label{Wd99a}
		\begin{split}
			& \|(h, \mathring{\bu})\|^2_{{H}^{s}_x}(t)+ \|u^0-1\|^2_{{H}^{s}_x}(t) + \|\bw\|^2_{{H}^{2}_x}(t)- \frac{C}{10(1+C)}\|\bw \|^2_{H^{2}_x}(t)-\frac{1}{10}\|\bw \|^2_{H^{2}_x}(t)
			\\
			\lesssim
			& \|\mathring{\bu}_0\|^2_{H_x^s} + \|h_0\|^2_{H_x^s} + \|\bw_0\|^2_{{H}^{2}_x}+\|\mathring{\bu}_0\|^6_{H_x^s} + \|h_0\|^6_{H_x^s} + \|\bw_0\|^6_{{H}^{2}_x}
			\\
			& 	+  \textstyle{\sup}_{[0,t]} \left\{ \|(d\bu, dh)\|^3_{H^{s-1}_x}
			+  \|(d\bu, dh)\|^4_{H^{s-1}_x}+ \|(d\bu, dh)\|^{10}_{H^{s-1}_x} \right\}
			\\
			& + {\int^t_0}  \|(d\bu,dh)\|_{L^\infty_x}
			( \|(d\bu, dh)\|^2_{H^{s-1}_x} +  \| \bw\|^2_{H^{2}_x} ) (1+ \|(d\bu, dh)\|^2_{H^{s-1}_x} )d\tau
			\\
			& + {\int^t_0}  \|(d\bu,dh)\|_{L^\infty_x}
			( \|(d\bu, dh)\|^2_{H^{s-1}_x} +  \| \bw\|^2_{H^{2}_x} ) ( \|(d\bu, dh)\|^3_{H^{s-1}_x} + \|(d\bu, dh)\|^4_{H^{s-1}_x}) d\tau
			\\
			&   + {\int^t_0} ( \|(d\bu, dh)\|^2_{H^{s-1}_x} +  \| \bw\|^2_{H^{2}_x} ) (\|(d\bu, dh)\|^2_{H^{s-1}_x} + \|(d\bu, dh)\|^4_{H^{s-1}_x}) d\tau.
		\end{split}
	\end{equation}
	Since the left hand side of \eqref{Wd99a} $\geq \frac12\|(h, \mathring{\bu})\|^2_{{H}^{s}_x}(t)+ \frac12\|u^0-1\|^2_{{H}^{s}_x}(t) + \frac12\|\bw\|^2_{{H}^{2}_x}(t)$, so we have
	\begin{equation}\label{Wd103}
		\begin{split}
			 \tilde{E}_s(t)
			\lesssim
			& \|\mathring{\bu}_0\|^2_{H_x^s} + \|h_0\|^2_{H_x^s} + \|\bw_0\|^2_{{H}^{2}_x}+\|\mathring{\bu}_0\|^6_{H_x^s} + \|h_0\|^6_{H_x^s} + \|\bw_0\|^6_{{H}^{2}_x}
			\\
			& 	+ \textstyle{\sup}_{[0,t]} \left\{ \|(d\bu, dh)\|^3_{H^{s-1}_x}
			+  \|(d\bu, dh)\|^4_{H^{s-1}_x}+ \|(d\bu, dh)\|^{10}_{H^{s-1}_x} \right\}
			\\
			& + {\int^t_0}  \|(d\bu,dh)\|_{L^\infty_x}
			( \|(d\bu, dh)\|^2_{H^{s-1}_x} +  \| \bw\|^2_{H^{2}_x} ) (1+\|(d\bu, dh)\|^2_{H^{s-1}_x} )d\tau
			\\
			& + {\int^t_0} \|(d\bu,dh)\|_{L^\infty_x}
			( \|(d\bu, dh)\|^2_{H^{s-1}_x} +  \| \bw\|^2_{H^{2}_x} ) ( \|(d\bu, dh)\|^3_{H^{s-1}_x} + \|(d\bu, dh)\|^4_{H^{s-1}_x}) d\tau
			\\
			&   + {\int^t_0} ( \|(d\bu, dh)\|^2_{H^{s-1}_x} +  \| \bw\|^2_{H^{2}_x} ) (\|(d\bu, dh)\|^2_{H^{s-1}_x} + \|(d\bu, dh)\|^4_{H^{s-1}_x}) d\tau.
		\end{split}
	\end{equation}
	By using \eqref{VHe}, \eqref{VHea}, \eqref{VHeb}, and \eqref{Wd103}, we have
	\begin{equation*}
		\begin{split}
			\tilde{E}_s(t)
			\lesssim
			& \tilde{E}_s(0)+\tilde{E}_s(0)^3
			+ ( \tilde{E}_s(0)^{\frac32}+  \tilde{E}_s(0)^2+\tilde{E}_s(0)^5 ) \exp (5\int^t_0 \|d\bu, dh\|_{L^\infty_x}d\tau)
			\\
			& + {\int^t_0} \|(d\bu,dh)\|_{L^\infty_x}
			\tilde{E}_s(\tau) d\tau
			 + {\int^t_0} \|(d\bu,dh)\|_{L^\infty_x}  \tilde{E}_s(0) \exp (\int^\tau_0 \|d\bu, dh\|_{L^\infty_x}dz)
			\tilde{E}_s(\tau) d\tau
			\\
			&   + {\int^t_0} \tilde{E}_s(\tau) \left\{ \tilde{E}_s(0) \exp (\int^\tau_0 \|d\bu, dh\|_{L^\infty_x}dz)+ \tilde{E}_s(0)^{2} \exp (2 \int^\tau_0 \|d\bu, dh\|_{L^\infty_x}dz) \right\} d\tau
			\\
			& + {\int^t_0} \|(d\bu,dh)\|_{L^\infty_x}  (\tilde{E}_s(0)^{\frac32} + \tilde{E}_s(0)^2)\exp (2\int^\tau_0 \|d\bu, dh\|_{L^\infty_x}dz)
			\tilde{E}_s(\tau) d\tau.
		\end{split}
	\end{equation*}
	Observing $\tilde{E}_s(0)^{\frac32}, \tilde{E}_s(0)^{2}, \tilde{E}_s(0)^{3}  \leq \max\{\tilde{E}_s(0), \tilde{E}_s(0)^5\} $, so we have
		\begin{equation*}
		\begin{split}
			\tilde{E}_s(t)
			\lesssim
			& \tilde{E}_s(0)+\tilde{E}_s(0)^5
			+ ( \tilde{E}_s(0)+\tilde{E}_s(0)^5 ) \exp (5 \int^t_0 \|d\bu, dh\|_{L^\infty_x}d\tau)
			+ {\int^t_0} \|(d\bu,dh)\|_{L^\infty_x}
			\tilde{E}_s(\tau) d\tau
			\\
			& + {\int^t_0} \|(d\bu,dh)\|_{L^\infty_x}  ( \tilde{E}_s(0) + \tilde{E}_s(0)^5) \exp (2\int^t_0 \|d\bu, dh\|_{L^\infty_x}dz)
			E_s(\tau) d\tau
			\\
			&   + {\int^t_0} \tilde{E}_s(\tau) \left\{ \tilde{E}_s(0) \exp (\int^t_0 \|d\bu, dh\|_{L^\infty_x}dz)+ \tilde{E}_s(0)^{2} \exp (2\int^t_0 \|d\bu, dh\|_{L^\infty_x}dz) \right\} d\tau.
		\end{split}
	\end{equation*}
	Therefore, \eqref{WH}-\eqref{WLa} can be immediately obtained after an application of Gronwall's inequality.
\end{proof}
We next introduce a higher-order energy estimate as follows.
\begin{corollary}\label{hes}
	Let $(h,\bu)$ be a solution of \eqref{REE}. Let $2<\sstar<\frac52$. Let $\bw$ be defined in \eqref{VVd}. Then the following energy estimates hold:
	\begin{equation}\label{hes0}
		\begin{split}
			\mathbb{E}(t) \lesssim  \mathbb{E}_0 \exp \left\{ 18 \int^t_0 \|(d h, d \bu)\|_{L^\infty_x} d\tau \cdot \exp ( 3\int^t_0  \|(d h, d \bu) \|_{L^\infty_x}  d\tau ) \right\},
		\end{split}
	\end{equation}
	where $\mathbb{E}(t)$ and $\mathbb{E}_0$ are defined by
	\begin{equation*}
		\begin{split}
			\mathbb{E}(t)=& \|(h, u^0-1,\mathring{\bu})\|^2_{H_x^{\sstar+1}} + \|\bw\|^2_{H_x^{3}},
			\\
			\mathbb{E}_0 = &   \|(h_0, \mathring{\bu}_0)\|^2_{H_x^{\sstar+1}}+ \|\bw_0\|^2_{H_x^{3}}+\|(\mathring{\bu}_0, h_0)\|^3_{H^3}+\| \bw_0 \|^2_{{H}_x^{3}} \| h_0 \|^2_{{H}_x^{3}}+\|(\mathring{\bu}_0, h_0)\|^6_{H^3} .
		\end{split}
	\end{equation*}
\end{corollary}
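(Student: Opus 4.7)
The plan is to adapt the two-tier energy scheme used in Theorem \ref{VEt}: apply the classical symmetric-hyperbolic estimate (Theorem \ref{VHE}) at regularity $s+1$ to control the fluid variables $(h,u^0-1,\mathring{\bu})$, and apply the div-curl/elliptic decomposition together with the transport equation for $\bG-\bF$ from Lemma \ref{VC} at one higher order of derivatives to control $\bw$ in $H^3_x$. Throughout, because $s>2$, we have $s+1>3$ and in particular $H^{s+1}_x\hookrightarrow W^{1,\infty}_x$, so all the product estimates of Section \ref{sec:preliminaries} apply with comfortable margins.

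First, I would invoke Theorem \ref{VHE} with $a=s+1$ to get
\[
\|(h,\mathring{\bu})\|_{H^{s+1}_x}^2(t)\lesssim \|(h_0,\mathring{\bu}_0)\|_{H^{s+1}}^2\,\exp\Bigl(\int_0^t\|(d h,d\mathring{\bu})\|_{L^\infty_x}\,d\tau\Bigr),
\]
and use $u^0=\sqrt{1+|\mathring{\bu}|^2}$ combined with Lemma \ref{jh0} to transfer this bound to $\|u^0-1\|_{H^{s+1}_x}$. Second, for the vorticity part, I would repeat the argument of Theorem \ref{VEt}, but with $\dot H^0$ replaced by $\dot H^1$ (equivalently $\Lambda_x$ of all the $L^2$-level estimates there): apply Lemma \ref{vor1} to reduce $\|\Delta\bw\|_{\dot H^1_x}$ to $\|\bW\|_{\dot H^2_x}$ plus lower-order product terms controlled by $\|\bw\|_{H^{\frac32}_x}\|(d\bu,dh)\|_{H^s_x}$; apply Lemma \ref{vor2} (at one derivative higher) to bound $\|\nabla\mathring{\bW}\|_{\dot H^1_x}$ by $\|\bG\|_{\dot H^1_x}$ plus similar nonlinear terms; then run the differential inequality for $\|\Lambda_x(\bG-\bF)\|_{L^2_x}^2$ coming from \eqref{SDe}.

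The main obstacle — as in Theorem \ref{VEt} — is handling the highest-order contribution of $\Gamma$ in the integral $\mathrm{H}_1=\int \partial^\alpha\bigl(\Lambda_x((u^0)^{-1}\Gamma)\bigr)\cdot\Lambda_x(G_\alpha-F_\alpha)\,dx$: a naive application of Hölder loses a derivative. The resolution is the same structural cancellation used in Theorem \ref{VEt}. I would integrate by parts in $\partial^\alpha$, splitting into a $G^\alpha$-piece and an $F^\alpha$-piece, move the time derivative outside to create the total $\frac{d}{dt}$-term $\int\Lambda_x((u^0)^{-1}\Gamma)\cdot\Lambda_x(G^0-F^0)\,dx$, and then use the wave equations \eqref{We72}--\eqref{We73} to rewrite $\partial_t^2 u_\lambda$ and $\partial_t^2 h$ in terms of spatial derivatives plus semilinear terms, followed by one further integration by parts in a spatial variable. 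The remaining integrals are bounded by Lemma \ref{lpe} and Lemma \ref{ps}.

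Finally, absorbing the top-order term $\tfrac{1}{10(1+C)}\|\bw\|_{H^3_x}^2$ on the right (as in the derivation of \eqref{Wd98}), combining with the classical estimate for $(h,u^0-1,\mathring{\bu})\in H^{s+1}_x$, and applying Gronwall's inequality in the nested form used in Theorem \ref{VEt} yields the doubly-exponential bound \eqref{hes0}. Since the regularity $s+1$ of the fluid and the regularity $3$ of the vorticity both exceed the Sobolev thresholds invoked in the commutator and product lemmas of Section \ref{sec:preliminaries}, each term arising in the process is handled by the same estimates used for Theorem \ref{VEt}, only with one more derivative on every factor; no new cancellation is required beyond what is already present in Lemmas \ref{vor1}, \ref{vor2}, and \ref{VC}.
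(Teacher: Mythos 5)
Your proposal is correct in outline, but it takes a genuinely different --- and substantially heavier --- route than the paper. You propose to rerun the full two-tier machinery of Theorem \ref{VEt} shifted up by one derivative: reduce $\|\Delta\bw\|_{\dot H^1_x}$ to $\|\bW\|_{\dot H^2_x}$, then to $\|\bG\|_{\dot H^1_x}$ via Lemma \ref{vor2}, and then run the differential inequality for $\|\Lambda_x(\bG-\bF)\|_{L^2_x}^2$ from \eqref{SDe}, including the integration-by-parts cancellation for the $\Gamma$-term and the substitution of the wave equations \eqref{We72}--\eqref{We73}. The paper stops one level earlier: after the elliptic reduction $\|\bw\|_{\dot H^3_x}\lesssim\|\bW\|_{\dot H^2_x}+\|\mathring{\bw}\|_{H^2_x}\|(d\bu,dh)\|_{H^2_x}$ (the analogue of Lemma \ref{vor1}), it closes the estimate by a \emph{direct} transport energy estimate for $\bW$ in $\dot H^2_x$ using \eqref{CEQ1}, with no recourse to $\bG$, $\Gamma$, $\bF$, or $\bE$ at all. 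The reason this shortcut is available is worth noting: your remark that ``a naive application of H\"older loses a derivative'' is no longer true at this regularity. The worst terms in \eqref{CEQ1} are of the schematic form $d\bu\cdot d\bw$ and $\bW\cdot d\bu$, and with $\bw\in H^3_x$, $\bu\in H^{s+1}_x$ one has $\|d\bu\cdot d\bw\|_{\dot H^2_x}\lesssim\|d\bu\|_{H^2_x}\|\bw\|_{H^3_x}$ by Lemma \ref{ps} ($H^2$ being an algebra), so the top-order norm appears only linearly against integrable coefficients and Gronwall closes the loop; the Besov/Strichartz norms and the structural cancellation were needed in Theorems \ref{VE}--\ref{VEt} only because $\bW$ there lived in $H^{s_0-1}$ with $s_0-1<\tfrac32$. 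So your approach would go through, but it imports a cancellation mechanism that is not needed here, and your closing sentence (``no new cancellation is required beyond what is already present'') understates the point --- no cancellation is required at all. A final minor caveat: the specific constants $18$ and $3$ in \eqref{hes0} come from the paper's bookkeeping of the direct $\bW$-estimate (a factor $\exp\{6\int(1+\mathrm{e}^{2\int}+\mathrm{e}^{3\int})\}$ collapsed into $\exp\{18\int\mathrm{e}^{3\int}\}$); your route would yield a bound of the same doubly-exponential shape but would not automatically reproduce these constants, so you would either have to track them or state the corollary with unspecified constants.
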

\begin{proof}
	By using \eqref{VHe} and \eqref{u00}, we have
	\begin{equation}\label{hes1}
		\| (h,u^0-1,\mathring{\bu})\|^2_{H_x^{\sstar+1}}(t)\leq C\| (h_0,\mathring{\bu}_0)\|^2_{H_x^{\sstar+1}} \exp\{ \int^t_0 \|(dh, d\bu)\|_{L^\infty_x} d\tau \} .
	\end{equation}
	By \eqref{We3}, we get
	\begin{equation}\label{hes2}
		\| \bw \|^2_{L_x^2} \leq \| \bw_0 \|^2_{L_x^2} + C\int^t_0 \| d \bu\|_{L^\infty_x}\|\bw\|^2_{L^2_x}d\tau.
	\end{equation}
For $\| \bw \|_{\dot{H}^{3}}$, we notice
\begin{equation}\label{hes3}
	\| \bw \|_{\dot{H}_x^{3}} = \| \Delta \bw \|_{\dot{H}_x^{1}}.
\end{equation}
Recall $\mathring{\bw}=(w^1,w^2,w^3)$. Let us first consider $	\|  \mathring{\bw} \|_{\dot{H}_x^{s_0}} $. Let $\Delta_{H}$ be defined in \eqref{dh}. Using \eqref{MFd}, \eqref{w0d}, and Lemma \ref{ps}, it follows
\begin{equation}\label{hes4}
	\begin{split}
		\| \Delta_H \mathring{\bw} \|_{\dot{H}_x^{1}}  \lesssim	 & \|\bu \cdot \bW\|_{\dot{H}_x^{2}} + \| \bw \cdot (d\bu,dh)\|_{\dot{H}_x^{1}}+  \| d\bu \cdot \nabla \mathring{\bw} \|_{\dot{H}_x^{1}}
		\\
		\lesssim	& \|\bW\|_{\dot{H}_x^{2}}
		+ \| \mathring{\bw} \cdot (d\bu,dh)\|_{\dot{H}_x^{2}}+  \| d\bu \cdot \nabla \mathring{\bw} \|_{\dot{H}_x^{1}}
		\\
		\lesssim	 & \|\bW\|_{\dot{H}_x^{2}}
		+ \| \mathring{\bw} \|_{{H}_x^{2}}\| (d\bu,dh) \|_{{H}_x^{2}}  .
	\end{split}
\end{equation}
By \eqref{dh}, we can see
\begin{equation}\label{hes5}
	\| \Delta_H \mathring{\bw} \|_{\dot{H}_x^{1}}  \lesssim	\| \Delta \mathring{\bw} \|_{\dot{H}_x^{1}}  \lesssim 	\| \Delta_H \mathring{\bw} \|_{\dot{H}_x^{1}} .
\end{equation}
Combining \eqref{hes4} and \eqref{hes5}, we can obtain
\begin{equation}\label{hes6}
	\begin{split}
		\| \Delta \mathring{\bw} \|_{\dot{H}_x^{1}}  \lesssim	 	 & \|\bW\|_{\dot{H}_x^{2}}
		+ \| \mathring{\bw} \|_{{H}_x^{2}}\| (d\bu,dh) \|_{{H}_x^{2}}  .
	\end{split}
\end{equation}
Seeing from \eqref{w0d} and \eqref{hes6}, we can obtain
\begin{equation}\label{hes7}
	\begin{split}
		\| \Delta {w^0} \|_{\dot{H}_x^{1}}  \lesssim	 & \| \Delta \mathring{\bw} \|_{\dot{H}_x^{1}}+ \|\Delta \bu \cdot \mathring{\bw} \|_{\dot{H}_x^{1}} + \| \nabla \mathring{\bw} \cdot \nabla \bu \|_{\dot{H}_x^{1}}
		\\
		\lesssim	 	 & \|\bW\|_{\dot{H}_x^{2}}
		+ \| \mathring{\bw} \|_{{H}_x^{2}}\| (d\bu,dh) \|_{{H}_x^{2}}  .
	\end{split}
\end{equation}
Adding \eqref{hes6} and \eqref{hes7}, it follows
\begin{equation}\label{hes8}
	\begin{split}
		\| \bw \|_{\dot{H}_x^{3}}  \lesssim	   \|\bW\|_{\dot{H}_x^{2}}
		+ \| \mathring{\bw} \|_{{H}_x^{2}}\| (d\bu,dh) \|_{{H}_x^{2}}  .
	\end{split}
\end{equation}
Next, let us bound the term $\|\bW\|_{\dot{H}_x^{2}}$. Operating $\Lambda_x^2$ on \eqref{CEQ1} and multiplying $(u^0)^{-1} \Lambda_x^2 \bW$ on \eqref{CEQ1}, we therefore get the energy estimate
\begin{equation}\label{hes9}
	\begin{split}
		\| \bW \|_{\dot{H}_x^{2}}
		\lesssim	 & \| \bW_0 \|_{\dot{H}_x^{3}}
		+ \int^t_0 \|d\bu\|_{L^\infty_x} \|d\bw\|_{\dot{H}_x^{2}} \|\bW\|_{\dot{H}_x^{2}}d\tau
		\\
		& + \int^t_0 \|d\bu\|_{L^\infty_x} \|\bW\|^2_{\dot{H}_x^{2}}d\tau+  \int^t_0 \|\bw\cdot d\bu \cdot dh\|_{L^2_x} \|\bW\|_{\dot{H}_x^{2}}d\tau.
	\end{split}
\end{equation}
Due to \eqref{hes2}, \eqref{hes8}, \eqref{hes9}, and \eqref{MFd}, it yields
\begin{equation}\label{hes10}
	\begin{split}
		\| \bw \|^2_{{H}_x^{3}}  \leq	   &C\| \bW_0 \|^2_{\dot{H}_x^{3}} + C\| \bw_0 \|^2_{L_x^{2}}
		+ C\int^t_0 \|d\bu\|_{L^\infty_x} \|d\bw\|_{\dot{H}_x^{2}} \|\bW\|_{\dot{H}_x^{2}}d\tau
	 \\
	 & + C\int^t_0 \|d\bu\|_{L^\infty_x} \|\bW\|^2_{\dot{H}_x^{2}}d\tau
	 + C\int^t_0 \|d\bu\|_{L^\infty_x} \|\bw\|^2_{L_x^{2}}d\tau
	 \\
	 & +  C\int^t_0 \|\bw\cdot d\bu \cdot dh\|_{L^2_x} \|\bW\|_{\dot{H}_x^{2}}d\tau
		+ C\| \mathring{\bw} \|^2_{{H}_x^{2}}\| (d\bu,dh) \|^2_{{H}_x^{2}}
		\\
		  \leq	   &C\| \bw_0 \|^2_{{H}_x^{3}} + C\| \bw_0 \|^2_{{H}_x^{3}} \| h_0 \|^2_{{H}_x^{3}} + C\| {\bw} \|^2_{{H}_x^{2}}\| (d\bu,dh) \|^2_{{H}_x^{2}}
		 \\
		 & + C\int^t_0 \|d\bu\|_{L^\infty_x} \|\bw\|^2_{{H}_x^{3}} (1+ \|dh,d\bu\|^2_{{H}_x^{2}}+\|dh,d\bu\|^3_{{H}_x^{2}}) d\tau .
	\end{split}
\end{equation}
Because of Young's inequality, we have
\begin{equation}\label{hes11}
	\begin{split}
		C\| {\bw} \|^2_{{H}_x^{2}}\| (d\bu,dh) \|^2_{{H}_x^{2}} \leq & \frac{1}{2}\| {\bw} \|^2_{{H}_x^{3}} + C\| {\bw} \|^2_{{H}_x^{1}}\| (d\bu,dh) \|^4_{{H}_x^{2}}
		\\
		\leq & \frac{1}{2} \| {\bw} \|^2_{{H}_x^{3}} + C\| (d\bu,dh) \|^6_{{H}_x^{2}}.
	\end{split}
\end{equation}
Inserting \eqref{hes11} to \eqref{hes10}, we obtain
\begin{equation}\label{hes12}
	\begin{split}
		\| \bw \|^2_{{H}_x^{3}}  \lesssim	  	   & \| \bw_0 \|^2_{{H}_x^{3}} + \| \bw_0 \|^2_{{H}_x^{3}} \| h_0 \|^2_{{H}_x^{3}}
		+ \| (d\bu,dh) \|^6_{{H}_x^{2}}
		\\
		& + \int^t_0 \|d\bu\|_{L^\infty_x} \|\bw\|^2_{{H}_x^{3}} (1+ \|dh,d\bu\|^2_{{H}_x^{2}}+\|dh,d\bu\|^3_{{H}_x^{2}}) d\tau
		\\
		 \lesssim	  	   & \| \bw_0 \|^2_{{H}_x^{3}} + \| \bw_0 \|^2_{{H}_x^{3}} \| h_0 \|^2_{{H}_x^{3}}
		+ \|(\mathring{\bu}_0, h_0)\|^6_{H^3} \exp ( 6\|dh,d\bu\|_{L^\infty_x} )
		\\
		& + \int^t_0 \|d\bu\|_{L^\infty_x} \|\bw\|^2_{{H}_x^{3}} (1+ \|h,\mathring{\bu}\|^2_{{H}_x^{2}}+\|h,\mathring{\bu}\|^3_{{H}_x^{3}}) d\tau  .
	\end{split}
\end{equation}
Above, we also use Lemma \ref{QH}, \eqref{muu}, and \eqref{hes1}.

Finally, for \eqref{hes12}, by Gronwall's inequality, we have
\begin{equation}\label{hes13}
	\begin{split}
		\| \bw \|^2_{{H}_x^{3}}  \lesssim	  	   & (\| \bw_0 \|^2_{{H}_x^{3}} + \|(\mathring{\bu}_0, h_0)\|^2_{H^3}+\|(\mathring{\bu}_0, h_0)\|^3_{H^3}+\| \bw_0 \|^2_{{H}_x^{3}} \| h_0 \|^2_{{H}_x^{3}}+\|(\mathring{\bu}_0, h_0)\|^6_{H^3} )
		\\
		& \cdot \exp\{ ( 6\int^t_0 \|dh,d\bu\|_{L^\infty_x} d\tau ) (1+
		\mathrm{e}^{ 2\int^t_0 \|dh,d\bu\|_{L^\infty_x} d\tau }+  \mathrm{e}^{ 3\int^t_0 \|dh,d\bu\|_{L^\infty_x} d\tau } \}
		\\
		\lesssim
		& (\| \bw_0 \|^2_{{H}_x^{3}} + \|(\mathring{\bu}_0, h_0)\|^2_{H^3}+\|(\mathring{\bu}_0, h_0)\|^3_{H^3}+\| \bw_0 \|^2_{{H}_x^{3}} \| h_0 \|^2_{{H}_x^{3}}+\|(\mathring{\bu}_0, h_0)\|^6_{H^3} )
		\\
		& \cdot \exp \{ 18 \int^t_0 \|(d h, d \bu)\|_{L^\infty_x} d\tau \cdot \exp (3\int^t_0  \|(d h, d \bu) \|_{L^\infty_x}  d\tau )\} .
	\end{split}
\end{equation}
Adding \eqref{hes1} and \eqref{hes13}, we have proved \eqref{hes0}.
\end{proof}
Based on the above energy theorems, we can obtain the uniqueness of solutions.
\subsection{Uniqueness of the solution}
\begin{corollary}\label{cor}
	Assume $2<s_0<s\leq\frac52$ and \eqref{HEw}. Suppose $(h_1,\bu_1)$ and $(h_2,\bu_2)$ to be two solutions of \eqref{REE} with the same initial data $(h_0,\mathring{\bu}_0) \in H^{s} \times H^s$. We assume the initial modified vorticity $\bw_0=-\epsilon^{\alpha \beta \gamma \delta}\mathrm{e}^{h_0}u_{0\beta}\partial_{\gamma}u_{0\delta} \in H^{s_0}$. Then there exists a constant $T>0$ such that
	$(h_1, \mathring{\bu}_1, h_2, \mathring{\bu}_2) \in C([0,T],H_x^s) \cap C^1([0,T],H_x^{s-1})$, $\bw \in C([0,T],H_x^{s_0}) \cap C^1([0,T],H_x^{s_0-1})$ and $(dh_1, d\bu_1, dh_2, d\bu_2) \in {L^2_{[0,T]} L^\infty_x}$. Furthermore, we have
	\begin{equation*}
		\bu_1=\bu_2, \quad h_1=h_2.
	\end{equation*}
\end{corollary}
\begin{proof}
For $0<t\leq T$, by Theorem \ref{VE}, and using Strichartz estimates $(dh_1, d\bu_1, dh_2, d\bu_2) \in {L^2_{[0,t]} L^\infty_x}$, we can bound
	\begin{equation*}
		\|\bu_1-\bu_2, h_1-h_2\|_{L^2_x}(t) \lesssim \|\bu_1-\bu_2, h_1-h_2\|_{L^2_x}(0)=0.
	\end{equation*}
	So the solution is unique.
\end{proof}

Similarly, we can obtain the following corollary.
\begin{corollary}\label{cor2}
	Assume $2<s\leq\frac52$ and \eqref{HEw} hold. Suppose $(h_1,\bu_1)$ and $(h_2,\bu_2)$ to be two solutions of \eqref{REE} with the same initial data $(h_0,\mathring{\bu}_0) \in H^{s} \times H^s $. We also assume the initial modified vorticity $\bw_0=-\epsilon^{\alpha \beta \gamma \delta}\mathrm{e}^{h_0}u_{0\beta}\partial_{\gamma}u_{0\delta} \in H^{2}$. Then there exists a constant $T^*>0$ such that
	$(\mathring{\bu}_1,h_1, \mathring{\bu}_2, h_2) \in C([0,T^*],H_x^s) \cap C^1([0,T^*],H_x^{s-1})$, $\bw \in C([0,T^*],H_x^{2}) \cap C^1([0,T^*],H_x^{1})$ and $(dh_1, d\bu_1, dh_2, d\bu_2) \in {L^2_{[0,T^*]} L^\infty_x}$. Furthermore, we have
	\begin{equation*}
		\bu_1=\bu_2, \quad h_1=h_2.
	\end{equation*}
\end{corollary}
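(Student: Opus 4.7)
The proof of Corollary \ref{cor2} will follow the same template as Corollary \ref{cor}, only replacing Theorem \ref{VE} with Theorem \ref{VEt} and using the Strichartz bound \eqref{A05} from Theorem \ref{dingli2} instead of \eqref{SSr} to guarantee $(dh_i,d\bu_i)\in L^2_{[0,T^*]}L^\infty_x$ for $i=1,2$. The first step is to invoke the existence portion of Theorem \ref{dingli2}: starting from the common initial data $(h_0,\mathring{\bu}_0,\bw_0)\in H^s\times H^s\times H^2$, we get a common lifespan $T^*>0$ on which both $(h_i,\bu_i)$ satisfy the regularity $(h_i,\mathring{\bu}_i,u_i^0-1)\in C([0,T^*],H_x^s)\cap C^1([0,T^*],H_x^{s-1})$, $\bw_i\in C([0,T^*],H_x^{2})\cap C^1([0,T^*],H_x^{1})$, together with the Strichartz bounds $\|(d\bu_i,dh_i)\|_{L^2_{[0,T^*]}L^\infty_x}\le M_2$.

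Next, I would set up an $L^2$ energy estimate for the difference. Let $\delta\bU=\bU_1-\bU_2$, where $\bU_i=(p(h_i),u_i^1,u_i^2,u_i^3)^{\mathrm{T}}$ is the symmetric-hyperbolic variable of Lemma \ref{QH}. Since both $\bU_1,\bU_2$ solve $A^\alpha(\bU)\partial_\alpha\bU=0$, subtracting yields
\begin{equation*}
  A^\alpha(\bU_1)\partial_\alpha \delta\bU = -\bigl(A^\alpha(\bU_1)-A^\alpha(\bU_2)\bigr)\partial_\alpha \bU_2,
\end{equation*}
which is a linear symmetric hyperbolic system for $\delta\bU$ with coefficients $A^\alpha(\bU_1)$ and a source that is, by the mean value theorem, a smooth matrix function of $(\bU_1,\bU_2)$ applied to $\delta\bU$ and multiplied by $d\bU_2$. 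Multiplying by $2\delta\bU^{\mathrm{T}}A^0(\bU_1)$, integrating over $\mathbb{R}^3$, and using the symmetry and positive-definiteness of $A^0(\bU_1)$ (uniform in $t$ on $[0,T^*]$ by the $L^\infty$-bound \eqref{dA04} and the sound-speed lower bound $c_s>c_0>0$), a standard computation yields
\begin{equation*}
  \frac{d}{dt}\|\delta\bU\|_{L^2_x}^2 \lesssim \bigl(\|d\bU_1\|_{L^\infty_x}+\|d\bU_2\|_{L^\infty_x}\bigr)\|\delta\bU\|_{L^2_x}^2,
\end{equation*}
with the implied constant universal by \eqref{dA04}.

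The final step is Gronwall: by the Strichartz estimate \eqref{A05}, the function $t\mapsto\|d\bU_1,d\bU_2\|_{L^\infty_x}$ belongs to $L^2_{[0,T^*]}\subset L^1_{[0,T^*]}$, hence
\begin{equation*}
  \|\delta\bU(t,\cdot)\|_{L^2_x}^2 \le \|\delta\bU(0,\cdot)\|_{L^2_x}^2 \exp\Bigl(C\int_0^t\|d\bU_1,d\bU_2\|_{L^\infty_x}\,d\tau\Bigr)=0,
\end{equation*}
since $\delta\bU|_{t=0}=0$. This gives $\bU_1\equiv\bU_2$ on $[0,T^*]$, which, combined with the normalization \eqref{muu} and the monotone relation $p=p(h)$ from \eqref{und}, yields $h_1=h_2$ and $\mathring{\bu}_1=\mathring{\bu}_2$, hence $\bu_1=\bu_2$. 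The only subtlety to watch is that $s>2$ gives $d\bU_i\in L^\infty_t H^{s-1}_x\hookrightarrow L^\infty_{t,x}$ only through the Strichartz estimate (not through Sobolev embedding, since $s-1$ need not exceed $\tfrac32$ when $s$ is close to $2$); this is precisely why the time-integrability from \eqref{A05} is indispensable, and it is the single nontrivial ingredient in the argument.
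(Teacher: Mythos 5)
Your proof is correct and follows essentially the same route as the paper, which disposes of Corollary \ref{cor2} in one line by analogy with Corollary \ref{cor}: an $L^2$ energy estimate for the difference of the two solutions (via the symmetric hyperbolic formulation of Lemma \ref{QH}), closed by Gronwall using the $L^2_{[0,T^*]}L^\infty_x$ Strichartz bound on $(dh_i,d\bu_i)$ to make the coefficient integrable in time. Your write-up simply supplies the standard details that the paper leaves implicit, including the correct observation that the time-integrability of $\|d\bU_i\|_{L^\infty_x}$ must come from the Strichartz estimate rather than Sobolev embedding when $s$ is close to $2$.
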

\section{Proof of Theorem \ref{dingli}}\label{Sec4}
In this section, firstly, we reduce Theorem \ref{dingli} to the case of smooth initial
data, i.e. Proposition \ref{p3}. Secondly, taking advantage of scaling and the finite speed of propagation to further simplify the problem to the case of smooth, small, supported initial
data, i.e. Proposition \ref{p1}. Finally, we reduce the result to a bootstrap argument, i.e. Proposition \ref{p4}. 
\subsection{Reduction to smooth initial data}\label{sec4.1}
We state a proposition as follows.
\begin{proposition}\label{p3}
Let $2<s_0<s\leq\frac52$. For each $M_0>0$, there exists $T, M>0$ (depending on $C_0,c_0,s,s_0$ and $M_0$) such that, for each smooth initial data\footnote{Here $\bw_0=\mathrm{vort}(\mathrm{e}^{h_0}\bu_0)=-\epsilon^{\alpha \beta \gamma \delta}\mathrm{e}^{h_0} u_{0\beta} \partial_\gamma u_{0\delta} $, ${\bu}_0=(u^0_0,\mathring{\bu}_0)^{\mathrm{T}}$.} $(h_0, {\bu}_0, \bw_0)$, which satisfies
\begin{equation}\label{ig1}
	\begin{split}
	&\|h_0\|_{H^s}+\|\bu_0\|_{H^s} + \| \bw_0\|_{H^{s_0}}  \leq M_0,
	\end{split}
	\end{equation}
there exists a smooth solution $(h,\bu,\bw)$ to \eqref{WTe} on $[0,T] \times \mathbb{R}^3$ satisfying
\begin{equation}\label{e9}
	\begin{split}
		 & \|h, u^0-1,\mathring{\bu}\|_{L^\infty_{[0,T]}H_x^s}+\|\bw\|_{L^\infty_{[0,T]}H_x^{s_0}} \leq M,
		 \\
		 & u^0 \geq 1, \quad \|u^0-1,\mathring{\bu}, h \|_{L^\infty_{[0,T]\times \mathbb{R}^3}} \leq 1+C_0.
	\end{split}
\end{equation}
Furthermore, the solution satisfies estimates

$\mathrm{(1)}$ dispersive estimate for $\bu$, $h$ and $\bu_+$
	\begin{equation}\label{p303}
	\|d \bu, d h\|_{L^2_{[0,T]} C^\delta_x}+\|\nabla \bu_+, d h, d \bu\|_{L^2_{[0,T]} \dot{B}^{s_0-2}_{\infty,2}} \leq M,
	\end{equation}

$\mathrm{(2)}$ Let $f$ satisfy the equation \eqref{linear}. For each $1 \leq r \leq s+1$, the Cauchy problem \eqref{linear} is well-posed in $H_x^r \times H_x^{r-1}$. Furthermore, the following energy estimate
	\begin{equation}\label{p304}
	\|f\|_{L^\infty_{[0,T]} H^r_x} + \|\partial_t f\|_{L^\infty_{[0,T]} H^{r-1}_x}  \leq C_{M_0} ( \| f_0\|_{H^r}+ \| f_1\|_{H^{r-1}}),
	\end{equation}
and Strichartz estimates
		\begin{equation}\label{305}
		\|\left< \nabla \right>^k f\|_{L^2_{[0,T]} L^\infty_x} \leq  C_{M_0}(\| f_0\|_{H^r}+ \| f_1\|_{H^{r-1}}),\ \ \ k<r-1,
	\end{equation}
hold. The same estimates hold with $\left< \nabla \right>^k$ replaced by $\left< \nabla \right>^{k-1}d$. Above, $C_{M_0}$ is also a constant depending on $C_0, c_0, s, s_0, M_0$.
\end{proposition}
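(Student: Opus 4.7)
The plan is to run a bootstrap argument that couples the energy estimate of Theorem \ref{VE} with Strichartz estimates for the linear wave equation $\square_g f=0$ having acoustic metric $g$. Since the initial data are smooth, classical symmetric hyperbolic theory applied to the formulation \eqref{QHl} produces a smooth solution $(h,\bu)$ on some maximal interval $[0,T_{\max})$; the modified vorticity $\bw$ is then smooth as well and satisfies \eqref{CEQ}--\eqref{CEQ0}. Our task is therefore to propagate the bounds \eqref{e9}--\eqref{p303} on an interval $[0,T]$ whose length depends only on $M_0$, and to establish the linear estimates \eqref{p304}--\eqref{305} on that same interval.

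First I would set up the bootstrap: on some short interval $[0,T]$ (to be shrunk as needed) assume that
\begin{equation*}
\|d\bu,dh\|_{L^2_{[0,T]}C^{\delta}_x}+\|d\bu,dh,\nabla\bu_+\|_{L^2_{[0,T]}\dot{B}^{s_0-2}_{\infty,2}}\leq 2M,
\end{equation*}
with $M$ to be chosen. Inserting this into the energy functional $M(t)$ of \eqref{WL} and applying Theorem \ref{VE} gives $E_s(t)\leq 2E_0$ provided $T$ is small, which via \eqref{u00}, Lemma \ref{QH}, and the elliptic estimate \eqref{tue2} for $\mathbf{T}\bu_{-}$ yields uniform $H^s\times H^{s_0}$ control of $(h,u^0-1,\mathring{\bu},\bw)$ as well as the pointwise bounds on $u^0$, $h$, $\mathring{\bu}$. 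These bounds in turn guarantee the acoustical metric $g$ in \eqref{AMd} is uniformly Lorentzian with universal constants.

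Next I would close the bootstrap through Strichartz. For the linear equation \eqref{linear}, rescaling and a Littlewood--Paley/translation reduction brings one into the small-data, compactly supported smooth regime covered by Proposition \ref{p1}, producing \eqref{p304}--\eqref{305}; energy bounds for $f$ follow from standard $\square_g$ energy identities together with the commutator estimate Lemma \ref{yx}. Applied to the quasilinear variables: for $h$ and $u^\alpha$ I use the wave equations in \eqref{WTe} together with the inhomogeneous version of \eqref{305} (Duhamel plus Strichartz); for $u^\alpha_+$ I rewrite \eqref{UM} in the Duhamel-compatible form \eqref{ZH} by setting $F=\mathbf{T}\bu_-$ (so that $\mathbf{g}^{0\alpha}\partial_\alpha F$ matches the schematic form in \eqref{sta}), apply Lemma \ref{LD}, and then control $\mathbf{T}\bu_-$ by \eqref{tue2} and $\bu_-$ by the elliptic inequality $\|\nabla\bu_-\|_{H_x^{s-1}}\lesssim\|\bW\|_{H_x^{s-2}}$. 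The Besov-valued piece $\|\nabla\bu_+\|_{L^2_{[0,T]}\dot{B}^{s_0-2}_{\infty,2}}$ comes from frequency-localized Strichartz via Lemma \ref{YR} and Lemma \ref{LPE}. Summing dyadic pieces and using the interpolation $H^{s_0}\hookrightarrow \dot{B}^{s_0-2}_{\infty,2}$-style estimates produces a bound of the form
\begin{equation*}
\|d\bu,dh,\nabla\bu_+\|_{L^2_{[0,T]}\dot{B}^{s_0-2}_{\infty,2}}+\|d\bu,dh\|_{L^2_{[0,T]}C^\delta_x}\leq C(M_0)\,T^{\alpha}\cdot M + C(M_0),
\end{equation*}
for some $\alpha>0$. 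Choosing $M$ large (depending only on $C(M_0)$) and $T$ correspondingly small closes the bootstrap, which also prevents blow-up and extends the smooth solution past $T$.

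The main obstacle I anticipate is the linear Strichartz step \eqref{305}. The acoustical metric $g$ is only at the regularity $d g\in L^2 L^\infty\cap L^\infty H^{s-1}$ determined by the bootstrap, which is precisely the borderline Smith--Tataru regime; obtaining \eqref{305} for all $k<r-1$ with $r$ up to $s+1$ requires a parametrix/wave-packet construction controlled by $(h,\bu)$ through its Ricci and conformal curvature, and it is here that one reduces via scaling, frequency localization and a finite-speed-of-propagation/cutoff argument to the small, smooth, compactly supported setting of Proposition \ref{p1}. A secondary difficulty is the non-trivial algebraic rearrangement needed to cast $\square_g u^\alpha_+$ in the form \eqref{ZH} amenable to Lemma \ref{LD}; this is where the choice of the elliptic operator $\mathbf{P}$ in \eqref{De0} (rather than $\Delta$) is essential, since only with this $\mathbf{P}$ does $\mathbf{T}\bu_-$ enjoy the elliptic gain recorded in \eqref{tue2}, which is what makes the Duhamel source term in \eqref{ZH} manageable by the available Strichartz norms.
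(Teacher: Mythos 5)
Your proposal is organized as a self-contained bootstrap on $[0,T]$, but this is not how Proposition \ref{p3} is proved in the paper, and as written the bootstrap cannot close. The only route in this paper to the linear Strichartz estimate \eqref{305} for the acoustic metric $g$ of a \emph{large-data} solution is Proposition \ref{p1}, whose hypotheses concern the nonlinear system with small, smooth, compactly supported fluid data; the characteristic-hypersurface machinery behind it (Propositions \ref{r1}, \ref{r2}, Corollary \ref{Rfenjie}) uses both smallness and the fact that $g$ arises from a solution of the wave-transport system, not merely the generic bounds $dg\in L^2L^\infty\cap L^\infty H^{s-1}$ supplied by your bootstrap hypothesis. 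Consequently, to invoke \eqref{304} you must \emph{first} rescale by $T$ with $\max\{M_0T^{s-3/2},M_0T^{s_0-1/2}\}\ll\epsilon_3$, localize the fluid data as $\chi(x-y)(\underline{\bu}_0(x)-\underline{\bu}_0(y))$ (the subtraction of the value at $y$ is what makes the inhomogeneous $H^s$ norm small, using $2<s<\tfrac52$; one must also re-impose \eqref{muu} via $\widetilde u^0_0=\sqrt{1+|\mathring{\widetilde\bu}_0|^2}$ and check the localized vorticity stays small in $H^{s_0}$, cf. \eqref{wde}--\eqref{sid4}), apply Proposition \ref{p1} on each cone $\mathrm{K}^y$, and identify the restrictions with your solution by finite speed of propagation and Corollary \ref{cor}. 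But once this identification is made, \emph{all} of \eqref{e9}, \eqref{p303}, \eqref{p304}, \eqref{305} follow directly by summing over the partition of unity $\sum_{y\in 3^{-1/2}\mathbb{Z}^3}\psi(x-y)$ and undoing the scaling (the powers $T^{-(1/2+\delta)}\cdot T^{s-3/2}$ etc.\ are exactly what make the constants depend only on $M_0$). There is then nothing left for the bootstrap to close: the continuity/bootstrap argument in this paper lives one level down, in Proposition \ref{p4}, which is used to prove Proposition \ref{p1}, not Proposition \ref{p3}.

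A secondary inaccuracy: in the proof of Proposition \ref{p3} the bound on $\nabla\bu_+$ in $\dot B^{s_0-2}_{\infty,2}$ is transferred from the localized solutions of Proposition \ref{p1} (and, where needed, obtained from $\bu_+=\bu-\bu_-$ plus the elliptic estimate for $\mathbf{P}$); the Duhamel rewriting of $\square_g\bu_+$ with source $\mathbf{g}^{0\alpha}\partial_\alpha\mathbf{Z}$, $\mathbf{Z}\sim\mathbf{T}\bu_-$, and Lemma \ref{LD} is carried out inside the Strichartz section (Proposition \ref{r6}) in the small-data setting, not at this stage. Similarly, the linear problem \eqref{linear} is handled by localizing the data $(f_0,f_1)$ of $f$ and gluing, not by a Littlewood--Paley reduction. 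Your instinct that the heart of the matter is the reduction to Proposition \ref{p1} is right, but it must be the backbone of the argument rather than a remark about the ``main obstacle,'' and the outer bootstrap should be deleted.
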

Next, we show that Theorem \ref{dingli} is a consequence of Proposition \ref{p3}.
\begin{proof}[Proof of Theorem \ref{dingli} by using Proposition \ref{p3}]
Let $(h_0,{\bu}_0,\bw_0)$ be stated in Theorem \ref{dingli}. So we have
\begin{equation*}
	\|\mathring{\bu}_0\|_{H^s} + \|h_0 \|_{H^s} + \|\bw_0\|_{H^{s_0}} \leq M_0.
\end{equation*}
Let $\{ (h_{0k}, \mathring{\bu}_{0k}) \}_{k\in \mathbb{Z}^+}$ be a sequence of smooth data converging to $(h_0, \mathring{\bu}_0)$ in $H^s \times H^s$. Referring \eqref{muu}, we set
\begin{equation*}
	u^0_{0k}=\sqrt{1+|\mathring{\bu}_{0k}|^2 }, \quad {\bu}_{0k}=(u^0_{0k}, \mathring{\bu}_{0k}).
\end{equation*}
Referring \eqref{VVd}, we also define $\bw_{0k}= \textrm{vort}(\mathrm{e}^{h_{0k}} \bu_{0k})$. Therefore, $\{ (h_{0k}, {\bu}_{0k}, \bw_{0k}) \}_{k\in \mathbb{Z}^+}$ is a smooth sequence which converges to $(h_0, {\bu}_0, \bw_0)$ in $H^s \times H^s \times H^{s_0}$. By Proposition \ref{p3}, for every $k \in \mathbb{Z}^+$, there exists a solution $(h_k, \bu_k, \bw_k)$ to System \eqref{WTe} with
\begin{equation*}
	(h_k, \bu_k, \bw_k)|_{t=0}=(h_{0k}, \bu_{0k}, \bw_{0k}).
\end{equation*}
Note that the solution of \eqref{WTe} also satisfies \eqref{REE} and \eqref{QHl}.

Set $\bU_k=(p_k, u_{1k}, u_{2k}, u_{3k})^{\mathrm{T}}, k \in \mathbb{Z}^+$, where $p_k=p(h_k)$. For $j \in \mathbb{Z}^+$, we then have
	\begin{equation*}
		\begin{split}
			&A^0( \bU_k )\partial_t \bU_k+ \sum^3_{i=1}A^i( \bU_k )\partial_{i}\bU_k = 0,
			\\
			&A^0( \bU_l ) \partial_t \bU_l+ \sum^3_{i=1}A^i(\bU_l)\partial_{i}\bU_l=0.
		\end{split}
	\end{equation*}
By standard energy estimates, we can carry out
	\begin{equation*}
		\frac{d}{dt}\|\bU_k - \bU_l \|_{H_x^{s-1}} \leq C_{k, l} \left(\| d\bU_k, d\bU_l\|_{L^\infty_x}\|\bU_k-\bU_l\|_{H_x^{s-1}}+ \|\bU_k-\bU_l\|_{L^\infty_x}\| \nabla \bU_l\|_{H_x^{s-1}} \right),
	\end{equation*}
	where $C_{k, l}$ depends on the $L^\infty_x$ norm of $\bU_k$and $\bU_l$. Due to the Strichartz estimates of $d\bu_k$ and $dh_k$ $( k \in \mathbb{Z}^+ )$ in Proposition \ref{p3}, we can derive that
	\begin{equation*}
		\begin{split}
			\|(\bU_k-\bU_l)(t,\cdot)\|_{H_x^{s-1}} &\lesssim \|(\bU_k-\bU_l)(0,\cdot)\|_{H_x^{s-1}}
			\lesssim\|(\bu_{0k}-\bu_{0l}, h_{0k}-h_{0l}) \|_{H^{s-1}}.
		\end{split}
	\end{equation*}
	So $\{(p_k,u_{1k},u_{2k},u_{3k})\}_{k \in \mathbb{Z}^+}$ is a Cauchy sequence in $C([0,T];H_x^{s-1})$. Denote $(p,u_1,u_2,u_3 )$ being the limit. Then $(p,u_1,u_2,u_3 ) \in C([0,T];H_x^{s-1})$ and
	\begin{equation}\label{uk1}
		\lim_{k\rightarrow \infty}(p_k,u_{1k},u_{2k},u_{3k})=(p,u_{1},u_{2},u_{3}) \ \mathrm{in} \ C([0,T];H_x^{s-1}).
	\end{equation}
Note $u^2_{0k}-(u^2_{1k}+u^2_{2k}+u^2_{3k})=1$. By \eqref{uk1}, we infer
		\begin{equation*}
		\lim_{k\rightarrow \infty} u_{0k} =u_{0} \ \mathrm{in} \ C([0,T];H_x^{s-1}),
	\end{equation*}
	and
	\begin{equation*}
		u^2_{0}-(u^2_{1}+u^2_{2}+u^2_{3})=1.
	\end{equation*}
By \eqref{mo3}, \eqref{mo5}, and \eqref{mo9}, a direct calculation tells us
\begin{equation}\label{uk1a}
	h = \int \frac{\vartheta \varrho^{\vartheta-1}}{ \varrho^{\vartheta}+\varrho } d\varrho.
\end{equation}
Due to \eqref{uk1} and Lemma \ref{jh0}, we get
	\begin{equation}\label{uk1b}
	\lim_{k\rightarrow \infty}\varrho_k=\varrho \ \mathrm{in} \ C([0,T];H_x^{s-1}).
\end{equation}
By \eqref{uk1a}, then $h_k$ is also a function of $\varrho_k$. Combining with \eqref{uk1b} and Lemma \ref{jh0}, we infer
	\begin{equation*}
		\lim_{k\rightarrow \infty}h_k=h \ \mathrm{in} \ C([0,T];H_x^{s-1}).
	\end{equation*}
Note $\bw_k=\mathrm{vort} ( \mathrm{e}^{h_k} \bu_k )$.
Due to product estimates and Lemma \ref{jh0}, we have
	\begin{equation}\label{cx}
		\begin{split}
			\|\bw_k-\bw_l\|_{H_x^{s_0-2}} &\lesssim (\| \bu_k-\bu_l\|_{H_x^{s_0-1}}+\|h_k-h_l\|_{H_x^{s_0-1}})
			\\
			& \lesssim (\| \bu_{0k}-\bu_{0l}\|_{H^{s}}+\|h_{0k}-h_{0l} \|_{H^{s}}).
		\end{split}
	\end{equation}
From \eqref{cx}, $\{\bw_k\}_{k \in \mathbb{Z}^+}$ is a Cauchy sequence in $C([0,T];H_x^{s_0-2})$. Let $\bw$ be this limit. We have
	\begin{equation*}
		\lim_{k\rightarrow \infty} \bw_k = \bw \ \mathrm{in} \ C([0,T];H_x^{s_0-2}).
	\end{equation*}
	Since $(h_k,\bu_k)$ is uniformly bounded in $C([0,T];H_x^{s})$ and $\bw_k$ is bounded in $C([0,T];H_x^{s_0})$ respectively. Thus, $(h,u_1,u_2,u_3 ) \in C([0,T];H_x^{s})$ and $\bw \in C([0,T];H_x^{s_0})$.

	By using Proposition \ref{p3} again, the sequence $\{(d\bu_k,dh_k)\}_{k \in \mathbb{Z}^+}$ is uniformly bounded in $L^2([0,T];C_x^\delta)$. Consequently, the sequence $\{(d\bu_k, dh_k)\}_{k \in \mathbb{Z}^+}$ converges to $(d\bu, dh)$ in $L^2([0,T];L_x^\infty)$. That is,
	\begin{equation}\label{ccr}
		\lim_{k \rightarrow \infty} (d\bu_k, dh_k)=(d\bu, dh).
	\end{equation}
	On the other hand, we also deduce that the sequence $\{(d \bu_k,dh_k)\}_{k \in \mathbb{Z}^+}$ is bounded in $L^2([0,T];\dot{B}^{s_0-2}_{\infty,2})$.
	Combining \eqref{ccr} and  \eqref{p303}, we get
	\begin{equation}\label{cc1}
	\| d \bu, dh\|_{ L^2_{[0,T]}L^\infty_x }+	\| d \bu, dh\|_{ L^2_{[0,T]}\dot{B}^{s_0-2}_{\infty,2} } \leq M.
	\end{equation}
	By the fundamental formula of calculus, when $T\leq \frac{1}{(1+M)^2}$, we can obtain
	\begin{equation}\label{ccs}
		\|u^0-1,\mathring{\bu}, h\|_{L^\infty([0,T]\times \mathbb{R}^3)} \leq C_0+T^{\frac12}\| d \bu, dh\|_{ L^2_{[0,T]}L^\infty_x } \leq 1+C_0.
	\end{equation}
	For
	\begin{equation*}
		\bu=\bu_{+}+\bu_{-},
	\end{equation*}
we therefore have
	\begin{equation}\label{cc2}
		\begin{split}
			\| \nabla \bu_{+}\|_{\dot{B}^{s_0-2}_{\infty,2}} \leq \|  \nabla \bu\|_{\dot{B}^{s_0-2}_{\infty,2}}+\| \nabla \bu_{-}\|_{\dot{B}^{s_0-2}_{\infty,2}}.
		\end{split}
	\end{equation}
Note that $\mathbf{P}$ is a space-time elliptic operator on $[0,T]\times \mathbb{R}^3$. By using the elliptic estimate and Sobolev inequality, we have
	\begin{equation*}
		\begin{split}
			\| \nabla \bu_{-}\|_{L^2_{[0,T]}\dot{B}^{s_0-2}_{\infty, 2}} & \lesssim \| \mathbf{P}^{-1}(\mathrm{e}^{h} \bW) \|_{L^2_{[0,T]} \dot{B}^{s_0-\frac{1}{2}}_{2,2}}
			\lesssim \|\mathrm{e}^{h} \bW\|_{L^2_{[0,T]}\ \dot{H}^{s_0-\frac{3}{2}}}
			 \lesssim (1+\|h_0\|_{H^{s}})\|\bw_0 \|_{H^{s_0}}.
		\end{split}
	\end{equation*}
This implies $\nabla \bu_{-} \in L^2([0,T];\dot{B}^{s_0-2}_{\infty,2})$. By \eqref{cc2}, then $\nabla \bu_{+} \in L^2([0,T];\dot{B}^{s_0-2}_{\infty,2})$. Therefore, we complete the proof of Theorem \ref{dingli}.
\end{proof}

\subsection{Reduction to small, smooth, and compactly supported data}\label{sec4.2}
Denote by $c$ the largest speed of
propagation corresponding to the acoustic metric $g$. We can further reduce the problem to this following
result:
\begin{proposition}\label{p1}
	Let $2<s_0<s\leq \frac52$. Assume \eqref{a1}-\eqref{a0} hold. Suppose the initial data $(h_0, \mathring{\bu}_0, \bw_0)$ be smooth, supported in $B(0,c+2)$ and satisfying
	\begin{equation}\label{300}
	\begin{split}
	&\|\mathring{\bu}_0\|_{H^s} + \| h_0 \|_{H^s} + \|\bw_0\|_{H^{s_0}} \leq \epsilon_3.
	\end{split}
	\end{equation}
	Then the Cauchy problem of \eqref{WTe} admits a smooth solution $(h,\bu,\bw)$ on $[-2,2] \times \mathbb{R}^3$. Moreover, it has the following properties:
	
	$\mathrm{(1)}$ energy estimate
	\begin{equation}\label{402}
	\begin{split}
	&\|\mathring{\bu}\|_{L^\infty_{[-1,1]} H_x^{s}}+\|u^0-1\|_{L^\infty_{[-1,1]} H_x^{s}}+\| h \|_{L^\infty_{[-1,1]} H_x^{s}} + \| \bw\|_{L^\infty_{[-1,1]} H_x^{s_0}} \leq \epsilon_2,
	\\
	& 1\leq u^0 \leq 1+C_0, \quad \|\bu,h\|_{L^\infty_{[-1,1]\times \mathbb{R}^3}} \leq 1+C_0.
	\end{split}
	\end{equation}

	$\mathrm{(2)}$ dispersive estimate for $\bu$ and $h$
	\begin{equation}\label{s403}
	\|d \bu, d h\|_{L^2_{[-1,1]} C^\delta_x}+\| d h, \nabla \bu_{+}, d \bu\|_{L^2_{[-1,1]} \dot{B}^{s_0-2}_{\infty,2}} \leq \epsilon_2,
	\end{equation}

where $\bu_{+}$ is defined as in \eqref{De}-\eqref{De0}.

	$\mathrm{(3)}$ dispersive estimate for the linear equation

Let $f$ satisfy
	\begin{equation}\label{linearA}
	\begin{cases}
		& \square_g f=0, \qquad (t,x) \in [-2,2]\times \mathbb{R}^3,
		\\
		&(f, \partial_t f)|_{t=t_0}=(f_0, f_1) \in H^r(\mathbb{R}^3) \times H^{r-1}(\mathbb{R}^3),
	\end{cases}
\end{equation}
where $g$ is defined in \eqref{AMd2}-\eqref{AMd}. For each $1 \leq r \leq s+1$, then the Cauchy problem \eqref{linearA} has a unique solution in $C([-2,2]; H_x^r) \cap C^1([-2,2]; H_x^{r-1})$. Furthermore, for $k<r-1$, we also have
	\begin{equation}\label{304}
	\|\left< \nabla\right>^k f\|_{L^2_{[-2,2]} L^\infty_x} \lesssim  \| f_0\|_{H^r}+ \| f_1\|_{H^{r-1}},
	\end{equation}
	and the same estimates hold with $\left< \nabla \right>^k$ replaced by $\left< \nabla \right>^{k-1}d$.
\end{proposition}
Due to Proposition \ref{p1}, we are ready to prove Proposition \ref{p3}.
\begin{proof}[Proof of Proposition \ref{p3} by using Proposition \ref{p1}] 
We divide the proof into three steps.

$\textbf{Step 1: Scaling}$. Recall
\begin{equation}\label{a4}
\begin{split}
&\|\mathring{\bu}_0\|_{H^s}+ \| h_0\|_{H^s} + \|\bw_0\|_{H^{s_0}}  \leq M_0.
\end{split}
\end{equation}
Taking the scaling
\begin{equation*}
\begin{split}
&\underline{{\bu}}(t,x)=\bu(Tt,Tx),\quad \underline{h}(t,x)=h(Tt,Tx),
\end{split}
\end{equation*}
by \eqref{a4}, we can compute out
\begin{equation}\label{sca}
\begin{split}
&\|\underline{\bu}_0\|_{\dot{H}^s}=T^{s-\frac{3}{2}} \|{\bu}_0\|_{\dot{H}^s} \leq M_0 T^{s-\frac{3}{2}}.
\end{split}
\end{equation}
Similarly, we obtain
\begin{equation}\label{scab}
	\begin{split}
	& \|\underline{h}_0\|_{\dot{H}^s} \leq M_0 T^{s-\frac{3}{2}},
	\end{split}
\end{equation}
and
\begin{equation}\label{scac}
	\begin{split}
		 \|\epsilon^{\alpha \beta \gamma \delta} \partial_\gamma \underline{u}_{0\delta} \|_{\dot{H}^{s_0}} & =  T^{s_0-\frac{1}{2}} \|\epsilon^{\alpha \beta \gamma \delta} \partial_\gamma {u}_{0\delta} \|_{\dot{H}^{s_0}}
		\\
		& = T^{s_0-\frac{1}{2}} \|\mathrm{e}^{-{h}_0}\bw_0 \|_{\dot{H}^{s_0}},
		\\
		& \leq M_0 T^{s_0-\frac{1}{2}}.
	\end{split}
\end{equation}
Set $\underline{\bw}_0=(\underline{w}^0_0,\underline{w}^1_0,\underline{w}^2_0,\underline{w}^3_0)^{\mathrm{T}}$ and $\underline{w}^\alpha_0=-\epsilon^{\alpha\beta\gamma\delta}\underline{{u}}_{0\beta} \partial_\gamma ( \mathrm{e}^{\underline{h}_0}\underline{{u}}_{0\delta})$. By \eqref{scac}, we have
\begin{equation}\label{scad}
	\begin{split}
		\|\underline{\bw}_0 \|_{\dot{H}^{s_0}} \leq M_0 T^{s_0-\frac{1}{2}}.
	\end{split}
\end{equation}
Choose sufficiently small $T$ such that
\begin{equation}\label{sca0}
\max\{ M_0 T^{s-\frac{3}{2}}, M_0 T^{s_0-\frac{1}{2}} \} \ll \epsilon_3.
\end{equation}
Therefore, by \eqref{sca}, \eqref{scab}, \eqref{scad}, and \eqref{sca0}, we get
\begin{equation}\label{sca1}
\begin{split}
& \|\underline{\bu}_0\|_{\dot{H}^s}\leq \epsilon_3, \qquad \|\underline{h}_0\|_{\dot{H}^s} \leq \epsilon_3,
\\
&
\|\epsilon^{\alpha \beta \gamma \delta} \partial_\gamma \underline{u}_{0\delta} \|_{\dot{H}^{s_0}}\leq \epsilon_3 , \quad  \|\underline{\bw}_0\|_{\dot{H}^{s_0}} \leq \epsilon_3.
\end{split}
\end{equation}
Using \eqref{HEw} and \eqref{muu}, we also have
\begin{equation}\label{sca2}
		\|\underline{\bu}_0,\underline{h}_0\|_{L^\infty}\leq C_0, \quad \underline{u}^0_0 \geq 1.
	\end{equation}
Next, to reduce the initial data with support set, we use the physical localization technique.

$\textbf{Step 2: Localization}$. Note that the propagation speed of \eqref{WTe} is finite. Let $c$ be the largest speed of \eqref{WTe}. Set $\chi(x)$ be a smooth function supported in $B(0,c+2)$, and which equals $1$ in $B(0,c+1)$. To be simple, we set
\begin{equation*}
	\underline{\bu}_0=(\underline{u}^0_0,\mathring{\underline{\bu}}_0).
\end{equation*}
For given $y \in \mathbb{R}^3$, we define the localized initial data for the velocity and density near $y$:
\begin{equation}\label{sid0}
\begin{split}
\mathring{\widetilde{\bu}}_0(x)=&\chi(x-y) ( \mathring{\underline{\bu}}_0(x) - \mathring{\underline{\bu}}_0(y)) ,
\\
\widetilde{ h }_0(x)=&\chi(x-y)( \underline{h}_0(x)- \underline{h}_0(y) ).
\end{split}
\end{equation}
Since the velocity $\widetilde{\bu}$ satisfies \eqref{muu}, so we set
\begin{equation}\label{sid00}
	{\widetilde{u}^0}_0=\sqrt{1+ |\mathring{\widetilde{\bu}}_0|^2}, \quad \widetilde{\bu}_0=(	{\widetilde{u}^0}_0, \mathring{\widetilde{\bu}}_0).
\end{equation}
By using \eqref{sca2}, \eqref{sid0}, and \eqref{sid00}, we have
\begin{equation}\label{sca3}
	\|\mathring{\widetilde{\bu}}_0,\widetilde{ h }_0\|_{L^\infty}\leq C_0, \quad 1\leq{\widetilde{u}^0}_0 \leq 1+C_0.
\end{equation}
Since $s,s_0\in (2, \frac{5}{2})$, we can verify that
\begin{equation}\label{245}
	\| \widetilde{\bu}_0\|_{{H}^s} \lesssim  \|\underline{\bu}_0\|_{\dot{H}^s}, \quad  \| \widetilde{h}_0 \|_{H^s} \lesssim \|\underline{h}_0 \|_{\dot{H}^s}.
\end{equation}
We also need to define the specific vorticity:
\begin{equation}\label{wde}
	\widetilde{\bw}_0=-\epsilon^{\alpha \beta \gamma \delta}\mathrm{e}^{\widetilde{ h }_0+\widetilde{h}_0(y) }(\widetilde{u}_{0\beta}+ \underline{u}_{0\beta}(y)) \partial_{\gamma}\widetilde{u}_{0\delta}.
\end{equation}
For $\widetilde{\bu}_{0}$ only depending on spatial variables, so \eqref{wde} and \eqref{sca3} give us
\begin{equation}\label{sid1}
	\| \widetilde{\bw}_0 \|_{L^2} \lesssim \|\nabla \widetilde{\bu}_{0}\|_{L^2} .
\end{equation}
Substituting \eqref{sid0} to \eqref{wde}, so we have
\begin{equation}\label{sid3}
	\begin{split}
		\widetilde{\bw}_0 = &
		-\epsilon^{\alpha \beta \gamma \delta}\mathrm{e}^{\widetilde{ h }_0} \widetilde{u}_{0\beta}  \partial_{\gamma} \chi \cdot ( \underline{u}_{0\delta} - \underline{u}_{0\delta}(y))
	 - \chi^2  \mathrm{e}^{\widetilde{ h }_0} (\widetilde{u}_{0\beta}+ \underline{u}_{0\beta}(y)) \cdot \epsilon^{\alpha \beta \gamma \delta}
	\partial_{\gamma} \underline{u}_{0\delta}.
	\end{split}
\end{equation}
Due to \eqref{sid3}, \eqref{sca3}, and H\"older's inequality, we infer
\begin{equation}\label{sid4}
	\begin{split}
		\| \widetilde{\bw}_0 \|_{\dot{H}^{s_0}}\lesssim &
		\| \underline{u}_{0\delta}\|_{\dot{H}^{s_0}}+ \|  \underline{h}_{0}\|_{\dot{H}^{s_0}}
		+ \| \epsilon^{\alpha \beta \gamma \delta}
		\partial_{\gamma} \underline{u}_{0\delta} \|_{\dot{H}^{s_0}} .
	\end{split}
\end{equation}
Combining \eqref{sca1}, \eqref{245}, \eqref{sid1}, and \eqref{sid4}, we therefore have
\begin{equation}\label{sid}
\begin{split}
&	\|\widetilde{ \bu }_0 \|_{{H}^{s}}+\|\widetilde{h}_0 \|_{{H}^{s}}+ \|\widetilde{ \bw }_0 \|_{{H}^{s_0}} \lesssim \epsilon_3.
\end{split}
\end{equation}
$\textbf{Step 3: Using Proposition \ref{p1}}$.
Under the initial conditions \eqref{sca3} and \eqref{sid}, by Proposition \ref{p1}, there is a smooth solution $(\widetilde{h}, \widetilde{\bu}, \widetilde{\bw})$ on $[-2,2]\times \mathbb{R}^3$ such that
\begin{equation}\label{p}
\begin{cases}
\square_{\widetilde{g}} \widetilde{h}=\widetilde{D},
\\
\square_{\widetilde{g}} \widetilde{u}^\alpha=-\widetilde{c}^2_s \widetilde{\Omega} \mathrm{e}^{- (\widetilde{h} + \underline{h}_0(y))} \widetilde{W}^\alpha+\widetilde{Q}^\alpha,
\\
(\widetilde{u}^\kappa+\underline{u}_0^\kappa(y)) \partial_\kappa \widetilde{w}^\alpha= -(\widetilde{u}^\alpha+\underline{u}_0^\alpha(y)) \widetilde{w}^\kappa \partial_\kappa \widetilde{h}+ \widetilde{w}^\kappa \partial_\kappa \widetilde{u}^\alpha- \widetilde{w}^\alpha \partial_\kappa \widetilde{u}^\kappa,
\\
\partial_\alpha \widetilde{w}^\alpha =-\widetilde{w}^\kappa \partial_\kappa \widetilde{h},
\\
(\widetilde{h},\widetilde{u},\widetilde{w})|_{t=0}=(\widetilde{h}_0,\widetilde{u}_0,\widetilde{w}_0),
\end{cases}
\end{equation}
where the quantities $\widetilde{c}^2_s$, $\widetilde{{g}}$, $\widetilde{\Omega}$, and $\widetilde{\bW}=(\widetilde{W}^0,\widetilde{W}^1,\widetilde{W}^2,\widetilde{W}^3)^{\mathrm{T}}$ are defined by
\begin{equation}\label{DDE}
	\begin{split}
		\widetilde{c}^2_s&= \frac{dp}{d{h}}(\widetilde{h}+\underline{h}_0(y)),
		\\
		\widetilde{g}&=g(\widetilde{h}+\underline{h}_0(y),\widetilde{\bu}+\underline{\bu}_0(y) ),
		\\
\widetilde{\Omega}&=\Omega(\widetilde{h}+\underline{h}_0(y), \widetilde{\bu}+\underline{\bu}_0(y)),
	\end{split}
\end{equation}
and
\begin{equation}\label{DDE0}
	\begin{split}
		\widetilde{W}^\alpha& =-\epsilon^{\alpha \beta \gamma \delta} (\widetilde{u}_\beta+\underline{u}_{0\beta}(y)) \partial_{\gamma} \widetilde{w}_\delta+\widetilde{c}_s^{-2}\epsilon^{\alpha\beta\gamma\delta}(\widetilde{u}_\beta+\underline{u}_{0\beta}(y)) \widetilde{w}_{\delta}\partial_{\gamma}\widetilde{ h },
	\end{split}
\end{equation}
and
\begin{equation*}
	\begin{split}
		\widetilde{Q}^\alpha=& \widetilde{Q}_{1 \kappa }^{\alpha \beta \gamma } \partial_\beta \widetilde{h} \partial_\gamma \widetilde{u}^\kappa+\widetilde{Q}_{2\kappa  \delta}^{\alpha \beta \gamma } \partial_\beta \widetilde{u}^\kappa \partial_\gamma u^\delta +\widetilde{Q}_{3}^{\alpha \beta \gamma } \partial_\beta \widetilde{h} \partial_\gamma \widetilde{h},
		\\
		\widetilde{D}^\alpha=&\widetilde{D}_{1 \kappa }^{\beta \gamma } \partial_\beta \widetilde{h} \partial_\gamma \widetilde{u}^\kappa+\widetilde{D}_{2\kappa  \delta}^{ \beta \gamma } \partial_\beta \widetilde{u}^\kappa \partial_\gamma \widetilde{u}^\delta +\widetilde{D}_{3}^{ \beta \gamma } \partial_\beta \widetilde{h} \partial_\gamma \widetilde{h},
	\end{split}
\end{equation*}
and the functions $\widetilde{Q}_{1 \kappa }^{\alpha \beta \gamma }$, $\widetilde{Q}_{2\kappa  \delta}^{\alpha \beta \gamma }$, $\widetilde{Q}_{3}^{\alpha \beta \gamma }$, $\widetilde{D}_{1 \kappa }^{\beta \gamma }$, $\widetilde{D}_{2\kappa  \delta}^{ \beta \gamma }$, $\widetilde{D}_{3}^{ \beta \gamma }$ have the same formulations with ${Q}_{1 \kappa }^{\alpha \beta \gamma }$, ${Q}_{2\kappa  \delta}^{\alpha \beta \gamma }$, $\widetilde{Q}_{3}^{\alpha \beta \gamma }$, ${D}_{1 \kappa }^{\beta \gamma }$, ${D}_{2\kappa  \delta}^{ \beta \gamma }$, ${D}_{3}^{ \beta \gamma }$ by replacing $(\bu, {h})$ to $(\widetilde{\bu}+\underline{\bu}_0(y), \widetilde{h}+\underline{h}_0(y))$.
To see a better component of the velocity $\widetilde{\bu}$, we define
\begin{equation}\label{deu1}
\widetilde{\mathbf{P}}=\mathrm{I}-\left[ m^{\alpha \beta}+ 2 (\widetilde{u}^\alpha+ \underline{u}^\alpha_0(y) ) (\widetilde{u}^\beta+ \underline{u}^\beta_0(y) ) \right] \partial^2_{ \alpha \beta}.
\end{equation}
Then $\widetilde{\mathbf{P}}$ is a space-time elliptic operator (please see Lemma \ref{app1}). We next denote
\begin{equation}\label{deu2}
\widetilde{u}^\alpha_{-} =\mathbf{P}^{-1}(\mathrm{e}^{ \widetilde{h}+ \underline{h}_0(y) } \widetilde{ W }^\alpha), \quad  \widetilde{u}_{+}=\widetilde{u}-\widetilde{u}_{-}.
\end{equation}
and
\begin{equation}\label{deu3}
	\widetilde{\mathbf{T}}= \partial_t + \frac{\widetilde{u}^i+\underline{u}^i_0(y)}{\widetilde{u}^0+ \underline{u}^0_0(y) }\partial_i.
\end{equation}
By \eqref{UM}, so we infer
\begin{equation*}
\begin{split}
  \square_g  \widetilde{u}^\alpha_{+}&=\widetilde{\Omega} (\widetilde{c}^2_s+1) (\widetilde{u}^\gamma+\underline{u}^\gamma_0(y)) (\widetilde{u}^\beta+\underline{u}^\beta_0(y)) \partial^2_{\beta \gamma}\widetilde{u}^\alpha_{-}+\widetilde{\Omega} \widetilde{c_s}^2 \widetilde{u}_{-}^\alpha+ \widetilde{Q}^\alpha.
\end{split}
\end{equation*}
Using Proposition \ref{p1} again, the solution $(\widetilde{h}, \widetilde{{\bu}}, \widetilde{\bw})$ also satisfies
\begin{equation}\label{see0}
  \|\mathring{\widetilde{\bu}}\|_{L^\infty_{[-2,2]}H_x^s}+\|\widetilde{u}^0-1\|_{L^\infty_{[-2,2]}H_x^s}+ \|\widetilde{h}\|_{L^\infty_{[-2,2]}H_x^s}+ \| \widetilde{\bw} \|_{L^\infty_{[-2,2]}H_x^{s_0}} \leq \epsilon_2,
\end{equation}
and
\begin{equation}\label{see1}
  \|d\widetilde{\bu}, d\widetilde{h}\|_{L^2_{[-2,2]} C^\delta_x}+ \| d\widetilde{\bu}, \nabla \widetilde{\bu}_{+}, d\widetilde{h} \|_{L^2_{[-2,2]} \dot{B}^{s_0-2}_{\infty,2}} \leq \epsilon_2.
\end{equation}
Moreover, the linear wave equation endowed with the Lorentz metric $\widetilde{{g}}$
\begin{equation}\label{312}
\begin{cases}
&\square_{ \widetilde{{g}}} \tilde{f}=0,
\\
&\tilde{f}(t_0,\cdot)=\widetilde{f}_0, \ \partial_t\tilde{f}(t_0,)=\widetilde{f}_1,
\end{cases}
\end{equation}
also admits a solution $\widetilde{f} \in C([-2,2],H_x^r)\times C^1([-2,2],H_x^{r-1})$, and if $k<r-1$, the following estimate
	\begin{equation}\label{s31}
	\|\left< \nabla \right>^k \widetilde{f}\|_{L^2_{[-2,2]} L^\infty_x} \lesssim  \| \widetilde{f}_0\|_{H^r}+ \| \widetilde{f}_1\|_{H^{r-1}},
	\end{equation}
holds. On the other hand,  if we set 
\begin{equation}\label{vw}
	\begin{split}
		\bar{h}(t,x)  &=\widetilde{ h}+ \underline{h}_0(y),
		\\	
		\bar{{\bu}}(t,x)&=(\bar{u}^0, \mathring{\bar{\bu}}), \quad \bar{u}^0=\sqrt{1+|\mathring{\bar{\bu}}|^2},
		\\
		\mathring{\bar{\bu}}(t,x)  &= \mathring{\widetilde{\bu}}+\underline{\mathring{\bu}}_0(y),
		\\
		\bar{\bw}(t,x)  &=\mathrm{vort}(\mathrm{e}^{\bar{h}} \bar{\bu}),
	\end{split}
\end{equation}
then $(\bar{h},\bar{\bu},\bar{\bw})$ is also a solution of \eqref{p}, and its initial data coincides with $(h_0, \bu_0, \bw_0)$ in $B(y,c+1)$. Consider a restriction
\begin{equation}\label{RS}
   (\bar{h},\bar{\bu},\bar{\bw})|_{\mathrm{K}^y},
\end{equation}
where $y\in \mathbb{R}^3$ and $\mathrm{K}^y=\left\{ (t,x): ct+|x-y| \leq c+1, |t| <1 \right\}$. Then \eqref{RS} solves \eqref{p} on $\mathrm{K}^y$. By finite speed of propagation, a smooth solution $(\bar{h}, \bar{\bu}, \bar{\bw})$ solves \eqref{WTe} on $[-1,1] \times \mathbb{R}^3$. Without diffusion, we still use the notation $\bar{h}, \bar{\bu}$ and $\bar{\bw}$ being the restrictions on $\mathrm{K}^y$.

From \eqref{vw}, by time-space scaling $(t,x)$ to $(T^{-1}t,T^{-1}x)$, we also obtain
\begin{equation}\label{po0}
	\begin{split}
		&(h, \bu, \bw)=(\bar{h}, \bar{\bu}, \bar{\bw})(T^{-1}t,T^{-1}x),
		\\
		&( h, \bu, \bw)|_{t=0}=(\bar{h}, \bar{\bu}, \bar{\bw})(0,T^{-1}x)=(h_0, \bu_0, \bw_0).
	\end{split}
\end{equation}
Therefore, the function $(h, \bu, \bw)$ defined in \eqref{po0} is the solution of \eqref{WTe} according to the uniqueness of solutions, i.e. Corollary \ref{cor}. To obtain the energy estimates for $(h, \bu,\bw)$ on $[0,T]\times \mathbb{R}^3$, let us use the cartesian grid $3^{-\frac12} \mathbb{Z}^3$ in $\mathbb{R}^3$, and a corresponding smooth partition of unity
\begin{equation*}
	\textstyle{\sum}_{y \in 3^{-\frac12} \mathbb{Z}^3} \psi(x-y)=1,
\end{equation*}
such that the function $\psi$ is supported in the unit ball. Therefore, we have
\begin{equation}\label{vwa}
	\begin{split}
		& \mathring{\bar{\bu}}=\textstyle{\sum}_{y \in 3^{-\frac12} \mathbb{Z}^3} \psi(x-y)(\mathring{\widetilde{\bu}}+\underline{\mathring{\bu}}_0(y) ),
		\\
		& \bar{h}=\textstyle{\sum}_{y \in 3^{-\frac12} \mathbb{Z}^3} \psi(x-y)(\widetilde{h}+\underline{h}_0(y) ),
		\\
		& \bar{u}^0=\sqrt{1+|\mathring{\bar{\bu}}|^2}, \quad \bar{\bu}=(\bar{u}^0, \mathring{\bar{\bu}}) , \quad \bar{\bw} =\mathrm{vort}(\mathrm{e}^{\bar{h}} \bar{\bu}).
	\end{split}
\end{equation}
By \eqref{see1} and \eqref{vwa}, we can see
\begin{equation*}\label{po1}
	\begin{split}
		& \|d\bar{\bu}, d\bar{h}\|_{L^2_{[0,1]}C^{\delta}_x}+ \| d\bar{\bu}, \nabla \bar{\bu}_{+},d\bar{h} \|_{L^2_{[0,1]} \dot{B}^{s_0-2}_{\infty,2}}
		\\
		\leq & \sup_{y \in 3^{-\frac12} \mathbb{Z}^3} ( \|d\widetilde{\bu},d\widetilde{h}\|_{L^2_{[0,1]}{C^{\delta}_x}}+ \| d\widetilde{\bu}, \nabla \widetilde{\bu}_{+}, d\widetilde{h} \|_{L^2_{[0,1]} \dot{B}^{s_0-2}_{\infty,2}} )
		\\
		\leq & C(\|\mathring{\widetilde{\bu}}_0\|_{H_x^s}+ \|\widetilde{h}_0\|_{H_x^s}+ \| \widetilde{\bw}_0 \|_{H_x^{s_0}}).
	\end{split}
\end{equation*}
Due to \eqref{see0}, \eqref{sca2}, and \eqref{vwa}, we shall obtain
\begin{equation}\label{po03}
	\begin{split}
		& \|\bar{u}^0-1,\bar{\mathring{\bu}}, \bar{h}\|_{L^\infty_{[0,1]}L^\infty_x} \leq \|\widetilde{u}^0-1,\mathring{\widetilde{\bu}}, \widetilde{h} \|_{H_x^{s}} + \|\underline{u}_0^0-1,\mathring{\underline{\bu}}_0, \underline{h}_0\|_{L^\infty_x} \leq 1+C_0.
	\end{split}
\end{equation}
By changing of coordinates and using ${\delta} \in (0,s-2)$ and $2<s_0<s$, we get
\begin{equation}\label{po2}
	\begin{split}
		& \|d{\bu}, d{h}\|_{L^2_{[0,T]}C^{\delta}_x}+ \| d{\bu}, \nabla {\bu}_{+}, d{h} \|_{L^2_{[0,T]} \dot{B}^{s_0-2}_{\infty,2}}
		\\
		\leq & T^{-(\frac12+{\delta})}\|d\widetilde{\bu}, d\widetilde{h}\|_{L^2_{[0,1]}C^{\delta}_x}+ \| d\widetilde{\bu}, \nabla \widetilde{\bu}_{+}, d\widetilde{h} \|_{L^2_{[0,1]} \dot{B}^{s_0-2}_{\infty,2}}
		\\
		\leq
		&C (T^{-(\frac12+{\delta})}+T^{-(\frac12+s_0-2)}) (\|\mathring{\widetilde{\bu}}_0\|_{H_x^s}+ \|\widetilde{h}_0\|_{H_x^s}+ \| \widetilde{\bw}_0 \|_{H_x^{s_0}})
		\\
		\leq & C (T^{-(\frac12+{\delta})}+T^{-(\frac12+s_0-2)})(T^{s-\frac32}\|{\bu}_0\|_{\dot{H}_x^s}+ T^{s-\frac32}\|h_0\|_{\dot{H}_x^s}
		 + T^{s_0+1-\frac32}\| {\bw}_0 \|_{\dot{H}_x^{s_0}})
		\\
		\leq & C (\|\mathring{\bu}_0\|_{H_x^s}+ \|{h}_0\|_{H_x^s}+ \| {\bw}_0 \|_{H_x^{s_0}}).
	\end{split}
\end{equation}
By using \eqref{po03} and changing of coordinates from $(t,x) \rightarrow (T^{-1}t,T^{-1}x)$, we get
\begin{equation}\label{po05}
	\begin{split}
		& \|u^0-1, \mathring{\bu}, {h}\|_{L^\infty_{[0,T]\times \mathbb{R}^3}} = \|\bar{u}^0-1,\mathring{\bar{\bu}}, \bar{h}\|_{L^\infty_{[0,1]}L^{\infty}_x}  \leq 1+C_0,
	\end{split}
\end{equation}
and
\begin{equation}\label{po0a}
	\begin{split}
		& u^0=\sqrt{1+ |\mathring{\bu}|^2} \geq 1.
	\end{split}
\end{equation}
Using Theorem \ref{VE}, \eqref{po2}, \eqref{po05}, and \eqref{po0a}, we can obtain that $( h, \bu, \bw)$ satisfies \eqref{ig1}, \eqref{e9} and \eqref{p303}. It remains for us to prove \eqref{p304} and \eqref{305}.

For $1\leq r \leq s+1$, we consider the following homogeneous linear wave equation
\begin{equation}\label{po3}
	\begin{cases}
		\square_{{g}} f=0, \qquad [0,T]\times \mathbb{R}^3,
		\\
		(f,f_t)|_{t=0}=(f_0,f_1)\in H_x^r \times H^{r-1}_x.
	\end{cases}
\end{equation}
For \eqref{po3} is a scaling invariant system, so we can transfer \eqref{po3} to the localized linear equation
\begin{equation}\label{po4}
	\begin{cases}
		\square_{\widetilde{g}} {f}^y=0, \quad [0,1]\times \mathbb{R}^3,
		\\
		({f}^y,{f}^y_t)|_{t=0}=(f_0^y,f^y_1),
	\end{cases}
\end{equation}
where
\begin{equation*}\label{po5}
	\begin{split}
		& f_0^y=\chi(x-y)(\widetilde{f}_0-\widetilde{f}_0(y)), \quad f^y_1=\chi(x-y)\widetilde{f}_1,
		\\ & \widetilde{f}_0=f_0(Tx),\quad \widetilde{f}_1=f_1(Tx).
	\end{split}
\end{equation*}
Let
\begin{equation}\label{po6}
	\widetilde{f}=\textstyle{\sum}_{y\in 3^{-\frac12}\mathbb{Z}^3}\psi(x-y)({f}^y+\widetilde{f}_0(y)).
\end{equation}
Seeing \eqref{po4}, using Proposition \ref{p1} again, for $k<r-1$, we shall obtain that
\begin{equation}\label{po7}
	\begin{split}
		\|\left< \nabla \right>^{k-1} d {f}^y\|_{L^2_{[0,1]} L^\infty_x} \leq  & C(\| \chi(x-y)(\widetilde{f}_0-\widetilde{f}_0(y)) \|_{H_x^r}+ \| \chi(x-y)\widetilde{f}_1 \|_{H_x^{r-1}})
		\\
		\leq & C(\|\widetilde{f}_0\|_{\dot{H}_x^r}+ \| \widetilde{f}_1 \|_{\dot{H}_x^{r-1}}).
	\end{split}
\end{equation}
We again use the finite speed of propagation to conclude that $\widetilde{f}={f}^y+\widetilde{f}_0(y)$ in $\mathrm{K}^y$. Using \eqref{po6} and \eqref{po7}, so we get
\begin{equation}\label{po8}
	\begin{split}
		 \|\left< \nabla \right>^{k-1} d \widetilde{f}\|_{L^2_{[0,1]} L^\infty_x}
		= & \sup_{y\in 3^{-\frac12}\mathbb{Z}^3}\|\left< \nabla \right>^{k-1} d ({f}^y+\widetilde{f}_0(y))\|_{L^2_{[0,1]} L^\infty_x}
		\\
		= & \sup_{y\in 3^{-\frac12}\mathbb{Z}^3}\|\left< \nabla \right>^{k-1} d {f}^y\|_{L^2_{[0,1]} L^\infty_x}
		\\
		\leq & C(\|\widetilde{f}_0\|_{\dot{H}_x^r}+ \| \widetilde{f}_1 \|_{\dot{H}_x^{r-1}}).
	\end{split}
\end{equation}
By changing of coordinates from $(t,x)\rightarrow (T^{-1}t,T^{-1}x)$, we have
\begin{equation}\label{po9}
	\begin{split}
		\|\left< \nabla \right>^{k-1} d{f}\|_{L^2_{[0,T]} L^\infty_x} = & T^{k-\frac12}\|\left< \nabla \right>^{k-1} d\widetilde{f}\|_{L^2_{[0,1]} L^\infty_x}.
	\end{split}
\end{equation}
Taking $T\leq 1$, combing \eqref{po8} with \eqref{po9}, if $k<r-1$, then we have
\begin{equation}\label{po10}
	\begin{split}
		\|\left< \nabla \right>^{k-1} d{f}\|_{L^2_{[0,T]} L^\infty_x}
		\leq & CT^{r-1-k}(\|{f}_0\|_{\dot{H}_x^r}+ \| {f}_1 \|_{\dot{H}_x^{r-1}})
		\\
		\leq & C(\|{f}_0\|_{{H}_x^r}+ \| {f}_1 \|_{{H}_x^{r-1}}).
	\end{split}
\end{equation}
By energy estimates for \eqref{po3} and \eqref{po2}, we can carry out
\begin{equation}\label{po11}
	\begin{split}
		\|d {f}\|_{L^\infty_{[0,T]} H^{r-1}_x} \leq  & C(\| {f}_0\|_{H_x^r}+ \| {f}_1\|_{H_x^{r-1}}) \exp\{  \|d g \|_{L^1_{[0,T]L^\infty_x}}\}
		\\
		\leq  & C_{M_0}(\| {f}_0\|_{H_x^r}+ \| {f}_1\|_{H_x^{r-1}}) \exp\{  \|d \bu,dh \|_{L^1_{[0,T]L^\infty_x}}\}
		\\
		\leq & C_{M_0}(\| {f}_0\|_{H_x^r}+ \| {f}_1\|_{H_x^{r-1}}).
	\end{split}
\end{equation}
By using $f=\int^{t}_0 \partial_ t fd\tau -f_0$, then
\begin{equation}\label{po12}
	\begin{split}
		\|f\|_{L^\infty_{[0,T]} L^{2}_x}
		\leq & T \|d {f}\|_{L^\infty_{[0,T]} L^{2}_x}+ \| {f}_0\|_{L_x^2}
		\\
		\leq & C_{M_0}(\| {f}_0\|_{H_x^r}+ \| {f}_1\|_{H_x^{r-1}}).
	\end{split}
\end{equation}
Adding \eqref{po11} and \eqref{po12}, we therefore obtain
\begin{equation}\label{po13}
	\begin{split}
		\|{f}\|_{L^\infty_{[0,T]} H^{r}_x}+ \|\partial_t {f}\|_{L^\infty_{[0,T]} H^{r-1}_x} \leq  C_{M_0}(\| {f}_0\|_{H_x^r}+ \| {f}_1\|_{H_x^{r-1}}).
	\end{split}
\end{equation}
Because of \eqref{po10}-\eqref{po13}, \eqref{p304} and \eqref{305} hold. Thus, we give a complete proof for Proposition \ref{p3}.
\end{proof}
\subsection{Reduction to a bootstrap argument: Proposition \ref{p4}}\label{sec4.3}
We will establish Proposition \ref{p1} via a continuity argument. For technical reasons, it is convenient for us to replace the original acoustic metric $g$ by a truncated one. So we first introduce the process of truncation.

Set two vectors
\begin{equation}\label{0e}
	\mathbf{0}=(0,0,0,0)^{\mathrm{T}}, \quad \mathbf{e}=(1,0,0,0)^{\mathrm{T}}.
\end{equation}
Then $h=0, \bu=\mathbf{e}, \bw=\mathbf{0}$ is a equilibrium state for System \eqref{WTe}. So the data in \eqref{300} can be seen as a small perturbation around the equilibrium state. Taking $\bu=\mathbf{e}, h=0$ in $g$, we record it $g(0)$. The inverse matrix of the metric $g(0)$ is
\begin{equation*}
	g^{-1}(0)=
	\left(
	\begin{array}{cccc}
		-1 & 0 & 0 & 0\\
		0 & c^2_s(0) & 0& 0\\
		0 & 0 & c^2_s(0) & 0
		\\
		0 & 0 & 0 &   c^2_s(0)
	\end{array}
	\right ).
\end{equation*}
By a linear change of coordinates which preserves $dt$, we may assume that $g^{\alpha \beta}(0)=m^{\alpha \beta}$. Let $\chi$ be a smooth cut-off function supported in the region $B(0,3+2c) \times [-\frac{3}{2}, \frac{3}{2}]$, which equals to $1$ in the region $B(0,2+2c) \times [-1, 1]$. Let $\mathbf{g}= (\mathbf{g}^{\alpha \beta})_{4\times 4}$. Define
\begin{equation}\label{AMd3}
	\begin{split}
		\mathbf{g}^{\alpha \beta}=& \chi(t,x)(g^{\alpha \beta}-g^{\alpha \beta }(0))+g^{\alpha \beta }(0),
	\end{split}
\end{equation}
where $g^{\alpha\beta}=\Omega\{ c_s^{2}m^{\alpha\beta}+(c_s^2-1)u^\alpha u^\beta \}$ has been given by \eqref{AMd}. Consider a wave-transport system
\begin{equation}\label{XT}
	\begin{cases}
		\square_{\mathbf{g}} h=D,
		\\
		\square_{\mathbf{g}} u^\alpha=-c^2_s \Omega \mathrm{e}^{-h} W^\alpha+Q^\alpha,
		\\
		u^\kappa \partial_\kappa w^\alpha = -u^\alpha w^\kappa \partial_\kappa h+ w^\kappa \partial_\kappa u^\alpha- w^\alpha \partial_\kappa u^\kappa,
		\\
		\partial_\alpha w^\alpha =-w^\kappa \partial_\kappa h,
	\end{cases}
\end{equation}
where $D$, $\Omega$, $W^\alpha$, and $Q^\alpha$ have the formulations in \eqref{err1}, \eqref{AMd2}, \eqref{MFd} and \eqref{err}.

We denote by $\mathcal{H}$ the family of smooth solutions $(h, \bu, \bw)$ to Equation \eqref{XT} for $t \in [-2,2]$, with the initial data $(h_0, \mathring{\bu}_0, \bw_0)$ supported in $B(0,2+c)$, and for which
\begin{equation}\label{401}
	\|\mathring{\bu}_0\|_{H^s} + \|h_0\|_{H^s}+\| \bw_0\|_{H^{s_0}}  \leq \epsilon_3,
\end{equation}
\begin{equation}\label{402a}
	\|(\mathring{\bu},u^0-1)\|_{L^\infty_{[-2,2]} H_x^{s}}+\| h \|_{L^\infty_{[-2,2]} H_x^{s}}+\| \bw\|_{L^\infty_{[-2,2]} H_x^{s_0}} \leq 2 \epsilon_2,
\end{equation}
\begin{equation}\label{403}
	\| d \bu, d h \|_{L^2_{[-2,2]} C_x^\delta}+ \|d h, \nabla \bu_{+}, d\bu\|_{L^2_{[-2,2]} \dot{B}^{s_0-2}_{\infty,2}} \leq 2 \epsilon_2.
\end{equation}
Then our bootstrap argument can be
stated as follows:
\begin{proposition}\label{p4}
Assume that \eqref{a0} holds. Then there is a continuous functional\footnote{We set aside for the moment the definition of
	$\Re$.} $\Re: \mathcal{H} \rightarrow \mathbb{R}^{+}$, satisfying $\Re(0,\mathbf{e},\mathbf{0})=0$, so that for each $(h, \bu, \bw) \in \mathcal{H}$ satisfying $\Re(h,\bu,\bw) \leq 2 \epsilon_1$ the following hold:

$\mathrm{(1)}$ The function $h$, $\bu$, and $\bw$ satisfies $\Re(h,\bu) \leq \epsilon_1$.

$\mathrm{(2)}$ The following estimate holds,
\begin{equation}\label{404}
\|(\mathring{\bu},u^0-1)\|_{L^\infty_{[-2,2]} H_x^{s}}+ \|h\|_{L^\infty_{[-2,2]} H_x^{s}}+ \|\bw\|_{L^\infty_{[-2,2]} H_x^{s_0}} \leq \epsilon_2,
\end{equation}
\begin{equation}\label{405}
\|d \bu, d h\|_{L^2_{[-2,2]} C^\delta_x}+\|\nabla \bu_{+}, d h, d\bu\|_{L^2_{[-2,2]} \dot{B}^{s_0-2}_{\infty,2}} \leq \epsilon_2.
\end{equation}

$\mathrm{(3)}$ For any $1 \leq k \leq s+1$, and for each $t_0\in [-2,2)$, the linear wave equation
	\begin{equation}\label{linearB}
	\begin{cases}
		& \square_{\mathbf{g}} f=0, \qquad (t,x) \in (t_0,2]\times \mathbb{R}^3,
		\\
		&(f, \partial_t f)|_{t=t_0}=(f_0,f_1) \in H^r(\mathbb{R}^3)\times H^{r-1}(\mathbb{R}^3),
	\end{cases}
\end{equation}
admits a unique solution $f \in C([-2,2],H_x^r) \times C^1([-2,2],H_x^{r-1})$. Furthermore, the Strichartz estimates \eqref{304} hold.
\end{proposition}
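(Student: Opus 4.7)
The plan is to take the functional $G$ to be the left-hand side of the Strichartz bound \eqref{405}, namely
\[
G(h,\bu) = \|d\bu, dh\|_{L^2_{[-2,2]} C^\delta_x} + \|\nabla \bu_{+}, dh, d\bu\|_{L^2_{[-2,2]} \dot{B}^{s_0-2}_{\infty,2}},
\]
which vanishes at the zero solution and depends continuously on $(h,\bu,\bw)\in\mathcal{H}$. Under the bootstrap hypothesis $G(h,\bu)\le 2\epsilon_1$, the energy bound \eqref{404} will be obtained by plugging the Strichartz input into Theorem \ref{VE}: the integral $M(t)$ there is controlled by $\int_{-2}^{2}(\|d\bu,dh\|_{L^\infty_x}+\|d\bu,dh\|_{\dot B^{s_0-2}_{\infty,2}})\,d\tau\lesssim \epsilon_1$, so the resulting constant $\exp(5M\cdot e^{5M})$ is essentially $1$, and together with \eqref{401} this gives $\|(\mathring{\bu},u^0-1,h)\|_{L^\infty H^s_x}+\|\bw\|_{L^\infty H^{s_0}_x}\lesssim \epsilon_3\ll\epsilon_2$, proving \eqref{404}.

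Next, to establish \eqref{405} and improve $G(h,\bu)\le\epsilon_1$, I would prove first the Strichartz estimate \eqref{304} for the linear wave equation \eqref{linearB} endowed with the acoustical metric $\mathbf{g}$. This is the technical heart of the paper: with the energy control from step one, $\mathbf{g}$ satisfies $d\mathbf{g}\in L^\infty H^{s-1}_x\cap L^2 C^\delta_x\cap L^2 \dot B^{s_0-2}_{\infty,2}$, which is precisely the class of rough Lorentzian metrics for which the $H^{r}$--$L^2_t L^\infty_x$ Strichartz estimate with $\tfrac12$-derivative loss is known, after constructing a $T T^*$/wave-packet parametrix and controlling the characteristic conformal energy. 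I will carry out the argument by adapting the framework of Smith--Tataru \cite{ST} and Q.~Wang \cite{WQSharp} to the present metric, whose coefficients are built from $(h,\bu)$ satisfying \eqref{402a}--\eqref{403}; this step will supply \eqref{304} for all $1\le r\le s+1$ and $k<r-1$, together with the companion estimate for $\langle\nabla\rangle^{k-1}d$.

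Once the linear Strichartz estimate is in hand, I apply it to $h$ through the wave equation $\square_{\mathbf{g}}h=D$ in \eqref{XT}, and to $\bu_{+}$ through Lemma \ref{um} rewritten in a Duhamel-compatible form. Because the right-hand side of the $\bu_{+}$-equation contains a term of the form $\mathbf{g}^{0\alpha}\partial_{\alpha}(\mathbf{T}\bu_{-})$ plus lower-order pieces, I invoke the modified Duhamel principle, Lemma \ref{LD}, to convert the leading inhomogeneity into a Cauchy datum $\mathbf{T}\bu_{-}|_{t=0}$ for a new homogeneous wave; its size is controlled by the elliptic estimate \eqref{tue2} in Lemma \ref{tu}, which gives $\|\mathbf{T}\bu_{-}\|_{H^s_x}\lesssim \|\bw\|_{H^{3/2+}_x}(\|(h,u^0-1,\mathring{\bu})\|_{H^s_x}+\|(\cdot)\|_{H^s_x}^2)\lesssim \epsilon_2^2$. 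The remaining forcing $Q^\alpha, D$ and their analogues are quadratic in $(d\bu, dh)$, so estimating them in $L^1_t H^{r-1}_x$ against the bootstrap Strichartz norm yields a gain of an $\epsilon$-factor. To recover $\nabla\bu$ from $\nabla\bu_{+}$ I add back $\nabla\bu_{-}=\mathbf{P}^{-1}(e^{-h}\bW)$ and use Sobolev embedding on the elliptic estimate to bound $\|\nabla \bu_{-}\|_{\dot B^{s_0-2}_{\infty,2}}$ by $\|\bw\|_{H^{s_0}_x}$.

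Combining these bounds produces $G(h,\bu)\le C(\epsilon_3+\epsilon_2^2)\le \epsilon_1$ by the scale ordering \eqref{a0}, which simultaneously closes the bootstrap and delivers \eqref{405}. The main obstacle, as already indicated, is the proof of the linear Strichartz estimate \eqref{304} with the low-regularity acoustical metric $\mathbf{g}$: this requires a parametrix construction compatible with a metric whose second derivatives are only square-integrable in time with values in $\dot B^{s_0-2}_{\infty,2}$, and it is where the restriction $s_0>2$ and the Besov improvement in \eqref{403} are truly used. Every other ingredient---the wave--transport structure \eqref{WTe}, the decomposition $\bu=\bu_{+}+\bu_{-}$, the elliptic control of $\mathbf{T}\bu_{-}$, and the modified Duhamel formula---is already available from Section \ref{sec:preliminaries}, so the bootstrap closes once the linear theorem is proved.
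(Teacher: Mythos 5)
Your choice of bootstrap functional is where the argument breaks down. You take $G$ to be the Strichartz norm of the solution itself, but the class $\mathcal{H}$ already contains the bound \eqref{403}, i.e.\ that same norm is $\leq 2\epsilon_2 \ll \epsilon_1$ for every member of $\mathcal{H}$, so your hypothesis $G\leq 2\epsilon_1$ carries no information and the ``improvement'' to $\epsilon_1$ is vacuous. More importantly, this $G$ cannot do the job the functional is designed for: the one step you correctly identify as the technical heart --- the linear Strichartz estimate \eqref{304} for $\square_{\mathbf g}$ --- requires, as input to the Smith--Tataru parametrix, quantitative control of the null foliations of $\mathbf g$. That is exactly what the paper's functional measures: $G(h,\bu)=\sup_{\theta,r}\vert\kern-0.25ex\vert\kern-0.25ex\vert d\phi_{\theta,r}-dt\vert\kern-0.25ex\vert\kern-0.25ex\vert_{s_0,2,\Sigma_{\theta,r}}$ in \eqref{500}, the deviation of the characteristic hypersurfaces $\Sigma_{\theta,r}$ from flat null hyperplanes. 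The bootstrap assumption $G\leq 2\epsilon_1$ is the hypothesis fed into Propositions \ref{r1} and \ref{r5}; with your $G$ there is no hypothesis on the foliation at all, so ``adapting Smith--Tataru'' is not a deferred technicality but an unaddressed gap.

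Correspondingly, the improvement in part (1) is not obtained from Strichartz estimates of $h$ and $\bu_+$ at all. In the paper it is Proposition \ref{r2}: one propagates the null second fundamental form $\chi_{ab}$ along the generators via the transport equation \eqref{610}, decomposes the curvature as $l(f_2)+f_1$ (Corollary \ref{Rfenjie}) using the fact that $\square_{\mathbf g}\mathbf g$ is essentially $\bW$ plus $(d\mathbf g)^2$, and controls $\bW$ on the characteristic surfaces by the dedicated characteristic energy estimate of Lemma \ref{te20} --- itself a substantial argument relying on the transport equation \eqref{SDe} for $\bG$ and the div--curl decompositions of Lemmas \ref{vor1}--\ref{vor2}. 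Your paragraphs on the Duhamel reformulation of the $\bu_+$ equation, the elliptic bound for $\mathbf{T}\bu_-$, and the recovery of the energy estimate \eqref{404} via Theorem \ref{VE} are consistent with what the paper does in Propositions \ref{r3}, \ref{r6} and \ref{r4}, but they sit downstream of the two missing ingredients: the correct (geometric) bootstrap quantity and the characteristic-hypersurface analysis that closes it.
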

Based on Proposition \ref{p4}, we can prove
Proposition \ref{p1} as follows.
\begin{proof}[Proof of Proposition \ref{p1} by using Proposition \ref{p4}]
Consider the initial data in Proposition \ref{p1} satisfying
\begin{equation*}
	\|\mathring{\bu}_0\|_{H^s} + \|h_0\|_{H^s}+ \| \bw_0\|_{H^{s_0}}  \leq \epsilon_3.
\end{equation*}
Denote by $\text{A}$ the subset of those $\gamma \in [0,1]$ such that the equation \eqref{XT} admits a smooth solution $(h_\gamma, \bu_\gamma, \bw_\gamma)$ having the initial data
\begin{equation*}
	\begin{split}
		h_\gamma(0)=&\gamma h_0,
		\\
		\mathring{\bu}_\gamma(0)=&\gamma \mathring{\bu}_0,
		\\
		u^0_\gamma(0)=& \sqrt{1+|\mathring{\bu}_\gamma(0)|^2},
		\\
		\bw_\gamma(0)=& \mathrm{vort}( \mathrm{e}^{h_\gamma(0)}\bu_\gamma(0)),
	\end{split}
\end{equation*}
and such that $\Re(h_\gamma,\bu_\gamma, \bw_\gamma) \leq \epsilon_1$ and \eqref{404}, \eqref{405} hold. If $\gamma=0$, then
\begin{equation*}
	(h_\gamma, {\bu}_\gamma, \bw_\gamma)(t,x)=(0,\mathbf{e},\mathbf{0}),
\end{equation*}
is a smooth solution of \eqref{XT} with initial data
\begin{equation*}
	(h_\gamma, {\bu}_\gamma, \bw_\gamma)(0,x)=(0,\mathbf{e},\mathbf{0}).
\end{equation*}
Thus, the set $\text{A}$ is not empty. If we can prove that $\text{A}=[0,1]$, then $1 \in \text{A}$. Also, \eqref{300}-\eqref{304} follow from Proposition \ref{p4}. As a result, Proposition \ref{p1} holds. It is sufficient to prove that $\text{A}$ is both open and closed in $[0,1]$.

(1) $\text{A}$ is open. Let $\gamma \in \text{A}$. Then $(h_\gamma, \bu_\gamma, \bw_\gamma)$ is a smooth solution to \eqref{XT}, where
\begin{equation*}
	\bw_\gamma= \mathrm{vort}(\mathrm{e}^{h_\gamma} \bu_\gamma).
\end{equation*}
Let $\beta$ be close to $\gamma$.  Since $(h^\gamma,\bu^\gamma,\bw^\gamma)$ is smooth, a perturbation argument
shows that the equation \eqref{XT} has a smooth solution $(h^\beta, \bu^\beta, \bw^\beta)$, which
depends continuously on $\beta$. By the continuity of $\Re$, for $\beta$ close to $\gamma$, we infer
\begin{equation*}
	\Re(h_\beta,\bu_\beta,\bw_\beta) \leq 2\epsilon_1,
\end{equation*}
and \eqref{401}, \eqref{402a}, \eqref{403} hold. Due to Proposition \ref{p4}, we get
\begin{equation*}
 \Re(h_\beta,\bu_\beta,\bw_\beta) \leq \epsilon_1,
\end{equation*}
and \eqref{404}-\eqref{405}. Thus, we have showed that $\beta \in \text{A}$.

(2) $\text{A}$ is closed. Let $\gamma_k \in \text{A}, k \in \mathbb{N}^+$. Let $\gamma$ be a limit satisfying $\lim_{k \rightarrow \infty} \gamma_k = \gamma$.
Then there exists a sequence $\{(h_{\gamma_k}, \bu_{\gamma_k},\bw_{\gamma_k}) \}_{k \in \mathbb{N}^+}$ being the smooth solutions to \eqref{XT} and
\begin{align*}
	& \|(u^0_{\gamma_k}-1, \mathring{\bu}_{\gamma_k}, h_{\gamma_k})\|_{L^\infty_{[-2,2]} H_x^{s}}+ \|(\partial_t \bu_{\gamma_k}, \partial_t h_{\gamma_k})\|_{L^\infty_{[-2,2]} H_x^{s-1}}+
	+ \| \bw_{\gamma_k} \|_{L^\infty_{[-2,2]} H_x^{s_0}}
	\\
	& +\| \partial_t \bw_{\gamma_k} \|_{L^\infty_{[-2,2]} H_x^{s_0-1}}+  \| d \bu_{\gamma_k}, d h_{\gamma_k} \|_{L^2_{[-2,2]} C_x^\delta}+ \| d \bu_{\gamma_k}, dh_{\gamma_k} \|_{L^2_{[-2,2]} \dot{B}_{\infty,2}^{s_0-2}} \leq \epsilon_2.
\end{align*}
Then there exists a subsequence $\{(h_{\gamma_k}, \bu_{\gamma_k}, \bw_{\gamma_k}) \}_{k \in \mathbb{N}^+}$  such that 
\begin{equation*}
	\lim_{k \rightarrow \infty} (h_{\gamma_k}, \bu_{\gamma_k}, \bw_{\gamma_k})=(h_{\gamma}, \bu_{\gamma}, \bw_{\gamma}) .
\end{equation*}
Moreover, $(h_{\gamma}, \bu_{\gamma}, \bw_{\gamma})$ satisfies
\begin{equation*}
	\begin{split}
		&\|(u^0_\gamma-1, \mathring{\bu}_{\gamma}, h_{\gamma})\|_{L^\infty_{[-2,2]} H_x^{s}}+ \|(\partial_t \bu_{\gamma}, \partial_t h_{\gamma})\|_{L^\infty_{[-2,2]} H_x^{s-1}}+ \| \bw_{\gamma} \|_{L^\infty_{[-2,2]} H_x^{s_0}}
		\\
		& \ +\| \partial_t \bw_{\gamma} \|_{L^\infty_{[-2,2]} H_x^{s_0-1}}+  \| d \bu_{\gamma}, dh_{\gamma} \|_{L^2_{[-2,2]} C_x^\delta}+  \| d \bu_{\gamma},dh_{\gamma} \|_{L^2_{[-2,2]} \dot{B}_{\infty,2}^{s_0-2}} \leq \epsilon_2,
	\end{split}
\end{equation*}
Thus, $\Re(h_\gamma,\bu_\gamma,\bw_\gamma) \leq \epsilon_1$. Then $\gamma \in \text{A}$. At this stage, we have proved Proposition \ref{p1}.
\end{proof}
\section{Proof of Proposition \ref{p4}}\label{secp4}
In this part, our goal is to prove Proposition \ref{p4}. Proposition \ref{p4} will follow as a result of Proposition \ref{r2}, Proposition \ref{r5}, and Proposition \ref{r4}. For a start, let us introduce a foliation of space-time and a definition of functional $\Re$.

\subsection{Geometric setup}

Let the metric $ \mathbf{g}$ which equals the Minkowski metric for $t \in [-2, -\frac{3}{2}]$. Following Smith-Tataru's paper \cite{ST}, for every $\theta \in \mathbb{S}^2$, we get a foliation of the slice $t=-2$ by taking level sets of the function $r_\theta(-2,x)=\theta \cdot x +2$. Noting the fact that
$\theta\cdot dx-dt$ would then be a covector field over $t=-2$ that is conormal to the level sets of $r_\theta(-2,x)$, so we set $\Gamma_{\theta}$ to be the flowout of this section under the
Hamilton flow of $\mathbf{g}$. A key idea in the proof of Strichartz estimates is to establish that, for each $\theta$, $\Gamma_\theta$ is the graph of a null covector field given by $dr_\theta$, where $r_\theta$ is a smooth extension of $\theta \cdot x -t$, and that the level sets of $r_\theta$ are small perturbations of the level sets of the function $\theta \cdot x -t$ in a certain norm captured by $\Re$. {In establishing Proposition \ref{p4}, we establish that $(h,\bu,\bw)\in \mathcal{H}$ implies $\Gamma_\theta$ is the graph of an appropriate null
covector field $dr_\theta$, so we only define $\Re(h,\bu,\bw)$ in the situation. For the definition of $\Re$, see \eqref{500} below.}

Assume that $\Gamma_\theta$ and $r_\theta$ are as above. Let $\Sigma_{\theta,r}$ for $t\in \mathbb{R}$ denote
the level sets of $r_\theta$. Thus, the characteristic hypersurface $\Sigma_{\theta,r}$ is the flowout of
the set $\theta \cdot x=r-2$ along the null geodesic flow in the direction $\theta$ at $t=-2$.

Let us introduce an orthonormal sets of coordinates on $\mathbb{R}^3$ by setting $x_{\theta}=\theta \cdot x$. Let $x'_{\theta}$ be given orthonormal coordinates on the hyperplane prependicular to $\theta$, which then define coordinate on $\mathbb{R}^3$ by projection along $\theta$. Then $(t,x'_{\theta})$ induce the coordinates on $\Sigma_{\theta,r}$, and $\Sigma_{\theta,r}$ is given by
\begin{equation}\label{sig}
  \Sigma_{\theta,r}=\left\{ (t,x): x_{\theta}-\phi_{\theta, r}=0  \right\},
\end{equation}
for a smooth function $\phi_{\theta, r}(t,x'_{\theta})$. We now introduce two norms for functions defined on $[-2,2] \times \mathbb{R}^3$,
\begin{equation}\label{d0}
  \begin{split}
  &\vert\kern-0.25ex\vert\kern-0.25ex\vert f \vert\kern-0.25ex\vert\kern-0.25ex\vert_{s_0, \infty} = \sup_{-2 \leq t \leq 2} \sup_{0 \leq j \leq 1} \| \partial_t^j f(t,\cdot)\|_{H_x^{s_0-j}(\mathbb{R}^3)},
  \\
  & \vert\kern-0.25ex\vert\kern-0.25ex\vert  f \vert\kern-0.25ex\vert\kern-0.25ex\vert_{s_0,2} = \big( \sup_{0 \leq j \leq 1} \int^{2}_{-2} \| \partial_t^j f(t,\cdot)\|^2_{H_x^{s_0-j}(\mathbb{R}^3)} dt \big)^{\frac{1}{2}}.
  \end{split}
\end{equation}
The same notation applies for functions in $[-2,2] \times \mathbb{R}^3$. We denote
\begin{equation*}
  \vert\kern-0.25ex\vert\kern-0.25ex\vert f\vert\kern-0.25ex\vert\kern-0.25ex\vert_{s_0,2,\Sigma_{\theta,r}}=\vert\kern-0.25ex\vert\kern-0.25ex\vert f|_{\Sigma_{\theta,r}} \vert\kern-0.25ex\vert\kern-0.25ex\vert_{s_0,2},
\end{equation*}
where the right hand side is the norm of the restriction of $f$ to ${\Sigma_{\theta,r}}$, taken over the $(t,x'_{\theta})$ variables used to parametrise ${\Sigma_{\theta,r}}$. Similarly, the notation $\|f\|_{H^{s_0}(\Sigma_{\theta,r})}$ denotes the $H^{s_0}(\mathbb{R}^2)$ norm of $f$ restricted to the time $t$ slice of ${\Sigma_{\theta,r}}$ using the $x'_{\theta}$ coordinates on ${\Sigma^t_{\theta,r}}$. To avoid confusion, let $\Delta_{x'}$ be the standard Laplacian operator on $\Sigma$. We also define
\begin{equation}\label{gam}
	\Lambda_{x'}=(-\Delta_{x'})^{\frac12}.
\end{equation}
We now define
\begin{equation}\label{500}
  \Re(h,\bu, \bw)= \sup_{\theta, r} \vert\kern-0.25ex\vert\kern-0.25ex\vert d \phi_{\theta,r}-dt\vert\kern-0.25ex\vert\kern-0.25ex\vert_{s_0,2,{\Sigma_{\theta,r}}}.
\end{equation}

Due to \eqref{500}, it suffices to consider $\theta=(0,0,1)$ and $r=0$. We fix the choice, and suppress $\theta$ and $r$ in our notation. We use $(x_3, x')$ instead of $(x_{\theta}, x'_{\theta})$. Then $\Sigma$ is defined by
\begin{equation*}
	\Sigma=\left\{ x_3- \phi(t,x')=0 \right\}.
\end{equation*}
The hypothesis $\Re(h,\bu,\bw) \leq 2 \epsilon_1$ (see Proposition \ref{p4}) implies that
\begin{equation}\label{503}
	\vert\kern-0.25ex\vert\kern-0.25ex\vert d \phi_{\theta,r}(t,\cdot)-dt \vert\kern-0.25ex\vert\kern-0.25ex\vert_{s_0,2, \Sigma} \leq 2 \epsilon_1.
\end{equation}
By Sobolev imbeddings, we have
\begin{equation}\label{504}
	\|d \phi(t,x')-dt \|_{L^2_{[-2,2]} C^{1,\delta_0}_{x'}} + \| \partial_t d \phi(t,x')\|_{L^2_{[-2,2]} C^{\delta_0}_{x'}} \lesssim \epsilon_1.
\end{equation}
In the following, we will discuss energy estimates along characteristic hypersurfaces.
\subsection{Energy estimates along characteristic hypersurfaces}
In this part, our goal is to establish energy estimates for $h,\bu$ and $\bw$ along the characteristic hypersurface $\Sigma$. We first introduce several inequalities on $\Sigma$ as follows.
\subsubsection{Useful lemmas}
\begin{Lemma}[\cite{ST}, Lemma 5.5]\label{te0}
Let $\tilde{f}(t,x)=f(t,x',x_3+\phi(t,x'))$. Then we have
\begin{equation*}
  \vert\kern-0.25ex\vert\kern-0.25ex\vert \tilde{f}\vert\kern-0.25ex\vert\kern-0.25ex\vert_{s_0,\infty}\lesssim \vert\kern-0.25ex\vert\kern-0.25ex\vert f\vert\kern-0.25ex\vert\kern-0.25ex\vert_{s_0,\infty}, \quad \|d\tilde{f}\|_{L^2_{[-2,2]}L_x^\infty}\lesssim \|d f\|_{{L^2_{[-2,2]}L_x^\infty}}, \quad  \|\tilde{f}\|_{H^{s_0}_{x}}\lesssim \|f\|_{H^{s_0}_{x}}.
\end{equation*}
\end{Lemma}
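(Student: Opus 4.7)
\textbf{Proof proposal for Lemma \ref{te0}.} The map $\Phi:(t,x',x_3)\mapsto(t,x',x_3+\phi(t,x'))$ is a global shear diffeomorphism of $\mathbb{R}\times\mathbb{R}^2\times\mathbb{R}$ with unit Jacobian and inverse $\Psi:(t,x',y)\mapsto(t,x',y-\phi(t,x'))$; its differential is $d\Phi=\mathrm{Id}+(\partial_t\phi\,dt+\partial_{x'}\phi\cdot dx')\otimes e_3$, which differs from the identity by a term controlled by $d\phi-dt$. From \eqref{503}--\eqref{504} and the Sobolev embedding on $\mathbb{R}^2$, the tangential derivatives $\partial_{x'}\phi$ lie in $L^2_tC^{1,\delta}_{x'}$ and $\partial_t\phi$ in $L^2_tC^{\delta}_{x'}$, both with norm $\lesssim\epsilon_1$; in particular $d\Phi$ is uniformly close to the identity, so $\Phi$ and $\Psi$ are bi-Lipschitz with unit-size constants.

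The plan is then to establish the three inequalities in order of increasing difficulty. First, for $\|d\tilde f\|_{L^2_{[-2,2]}L^\infty_x}\lesssim\|df\|_{L^2_{[-2,2]}L^\infty_x}$, I would apply the chain rule: $\partial_t\tilde f=(\partial_tf)\!\circ\!\Phi+(\partial_{x_3}f)\!\circ\!\Phi\cdot\partial_t\phi$, $\partial_{x'}\tilde f=(\partial_{x'}f)\!\circ\!\Phi+(\partial_{x_3}f)\!\circ\!\Phi\cdot\partial_{x'}\phi$, and $\partial_{x_3}\tilde f=(\partial_{x_3}f)\!\circ\!\Phi$. Since $\Phi$ is a volume-preserving bi-Lipschitz map of $\mathbb{R}^3$ for each fixed $t$ and $\|d\phi\|_{L^\infty_x}\lesssim 1$, taking $L^\infty_x$ norms and then $L^2_t$ absorbs all $d\phi$ factors into the implicit constant.

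Second, for $\vert\kern-0.25ex\vert\kern-0.25ex\vert\tilde f\vert\kern-0.25ex\vert\kern-0.25ex\vert_{s_0,\infty}\lesssim\vert\kern-0.25ex\vert\kern-0.25ex\vert f\vert\kern-0.25ex\vert\kern-0.25ex\vert_{s_0,\infty}$ and $\|\tilde f\|_{H^{s_0}_x}\lesssim\|f\|_{H^{s_0}_x}$, I would invoke a Moser/composition estimate in fractional Sobolev spaces: for $s_0\in(2,5/2)$ and a bi-Lipschitz diffeomorphism $\Phi$ with $d\Phi-\mathrm{Id}\in C^{1,\delta}_x$,
\[
\|f\circ\Phi\|_{H^{s_0}_x}\lesssim\|f\|_{H^{s_0}_x}\bigl(1+\|d\Phi-\mathrm{Id}\|_{C^{s_0-1}_x}\bigr)^{\lceil s_0\rceil}.
\]
Since $s_0-1<3/2<1+\delta$, our pointwise control on $d\phi$ from \eqref{504} feeds directly into this bound at each fixed $t$ and yields the spatial $H^{s_0}_x$ estimate; for $\vert\kern-0.25ex\vert\kern-0.25ex\vert\cdot\vert\kern-0.25ex\vert\kern-0.25ex\vert_{s_0,\infty}$ the same argument applies to $\partial_t\tilde f$ after differentiating, using the chain rule above and pairing a top-order derivative on $f$ with low-order factors of $d\phi$ through Littlewood--Paley/paraproduct decomposition.

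The main obstacle will be the fractional composition estimate: because $s_0$ is non-integer and $\phi$ has exactly the regularity made available by \eqref{503}--\eqref{504} with no room to spare, one must decompose $\tilde f$ by paraproducts of $f$ with the components of $\Phi$, and confirm that every resulting term closes at the available regularity of $\phi$. The time-derivative case of $\vert\kern-0.25ex\vert\kern-0.25ex\vert\tilde f\vert\kern-0.25ex\vert\kern-0.25ex\vert_{s_0,\infty}$ is slightly more delicate since $\partial_t\phi$ is only bounded in $L^2_tC^\delta_{x'}$ rather than pointwise in $t$, so the estimate on $\partial_t\tilde f$ should be read as an $L^\infty_tH^{s_0-1}_x$ bound extracted after trading one derivative onto the smoother factor and using Sobolev embeddings $H^{s_0-1}_x\hookrightarrow L^\infty_x$ (valid since $s_0>5/2$ fails but $s_0>3/2$ holds, so this embedding is only used up to fractional loss, which the $C^\delta$ control of $\partial_t\phi$ exactly supplies).
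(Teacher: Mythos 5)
The paper offers no proof of this lemma---it is quoted verbatim from Smith--Tataru \cite{ST} (note also the typo $\|h\|_{H^{s_0}_x}$ for $\|f\|_{H^{s_0}_x}$ in the statement)---so there is nothing internal to compare against; your sketch is the standard argument for it: the change of variables is a volume-preserving shear, the first-order bounds follow from the chain rule, and the Sobolev bounds follow from a composition estimate. That overall route is correct.

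However, the regularity bookkeeping in your sketch has concrete errors. First, the chain ``$s_0-1<3/2<1+\delta$'' is false: by \eqref{a1} one has $\delta\in(0,s-2)$ with $s\le\frac52$, so $\delta<\frac12$ and $1+\delta<\frac32$. Second, the uniform-in-time pointwise control of $d\phi$ cannot be read off from \eqref{504}, which is only an $L^2_t$ bound; an $L^2$-in-time bound does not make $d\Phi$ ``uniformly close to the identity.'' The correct source is the trace estimate of Lemma \ref{te2} applied to \eqref{503}, which gives $\sup_t\|d\phi-dt\|_{H^{s_0-1/2}_{x'}}\lesssim\epsilon_1$, and then the Sobolev embedding on $\mathbb{R}^2$ (not $\mathbb{R}^3$: $\phi$ is a function of $(t,x')$), namely $H^{s_0-1/2}(\mathbb{R}^2)\hookrightarrow C^{s_0-3/2}(\mathbb{R}^2)\subset L^\infty$, which holds comfortably since $s_0>2$. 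Your final paragraph invokes $H^{s_0-1}_x\hookrightarrow L^\infty_x$ on $\mathbb{R}^3$ and then tries to patch the failure of $s_0>\frac52$ with the $C^\delta$ control of $\partial_t\phi$; this is the wrong embedding in the wrong dimension, and the patch as described does not close. With the trace-lemma control substituted in, the composition estimate for $\|f\circ\Phi\|_{H^{s_0}_x}$ at fixed $t$ should be run using $\nabla_{x'}\phi(t,\cdot)\in H^{s_0-1/2}(\mathbb{R}^2)$ (hence $\nabla^2_{x'}\phi\in H^{s_0-3/2}$) rather than a $C^{s_0-1}$ hypothesis on $d\Phi$, which is not available uniformly in $t$.
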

\begin{Lemma}[\cite{ST}, Lemma 5.4]\label{te2}
For $r\geq 1$, we have
\begin{equation*}
\begin{split}
  \sup_{t\in[-2,2]} \| f\|_{H^{r-\frac{1}{2}}(\mathbb{R}^n)} & \lesssim \vert\kern-0.25ex\vert\kern-0.25ex\vert f \vert\kern-0.25ex\vert\kern-0.25ex\vert_{r,2},
  \\
  \sup_{t\in[-2,2]} \| f\|_{H^{r-\frac{1}{2}}(\Sigma^t)} & \lesssim \vert\kern-0.25ex\vert\kern-0.25ex\vert f \vert\kern-0.25ex\vert\kern-0.25ex\vert_{r,2,\Sigma}.
\end{split}
\end{equation*}
If $r> \frac{n+1}{2}$, then
\begin{equation*}
  \vert\kern-0.25ex\vert\kern-0.25ex\vert f_1 f_2 \vert\kern-0.25ex\vert\kern-0.25ex\vert_{r,2}\lesssim  \vert\kern-0.25ex\vert\kern-0.25ex\vert f_1 \vert\kern-0.25ex\vert\kern-0.25ex\vert_{r,2} \vert\kern-0.25ex\vert\kern-0.25ex\vert f_2 \vert\kern-0.25ex\vert\kern-0.25ex\vert_{r,2}.
\end{equation*}
Similarly, if $r>\frac{n}{2}$, then
\begin{equation*}
  \vert\kern-0.25ex\vert\kern-0.25ex\vert f_1 f_2 \vert\kern-0.25ex\vert\kern-0.25ex\vert_{r,2,\Sigma}\lesssim  \vert\kern-0.25ex\vert\kern-0.25ex\vert f_1 \vert\kern-0.25ex\vert\kern-0.25ex\vert_{r,2,\Sigma} \vert\kern-0.25ex\vert\kern-0.25ex\vert f_2 \vert\kern-0.25ex\vert\kern-0.25ex\vert_{r,2,\Sigma}.
\end{equation*}
\end{Lemma}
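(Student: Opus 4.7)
The lemma splits into two trace-type inequalities (the first two displays) and an algebra-type product estimate (the last two displays), all sitting naturally in the anisotropic spacetime norms $\vert\kern-0.25ex\vert\kern-0.25ex\vert \cdot\vert\kern-0.25ex\vert\kern-0.25ex\vert_{r,2}$ and $\vert\kern-0.25ex\vert\kern-0.25ex\vert\cdot\vert\kern-0.25ex\vert\kern-0.25ex\vert_{r,2,\Sigma}$. The plan is to handle the two trace inequalities first and then bootstrap them into the algebra property via Sobolev embedding plus Kato--Ponce (Lemma \ref{cj}). Throughout, the key quantitative input is that $\vert\kern-0.25ex\vert\kern-0.25ex\vert f\vert\kern-0.25ex\vert\kern-0.25ex\vert_{r,2}$ controls $f\in L^2_tH^r_x$ and $\partial_t f\in L^2_tH^{r-1}_x$.

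For the first trace inequality on $\mathbb{R}^n$, I would argue at the frequency-localized level. Writing $P_j f$ for the Littlewood--Paley piece, the fundamental theorem of calculus gives
\begin{equation*}
\|P_j f(t,\cdot)\|_{L^2_x}^2 \leq \|P_j f(-2,\cdot)\|_{L^2_x}^2 + 2\int_{-2}^{2}\|P_j f(s,\cdot)\|_{L^2_x}\|P_j\partial_s f(s,\cdot)\|_{L^2_x}\,ds,
\end{equation*}
and multiplying by $2^{(2r-1)j}$ and using Cauchy--Schwarz yields $\sup_t\|P_j f\|_{L^2_x}^2 \lesssim 2^{-j}\|P_j f\|_{L^2_tL^2_x}\cdot 2^{(2r-1)j}\|P_j\partial_t f\|_{L^2_tL^2_x}+2^{2rj}\|P_j f\|_{L^2_tL^2_x}^2$. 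Summing in $j$ with weight $2^{(2r-1)j}$ and applying Cauchy--Schwarz in $j$ gives $\sup_t\|f\|_{H^{r-1/2}_x}^2 \lesssim \vert\kern-0.25ex\vert\kern-0.25ex\vert f\vert\kern-0.25ex\vert\kern-0.25ex\vert_{r,2}^2$. For the version on $\Sigma^t$, the strategy is to flatten $\Sigma$ by the change of coordinates $x_3\mapsto x_3-\phi(t,x')$ (using Lemma \ref{te0} and the $C^{1,\delta}$ regularity of $\phi$ from Proposition \ref{r2}) to reduce to the flat problem, then run the same frequency argument in the tangential variables $x'$.

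For the product estimates, I would apply the Leibniz decomposition
\begin{equation*}
\partial_t^j(f_1f_2)=\sum_{j_1+j_2=j}\binom{j}{j_1}\partial_t^{j_1}f_1\,\partial_t^{j_2}f_2,\qquad j\in\{0,1\},
\end{equation*}
and then use the spatial Kato--Ponce inequality (Lemma \ref{cj}) for each fixed $t$, bounding one factor in $L^\infty_x$ and the other in $H^{r-j}_x$. The already-established trace inequality upgrades $\vert\kern-0.25ex\vert\kern-0.25ex\vert f_i\vert\kern-0.25ex\vert\kern-0.25ex\vert_{r,2}$ to $\sup_t\|f_i\|_{H^{r-1/2}_x}$; under the hypothesis $r>(n+1)/2$ we have $r-1/2>n/2$, so Sobolev embedding supplies $L^\infty_x$ control on whichever factor carries no derivatives. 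Integrating the squared inequality in $t$ and combining the $j=0$ and $j=1$ contributions gives the algebra estimate in $\vert\kern-0.25ex\vert\kern-0.25ex\vert\cdot\vert\kern-0.25ex\vert\kern-0.25ex\vert_{r,2}$. The version on $\Sigma$ follows the same template after flattening by $\phi$, with the sharper threshold $r>n/2$ reflecting that $\Sigma^t$ has tangential dimension $n-1$.

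The main obstacle I anticipate is the asymmetric treatment of the time derivative: the $\vert\kern-0.25ex\vert\kern-0.25ex\vert\cdot\vert\kern-0.25ex\vert\kern-0.25ex\vert_{r,2}$ norm only controls \emph{one} time derivative, so in the $j=1$ term one must place $\partial_t f_1$ in $L^2_tH^{r-1}_x$ while placing $f_2$ pointwise in time, and it is precisely here that the trace inequality from the first part is indispensable. On $\Sigma$ there is the extra bookkeeping that $\partial_t|_\Sigma$ is transverse to the surface only up to the correction $\partial_t\phi\cdot\nabla_{x'}$; the estimate $\vert\kern-0.25ex\vert\kern-0.25ex\vert d\phi-dt\vert\kern-0.25ex\vert\kern-0.25ex\vert_{s_0,2,\Sigma}\lesssim\epsilon_1$ from Proposition \ref{r2} ensures that this correction is small in $C^\delta$ and can be absorbed as a perturbation. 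No macros beyond those already defined in the excerpt are needed.
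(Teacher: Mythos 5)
The paper offers no proof of this lemma (it is quoted from Smith--Tataru \cite{ST}), and your overall strategy --- a Littlewood--Paley trace-in-time argument for the first two displays, followed by a Leibniz split and product estimates for the algebra property, with the $\Sigma$ versions reducing to the flat case in the $(t,x')$ parametrization --- is exactly the standard route. Two points in your write-up, however, do not close as stated.

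The substantive one is the cross term $\partial_t f_1\cdot f_2$ in the $j=1$ part of the product estimate. Lemma \ref{cj} is symmetric: applied at regularity $r-1$ it produces \emph{both} $\|f_2\|_{L^\infty_x}\|\partial_t f_1\|_{H^{r-1}_x}$ \emph{and} $\|\partial_t f_1\|_{L^\infty_x}\|f_2\|_{H^{r-1}_x}$, and the second term is not controlled: the norm $\vert\kern-0.25ex\vert\kern-0.25ex\vert f_1\vert\kern-0.25ex\vert\kern-0.25ex\vert_{r,2}$ only gives $\partial_t f_1\in L^2_tH^{r-1}_x$, and $r-1$ can lie below $n/2$ (e.g.\ $n=3$, $r=2.1$), so $H^{r-1}\not\hookrightarrow L^\infty$. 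You cannot "choose" which factor Kato--Ponce places in $L^\infty_x$. The correct tool is the asymmetric Sobolev multiplication law $H^{s}\cdot H^{\sigma}\to H^{\sigma}$ for $s>n/2$, $|\sigma|\le s$ (Lemma \ref{ps} in this paper for $n=3$), applied with $s=r-\tfrac12>n/2$ on the undifferentiated factor (controlled pointwise in $t$ via your trace inequality) and $\sigma=r-1$ on $\partial_t f_1$; the hypothesis $-(r-\tfrac12)\le r-1\le r-\tfrac12$ holds since $r\ge 1$. With that substitution the $j=1$ term closes, and the $j=0$ term works exactly as you describe.

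The minor one is the fundamental-theorem-of-calculus step: the right-hand side contains $\|P_jf(-2,\cdot)\|_{L^2_x}^2$, a fixed-time quantity that is not itself controlled by the $L^2_t$-based norm. The standard repair is to write the inequality with an arbitrary base time $t_0$ and average over $t_0\in[-2,2]$, which replaces that term by $\tfrac14\int_{-2}^2\|P_jf(s,\cdot)\|_{L^2_x}^2\,ds$; the rest of your summation in $j$ then goes through.
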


\begin{Lemma}[\cite{AZ}, Lemma 6.5]\label{te1}
Assume $2<s_0<s<\frac52$. Let $\bU=(p(h), {u}^1, {u}^2, {u}^3)^{\mathrm{T}}$ satisfy the symmetric hyperbolic system
\begin{equation*}\label{505}
  A^\alpha(\bU) \partial_{ \alpha } \bU= 0,
\end{equation*}
where $A^\alpha$ ($\alpha=0,1,2,3$) is defined in \eqref{A0}-\eqref{A3}.
Then we have the characteristic energy estimates
\begin{equation*}\label{te10}
\begin{split}
 \vert\kern-0.25ex\vert\kern-0.25ex\vert  \bU \vert\kern-0.25ex\vert\kern-0.25ex\vert_{s_0,2,\Sigma} & \lesssim \|d \bU \|_{L^2_{[-2,2]} L^{\infty}_x}+ \| \bU \|_{L^{\infty}_{[-2,2]}H_x^{s_0}}.
   \end{split}
\end{equation*}
\end{Lemma}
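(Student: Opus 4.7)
\textbf{Proof plan for Lemma \ref{te1}.} The plan is to establish the characteristic energy identity directly from the symmetric hyperbolic structure \eqref{QHl}, and then recover fractional regularity on $\Sigma$ by means of a Littlewood--Paley decomposition combined with the change of coordinates flattening $\Sigma$. First I would flatten the surface using the map $(t, x', x_3) \mapsto (t, x', x_3 + \phi(t,x'))$ from Lemma \ref{te0}; by \eqref{504} the associated Jacobian and its derivatives are controlled by $\epsilon_1$, so after this change of variables $\Sigma$ becomes $\{x_3 = 0\}$, and $\bU$ is transformed to $\widetilde{\bU}$ satisfying a symmetric hyperbolic system $\widetilde{A}^\alpha(\widetilde{\bU})\partial_\alpha \widetilde{\bU} = 0$ of the same structure, with coefficients that are small perturbations of the flat ones. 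The benefit is that the characteristic trace norm $\vert\kern-0.25ex\vert\kern-0.25ex\vert \bU\vert\kern-0.25ex\vert\kern-0.25ex\vert_{s_0,2,\Sigma}$ becomes the standard mixed space-time norm of $\widetilde{\bU}|_{x_3 = 0}$, which we can attack by Plancherel in $(t,x')$.

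Next I would apply the Littlewood--Paley projector $P_j$ (in the full $(t,x)$ variables, or just in $x$) to the flattened system, yielding
\begin{equation*}
\widetilde{A}^\alpha \partial_\alpha P_j \widetilde{\bU} = [\,\widetilde{A}^\alpha, P_j\,]\, \partial_\alpha \widetilde{\bU}.
\end{equation*}
Multiplying by $P_j \widetilde{\bU}$ and integrating over the slab $[-2, t^\star] \times \{x_3 \leq 0\}$ (or a suitable truncation), the divergence theorem produces, modulo lower order commutators, the flux on $\{x_3 = 0\} = \Sigma$ weighted by $\widetilde{A}^3$. Because $\Sigma$ was constructed as a null hypersurface for $\mathbf{g}$, after flattening the matrix $\widetilde{A}^3$ evaluated on the perturbed background is positive semi-definite with the expected null direction, up to an $O(\epsilon_1)$ error that is absorbed. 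This provides the key coercive estimate
\begin{equation*}
\|P_j \widetilde{\bU}\|_{L^2_{t,x'}(\Sigma)}^2 \lesssim \|P_j \widetilde{\bU}(-2,\cdot)\|_{L^2_x}^2 + \int_{-2}^{2}\! \int \bigl(|[\,\widetilde{A}^\alpha, P_j\,]\partial_\alpha \widetilde{\bU}|\,|P_j\widetilde{\bU}| + |d\widetilde{A}|\,|P_j\widetilde{\bU}|^2\bigr)\, dx\,dt.
\end{equation*}
An analogous estimate holds for $P_j \partial_t \widetilde{\bU}$ after differentiating the equation once in time (and solving algebraically for $\partial_t \widetilde{\bU}$ via $\widetilde{A}^0$, which is uniformly positive definite by \eqref{A0} and the bootstrap bounds).

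I would then multiply by $2^{2 s_0 j}$ and sum in $j$. The commutator $[\widetilde{A}^\alpha, P_j]\partial_\alpha \widetilde{\bU}$ is handled by Lemma \ref{jh}, Lemma \ref{cj}, and Lemma \ref{jh0} (applied to $\widetilde{A}^\alpha = A^\alpha(\widetilde{\bU})$), yielding a bound of the form
$\|d\widetilde{\bU}\|_{L^\infty_x} \|\widetilde{\bU}\|_{H^{s_0}_x}$ in $L^2_t$, which exactly matches the right-hand side of \eqref{te10} after applying Cauchy--Schwarz and the Sobolev embedding (here $s_0 > 2 = \tfrac{n+1}{2}$ in spatial dimension $n = 3$). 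Finally, pulling the estimate back through the change of variables using Lemma \ref{te0} (and Lemma \ref{te2} to handle the fractional traces) gives \eqref{te10} for $\bU$ on $\Sigma$.

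The main obstacle I expect is the handling of the commutator $[\widetilde{A}^\alpha, P_j]\partial_\alpha \widetilde{\bU}$ when $s_0$ is a fractional exponent close to $2$: the natural commutator bound gives losses that must be balanced against the regularity of the coefficients $A^\alpha(\bU)$, which themselves are only in $H^s_x$ with $s$ slightly above $s_0$. One must exploit that $A^\alpha$ depends smoothly on $\bU$ and that $\|\bU\|_{L^\infty_{[-2,2]} H^{s_0}_x}$ already appears on the right-hand side, so that the Moser-type estimate (Lemma \ref{jh0}) and the product estimate in Lemma \ref{cj} suffice without invoking the slightly stronger $H^s$-control. A secondary subtlety is the coercivity of $\widetilde{A}^3$ on $\Sigma$: it requires that $d\phi - dt$ be small not only in $L^\infty$ but also with enough regularity to prevent loss on the principal part, which is guaranteed by \eqref{502} and the bootstrap assumption $G(h,\bu) \leq 2\epsilon_1$.
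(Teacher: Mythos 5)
The paper itself does not prove this lemma; it is quoted from \cite{AZ}. Judged against what a correct argument must contain, your proposal has a genuine gap at its central step, namely the claimed coercivity of the boundary flux.

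The hypersurface $\Sigma$ is by construction a \emph{null} hypersurface for the acoustic metric, hence a \emph{characteristic} hypersurface for the symmetric hyperbolic system \eqref{QHl}: if $N$ is the conormal to $\Sigma$, then the acoustic factor of $\det(A^\alpha N_\alpha)$ vanishes, so the flux matrix $\widetilde{A}^3=A^3-\partial_t\phi\,A^0-\partial_a\phi\,A^a$ is only positive \emph{semi}-definite, with a one-dimensional kernel (in the flat model $A^0=I$, $c_s=1$, $\theta=e_3$, the kernel of $A^0-A^3$ is spanned by $(1,0,0,1)$, i.e.\ the outgoing Riemann invariant $p+v^3$, and the flux identity controls only $\int_\Sigma\bigl(|p-v^3|^2+|v^1|^2+|v^2|^2\bigr)$). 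You acknowledge the null direction but then assert a coercive bound on the full norm $\|P_j\widetilde{\bU}\|_{L^2_{t,x'}(\Sigma)}^2$; these two statements are incompatible. The component of $\Lambda_{x'}^{s_0}\bU$ along the kernel is invisible to the flux, and it is exactly the sound wave travelling \emph{along} $\Sigma$, so it cannot be dismissed as an $O(\epsilon_1)$ perturbation: the degeneracy is exact on the support of the solution, not approximate. This is precisely the half-derivative that the trace theorem fails to supply and is therefore the entire content of the lemma; without it the proof reduces to the (insufficient) trace estimate $H^{s_0}_x\to H^{s_0-1/2}_{x'}(\Sigma^t)$.

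Recovering the degenerate component requires additional structure that your plan does not invoke. The natural attempt --- the kernel component satisfies a transport equation along the null generator $L$ of $\Sigma$ with source given by angular derivatives of the good components --- does not close naively either, because at top order the source $\Lambda_{x'}^{s_0}\nabla\!\!\!/\,\bU_{\mathrm{good}}$ costs one more tangential derivative than the flux provides. A correct proof must exploit further structure of the Euler system (for instance the second-order wave equations \eqref{WTe} for $h$ and $\bu$, whose energy flux through a null hypersurface controls \emph{all} tangential derivatives of the restriction, combined with the transversality of the material derivative $\mathbf{T}$ to $\Sigma$ for the vortical part, as in Lemma \ref{te3}). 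The remaining ingredients of your plan (flattening $\Sigma$, tangential Littlewood--Paley, Kato--Ponce commutators for $A^\alpha(\bU)$) are standard and unobjectionable, but they do not touch the real difficulty. One smaller point: if $P_j$ is taken in the full spatial variables rather than tangentially, reassembling the $H^{s_0}_{x'}(\Sigma^t)$ norm from the frequency-localized pieces needs an extra almost-orthogonality argument on the two-dimensional slices.
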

\subsubsection{Characteristic energy estimates for enthalpy and velocity}
For $(h,\bu, \bw) \in \mathcal{H}$, using \eqref{402a} and \eqref{403}, so we have
\begin{equation}\label{5021}
	\vert\kern-0.25ex\vert\kern-0.25ex\vert h,\mathring{\bu},u^0-1 \vert\kern-0.25ex\vert\kern-0.25ex\vert_{s,\infty}+\vert\kern-0.25ex\vert\kern-0.25ex\vert \bw \vert\kern-0.25ex\vert\kern-0.25ex\vert_{s_0,\infty} +\|d\bu,dh, \nabla \bu_{+}\|_{L^2_{[-2,2]}C^\delta_x}+ \|dh, \nabla \bu_{+}, d \bu\|_{L^2_{[-2,2]} \dot{B}^{s_0-2}_{\infty,2}} \lesssim \epsilon_2.
\end{equation}
\begin{corollary}\label{vte}
Suppose that $(h, \bu, \bw) \in \mathcal{H}$. Then we have
\begin{equation}\label{335}
\vert\kern-0.25ex\vert\kern-0.25ex\vert (u^0-1,\mathring{\bu}) \vert\kern-0.25ex\vert\kern-0.25ex\vert_{s_0,2,\Sigma}+ \vert\kern-0.25ex\vert\kern-0.25ex\vert h \vert\kern-0.25ex\vert\kern-0.25ex\vert_{s_0,2,\Sigma} \lesssim \epsilon_2.
\end{equation}
\end{corollary}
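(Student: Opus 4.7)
\medskip

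\noindent\textbf{Proof plan for Corollary \ref{vte}.}
The natural strategy is to apply Lemma \ref{te1} directly to $\bU=(p(h),\mathring{\bu})^{\mathrm{T}}$, which by Lemma \ref{QH} solves the symmetric hyperbolic system \eqref{505}, and then convert the resulting characteristic-trace bound into the stated bound for $(u^{0}-1,\mathring{\bu})$ and $h$. Concretely, Lemma \ref{te1} gives
\begin{equation*}
\vert\kern-0.25ex\vert\kern-0.25ex\vert\bU\vert\kern-0.25ex\vert\kern-0.25ex\vert_{s_{0},2,\Sigma}\lesssim \|d\bU\|_{L^{2}_{[-2,2]}L^{\infty}_{x}}+\|\bU\|_{L^{\infty}_{[-2,2]}H^{s_{0}}_{x}},
\end{equation*}
so everything reduces to bounding the right-hand side by a universal multiple of $\epsilon_{2}$ using the bootstrap hypotheses \eqref{402a}--\eqref{403}.

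\smallskip

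\noindent For the first term, since $p=p(h)$ is a smooth function of $h$ with $p(0)=0$, chain rule yields $dp(h)=p'(h)\,dh$, hence $\|d(p(h))\|_{L^{2}_{[-2,2]}L^{\infty}_{x}}\lesssim (1+\|h\|_{L^{\infty}})\|dh\|_{L^{2}_{[-2,2]}L^{\infty}_{x}}$, which by \eqref{402a} and \eqref{403} is $\lesssim\epsilon_{2}$. Combined with the direct bound on $\|d\mathring{\bu}\|_{L^{2}L^{\infty}}$ from \eqref{403}, one gets $\|d\bU\|_{L^{2}_{[-2,2]}L^{\infty}_{x}}\lesssim\epsilon_{2}$. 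For the second term, Lemma \ref{jh0} applied to $p(h)$ (with the $L^{\infty}$ bound from the bootstrap) gives $\|p(h)\|_{L^{\infty}_{[-2,2]}H^{s_{0}}_{x}}\lesssim(1+\|h\|_{L^{\infty}})\|h\|_{L^{\infty}_{[-2,2]}H^{s_{0}}_{x}}\lesssim\epsilon_{2}$, and $\|\mathring{\bu}\|_{L^{\infty}_{[-2,2]}H^{s_{0}}_{x}}\leq\|\mathring{\bu}\|_{L^{\infty}_{[-2,2]}H^{s}_{x}}\lesssim\epsilon_{2}$ by \eqref{402a}. Together this yields $\vert\kern-0.25ex\vert\kern-0.25ex\vert\bU\vert\kern-0.25ex\vert\kern-0.25ex\vert_{s_{0},2,\Sigma}\lesssim\epsilon_{2}$.

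\smallskip

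\noindent It remains to pass from the bound on $\bU=(p(h),\mathring{\bu})$ to the bound on $(h,\mathring{\bu},u^{0}-1)$. Since $p'(0)\neq 0$ and $p(h)$ is smooth near $0$ (using $\|h\|_{L^{\infty}}\leq 1+C_{0}$ from \eqref{HEw} and the bootstrap), the map $h\mapsto p(h)$ is a local diffeomorphism with smooth inverse vanishing at $0$; applying Lemma \ref{jh0} to this inverse converts the trace estimate for $p(h)$ into one for $h$ with the same $\epsilon_{2}$ bound. Finally, the constraint $u^{0}-1=\sqrt{1+|\mathring{\bu}|^{2}}-1$ is a smooth function of $\mathring{\bu}$ vanishing at $\mathring{\bu}=0$, so one more application of Lemma \ref{jh0} (together with Lemma \ref{te0} to remain in the coordinates adapted to $\Sigma$) yields $\vert\kern-0.25ex\vert\kern-0.25ex\vert u^{0}-1\vert\kern-0.25ex\vert\kern-0.25ex\vert_{s_{0},2,\Sigma}\lesssim\vert\kern-0.25ex\vert\kern-0.25ex\vert\mathring{\bu}\vert\kern-0.25ex\vert\kern-0.25ex\vert_{s_{0},2,\Sigma}\lesssim\epsilon_{2}$. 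Adding these pieces gives \eqref{335}.

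\smallskip

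\noindent The only genuine technical point is the composition estimate controlling fractional $H^{s_{0}}$-norms of $p(h)$ and of $\sqrt{1+|\mathring{\bu}|^{2}}-1$ on the trace to $\Sigma$; this is handled routinely by Lemma \ref{jh0} once the $L^{\infty}$ smallness from \eqref{HEw}--\eqref{402a} is in place. No new estimate beyond what is already stated in the preliminaries is needed.
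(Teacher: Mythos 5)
Your proposal matches the paper's proof: the paper obtains Corollary \ref{vte} exactly by applying Lemma \ref{te1} to $\bU=(p(h),\mathring{\bu})^{\mathrm{T}}$ and invoking the bootstrap bounds \eqref{402a}--\eqref{403}, and you merely spell out the routine chain-rule, composition and inversion steps that the paper leaves implicit. The only caveat is that $p(0)$ need not vanish for the equation of state \eqref{mo3}, so Lemma \ref{jh0} should be applied to $p(h)-p(0)$ rather than to $p(h)$ itself; this normalization is already implicit in the paper's use of $\|\bU\|_{L^\infty_{[-2,2]}H^{s_0}_x}$ in Lemma \ref{te1} and does not affect the argument.
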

\begin{proof}
	By \eqref{5021} and Lemma \ref{te1}, we obtain \eqref{335}.
\end{proof}

\begin{Lemma}\label{fre}
	Let $\bU=(p(h), u^1,u^2,u^3)^{\mathrm{T}}$ satisfy \eqref{QHl}. Let $P_{\leq j}=\sum_{k=0}^{j} P_k$. Then
	\begin{equation*}\label{508}
		\vert\kern-0.25ex\vert\kern-0.25ex\vert  2^j(\bU-P_{\leq j} \bU), d P_{\leq j} \bU, 2^{-j} d \nabla P_{\leq j} \bU \vert\kern-0.25ex\vert\kern-0.25ex\vert_{s_0-1,2,\Sigma} \lesssim \epsilon_2.
	\end{equation*}
\end{Lemma}
\begin{proof}
	Let $\Delta_0$ be a standard multiplier of order $0$ on $\mathbb{R}^3$, such that $\Delta_0$ is additionally bounded on $L^\infty(\mathbb{R}^3)$. Clearly,
	\begin{equation*}
		A^0(\bU)(\Delta_0U)_t+ \sum^3_{i=1}A^i(\bU)(\Delta_0 \bU)_{x_i}= -[\Delta_0, A^\alpha(\bU)]\partial_{x_\alpha}\bU.
	\end{equation*}
	Due to Lemma \ref{te1}, we have
	\begin{equation}\label{60}
		\vert\kern-0.25ex\vert\kern-0.25ex\vert \Delta_0U\vert\kern-0.25ex\vert\kern-0.25ex\vert^2_{s_0,2,\Sigma} \lesssim \|d \bU \|_{L^2_t L^{\infty}_x}\| \Delta_0 \bU\|^2_{L^{\infty}_tH_x^{s}}+\| [\Delta_0, A^\alpha(\bU)]\partial_{x_\alpha}\bU \|_{L^{2}_tH_x^{s_0}}\| \Delta_0 \bU\|_{L^{\infty}_tH_x^{s_0}}.
	\end{equation}
	By commutator estimates, we obtain
	\begin{equation*}
		\| [\Delta_0, A^\alpha(\bU)]\partial_{x_\alpha}\bU \|_{L^{2}_tH_x^{s_0}} \lesssim \|d \bU \|_{L^2_t L^{\infty}_x}\| \Delta_0 \bU\|_{L^{\infty}_tH_x^{s_0}}.
	\end{equation*}
	Hence, \eqref{60} yields
	\begin{equation}\label{60e}
		\vert\kern-0.25ex\vert\kern-0.25ex\vert \Delta_0 \bU \vert\kern-0.25ex\vert\kern-0.25ex\vert^2_{s_0,2,\Sigma} \lesssim \|d \bU \|_{L^2_t L^{\infty}_x}\| \Delta_0 \bU\|^2_{L^{\infty}_tH_x^{s_0}}.
	\end{equation}
	To control the norm of $2^j(\bU-P_{\leq j} \bU)$, we write
	\begin{equation*}
		2^j(\bU-P_{\leq j} \bU)= 2^j \sum_{m\geq j}\Delta_m \bU,
	\end{equation*}
	where $\Delta_m \bU$ satisfies the above conditions for $\Delta_0 \bU$. Applying \eqref{60e} and replacing $s_0$ to $s_0-1$, we get
	\begin{equation*}
		\begin{split}
			\vert\kern-0.25ex\vert\kern-0.25ex\vert 2^j(\bU-P_{\leq j} \bU) \vert\kern-0.25ex\vert\kern-0.25ex\vert^2_{s_0-1,2,\Sigma}
			\lesssim & \sum_{m\geq j} \vert\kern-0.25ex\vert\kern-0.25ex\vert 2^m \Delta_m \bU \vert\kern-0.25ex\vert\kern-0.25ex\vert^2_{s_0-1,2,\Sigma}
			\\
			\lesssim & \|d \bU \|_{L^2_t L^{\infty}_x} \sum_{m\geq j} (2^m\|\Delta_m \bU\|^2_{L^{\infty}_tH_x^{s_0}})
			\\
			\lesssim & \|d \bU \|_{L^2_t L^{\infty}_x} \| \bU \|^2_{L^{\infty}_tH_x^{s_0}} \lesssim \epsilon^2_2.
		\end{split}
	\end{equation*}
	Taking square of it, we can see
	\begin{equation*}
		\vert\kern-0.25ex\vert\kern-0.25ex\vert 2^j(\bU-P_{\leq j} \bU) \vert\kern-0.25ex\vert\kern-0.25ex\vert_{s_0-1,2,\Sigma}
		\lesssim  \epsilon_2.
	\end{equation*}
	Finally, applying \eqref{60} to $\Delta_0=P_{\leq j}$ and $\Delta_0=2^{-j}\nabla P_{\leq j}$ shows that
	\begin{equation*}
		\vert\kern-0.25ex\vert\kern-0.25ex\vert d P_{\leq j}\bU\vert\kern-0.25ex\vert\kern-0.25ex\vert_{s_0-1,2,\Sigma} +\vert\kern-0.25ex\vert\kern-0.25ex\vert 2^{-j} d \nabla P_{\leq j}\bU \vert\kern-0.25ex\vert\kern-0.25ex\vert_{s_0-1,2,\Sigma} \lesssim \epsilon_2.
	\end{equation*}
	Therefore, the proof of Lemma \ref{fre} is completed.
\end{proof}
Based on the above lemmas, we are ready to prove the following result.
\begin{proposition}\label{r1}
	Let $(h, \bu, \bw) \in \mathcal{H}$ so that $\Re(h,\bu, \bw) \leq 2 \epsilon_1$. Then
	\begin{equation}\label{501}
		\vert\kern-0.25ex\vert\kern-0.25ex\vert  {\mathbf{g}}^{\alpha \beta}-m^{\alpha \beta} \vert\kern-0.25ex\vert\kern-0.25ex\vert_{s_0,2,{\Sigma_{\theta,r}}} + \vert\kern-0.25ex\vert\kern-0.25ex\vert 2^{j}({\mathbf{g}}^{\alpha \beta}-S_j {\mathbf{g}}^{\alpha \beta}), d S_j {\mathbf{g}}^{\alpha \beta}, 2^{-j} \nabla S_j d {\mathbf{g}}^{\alpha \beta}  \vert\kern-0.25ex\vert\kern-0.25ex\vert_{s_0-1,2,{\Sigma_{\theta,r}}} \lesssim \epsilon_2.
	\end{equation}
\end{proposition}
\begin{proof}
	Note $(h, \bu, \bw) \in \mathcal{H}$. By Lemma \ref{fre}, it is sufficient for us to verify that
	\begin{equation*}
		\begin{split}
			\vert\kern-0.25ex\vert\kern-0.25ex\vert {\mathbf{g}}^{\alpha \beta}-\mathbf{m}^{\alpha \beta}\vert\kern-0.25ex\vert\kern-0.25ex\vert_{s_0,2,\Sigma_{\theta,r}} \lesssim \epsilon_2.
		\end{split}
	\end{equation*}
	By Lemma \ref{te1}, \eqref{504}, and \eqref{5021}, we have
	\begin{equation*}
		\sup_{\theta,r}\vert\kern-0.25ex\vert\kern-0.25ex\vert (u^0-1,\mathring{\bu}) \vert\kern-0.25ex\vert\kern-0.25ex\vert_{s_0,2,\Sigma_{\theta,r}}+ \sup_{\theta,r}\vert\kern-0.25ex\vert\kern-0.25ex\vert h \vert\kern-0.25ex\vert\kern-0.25ex\vert_{s_0,2,\Sigma_{\theta,r}} \lesssim \epsilon_2.
	\end{equation*}
	Noting \eqref{AMd3}, and $\mathbf{g}^{00}=-1$, by Lemma \ref{te2}, and Corollary \ref{vte} we derive that
	\begin{equation*}
		\begin{split}
			\vert\kern-0.25ex\vert\kern-0.25ex\vert  {\mathbf{g}}^{\alpha \beta}-\mathbf{m}^{\alpha \beta}\vert\kern-0.25ex\vert\kern-0.25ex\vert _{s_0,2,\Sigma_{\theta,r}} & \lesssim \vert\kern-0.25ex\vert\kern-0.25ex\vert \bu \cdot \mathring{\bu} \vert\kern-0.25ex\vert\kern-0.25ex\vert_{s_0,2,\Sigma_{\theta,r}}+\vert\kern-0.25ex\vert\kern-0.25ex\vert c_s^2(h)-c_s^2(0)\vert\kern-0.25ex\vert\kern-0.25ex\vert_{s_0,2,\Sigma_{\theta,r}}
			\\
			& \lesssim \vert\kern-0.25ex\vert\kern-0.25ex\vert \mathring{\bu} \vert\kern-0.25ex\vert\kern-0.25ex\vert_{s_0,2,\Sigma_{\theta,r}}+ \vert\kern-0.25ex\vert\kern-0.25ex\vert (u^0-1,  \mathring{\bu}) \cdot \mathring{\bu} \vert\kern-0.25ex\vert\kern-0.25ex\vert_{s_0,2,\Sigma_{\theta,r}}
			+\vert\kern-0.25ex\vert\kern-0.25ex\vert h \vert\kern-0.25ex\vert\kern-0.25ex\vert_{s_0,2,\Sigma_{\theta,r}}
			\\
			& \lesssim \epsilon_2.
		\end{split}
	\end{equation*}
	Therefore, we have proved Proposition \ref{r1}.
\end{proof}
\subsubsection{Characteristic energy estimates for vorticity} We first introduce a lemma for transport equations.
\begin{Lemma} [\cite{AZ}, Lemma 6.7]\label{te3}
	Suppose that $(h, \bu, \bw) \in \mathcal{H}$. Let $f$ satisfy the following transport equation
	\begin{equation}\label{333}
		\mathbf{T} f=F,
	\end{equation}
	where $\mathbf{T}$ is defined in \eqref{opt}. Set $\mathrm{L}= \nabla(-\Delta)^{-1}\mathrm{curl}$. Then the following estimate holds
	\begin{equation}\label{teE}
		\begin{split}
			\vert\kern-0.25ex\vert\kern-0.25ex\vert  \mathrm{L} f\vert\kern-0.25ex\vert\kern-0.25ex\vert^2_{s_0-2,2,\Sigma} \lesssim & \ \big( \| \nabla \bu\|_{L^2_{[-2,2]}\dot{B}^{s_0-2}_{\infty,2}}+\vert\kern-0.25ex\vert\kern-0.25ex\vert d\phi-dt \vert\kern-0.25ex\vert\kern-0.25ex\vert_{s_0,2,\Sigma} \big) \|f\|^2_{{H}^{s_0-2}_x}(1+\vert\kern-0.25ex\vert\kern-0.25ex\vert d\phi-dt \vert\kern-0.25ex\vert\kern-0.25ex\vert_{s_0,2,\Sigma})
			\\
			& + \sum_{\sigma\in\{0,s_0-2 \}}\left|\int^2_{-2} \int_{\mathbb{R}^3} \Lambda^{\sigma}_{x'}{\mathrm{L} F} \cdot \Lambda^{\sigma}_{x'}\mathrm{L} fdxd\tau \right|.
		\end{split}
	\end{equation}
	Here, $\Lambda_{x'}$ is defined in \eqref{gam}.
\end{Lemma}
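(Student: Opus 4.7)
The plan is to derive a weighted characteristic energy identity for $\Lambda^\sigma_{x'}\mathrm{L} f$ on the hypersurface $\Sigma$, combining a surface flux computation in divergence form with the commutator estimates Lemma \ref{ceR} and Lemma \ref{ce}, and to absorb the curvature of $\Sigma$ via the straightening change of variables $\tilde x_3 = x_3 - \phi(t,x')$ from Lemma \ref{te0}. First I would apply $\mathrm{L}$ to \eqref{333} and then the tangential fractional derivative $\Lambda^\sigma_{x'}$ for $\sigma \in \{0, s_0-2\}$, getting
\begin{equation*}
\mathbf{T}\bigl(\Lambda^\sigma_{x'} \mathrm{L} f\bigr) = \Lambda^\sigma_{x'} \mathrm{L} F + \Lambda^\sigma_{x'}[\mathbf{T}, \mathrm{L}] f + [\mathbf{T}, \Lambda^\sigma_{x'}](\mathrm{L} f).
\end{equation*}
Multiplying by $\Lambda^\sigma_{x'} \mathrm{L} f$, rewriting $u^\kappa\partial_\kappa = \partial_\kappa(u^\kappa\,\cdot) - (\partial_\kappa u^\kappa)\,\cdot$ and using \eqref{REE} to control $\partial_\kappa u^\kappa$ by $\|\bu\cdot dh\|_{L^\infty_x}$, then integrating by parts over the slab $D\subset[-2,2]\times\mathbb{R}^3$ bounded by $\Sigma$, I get a surface flux
\begin{equation*}
\int_\Sigma |\Lambda^\sigma_{x'} \mathrm{L} f|^2 \, u^\kappa n_\kappa \, dS,
\qquad n_\kappa = (-\partial_t\phi,-\partial_1\phi,-\partial_2\phi,1).
\end{equation*}
Since $|\mathring{\bu}|$ is uniformly bounded by \eqref{402a}, and $d\phi-dt$ is small by \eqref{504}, the transport field is uniformly transverse to $\Sigma$ with $|u^\kappa n_\kappa|/u^0 \simeq 1$, so after summing over $\sigma\in\{0,s_0-2\}$ this flux dominates $\vert\kern-0.25ex\vert\kern-0.25ex\vert \mathrm{L} f\vert\kern-0.25ex\vert\kern-0.25ex\vert^2_{s_0-2,2,\Sigma}$.

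Next I would estimate the bulk terms arising from the right-hand side. For the Riesz commutator I apply Lemma \ref{ceR} with $a = s_0-2$, giving
\begin{equation*}
\|[\mathrm{L}, \mathbf{T}] f\|_{\dot{H}^{s_0-2}_x}
\lesssim \|\bu\|_{\dot{B}^{s_0-1}_{\infty,\infty}} \|f\|_{L^2_x}
+ \|\bu\|_{\dot{B}^{1}_{\infty,\infty}} \|f\|_{\dot{H}^{s_0-2}_x}.
\end{equation*}
Pairing with $\Lambda^{s_0-2}_{x'}\mathrm{L} f$, integrating in $t$ with Cauchy-Schwarz, and using the embedding $\nabla\bu\in \dot{B}^{s_0-2}_{\infty,2}\hookrightarrow \dot{B}^{s_0-2}_{\infty,\infty}$ converts this into the factor $\|\nabla\bu\|_{L^2_{[-2,2]}\dot{B}^{s_0-2}_{\infty,2}}\|f\|_{H^{s_0-2}_x}^2$. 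For the tangential commutator I observe that $\Lambda^\sigma_{x'}$ commutes with $\partial_t$ and $\partial_3$, so $[\mathbf{T},\Lambda^\sigma_{x'}] = [(u^0)^{-1}u^i,\Lambda^\sigma_{x'}]\partial_i$ for $i=1,2$, which vanishes when $\sigma=0$ and is controlled by Lemma \ref{ce} through $\|\nabla\bu\|_{L^\infty_x}\|\mathrm{L} f\|_{\dot H^{s_0-2}_x}$ when $\sigma = s_0-2\in(0,\tfrac12)$, again absorbed into the same factor. The forcing contribution remains exactly the integral pairing $\int \Lambda^\sigma_{x'}\mathrm{L} F\cdot\Lambda^\sigma_{x'}\mathrm{L} f$ displayed on the right of \eqref{teE}.

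The main obstacle is relating the ambient operator $\Lambda^\sigma_{x'}$ to a genuine tangential derivative along the curved surface $\Sigma$, since the trace onto $\Sigma$ is defined through the parametrization $(t,x')\mapsto(t,x',\phi(t,x'))$. To handle this I would invoke the change of variables $\tilde x_3 = x_3 - \phi(t,x')$ from Lemma \ref{te0}, which straightens $\Sigma$ into $\{\tilde x_3 = 0\}$. Under this pullback the transport operator and the flux weight acquire additive corrections proportional to $d\phi-dt$; applying the product estimates of Lemma \ref{te2} and the bootstrap bound \eqref{503}, these errors produce exactly the weighted factor $\vert\kern-0.25ex\vert\kern-0.25ex\vert d\phi-dt\vert\kern-0.25ex\vert\kern-0.25ex\vert_{s_0,2,\Sigma}\,\|f\|_{H^{s_0-2}_x}^2\,(1+\vert\kern-0.25ex\vert\kern-0.25ex\vert d\phi-dt\vert\kern-0.25ex\vert\kern-0.25ex\vert_{s_0,2,\Sigma})$ appearing in \eqref{teE}, the quadratic factor coming from the Jacobian together with a product of one low-regularity and one high-regularity $d\phi-dt$ coefficient. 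The hardest point is bookkeeping: one must verify that every error generated by straightening $\Sigma$ and by commuting $\Lambda^\sigma_{x'}$ with the non-flat tangential vector fields can be placed in $\dot H^{s_0-2}_x$ (paired against $\Lambda^\sigma_{x'}\mathrm{L} f$ via Cauchy--Schwarz) without losing a derivative, which is why the restriction $\sigma\in\{0,s_0-2\}$ together with the sharp trace Lemma \ref{te2} is essential.
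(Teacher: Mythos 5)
The paper does not actually prove this lemma — it is quoted from \cite{AZ} — and the closest in-paper analogue is the concrete instance carried out for $\bG-\bF$ in the proof of Lemma \ref{te20} (see \eqref{L7}--\eqref{L28}). Your strategy matches that argument: straighten $\Sigma$ via $x_3\mapsto x_3-\phi$, run a divergence-form transport energy identity so that transversality of $\mathbf{T}$ to the acoustically null surface produces the characteristic flux, control $[\mathbf{T},\mathrm{L}]$ by Lemma \ref{ceR} with $a=s_0-2$, and keep the forcing as an unexpanded pairing (which is essential, since in applications $F$ contains a gradient that must be integrated by parts later). The way you attribute the two factors of $\vert\kern-0.25ex\vert\kern-0.25ex\vert d\phi-dt\vert\kern-0.25ex\vert\kern-0.25ex\vert_{s_0,2,\Sigma}$ to the Jacobian and to the modified transport coefficients is also the right bookkeeping.

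There is one component of the conclusion your argument does not reach. By \eqref{d0}, $\vert\kern-0.25ex\vert\kern-0.25ex\vert\mathrm{L} f\vert\kern-0.25ex\vert\kern-0.25ex\vert^2_{s_0-2,2,\Sigma}$ is a supremum over $j\in\{0,1\}$, and your flux only dominates the $j=0$ piece $\|\mathrm{L} f\|^2_{L^2_tH^{s_0-2}_{x'}(\Sigma)}$. For $j=1$ you must bound $\|\partial_t(\mathrm{L} f|_\Sigma)\|_{L^2_tH^{s_0-3}_{x'}}$; decomposing the time-tangential field $e_0=\partial_t+\partial_t\phi\,\partial_3$ as $\mathbf{T}$ plus $\Sigma$-tangential spatial fields leaves a residual normal derivative $(\partial_t\phi-u^3/u^0)\,\partial_3\mathrm{L} f$ whose coefficient is $O(1)$ (not small), and whose trace in the negative-order space $H^{s_0-3}_{x'}$ is not controlled by anything you have produced. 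You need either to rerun the flux argument for $\partial_t\mathrm{L} f$ at regularity $s_0-3$ or to state explicitly that only the $j=0$ component is established (which is all that is used downstream, e.g.\ in \eqref{L20}). Two smaller points: Lemma \ref{ce} is stated for the full spatial $\Lambda^a_x$, not for $\Lambda^a_{x'}$ acting in the straightened tangential variables with $t$ and $\tilde x_3$ as parameters, so the tangential commutator needs its own (routine) justification; and the commutation with $\Lambda^\sigma_{x'}$ must be performed \emph{after} the change of variables — your first two paragraphs work in the ambient coordinates and only repair this in the third, whereas the $\vert\kern-0.25ex\vert\kern-0.25ex\vert d\phi-dt\vert\kern-0.25ex\vert\kern-0.25ex\vert$ factors in \eqref{teE} arise precisely from commuting $\Lambda^\sigma_{x'}$ with the $\phi$-dependent coefficients of the transformed transport operator.
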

We next give the characteristic energy estimates for vorticity as follows.

\begin{Lemma}\label{te20}
	Suppose that $(h, \bu, \bw) \in \mathcal{H}$. Let $\bW$ be defined in \eqref{MFd}. Then
	\begin{equation}\label{te201}
		\begin{split}
			\vert\kern-0.25ex\vert\kern-0.25ex\vert \bW  \vert\kern-0.25ex\vert\kern-0.25ex\vert_{s_0-1,2,\Sigma} \lesssim  \epsilon_2.
		\end{split}
	\end{equation}
\end{Lemma}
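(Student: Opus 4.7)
The plan is to derive the characteristic energy estimate for $\bW$ directly from the transport equation \eqref{CEQ1}. Since $\bW$ satisfies $u^\kappa\partial_\kappa W^\alpha = F^\alpha$, where $F^\alpha$ is the sum of the quadratic and cubic source terms appearing on the right-hand side of \eqref{CEQ1}, we divide by $u^0$ to obtain $\mathbf{T} W^\alpha = (u^0)^{-1} F^\alpha$. Applying $\Lambda_{x'}^{s_0-2}$ (acting in the $x'$ variables along the hypersurface $\Sigma$) to this equation, multiplying by $\Lambda_{x'}^{s_0-2}\mathbf{g}(\bW,\cdot)$ and integrating by parts over the region between $\Sigma$ and a Cauchy slice, the boundary term on $\Sigma$ produces $\vert\kern-0.25ex\vert\kern-0.25ex\vert \bW \vert\kern-0.25ex\vert\kern-0.25ex\vert_{s_0-1,2,\Sigma}^2$ up to the perturbation factor $(1+\vert\kern-0.25ex\vert\kern-0.25ex\vert d\phi-dt\vert\kern-0.25ex\vert\kern-0.25ex\vert_{s_0,2,\Sigma})$, which by Proposition \ref{r2} is $1+O(\epsilon_2)$. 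This reduces the problem to estimating the bulk contribution $\|\bW\|_{L^\infty_t H^{s_0-1}_x}^2$ plus a space-time integral of $\Lambda_{x'}^{s_0-2}[(u^0)^{-1}F^\alpha]\cdot \Lambda_{x'}^{s_0-2}W_\alpha$, in the spirit of Lemma \ref{te3}.

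For the bulk term, by \eqref{MFd} and Lemma \ref{cj} one has $\|\bW\|_{H^{s_0-1}_x}\lesssim (1+\|h\|_{H^{s_0}_x})\|\bw\|_{H^{s_0}_x}$, so \eqref{5021} gives $\|\bW\|_{L^\infty_t H^{s_0-1}_x}\lesssim \epsilon_2$. For the source integrals, I would classify the terms in \eqref{CEQ1} into three groups: (i) those linear in $\bW$ or $\bw$ multiplied by one or two copies of $(d\bu,dh)$; (ii) cubic and quartic lower-order products like $\mathrm{e}^{-h} w^\alpha w^\kappa\partial_\kappa h$; and (iii) the potentially dangerous term $\epsilon^{\alpha\beta\gamma\delta}u_\beta\partial_\delta u^\kappa\partial_\gamma w_\kappa$. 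Groups (i)–(ii) are controlled by Lemma \ref{ps}, Lemma \ref{lpe}, together with the Strichartz and energy bounds in \eqref{5021}, producing an $\epsilon_2^3$-type contribution.

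The main obstacle is the term in group (iii), where one factor of $\partial w$ appears at top order; estimating it directly through H\"older would cost a derivative. To handle it I would use the identity \eqref{OE00} applied to $\bw$, namely \eqref{cr04}, to decompose $\partial w_\gamma$ into three pieces: a contribution from $\mathrm{vort}^\delta(\bw) = W^\delta - c_s^{-2}\epsilon^{\delta\beta\mu\nu}u_\beta w_\nu\partial_\mu h$ (which is already estimated at order $s_0-2$ by the bootstrap), a transport-type piece $u_\alpha u^\gamma\partial_\gamma w_\beta$ that can be rewritten through \eqref{CEQ} as a cubic expression in $\bw$ and $d\bu,dh$, and a symmetric piece that can be handled like group (i). After this decomposition, the remaining top-order factor is always $\bW$ itself, which we absorb into the left-hand side via the usual Gronwall-in-$\Sigma$ argument, while the other pieces give $\epsilon_2^2\cdot\epsilon_2 = \epsilon_2^3$.

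Combining all contributions yields
\begin{equation*}
\vert\kern-0.25ex\vert\kern-0.25ex\vert \bW\vert\kern-0.25ex\vert\kern-0.25ex\vert_{s_0-1,2,\Sigma}^2 \lesssim \epsilon_2^2 + \epsilon_2\cdot \vert\kern-0.25ex\vert\kern-0.25ex\vert \bW\vert\kern-0.25ex\vert\kern-0.25ex\vert_{s_0-1,2,\Sigma}^2,
\end{equation*}
and absorbing the second term on the right (using $\epsilon_2\ll 1$) gives \eqref{te201}. The whole argument is the characteristic-surface analogue of the energy estimate of Theorem \ref{VE} for $\bW$, with Proposition \ref{r2} replacing the flat transport coefficient.
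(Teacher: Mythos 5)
Your overall scheme (characteristic energy estimate for a transport equation, absorbing the top-order $\bW$-contribution, Proposition \ref{r2} for the surface regularity) has the right shape, but the argument does not close at the level of $\bW$, and the fix you propose for the dangerous term does not work. To run the energy estimate for $\vert\kern-0.25ex\vert\kern-0.25ex\vert \bW\vert\kern-0.25ex\vert\kern-0.25ex\vert_{s_0-1,2,\Sigma}$ you must control the source of \eqref{CEQ1} at regularity $H^{s_0-1}_x$, and in particular the term $-2\epsilon^{\alpha\beta\gamma\delta}u_\beta\partial_\delta u^\kappa\partial_\gamma w_\kappa$ that you single out. Here $\partial\bu\in H^{s-1}_x$ and $\partial\bw\in H^{s_0-1}_x$ with $s-1,\,s_0-1<\tfrac32$, so Lemma \ref{ps} only places the product in $H^{s+s_0-\frac72}_x$, which is strictly below $H^{s_0-1}_x$ for $s<\tfrac52$; neither $\|\partial\bw\|_{L^\infty_x}$ nor $\|d\bu\|_{L^2\dot B^{s_0-1}_{\infty,2}}$ (one order above the Strichartz norm available in \eqref{403}) is at your disposal to compensate. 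Your proposed remedy — decomposing $\partial_\gamma w_\kappa$ via \eqref{cr04} — only rewrites the \emph{antisymmetric} part $\partial_\gamma w_\kappa-\partial_\kappa w_\gamma$ in terms of $\mathrm{vort}\,\bw$ and transport derivatives; the symmetric part $\partial_\gamma w_\kappa+\partial_\kappa w_\gamma$ survives, is a genuine top-order derivative of $\bw$ not reducible to $\bW$, $\partial_\alpha w^\alpha$ or $u^\kappa\partial_\kappa\bw$, and reproduces exactly the product you were trying to avoid. So it cannot be ``handled like group (i)'', and the claimed inequality with the absorbable $\epsilon_2\vert\kern-0.25ex\vert\kern-0.25ex\vert\bW\vert\kern-0.25ex\vert\kern-0.25ex\vert^2$ term is not established. (A smaller point: to reach the $(s_0-1)$-level norm you must differentiate the equation by $\Lambda^{s_0-1}_{x'}$, not $\Lambda^{s_0-2}_{x'}$.)

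The paper avoids this loss by never estimating $\bW$ at top order through its own transport equation. It writes $\|\Lambda^{s_0-1}_{x'}\mathring{\bW}\|$ through the decomposition of Lemma \ref{vor2}: the divergence \eqref{d24} and the time-derivative component (via the constraint $u_\alpha W^\alpha=0$) are lower order, and the curl component is $\bG=\mathrm{vort}\,\bW$, which satisfies the renormalized transport equation \eqref{SDe}, $u^\kappa\partial_\kappa(G^\alpha-F^\alpha)=\partial^\alpha\Gamma+E^\alpha$. There the would-be derivative loss has been reorganized into a perfect spacetime gradient $\partial^\alpha\Gamma$ — integrated by parts against $G_\alpha-F_\alpha$, using that $\partial_\alpha G^\alpha$ is lower order by antisymmetry (see \eqref{We30} and the treatment of $\mathrm{J}_4$, $\mathrm{J}_{61}$ in the paper's proof) — together with a transport derivative of $\bF$ absorbed into the evolved quantity $\bG-\bF$. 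This renormalization ($\bG-\bF$ and the null-form $\Gamma$ of Lemma \ref{VC}) is the key structural input your argument is missing; without it the bound \eqref{te201} does not follow.
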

\begin{proof}
	Due to \eqref{d0}, we have
	\begin{equation}\label{Cd}
		\begin{split}
			\vert\kern-0.25ex\vert\kern-0.25ex\vert \mathbf{W} \vert\kern-0.25ex\vert\kern-0.25ex\vert_{s_0-1,2,\Sigma}= &\| \mathbf{W} \|_{L^2_t H^{s_0-1}_{x'}}+ \| \partial_t \mathbf{W} \|_{L^2_t H^{s_0-2}_{x'}}
			\\
			=	& \| \mathbf{W} \|_{L^2_t L^2_{x'}(\Sigma)}+\| \Lambda^{s_0-1}_{x'} \mathbf{W} \|_{L^2_t L^2_{x'}(\Sigma)}+ \| \partial_t \mathbf{W} \|_{L^2_t H^{s_0-2}_{x'}}.
		\end{split}
	\end{equation}

	\textit{The bound for $\| \mathbf{W} \|_{L^2_t L^2_{x'}(\Sigma)}$}. For \eqref{CEQ1}, by changing of coordinates $x_3 \rightarrow x_3-\phi(t,x')$, multiplying $\mathbf{W}$ and integrating \eqref{CEQ} on $[-2,2]\times \mathbb{R}^3$, we infer
	\begin{equation}\label{999}
		\begin{split}
			\| \mathbf{W} \|^2_{L^2_t L^2_{x'}(\Sigma)} \lesssim & \ \|\nabla \bu\|_{L^1_tL^\infty_x} ( \|\nabla \bw\|_{L^\infty_t L^2_x}+\| \mathbf{W}\|_{L^\infty_t L^2_x}) \| \mathbf{W} \|_{L^\infty_t L^2_x}.
		\end{split}
	\end{equation}
	Taking square of \eqref{999}, and using \eqref{402}-\eqref{403}, we then derive
	\begin{equation}\label{OM0}
		\| \mathbf{W} \|_{L^2_t L^2_{x'}(\Sigma)} \lesssim \epsilon_2.
	\end{equation}

	\textit{The bound for $\| \Lambda^{s_0-1}_{x'} \mathbf{W} \|_{L^2_t L^2_{x'}(\Sigma)}$}. To bound $\| \Lambda^{s_0-1}_{x'} \mathbf{W} \|_{L^2_t L^2_{x'}(\Sigma)}$, so consider $\|  \nabla \mathbf{\mathring{\bW}} \|_{L^2_t H^{s_0-2}_{x'}(\Sigma)}$ and $\|  \nabla W^0 \|_{L^2_t H^{s_0-2}_{x'}(\Sigma)}$. By Hodge decomposition, we have
	\begin{equation}\label{OM1}
		\begin{split}
			\|  \nabla \mathbf{\mathring{\bW}} \|_{L^2_t H^{s_0-2}_{x'}(\Sigma)} &= \| \mathrm{L} ( \mathrm{curl} \mathbf{\mathring{\bW}})+\mathrm{H} ( \mathrm{div} \mathbf{\mathring{\bW}})\|_{L^2_t H^{s_0-2}_{x'}(\Sigma)}
			\\
			& \leq \| \mathrm{L} ( \mathrm{curl} \mathbf{\mathring{\bW}})\|_{L^2_t H^{s_0-2}_{x'}(\Sigma)}+\| \mathrm{H} ( \mathrm{div} \mathbf{\mathring{\bW}})\|_{L^2_t H^{s_0-2}_{x'}(\Sigma)},
		\end{split}
	\end{equation}
	where the operators $\mathrm{L}$ and $\mathrm{H}$ are given by
	\begin{equation}\label{LH}
		\mathrm{L} = \nabla(-\Delta)^{-1}\mathrm{curl}, \quad    \mathrm{H} = (-\Delta)^{-1}\nabla^2.
	\end{equation}
	For $\mathrm{div} \mathbf{\mathring{\bW}}=\partial_i W^i=\partial_\alpha W^\alpha- \partial_t W^0$, we can carry out
	\begin{equation}\label{OM5}
		\begin{split}
			\| \mathrm{H} ( \mathrm{div} \mathbf{\mathring{\bW}})\|_{L^2_t H^{s_0-2}_{x'}(\Sigma)}\leq & \| \mathrm{H} ( \partial_\alpha W^\alpha) \|_{L^2_t H^{s_0-2}_{x'}(\Sigma)}+ \| \mathrm{H} ( \partial_t W^0)\|_{L^2_t H^{s_0-2}_{x'}(\Sigma)}.
		\end{split}
	\end{equation}
	By \eqref{d24}, trace theorem, and H\"older's inequality, it follows
	\begin{equation}\label{OM6}
		\begin{split}
			\| \mathrm{H} ( \partial_\alpha W^\alpha) \|_{L^2_t H^{s_0-2}_{x'}(\Sigma)}
			\lesssim & \|  ( d\bu,dh)\cdot d\bw \|_{L^2_t H^{s_0-2}_{x'}(\Sigma)}+\| \bw
			\cdot d\bu \cdot dh \|_{L^2_t H^{s_0-2}_{x'}(\Sigma)}
			\\
			\lesssim & \|  ( d\bu,dh)\cdot d\bw \|_{L^2_t H^{s_0-3/2}_{x}}+\| \bw
			\cdot d\bu \cdot dh \|_{L^2_t H^{s_0-3/2}_{x}}
			\\
			\lesssim & \|  ( d\bu,dh)\|_{L^\infty_t H^{s-1}_{x}} ( \|d\bw \|_{L^\infty_t H^{s_0-1}_{x}}+ \| \bw\|_{L^\infty_t H^{s_0}_{x}}
			\| d\bu \|_{L^\infty_t H^{s-1}_{x}} ).
		\end{split}
	\end{equation}
	Due to \eqref{d21} and \eqref{d22}, using trace theorem and H\"older's inequality, we have
	\begin{equation}\label{OM7}
		\begin{split}
			\| \mathrm{H} ( \partial_t W^0) \|_{L^2_t H^{s_0-2}_{x'}(\Sigma)} \leq & \| \mathring{\bW} \cdot d\bu\|_{L^2_t H^{s_0-2}_{x'}(\Sigma)}
			+  \| (u^0)^{-1} \mathring{\bu} \cdot \partial_t \mathring{\bW} \|_{L^2_t H^{s_0-2}_{x'}(\Sigma)}
			\\
			\leq & C\| \mathring{\bW} \cdot d\bu\|_{L^2_t H^{s_0-2}_{x'}(\Sigma)}
			+  |(u^0)^{-1} \mathring{\bu}|\cdot \| \partial_t \mathring{\bW} \|_{L^2_t H^{s_0-2}_{x'}(\Sigma)}
			\\
			\leq & |(u^0)^{-1} \mathring{\bu}|^2 \cdot \| \nabla \mathring{\bW} \|_{L^2_t H^{s_0-2}_{x'}(\Sigma)}+ C\| {\bW} \cdot (d\bu,dh) \|_{L^2_t H^{s_0-2}_{x'}(\Sigma)}
			\\
			& + C\|d{\bw} \cdot (d\bu,dh) \|_{L^2_t H^{s_0-2}_{x'}(\Sigma)} + C\| {\bw} \cdot d\bu \cdot dh\|_{L^2_t H^{s_0-2}_{x'}(\Sigma)}
			\\
			\leq  & (\frac{|\mathring{\bu}|}{|u^0|})^2 \cdot \| \nabla \mathring{\bW} \|_{L^2_t H^{s_0-2}_{x'}(\Sigma)}+ C\|  ( d\bu,dh)\|_{L^\infty_t H^{s-1}_{x}} \|d\bw \|_{L^\infty_t H^{s_0-1}_{x}}
			\\
			& +  C\|  ( d\bu,dh)\|_{L^\infty_t H^{s-1}_{x}} \| \bw\|_{L^\infty_t H^{s_0}_{x}}
			\| d\bu \|_{L^\infty_t H^{s-1}_{x}} .
		\end{split}
	\end{equation}
	Gathering \eqref{OM1}, \eqref{OM5}, \eqref{OM6}, and \eqref{OM7}, we can prove
	\begin{equation*}
		\begin{split}
			(1- \frac{|\mathring{\bu}|^2}{|u^0|^2} )	\| \nabla \mathbf{\mathring{\bW}}  \|_{L^2_t H^{s_0-2}_{x'}(\Sigma)} \leq &  \| \mathrm{L} ( \mathrm{curl} \mathbf{\mathring{\bW}})\|_{L^2_t H^{s_0-2}_{x'}(\Sigma)}+ C\|  ( d\bu,dh)\|_{L^\infty_t H^{s-1}_{x}} \|d\bw \|_{L^\infty_t H^{s_0-1}_{x}}
			\\
			& +  C\|  ( d\bu,dh)\|_{L^\infty_t H^{s-1}_{x}} \| \bw\|_{L^\infty_t H^{s_0}_{x}}
			\| d\bu \|_{L^\infty_t H^{s-1}_{x}} .
		\end{split}
	\end{equation*}
	Note $1\leq |u^0|\leq 1+C_0$. By using \eqref{muu}, \eqref{QHl}, \eqref{CEQ}, and \eqref{5021}, we therefore get
	\begin{equation}\label{OM8}
		\begin{split}
			\| \nabla \mathbf{\mathring{\bW}}  \|_{L^2_t H^{s_0-2}_{x'}(\Sigma)} \lesssim &  \| \mathrm{L} ( \mathrm{curl} \mathbf{\mathring{\bW}})\|_{L^2_t H^{s_0-2}_{x'}(\Sigma)}+ \|  ( d\bu,dh)\|_{L^\infty_t H^{s-1}_{x}} \|d\bw \|_{L^\infty_t H^{s_0-1}_{x}}
			\\
			& +  \|  ( d\bu,dh)\|_{L^\infty_t H^{s-1}_{x}} \| \bw\|_{L^\infty_t H^{s_0}_{x}}
			\| d\bu \|_{L^\infty_t H^{s-1}_{x}}
			\\
			\lesssim &  \| \mathrm{L} ( \mathrm{curl} \mathbf{\mathring{\bW}})\|_{L^2_t H^{s_0-2}_{x'}(\Sigma)}+ \epsilon_2^2.
		\end{split}
	\end{equation}
	By \eqref{d20}, we can obtain
	\begin{equation}\label{OM2}
		\begin{split}
			\|  \nabla W^0 \|_{L^2_t H^{s_0-2}_{x'}(\Sigma)} &\lesssim \|  \nabla \mathbf{\mathring{\bW}} \|_{L^2_t H^{s_0-2}_{x'}(\Sigma)}
			+ \|  \bW d\bu \|_{L^2_t H^{s_0-2}_{x'}(\Sigma)}.
		\end{split}
	\end{equation}
	By trace theorem, \eqref{MFd}, and \eqref{5021}, we have
	\begin{equation}\label{OM3}
		\begin{split}
			\|  \bW d\bu \|_{L^2_t H^{s_0-2}_{x'}(\Sigma)} \lesssim & \|  \bW d\bu \|_{L^2_t H^{s_0-1/2}_{x}}
			\\
			\lesssim & \| d\bw\|_{L^\infty_t H^{s_0-1}_x} \|d\bu \|_{L^{\infty}_t H^{s-1}_{x}}+ \| \bw\|_{L^\infty_t H^{s_0}_x}\|d\bu \|_{L^{\infty}_t H^{s-1}_{x}} \|dh \|_{L^{\infty}_t H^{s-1}_{x}}
			\\
			\lesssim & \epsilon^2_2.
		\end{split}
	\end{equation}
	Combining \eqref{OM8}, \eqref{OM2}, and \eqref{OM3}, we can prove that
	\begin{equation}\label{OM4}
		\begin{split}
			\|  \nabla \mathbf{{\bW}} \|_{L^2_t H^{s_0-2}_{x'}(\Sigma)} \lesssim \| \mathrm{L} ( \mathrm{curl} \mathbf{\mathring{\bW}})\|_{L^2_t H^{s_0-2}_{x'}(\Sigma)}+\epsilon^2_2 .
		\end{split}
	\end{equation}
	In \eqref{OM4}, we still need to bound $\|\Lambda^{s_0-2}_{x'}\mathrm{L} ( \mathrm{curl} \mathbf{\mathring{\bW}})\|_{L^2_t L^2_{x'}(\Sigma)}$. By \eqref{c2}, we have
	\begin{equation}\label{L0}
		\begin{split}
			& \|\Lambda^{s_0-2}_{x'}\mathrm{L} ( \mathrm{curl} \mathbf{\mathring{\bW}})\|_{L^2_t L^2_{x'}(\Sigma)}
			\\
			\lesssim & \|\Lambda^{s_0-2}_{x'}\mathrm{L} ( \bu\cdot {\bG})\|_{L^2_t L^2_{x'}(\Sigma)}
			+ \|\Lambda^{s_0-2}_{x'}\mathrm{L} ( \bW \cdot d\bu)\|_{L^2_t L^2_{x'}(\Sigma)}
			+ \|\Lambda^{s_0-2}_{x'}\mathrm{L} ( \bw \cdot d\bu \cdot dh)\|_{L^2_t L^2_{x'}(\Sigma)}
			\\
			\lesssim & \| \bu \cdot \Lambda^{s_0-2}_{x'}\mathrm{L} \bG\|_{L^2_t L^2_{x'}(\Sigma)}
			+ \| [\Lambda^{s_0-2}_{x'}\mathrm{L},\bu ]  \bG\|_{L^2_t L^2_{x'}(\Sigma)}
			+ \|\Lambda^{s_0-2}_{x'}\mathrm{L} ( \bW \cdot d\bu)\|_{L^2_t L^2_{x'}(\Sigma)}
			\\
			& + \|\Lambda^{s_0-2}_{x'}\mathrm{L} ( \bw \cdot d\bu \cdot dh)\|_{L^2_t L^2_{x'}(\Sigma)}.
		\end{split}
	\end{equation}
	By Sobolev imbeddings, trace theorem and \eqref{5021}, we can obtain
	\begin{equation}\label{L1}
		\begin{split}
			\| [\Lambda^{s_0-2}_{x'}\mathrm{L}, \bu ]  \bG\|_{L^2_t L^2_{x'}(\Sigma)} \lesssim & \|\bu \|_{L^\infty_t C_{x'}^{s_0-2}(\Sigma)} \|\bG\|_{L^2_t L^2_{x'}(\Sigma)}
			\\
			\lesssim & (\|u^0-1, \mathring{\bu}\|_{L^\infty_t H_{x'}^{s_0-1/2}(\Sigma)}+1)\|\bG\|_{L^2_t L^2_{x'}(\Sigma)}
			\\
			\lesssim & (\|u^0-1, \mathring{\bu}\|_{L^\infty_t H_{x}^{s}}+1)\|\bG\|_{L^2_t L^2_{x'}(\Sigma)}
			\\
			\lesssim & \|\bG\|_{L^2_t L^2_{x'}(\Sigma)}.
		\end{split}
	\end{equation}
	Due to H\"older's inequality, Sobolev imbeddings, trace theorem  and \eqref{5021}, we have
	\begin{equation}\label{L2}
		\begin{split}
			\| \bu \cdot \Lambda^{s_0-2}_{x'}\mathrm{L} \bG\|_{L^2_t L^2_{x'}(\Sigma)}
			\lesssim & \| \bu \|_{L^\infty_t L^\infty_{x'}(\Sigma)} \| \Lambda^{s_0-2}_{x'}\mathrm{L} \bG\|_{L^2_t L^2_{x'}(\Sigma)}
			\\
			\lesssim & (\|u^0-1, \mathring{\bu}\|_{L^\infty_t H_{x}^{s}}+1)\| \Lambda^{s_0-2}_{x'}\mathrm{L} \bG\|_{L^2_t L^2_{x'}(\Sigma)}
			\\
			\lesssim &\| \Lambda^{s_0-2}_{x'}\mathrm{L} \bG\|_{L^2_t L^2_{x'}(\Sigma)}.
		\end{split}
	\end{equation}
	Due to trace theorem and \eqref{5021}, we have
	\begin{equation}\label{L3}
		\begin{split}
			\| \Lambda^{s_0-2}_{x'}\mathrm{L}\left( \bW \cdot d\bu \right)\|_{L^2_t L^2_{x'}(\Sigma)} \lesssim & \| \mathrm{L}\left( \bW \cdot d\bu \right)\|_{L^2_t H^{s-3/2}_{x}}
			\\
			\lesssim & \| \bW \cdot d\bu \|_{L^\infty_t H^{s-3/2}_{x}}
			\\
			\lesssim & \|  d\bu \|_{L^\infty_tH^{s-1}_x} \| \bW \|_{L^\infty_tH^{1}_{x}}
			\\
			\lesssim & \epsilon^2_2.
		\end{split}
	\end{equation}
	Similarly, we can obtain
	\begin{equation}\label{L4}
		\begin{split}
			\| \Lambda^{s_0-2}_{x'}\mathrm{L}\left( ( \bw \cdot d\bu \cdot dh) \right)\|_{L^2_t L^2_{x'}(\Sigma)} \lesssim & \| \mathrm{L}\left(  \bw \cdot d\bu \cdot dh \right)\|_{L^2_t H^{s-3/2}_{x}}
			\\
			\lesssim & \| \bw \cdot d\bu \cdot dh \|_{L^\infty_t H^{s-3/2}_{x}}
			\\
			\lesssim & \|  d\bu \|_{L^\infty_tH^{s-1}_x} \|  d h \|_{L^\infty_tH^{s-1}_x} \| \bw \|_{L^\infty_tH^{s_0}_{x}}
			\\
			\lesssim & \epsilon^3_2.
		\end{split}
	\end{equation}
	To summarize from \eqref{OM4}, \eqref{L0}, \eqref{L1}, \eqref{L2}, \eqref{L3}, and \eqref{L4}, we get
	\begin{equation}\label{L6}
		\begin{split}
			\|\nabla \bW\|_{L^2_t H^{s_0-2}_{x'}(\Sigma)}
			\lesssim & \|\bG\|_{L^2_t L^2_{x'}(\Sigma)} +\| \Lambda^{s_0-2}_{x'}\mathrm{L} \bG\|_{L^2_t L^2_{x'}(\Sigma)}+ \epsilon_2^2.
		\end{split}
	\end{equation}

	\textit{The bound for $\|\bG\|_{L^2_t L^2_{x'}(\Sigma)}$.} By \eqref{SDe}, we can see
	\begin{equation}\label{L7}
		\begin{split}
			\mathbf{T} \left(G^\alpha-F^\alpha \right)
			=& K^\alpha,
		\end{split}
	\end{equation}
	where
	\begin{equation}\label{Kalpha}
		K^\alpha= (u^0)^{-1} E^\alpha
		+  \partial^\alpha \left( (u^0)^{-1} \Gamma \right)
		-\Gamma  \partial^\alpha \left( (u^0)^{-1}\right) .
	\end{equation}
	By changing of coordinates $x_3 \rightarrow x_3-\phi(t,x')$, then \eqref{L7} becomes to
	\begin{equation}\label{L8}
		\begin{split}
			(\partial_t+ \partial_t \phi \partial_{3}) (\widetilde{G}^\alpha-\widetilde{F}^\alpha )+ \frac{\widetilde{u}^i}{\widetilde{u}^0} \cdot (\partial_{i}+\partial_{i} \phi \partial_{3} )  (\widetilde{G}^\alpha-\widetilde{F}^\alpha )
			=& \widetilde{K}^\alpha,
		\end{split}
	\end{equation}
	where
	\begin{equation*}
		\begin{split}
			\widetilde{K}^\alpha=&  (\partial^\alpha+ \partial^\alpha \phi \partial_3) \left( (u^0)^{-1} \Gamma \right)+ (\widetilde{u}^0)^{-1} \widetilde{E}^\alpha
			-\widetilde{\Gamma}  (\partial^\alpha+ \partial^\alpha \phi \partial_3) \left( (\widetilde{u}^0)^{-1}\right) .
		\end{split}
	\end{equation*}
	Multiplying $(\widetilde{G}_\alpha-\widetilde{F}_\alpha )$ on \eqref{L8} and integrating it on $[-2,2]\times \mathbb{R}^3$, we therefore get
	\begin{equation}\label{L9}
		\begin{split}
			\vert\kern-0.25ex\vert\kern-0.25ex\vert \bG-\bF \vert\kern-0.25ex\vert\kern-0.25ex\vert^2_{0,2,\Sigma} =	
			\mathrm{J_1}+\mathrm{J_2}+\mathrm{J_3}+\mathrm{J_4}+\mathrm{J_5},
		\end{split}
	\end{equation}
	where $\mathrm{J_1}, \mathrm{J_2}, \cdots, \mathrm{J_5}$ are given by
	\begin{equation*}\label{L10}
		\begin{split}
			\mathrm{J_1}=&- \int^2_{-2} \int_{\mathbb{R}^3}\partial_t \phi \partial_{3} (\widetilde{G}^\alpha-\widetilde{F}^\alpha ) \cdot (\widetilde{G}_\alpha-\widetilde{F}_\alpha ) dx d\tau ,
			\\
			\mathrm{J_2}=& -\int^2_{-2} \int_{\mathbb{R}^3} \frac{\widetilde{u}^i}{\widetilde{u}^0} \cdot (\partial_{i}+\partial_{i} \phi \partial_{3} )  (\widetilde{G}^\alpha-\widetilde{F}^\alpha ) \cdot (\widetilde{G}_\alpha-\widetilde{F}_\alpha ) dx d\tau,
			\\
			\mathrm{J_3}=& \int^2_{-2} \int_{\mathbb{R}^3} (\widetilde{u}^0)^{-1} \widetilde{E}^\alpha  \cdot (\widetilde{G}_\alpha-\widetilde{F}_\alpha ) dx d\tau,
			\\
			\mathrm{J_4}=& \int^2_{-2} \int_{\mathbb{R}^3} (\partial^\alpha+ \partial^\alpha \phi \partial_3) \{   (\widetilde{u}^0)^{-1} \widetilde{\Gamma} \} \cdot (\widetilde{G}_\alpha-\widetilde{F}_\alpha ) dx d\tau,
			\\
			\mathrm{J_5}=& -\int^2_{-2} \int_{\mathbb{R}^3} \widetilde{\Gamma}  (\partial^\alpha+ \partial^\alpha \phi \partial_3) \left( (\widetilde{u}^0)^{-1}\right)  \cdot (\widetilde{G}_\alpha-\widetilde{F}_\alpha ) dx d\tau.
		\end{split}
	\end{equation*}
	For $\phi$ is independent of $x_3$, if we integrate $\mathrm{J_1}$ by parts, then we have
	\begin{equation}\label{J1}
		\mathrm{J_1}=0.
	\end{equation}
	Integrating $\mathrm{J_2}$ by parts, and using $\phi$ independent of $x_3$, we can obtain
	\begin{equation*}
		\begin{split}
			\mathrm{J_2}=& \int^2_{-2} \int_{\mathbb{R}^3} \partial_{i} (\frac{\widetilde{u}^i}{\widetilde{u}^0} ) (\widetilde{G}^\alpha-\widetilde{F}^\alpha ) \cdot (\widetilde{G}_\alpha-\widetilde{F}_\alpha ) dx d\tau
			\\
			&+
			\int^2_{-2} \int_{\mathbb{R}^3} \partial_{i} \phi \partial_{3}(\frac{\widetilde{u}^i}{\widetilde{u}^0} )    (\widetilde{G}^\alpha-\widetilde{F}^\alpha ) \cdot (\widetilde{G}_\alpha-\widetilde{F}_\alpha ) dx d\tau.
		\end{split}
	\end{equation*}
	Using H\"older's inequality, it follows
	\begin{equation}\label{J2e}
		\begin{split}
			| \mathrm{J_2} | \lesssim & \|\nabla \bu\|_{L^2_t L^\infty_x}(1+ \vert\kern-0.25ex\vert\kern-0.25ex\vert d\phi-dt \vert\kern-0.25ex\vert\kern-0.25ex\vert_{s_0,2,\Sigma} ) \|\bG-\bF\|^2_{L^\infty_t L^2_x}.
		\end{split}
	\end{equation}
	Inserting \eqref{Wd23}, \eqref{Wd29} to \eqref{J2e}, and using \eqref{5021}, it yields
	\begin{equation}\label{J2}
		\begin{split}
			| \mathrm{J_2} |
			\lesssim & \|d\bu\|_{L^2_t L^\infty_x} ( \|(d\bu,dh)\|^2_{L^\infty_t H^{s-1}_x} + \| \bw\|^2_{L^\infty_t H^{2}_x})(1 +\|(d\bu,dh)\|^2_{L^\infty_t H^{s-1}_x}   )
			\\
			&+ \|d\bu\|_{L^2_t L^\infty_x} ( \|(d\bu,dh)\|^2_{L^\infty_t H^{s-1}_x} + \| \bw\|^2_{L^\infty_t H^{2}_x})\|(d\bu,dh)\|^3_{L^\infty_t H^{s-1}_x}
			\\
			& + ( \|(d\bu,dh)\|^2_{L^\infty_t H^{s-1}_x} + \| \bw\|^2_{L^\infty_t H^{2}_x})(\|(d\bu,dh)\|_{L^\infty_t H^{s-1}_x} +\|(d\bu,dh)\|^2_{L^\infty_t H^{s-1}_x}  )
			\\
			& + ( \|(d\bu,dh)\|^2_{L^\infty_t H^{s-1}_x} + \| \bw\|^2_{L^\infty_t H^{2}_x})(\|(d\bu,dh)\|^3_{L^\infty_t H^{s-1}_x} +\|(d\bu,dh)\|^4_{L^\infty_t H^{s-1}_x}  )
			\\
			\lesssim & \epsilon^2_2.
		\end{split}
	\end{equation}
	By \eqref{Wd23}, \eqref{Wd24}, \eqref{Wd29}, \eqref{5021} and using H\"older's inequality again, we have
	\begin{equation}\label{J4}
		\begin{split}
			& | \mathrm{J_3} |+| \mathrm{J_5} |
			\\
			\lesssim & \int^2_{-2} \|\bE\|_{L^2_x}\|\bG-\bF\|_{L^2_x}d\tau+ \int^2_{-2} \|\Gamma \cdot d\bu\|_{L^2_x}\|\bG-\bF\|_{L^2_x}d\tau
			\\
			\lesssim & \|d\bu\|_{L^2_t L^\infty_x} ( \|(d\bu,dh)\|^2_{L^\infty_t H^{s-1}_x} + \| \bw\|^2_{L^\infty_t H^{2}_x})(1 +\|(d\bu,dh)\|^2_{L^\infty_t H^{s-1}_x}   )
			\\
			&+ \|d\bu\|_{L^2_t L^\infty_x} ( \|(d\bu,dh)\|^2_{L^\infty_t H^{s-1}_x} + \| \bw\|^2_{L^\infty_t H^{2}_x})\|(d\bu,dh)\|^3_{L^\infty_t H^{s-1}_x}
			\\
			& + ( \|(d\bu,dh)\|^2_{L^\infty_t H^{s-1}_x} + \| \bw\|^2_{L^\infty_t H^{2}_x})(\|(d\bu,dh)\|_{L^\infty_t H^{s-1}_x} +\|(d\bu,dh)\|^2_{L^\infty_t H^{s-1}_x}  )
			\\
			& + ( \|(d\bu,dh)\|^2_{L^\infty_t H^{s-1}_x} + \| \bw\|^2_{L^\infty_t H^{2}_x})(\|(d\bu,dh)\|^3_{L^\infty_t H^{s-1}_x} +\|(d\bu,dh)\|^4_{L^\infty_t H^{s-1}_x}  )
			\\
			\lesssim & \epsilon^2_2.
		\end{split}
	\end{equation}
	For $\mathrm{J_4}$, by changing of coordinates $x_3 \rightarrow x_3+\phi(t,x')$, we obtain
	\begin{equation*}\label{L12}
		\begin{split}
			\mathrm{J_4}=& \int^2_{-2} \int_{\mathbb{R}^3} \partial^\alpha \{   ({u}^0)^{-1} {\Gamma} \} \cdot ({G}_\alpha-{F}_\alpha ) dx d\tau.
		\end{split}
	\end{equation*}
	By chain rule, we further calculate $\mathrm{J_4}$ by
	\begin{equation}\label{L13}
		\begin{split}
			\mathrm{J_4}=& \int^2_{-2} \int_{\mathbb{R}^3}  \partial^\alpha\{  ({u}^0)^{-1} {\Gamma} \cdot {G}_\alpha \}  dx -\int^2_{-2} \int_{\mathbb{R}^3}   ({u}^0)^{-1} {\Gamma} \cdot \partial^\alpha {G}_\alpha dx
			d\tau
			\\
			&-\int^2_{-2} \int_{\mathbb{R}^3} \partial^\alpha \{   ({u}^0)^{-1} {\Gamma} \cdot {F}_\alpha \} dx - \int^2_{-2} \int_{\mathbb{R}^3}    ({u}^0)^{-1} {\Gamma}  \cdot \partial^\alpha {F}_\alpha  dx
			\\
			=& \left(\int_{\mathbb{R}^3} ({u}^0)^{-1} \Gamma \cdot G^0 dx\right)\big|^2_{-2} -\int^2_{-2} \int_{\mathbb{R}^3}   ({u}^0)^{-1} {\Gamma} \cdot \partial^\alpha {G}_\alpha dx
			d\tau
			\\
			&- \left(\int_{\mathbb{R}^3} ({u}^0)^{-1} \Gamma \cdot F^0 dx \right)\big|^2_{-2} + \int^2_{-2} \int_{\mathbb{R}^3}    ({u}^0)^{-1} {\Gamma}  \cdot \partial^\alpha {F}_\alpha  dx.
		\end{split}
	\end{equation}
	Inserting \eqref{L13} to \eqref{L9}, it follows
	\begin{equation}\label{L14}
		\begin{split}
			\vert\kern-0.25ex\vert\kern-0.25ex\vert \bG-\bF \vert\kern-0.25ex\vert\kern-0.25ex\vert^2_{0,2,\Sigma}
			=	& \mathrm{J_1}+\mathrm{J_2}+ \left(\int_{\mathbb{R}^3} ({u}^0)^{-1} \Gamma \cdot G^0 dx\right)\big|^2_{-2} - \left(\int_{\mathbb{R}^3} ({u}^0)^{-1} \Gamma \cdot F^0 dx \right)\big|^2_{-2}
			\\
			&+\mathrm{J_3}+\mathrm{J_5}-\int^2_{-2} \int_{\mathbb{R}^3}   ({u}^0)^{-1} {\Gamma} \cdot \partial^\alpha {G}_\alpha dx
			+\int^2_{-2} \int_{\mathbb{R}^3}    ({u}^0)^{-1} {\Gamma}  \cdot \partial^\alpha {F}_\alpha  dx.
		\end{split}
	\end{equation}
	Note $1\leq u^0 \leq 1+C_0$. Referring \eqref{Wd60}-\eqref{Wd63}, and using \eqref{5021}, we can derive that
	\begin{equation}\label{L15}
		\begin{split}
			& \left|\int^2_{-2} \int_{\mathbb{R}^3}   ({u}^0)^{-1} {\Gamma} \cdot \partial^\alpha {G}_\alpha dx \right|
			\\
			\lesssim &  \|(d\bu,dh)\|_{L^2_t L^\infty_x}
			( \|(d\bu, dh)\|^2_{L^\infty_t H^{s-1}_x} +  \| \bw\|^2_{L^\infty_t H^{2}_x} ) \|(d\bu, dh)\|_{L^\infty_t H^{s-1}_x}
			\\
			&   + ( \|(d\bu, dh)\|^2_{L^\infty_t H^{s-1}_x} +  \| \bw\|^2_{L^\infty_t H^{2}_x} ) \|(d\bu, dh)\|^3_{L^\infty_t H^{s-1}_x}
			\\
			\lesssim & \epsilon_2^2.
		\end{split}
	\end{equation}
	In a similar way, by referring \eqref{Wd61}-\eqref{Wd90} and using \eqref{5021}, we can obtain
	\begin{equation}\label{L16}
		\begin{split}
			& \left|\int^2_{-2} \int_{\mathbb{R}^3}    ({u}^0)^{-1} {\Gamma}  \cdot \partial^\alpha {F}_\alpha  dx \right|
			\\
			\lesssim & \|(d\bu,dh)\|_{L^2_t L^\infty_x}
			( \|(d\bu, dh)\|^2_{L^\infty_t H^{s-1}_x} +  \| \bw\|^2_{L^\infty_t H^{2}_x} ) \|(d\bu, dh)\|_{L^\infty_t H^{s-1}_x}
			\\
			&   + ( \|(d\bu, dh)\|^2_{L^\infty_t H^{s-1}_x} +  \| \bw\|^2_{L^\infty_t H^{2}_x} ) (\|(d\bu, dh)\|^2_{L^\infty_t H^{s-1}_x} + \|(d\bu, dh)\|^3_{L^\infty_t H^{s-1}_x})
			\\
			\lesssim & \epsilon_2^2.
		\end{split}
	\end{equation}
	By H\"older's inequality, \eqref{YXg}, \eqref{Wd29}, and \eqref{5021}, we also derive
	\begin{equation}\label{L00}
		\begin{split}
			& \| \int_{\mathbb{R}^3} ({u}^0)^{-1} \Gamma \cdot G^0 dx \|_{L^\infty_{[-2,2]}}
			\\
			\lesssim & ( \|(d\bu, dh)\|^2_{L^\infty_t H^{s-1}_x} +  \| \bw\|^2_{L^\infty_t H^{2}_x} ) (\|(d\bu, dh)\|^2_{L^\infty_t H^{s-1}_x} + \|(d\bu, dh)\|^3_{L^\infty_t H^{s-1}_x})
			\\
			\lesssim & \epsilon_2^2,
		\end{split}
	\end{equation}
	and
	\begin{equation}\label{L02}
		\begin{split}
			& \| \int_{\mathbb{R}^3} ({u}^0)^{-1} \Gamma \cdot F^0 dx \|_{L^\infty_{[-2,2]}}
			\\
			\lesssim & ( \|(d\bu, dh)\|^2_{L^\infty_t H^{s-1}_x} +  \| \bw\|^2_{L^\infty_t H^{2}_x} ) (\|(d\bu, dh)\|^2_{L^\infty_t H^{s-1}_x} + \|(d\bu, dh)\|^3_{L^\infty_t H^{s-1}_x})
			\\
			\lesssim & \epsilon_2^2.
		\end{split}
	\end{equation}
	To summarize \eqref{L14}, \eqref{J1}, \eqref{J2}, \eqref{J4}, \eqref{L15}, \eqref{L16}, \eqref{L00}, and \eqref{L02}, we can find that
	\begin{equation}\label{L17}
		\begin{split}
			\vert\kern-0.25ex\vert\kern-0.25ex\vert \bG-\bF \vert\kern-0.25ex\vert\kern-0.25ex\vert^2_{0,2,\Sigma}
			\lesssim & \epsilon_2^2.
		\end{split}
	\end{equation}

	\textit{The bound for $\vert\kern-0.25ex\vert\kern-0.25ex\vert \mathrm{L}\bG \vert\kern-0.25ex\vert\kern-0.25ex\vert_{s_0-2,2,\Sigma}$.} By Lemma \ref{te3} and \eqref{5021}, we have
	\begin{equation}\label{L20}
		\begin{split}
			\vert\kern-0.25ex\vert\kern-0.25ex\vert  \mathrm{L} \left(\bG-\bF \right)\vert\kern-0.25ex\vert\kern-0.25ex\vert^2_{s_0-2,2,\Sigma} \lesssim &  \| \nabla \bu\|_{L^2_{[-2,2]}\dot{B}^{s_0-2}_{\infty,2}} \|\bG-\bF\|^2_{L^\infty_t {H}^{s_0-2}_x}(1+\vert\kern-0.25ex\vert\kern-0.25ex\vert d\phi-dt \vert\kern-0.25ex\vert\kern-0.25ex\vert_{s_0,2,\Sigma})
			\\
			&+ \vert\kern-0.25ex\vert\kern-0.25ex\vert d\phi-dt \vert\kern-0.25ex\vert\kern-0.25ex\vert_{s_0,2,\Sigma}  \|\bG-\bF\|^2_{L^\infty_t {H}^{s_0-2}_x}(1+\vert\kern-0.25ex\vert\kern-0.25ex\vert d\phi-dt \vert\kern-0.25ex\vert\kern-0.25ex\vert_{s_0,2,\Sigma})
			\\
			& + \textstyle{\sum_{\sigma\in\{0,s_0-2 \}}}\big|{\int}^2_{-2} \int_{\mathbb{R}^3} \Lambda^{\sigma}_{x'}{\mathrm{L} K^\alpha } \cdot \Lambda^{\sigma}_{x'}\mathrm{L} (G_\alpha-F_\alpha)dxd\tau \big|
			\\
			\lesssim & \epsilon_2^2+ \underbrace{ \textstyle{\sum_{\sigma\in\{0,s_0-2 \}}}\big|{\int}^2_{-2} \int_{\mathbb{R}^3} \Lambda^{\sigma}_{x'}{\mathrm{L} K^\alpha } \cdot \Lambda^{\sigma}_{x'}\mathrm{L} (G_\alpha-F_\alpha)dxd\tau \big| }_{\equiv \mathrm{J}_6}.
		\end{split}
	\end{equation}
	From \eqref{Kalpha}, we divide $\mathrm{J}_6$ into
	\begin{equation}\label{L21}
		\begin{split}
			\mathrm{J}_6=&\underbrace{ \textstyle{\sum_{\sigma\in\{0,s_0-2 \}}}\big|{\int}^2_{-2} \int_{\mathbb{R}^3} \Lambda^{\sigma}_{x'}{\mathrm{L} \left\{  \partial^\alpha \left( (u^0)^{-1} \Gamma \right) \right\} } \cdot \Lambda^{\sigma}_{x'}\mathrm{L} (G_\alpha-F_\alpha)dxd\tau \big| }_{\equiv \mathrm{J}_{61}}
			\\
			& + \underbrace{ \textstyle{\sum_{\sigma\in\{0,s_0-2 \}}}\big|{\int}^2_{-2} \int_{\mathbb{R}^3} \Lambda^{\sigma}_{x'}{\mathrm{L} \left\{ ({u}^0)^{-1} {E}^\alpha \right\} } \cdot \Lambda^{\sigma}_{x'}\mathrm{L} (G_\alpha-F_\alpha)dxd\tau \big| }_{\equiv \mathrm{J}_{62}}
			\\
			&+ \underbrace{ \textstyle{\sum_{\sigma\in\{0,s_0-2 \}}}\big|{\int}^2_{-2} \int_{\mathbb{R}^3} \Lambda^{\sigma}_{x'}{\mathrm{L} \left\{{\Gamma}  \partial^\alpha \left( ({u}^0)^{-1}\right)  \right\} } \cdot \Lambda^{\sigma}_{x'}\mathrm{L} (G_\alpha-F_\alpha)dxd\tau \big| }_{\equiv \mathrm{J}_{63}} .
		\end{split}
	\end{equation}
	To bound $\mathrm{J}_{6}$, we need to discuss $\mathrm{J}_{61}$, $\mathrm{J}_{62}$ and $\mathrm{J}_{63}$ as follows. By chain rule, we get
	\begin{equation}\label{L22}
		\begin{split}
			\mathrm{J}_{61} \leq	& \underbrace{ {\textstyle{\sum}_{\sigma\in\{0,s_0-2 \}}}\big|{\int}^2_{-2} \int_{\mathbb{R}^3} \partial^\alpha \left\{ \Lambda^{\sigma}_{x'}{\mathrm{L}    \left( (u^0)^{-1} \Gamma \right)  } \cdot \Lambda^{\sigma}_{x'}\mathrm{L} (G_\alpha-F_\alpha) \right\} dxd\tau \big| }_{\equiv 	\mathrm{J}_{61a} }
			\\
			& + \underbrace{ {\textstyle{\sum}_{\sigma\in\{0,s_0-2 \}}}\big|{\int}^2_{-2} \int_{\mathbb{R}^3} \Lambda^{\sigma}_{x'}{\mathrm{L}   \left( (u^0)^{-1} \Gamma \right)  } \cdot \Lambda^{\sigma}_{x'}\mathrm{L} \partial^\alpha G_\alpha dxd\tau \big| }_{\equiv 	\mathrm{J}_{61b} }
			\\
			& + \underbrace{ {\textstyle{\sum}_{\sigma\in\{0,s_0-2 \}}}\big|{\int}^2_{-2} \int_{\mathbb{R}^3} \Lambda^{\sigma}_{x'}{\mathrm{L}   \left( (u^0)^{-1} \Gamma \right)  } \cdot \Lambda^{\sigma}_{x'}\mathrm{L} \partial^\alpha F_\alpha dxd\tau \big| }_{\equiv 	\mathrm{J}_{61c} }.
		\end{split}
	\end{equation}
	By a direct calculation, it follows
	\begin{equation}\label{L23}
		\begin{split}
			\mathrm{J}_{61a}=& {\textstyle{\sum}_{\sigma\in\{0,s_0-2 \}}}  \left|  \left( \int_{\mathbb{R}^3}  \Lambda^{\sigma}_{x'}{\mathrm{L}    \left( (u^0)^{-1} \Gamma \right)  } \cdot \Lambda^{\sigma}_{x'}\mathrm{L} (G^0-F^0) dx \right)\big|^2_{-2} \right|
			\\
			\lesssim & \|(u^0)^{-1} \Gamma \|_{L^\infty_{t}H^{s_0-2}_x} ( \|\bG \|_{L^\infty_{t}H^{s_0-2}_x} + \|\bF \|_{L^\infty_{t}H^{s_0-2}_x} )
			\\
			\lesssim & (\|(d\bu,dh)\|_{L^\infty_t H^{s_0-1}_x}+\|(d\bu,dh)\|^2_{L^\infty_t H^{s_0-1}_x}) \| \bw\|^{2}_{L^\infty_t H^{s_0-\frac12}_x}
			\\
			\lesssim & \epsilon^2_2.
		\end{split}
	\end{equation}
	Above, we also use \eqref{5021}. While, for $\mathrm{J}_{61b} $, recalling the term $\mathrm{H}_{111}$ in \eqref{We60}, so we find that $\mathrm{J}_{61b} $ can be controlled in a same way for $\mathrm{H}_{111}$. Therefore, from \eqref{We63}, we have
	\begin{equation*}
		\begin{split}
			\mathrm{J}_{61b}
			\lesssim & ( \|(d\bu,dh)\|_{L^2_t \dot{B}_{\infty,2}^{s_0-2}}+ \|(d\bu,dh)\|_{L^2_t L^\infty_x} )
			\|(d\bu, dh)\|^2_{L^\infty_t H^{s_0-1}_x}  \|(d\bu, dh)\|_{L^\infty_t H^{s_0-1}_x}
			\\
			& +  ( \|(d\bu,dh)\|_{L^2_t \dot{B}_{\infty,2}^{s_0-2}}+ \|(d\bu,dh)\|_{L^2_t L^\infty_x} )
			\| \bw\|^2_{L^\infty_t H^{s_0}_x}  \|(d\bu, dh)\|_{L^\infty_t H^{s_0-1}_x}
			\\
			&   + ( \|(d\bu, dh)\|^2_{L^\infty_t H^{s_0-1}_x} +  \| \bw\|^2_{L^\infty_t H^{s_0}_x} ) \|(d\bu, dh)\|^3_{L^\infty_t H^{s_0-1}_x}.
		\end{split}
	\end{equation*}
	Due to \eqref{5021}, it yields
	\begin{equation}\label{L24}
		\begin{split}
			\mathrm{J}_{61b}
			\lesssim & \epsilon^2_2.
		\end{split}
	\end{equation}
	For $\mathrm{J}_{61c} $, recalling the term $\mathrm{H}_{121}$ in \eqref{We61}, so we find that $\mathrm{J}_{61c} $ can be controlled in a same way for $\mathrm{H}_{121}$. Therefore, from \eqref{We90}, we have
	\begin{equation*}
		\begin{split}
			\mathrm{J}_{61c}
			\lesssim & ( \|(d\bu,dh)\|_{L^2_t \dot{B}_{\infty,2}^{s_0-2}}+ \|(d\bu,dh)\|_{L^2_t L^\infty_x} )
			( \|(d\bu, dh)\|^2_{L^\infty_t H^{s_0-1}_x} +  \| \bw\|^2_{L^\infty_t H^{s_0}_x} ) \|(d\bu, dh)\|_{L^\infty_t H^{s_0-1}_x}
			\\
			&   + ( \|(d\bu, dh)\|^2_{L^\infty_t H^{s_0-1}_x} +  \| \bw\|^2_{L^\infty_t H^{s_0}_x} ) (\|(d\bu, dh)\|^2_{L^\infty_t H^{s_0-1}_x} + \|(d\bu, dh)\|^3_{L^\infty_t H^{s_0-1}_x}).
		\end{split}
	\end{equation*}
	By \eqref{5021}, we have proved
	\begin{equation}\label{L25}
		\begin{split}
			\mathrm{J}_{61c}
			\lesssim  \epsilon_2^2.
		\end{split}
	\end{equation}
	Since \eqref{L22}, using \eqref{L23}, \eqref{L24}, together with \eqref{L25}, we have obtained
	\begin{equation}\label{J61}
		\begin{split}
			\mathrm{J}_{61}
			\lesssim  \epsilon_2^2.
		\end{split}
	\end{equation}
	For $\mathrm{J}_{62}$, using H\"older's inequality and \eqref{5021}, we can show that
	\begin{equation}\label{L26}
		\begin{split}
			\mathrm{J}_{62} 	\lesssim	& \| \bE \|_{L^1_t H^{s_0-2}_x} \|\bG-\bF\|_{L^\infty_t {H}_x^{s_0-2}}
			\\
			\lesssim & ( \|(d\bu,dh)\|_{L^2_t \dot{B}_{\infty,2}^{s_0-2}}+ \|(d\bu,dh)\|_{L^2_t L^\infty_x} ) \| \bw\|^2_{L^\infty_t H^{s_0}_x}
			\\
			& +(\|(d\bu,dh)\|_{L^\infty_t H^{s_0-1}_x}+\|(d\bu,dh)\|^5_{L^\infty_t H^{s_0-1}_x})  \| \bw\|^2_{L^\infty_t H^{s_0}_x}
			\\
			\lesssim & \epsilon_2^2.
		\end{split}
	\end{equation}
	For $\mathrm{J}_{63}$, using H\"older's inequality and \eqref{5021}, we can also bound it by
	\begin{equation}\label{L27}
		\begin{split}
			\mathrm{J}_{63}  \lesssim & \|\Gamma \cdot d\bu\|_{L^1_t {H}_x^{s_0-2}}  \|\bG-\bF\|_{L^\infty_t {H}_x^{s_0-2}}
			\\
			\lesssim & (\|d\bu\|_{L^2_t \dot{B}_{\infty,2}^{s_0-2}}+ \|d\bu\|_{L^2_t L^\infty_x}) \| \bw\|^2_{L^\infty_t H^{s_0}_x}(1 +\|(d\bu,dh)\|^2_{L^\infty_t H^{s_0-1}_x}  )
			\\
			& + (\|d\bu\|_{L^2_t \dot{B}_{\infty,2}^{s_0-2}}+ \|d\bu\|_{L^2_t L^\infty_x}) \| \bw\|^2_{L^\infty_t H^{s_0}_x}\|(d\bu,dh)\|^3_{L^\infty_t H^{s_0-1}_x}
			\\
			\lesssim & \epsilon_2^2.
		\end{split}
	\end{equation}
	To summarize our outcome \eqref{L20}, \eqref{L21}, \eqref{J61}, \eqref{L26}, and \eqref{L27}, we can conclude that
	\begin{equation}\label{L28}
		\begin{split}
			\vert\kern-0.25ex\vert\kern-0.25ex\vert  \mathrm{L} \left(\bG-\bF \right)\vert\kern-0.25ex\vert\kern-0.25ex\vert^2_{s_0-2,2,\Sigma} \lesssim \epsilon_2^2.
		\end{split}
	\end{equation}
	By trace theorem, \eqref{YX0}, H\"older's inequality, and \eqref{5021}, we have
	\begin{equation}\label{L29}
		\begin{split}
			\vert\kern-0.25ex\vert\kern-0.25ex\vert  \mathrm{L} \bF \vert\kern-0.25ex\vert\kern-0.25ex\vert_{s_0-2,2,\Sigma}
			\lesssim & \| \mathrm{L} \bF \|_{L^2_{[-2,2]}H_x^{s_0-\frac32}}
			\\
			\lesssim & \| d\bw \cdot (d\bu,dh) \|_{L^\infty_{[-2,2]}H_x^{s_0-\frac32}}+\| \bw \cdot dh \cdot dh \|_{L^\infty_{[-2,2]}H_x^{s_0-\frac32}}
			\\
			\lesssim &\| d\bw\|_{L^\infty_{[-2,2]}H_x^{s_0-1}} \|d\bu,dh) \|_{L^\infty_{[-2,2]}H_x^{1}}+\| \bw\|_{L^\infty_{[-2,2]}H_x^{s_0}} \|dh \|^2_{L^\infty_{[-2,2]}H_x^{1}}
			\\
			\lesssim & \epsilon_2^2.
		\end{split}
	\end{equation}
	Since \eqref{L28}, \eqref{L29}, and \eqref{L17}, so \eqref{L6} becomes
	\begin{equation}\label{L30}
		\begin{split}
			\|\nabla \bW\|_{L^2_t H^{s_0-2}_{x'}(\Sigma)}
			\lesssim \epsilon_2.
		\end{split}
	\end{equation}

	\textit{The bound for $\|\partial_{t} \bW\|_{L^2_t H^{s_0-2}_{x'}(\Sigma)}$.} For $\partial_{x'}\bW=\nabla \bW \cdot (0,\partial_{x'}\phi)^{\mathrm{T}}$, using \eqref{L30}, it yields
	\begin{equation}\label{L32}
		\begin{split}
			\|\partial_{x'} \bW\|_{L^2_t H^{s_0-2}_{x'}(\Sigma)}
			\lesssim \epsilon_2.
		\end{split}
	\end{equation}
	By using \eqref{d21}, \eqref{d22} and \eqref{L32}, we infer
	\begin{equation}\label{L33}
		\begin{split}
			\|\partial_{t} \bW\|_{L^2_t H^{s_0-2}_{x'}(\Sigma)}
			\lesssim \epsilon_2.
		\end{split}
	\end{equation}
	Adding \eqref{OM0}, \eqref{L32} and \eqref{L33}, so \eqref{te201} holds. Thus, we finish the proof of Lemma \ref{te20}.
\end{proof}
\subsection{Regularity of the characteristic hypersurface}\label{rch}
Our goal here is to establish the following:
\begin{proposition}\label{r2}
	Let $(h,\bu,\bw) \in \mathcal{H}$ so that $\Re(h,\bu, \bw) \leq 2 \epsilon_1$. Then

	\begin{equation}\label{G}
		\Re(h,\bu, \bw) \lesssim \epsilon_2.
	\end{equation}
	Furthermore, for each $t\in[-2,2]$ it holds that
	\begin{equation}\label{502}
		\|d \phi_{\theta,r}(t,\cdot)-dt \|_{C^{1,\delta_0}_{x'}} \lesssim \epsilon_2+  \| d {\mathbf{g}}(t,\cdot) \|_{C^{\delta_0}_x(\mathbb{R}^3)}.
	\end{equation}
\end{proposition}
To prove Proposition \ref{r2}, let us first introduce a null frame on $\Sigma$.
\subsubsection{Null frame}
Firstly, let
\begin{equation*}
	V=(dr)^*,
\end{equation*}
where $r$ is the defining function of the foliation $\Sigma$, and where $*$ denotes the identification of covectors and vectors induced by $\mathbf{g}$. Then $V$ is the null geodesic flow field tangent to $\Sigma$. Let
\begin{equation}\label{600}
	\sigma=dt(V), \qquad l=\sigma^{-1} V.
\end{equation}
Thus $l$ is the g-normal field to $\Sigma$ normalized so that $dt(l)=1$, hence
\begin{equation}\label{601}
	l=\left< dt,dx_3-d\phi\right>^{-1}_{\mathbf{g}} \left( dx_3-d \phi \right)^*.
\end{equation}
So the coefficients $l^j$ are smooth functions of $h, \bu$ and $d \phi$. Conversely,
\begin{equation}\label{602}
	dx_3-d \phi =\left< l,\partial_{x_3}\right>^{-1}_{\mathbf{g}} l^*.
\end{equation}
It also implies that $d \phi$ is a smooth function of $h, \bu$ and the coefficients of $l$.

Next we introduce the vector fields $e_a, a=1,2$ tangent to the fixed-time slice $\Sigma^t$ of $\Sigma$. We do this by applying Gram-Schmidt orthogonalization in the metric $\mathbf{g}$ to the $\Sigma^t$-tangent vector fields $\partial_{x_a}+ \partial_{x_a} \phi \partial_{x_3}, a=1, 2$.

Finally, we set
\begin{equation*}
	\underline{l}=l+2\partial_t.
\end{equation*}
Then $\{l, \underline{l}, e_1, e_2 \}$ is a null frame in the sense that
\begin{align*}
	& \left<l, \underline{l} \right>_{\mathbf{g}} =2, \qquad \qquad \ \ \ \ \ \ \left< e_a, e_b\right>_{\mathbf{g}}=\delta_{ab}, \ \ (a,b=1,2),
	\\
	& \left<l, l \right>_{\mathbf{g}} =\left<\underline{l}, \underline{l} \right>_{\mathbf{g}}=0, \qquad \left<l, e_a \right>_{\mathbf{g}}=\left<\underline{l}, e_a \right>_{\mathbf{g}}=0, \ \ (a=1,2).
\end{align*}
The coefficients of each fields are smooth functions of $h$, $\bu$ and $d \phi$. Also, by $\Re(h,\bu,\bw)\leq 2\epsilon_1$, we also have the pointwise bound
\begin{equation*}
	| e_1 - \partial_{x_1} | +| e_2 - \partial_{x_2} | + | l- (\partial_t+\partial_{x_3}) | + | \underline{l} - (-\partial_t+\partial_{x_3})|  \lesssim \epsilon_1.
\end{equation*}
After that, we can state the following lemma concerning to the decomposition of curvature tensor.
\begin{Lemma}[\cite{ST}, Lemma 5.8]\label{LLQ}
	Suppose $f$ satisfies
	$$\mathbf{g}^{\alpha \beta} \partial^2_{\alpha \beta}f=F.$$
	Let $(t,x',\phi(t,x'))$ denote the parametrization of $\Sigma$, and for $0 \leq \alpha, \beta \leq 2$, let $/\kern-0.55em \partial_\alpha$ denote differentiation along $\Sigma$ in the induced coordinates. Then, for $0 \leq \alpha, \beta \leq 2$, one can write
	\begin{equation*}
		/\kern-0.55em \partial_\alpha /\kern-0.55em \partial_\beta (f|_{\Sigma}) = l(f_2)+ f_1,
	\end{equation*}
	where
	\begin{equation*}
		\| f_2 \|_{L^2_t H^{s_0-1}_{x'}(\Sigma)}+\| f_1 \|_{L^1_t H^{s_0-1}_{x'}(\Sigma)} \lesssim \|df\|_{L^\infty_t H_x^{s_0-1}}+ \|df\|_{L^2_t L_x^\infty}+ \| F\|_{L^2_t H^{s_0-1}_x}+ \| F\|_{L^1_t H^{s_0-1}_{x'}(\Sigma)}.
	\end{equation*}
\end{Lemma}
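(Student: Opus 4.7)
\medskip\noindent\textbf{Proof proposal for Lemma \ref{LLQ}.}
The plan is to exploit the null frame $\{l,\underline{l},e_1,e_2\}$ set up just before the statement, together with the wave equation $\mathbf{g}^{\alpha\beta}\partial^2_{\alpha\beta}f=F$, to eliminate the only second derivative that is transverse to $\Sigma$ (namely $\underline{l}\,\underline{l}f$) and to express every remaining tangential second derivative as a $l$-derivative of a controlled quantity plus a source lying in $L^1_tH^{s_0-1}_{x'}(\Sigma)$. Concretely, because $l$ is tangent to $\Sigma$ and $\{l,e_1,e_2\}$ spans $T\Sigma$, for each $\alpha\in\{0,1,2\}$ one has a decomposition $/\kern-0.55em \partial_\alpha=a^\alpha\, l+b^{\alpha,a}e_a$ where the coefficients $a^\alpha,b^{\alpha,a}$ are smooth functions of $(h,\bu,d\phi)$; by Proposition~\ref{r1} and \eqref{502} these coefficients and their first derivatives along $\Sigma$ are controlled in the norms $\vert\kern-0.25ex\vert\kern-0.25ex\vert\cdot\vert\kern-0.25ex\vert\kern-0.25ex\vert_{s_0-1,2,\Sigma}$.

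Expanding $/\kern-0.55em \partial_\alpha/\kern-0.55em \partial_\beta(f|_{\Sigma})$ by the Leibniz rule produces, schematically, a leading piece $a^\alpha\, l(/\kern-0.55em \partial_\beta f)+b^{\alpha,a}\, e_a(/\kern-0.55em \partial_\beta f)$ plus terms of the form $(\Sigma\text{-derivative of frame coefficient})\cdot df$. The first piece will be rewritten as $l\bigl(a^\alpha/\kern-0.55em \partial_\beta f\bigr)-l(a^\alpha)/\kern-0.55em \partial_\beta f$, which already has the advertised shape (the first term feeds $f_2$, the second feeds $f_1$). The second piece contains genuine tangential Hessians of the form $e_a e_b f$ and $e_a l f$; these are not obviously $l$-derivatives, so they have to be treated using the wave equation. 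In the null frame we have $\mathbf{g}^{-1}\equiv-\tfrac12(l\otimes\underline{l}+\underline{l}\otimes l)+\delta^{ab}e_a\otimes e_b$ modulo connection terms, so $\square_{\mathbf{g}}f=F$ reads
\begin{equation*}
-l(\underline{l}f)+\delta^{ab}e_a e_b f=F+\Gamma\cdot df,
\end{equation*}
where $\Gamma$ collects the connection coefficients of the null frame. This identity lets us trade the combination $\delta^{ab}e_a e_b f$ for $l(\underline{l}f)+F+\Gamma\cdot df$, which is exactly of the form $l(f_2)+f_1$. The remaining mixed term $e_a l f$ is rewritten as $l(e_a f)+[e_a,l]f$, and the commutator $[e_a,l]$ is again a combination of frame vectors with coefficients given by connection one-forms.

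Putting these pieces together yields an identity of the form $/\kern-0.55em \partial_\alpha/\kern-0.55em \partial_\beta(f|_{\Sigma})=l(f_2)+f_1$, where $f_2$ is a linear combination, with $C^0_tH^{s_0-1}_{x'}(\Sigma)$ coefficients, of $\underline{l}f$, $l f$, $e_a f$, hence $f_2$ is essentially a component of $df$ restricted to $\Sigma$; and $f_1$ is a sum of products $\Gamma\cdot df$ plus the source $F$. For $f_2$ the bound $\|f_2\|_{L^2_tH^{s_0-1}_{x'}(\Sigma)}\lesssim\|df\|_{L^\infty_tH^{s_0-1}_x}+\|df\|_{L^2_tL^\infty_x}$ will follow from the trace inequality in Lemma~\ref{te2}, combined with the algebra estimates for products with the frame coefficients (which are in $L^\infty_tH^{s_0}_{x'}(\Sigma)$ by \eqref{501} and \eqref{502}). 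For $f_1$, the algebraic structure $\Gamma\cdot df$ allows us to place $\Gamma$ in $L^2_tC^{\delta}_x$ (via \eqref{502}) and $df$ in $L^2_tH^{s_0-1}_x$, yielding $\|\Gamma\cdot df\|_{L^1_tH^{s_0-1}_{x'}(\Sigma)}\lesssim\|df\|_{L^\infty_tH^{s_0-1}_x}+\|df\|_{L^2_tL^\infty_x}$; the contribution of $F$ reduces to $\|F\|_{L^1_tH^{s_0-1}_{x'}(\Sigma)}$, which appears directly on the right-hand side, after applying a trace inequality to the spacetime $L^2_tH^{s_0-1}_x$ piece.

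The main obstacle I expect is the careful bookkeeping of the connection coefficients $\Gamma$ that arise when commuting null-frame vectors and when expressing the tangential coordinate derivatives $/\kern-0.55em \partial_\alpha$ in the null frame: one must verify that every such $\Gamma$ either (i) multiplies $df$ and is controlled in $L^2_tC^\delta_x$ or $L^\infty_tH^{s_0-1}_{x'}(\Sigma)$, so that the product lands in $L^1_tH^{s_0-1}_{x'}(\Sigma)$, or (ii) multiplies $f$ itself only through a single $l$-derivative, in which case it is absorbed into $f_2$. Establishing these coefficient bounds relies crucially on the regularity of $d\phi-dt$ from Proposition~\ref{r2} and on the frequency-localized bounds for $\mathbf{g}$ in Proposition~\ref{r1}; the rest of the argument is then a mechanical null-frame computation.
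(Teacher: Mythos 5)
The paper offers no proof of this lemma: it is quoted verbatim from Smith--Tataru \cite{ST}. Measured against the argument you would actually need, your proposal contains a genuine gap at the step where you invoke the wave equation. Writing $\mathbf{g}^{-1}=-\tfrac12(l\otimes\underline{l}+\underline{l}\otimes l)+\delta^{ab}e_a\otimes e_b$, the relation $\square_{\mathbf g}f=F$ supplies exactly \emph{one} scalar identity: it expresses the trace $\delta^{ab}e_ae_bf$ through $l(\underline{l}f)$, $F$ and first-order terms. But after your frame decomposition $/\kern-0.55em \partial_\alpha=a^\alpha l+b^{\alpha,a}e_a$, the Hessians $/\kern-0.55em \partial_\alpha/\kern-0.55em \partial_\beta f$ for general $0\le\alpha,\beta\le2$ contain the individual angular components $e_1e_1f$, $e_1e_2f$, $e_2e_2f$, not merely their trace. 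These cannot be produced algebraically from the wave operator and $l$-derivatives: if $c\,\mathbf g^{-1}+l\odot X$ with $X$ tangent to $\Sigma$ lies in $S^2(T\Sigma)$, then the transverse $l\odot\underline{l}$ part forces $c=0$, so the reachable subspace of the six-dimensional $S^2(T\Sigma)$ is only the three-dimensional $l\odot T\Sigma$; in particular $e_1\odot e_2$ and $e_1\odot e_1-e_2\odot e_2$ are unreachable. Your argument therefore leaves the traceless part of the angular Hessian untreated, and that is precisely the hard content of the lemma.

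A secondary but also substantive problem is your control of $f_2$. You propose to bound $\|f_2\|_{L^2_tH^{s_0-1}_{x'}(\Sigma)}$ by $\|df\|_{L^\infty_tH^{s_0-1}_x}$ via ``the trace inequality in Lemma \ref{te2}''; that lemma goes in the opposite direction (from $\Sigma$-norms to fixed-time traces), and the ordinary restriction $H^{s_0-1}(\mathbb{R}^3)\to H^{\sigma}(\Sigma^t)$ onto the two-dimensional slices loses half a derivative, yielding only $H^{s_0-3/2}_{x'}$. The correct mechanism for controlling restrictions of $df$ to the null hypersurface in $L^2_tH^{s_0-1}_{x'}(\Sigma)$ is the characteristic energy (flux) estimate for $\square_{\mathbf g}f=F$ commuted with tangential fractional derivatives --- which is also the reason the term $\|F\|_{L^2_tH^{s_0-1}_x}$ appears on the right-hand side of the statement, a term your proposal never genuinely uses. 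Both the missing angular components and the missing flux estimate must be supplied before this can be regarded as a proof.
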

\begin{corollary}\label{Rfenjie}
	Let $R$ be the Riemann curvature tensor of the metric ${\mathbf{g}}$. Let $e_0=l$. Then for any $0 \leq a, b, c,d \leq 2$, we can write
	\begin{equation}\label{603}
		\left< R(e_a, e_b)e_c, e_d \right>_{\mathbf{g}}|_{\Sigma}=l(f_2)+f_1,
	\end{equation}
	where $|f_1|\lesssim |\bW|+ |d \mathbf{g} |^2$ and $|f_2| \lesssim |d {\mathbf{g}}|$. Moreover, the characteristic energy estimates
	\begin{equation}\label{604}
		\|f_2\|_{L^2_t H^{s_0-1}_{x'}(\Sigma)}+\|f_1\|_{L^1_t H^{s_0-1}_{x'}(\Sigma)} \lesssim \epsilon_2,
	\end{equation}
	holds. Additionally, for any $t \in [-2,2]$,
	\begin{equation}\label{605}
		\|f_2(t,\cdot)\|_{C^{\delta_0}_{x'}(\Sigma^t)} \lesssim \|d \mathbf{g}\|_{C^{\delta_0}_x(\mathbb{R}^3)}.
	\end{equation}
\end{corollary}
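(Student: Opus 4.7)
The plan is to reduce the statement to Lemma \ref{LLQ} applied to the metric components $\mathbf{g}^{\alpha\beta}$, viewed as scalar functions whose d'Alembertians are controlled by the wave-transport system \eqref{XT}. The starting point is the classical coordinate expression of the Riemann tensor
\[
R_{\alpha\beta\gamma\delta} = \tfrac{1}{2}\bigl(\partial^2_{\alpha\gamma}\mathbf{g}_{\beta\delta} + \partial^2_{\beta\delta}\mathbf{g}_{\alpha\gamma} - \partial^2_{\alpha\delta}\mathbf{g}_{\beta\gamma} - \partial^2_{\beta\gamma}\mathbf{g}_{\alpha\delta}\bigr) + \mathbf{g}(\Gamma,\Gamma),
\]
which after contraction with the null frame $\{l,e_1,e_2\}$ gives $\langle R(e_a,e_b)e_c,e_d\rangle_{\mathbf{g}}|_\Sigma$ as a sum of (i) second derivatives of $\mathbf{g}$ taken in two $\Sigma$-tangential directions, plus (ii) quadratic expressions in $d\mathbf{g}$ with smooth coefficients depending on $\mathbf{g}$ and on the frame coefficients (hence on $h,\bu,d\phi$). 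The quadratic pieces go straight into $f_1$, contributing $O(|d\mathbf{g}|^2)$ pointwise.

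The first real step is to convert the tangential second derivatives $e_a(e_b(\mathbf{g}))$ into $/\kern-0.55em \partial_a /\kern-0.55em \partial_b(\mathbf{g}|_\Sigma)$ plus commutator terms of the form $([e_a,e_b]+\text{connection})\cdot d\mathbf{g}$; these commutator pieces are again $O(|d\mathbf{g}|^2)$ and join $f_1$. Then I invoke Lemma \ref{LLQ} with $f = \mathbf{g}^{\alpha\beta}$ and $F = \square_\mathbf{g} \mathbf{g}^{\alpha\beta}$. Using $\mathbf{g}=\mathbf{g}(h,\bu)$ and the chain rule, $\square_\mathbf{g}\mathbf{g}^{\alpha\beta}$ reduces to a linear combination of $\square_\mathbf{g} h$ and $\square_\mathbf{g} u^\kappa$ (with smooth coefficients) plus $O(|d\mathbf{g}|^2)$ terms coming from the second derivatives of $\mathbf{g}$ in $(h,\bu)$. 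By \eqref{XT} we have $\square_\mathbf{g} h = D$ and $\square_\mathbf{g} u^\alpha = -c_s^2\Omega e^{-h}W^\alpha + Q^\alpha$, and by \eqref{err1}, \eqref{err}, $D$ and $Q^\alpha$ are quadratic in $(dh,d\bu)$. So schematically $\square_\mathbf{g}\mathbf{g}^{\alpha\beta} = \mathcal{O}(\mathbf{W}) + \mathcal{O}(|d\mathbf{g}|^2)$, which is the origin of the $|\mathbf{W}|$ term in the bound for $f_1$.

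Lemma \ref{LLQ} then delivers the decomposition $/\kern-0.55em \partial_a /\kern-0.55em \partial_b(\mathbf{g}|_\Sigma) = l(f_2^{\alpha\beta}) + f_1^{\alpha\beta}$, where $f_2$ is bounded (pointwise) by $|d\mathbf{g}|$ and $f_1$ by $|\mathbf{W}|+|d\mathbf{g}|^2$. For the $L^2_t H^{s_0-1}_{x'}(\Sigma)$ norm of $f_2$ and the $L^1_tH^{s_0-1}_{x'}(\Sigma)$ norm of $f_1$, I feed into Lemma \ref{LLQ} the bounds
\[
\|d\mathbf{g}\|_{L^\infty_t H^{s_0-1}_x} + \|d\mathbf{g}\|_{L^2_tL^\infty_x} \lesssim \epsilon_2,\qquad \|\square_\mathbf{g}\mathbf{g}\|_{L^2_tH^{s_0-1}_x}+\|\square_\mathbf{g}\mathbf{g}\|_{L^1_tH^{s_0-1}_{x'}(\Sigma)}\lesssim \epsilon_2,
\]
which follow from \eqref{402a}--\eqref{403} and \eqref{5021} for the $d\mathbf{g}$ side, from the product estimates Lemma \ref{cj}--\ref{ps} applied to the expressions for $D$, $Q^\alpha$ for the quadratic part of $\square_\mathbf{g}\mathbf{g}$, and from Lemma \ref{te20} together with the trace theorem for the $W^\alpha$ contribution on $\Sigma$. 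Assembling these yields \eqref{604}.

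Finally, the fixed-time H\"older bound \eqref{605} is essentially direct: since $f_2 \sim d\mathbf{g}$ pointwise with smooth coefficients in $(\mathbf{g}, l, e_a)$, which themselves depend smoothly on $(h,\bu,d\phi)$, one combines Proposition \ref{r2} (to control $\|d\phi(t,\cdot)-dt\|_{C^{1,\delta}_{x'}}$ by $\epsilon_2 + \|d\mathbf{g}(t,\cdot)\|_{C^\delta_x}$) with the standard product rule in $C^\delta$. The main obstacle I anticipate is the careful bookkeeping in the second step: isolating which second derivatives of $\mathbf{g}$ get replaced by $\square_\mathbf{g}\mathbf{g}$ (to generate the $\mathbf{W}$ in $f_1$) versus which remain as tangential derivatives (and are absorbed into $l(f_2)$ via Lemma \ref{LLQ}), all while keeping the coefficient structure compatible with Lemma \ref{te20} so that $\|\mathbf{W}\|_{L^1_tH^{s_0-1}_{x'}(\Sigma)}\lesssim \vert\kern-0.25ex\vert\kern-0.25ex\vert\mathbf{W}\vert\kern-0.25ex\vert\kern-0.25ex\vert_{s_0-1,2,\Sigma}\lesssim \epsilon_2$ is the quantity that appears.
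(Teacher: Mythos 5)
Your proposal is correct and follows essentially the same route as the paper: expand the Riemann tensor in coordinates, send the quadratic $Q(\mathbf{g},d\mathbf{g})$ and frame-derivative terms into $f_1$, reduce the tangential second derivatives of $\mathbf{g}$ to Lemma \ref{LLQ} with $F=\square_{\mathbf{g}}\mathbf{g}_{\mu\nu}$, and use the wave equations in \eqref{XT} together with Lemma \ref{te20} to bound $\square_{\mathbf{g}}\mathbf{g}_{\mu\nu}$ by $\|\bW\|_{L^2_tH^{s_0-1}_{x'}(\Sigma)}+\|d\mathbf{g}\|_{L^2_tL^\infty_x}\|d\mathbf{g}\|_{L^2_tH^{s_0-1}_{x'}(\Sigma)}\lesssim\epsilon_2$.
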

\begin{proof}
	According to the expression of curvature tensor, we have
	\begin{equation*}
		\left< R(e_a, e_b)e_c, e_d \right>_{\mathbf{g}}= R_{\alpha \beta \mu \nu}e^\alpha_a e^\beta_b e_c^\mu e_d^\nu,
	\end{equation*}
	where
	\begin{equation*}
		R_{\alpha \beta \mu \nu}= \frac12 \left[ \partial^2_{\alpha \mu} \mathbf{g}_{\beta \nu}+\partial^2_{\beta \nu} \mathbf{g}_{\alpha \mu}-\partial^2_{\beta \mu} \mathbf{g}_{\alpha \nu}-\partial^2_{\alpha \nu} \mathbf{g}_{\beta \mu} \right]+  \Theta(\mathbf{g}^{\alpha \beta}, d \mathbf{g}_{\alpha \beta}),
	\end{equation*}
	where $\Theta$ is a sum of products of coefficients of $\mathbf{g}^{\alpha \beta} $ with quadratic forms in $d \mathbf{g}_{\alpha \beta}$. By using Proposition \ref{r1}, then $\Theta$ satisfies the bound required of $f_1$. It is sufficient to consider
	\begin{equation*}
		\frac12 e^\alpha_a e^\beta_b e_c^\mu e_d^\nu  \left[ \partial^2_{\alpha \mu} \mathbf{g}_{\beta \nu}+\partial^2_{\beta \nu} \mathbf{g}_{\alpha \mu}-\partial^2_{\beta \mu} \mathbf{g}_{\alpha \nu}-\partial^2_{\alpha \nu} \mathbf{g}_{\beta \mu} \right].
	\end{equation*}
	We therefore look at the term $ e^\alpha_a e_c^\mu \partial^2_{\alpha \mu} \mathbf{g}_{\beta \nu} $, which is typical. By \eqref{503}, Proposition \ref{r1}, and Lemma \ref{te3}, we get
	\begin{equation}\label{LLL}
		\vert\kern-0.25ex\vert\kern-0.25ex\vert  l^\alpha - \delta^{\alpha 0} \vert\kern-0.25ex\vert\kern-0.25ex\vert _{s_0,2,\Sigma} +\vert\kern-0.25ex\vert\kern-0.25ex\vert  \underline{l}^\alpha + \delta^{\alpha 0}-2\delta^{\alpha 3} \vert\kern-0.25ex\vert\kern-0.25ex\vert _{s_0,2,\Sigma} + \vert\kern-0.25ex\vert\kern-0.25ex\vert  e^\alpha_a- \delta^{\alpha a} \vert\kern-0.25ex\vert\kern-0.25ex\vert _{s_0,2,\Sigma} \lesssim \epsilon_1.
	\end{equation}
	By \eqref{LLL} and Proposition \ref{r1}, the term $ e_a (e_c^\mu) \partial_{ \mu} \mathbf{g}_{\beta \nu}$ satisfies the bound required of $f_1$, so we consider $e_a(e_c(\mathbf{g}_{\beta \nu}))$. Finally, since the coefficients of $e_c$ in the basis $/\kern-0.55em \partial_\alpha$ have tangential derivatives bounded in $L^2_tH^{s_0-1}_{x'}(\Sigma)$, we are reduced by Lemma \ref{LLQ} to verifying that
	\begin{equation*}
		\| \mathbf{g}^{\alpha \beta } \partial^2_{\alpha \beta} \mathbf{g}_{\mu \nu} \|_{L^1_t H^{s_0-1}_{x'}(\Sigma)} \lesssim \epsilon_2.
	\end{equation*}
	Note
	\begin{equation*}
		\mathbf{g}^{\alpha \beta } \partial^2_{\alpha \beta} \mathbf{g}_{\mu \nu}= \square_{\mathbf{g}} \mathbf{g}_{\mu \nu}.
	\end{equation*}
	By Lemma \ref{te20}, Corollary \ref{vte}, \eqref{XT}, and \eqref{403}, so it yields
	\begin{equation*}
		\begin{split}
			\| \square_{\mathbf{g}} \mathbf{g}_{\mu \nu}\|_{L^1_t H^{s_0-1}_{x'}(\Sigma)}
			\lesssim & \ \| \square_{\mathbf{g}} \bu\|_{L^2_t H^{s_0-1}_{x'}(\Sigma)}+\| \square_{\mathbf{g}} h \|_{L^2_t H^{s_0-1}_{x'}(\Sigma)}
			\\
			\lesssim & \ \| \bW\|_{L^1_t H^{s_0-1}_{x'}(\Sigma)}+ \| d{\mathbf{g}} \cdot d{\mathbf{g}}\|_{L^1_t H^{s_0-1}_{x'}(\Sigma)}
			\\
			\lesssim & \ \|  \bW\|_{L^2_t H^{s_0-1}_{x'}(\Sigma)}+ \| d{\mathbf{g}} \|_{L^2_t L^\infty_x }\| d{\mathbf{g}}\|_{L^2_t H^{s_0-1}_{x'}(\Sigma)}
			\\
			\lesssim & \ \epsilon_2.
		\end{split}
	\end{equation*}
	Above, $\bW$ and $(d{\mathbf{g}})^2$ are included in $f_1$. We hence complete the proof of Corollary \ref{Rfenjie}.
\end{proof}

\subsubsection{Estimate of the connection coefficients}
Define
\begin{equation}\label{Xab}
	\chi_{ab} = \left<D_{e_a}l,e_b \right>_{\mathbf{g}}, \qquad l(\ln \sigma)=\frac{1}{2}\left<D_{l}\underline{l},l \right>_{\mathbf{g}}, \qquad \mu_{0ab} = \left<D_{l}e_a,e_b \right>_{\mathbf{g}}.
\end{equation}
For $\sigma$, we set the initial data $\sigma=1$ at the time $-2$. Due to Proposition \ref{r1}, we have
\begin{equation}\label{606}
	\|\chi_{ab}\|_{L^2_t H^{s_0-1}_{x'}(\Sigma)} + \| l(\ln \sigma)\|_{L^2_t H^{s_0-1}_{x'}(\Sigma)} + \|\mu_{0ab}\|_{L^2_t H^{s_0-1}_{x'}(\Sigma)} \lesssim \epsilon_1.
\end{equation}
In a similar way, if we expand $l=l^\alpha /\kern-0.55em \partial_\alpha$ in the tangent frame $\partial_t, \partial_{x'}$ on $\Sigma$, we infer
\begin{equation}\label{607}
	l^0=1, \quad \|l^1\|_{s_0,2,\Sigma} \lesssim \epsilon_1.
\end{equation}
\begin{Lemma}\label{chi}
	Let $\chi_{ab}$ ($a,b=1,2$) be defined as \eqref{Xab}. Then
	\begin{equation}\label{608}
		\|\chi_{ab}\|_{L^2_t H^{s_0-1}_{x'}(\Sigma)} \lesssim \epsilon_2.
	\end{equation}
	Furthermore, for any $t \in [-2,2]$,
	\begin{equation*}\label{609}
		\| \chi_{ab} \|_{C^{\delta_0}_{x'}(\Sigma^t)} \lesssim \epsilon_2+ \|d \mathbf{g} \|_{C^{\delta_0}_{x}(\mathbb{R}^3)}.
	\end{equation*}
\end{Lemma}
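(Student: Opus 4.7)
The plan is to derive a transport equation for $\chi_{ab}$ along the null generator $l$ of $\Sigma$ and then exploit the curvature decomposition proved in Corollary \ref{Rfenjie}. Using the structure equations for the null frame $\{l,\underline{l},e_1,e_2\}$, a standard computation (the null Raychaudhuri identity, taking into account that $l$ is not affinely parametrized) yields a schematic identity of the form
\begin{equation*}
l(\chi_{ab}) = -\chi_{ac}\chi^{c}{}_{b} + l(\ln\sigma)\,\chi_{ab} - \langle R(l,e_a)l,e_b\rangle_{\mathbf{g}} + \mathcal{N}_{ab},
\end{equation*}
where $\mathcal{N}_{ab}$ is a finite sum of bilinear expressions in the connection coefficients $\chi_{ab}$, $\mu_{0ab}$ and $l(\ln\sigma)$, all of which have already been bootstrapped to size $\epsilon_1$ in \eqref{606}.

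The next step is to apply Corollary \ref{Rfenjie} with $e_0=l$ to the curvature term: one writes $\langle R(l,e_a)l,e_b\rangle_{\mathbf{g}}|_\Sigma = l(f_{2,ab}) + f_{1,ab}$ with $\|f_{2,ab}\|_{L^2_t H^{s_0-1}_{x'}(\Sigma)}+\|f_{1,ab}\|_{L^1_t H^{s_0-1}_{x'}(\Sigma)}\lesssim \epsilon_2$ and $\|f_{2,ab}(t,\cdot)\|_{C^{\delta}_{x'}(\Sigma^t)}\lesssim \|d\mathbf{g}\|_{C^{\delta}_x}$. Substituting and moving $l(f_{2,ab})$ to the left produces the clean transport equation
\begin{equation*}
l\bigl(\chi_{ab}+f_{2,ab}\bigr) = -\chi_{ac}\chi^{c}{}_{b} + l(\ln\sigma)\,\chi_{ab} + \mathcal{N}_{ab} - f_{1,ab},
\end{equation*}
whose right-hand side I intend to place in $L^1_t H^{s_0-1}_{x'}(\Sigma)$. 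With zero initial data on $\Sigma\cap\{t=-2\}$ (since $\mathbf{g}$ is flat there and the initial leaves are planes), the transport estimate along $l$ combined with \eqref{607} gives $\|\chi_{ab}+f_{2,ab}\|_{L^\infty_t H^{s_0-1}_{x'}(\Sigma)}\lesssim \epsilon_2$, whence \eqref{608} follows by the triangle inequality and the $L^2_tH^{s_0-1}_{x'}$ bound on $f_{2,ab}$. The pointwise bound \eqref{609} is obtained by running an $L^\infty$ transport argument on the same equation, using \eqref{605} to bring $f_{2,ab}$ into $C^{\delta}_{x'}$.

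The principal obstacle is controlling the top-order quadratic term $\chi_{ac}\chi^{c}{}_{b}$ at regularity $H^{s_0-1}_{x'}(\Sigma^t)$ without loss. This is where the condition $s_0>2$ is crucial: since $\Sigma^t$ is two-dimensional, $s_0-1>1$ ensures that $H^{s_0-1}_{x'}(\Sigma^t)$ is a Banach algebra, so $\|\chi\cdot\chi\|_{H^{s_0-1}_{x'}}\lesssim \|\chi\|_{H^{s_0-1}_{x'}}^2$, and the bootstrap smallness $\epsilon_1$ in \eqref{606} permits absorption of this contribution into the left-hand side after integrating in $t$. The cross term $l(\ln\sigma)\,\chi_{ab}$ and the error $\mathcal{N}_{ab}$ are handled in exactly the same way via \eqref{606}, while the Hölder version required for \eqref{609} uses the $C^{\delta}$ algebra property together with the already-established $H^{s_0-1}_{x'}\hookrightarrow C^{\delta}_{x'}$ embedding to close the $L^\infty$ transport estimate.
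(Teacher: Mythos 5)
Your proposal is correct and follows essentially the same route as the paper: the Raychaudhuri-type transport equation for $\chi_{ab}$ along $l$, the substitution of the curvature decomposition $\langle R(l,e_a)l,e_b\rangle_{\mathbf{g}}|_\Sigma = l(f_2)+f_1$ from Corollary \ref{Rfenjie} to renormalize the unknown to $\chi_{ab}-f_2$, the algebra property of $H^{s_0-1}_{x'}(\Sigma^t)$ in two dimensions for $s_0>2$ to control the quadratic terms, closure via the bootstrap bounds \eqref{606}--\eqref{607} and a Gronwall-type absorption, and finally the $C^{\delta}_{x'}$ bound via an $L^\infty$ transport argument together with \eqref{605} and the embedding $H^{s_0-1}(\mathbb{R}^2)\hookrightarrow C^{\delta}(\mathbb{R}^2)$. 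The only detail the paper makes explicit that you leave schematic is the commutator estimate for $[\Lambda_{x'}^{s_0-1},l]$, handled via Kato--Ponce and the coefficient bound \eqref{607}, but this fits within your stated plan.
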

\begin{proof}
	For $\chi_{ab}$, we know 
	\begin{equation*}
		l(\chi_{ab})=\left< R(l,e_a)l, e_b \right>_{\mathbf{g}}-\chi_{ac}\chi_{cb}-l(\ln \sigma)\chi_{ab}+\mu_{0ab} \chi_{cb}+ \mu_{0bc}\chi_{ac},
	\end{equation*}
	which is introduced by Klainerman-Rodnianski \cite{KR2}. Due to Corollary \ref{Rfenjie}, we can rewrite the above equation as
	\begin{equation}\label{610}
		l(\chi_{ab}-f_2)=f_1-\chi_{ac}\chi_{cb}-l(\ln \sigma)\chi_{ab}+\mu_{0ab} \chi_{cb}+ \mu_{0bc}\chi_{ac},
	\end{equation}
	where $f_1$ and $f_2$ is the same as in Corollary \ref{Rfenjie}. Let $\Lambda_{x'}^{s_0-1}$ be the fractional derivative operator in the $x'$ variables. Set
	\begin{equation*}
		\Psi=f_1-\chi_{ac}\chi_{cb}-l(\ln \sigma)\chi_{ab}+\mu_{0ab} \chi_{cb}+ \mu_{0bc}\chi_{ac}.
	\end{equation*}
	By \eqref{610}, then we have
	\begin{equation}\label{613}
		\begin{split}
			& \|\Lambda_{x'}^{s_0-1}(\chi_{ab}-f_2)(t,\cdot) \|_{ L^2_{x'}(\Sigma^t)}
			\lesssim \| [\Lambda_{x'}^{s_0-1},l](\chi_{ab}-f_2) \|_{L^1_tL^2_{x'}(\Sigma^t)}+ \| \Lambda_{x'}^{s_0-1}\Psi \|_{L^1_tL^2_{x'}(\Sigma^t)}.
		\end{split}
	\end{equation}
	Note that $H^{s_0-1}_{x'}(\Sigma^t)$ is an algebra for $s_0>2$. By direct calculations, we can compute out
	\begin{equation}\label{614}
		\begin{split}
			\| \Lambda_{x'}^{s-1}\Psi \|_{L^1_tL^2_{x'}(\Sigma^t)} &\lesssim \|f_1\|_{L^1_tH^{s_0-1}_{x'}(\Sigma^t)}+ \|\chi\|^2_{L^2_tH^{s_0-1}_{x'}(\Sigma^t)}
			\\
			& \quad + \|\chi\|_{L^2_tH^{s_0-1}_{x'}(\Sigma^t)}\cdot\|l(\ln \sigma)\|_{L^2_tH^{s_0-1}_{x'}(\Sigma^t)}
			\\
			& \quad + \|\mu\|_{L^2_tH^{s_0-1}_{x'}(\Sigma^t)}\cdot\|\chi\|_{L^2_tH^{s_0-1}_{x'}(\Sigma^t)}.
		\end{split}
	\end{equation}
We next bound
	\begin{align*}
		\| [\Lambda_{x'}^{s_0-1},l](\chi_{ab}-f_2) \|_{L^2_{x'}(\Sigma^t)} &\leq \| /\kern-0.55em \partial_{\alpha} l^{\alpha} (\chi_{ab}-f_2)(t,\cdot) \|_{H^{s_0-1}_{x'}(\Sigma^t)}
		\\
		& \quad \ + \|[\Lambda_{x'}^{s_0-1} /\kern-0.55em \partial_{\alpha}, l^{\alpha}](\chi-f_2)(t,\cdot) \|_{L^{2}_{x'}(\Sigma^t)}.
	\end{align*}
	By Kato-Ponce commutator estimate and Sobolev embeddings, the above is bounded by
	\begin{equation}\label{615}
		\|l^1(t,\cdot)\|_{H^{s_0-1}_{x'}(\Sigma^t)} \| \Lambda_{x'}^{s_0-1}(\chi_{ab}-f_2)(t,\cdot) \|_{L^{2}_{x'}(\Sigma^t)} .
	\end{equation}
	Gathering \eqref{606}, \eqref{607}, \eqref{604}, \eqref{613}, \eqref{614}, and \eqref{615} together, we thus prove that
	\begin{equation*}
		\sup_{t\in [-2,2]} \|(\chi_{ab}-f_2)(t,\cdot)\|_{H^{s_0-1}_{x'}(\Sigma^t)}  \lesssim \epsilon_2.
	\end{equation*}
	This bound combining with \eqref{604} give us \eqref{608}.

	From \eqref{610}, we see that
	\begin{equation}\label{616}
		\begin{split}
			\| \chi_{ab}-f_2\|_{C^{\delta}_{x'}} & \lesssim \| f_1 \|_{L^1_tC^{\delta}_{x'}}+ \|\chi_{ac}\chi_{cb}\|_{L^1_tC^{\delta}_{x'} }+\|l(\ln \sigma)\chi_{ab}\|_{L^1_tC^{\delta}_{x'}}
			\\
			& \quad + \|\mu_{0ab} \chi_{cb}\|_{L^1_tC^{\delta}_{x'} }+\|\mu_{0bc}\chi_{ac}\|_{L^1_tC^{\delta}_{x'} }.
		\end{split}
	\end{equation}
	Seeing \eqref{616}, using \eqref{604}, \eqref{605}, \eqref{606}, together with Sobolev imbedding\footnote{The number $\delta_0$ is defined in \eqref{a1} and $\delta_0\in (0,s_0-2)$.} $H^{s_0-1}(\mathbb{R}^2)\hookrightarrow C^{\delta_0}(\mathbb{R}^2)$ , we derive that
	\begin{equation*}
		\| \chi_{ab} \|_{C^{\delta_0}_{x'}(\Sigma^t)} \lesssim \epsilon_2+ \|d \mathbf{g}\|_{C^{\delta_0}_{x}(\mathbb{R}^3)}.
	\end{equation*}
	Therefore, we complete the proof of Lemma \ref{chi}.
\end{proof}
\subsubsection{Proof of Proposition \ref{r2}}
Our goal here is to prove \eqref{G} and \eqref{502}. We recall
\begin{equation*}
	\Re(h,\bu, \bw)= \vert\kern-0.25ex\vert\kern-0.25ex\vert d\phi(t,x')-dt\vert\kern-0.25ex\vert\kern-0.25ex\vert_{s_0,2, \Sigma}.
\end{equation*}
Combining \eqref{602}, with Proposition \ref{r1}, then \eqref{G} follows from the bound
\begin{equation*}
	\vert\kern-0.25ex\vert\kern-0.25ex\vert l-(\partial_t-\partial_{x_3})\vert\kern-0.25ex\vert\kern-0.25ex\vert_{s_0,2,\Sigma} \lesssim \epsilon_2,
\end{equation*}
where it is understood that one takes the norm of the coefficients of $l-(\partial_t-\partial_{x_3})$ in the standard frame on $\mathbb{R}^{3+1}$. Let $\Gamma^\alpha_{\beta \gamma}$ be the Christoffel symbols. Then we have
\begin{equation}\label{pow}
\|\Gamma^\alpha_{\beta \gamma}\|_{L^2_t L^\infty_x} \lesssim \|d {\mathbf{g}} \|_{L^2_t L^\infty_x}\lesssim \epsilon_2
\end{equation}
Due to \eqref{pow} and the geodesic equation, we infer
\begin{equation}\label{pow1}
	\|l-(\partial_t-\partial_{x_3})\|_{L^\infty_{t,x}} \lesssim \epsilon_2.
\end{equation}
So it's sufficient to bound the tangential derivatives of the coefficients of $l-(\partial_t-\partial_{x_3})$ in the norm $L^2_t H^{s_0-1}_{x'}(\Sigma)$. By using Proposition \ref{r1}, we can estimate the Christoffel symbols
\begin{equation}\label{pow2}
	\|\Gamma^\alpha_{\beta \gamma} \|_{L^2_t H^{s_0-1}_{x'}(\Sigma^t)} \lesssim \epsilon_2.
\end{equation}
For $s_0>2$, then $H^{s_0-1}_{x'}(\Sigma^t)$ is an algebra. By \eqref{pow}, \eqref{pow1}, \eqref{pow2}, and Proposition \ref{r1}, we get
\begin{equation*}
	\|\Gamma^\alpha_{\beta \gamma} e_1^\beta l^\gamma\|_{L^2_t H^{s_0-1}_{x'}(\Sigma^t)} \lesssim \epsilon_2.
\end{equation*}
We now establish the following bound,
\begin{equation}\label{qop}
	\| \left< D_{e_a}l, e_b \right>\|_{L^2_t H^{s_0-1}_{x'}(\Sigma^t)}+ \| \left< D_{e_a}l, \underline{l} \right>\|_{L^2_t H^{s_0-1}_{x'}(\Sigma^t)}+\|\left< D_{l}l, \underline{l} \right>\|_{L^2_t H^{s_0-1}_{x'}(\Sigma^t)} \lesssim \epsilon_2.
\end{equation}
The first term in \eqref{qop} is $\chi_{ab}$, which has been bounded by Lemma \ref{chi}. For the second term in \eqref{qop}, noting
\begin{equation*}
	\left< D_{e_a}l, \underline{l} \right>=\left< D_{e_a}l, 2\partial_t \right>=-2\left< D_{e_a}\partial_t,l \right>,
\end{equation*}
so it can be bounded via Proposition \ref{r1}. In a similar idea, the last term in \eqref{qop} can also be controlled by Proposition \ref{r1}.

It still remains for us to prove
\begin{equation*}
	\| d \phi(t,x')-dt \|_{C^{1,{\delta_0}}_{x'}(\mathbb{R}^2)}  \lesssim \epsilon_2+ \| d\mathbf{g}(t,\cdot)\|_{C^{\delta_0}_x(\mathbb{R}^3)}.
\end{equation*}
By \eqref{602}, it is sufficient to show that
\begin{equation*}
	\|l(t,\cdot)-(\partial_t-\partial_{x_3})\|_{C^{1,{\delta_0}}_{x'}(\mathbb{R}^2)} \lesssim \epsilon_2+ \| d \mathbf{g} (t,\cdot)\|_{C^{\delta_0}_x(\mathbb{R}^3)}.
\end{equation*}
The coefficients of $e_1$ are small in $C^{\delta_0}_{x'}(\Sigma^t)$ perturbations of their constant coefficient analogs, so it is sufficient to show that
\begin{equation*}
	\|\left< D_{e_1}l, e_1 \right>(t,\cdot)\|_{C^{\delta_0}_{x'}(\Sigma^t)}
	+\|\left< D_{e_1}l, \underline{l} \right>(t,\cdot)\|_{C^{\delta_0}_{x'}(\Sigma^t)}  \lesssim \epsilon_2+ \| d\mathbf{g}(t,\cdot)\|_{C^{\delta_0}_x(\mathbb{R}^3)}.
\end{equation*}
Above, the first term is bounded by Lemma \ref{chi}, and the second term can be estimated by using
\begin{equation*}
	\|\left< D_{e_1}\partial_t, l \right>(t,\cdot)\|_{C^{\delta_0}_{x'}(\Sigma^t)} \lesssim  \| d\mathbf{g}(t,\cdot)\|_{C^{\delta_0}_x(\mathbb{R}^3)}.
\end{equation*}
Therefore, we complete the proof of Proposition \ref{r2}.
\subsection{Strichartz estimates of solutions}\label{SEs}
Our goal is to prove the Strichartz estimates in Proposition \ref{p4}. We start to introduce a proposition as follows.
\begin{proposition} \label{r5}
	Suppose that $(h,\bu, \bw) \in \mathcal{{H}}$ and $\Re(h,\bu, \bw)\leq 2 \epsilon_1$. Let $\mathbf{g}$ be denoted in \eqref{AMd3}. For each $1 \leq r \leq s+1$, and for each $t_0 \in [-2,2]$, then the linear, homogeneous equation
	\begin{equation}\label{wform1}
		\begin{cases}
			& \square_{\mathbf{g}} f=0,\qquad (t,x)\in  [-2,2]\times \mathbb{R}^3,
			\\
			&(f,\partial_t f)|_{t=t_0}=(f_0, f_1),
		\end{cases}
	\end{equation}
	is well-posed on the time-interval $[-2,2]$ if the data $(f_0, f_1)\in H_x^r \times H_x^{r-1}$. Moreover, the solution satisfies, respectively, the energy estimates
	\begin{equation*}
		\| f\|_{L^\infty_{[-2,2]}H_x^r}+ \| \partial_t f\|_{L^\infty_{[-2,2]}H_x^{r-1}} \leq C \big( \|f_0\|_{H_x^r}+ \|f_1\|_{H_x^{r-1}} \big),
	\end{equation*}
	and the Strichartz estimates\footnote{here the constant $C$ is universal if the solution $(h, \bu,\bw) \in \mathcal{{H}}$.}
	\begin{equation}\label{SL}
		\| \left<\nabla \right>^k f\|_{L^2_{[-2,2]}L^\infty_x} \leq C \big( \|f_0\|_{H_x^r}+ \|f_1\|_{H_x^{r-1}} \big), \quad k<r-1,
	\end{equation}
	hold. The similar estimates also hold when we replace $\left<\nabla \right>^k$ by $\left<\nabla \right>^{k-1}d$.
\end{proposition}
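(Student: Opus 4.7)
The plan is to follow the Smith--Tataru program as adapted by Q. Wang \cite{WQSharp} and subsequently by Andersson--Zhang \cite{AZ}, namely to reduce \eqref{wform1} to frequency--localized dispersive estimates built from a wave--packet parametrix whose phase is read off from the characteristic foliations $\Sigma_{\theta,r}$ already analyzed in Propositions \ref{r1}--\ref{r2} and Lemma \ref{chi}. First I would establish the energy bound directly from $\square_{\mathbf{g}} f = 0$: pairing with $\partial_t f$ and integrating by parts produces a Gronwall-type inequality with driving term $\|d\mathbf{g}\|_{L^1_{[-2,2]}L^\infty_x}$, which is controlled by \eqref{5021} through the structure of $\mathbf{g}$ in \eqref{AMd3}, and higher $H^r_x$ control comes from commuting $\left<\nabla\right>^{r-1}$ through $\square_{\mathbf{g}}$ using Lemma \ref{yx}. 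Well--posedness in $H^r_x\times H^{r-1}_x$ for $1\le r\le s+1$ then follows from standard Galerkin approximation and uniqueness via the $L^2$ energy estimate.

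For the Strichartz bound \eqref{SL}, I would start with a Littlewood--Paley decomposition $f = \sum_k P_k f$ and by $TT^*$ and Duhamel reduce everything to proving, for each dyadic $k$, a fixed-time dispersive estimate for the piece of frequency $\simeq 2^k$. The standard Smith--Tataru rescaling $(t,x)\mapsto (2^{-k}t, 2^{-k}x)$ pulls this back to a unit time slab on which the rescaled metric $\mathbf{g}_k$ is an $\epsilon$-perturbation of Minkowski: indeed, truncating the metric at frequencies below $2^k$ gives by Proposition \ref{r1} that
\begin{equation*}
 \vert\kern-0.25ex\vert\kern-0.25ex\vert 2^{k}(\mathbf{g}-S_k\mathbf{g}),\, dS_k\mathbf{g},\, 2^{-k}\nabla dS_k\mathbf{g}\vert\kern-0.25ex\vert\kern-0.25ex\vert_{s_0-1,2,\Sigma_{\theta,r}}\lesssim \epsilon_2 ,
\end{equation*}
which is exactly the symbol--class input needed to construct an FBI/wave--packet parametrix $T_k$. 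The phase of $T_k$ is transported along the null geodesics of $\mathbf{g}$ along which the optical function $r_\theta$ is constant, so the $C^{1,\delta}_{x'}$ control of $d\phi_{\theta,r}-dt$ in \eqref{502} together with the $H^{s_0-1}_{x'}(\Sigma^t)$ bound on the null second fundamental form $\chi_{ab}$ (Lemma \ref{chi}) gives uniform transversality and controls the Jacobian of the exponential map.

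With the parametrix in hand, the $L^1_{\tilde x}\to L^\infty_{\tilde x}$ dispersive estimate at unit time on the rescaled scale is obtained by stationary phase along $\Sigma_{\theta,r}$, using Corollary \ref{Rfenjie} to absorb the Riemann curvature into the error terms $f_1$, $f_2$ of \eqref{603}--\eqref{605}. Undoing the rescaling converts this into the standard $t^{-1}$ Strichartz kernel bound at frequency $2^k$ on a time interval of length $2^{-k}$, and after Keel--Tao--type $TT^*$ and Littlewood--Paley square--function summation over $k$ one recovers \eqref{SL} for all $k<r-1$ with the half--derivative loss being absorbed by the slack $r-1-k>0$. The companion estimate with $\left<\nabla\right>^k$ replaced by $\left<\nabla\right>^{k-1} d$ is immediate upon differentiating \eqref{wform1} and noting that $\square_{\mathbf{g}}\partial_\alpha f = [\square_{\mathbf{g}},\partial_\alpha] f$ is lower order, controllable by the energy estimate.

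The hard part will be propagating the error terms that arise when $\mathbf{g}$ is replaced by $S_{<k}\mathbf{g}$ in the parametrix: because we only have $d\mathbf{g}\in L^2_{[-2,2]} C^\delta_x\cap L^2_{[-2,2]}\dot B^{s_0-2}_{\infty,2}$, the commutator $[\square_{\mathbf{g}},P_k] f$ is only square-summable in $k$ with a logarithmic loss, and this loss must be absorbed by the sharp characteristic--energy control $\vert\kern-0.25ex\vert\kern-0.25ex\vert d\phi_{\theta,r}-dt\vert\kern-0.25ex\vert\kern-0.25ex\vert_{s_0,2,\Sigma}\lesssim \epsilon_2$ from Proposition \ref{r2}, which is precisely the reason the bootstrap quantity $G(h,\bu)$ was designed to sit at regularity $s_0$. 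Once this bookkeeping is carried out, the argument reproduces line by line the one in \cite{AZ}, with the new inputs being Lemmas \ref{tr0}--\ref{VC} for the transport structure of $\bW$ and $\bG$ and Lemma \ref{te20} for the corresponding characteristic--energy bound on $\bW$, which replace the specific vorticity and its curl in the non-relativistic setting. Completing this schema yields Proposition \ref{r5}, and together with Theorem \ref{VE} and Lemma \ref{um} it closes the bootstrap Proposition \ref{p4}.
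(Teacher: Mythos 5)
First, note that the paper does not actually prove Proposition \ref{r5}: it is stated with the citation \cite{AZ} and imported as a black box, the paper's own contribution being the verification of its hypotheses, namely the regularity of $\mathbf{g}$ and of the characteristic foliation along $\Sigma_{\theta,r}$ (Propositions \ref{r1}--\ref{r2}, Lemma \ref{chi}, Corollary \ref{Rfenjie}). Your outline correctly identifies that machinery and the role of $G(h,\bu)$, the null second fundamental form $\chi_{ab}$, and the curvature decomposition of Corollary \ref{Rfenjie}, so at the level of strategy you are reconstructing exactly the Smith--Tataru/Andersson--Zhang argument that the paper invokes.

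There is, however, one step in your assembly of \eqref{SL} that would fail as written. You propose to obtain the frequency-$2^k$ dispersive estimate on time intervals of length $2^{-k}$ and then sum, claiming the resulting half-derivative loss ``is absorbed by the slack $r-1-k>0$.'' It cannot be: the slack $r-1-k$ is arbitrarily small, whereas the loss incurred by decomposing $[-2,2]$ into $2^{k}$ subintervals and summing the $L^2_tL^\infty_x$ norms is a fixed positive power of $2^{k}$ (this is precisely the lossy Bahouri--Chemin/Tataru route, and it is the mechanism the present paper uses \emph{deliberately} in Section \ref{Sec5}, where the loss shows up as the restriction $a\le r-\tfrac{s}{2}$ in \eqref{tu02}). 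The estimate \eqref{SL} is the lossless (up to the forbidden endpoint) Strichartz estimate, and obtaining it requires constructing the wave-packet parametrix on the \emph{entire} unit-scale time interval, with the packets transported along the null foliation whose regularity is exactly what $\vert\kern-0.25ex\vert\kern-0.25ex\vert d\phi_{\theta,r}-dt\vert\kern-0.25ex\vert\kern-0.25ex\vert_{s_0,2,\Sigma}\lesssim\epsilon_2$ and Lemma \ref{chi} furnish; no subdivision in time at frequency-dependent scales enters. If you replace the ``rescale to $2^{-k}$ time slabs and sum'' step by the global-in-time parametrix of \cite{ST}/\cite{AZ} (overlap estimates for wave packets, square-function summation, $TT^*$ on the full interval), the rest of your sketch is consistent with the cited proof.
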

\begin{remark}
	The proof for Proposition \ref{r5} can be found in section \ref{spp}.
\end{remark}
Due to Proposition \ref{r5}, we next introduce a result for Strichartz estimates of inhomogeneous waves.
\begin{corollary}\label{r8}
	Suppose that $(h,\bu, \bw) \in \mathcal{{H}}$ and $\Re(h,\bu, \bw)\leq 2 \epsilon_1$. Let $\mathbf{g}$ is denoted in \eqref{AMd3}. For each $1 \leq r \leq s+1$, and for each $t_0 \in [-2,2]$, then the linear wave equation
	\begin{equation}\label{wform5}
		\begin{cases}
			& \square_{\mathbf{g}} f=F,\qquad (t,x)\in [-2,2]\times \mathbb{R}^3,
			\\
			&(f,\partial_t f)|_{t=t_0}=(f_0, f_1),
		\end{cases}
	\end{equation}
	is well-posed on the time-interval $[-2,2]$ if the initial data $(f_0, f_1)\in H_x^r \times H_x^{r-1}$. Furthermore, the solution satisfies energy estimates
	\begin{equation}\label{QL0}
		\| f\|_{L^\infty_{[-2,2]}H_x^r}+ \| \partial_t f\|_{L^\infty_{[-2,2]}H_x^{r-1}} \leq C \big( \|f_0\|_{H_x^r}+ \|f_1\|_{H_x^{r-1}}+\|F\|_{L^1_{[-2,2]}H_x^{r-1}} \big),
	\end{equation}
	and the Strichartz estimates
	\begin{equation}\label{QL1}
		\| \left<\nabla \right>^k f\|_{L^2_{[-2,2]}L^\infty_x} \leq C \big( \|f_0\|_{H_x^r}+ \|f_1\|_{H_x^{r-1}} +\|F\|_{L^1_{[-2,2]}H_x^{r-1}} \big), \quad k<r-1,
	\end{equation}
	and the similar estimates also hold when we replace $\left<\nabla \right>^k$ by $\left<\nabla \right>^{k-1}d$. Additionally, if $2<r\leq s$, we have
	\begin{equation}\label{QL2}
		\| d f\|_{L^2_{[-2,2]}\dot{B}^0_{\infty,2}} \leq C \big( \|f_0\|_{H_x^r}+ \|f_1\|_{H_x^{r-1}} +\|F\|_{L^1_{[-2,2]}H_x^{r-1}} \big).
	\end{equation}
\end{corollary}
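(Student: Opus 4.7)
\medskip
\noindent\textbf{Proof proposal for Corollary \ref{r8}.} The plan is to reduce the inhomogeneous Cauchy problem \eqref{wform5} to the homogeneous case already handled by Proposition \ref{r5} through linearity and a Duhamel representation, and then to upgrade the $L^2_tL^\infty_x$ Strichartz bound to the Besov bound \eqref{QL2} by means of a Littlewood–Paley decomposition together with the commutator estimate Lemma \ref{yx}. To begin, write $f=f_h+f_p$, where $f_h$ solves the homogeneous problem $\square_{\mathbf{g}} f_h=0$ with Cauchy data $(f_h,\partial_t f_h)|_{t=t_0}=(f_0,f_1)$, and $f_p$ solves $\square_{\mathbf{g}} f_p=K$ with zero data at $t=t_0$. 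For $f_h$ the bounds \eqref{QL0} and \eqref{QL1} are direct consequences of Proposition \ref{r5}.

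For $f_p$, since $\mathbf{g}^{00}=-1$, the classical Duhamel principle yields the representation
\begin{equation*}
f_p(t,x)=\int_{t_0}^{t} f^{\tau}(t,x)\,d\tau,
\end{equation*}
where for each $\tau\in[t_0,2]$ the function $f^{\tau}$ solves the homogeneous problem $\square_{\mathbf{g}}f^{\tau}=0$ on $[\tau,2]\times\mathbb{R}^3$ with Cauchy data $(f^{\tau},\partial_tf^{\tau})|_{t=\tau}=(0,-K(\tau,\cdot))$. A direct computation using $\mathbf{g}^{00}=-1$ (analogous to the calculation in Lemma \ref{LD}) verifies $\square_{\mathbf{g}}f_p=K$ with vanishing Cauchy data. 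Applying Proposition \ref{r5} to each slice $f^{\tau}$ with data of size $\|K(\tau,\cdot)\|_{H^{r-1}}$, followed by Minkowski's inequality in $\tau$, produces
\begin{equation*}
\|f_p\|_{L^{\infty}_{[-2,2]}H^r_x}+\|\partial_t f_p\|_{L^{\infty}_{[-2,2]}H^{r-1}_x}+\|\langle\nabla\rangle^k f_p\|_{L^2_{[-2,2]}L^{\infty}_x}\lesssim \|K\|_{L^1_{[-2,2]}H^{r-1}_x}
\end{equation*}
for $k<r-1$, and the analogous bound with $\langle\nabla\rangle^{k-1}d$ in place of $\langle\nabla\rangle^k$. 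Combining this with the homogeneous bounds for $f_h$ yields \eqref{QL0} and \eqref{QL1}.

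For the Besov estimate \eqref{QL2}, the plan is to apply $P_j$ to \eqref{wform5} to obtain
\begin{equation*}
\square_{\mathbf{g}}(P_jf)=P_jK+[\square_{\mathbf{g}},P_j]f,\qquad (P_jf,\partial_tP_jf)|_{t=t_0}=(P_jf_0,P_jf_1),
\end{equation*}
and then to apply the energy-plus-Strichartz bound (already established in the previous paragraph) at level $r'$ with $2<r'\leq r$, chosen so that $k=1<r'-1$. This gives for each $j\geq -1$
\begin{equation*}
\|dP_jf\|_{L^2_{[-2,2]}L^{\infty}_x}\lesssim 2^{j(r'-1)}\|P_jf_0\|_{L^2}+2^{j(r'-2)}\|P_jf_1\|_{L^2}+\|P_jK+[\square_{\mathbf{g}},P_j]f\|_{L^1_{[-2,2]}H^{r'-1}_x}.
\end{equation*}
Taking the $\ell^2_j$ norm, the first two pieces are controlled by $\|f_0\|_{\dot H^{r'}}+\|f_1\|_{\dot H^{r'-1}}$, while the Littlewood–Paley characterisation gives $\|\{2^{j(r'-1)}\|P_jK\|_{L^2_x}\}\|_{\ell^2_j}\lesssim\|K\|_{H^{r'-1}_x}$. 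The commutator is handled by Lemma \ref{yx} applied at level $(s_1,s_2)=(r',r')$ (noting $2<r'$), which yields
\begin{equation*}
\Big\|\{2^{j(r'-1)}\|[\square_{\mathbf{g}},P_j]f\|_{L^2_x}\}\Big\|_{\ell^2_j}\lesssim \|df\|_{L^{\infty}_x}\|d\mathbf{g}\|_{\dot H^{r'-1}_x}+\|d\mathbf{g}\|_{L^{\infty}_x}\|df\|_{\dot H^{r'-1}_x},
\end{equation*}
and the right-hand side, after integration in $t$, is absorbed using the already-proved energy bound for $f$ together with the bootstrap assumptions \eqref{402a}–\eqref{403} on $\mathbf{g}$.

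The main obstacle I anticipate is controlling the commutator term $[\square_{\mathbf{g}},P_j]f$ when taking the $\ell^2_j$ sum at the sharp regularity $r'$ just above $2$: one has to match the loss produced by the rough metric $\mathbf{g}$ (only in the bootstrap class $\mathcal{H}$) against the regularity budget of $f$ in $H^{r'}_x$. The key is to use the sharp commutator bound of Lemma \ref{yx} — which is tailored for Lorentzian metrics with $\mathbf{g}^{00}=-1$ — and to close a short Gronwall argument in $t$ using the Strichartz bound \eqref{p303} on $d\mathbf{g}$. Once this is in place, the remaining manipulations are routine dyadic summations and an application of the energy estimate \eqref{QL0} already established in the inhomogeneous setting.
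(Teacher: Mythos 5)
Your proposal is correct and follows essentially the same route as the paper: \eqref{QL0}--\eqref{QL1} via Duhamel's principle combined with Proposition \ref{r5}, and \eqref{QL2} by applying $P_j$ to the equation, invoking the just-established inhomogeneous Strichartz bound frequency by frequency, and controlling the commutator $[\square_{\mathbf{g}},P_j]f$ with Lemma \ref{yx} before absorbing it through the bootstrap bounds on $\mathbf{g}$ and the energy bound on $f$ (no Gronwall iteration is actually needed). The only blemishes are cosmetic: the dyadic weights $2^{j(r'-1)}\|P_jf_0\|_{L^2}+2^{j(r'-2)}\|P_jf_1\|_{L^2}$ in your display should read $2^{jr'}\|P_jf_0\|_{L^2}+2^{j(r'-1)}\|P_jf_1\|_{L^2}$ to match the $H^{r'}\times H^{r'-1}$ data norms you correctly sum to at the end.
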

\begin{proof}
	By using Proposition \ref{r5} and Duhamel's principle, we can obtain \eqref{QL0} and \eqref{QL1} directly. To prove \eqref{QL2}, operating $P_j$ on \eqref{wform5}, it follows
		\begin{equation}\label{wform6}
		\begin{cases}
			& \square_{\mathbf{g}} P_jf=p_j F+ [\square_{\mathbf{g}}, P_j]f,
			\\
			&(P_jf, \partial_t P_jf)|_{t=t_0}=(P_jf_0, P_jf_1).
		\end{cases}
	\end{equation}
For $r>2$, by using \eqref{QL1} on \eqref{wform6}, we therefore get
\begin{equation}\label{ise1}
	\begin{split}
		 \| P_j df\|_{L^2_{[-2,2]} L^\infty_x}
		\lesssim  & \|P_j f_0 \|_{H_x^{r}}+\|P_j f_1 \|_{H_x^{r-1}}
		\\
		&+\|P_j F \|_{L^1_{[-2,2]} H_x^{r-1}}+ \|[\square_{\mathbf{g}}, P_j]f\|_{L^1_{[-2,2]} H_x^{r-1}}.
	\end{split}
\end{equation}	
Taking the $l^2$ sum on \eqref{ise1} for $j$, and using Lemma \ref{yx}, then we derive that
\begin{equation*}\label{ise2}
	\begin{split}
		\| d f\|_{L^2_{[-2,2]} \dot{B}^0_{\infty,2}}
		\lesssim  & \|f_0 \|_{H_x^{r}}+\|f_1 \|_{H_x^{r-1}}+\| F \|_{L^1_{[-2,2]} H_x^{r-1}}
		\\
		&+ \| d f\|_{L^1_{[-2,2]} L_x^\infty} \|d \mathbf{g}\|_{L^\infty_{[-2,2]} {H}_x^{r-1}}+\| d \mathbf{g}\|_{L^1_{[-2,2]} L_x^\infty}\|d f\|_{L^\infty_{[-2,2]}{H}_x^{r-1}}
		\\
		\lesssim  & \|f_0 \|_{H_x^{r}}+\|f_1 \|_{H_x^{r-1}}+\| F \|_{L^1_{[-2,2]} H_x^{r-1}}.
	\end{split}
\end{equation*}	
Above, we also use \eqref{5021}, \eqref{QL0}, and $2< r \leq s$.
\end{proof}
We also need the following Strichartz estimate:
\begin{proposition}\label{r3}
Suppose that $(h,\bu, \bw) \in \mathcal{{H}}$ and $\Re(h,\bu, \bw)\leq 2 \epsilon_1$. Let $\mathbf{g}$ is denoted in \eqref{AMd3}.
For any $1 \leq r \leq s+1$, and for each $t_0 \in [-2,2]$, the linear, non-homogeneous equation
\begin{equation*}\label{wform}
\begin{cases}
	& \square_{\mathbf{g}} f=\mathbf{g}^{0\alpha} \partial_\alpha F, \qquad (t,x) \in (t_0,2]\times \mathbb{R}^3,
	\\
	&(f,\partial_t f)|_{t=t_0}=( f_0,F(t_0,\cdot) ),
\end{cases}
\end{equation*}
admits a solution $f \in C([-2,2],H_x^r) \times C^1([-2,2],H_x^{r-1})$ and the following estimates holds:
\begin{equation}\label{lw0}
\begin{split}
	\| f\|_{L_{[-2,2]}^\infty H_x^r}+ \|\partial_t f\|_{L_{[-2,2]}^\infty H_x^{r-1}} \lesssim & \|f_0\|_{H_x^r}
	+\| F \|_{L^1_{[-2,2]}H_x^r},
\end{split}
\end{equation}
and provided $k<r-1$,
\begin{equation}\label{lw1}
\| \left<\nabla \right>^k f\|_{L^2_{[-2,2]}L^\infty_x} \lesssim \|f_0\|_{H_x^r}+\| F \|_{L^1_{[-2,2]}H_x^r},
\end{equation}
and the similar estimates also hold when we repace $\left<\nabla \right>^k$ by $\left<\nabla \right>^{k-1}d$. Additionally, if $2<r \leq s$, we have
	\begin{equation}\label{lw2}
	\| d f\|_{L^2_{[-2,2]}\dot{B}^0_{\infty,2}} \leq C \big( \|f_0\|_{H_x^r} +\|F\|_{L^1_{[-2,2]}H_x^{r}} \big).
\end{equation}
\end{proposition}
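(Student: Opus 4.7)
The plan is to reduce Proposition \ref{r3} to the Strichartz estimates for the homogeneous wave equation (Proposition \ref{r5} and Corollary \ref{r8}) via the modified Duhamel's principle established in Lemma \ref{LD}. Specifically, I would decompose $f = f_{\mathrm{h}} + f_{\mathrm{D}}$, where $f_{\mathrm{h}}$ solves the homogeneous Cauchy problem $\square_{\mathbf{g}} f_{\mathrm{h}} = 0$ on $[-2,2]\times\mathbb{R}^3$ with data $(f_{\mathrm{h}}, \partial_t f_{\mathrm{h}})|_{t=t_0} = (f_0, 0)$, and the Duhamel piece is defined by
\[
f_{\mathrm{D}}(t,x) \;=\; \int_{t_0}^{t} f_\tau(t,x)\,d\tau,
\]
where for each $\tau \in [t_0, 2]$ the auxiliary function $f_\tau$ solves $\square_{\mathbf{g}} f_\tau = 0$ on $[\tau, 2] \times \mathbb{R}^3$ with initial data $(f_\tau, \partial_t f_\tau)|_{t=\tau} = (F(\tau,\cdot), 0)$. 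A translate of Lemma \ref{LD} from $t=0$ to $t=t_0$ then shows that $f_{\mathrm{D}}$ satisfies $\square_{\mathbf{g}} f_{\mathrm{D}} = \mathbf{g}^{0\alpha}\partial_\alpha F$ with $(f_{\mathrm{D}}, \partial_t f_{\mathrm{D}})|_{t=t_0} = (0, F(t_0, \cdot))$ (up to a sign), so $f = f_{\mathrm{h}} + f_{\mathrm{D}}$ solves \eqref{wform}.

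With this decomposition in hand, Proposition \ref{r5} applied to $f_{\mathrm{h}}$ directly yields $\|f_{\mathrm{h}}\|_{L^\infty H^r_x} + \|\partial_t f_{\mathrm{h}}\|_{L^\infty H^{r-1}_x} + \|\langle \partial \rangle^k f_{\mathrm{h}}\|_{L^2 L^\infty_x} \lesssim \|f_0\|_{H^r}$ for $k<r-1$. Applying the same proposition to each auxiliary $f_\tau$ gives the analogous bound with $\|f_0\|_{H^r}$ replaced by $\|F(\tau,\cdot)\|_{H^r}$, and Minkowski's inequality applied to the $\tau$-integral defining $f_{\mathrm{D}}$ converts the pointwise-in-$\tau$ bound into
\[
\|f_{\mathrm{D}}\|_{L^\infty_{[-2,2]} H^r_x} + \|\langle\partial\rangle^k f_{\mathrm{D}}\|_{L^2_{[-2,2]} L^\infty_x} \;\lesssim\; \int_{-2}^{2} \|F(\tau, \cdot)\|_{H^r}\,d\tau \;=\; \|F\|_{L^1_{[-2,2]} H^r_x},
\]
which, combined with the bound on $f_{\mathrm{h}}$, produces \eqref{lw1} and the spatial part of \eqref{lw0}. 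For $\partial_t f_{\mathrm{D}}$, differentiation of the Duhamel integral picks up the boundary contribution $f_\tau(t,x)|_{\tau=t} = F(t,x)$ together with the integrated term $\int_{t_0}^t \partial_t f_\tau\,d\tau$; the latter is handled by Minkowski as before. The Besov estimate \eqref{lw2} is obtained in the same manner, replacing Proposition \ref{r5} by the Besov bound \eqref{QL2} from Corollary \ref{r8}, which is available precisely in the regime $2 < r \leq s$, and then integrating over $\tau$.

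The main obstacle is the treatment of the boundary term $F(t,x)$ that appears in $\partial_t f_{\mathrm{D}}$ and its analogue in the Besov norm: naively these produce a term of size $\|F\|_{L^\infty H^{r-1}}$ or $\|F\|_{L^2 \dot{B}^0_{\infty,2}}$, which are not directly dominated by $\|F\|_{L^1 H^r}$ on $[-2,2]$ without further input. This can be reconciled by exploiting that in the applications of Proposition \ref{r3} (notably the wave equation \eqref{wrtu} for $\bu_+$), the source function $F$ is of the form $\mathbf{T}\bu_-$ whose elliptic structure, captured by Lemma \ref{tu} and estimate \eqref{tue2}, provides enough extra regularity to absorb the boundary contribution into the stated right-hand side; aside from this technical point, the argument is a careful bookkeeping exercise converting a derivative-source inhomogeneous wave equation into a one-parameter family of homogeneous Cauchy problems to which the full strength of Proposition \ref{r5} and Corollary \ref{r8} applies.
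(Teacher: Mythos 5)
Your proposal is essentially the paper's own proof: the paper decomposes $f=V+Q$ with $V$ solving the homogeneous problem with data $(f_0,0)$ and $Q$ the Duhamel superposition of homogeneous solutions with data $(F(\tau,\cdot),0)$ furnished by Lemma \ref{LD}, then applies Proposition \ref{r5} (and Corollary \ref{r8} for the Besov bound) together with Minkowski's inequality in $\tau$. The boundary term $F(t,x)$ appearing in $\partial_t Q$ that you single out as the main obstacle is real, but the paper's proof simply asserts the bound \eqref{QE} without addressing it either --- the Duhamel argument naturally controls $\partial_t Q - F$ rather than $\partial_t Q$, and $\|F\|_{L^\infty_{[-2,2]}H^{r-1}_x}$ is not dominated by $\|F\|_{L^1_{[-2,2]}H^r_x}$ --- so your observation identifies a gap shared with (and silently incurred by) the paper, which in applications is absorbed because the sources used (e.g.\ $\mathbf{Z}$ in Proposition \ref{r6}) carry uniform-in-time Sobolev bounds.
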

\begin{proof}
Let $V$ satisfy the the linear, homogenous equation
\begin{equation}\label{Vf}
\begin{cases}
	& \square_{\mathbf{g}} V=0,
	\\
	&V(t,x)|_{t=t_0}=f_0, \quad \partial_t V(t,x)|_{t=t_0}=0,
\end{cases}
\end{equation}
Let $K$ satisfy the the linear, nonhomogenous equation
\begin{equation}\label{Qf}
\begin{cases}
	& \square_{\mathbf{g}} K=\mathbf{g}^{0\alpha} \partial_\alpha F,
	\\
	&K(t,x)|_{t=t_0}=0, \quad \partial_t K(t,x)|_{t=t_0}=F(t_0,x),
\end{cases}
\end{equation}
By the superposition principle, and referring \eqref{Vf} and \eqref{Qf}, then $f= V+K$ satisfies
\begin{equation*}
\begin{cases}
	& \square_{\mathbf{g}} f=\mathbf{g}^{0\alpha}\partial_\alpha F,
	\\
	&(f, \partial_t f)|_{t=t_0}=(f_0, F(t_0,x)).
\end{cases}
\end{equation*}
Due to Proposition \ref{r5}, it follows
\begin{equation}\label{Vf1}
\| V\|_{L^\infty_{[-2,2]}H_x^r}+ \| \partial_t V\|_{L^\infty_{[-2,2]}H_x^{r-1}} \leq C  \|f_0\|_{H_x^r},
\end{equation}
and
\begin{equation}\label{Vf2}
\| \left<\nabla \right>^k V\|_{L^2_{[-2,2]}L^\infty_x} \leq  C\|f_0\|_{H_x^r}, \quad k<r-1.
\end{equation}
Using Corollary \ref{r8}, we have
\begin{equation}\label{Vf3}
	\| d V\|_{L^2_{[-2,2]}\dot{B}^{0}_{\infty,2}} \leq C \|f_0\|_{H_x^r}, \quad 2<r\leq s.
\end{equation}
By Lemma \ref{LD} and Proposition \ref{r5}, we can bound $K$ by
\begin{equation}\label{QE}
\| K\|_{L^\infty_{[-2,2]} H_x^r}+ \| \partial_t K\|_{L^\infty_{[-2,2]} H_x^{r-1}} \leq C  \|F\|_{L^1_{[-2,2]}H_x^{r}},
\end{equation}
and
\begin{equation}\label{SQ}
\| \left<\nabla \right>^k K\|_{L^2_{[-2,2]} L^\infty_x} \leq C \|F\|_{L^1_{[-2,2]}H_x^{r}}, \quad k<r-1.
\end{equation}
Using Corollary \ref{r8} again, we can obtain
\begin{equation}\label{Vf4}
	\| d V\|_{L^2_{[-2,2]}\dot{B}^{0}_{\infty,2}} \leq C \|F\|_{L^1_{[-2,2]}H_x^{r}}, \quad 2<r\leq s.
\end{equation}
Adding \eqref{Vf1} and \eqref{QE}, we get \eqref{lw0}. Adding \eqref{Vf2} and \eqref{SQ}, we get \eqref{lw1}.  Adding \eqref{Vf3} and \eqref{Vf4}, we get \eqref{lw2}. At this stage, we have finished the proof of Proposition \ref{r3}.
\end{proof}
Based on Proposition \ref{r5} and \ref{r3}, we can obtain the Strichartz estimates for the solutions of \eqref{REE} as follows.
\begin{proposition}\label{r6}
Suppose $(h, \bu, \bw) \in \mathcal{H}$ and $\Re(h, \bu, \bw)\leq 2 \epsilon_1$. Let $2<s_0<s\leq\frac52$. Let $\bu_{+}$ be defined in \eqref{De}. Then for $k<s-1$, we have
\begin{align}\label{fgh1}
	\|(\left< \nabla \right>^{k} h, \left< \nabla \right>^{k-1} \bu_{+})\|_{L^2_{[-2,2]} \dot{B}^{0}_{\infty, 2}}
	\lesssim  & \|(u^0-1, \mathring{\bu}, h) \|_{L^\infty_{[-2,2]} H_x^{s}}
	+\| \bw \|_{L^2_{[-2,2]} H_x^{\frac{3}{2}+}},
\end{align}
and the similar estimates hold if we replace $\left< \nabla \right>^{k} $ with $\left< \nabla \right>^{k-1} d$.
\end{proposition}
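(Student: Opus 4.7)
The plan is to derive Strichartz/Besov bounds for $h$ and $\bu_+$ separately from their wave equations by invoking the linear Strichartz machinery of Proposition \ref{r5}, Corollary \ref{r8} and Proposition \ref{r3}, and to transfer the derivative loss coming from the vorticity onto the elliptic component $\bu_-$ via Lemma \ref{tu}. For the enthalpy I would start from $\square_g h = D$ (Lemma \ref{WT}) and apply the Besov estimate \eqref{QL2} with $r = k+1 \in (2,s]$, treating the low-frequency range by Bernstein once $k+1$ drops below $2$. Because $D$ in \eqref{err1} is a sum of $d$-quadratic expressions in $(\bu,h)$, the product estimates of Lemma \ref{jh} and Lemma \ref{cj} give $\|D\|_{L^1_{[-2,2]} H^{k}_x} \lesssim \|d\bu,dh\|_{L^2 L^\infty}\|d\bu,dh\|_{L^\infty H^{k}_x}$, which by the bootstrap assumptions \eqref{402a}--\eqref{403} (i.e.\ \eqref{5021}) is controlled by $\|(u^0-1,\mathring{\bu},h)\|_{L^\infty H^{s}_x}$, with no appearance of $\bw$ at this stage.

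For $\bu_+$ the decisive step is to recast $\square_g u_+^\alpha = \Omega(c_s^2+1) u^\beta u^\gamma \partial^2_{\beta\gamma} u_-^\alpha + Q^\alpha$ (Lemma \ref{um}) so that the modified Duhamel principle of Lemma \ref{LD}, implemented in Proposition \ref{r3}, applies. Writing $u^\beta \partial_\beta = u^0 \mathbf{T}$ and using $\mathbf{g}^{00}=-1$ together with $\mathbf{g}^{\alpha\beta} = \Omega(c_s^2 m^{\alpha\beta}+(c_s^2-1)u^\alpha u^\beta)$, I would redistribute one derivative from $u_-^\alpha$ onto $\mathbf{T}u_-^\alpha$ and absorb the commutators as lower-order remainders, reaching the structural identity
\[
\Omega(c_s^2+1) u^\beta u^\gamma \partial^2_{\beta\gamma} u_-^\alpha \;=\; \mathbf{g}^{0\beta}\partial_\beta(\mathbf{T} u_-^\alpha) + R^\alpha,
\]
which is precisely the form \eqref{sta} foreshadowed in the introduction. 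Here $R^\alpha$ contains either two derivatives of $u_-$ paired with $(d\bu,dh)$ or one derivative of $\mathbf{T}u_-$ paired with $(d\bu,dh)$. Proposition \ref{r3} then produces, for $r = k \in (1,s]$,
\[
\|\left<\partial\right>^{k-1} \bu_+\|_{L^2_{[-2,2]} \dot B^{0}_{\infty,2}} \;\lesssim\; \|\bu_+(0)\|_{H^{k}_x} + \|\mathbf{T}\bu_-\|_{L^1_{[-2,2]} H^{k}_x} + \|R+Q\|_{L^1_{[-2,2]} H^{k-1}_x}.
\]

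The $\bw$-factor of the claimed bound comes exactly from the second term: Lemma \ref{tu} provides $\|\mathbf{T}\bu_-\|_{H^s_x} \lesssim \|\bw\|_{H^{3/2+}_x}(\|(u^0-1,\mathring{\bu},h)\|_{H^s_x} + \|(u^0-1,\mathring{\bu},h)\|_{H^s_x}^2)$, so H\"older in time yields the $\|\bw\|_{L^2 H^{3/2+}_x}$ appearing on the right of \eqref{fgh1}, with the $H^s$ factor absorbed by the energy bound \eqref{5021}. The remainders $R^\alpha$ and $Q^\alpha$ from \eqref{err} are handled by Sobolev algebra (Lemma \ref{ps}, Lemma \ref{cj}) together with the elliptic control $\|\nabla \bu_-\|_{H^{s-1}_x} \lesssim \|\bw\|_{H^{s-1}_x}$ already recorded in \eqref{ETAs}; the natural pairing $\|d\bu,dh\|_{L^2 L^\infty}\cdot\|\cdot\|_{L^\infty H^{k-1}_x}$ delivers the $L^1_t H^{k-1}_x$ estimates thanks to \eqref{403}. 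The variant with $\left<\partial\right>^k$ replaced by $\left<\partial\right>^{k-1}d$ follows from the same argument, applied componentwise in $(t,x)$, supplemented by the Strichartz energy bound $\|\partial_t f\|_{L^\infty H^{k-1}_x}\lesssim \|f_0\|_{H^k_x}+\|K\|_{L^1 H^{k-1}_x}$ from Corollary \ref{r8}.

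The principal obstacle will be making the algebraic reduction to $\mathbf{g}^{0\beta}\partial_\beta(\mathbf{T}u_-^\alpha)+R^\alpha$ precise and verifying that every term in $R^\alpha$ and $Q^\alpha$ carries at most two spatial derivatives on $u_-$ (absorbable by the second-order elliptic estimate $\|d^2\bu_-\|_{H^{s-1}_x}\lesssim \|\bw\|_{H^s_x}$) or at most one derivative of $\mathbf{T}u_-$ paired against $d\bu,dh$; with $k$ allowed to be almost $s-1$ no derivative can be wasted, and it is precisely the identification of the top-order piece with $\mathbf{g}^{0\beta}\partial_\beta(\mathbf{T}u_-)$ that converts the potentially divergent $L^1_t H^{k}_x$ bookkeeping into the finite quantity $\|\bw\|_{L^2 H^{3/2+}_x}$ via Lemma \ref{tu}.
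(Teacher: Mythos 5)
Your proposal follows essentially the same route as the paper: the paper likewise rewrites $\Omega(c_s^2+1)u^\beta u^\gamma\partial^2_{\beta\gamma}\bu_-$ as $\mathbf{g}^{0\alpha}\partial_\alpha\mathbf{Z}+\mathbf{B}$ with $\mathbf{Z}$ a smooth multiple of $\mathbf{T}\bu_-$ (see \eqref{fdq}), applies the modified Duhamel principle via Proposition \ref{r3} together with Corollary \ref{r8} for the $\bQ+\mathbf{B}$ part, and converts the derivative loss into $\|\bw\|_{L^2H^{3/2+}_x}$ through the elliptic estimate \eqref{tue2} of Lemma \ref{tu}, exactly as you describe. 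The only cosmetic difference is that the paper absorbs the coefficient $-(u^0)^2(\mathbf{g}^{00})^{-1}\Omega(c_s^2+1)$ into $\mathbf{Z}$ rather than isolating $\mathbf{T}\bu_-$ itself, which changes nothing in the estimates.
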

\begin{proof}
By \eqref{WTe} and \eqref{fdr}, we can obtain
\begin{equation}\label{fcr}
\begin{cases}
	& \square_{\mathbf{g}} h= D,
	\\
	&\square_{\mathbf{g}} \bu_{+}=\mathbf{g}^{0\alpha}\partial_\alpha \mathbf{Z}+ \bQ+\mathbf{B}.
\end{cases}
\end{equation}
By using \eqref{err1}, \eqref{fdwB}, \eqref{err}, Lemma \ref{cj}, and \eqref{5021}, we therefore get
\begin{equation}\label{mlo}
\begin{split}
	 \|D\|_{L^1_{[-2,2]}H_x^{s-1}} + \|\bQ\|_{L^1_{[-2,2]}H_x^{s-1}} \lesssim  &  \|(d \bu, dh)\|_{L^1_{[-2,2]} L_x^\infty} \|(d \bu, dh)\|_{L^\infty_{[-2,2]}H_x^{s-1}},
\\
\lesssim &  \|(d \bu, dh)\|_{L^1_{[-2,2]} L_x^\infty} \|(u^0-1, \mathring{\bu}, h) \|_{L^\infty_{[-2,2]} H_x^{s}}
\\
\lesssim &  \|(u^0-1, \mathring{\bu}, h) \|_{L^\infty_{[-2,2]} H_x^{s}}.
\end{split}
\end{equation}
By using \eqref{fdwB} and Lemma \ref{cj}, it yields
\begin{equation}\label{mlb}
	\begin{split}
		 \|\bB\|_{L^1_{[-2,2]}H_x^{s-1}}  \lesssim  & \| \mathbf{T}\bu_{-}\|_{L^2_{[-2,2]}H_x^{s}}+  \|(d \bu_{-}, d \bu, dh)\|_{L^1_{[-2,2]} L_x^\infty} \|(d \bu, d \bu_{-}, dh)\|_{L^\infty_{[-2,2]}H_x^{s-1}}.
	\end{split}
\end{equation}
Using \eqref{De0} and \eqref{5021}, it follows
\begin{equation}\label{mao}
	\begin{split}
		\|\bu_{-}\|_{H^{\frac52+}_x} \lesssim  \| \bw \|_{ H^{\frac32+}_x }(1+\| h \|_{ H^{s}_x })\lesssim  \| \bw \|_{ H^{\frac32+}_x }.
	\end{split}
\end{equation}
Due to \eqref{tue2} and \eqref{5021}, we can compute out
\begin{equation}\label{fcv}
	\begin{split}
		\| \mathbf{T}\bu_{-} \|_{ H_x^s} \lesssim  & \| \bw \|_{ H^{\frac32+}_x } ( \|(u^0-1, \mathring{\bu}, h) \|_{ H_x^{s}}+\|(u^0-1, \mathring{\bu}, h) \|^2_{H_x^{s}})
		\\
		\lesssim  & \| \bw \|_{ H^{\frac32+}_x }.
	\end{split}
\end{equation}
From \eqref{fdwZ}, so we get
\begin{equation}\label{fce}
	\begin{split}
		\| \mathbf{Z} \|_{ H_x^s} \lesssim  \| \mathbf{T}\bu_{-} \|_{ H_x^s} \lesssim \| \bw \|_{ H^{\frac32+}_x }.
	\end{split}
\end{equation}
Combing \eqref{fcv}, \eqref{mao}, \eqref{mlb}, and using \eqref{5021}, we have
\begin{equation}\label{mld}
	\begin{split}
		\|\bB\|_{L^1_{[-2,2]}H_x^{s-1}}  \lesssim  & \| \bw \|_{ L^2_{[-2,2]} H^{\frac32+}_x } ( 1+ \|(u^0-1, \mathring{\bu}, h) \|_{ L^\infty_{[-2,2]} H_x^{s}}+\|(u^0-1, \mathring{\bu}, h) \|^2_{L^\infty_{[-2,2]} H_x^{s}}  )
		\\
		&
		+  \|(d \bu, dh)\|_{L^2_{[-2,2]} L_x^\infty} \|(u^0-1, \mathring{\bu}, h) \|_{L^\infty_{[-2,2]} H_x^{s}}
		\\
		\lesssim & \| \bw \|_{ L^2_{[-2,2]} H^{\frac32+}_x }+\|(u^0-1, \mathring{\bu}, h) \|_{L^\infty_{[-2,2]} H_x^{s}}.
	\end{split}
\end{equation}
Using Propostion \ref{r8} and \ref{r3} for Equation \eqref{fcr}, combining the bounds \eqref{mlo}, \eqref{fce}, and \eqref{mld}, we can prove \eqref{fgh1}. At this stage, we have finished the proof of Proposition \ref{r6}.
\end{proof}

\begin{remark}
We expect that Proposition \ref{r6} yields a Strichartz estimate of solutions with  sharp regularity of the velocity, enthalpy, and vorticity.
\end{remark}
\begin{proposition}\label{r4}
Suppose $(h,\bu, \bw) \in \mathcal{H}$ and $\Re(h,\bu, \bw)\leq 2 \epsilon_1$. Let $2<s_0<s<\frac52$ and $\delta \in [0,s-2)$. Let $\bu_{+}$ be defined in \eqref{De}-\eqref{De0}. Then we have the following Strichartz estimates
\begin{equation}\label{strr}
\|d \bu, dh, d \bu_{+}\|_{L^2_{[-2,2]} C^{\delta}_x}+\|d \bu_{+}, d\bu, dh\|_{L^2_{[-2,2]} \dot{B}^{s_0-2}_{\infty,2}} \leq \epsilon_2,
\end{equation}
and the energy estimates
\begin{equation}\label{eef}
\| (h,u^0-1,\mathring{\bu})\|_{L^\infty_{[-2,2]} H_x^s} +\| \bw\|_{L^\infty_{[-2,2]} H_x^{s_0}}\leq \epsilon_2.
\end{equation}
\end{proposition}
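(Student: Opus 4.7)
\emph{Proof proposal.} The plan is to run a two-step bootstrap closure, exploiting the sharp hierarchy $\epsilon_3\ll\epsilon_2\ll\epsilon_1$ from \eqref{a0}. Under the hypotheses $(h,\bu,\bw)\in\mathcal{H}$ and $G(h,\bu)\le 2\epsilon_1$, the Strichartz machinery of Propositions \ref{r5}, \ref{r3}, \ref{r6} and Corollary \ref{r8} is available; each will receive input controlled by the energy and produce output absorbed by the smallness of the initial data.

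First, I apply Theorem \ref{VE}. The bootstrap assumption \eqref{403} together with Cauchy--Schwarz on $[-2,2]$ gives $M(t)\lesssim \epsilon_2\ll 1$, while the initial data \eqref{401} gives $E_0\lesssim \epsilon_3^2$. Then \eqref{WHa} yields $E_s(t)\lesssim \epsilon_3^2 \exp(C\epsilon_2\, e^{C\epsilon_2})\lesssim \epsilon_3^2$, hence $\|(h,u^0-1,\mathring{\bu})\|_{L^\infty H_x^s}+\|\bw\|_{L^\infty H_x^{s_0}}\lesssim \epsilon_3\ll \epsilon_2$, which proves \eqref{eef}.

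Second, I feed this energy bound into Proposition \ref{r6} with $k=s_0-1<s-1$ (and the $\langle\nabla\rangle^{k-1}d$ variant) to obtain $\|dh\|_{L^2\dot B^{s_0-2}_{\infty,2}}+\|d\bu_+\|_{L^2\dot B^{s_0-2}_{\infty,2}}\lesssim \epsilon_3$. For the H\"older Strichartz bound with $\delta\in[0,s-2)$, Corollary \ref{r8} applied to $\square_{\mathbf{g}}h=D$ with $r=s$ and $k=1+\delta<s-1$, together with the source bound \eqref{mlo}, gives $\|dh\|_{L^2 C_x^\delta}\lesssim \epsilon_3$. For $\bu_+$ I split the evolution dictated by \eqref{fcr} into a homogeneous piece (Proposition \ref{r5}), a piece driven by $\mathbf{g}^{0\alpha}\partial_\alpha\mathbf{Z}$ with trivial initial data (Proposition \ref{r3}, with $\|\mathbf{Z}\|_{L^1 H^s}$ controlled by \eqref{fce}--\eqref{fcv}) and a piece driven by $\bQ+\bB$ (Corollary \ref{r8}, using \eqref{mlo} and \eqref{mld}); summing the three contributions yields $\|d\bu_+\|_{L^2 C_x^\delta}\lesssim \epsilon_3$.

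Third, to recover $\bu$ I write $\bu=\bu_++\bu_-$ with $\bu_-=\mathbf{P}^{-1}(e^{-h}\bW)$. Ellipticity of $\mathbf{P}$ (Lemma \ref{app1}) together with \eqref{MFd} gives $\|d\bu_-\|_{L^\infty H_x^{s_0}}\lesssim (1+\|h\|_{L^\infty H_x^s})\|\bw\|_{L^\infty H_x^{s_0}}\lesssim \epsilon_3$. Since $\delta<s-2<s_0-3/2$ and $s_0-3/2>s_0-2$ (both valid under $s<5/2$, $s_0>2$), the Sobolev embeddings $H_x^{s_0}\hookrightarrow C_x^\delta$ and $H_x^{s_0}\hookrightarrow \dot B^{s_0-2}_{\infty,2}$ apply, and the trivial $L^\infty\hookrightarrow L^2$ embedding on the bounded interval $[-2,2]$ upgrades them to the desired $L^2_t$ bounds. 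Combined with Step~2, this closes \eqref{strr} at the level $\lesssim \epsilon_3\le \epsilon_2$. The main obstacle is the source $\mathbf{g}^{0\alpha}\partial_\alpha\mathbf{Z}$ in \eqref{fcr}: a direct application of Corollary \ref{r8} would require $\|\partial\mathbf{Z}\|_{L^1 H^{s-1}}$, costing one derivative of $\bu_-$ that \eqref{tue} cannot supply. The modified Duhamel principle of Lemma \ref{LD}, which underlies Proposition \ref{r3}, absorbs this derivative into the wave evolution and reduces the demand to $\|\mathbf{Z}\|_{L^1 H^s}$, precisely the quantity bounded in \eqref{fcv}; this is the structural reason for introducing the decomposition $\bu=\bu_++\bu_-$ with the tailored elliptic operator $\mathbf{P}$ in the first place.
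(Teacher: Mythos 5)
Your proposal is correct and follows essentially the same route as the paper: Theorem \ref{VE} together with the bootstrap bound \eqref{403} closes the energy estimate at level $\epsilon_2$, Proposition \ref{r6} (whose proof is exactly the splitting of \eqref{fcr} via Propositions \ref{r5}, \ref{r3} and Corollary \ref{r8} that you re-derive) gives the Strichartz bounds for $dh$ and $d\bu_+$, and the decomposition $\bu=\bu_++\bu_-$ with the elliptic bound for $\bu_-$ (plus \eqref{fcv} for its time derivative) recovers $d\bu$. The only difference is cosmetic ordering, so no further comment is needed.
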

\begin{proof}
Taking $k=s_0-1$ in Proposition \ref{r6}, we then obtain
\begin{equation}\label{epp0}
\begin{split}
	\|dh, d\bu_{+} \|_{L^2_{[-2,2]} \dot{B}^{s_0-2}_{\infty, 2}}
	\lesssim  & \|(u^0-1, \mathring{\bu}, h) \|_{L^\infty_{[-2,2]} H_x^{s}}
	+\| \bw \|_{L^\infty_{[-2,2]} H_x^{\frac{3}{2}+}}.
\end{split} 		
\end{equation}
Using Proposition \ref{r6} again, for $\delta \in [0,s-2)$, we shall get
\begin{equation}\label{epp1}
\begin{split}
	\|dh, d\bu_{+} \|_{L^2_{[-2,2]}C^{\delta}_x}
	\lesssim  & \|(u^0-1, \mathring{\bu}, h) \|_{L^\infty_{[-2,2]} H_x^{s}}
	+\| \bw \|_{L^\infty_{[-2,2]} H_x^{\frac{3}{2}+}}.
\end{split} 		
\end{equation}
Due to \eqref{De0} and \eqref{5021}, it follows
\begin{equation}\label{maox}
	\begin{split}
	\|\nabla \bu_{-}\|_{L^2_{[-2,2]} \dot{B}^{s_0-2}_{\infty, 2}}\lesssim &	\|\nabla \bu_{-}\|_{L^2_{[-2,2]}H^{s_0-\frac12}_x} 
	\\
	\lesssim &	\|\mathrm{e}^{-h}\bW\|_{L^2_{[-2,2]}H^{s_0-\frac32}_x} 
	\\
	\lesssim  & \| \bw \|_{ L^2_{[-2,2]} H^{s_0}_x }(1+\| h \|_{ L^\infty_{[-2,2]}H^{s}_x })
	\\
	\lesssim  & \| \bw \|_{ L^2_{[-2,2]} H^{s_0}_x }.
	\end{split}
\end{equation}
By using Theorem \ref{VE} and $(h, \bu, \bw) \in \mathcal{H}$, it follows that
\begin{equation}\label{epp8}
\begin{split}
	 \|(u^0-1, \mathring{\bu}, h) \|_{L^\infty_{[-2,2]} H_x^{s}}
	+\| \bw \|_{L^\infty_{[-2,2]} H_x^{s_0}}
	\leq  & C (\epsilon_3+ \epsilon^2_3) \exp \left\{ C (\epsilon_3+ \epsilon^2_3)\epsilon_2 \mathrm{e}^{C (\epsilon_3+ \epsilon^2_3)\epsilon_2 }  \right\}
	\\
	\leq & \epsilon_2.
\end{split} 		
\end{equation}
Therefore, by \eqref{epp8}, \eqref{epp1}, \eqref{fcv}, \eqref{maox}, and \eqref{a0}, it turns out
\begin{equation}\label{epp9}
\begin{split}
	\|d h, d\bu \|_{L^2_{[-2,2]} \dot{B}^{s_0-2}_{\infty, 2}} \leq & C(\|(u^0-1, \mathring{\bu}, h) \|_{L^\infty_{[-2,2]} H_x^{s}}
	+\| \bw \|_{L^\infty_{[-2,2]} H_x^{s_0}})
	\\
	\leq  & C (\epsilon_3+ \epsilon^2_3) \exp \left\{ C (\epsilon_3+ \epsilon^2_3)\epsilon_2 \mathrm{e}^{C (\epsilon_3+ \epsilon^2_3)\epsilon_2 }  \right\}
	\\
	\leq & \epsilon_2.
\end{split} 		
\end{equation}
Similarly, we can also prove that
\begin{equation}\label{epp10}
\begin{split}
	\|dh, d\bu \|_{L^2_{[-2,2]} C^\delta_x}
	\lesssim   \epsilon_2.
\end{split} 		
\end{equation}
To summarize \eqref{epp1}, \eqref{epp8}, \eqref{epp9} and \eqref{epp10}, we have finished the proof of Proposition \ref{r4}.
\end{proof}
\section{Strichartz estimate for linear waves}\label{spp}
In this part, our goal is to prove Proposition \ref{r5}. Following Smith-Tataru's paper \cite{ST}, we give a proof for Proposition \ref{r5}. The method relies on a wave-packet construction for the metric $\mathbf{g}$ in \eqref{AMd3}. We mention that the use of wave-packets is introduced by Smith \cite{Sm}, and developed by Wolff \cite{Wo} and Smith-Tataru \cite{ST}. Following this idea, so we can represent approximate solutions to the linear equation as a square summable superposition of wave packets, and the wave packets are localized in phase space. As a result, we can obtain Strichartz estimates of linear wave equations with sufficient regularity of the foliations. 

We also set $(h,\bu, \bw) \in \mathcal{{H}}$ and $ \Re\leq 2 \epsilon_1$, where $\mathcal{{H}}$ and $\Re$ are defined in \eqref{401}-\eqref{403} and \eqref{500} respectively. Then $\|d \mathbf{g}\|_{L^2_{[-2,2]} L^\infty_x} \leq \epsilon_0$ and $\|d \mathbf{g}\|_{L^2_{[-2,2]} C^\delta_x} \leq \epsilon_0$, where $\epsilon_0$ has been stated in \eqref{a0}.

Let $P_\lambda$ be the Littlewood-Paley operator with frequency $\lambda$. For $\lambda \geq 1$, we consider the smooth metric
\begin{equation*}
	\mathbf{g}_{\lambda}= \sum_{\lambda' < \lambda}P_{\lambda'} \mathbf{g}.
\end{equation*}
\subsection{Proof of Proposition \ref{r5}} We first state the following result in a phase space.
\begin{proposition}\label{AA1}
	Suppose that $(h, \bu, \bw) \in \mathcal{{H}}$ and $\Re(h,\bu,\bw)\leq 2 \epsilon_1$. Suppose $f$ satisfy
	\begin{equation}\label{linearC}
		\begin{cases}
			\square_{\mathbf{g}} f=0, \quad (t,x)\in [-2,2]\times \mathbb{R}^3,\\
			(f, \partial_t f)|_{t=t_0}=(f_0, f_1).
		\end{cases}
	\end{equation}
	Then for each $(f_0,f_1) \in H^1 \times L^2$ there exists a function $f_{\lambda} \in C^\infty([-2,2]\times \mathbb{R}^3)$, with
	\begin{equation*}
		\mathrm{supp} \widehat{f_\lambda(t,\cdot)} \subseteq \{ \xi\in\mathbb{R}^3: \frac{\lambda}{8} \leq |\xi| \leq 8\lambda \},
	\end{equation*}
	such that
	\begin{equation}\label{Yee}
		\begin{cases}
			& \| \square_{\mathbf{g}_\lambda} f_{\lambda} \|_{L^1_{[-2,2]} L^2_x} \lesssim \epsilon_0 (\| {P_\lambda } f_0\|_{H^1}+\| {P_\lambda } f_1 \|_{L^2} ),
			\\
			& f_\lambda(t_0)=P_\lambda f_0, \quad \partial_t f_{\lambda} (t_0)=P_{\lambda} f_1.
		\end{cases}
	\end{equation}
	Additionally, the following Strichartz estimates holds:
	\begin{equation}\label{Ase}
		\| f_{\lambda} \|_{L^2_{[-2,2]} L^\infty_x} \lesssim \epsilon_0^{-\frac{1}{2}} (\ln \lambda)^{\frac12}
		( \| {P_\lambda } f_0 \|_{H^1} + \| {P_\lambda } f_1 \|_{L^2} ).
	\end{equation}
\end{proposition}
\begin{remark}\label{rel}
	In fact, Proposition \ref{AA1} tells us we can find a good approximate solution $f_\lambda$ for the problem
	\begin{equation*}
		\begin{cases}
			\square_{\mathbf{g}_\lambda} f_\lambda=0, \quad (t,x)\in [-2,2]\times \mathbb{R}^3,\\
			(f_\lambda, \partial_t f_\lambda)|_{t=t_0}=(P_\lambda f_0, P_\lambda f_1).
		\end{cases}
	\end{equation*}
	For $\epsilon_0 \lambda\leq 1$, the result in Proposition \ref{AA1} is almost trivial. Indeed, we can set $f_\lambda=P_\lambda f$, where $f$ is the exact solution of
	\begin{equation}\label{linearD}
		\begin{cases}
			\square_{\mathbf{g}_\lambda} f=0, \quad (t,x)\in [-2,2]\times \mathbb{R}^3,\\
			(f, \partial_t f)|_{t=t_0}=(P_\lambda f_0, P_\lambda f_1).
		\end{cases}
	\end{equation}
	Using energy estimates for \eqref{linearD}, we can see
	\begin{equation}\label{eet}
		\|df\|_{L^\infty_{[-2,2]} L^2_x} \lesssim \| P_\lambda f_0\|_{H^1}+\|P_\lambda f_1\|_{L^2}.
	\end{equation}
	Moreover, for $\mathbf{g}^{00}_\lambda=-1$, so we have
	\begin{equation*}
		\begin{split}
			\| \square_{\mathbf{g}_\lambda} f_\lambda \|_{L^2_{[-2,2]} L^2_x} 
			\lesssim & 
			\| [ \mathbf{g}^{\alpha i}_\lambda, \partial_\alpha P_\lambda ] \partial_i f \|_{ L^2_{[-2,2]} L^2_x } 
			+ \| P_\lambda (\partial_\alpha \mathbf{g}^{\alpha i}_\lambda ) \partial_i f \|_{ L^2_{[-2,2]} L^2_x }
			\\
			\lesssim & \| d \mathbf{g}_\lambda \|_{L^2_{[-2,2]} L^\infty_x} \| d {f} \|_{L^\infty_{[-2,2]} L^2_x}
			\\
			\lesssim & \epsilon_0 ( \| P_\lambda f_0\|_{H^1}+\|P_\lambda f_1\|_{L^2} ).
		\end{split}
	\end{equation*}
	The Strichartz estimate \eqref{Ase} follows from Sobolev imbeddings and \eqref{eet}. Hence, we will restrict to establishing Proposition \ref{AA1} in the case that
	\begin{equation*}
		\epsilon_0 \lambda \gg 1.
	\end{equation*}
\end{remark}
Based on Proposition \ref{AA1}, we can prove Proposition \ref{r5} as follows. 
\begin{proof}[Proof of Proposition \ref{r5} by using Proposition \ref{AA1}]
	We first prove the case of $r=1$ by using Proposition \ref{AA1}. Based on the result for $r=1$, combining with Kato-Ponce commutator estimates and Gagliardo-Nirenberg inequality, we are able to give a proof for other case $1<r\leq 1+s$.

\textbf{Step 1:} The case $r=1$. Using basic energy estimates for \eqref{wform1}, we have
\begin{equation*}
	\begin{split}
		\| \partial_t f \|_{L^2_x} + \|\nabla f \|_{L^2_x}  \lesssim & \ (\| f_0\|_{H^1}+ \| f_1\|_{L^2}) \exp(\int^t_0 \| d \mathbf{g} \|_{L^\infty_x} d\tau)
		\\
		\lesssim & \ \| f_0\|_{H^1}+ \| f_1\|_{L^2}.
	\end{split}
\end{equation*}
Then the Cauchy problem \eqref{linear} holds a unique solution $f \in C([-2,2],H_x^1)$ and $\partial_t f \in C([-2,2],L_x^2)$. It remains to show that the solution $f$ also satisfies the Strichartz estimate \eqref{SL}.

Without loss of generality, we take $t_0=0$ in Proposition \ref{r5}. For any given initial data $(f_0,f_1) \in H^1 \times L^2$, and $t_0 \in [-2,2]$, we take a Littlewood-Paley decomposition
\begin{equation*}
	f_0=\textstyle{\sum}_{\lambda}P_{\lambda}f_0, \qquad  f_1=\textstyle{\sum}_{\lambda}P_{\lambda}f_1,
\end{equation*}
and for each $\lambda$ we take the corresponding $f_{\lambda}$ as in Proposition \ref{AA1}.  If we set
\begin{equation}\label{fstar}
	f^*=\textstyle{\sum}_{\lambda}f_{\lambda},
\end{equation}
then $f^*$ matches the initial data $(f_0,f_1)$ at the time $t=t_0$, and also satisfies the Strichartz estimates \eqref{Ase}. We claim that $f^*$ is also an approximate solution for $\square_{\mathbf{g}}$ in the sense that
\begin{equation}\label{QQ}
	\| \square_{\mathbf{g}} f^* \|_{L^1_{[-2,2]} L^2_x} \lesssim \epsilon_0(\| f_0 \|_{H^1}+\| f_1 \|_{L^2} ).
\end{equation}
We can derive \eqref{QQ} by using the decomposition
\begin{equation}\label{er1}
	\square_{\mathbf{g}} f^*=\textstyle{\sum_{\lambda}} \square_{\mathbf{g}_\lambda} f_\lambda+ \textstyle{\sum_{\lambda}} \square_{\mathbf{g}-\mathbf{g}_\lambda} f_\lambda.
\end{equation}
In \eqref{er1}, the first one $\textstyle{\sum_{\lambda}} \square_{\mathbf{g}_\lambda} f_\lambda$ can be controlled by Proposition \ref{AA1}. As for the second one $\textstyle{\sum_{\lambda}} \square_{\mathbf{g}-\mathbf{g}_\lambda} f_\lambda$, noting $\mathbf{g}^{00}=-1$, we then have
\begin{equation*}
	\begin{split}
		\textstyle{\sum_{\lambda}} \square_{\mathbf{g}-\mathbf{g}_\lambda} f_\lambda= \textstyle{\sum_{\lambda}} ({\mathbf{g}-\mathbf{g}_\lambda}) \nabla  df_\lambda.
	\end{split}
\end{equation*}
Due to H\"older inequality, we obtain
\begin{equation}\label{y1}
	\begin{split}
		\| \textstyle{\sum_{\lambda}} \square_{\mathbf{g}-\mathbf{g}_\lambda} f_\lambda \|_{L^2_x} \lesssim \mathrm{sup}_{\lambda} \left(\lambda \| {\mathbf{g}-\mathbf{g}_\lambda} \|_{L^\infty_x} \right) \left( \sum_{\lambda}  \| df_\lambda \|^2_{L^2_x} \right)^{\frac12}.
	\end{split}
\end{equation}
On the other hand, we have
\begin{equation}\label{y2}
	\begin{split}
		\mathrm{sup}_{\lambda} \left(\lambda \| {\mathbf{g}-\mathbf{g}_\lambda} \|_{L^\infty_x} \right) \lesssim & \ \mathrm{sup}_{\lambda} \big(\lambda \sum_{\mu > \lambda}\| \mathbf{g}_\mu \|_{L^\infty_x} \big)
		\\
		\lesssim & \ \mathrm{sup}_{\lambda} \big(\lambda \sum_{\mu > \lambda}\mu^{-(1+\delta)}\|d \mathbf{g}_\mu \|_{C^\delta_x} \big)
		\\
		\lesssim & \ \|d \mathbf{g} \|_{C^\delta_x}(\textstyle{\sum_{\mu}} \mu^{-\delta}) \lesssim \ \|d \mathbf{g} \|_{C^\delta_x}.
	\end{split}
\end{equation}
For $f_\lambda$ is an approximate solution for the problem
\begin{equation*}
	\begin{cases}
		\square_{\mathbf{g}_\lambda} v=0, \quad (t,x)\in [-2,2]\times \mathbb{R}^3,\\
		(v, \partial_t v)|_{t=t_0}=(P_\lambda f_0, P_\lambda f_1).
	\end{cases}
\end{equation*}
Due to energy estimate and \eqref{Yee}, it yields
\begin{equation}\label{eem}
	\begin{split}
		\|df_\lambda \|_{L^2_x} \lesssim & \|P_\lambda f_0\|_{H^1}+ \| P_\lambda f_1 \|_{L^2} + \| \square_{\mathbf{g}_\lambda} f_\lambda \|_{L^1_{[-2,2]} L^2_x}
		\\
		\lesssim  & \|P_\lambda f_0\|_{H^1}+ \| P_\lambda f_1 \|_{L^2} .
	\end{split}
\end{equation} 
Gathering \eqref{y1}, \eqref{y2}, and \eqref{eem}, we can prove that
\begin{equation*}
	\begin{split}
		\| \textstyle{\sum_{\lambda}} \square_{\mathbf{g}-\mathbf{g}_\lambda} f_\lambda \|_{L^1_{[-2,2]} L^2_x} \lesssim \epsilon_0(\| f_0 \|_{H^1}+\| f_1 \|_{L^2} ),
	\end{split}
\end{equation*}
where we also use $\|d \mathbf{g} \|_{L^2_{[-2,2]} C^\delta_x} \lesssim \epsilon_0$. Hence, we have proved \eqref{QQ}. But $f^*$ is not the exact solution to $\square_{\mathbf{g}} f=0$ with the Cauchy data $(f_0,f_1)$. So we seek another function $\mathbf{M}F$ satisfying
\begin{equation}\label{AQ1}
	\begin{cases}
		&\square_{\mathbf{g}} \mathbf{M}F= - \square_{\mathbf{g}} f^*,
		\\
		& (\mathbf{M}F, \partial_t \mathbf{M}F)=(0,0).
	\end{cases}
\end{equation}
For given $F=- \square_{\mathbf{g}} f^* \in L^1_{[-2,2]} L^2_x$, we now form the function
\begin{equation*}
	F^{(1)}=\int^t_0 f^{(1)}(\tau;t,x)d\tau,
\end{equation*}
where $f^{(1)}(\tau;t,x)$ is the approximate solution of $\square_{\mathbf{g}} \tilde{f}=0$ and $f^{(1)}$ is formed like $f^*$(cf.  \eqref{fstar}) with the Cauchy data
\begin{equation*}
	\begin{cases}
		&\square_{\mathbf{g}} f^{(1)}=0, \qquad (t, x) \in (\tau,2] \times \mathbb{R}^3,
		\\
		&		f^{(1)}(\tau,t,x)|_{t=\tau}=0, \quad \partial_t f^{(1)}(\tau,t,x)|_{t=\tau}=F(\tau,\cdot).
	\end{cases}
\end{equation*}
By calculating
\begin{equation*}
	\begin{split}
		\partial_t F^{(1)}=& \int^t_0 \partial_t f^{(1)}(\tau;t,x) d\tau+ f^{(1)}(t;t,x)=\int^t_0 \partial_t f^{(1)}(\tau;t,x) d\tau,
		\\
		\partial_{tt} F^{(1)}=& \int^t_0 \partial_{tt} f^{(1)}(\tau;t,x) d\tau+ \partial_t f^{(1)}(\tau;t,x)=\int^t_0 \partial_{tt} f^{(1)}(t;t,x) d\tau+F(t,x),
		\\
		\partial_{ti} F^{(1)}=& \int^t_0 \partial_{ti} f^{(1)}(t;t,x) d\tau, \quad i=1,2,3,
		\\
		\partial_{ij} F^{(1)}=& \int^t_0 \partial_{ij} f^{(1)}(t;t,x) d\tau, \quad i,j=1,2,3,
	\end{split}	
\end{equation*}
we then have
\begin{equation*}
	\square_{\mathbf{g}} F^{(1)}=\int^t_0 \square_{\mathbf{g}} f^{(1)}(\tau;t,x) d\tau+F.
\end{equation*}
By using \eqref{QQ}, we derive that
\begin{equation*}
	\| \square_{\mathbf{g}} F^{(1)}-F\|_{L^1_{[-2,2]} L^2_x} \lesssim \| \square_{\mathbf{g}}f^{(1)}(\tau;t,x)\|_{L^1_{[-2,2]} L^2_x} \lesssim \epsilon_0 \| F \|_{L^1_{[-2,2]} L^2_x}.
\end{equation*}
For $F^{(m)}, m\geq 2$, we perform an iteration as follows. Let
\begin{equation*}
	F^{(2)}=\int^t_0 f^{(2)}(\tau;t,x)d\tau,
\end{equation*}
where $f^{(2)}(\tau;t,x)$ is the approximate solution of $\square_{\mathbf{g}} \tilde{f}=0$ and $f^{(2)}$ is formed like $f^*$ (cf. \eqref{fstar}) with Cauchy data
\begin{equation*}
	\begin{cases}
		&\square_{\mathbf{g}} f^{(2)}=0,\qquad (t,\bx) \in (\tau,2] \times \mathbb{R}^3,
		\\
		&		f^{(2)}(\tau;t,x)|_{t=\tau}=0, \quad \partial_t f^{(2)}(\tau;t,x)|_{t=\tau}=F-\square_{\mathbf{g}} F^{(1)}.
	\end{cases}
\end{equation*}
Then we can prove that
\begin{equation*}
	\|\square_{\mathbf{g}}F^{(2)}- (F-\square_{\mathbf{g}} F^{(1)}) \|_{L^1_{[-2,2]} L^2_x} \lesssim \epsilon_0 \| F-\square_{\mathbf{g}} F^{(1)} \|_{L^1_{[-2,2]} L^2_x} \lesssim \epsilon^2_0 \| F \|_{L^1_{[-2,2]} L^2_x}.
\end{equation*}
We rewrite the above equality as
\begin{equation*}
	\|\square_{\mathbf{g}}(F^{(2)}+ F^{(1)})-F \|_{L^1_{[-2,2]} L^2_x} \lesssim \epsilon_0 \| F-\square_{\mathbf{g}} F^{(1)} \|_{L^1_{[-2,2]} L^2_x} \lesssim \epsilon^2_0 \| F \|_{L^1_{[-2,2]} L^2_x}.
\end{equation*}
For $m \geq 2$, we can set
\begin{equation*}
	F^{(m)}=\int^t_0 f^{(m)}(\tau;t,x)d\tau,
\end{equation*}
where $f^{(m)}(\tau;t,x)$ is the approximate solution of $\square_{\mathbf{g}} \tilde{f}=0$ and $f^{(m)}$ is formed like $f^*$ (cf. \eqref{fstar}) with Cauchy data
\begin{equation*}
	\begin{cases}
		&\square_{\mathbf{g}} f^{(m)}=0,\qquad (t, x) \in (\tau,2] \times \mathbb{R}^3,
		\\
		&		f^{(m)}(\tau;t,x)|_{t=\tau}=0, \quad \partial_t f^{(2)}(\tau;t,x)|_{t=\tau}=F-\square_{\mathbf{g}} (F^{(1)}+F^{(2)}+\cdots F^{(m-1)}).
	\end{cases}
\end{equation*}
we therefore obtain that
\begin{equation}\label{AQ4}
	\|\square_{\mathbf{g}}(F^{(m)}+F^{(m-1)}+\cdots F^{(1)})- F \|_{L^1_t L^2_x} \lesssim \epsilon^m_0 \| F \|_{L^1_t L^2_x}.
\end{equation}
Seeing \eqref{AQ4}, by using the contraction principle, there is a limit $$  \mathbf{M}F=\lim_{m\rightarrow \infty} (F^{(m)}+F^{(m-1)}+\cdots F^{(1)}). $$ Hence, we get
\begin{equation}\label{AQ0}
	\square_{\mathbf{g}} \mathbf{M}F = - \square_{\mathbf{g}} f^*,  \quad (\mathbf{M}F, \partial_t \mathbf{M}F)=(0,0).
\end{equation}
Gathering \eqref{AQ1} and \eqref{AQ0}, we can write the solution $f$ in the form
\begin{equation*}
	f= f^*+ \mathbf{M}F,
\end{equation*}
where $f^*$ is the approximation solution formed in \eqref{fstar} for initial data $(f_0,f_1)$ specified at time $t=0$, and it satisfies the Strichartz estimates \eqref{Ase}, and
\begin{equation*}
	\| F\|_{L^1_{[-2,2]} L^2_x} \lesssim \epsilon_0 ( \| f_0 \|_{H^1}+\| f_1\|_{L^2}).
\end{equation*}
The Strichartz estimates of $\mathbf{M}F$ now follow since they holds for each $f^{(m)}(\tau;t,x)$, $\tau \in [0,t]$. Therefore, the estimate \eqref{SL} holds.

\textbf{Step 2:} The case $r=2$. We also consider $t_0=0$. Given data $(f_0,f_1) \in H^2\times H^1$, we find a solution of the form $f=\left< \nabla \right>^{-1}\bar{f}$. Then we have
\begin{equation*}
	\square_{\mathbf{g}} \bar{f}=( \square_{\mathbf{g}}- \left< \nabla \right> \square_{\mathbf{g}} \left< \nabla \right>^{-1}) \bar{f}=[\mathbf{g}^{\alpha \beta}, \left< \nabla \right>]\left< \nabla \right>^{-1} \partial_{\alpha \beta} \bar{f}.
\end{equation*}
Hence, $\bar{f}$ solves
\begin{equation*}
	\begin{cases}
		\square_{\mathbf{g}} \bar{f}= [\mathbf{g}^{\alpha \beta}, \left< \nabla \right>]\left< \nabla \right>^{-1} \partial_{\alpha \beta} \bar{f}, \quad (t, x)\in [-2,2]\times \mathbb{R}^3,
		\\
		\bar{f}|_{t=0}=\left< \nabla \right>f_0, \quad \partial_t\bar{f}|_{t=0}=\left< \nabla \right>f_1.
	\end{cases}
\end{equation*}

For $\bar{F}=[\mathbf{g}^{\alpha \beta}, \left< \nabla \right>]\left< \nabla \right>^{-1} \partial_{\alpha \beta} \bar{f} \in L_{[-2,2]}^1 L^2_x$, we form $\mathbf{M}\bar{F}$ as above in Step 1. So we have $\square_{\mathbf{g}} \mathbf{M}\bar{F}=\bar{F}$, and $\mathbf{M}\bar{F}$ has vanishing Cauchy data at $t_0=0$. Let $\bar{f}^*$ be the solution for the homogeneous equation
\begin{equation*}
	\begin{cases}
		\square_{\mathbf{g}} \bar{f}^*=0, \quad (t, x)\in [-2,2]\times \mathbb{R}^3,
		\\
		\bar{f}^*|_{t=0}=\left< \nabla \right>f_0, \quad \partial_t\bar{f}^*|_{t=0}=\left< \nabla \right>f_1.
	\end{cases}
\end{equation*}
Then we can seek a solution $\bar{f}$ of the form $\bar{f}=\bar{f}^*+\mathbf{M}\bar{F}$, provided that
\begin{equation}\label{AWQ}
	\| [\mathbf{g}^{\alpha \beta}, \left< \nabla \right>]\left< \nabla \right>^{-1} \partial_{\alpha \beta} \mathbf{M}\bar{F}\|_{L^1_{[-2,2]} L^2_x} \lesssim \epsilon_0 \|\bar{F}\|_{L^1_{[-2,2]}L^2_x}.
\end{equation}
Let us prove \eqref{AWQ}. Considering $\mathbf{g}^{00}=-1$, by using Coifman-Maeyer estimate, we derive that
\begin{equation}\label{AQ2}
	\|  [\mathbf{g}^{\alpha \beta}, \left< \nabla \right>] \left< \nabla \right>^{-1} \partial_{\alpha \beta} \mathbf{M}\bar{F} \|_{L^2_x} \lesssim \|d\mathbf{g}\|_{L^\infty_x} \| d \mathbf{M}\bar{F} \|_{L^2_x}.
\end{equation}
By using \eqref{AQ0}, we also obtain
\begin{equation}\label{AQ3}
	\| d \mathbf{M}\bar{F}\|_{L^\infty_{[-2,2]}L^2_x} \lesssim \|\bar{F}\|_{L^1_{[-2,2]} L^2_x}.
\end{equation}
Combining \eqref{AQ2} with \eqref{AQ3}, and using $\|d\mathbf{g}\|_{L^2_{[-2,2]}L^\infty_x} \lesssim \epsilon_0$, we can get \eqref{AWQ}.

Therefore, by Duhamel's principle, in the case of $r=1$ or $r=2$, we can also obtain the Strichartz estimates for the linear, nonhomogeneous wave equation
\begin{equation*}
	\begin{cases}
		\square_{\mathbf{g}} f= G, \quad (t, x)\in [-2,2]\times \mathbb{R}^3,
		\\
		f|_{t=0}=f_0, \quad \partial_tf|_{t=0}=f_1,
	\end{cases}
\end{equation*}
as
\begin{equation}\label{eAQ4}
	\| \left< \nabla \right>^k f \|_{L^2_t L^\infty_x} \lesssim  \|f_0\|_{H^r}+ \| f_1 \|_{H^{r-1}}+ \|G\|_{L^1_{[-2,2]} H_x^{r-1}} , \quad   k<r-1.
\end{equation}
Now, we claim that the following bounds\footnote{Comparing with \eqref{eAQ4}, the bound \eqref{AQ5} is stronger, for it includes the time-derivatives of $f$. It plays an important role in the next step.} hold for $r=1$ and $r=2$,
\begin{equation}\label{AQ5}
	\| \left< \nabla \right>^k df \|_{L^2_{[-2,2]} L^\infty_x} \lesssim  \|f_0\|_{H^r}+ \| f_1 \|_{H^{r-1}}+ \|G\|_{L^1_{[-2,2]} H_x^{r-1}} , \quad   k<r-2.
\end{equation}
Let us now use \eqref{eAQ4} to prove \eqref{AQ5}.

To prove \eqref{AQ5} for $r=2$, we first consider $G=0$. That is,
$\square_{\mathbf{g}} f=0, (f,\partial_t f)|_{t=0}=(f_0,f_1)\in H^2 \times H^1$. For $\mathbf{g}^{00}=-1$, we then have
\begin{equation}\label{AQ6}
	\square_{\mathbf{g}} d f= d\mathbf{g}^{\alpha \beta} \partial_{\alpha \beta} f \in L^1_{[-2,2]} L^2_x.
\end{equation}
The right hand side of \eqref{AQ6} can be derived by energy estimates and Gronwall's inequality. That is
\begin{equation*}
	\begin{split}
			\|\partial_t d f\|_{L^2_x}+ \|\nabla d f\|_{L^2_x} & \lesssim ( \|f_0\|_{H^2}+\|f_1\|_{H^1} ) \exp \left(\int^{2}_{0} \|d \mathbf{g}\|_{L^\infty_x }dt \right) 
			\\
			& \lesssim  \|f_0\|_{H^2}+\|f_1\|_{H^1} . 
	\end{split}
\end{equation*}
Then $d\mathbf{g}^{\alpha \beta} \partial_{\alpha \beta} f \in L^1_{[-2,2]} L^2_x$ holds. Thus, \eqref{AQ5} holds for $r=2$ and $G=0$. If $G\neq 0$, we can use the Duhamel principle to handle it. Decompose $f=f^a+f^b$, where
\begin{equation}\label{aw0}
	\begin{cases}
		&\square_{\mathbf{g}} f^a=0,
		\\
		&(f^a,\partial_t f^a)|_{t=0}=(f_0,f_1) \in H^2 \times H^1,
	\end{cases}
\end{equation}
and
\begin{equation}\label{aw1}
	\begin{cases}
		&\square_{\mathbf{g}} f^b=G,
		\\
		&(f^a,\partial_t f^b)|_{t=0}=(0,0).
	\end{cases}
\end{equation}
So we can get the Strichartz estimates for $f^a$
\begin{equation*}
	\| \left< \nabla \right>^k df^a \|_{L^2_{[-2,2]} L^\infty_x} \lesssim  \|f_0\|_{H^r}+ \| f_1 \|_{H^{r-1}}, \quad   k<r-2.
\end{equation*}
For $f^b$, we use the iteration process like $\mathbf{M}F$ in Step 1, and the following Strichartz estimates for $f^b$
\begin{equation*}
	\| \left< \nabla \right>^k df^b \|_{L^2_{[-2,2]} L^\infty_x} \lesssim  \|G\|_{L^1_t H_x^{r-1}} , \quad   k<r-2.
\end{equation*}
So the conclusion \eqref{AQ5} holds for $f=f^a+f^b$.

To prove \eqref{AQ5} for $r=1$, we note that, $\left< \nabla \right>^{-1}f$ has the Cauchy data in $H^2 \times H^1$ if the Cauchy data of $f$ is in $H^1 \times L^2$, and
\begin{equation*}
	\begin{split}
		\| \square_{\mathbf{g}} \left< \nabla \right>^{-1}f \|_{L^1_{[-2,2]} H^1_x}= & \|\left< \nabla \right> \square_{\mathbf{g}} \left< \nabla \right>^{-1}f \|_{L^1_{[-2,2]} L^2_x}
		\\
		\lesssim & \|[ \left< \nabla \right>, \mathbf{g}^{\alpha \beta}] \left< \nabla \right>^{-1}\partial_{\alpha \beta}f \|_{L^1_{[-2,2]} L^2_x}+ \|G \|_{L^1_{[-2,2]} L^2_x}.
	\end{split}
\end{equation*}
Noting $\mathbf{g}^{00}=-1$, then we give the following bounds by Coifman-Meyer estimate and energy estimates for $f$
\begin{equation*}
	\begin{split}
		\|[ \left< \nabla \right>, \mathbf{g}^{\alpha \beta}] \left< \nabla \right>^{-1}\partial_{\alpha \beta}f \|_{L^1_{[-2,2]} L^2_x}  \lesssim & \|d\mathbf{g}\|_{L^1_{[-2,2]} L^2_x}\|df\|_{L^\infty_{[-2,2]} L^2_x}
		\\
		\lesssim & \epsilon_0 ( \|f_0\|_{H^1} + \|f_1\|_{L^2}).
	\end{split}
\end{equation*}
\quad \textbf{Step 3:} The case $1 < r <2$. We also set $t_0=0$. Based on the above result in Step 1 and Step 2, we also transform the initial data in $H^1 \times L^2$. Applying $\left< \nabla \right>^{r-1}$ to \eqref{linearC}, we have
\begin{equation*}
	\square_{\mathbf{g}} \left< \nabla \right>^{r-1}f=-[\square_{\mathbf{g}}, \left< \nabla \right>^{r-1}]f.
\end{equation*}
Let $\left< \nabla \right>^{r-1}f=\bar{f}$. Then $\bar{f}$ is a solution to
\begin{equation}\label{Qd}
	\begin{cases}
		\square_{\mathbf{g}}\bar{f}=-[\square_{\mathbf{g}}, \left< \nabla \right>^{r-1}]\left< \nabla \right>^{1-r}\bar{f},
		\\
		(\bar{f}, \partial_t \bar{f})|_{t=0} \in H^1 \times L^2.
	\end{cases}
\end{equation}
Considering $\mathbf{g}^{00}=-1$, let us calculate
\begin{equation}\label{Qd0}
	\begin{split}
		-[\square_{\mathbf{g}}, \left< \nabla \right>^{r-1}]\left< \nabla \right>^{1-r}\bar{f}=& [\mathbf{g}^{\alpha i}-\mathbf{m}^{\alpha i}, \left< \nabla \right>^{r-1} \partial_i ] \partial_{\alpha} \left< \nabla \right>^{1-r} \bar{f}
		\\
		& \ + \left< \nabla \right>^{r-1}( \partial_i \mathbf{g}^{\alpha i} \partial_{\alpha} \left< \nabla \right>^{1-r} \bar{f}).
	\end{split}
\end{equation}
For $r-1 \in (0,1)$, by product estimates and a refined Kato-Ponce inequalities (please cite Theorem 1.9 in \ref{LD}), we have
\begin{equation}\label{Qd1}
	\begin{split}
		& \| [\mathbf{g}^{\alpha i}-\mathbf{m}^{\alpha i}, \left< \nabla \right>^{r-1} \partial_i ] \partial_{\alpha} \left< \nabla \right>^{1-r} \bar{f}  \|_{L^2_x}+ \|\left< \nabla \right>^{r-1}( \partial_i \mathbf{g}^{\alpha i} \partial_{\alpha} \left< \nabla \right>^{1-r} \bar{f})\|_{L^2_x}
		\\
		\lesssim & \| d \mathbf{g} \|_{L^\infty_x} \| d \bar{f} \|_{L^2_x}+  \| \left< \nabla \right>^{r} (\mathbf{g}-\mathbf{m})\|_{L^{p_1}_x}   \|\left< \nabla \right>^{1-r} d \bar{f} \|_{L^{q_1}_x},
	\end{split}
\end{equation}
where $p_1=\frac{3}{\frac{3}{2}-s+r}$ and $q_1=\frac{3}{s-r}$. By Sobolev's inequality, we get
\begin{equation}\label{Qd2}
	\| \left< \nabla \right>^{r} (\mathbf{g}-\mathbf{m})\|_{L^{p_1}_x} \lesssim \| \mathbf{g}- \mathbf{m}\|_{H^{s}_x}.
\end{equation}
By Gagliardo-Nirenberg inequality, we can see that\footnote{The number $s$ and $s_0$ satisfying $2<s_0<s \leq \frac52$.}
\begin{equation}\label{Qd3}
	\|\left< \nabla \right>^{1-r} d \bar{f} \|_{L^{q_1}_x} \lesssim \| d \bar{f} \|^{\frac{s-s_0}{\frac52-s_0}}_{L^{2}_x} \|\left< \nabla \right>^{1-s_0} d \bar{f} \|^{1-\frac{s-s_0}{\frac52-s_0}}_{L^{\infty}_x}
\end{equation}
Considering\footnote{please see the definition of $\mathcal{H}$ and \eqref{401}-\eqref{403}}
\begin{equation*}
	\| d \mathbf{g} \|_{L^2_{[-2,2]} L^\infty_x} +  \| \mathbf{g}- \mathbf{m}\|_{L^\infty_{[-2,2]} H^{s}_x} \lesssim \epsilon_0,
\end{equation*}
and combining with \eqref{Qd0}-\eqref{Qd3}, we have
\begin{equation}\label{eh3}
	\| [\square_{\mathbf{g}}, \left< \nabla \right>^{r-1}]\left< \nabla \right>^{1-r}\bar{f}\|_{L^1_{[-2,2]} L^2_x} \lesssim \epsilon_0( \| d\bar{f}\|_{L^\infty_{[-2,2]} L^2_x}+  \| \left< \nabla \right>^{1-s_0} d \bar{f} \|_{L^2_{[-2,2]} L^\infty_x}).
\end{equation}
Using the discussion in case $r=1$, namely \eqref{AQ5}, we then get the Strichartz estimates for $\bar{f}$ for $\theta<-1$,
\begin{equation}\label{eh}
	\begin{split}
		\| \left< \nabla \right>^\theta d \bar{f} \|_{L^2_{[-2,2]} L^\infty_x} \lesssim  & \  \epsilon_0( \|f_0\|_{H^r}+ \| f_1 \|_{H^{r-1}}+ \| d \bar{f} \|_{L^2_x}+  \|\left< \nabla \right>^{1-s_0} d \bar{f} \|_{L^2_{[-2,2]} L^\infty_x} ).
	\end{split}
\end{equation}
Note $1-s_0<-1$. So we take $\theta=1-s_0$, and then we obtain
\begin{equation}\label{eh0}
	\begin{split}
		\| \left< \nabla \right>^{1-s_0} d \bar{f} \|_{L^2_{[-2,2]} L^\infty_x} \lesssim  & \  \epsilon_0( \|f_0\|_{H^r}+ \| f_1 \|_{H^{r-1}}+ \| d \bar{f} \|_{L^2_x} ).
	\end{split}
\end{equation}
Using \eqref{eh0},  \eqref{eh} yields
\begin{equation}\label{eh1}
	\begin{split}
		\| \left< \nabla \right>^\theta d \bar{f} \|_{L^2_{[-2,2]} L^\infty_x} \lesssim  & \  \epsilon_0( \|f_0\|_{H^r}+ \| f_1 \|_{H^{r-1}}+ \| d \bar{f} \|_{L^2_x} ), \quad \theta <-1.
	\end{split}
\end{equation}
By \eqref{eh0}, \eqref{eh3} and the energy estimates for $\bar{f}$, we have
\begin{equation}\label{AQ7}
	\| d \bar{f} \|_{L^2_x} \lesssim \| \bar{f}(0) \|_{H^1_x}+ \| \partial_t \bar{f}(0) \|_{L^2_x} \lesssim \|f_0\|_{H^r}+ \| f_1 \|_{H^{r-1}}.
\end{equation}
Substituting $\left< \nabla \right>^{r-1}f=\bar{f}$ in \eqref{eh1} and using \eqref{AQ7}, we can prove that
\begin{equation*}
	\| \left< \nabla \right>^\theta d {f} \|_{L^2_{[-2,2]} L^\infty_x} \lesssim   \ \epsilon_0( \|f_0\|_{H^r}+ \| f_1 \|_{H^{r-1}}) , \quad   \theta<r-2.
\end{equation*}
As a result, we also have
\begin{equation*}
	\| \left< \nabla \right>^k f \|_{L^2_{[-2,2]} L^\infty_x}
	\lesssim   \epsilon_0( \|f_0\|_{H^r}+ \| f_1 \|_{H^{r-1}} ) , \quad   k<r-1.
\end{equation*}

\textbf{Step 4:} The case $2 < r \leq s$. We also set $t_0=0$. Based on the above result in Step 1 and Step 2, we transform the initial data in $H^1 \times L^2$. Applying $\left< \nabla \right>^{r-1}$ to \eqref{linearC}, we have
\begin{equation*}
	\square_{\mathbf{g}} \left< \nabla \right>^{r-1}f=-[\square_{\mathbf{g}}, \left< \nabla \right>^{r-1}]f.
\end{equation*}
Let $\left< \nabla \right>^{r-1}f=\bar{f}$. Then $\bar{f}$ is a solution to
\begin{equation*}\label{qd}
	\begin{cases}
		\square_{\mathbf{g}}\bar{f}=-[\square_{\mathbf{g}}, \left< \nabla \right>^{r-1}]\left< \nabla \right>^{1-r}\bar{f},
		\\
		(\bar{f}, \partial_t \bar{f})|_{t=0} \in H^1 \times L^2.
	\end{cases}
\end{equation*}
Considering $\mathbf{g}^{00}=-1$, let us calculate
\begin{equation}\label{AQ9}
	\begin{split}
		-[\square_{\mathbf{g}}, \left< \nabla \right>^{r-1}]\left< \nabla \right>^{1-r}\bar{f}=& [\mathbf{g}^{\alpha i}-\mathbf{m}^{\alpha i}, \left< \nabla \right>^{r-1} \partial_i ] \partial_{\alpha} \left< \nabla \right>^{1-r} \bar{f}
		\\
		& \ + \left< \nabla \right>^{r-1}( \partial_i \mathbf{g}^{\alpha i} \partial_{\alpha} \left< \nabla \right>^{1-r} \bar{f}).
	\end{split}
\end{equation}
By Kato-Ponce commutator and product estimates, we have
\begin{equation}\label{qdy}
	\begin{split}
		& \| [\mathbf{g}^{\alpha i}-\mathbf{m}^{\alpha i}, \left< \nabla \right>^{r-1} \partial_i ] \partial_{\alpha} \left< \nabla \right>^{1-r} \bar{f}  \|_{L^2_x}+ \|\left< \nabla \right>^{r-1}( \partial_i \mathbf{g}^{\alpha i} \partial_{\alpha} \left< \nabla \right>^{1-r} \bar{f})\|_{L^2_x}
		\\
		\lesssim & \| d \mathbf{g} \|_{L^\infty_x} \| d \bar{f} \|_{L^2_x}+  \| \mathbf{g}^{\alpha \beta}- \mathbf{m}^{\alpha \beta}\|_{H^{r}_x}   \|\left< \nabla \right>^{1-r} d \bar{f} \|_{L^\infty_x},
	\end{split}
\end{equation}
Considering\footnote{please see the definition of $\mathcal{H}$ and \eqref{401}-\eqref{403}}
\begin{equation*}
	\| d \mathbf{g} \|_{L^2_{[-2,2]} L^\infty_x} +  \| \mathbf{g}^{\alpha \beta}- \mathbf{m}^{\alpha \beta}\|_{L^\infty_{[-2,2]} H^{r}_x} \lesssim \epsilon_0, \qquad 2 < r \leq s,
\end{equation*}
and combining with \eqref{AQ9}, \eqref{qdy}, we have
\begin{equation}\label{Eh3}
	\| [\square_{\mathbf{g}}, \left< \nabla \right>^{r-1}]\left< \nabla \right>^{1-r}\bar{f}\|_{L^1_{[-2,2]} L^2_x} \lesssim \epsilon_0( \| d\bar{f}\|_{L^\infty_{[-2,2]} L^2_x}+  \| \left< \nabla \right>^{1-r} d \bar{f} \|_{L^2_{[-2,2]} L^\infty_x}).
\end{equation}
Using the discussion in case $r=1$, namely \eqref{AQ5}, we then get the Strichartz estimates for $\bar{f}$ for $\theta<-1$,
\begin{equation}\label{Eh}
	\begin{split}
		\| \left< \nabla \right>^\theta d \bar{f} \|_{L^2_{[-2,2]} L^\infty_x} \lesssim  & \  \epsilon_0( \|f_0\|_{H^r}+ \| f_1 \|_{H^{r-1}}+ \| d \bar{f} \|_{L^2_x}+  \|\left< \nabla \right>^{1-r} d \bar{f} \|_{L^2_{[-2,2]} L^\infty_x} ).
	\end{split}
\end{equation}
Note $r>2$ and $1-r<-1$. We then take $\theta=1-r$. So we have
\begin{equation}\label{Eh0}
	\begin{split}
		\| \left< \nabla \right>^{1-r} d \bar{f} \|_{L^2_{[-2,2]} L^\infty_x} \lesssim  & \  \epsilon_0( \|f_0\|_{H^r}+ \| f_1 \|_{H^{r-1}}+ \| d \bar{f} \|_{L^2_x} ).
	\end{split}
\end{equation}
Using \eqref{eh0}, \eqref{Eh} yields
\begin{equation}\label{Eh1}
	\begin{split}
		\| \left< \nabla \right>^\theta d \bar{f} \|_{L^2_{[-2,2]} L^\infty_x} \lesssim  & \  \epsilon_0( \|f_0\|_{H^r}+ \| f_1 \|_{H^{r-1}}+ \| d \bar{f} \|_{L^2_x} ), \quad \theta <-1.
	\end{split}
\end{equation}
By \eqref{Eh0}, \eqref{Eh3} and the energy estimates for $\bar{f}$, we have
\begin{equation}\label{Eh4}
	\| d \bar{f} \|_{L^2_x} \lesssim \| \bar{f}(0) \|_{H^1_x}+ \| \partial_t \bar{f}(0) \|_{L^2_x} \lesssim \|f_0\|_{H^r}+ \| f_1 \|_{H^{r-1}}.
\end{equation}
Substituting $\left< \nabla \right>^{r-1}f=\bar{f}$ in \eqref{Eh1}, and using \eqref{Eh4}, we can prove that
\begin{equation*}
	\| \left< \nabla \right>^\theta d {f} \|_{L^2_{[-2,2]} L^\infty_x} \lesssim   \ \epsilon_0( \|f_0\|_{H^r}+ \| f_1 \|_{H^{r-1}}) , \quad   \theta<r-2.
\end{equation*}
As a result, we also have
\begin{equation*}
	\| \left< \nabla \right>^k f \|_{L^2_{[-2,2]} L^\infty_x}
	\lesssim   \epsilon_0( \|f_0\|_{H^r}+ \| f_1 \|_{H^{r-1}} ) , \quad   k<r-1.
\end{equation*}

\textbf{Step 5:} The case $s < r \leq 3$. We also set $t_0=0$. Based on the above result in Step 1 and Step 2, we transform the initial data in $H^1 \times L^2$. Applying $\left< \nabla \right>^{r-1}$ to \eqref{linearC}, we have
\begin{equation*}
	\square_{\mathbf{g}} \left< \nabla \right>^{r-1}f=-[\square_{\mathbf{g}}, \left< \nabla \right>^{r-1}]f.
\end{equation*}
Let $\left< \nabla \right>^{r-1}f=\bar{f}$. Then $\bar{f}$ is a solution to
\begin{equation*}\label{sy0}
	\begin{cases}
		\square_{\mathbf{g}}\bar{f}=-[\square_{\mathbf{g}}, \left< \nabla \right>^{r-1}]\left< \nabla \right>^{1-r}\bar{f},
		\\
		(\bar{f}, \partial_t \bar{f})|_{t=0} \in H^1 \times L^2.
	\end{cases}
\end{equation*}
Considering $\mathbf{g}^{00}=-1$, let us calculate
\begin{equation}\label{sy1}
	\begin{split}
		-[\square_{\mathbf{g}}, \left< \nabla \right>^{r-1}]\left< \nabla \right>^{1-r}\bar{f}=& [\mathbf{g}^{\alpha i}-\mathbf{m}^{\alpha i}, \left< \nabla \right>^{r-1}  ] \partial_{\alpha} \left< \nabla \right>^{1-r} \partial_i\bar{f} .
	\end{split}
\end{equation}
By Kato-Ponce commutator estimates, we have
\begin{equation}\label{sy2}
	\begin{split}
		& \| [\mathbf{g}^{\alpha i}-\mathbf{m}^{\alpha i}, \left< \nabla \right>^{r-1}  ] \partial_{\alpha} \left< \nabla \right>^{1-r} \partial_i \bar{f}  \|_{L^2_x}
		\\
		\lesssim & \| d \mathbf{g} \|_{L^\infty_x} \| d \bar{f} \|_{L^2_x}+  \| \left< \nabla \right>^{r-1}( \mathbf{g}- \mathbf{m})\|_{L^{p_2}_x}   \|\left< \nabla \right>^{2-r} d \bar{f} \|_{L^{q_2}_x},
	\end{split}
\end{equation}
where $p_2=\frac{3}{\frac{1}{2}-s+r}$ and $q_2=\frac{3}{1+s-r}$. By Sobolev's inequality, we get
\begin{equation}\label{sy3}
	\| \left< \nabla \right>^{r-1} (\mathbf{g}-\mathbf{m})\|_{L^{p_2}_x} \lesssim \| \mathbf{g}- \mathbf{m}\|_{H^{s}_x}.
\end{equation}
By Gagliardo-Nirenberg inequality, we can see that\footnote{The number $s$ and $s_0$ satisfying $2<s_0<s \leq \frac52$.}
\begin{equation}\label{sy4}
	\|\left< \nabla \right>^{2-r} d \bar{f} \|_{L^{q_2}_x} \lesssim \| d \bar{f} \|^{\frac{s-2}{3-s}}_{L^{2}_x} \|\left< \nabla \right>^{-\frac{s}{2}} d \bar{f} \|^{\frac{5-2s}{3-s}}_{L^{\infty}_x}
\end{equation}
By using \footnote{please see the definition of $\mathcal{H}$ and \eqref{401}-\eqref{403}}
\begin{equation*}
	\| d \mathbf{g} \|_{L^2_{[-2,2]} L^\infty_x} +  \| \mathbf{g}^{\alpha \beta}- \mathbf{m}^{\alpha \beta}\|_{L^\infty_{[-2,2]} H^{r-1}_x} \lesssim \epsilon_0, \qquad s < r \leq 3,
\end{equation*}
also \eqref{sy1}, \eqref{sy2}, and \eqref{sy3},  we have
\begin{equation*}
	\| [\square_{\mathbf{g}}, \left< \nabla \right>^{r-1}]\left< \nabla \right>^{1-r}\bar{f}\|_{L^1_{[-2,2]} L^2_x} \lesssim \epsilon_0( \| d\bar{f}\|_{L^\infty_{[-2,2]} L^2_x}+  \| \left< \nabla \right>^{-\frac{s}{2}} d \bar{f} \|_{L^2_{[-2,2]} L^\infty_x}).
\end{equation*}
Using the discussion in case $r=1$, namely \eqref{AQ5}, we then get the Strichartz estimates for $\bar{f}$ for $\theta<-1$,
\begin{equation}\label{syu4}
	\begin{split}
		\| \left< \nabla \right>^\theta d \bar{f} \|_{L^2_{[-2,2]} L^\infty_x} \lesssim  & \  \epsilon_0( \|f_0\|_{H^r}+ \| f_1 \|_{H^{r-1}}+ \| d \bar{f} \|_{L^2_x}+  \|\left< \nabla \right>^{2-r} d \bar{f} \|_{L^2_{[-2,2]} L^\infty_x} ).
	\end{split}
\end{equation}
Note $-\frac{s}{2}<-1$. We then take $\theta=-\frac{s}{2}$. So we have
\begin{equation}\label{sy5}
	\begin{split}
		\| \left< \nabla \right>^{2-r} d \bar{f} \|_{L^2_{[-2,2]} L^\infty_x} \lesssim  & \  \epsilon_0( \|f_0\|_{H^r}+ \| f_1 \|_{H^{r-1}}+ \| d \bar{f} \|_{L^2_x} ).
	\end{split}
\end{equation}
Using \eqref{sy5},  \eqref{syu4} yields
\begin{equation}\label{sy6}
	\begin{split}
		\| \left< \nabla \right>^\theta d \bar{f} \|_{L^2_{[-2,2]} L^\infty_x} \lesssim  & \  \epsilon_0( \|f_0\|_{H^r}+ \| f_1 \|_{H^{r-1}}+ \| d \bar{f} \|_{L^2_x} ), \quad \theta <-1.
	\end{split}
\end{equation}
By \eqref{sy4}, \eqref{sy5} and the energy estimates for $\bar{f}$, we have
\begin{equation}\label{sy7}
	\| d \bar{f} \|_{L^2_x} \lesssim \| \bar{f}(0) \|_{H^1_x}+ \| \partial_t \bar{f}(0) \|_{L^2_x} \lesssim \|f_0\|_{H^r}+ \| f_1 \|_{H^{r-1}}.
\end{equation}
Substituting $\left< \nabla \right>^{r-1}f=\bar{f}$ in \eqref{sy6}, and using \eqref{sy7}, we can prove that
\begin{equation*}
	\| \left< \nabla \right>^\theta d {f} \|_{L^2_{[-2,2]} L^\infty_x} \lesssim   \ \epsilon_0( \|f_0\|_{H^r}+ \| f_1 \|_{H^{r-1}}) , \quad   \theta<r-2.
\end{equation*}
As a result, we also have
\begin{equation*}
	\| \left< \nabla \right>^k f \|_{L^2_{[-2,2]} L^\infty_x}
	\lesssim   \epsilon_0( \|f_0\|_{H^r}+ \| f_1 \|_{H^{r-1}} ) , \quad   k<r-1.
\end{equation*}

\textbf{Step 6:} The case $3 < r \leq s+1$. We also set $t_0=0$. Based on the above result in Step 1 and Step 2, we transform the initial data in $H^1 \times L^2$.  Applying $\left< \nabla \right>^{r-1}$ to \eqref{linearC}, we have
\begin{equation*}
	\square_{\mathbf{g}} \left< \nabla \right>^{r-1}f=-[\square_{\mathbf{g}}, \left< \nabla \right>^{r-1}]f.
\end{equation*}
Let $\left< \nabla \right>^{r-1}f=\bar{f}$. Then $\bar{f}$ is a solution to
\begin{equation*}\label{sp0}
	\begin{cases}
		\square_{\mathbf{g}}\bar{f}=-[\square_{\mathbf{g}}, \left< \nabla \right>^{r-1}]\left< \nabla \right>^{1-r}\bar{f},
		\\
		(\bar{f}, \partial_t \bar{f})|_{t=0} \in H^1 \times L^2.
	\end{cases}
\end{equation*}
Due to $\mathbf{g}^{00}=-1$, so we calculate
\begin{equation}\label{sp1}
	\begin{split}
		-[\square_{\mathbf{g}}, \left< \nabla \right>^{r-1}]\left< \nabla \right>^{1-r}\bar{f}=& [\mathbf{g}^{\alpha i}-\mathbf{m}^{\alpha i}, \left< \nabla \right>^{r-1}  ] \partial_{\alpha} \left< \nabla \right>^{1-r} \partial_i\bar{f} .
	\end{split}
\end{equation}
By Kato-Ponce commutator estimates, we have
\begin{equation}\label{sp2}
	\begin{split}
		& \| [\mathbf{g}^{\alpha i}-\mathbf{m}^{\alpha i}, \left< \nabla \right>^{r-1}  ] \partial_{\alpha} \left< \nabla \right>^{1-r} \partial_i \bar{f}  \|_{L^2_x}
		\\
		\lesssim & \| d \mathbf{g} \|_{L^\infty_x} \| d \bar{f} \|_{L^2_x}+  \| \mathbf{g}^{\alpha \beta}- \mathbf{m}^{\alpha \beta}\|_{H^{r-1}_x}   \|\left< \nabla \right>^{2-r} d \bar{f} \|_{L^\infty_x},
	\end{split}
\end{equation}
Considering\footnote{please see the definition of $\mathcal{H}$ and \eqref{401}-\eqref{403}}
\begin{equation*}
	\| d \mathbf{g} \|_{L^2_{[-2,2]} L^\infty_x} +  \| \mathbf{g}^{\alpha \beta}- \mathbf{m}^{\alpha \beta}\|_{L^\infty_{[-2,2]} H^{r-1}_x} \lesssim \epsilon_0, \qquad 3 < r \leq s+1,
\end{equation*}
and combining with \eqref{sp1}, \eqref{sp2}, we have
\begin{equation}\label{sp3}
	\| [\square_{\mathbf{g}}, \left< \nabla \right>^{r-1}]\left< \nabla \right>^{1-r}\bar{f}\|_{L^1_{[-2,2]} L^2_x} \lesssim \epsilon_0( \| d\bar{f}\|_{L^\infty_{[-2,2]} L^2_x}+  \| \left< \nabla \right>^{2-r} d \bar{f} \|_{L^2_{[-2,2]} L^\infty_x}).
\end{equation}
Using the discussion in case $r=1$, namely \eqref{AQ5}, we then get the Strichartz estimates for $\bar{f}$ for $\theta<-1$,
\begin{equation}\label{sp4}
	\begin{split}
		\| \left< \nabla \right>^\theta d \bar{f} \|_{L^2_{[-2,2]} L^\infty_x} \lesssim  & \  \epsilon_0( \|f_0\|_{H^r}+ \| f_1 \|_{H^{r-1}}+ \| d \bar{f} \|_{L^\infty_{[-2,2]} L^2_x}+  \|\left< \nabla \right>^{2-r} d \bar{f} \|_{L^2_{[-2,2]} L^\infty_x} ).
	\end{split}
\end{equation}
Note $r>3$ and $2-r<-1$. We then take $\theta=2-r$. So we have
\begin{equation}\label{sp5}
	\begin{split}
		\| \left< \nabla \right>^{2-r} d \bar{f} \|_{L^2_{[-2,2]} L^\infty_x} \lesssim  & \  \epsilon_0( \|f_0\|_{H^r}+ \| f_1 \|_{H^{r-1}}+ \| d \bar{f} \|_{L^2_x} ).
	\end{split}
\end{equation}
Since \eqref{sp5} and \eqref{sp4}, it yields
\begin{equation}\label{sp6}
	\begin{split}
		\| \left< \nabla \right>^\theta d \bar{f} \|_{L^2_{[-2,2]} L^\infty_x} \lesssim  & \  \epsilon_0( \|f_0\|_{H^r}+ \| f_1 \|_{H^{r-1}}+ \| d \bar{f} \|_{L^2_x} ), \quad \theta <-1.
	\end{split}
\end{equation}
By \eqref{Eh0}, \eqref{Eh3} and the energy estimates for $\bar{f}$, we have
\begin{equation}\label{sp7}
	\| d \bar{f} \|_{L^2_x} \lesssim \| \bar{f}(0) \|_{H^1_x}+ \| \partial_t \bar{f}(0) \|_{L^2_x} \lesssim \|f_0\|_{H^r}+ \| f_1 \|_{H^{r-1}}.
\end{equation}
Substituting $\left< \nabla \right>^{r-1}f=\bar{f}$ in \eqref{sp6} and using \eqref{sp7}, we can prove that
\begin{equation*}
	\| \left< \nabla \right>^\theta d {f} \|_{L^2_{[-2,2]} L^\infty_x} \lesssim   \ \epsilon_0( \|f_0\|_{H^r}+ \| f_1 \|_{H^{r-1}}) , \quad   \theta<r-2.
\end{equation*}
As a result, we also have
\begin{equation*}
	\| \left< \nabla \right>^k f \|_{L^2_{[-2,2]} L^\infty_x}
	\lesssim   \epsilon_0( \|f_0\|_{H^r}+ \| f_1 \|_{H^{r-1}} ) , \quad   k<r-1.
\end{equation*}
Therefore, we have finished the proof of Proposition \ref{r5}.
\end{proof}
\subsection{Proof of Proposition \ref{AA1}}
By Remark \ref{rel}, we only need to consider the problem if the frequency $\lambda$ is large enough, namely
\begin{equation*}
	\lambda \geq \epsilon_0^{-1}.
\end{equation*}
In this case, we can construct an approximate solutions to \eqref{linearA} by using wave packets, and a wave packet has a finer spatial localization than a plane wave solutions. We divide the proof for \eqref{Yee}-\eqref{Ase} to two parts. The conclusion \eqref{Yee} is established in Proposition \ref{szy} and Proposition \ref{szi}, and the conclusion \eqref{Ase} is obtained by Proposition \ref{szt}.

We introduce a spatially localized mollifier $T_\lambda$ by
\begin{equation*}
	T_\lambda f = \chi_\lambda * f, \quad \chi_\lambda=\lambda^3 \chi(\lambda^{-1} y),
\end{equation*}
where $\chi \in C^\infty_0(\mathbb{R}^3)$ is supported in the ball $|x| \leq \frac{1}{32}$, and has integral $1$. By choosing $\chi$ appropriately, any function $v$ with frequency support contained in $\{\xi:|\xi|\leq 4\lambda\}$ can be factored $v=T_\lambda \widetilde{v}$, where $\| \widetilde{v} \|_{L^2_x} \approx \| v \|_{L^2_x}$.

Let us now introduce the wave packets which is related to $\mathbf{g}$ and the null hypersurfaces $\Sigma$.
\begin{definition}[\cite{ST}, Definition 8.1]
	Let $\gamma=\gamma(t)$ be a geodesic for $\mathbf{g}$, and let $\Sigma_{\omega,r}$ be the null surface introduced in Section \ref{secp4} that contains $\gamma$, defined by
	\begin{equation*}
		\Sigma_{\omega,r}=\{ (t,x): x_\omega- \phi_{\omega,r}(t,x'_\omega)=0 \},
	\end{equation*}
where $x_\omega=x \cdot \omega$, and $x'_\omega$ are projective coordinates along $\omega$. A normalized wave packet around $\gamma$ is a function $f$ of the form
	\begin{equation*}
		f=\epsilon_0^{\frac12} \lambda^{-1} T_\lambda(\varphi \psi),
	\end{equation*}
	where
	\begin{equation*}
		\varphi(t,x)=\delta(x_{\omega}-\phi_{\omega,r}(t,x'_\omega)), \quad \psi=\chi_0( (\epsilon_0 \lambda)^{\frac12}(x'_\omega-\gamma_\omega(t))  ).
	\end{equation*}
	Here, $\chi_0$ is a smooth function supported in the set $|x'| \leq 1$, with uniform bounds on its derivatives $|\partial^\alpha_{x'} \chi_0(x')| \leq c_\alpha$.
\end{definition}
We give two notations here. We denote $L(\varphi,\psi)$ to denote a translation invariant bilinear operator of the form
\begin{equation*}
	L(\varphi,\psi)(x)=\int K(y,z)\varphi(x+y)\psi(x+z)dydz,
\end{equation*}
where $K(y,z)$ is a finite measure. If $X$ is a Sobolev spaces, we then denote $X_a$ the same space but with the norm obtained by dimensionless rescaling by $a$,
\begin{equation*}
	\| \varphi \|_{X_a}=\| \varphi(a \cdot)\|_{X}.
\end{equation*}
Since $2(s_0-1)>2$, then for $a<1$ we have $\| \varphi \|_{H^{s_0-1}_a(\mathbb{R}^{2})} \lesssim \| \varphi \|_{H^{s_0-1}(\mathbb{R}^{2})}$. In the following, let us introduce what we can get when we take the operator $\square_{\mathbf{g}_\lambda}$ on wave packets.
\subsubsection{A normalized wave packet}
\begin{proposition}\label{np}
	Let $f$ be a normalized packet. Then there is another normalized wave packet $\tilde{f}$, and functions $\psi_m(t,x'_\omega), m=0,1,2$, so that
	\begin{equation}\label{np1}
		\square_{\mathbf{g}_\lambda} P_\lambda f= L(d \mathbf{g}, d \tilde{P}_\lambda \tilde{f})+ \epsilon_0^{\frac12}\lambda^{-1}P_{\lambda}T_{\lambda}\sum_{m=0,1,2}
		\psi_m\delta^{(m)}(x'_\omega-\phi_{\omega,r}),
	\end{equation}
	where the functions $\psi_m=\psi_m(t,x'_\omega)$ satisfy the scaled Sobolev estimates
	\begin{equation}\label{np2}
		\| \psi_m\|_{L^2_{[-2,2]} H^{s_0-1}_{a,x'_\omega}} \lesssim \epsilon_0 \lambda^{1-m}, \quad m=0,1,2, \quad a=(\epsilon_0 \lambda)^{-\frac12}.
	\end{equation}
\end{proposition}
\begin{proof}
		For brevity, we consider the case $\omega=(0,0,1)$. Then $x_\omega=x_3$, and $x'_\omega=x'$. We write
		\begin{equation}\label{js0}
			\square_{\mathbf{g}_\lambda} P_\lambda f
			= \lambda^{-1} ( [\square_{\mathbf{g}_\lambda}, P_\lambda T_\lambda]+P_\lambda T_\lambda \square_{\mathbf{g}_\lambda} )(\varphi \psi).
		\end{equation}
		For the first term in \eqref{js0}, noting $\mathbf{g}\lambda$ supported at frequency $\leq \frac \lambda8$, then we can write
		\begin{equation*}
			[\square_{\mathbf{g}_\lambda}, P_\lambda T_\lambda]=[\square_{\mathbf{g}_\lambda}, P_\lambda T_\lambda]\tilde{P}_\lambda \tilde{T}_\lambda
		\end{equation*}
		for some multipliers $\tilde{P}_\lambda, \tilde{T}_\lambda$ which have the same properties as $P_\lambda, T_\lambda$. Therefore, by using the kernal bounds for $P_\lambda T_\lambda$, we conclude that
		\begin{equation*}
			[\square_{\mathbf{g}_\lambda}, P_\lambda T_\lambda]f=L(d\mathbf{g}, df).
		\end{equation*}
		For the second term in \eqref{js0}, we use the Leibniz rule
		\begin{equation}\label{js1}
			\square_{\mathbf{g}_\lambda}(\varphi\psi)=\psi \square_{\mathbf{g}_\lambda} \varphi+(\mathbf{g}^{\alpha \beta}_\lambda+\mathbf{g}^{\beta \alpha}_\lambda)\partial_\alpha \varphi \partial_\beta \psi + \varphi \square_{\mathbf{g}_\lambda} \psi.
		\end{equation}
		We let $\nu$ denote the conormal vector field along $\Sigma$, $\nu=dx_3-d\phi(t, x')$. In the following, we take the greek indices $0 \leq \alpha, \beta \leq 2$.

		For the first term in \eqref{js1}, we can calculate it by
		\begin{equation*}
			\begin{split}
				\mathbf{g}_{\lambda}^{\alpha \beta} \partial_{\alpha \beta} \varphi =& \mathbf{g}_{\lambda}^{\alpha \beta}(t,x',\phi)\nu_\alpha \nu_\beta \delta^{(2)}_{x_3-\phi}
				-2 (\partial_3 \mathbf{g}_{\lambda}^{\alpha \beta})(t, x',\phi) \nu_\alpha \nu_\beta \delta^{(1)}_{x_3-\phi}
				\\
				&+(\partial^2_3 \mathbf{g}_{\lambda}^{\alpha \beta})(t, x',\phi) \nu_\alpha \nu_\beta \delta^{(0)}_{x_3-\phi}- \mathbf{g}_{\lambda}^{\alpha \beta})(t, x',\phi) \partial_{\alpha \beta}\phi \delta^{(1)}_{x_3-\phi}
				\\
				&+ \partial_3\mathbf{g}_{\lambda}^{\alpha \beta})(t, x',\phi) \partial_{\alpha \beta}\phi \delta^{(0)}_{x_3-\phi}.
			\end{split}
		\end{equation*}
		Here, $\delta^{(m)}_{x_3-\phi}=(\partial^m \delta)(x_3-\phi) $. By Leibniz rule, we can take
		\begin{equation*}
			\begin{split}
				\psi_0&=\psi \big\{ (\partial^2_3 \mathbf{g}_{\lambda}^{\alpha \beta})(t, x',\phi)\nu_\alpha \nu_\beta+ (\partial_3 \mathbf{g}_{\lambda}^{\alpha \beta})(t, x',\phi)\partial^2_{\alpha \beta}\phi  \big\},
				\\
				\psi_1&=\psi \big\{ 2(\partial_3 \mathbf{g}_{\lambda}^{\alpha \beta})(t, x',\phi)\nu_\alpha \nu_\beta-  \mathbf{g}_{\lambda}^{\alpha \beta}(t, x',\phi)\partial^2_{\alpha \beta}\phi \big\},
				\\
				\psi_2&=\psi  ( \mathbf{g}_{\lambda}^{\alpha \beta}-\mathbf{g}^{\alpha \beta})\nu_\alpha \nu_\beta,
			\end{split}
		\end{equation*}
		Due to \eqref{G}, Proposition \ref{r1}, and Corollary \ref{vte}, we can conclude that this settings of $\psi_0, \psi_1,$ and $\psi_2$ satisfy \eqref{np2}. For the second term in \eqref{js0}, we have
		\begin{equation*}
			\begin{split}
				(\mathbf{g}^{\alpha \beta}_\lambda+\mathbf{g}^{\beta \alpha}_\lambda)\partial_\alpha \varphi \partial_\beta \psi=& \frac12\nu_\alpha (\mathbf{g}^{\alpha \beta}_\lambda+\mathbf{g}^{\beta \alpha}_\lambda)(t, x',\phi)\partial_\beta \psi \delta^{(1)}_{x_3-\phi}
				\\
				&- \frac12\nu_\alpha \partial_3 (\mathbf{g}^{\alpha \beta}_\lambda+\mathbf{g}^{\beta \alpha}_\lambda)(t, x',\phi)\partial_\beta \psi \delta^{(0)}_{x_3-\phi}.
			\end{split}
		\end{equation*}
		We therefore take
		\begin{equation*}
			\psi_0= \frac12\nu_\alpha \partial_3 (\mathbf{g}^{\alpha \beta}_\lambda+\mathbf{g}^{\beta \alpha}_\lambda)(t, x',\phi)\partial_\beta \psi, \quad \psi_1= \frac12\nu_\alpha (\mathbf{g}^{\alpha \beta}_\lambda+\mathbf{g}^{\beta \alpha}_\lambda)(t, x',\phi)\partial_\beta \psi.
		\end{equation*}
		Since \eqref{G}, Proposition \ref{r1}, and Corollary \ref{vte}, we can conclude that this settings of $\psi_0$ and $\psi_1$ satisfy the estimates \eqref{np2}.
		For the third term in \eqref{js0}, we can take
		\begin{equation*}
			\psi_0=\mathbf{g}^{\alpha \beta}_\lambda(t,x',\phi)\partial^2_{\alpha \beta}\psi.
		\end{equation*}
		Due to \eqref{G}, Proposition \ref{r1}, and Corollary \ref{vte}, we can conclude that this settings of $\psi_0$ satisfies the estimates \eqref{np2}. So we have finished the proof of Proposition \ref{np}.
	\end{proof}
\begin{remark}
	Seeing \eqref{np1}-\eqref{np2}, a single wave packet is not sufficient for us to constructe approximate solutions to a linear wave equation. 
\end{remark}	
We next consider a superposition of these single wave packets.
	\subsubsection{Superpositions of wave packets}\label{aFs}
	Firstly, let us introduce some notations. The index $\omega$ stands for the initial orientation of the wave packet at $t=-2$, which varies over a maximal collection of approximately $\epsilon_0^{-1}\lambda$ unit vectors separated by at least $\epsilon_0^{\frac12}\lambda^{-\frac12}$. For each $\omega$, we have the orthonormal coordinate system $(x_\omega, x'_\omega)$ of $\mathbb{R}^2$, where $x_\omega=x \cdot \omega$, and $x'_\omega$ are projective along $\omega$.

	We denote $\mathbb{R}^3$ by a parallel tiling of rectangles, with length $(8\lambda)^{-1}$ in the $x_{\omega}$ direction, and $(4\epsilon_0 \lambda)^{-\frac12}$ in the other directions $\bx'_{\omega}$. The index $j$ corresponds to a counting of the rectangles in this decomposition. Let $R_{\omega,j}$ denote the collection of the doubles of these rectangles, and $\Sigma_{\omega,j}$ denote the element of the $\Sigma_{\omega}$ ($\Sigma_{\omega}=\cup_{r} \Sigma_{\omega,r}$) foliation upon which $R_{\omega,j}$ is centered. Let $\gamma_{\omega,j}$ denote the null geodesic contained in $\Sigma_{\omega,j}$ which passes through the center of $R_{\omega,j}$ at time $t=-2$.

	We let $T_{\omega,j}$ be the set
	\begin{equation}\label{twj}
		T_{\omega,j}=\Sigma_{\omega,j} \cap \{ | x'_{\omega}-\gamma_{\omega,j}| \leq (\epsilon_0 \lambda)^{-\frac12}\} .
	\end{equation}
	By \eqref{600} and \eqref{606}, then the estimate
	\begin{equation*}
		|dr_{\theta}-(\theta \cdot d x-dt)| \lesssim \epsilon_1,
	\end{equation*}
	holds pointwise uniformly on $[-2,2]\times \mathbb{R}^3$. This also implies that
	\begin{equation}\label{pr0}
		| \phi_{\theta,r}(t,x'_{\theta})-\phi_{\theta,r'}(t,x'_{\theta})-(r-r')| \lesssim \epsilon_1|r-r'|.
	\end{equation}
	For simplicity,  we set
	\begin{equation*}
		{\rho}(t)=\| d \mathbf{g} \|_{C^\delta_x}.
	\end{equation*}
	Then \eqref{502} tells us
	\begin{equation}\label{pr1}
		\| d^2_{x'_{\omega}}\phi_{\omega,r}(t,x'_{\omega})-d^2_{x'_{\omega}}\phi_{\omega,r'}(t,x'_{\omega})\|_{L^\infty_{x'_{\omega}}} \lesssim \epsilon_2+ {\rho}(t).
	\end{equation}
	By using \eqref{pr0} and \eqref{pr1}, we get
	\begin{equation}\label{pr2}
		\| d_{x'_{\omega}}\phi_{\omega,r}(t,x'_{\omega})-d_{x'_{\omega}}\phi_{\omega,r'}(t,x'_{\omega})\|_{L^\infty_{x'_{\omega}}} \lesssim (\epsilon_2+ {\rho}(t))^{\frac12}|r-r'|^{\frac12}.
	\end{equation}
	For $dx_{\omega}-d\phi_{\omega,r}$ is null and also $d \mathbf{g} \leq {\rho}(t)$, this also implies H\"older-$\frac12$ bounds on $d\phi_{\omega,r}$. So we suppose that $(t,x)\in \Sigma_{\omega,r}$ and $(t,y)\in \Sigma_{\omega,r'}$, that $| x'_{\omega}-y'_{\omega}| \leq 2(\epsilon_0 \lambda)^{-\frac12}$, and that $|r-r'|\leq 2 \lambda^{-1}$. Using \eqref{601}, we can obtain
	\begin{equation*}
		|l_{\omega}(t,x)-l_{\omega}(t,y)| \lesssim \epsilon_0^{\frac12}\lambda^{-\frac12}+\epsilon_0^{-\frac12} {\rho}(t)\lambda^{-\frac12}.
	\end{equation*}
	Due to $\dot{\gamma}_{\omega}=l_{\omega}$, and $\|{\rho}\|_{L^2_{[-2,2]}} \lesssim \epsilon_0$, so any geodesic in $\Sigma_{\omega}$ which intersects a slab $T_{\omega,j}$ should be contained in the similar slab of half the scale.

	We are ready to introduce a result about a superposition of wave packets from a certain fixed time.
	\begin{Lemma}\label{SWP}
		Let $0<\mu < \delta$. Let a scalar function $v$ be formulated by
		\begin{equation}\label{swp}
			v(t,x)=\epsilon_0^{\frac12}P_{\lambda} \sum_{\omega,j}T_\lambda(f^{\omega,j}\delta_{x_{\omega}-\phi_{\omega,j}(t,x'_{\omega})}).
		\end{equation}
		Set ${\rho}(t)=\| d \mathbf{g} \|_{C^\delta_x}$ and $a=(\epsilon_0 \lambda)^{-\frac12}$. Then we have
		\begin{equation}\label{swp0}
			\| {v}(t) \|^2_{L^2_x}\lesssim \sum_{\omega,j}\| f^{\omega,j}\|^2_{H^{1+\mu}_a}, \qquad \qquad \ \ \text{if} \ \ {\rho}(t) \leq \epsilon_0,
		\end{equation}
		and
		\begin{equation}\label{swp1}
			\| {v}(t) \|^2_{L^2_x}\lesssim \epsilon_0^{-1} {\rho}(t)  \sum_{\omega,j}\| f^{\omega,j}\|^2_{H^{1+\mu}_a},  \qquad \text{if} \ \ {\rho}(t) \geq \epsilon_0 .
		\end{equation}
		\begin{proof}
			This proof follows ideas of Smith-Tataru \cite{ST} (Lemma 8.5 and Lemma 8.6 on page 340-345). Therefore we omit the details here.
			%
			%
		\end{proof}
		\begin{remark}
			By \eqref{swp0} and \eqref{swp1}, we get
			\begin{equation}\label{swp3}
				\| v(t) \|^2_{L^2_x}\lesssim \left(1+\epsilon_0^{-1} {\rho}(t)\right)  \sum_{\omega,j}\| f^{\omega,j}\|^2_{H^{1+\mu}_a}.
			\end{equation}
		\end{remark}
	\end{Lemma}
	\begin{proposition}[\cite{ST}, Proposition 8.4]\label{szy}
		Let $f=\sum_{\omega,j}a_{\omega,j}f^{\omega,j}$, where $f^{\omega,j}$ are normalized wave packets supported in $T_{\omega,j}$. Then we have
		\begin{equation}\label{ese}
			\| d P_\lambda f\|_{L^\infty_t L^2_x} \lesssim (\sum_{\omega,j} a^2_{\omega,j})^{\frac12},
		\end{equation}
		and
		\begin{equation}\label{ese1}
			\| \square_{\mathbf{g}_{\lambda}} P_\lambda f \|_{L^1_t L^2_x} \lesssim \epsilon_0 (\sum_{\omega,j} a^2_{\omega,j})^{\frac12}.
		\end{equation}
	\end{proposition}
	\begin{proof}
		We first prove a weaker estimate comparing with \eqref{ese}
		\begin{equation}\label{ese2}
			\| d P_\lambda f\|_{L^2_t L^2_x} \lesssim (\sum_{\omega,j} a^2_{\omega,j})^{\frac12}.
		\end{equation}
		By using \eqref{swp3} and replacing $P_\lambda$ by $\lambda^{-1}\nabla P_\lambda$, and $f^{\omega,j}$ by $a_{\omega,j}\psi^{\omega,j}$, we have
		\begin{equation*}
			\| \nabla P_\lambda f(t)\|^2_{L^2_x} \lesssim (1+\epsilon_0^{-1} {\rho}(t)) \sum_{\omega,j} a^2_{\omega,j}.
		\end{equation*}
		Due to the fact $\|{\rho}\|_{L^2_t} \lesssim \epsilon_0$, we can see that
		\begin{equation}\label{ese3}
			\| \nabla P_\lambda f\|^2_{L^2_t L^2_x} \lesssim \sum_{\omega,j} a^2_{\omega,j} .
		\end{equation}
		We also need to get the similar estimate for the time derivatives. We can calculate
		\begin{equation*}
			\partial_t \psi = \dot{\gamma}(t) (\epsilon_0 \lambda)^{\frac12} \tilde{\psi}, \quad \partial_t \delta(x_\omega-\phi_{\omega,j})=\partial_t \phi_{\omega,j} \delta^{(1)}(x_\omega-\phi_{\omega,j}).
		\end{equation*}
		For $\dot{\gamma} \in L^\infty_t$ and $\partial_t \phi_{\omega,j} \in L^\infty_t$, then we have
		\begin{equation}\label{ese4}
			\| \partial_t P_\lambda f\|^2_{L^2_t L^2_x} \lesssim \sum_{\omega,j} a^2_{\omega,j} .
		\end{equation}
		Combining \eqref{ese3} and \eqref{ese4}, we have proved \eqref{ese2}. To prove \eqref{ese1}, we use the formula \eqref{np1}. Considering the right hand of \eqref{np1}, by using \eqref{ese2}, we can bound the first term by
		\begin{equation*}
			\| L(d\mathbf{g}, d \tilde{P}_{\lambda} \tilde{f}) \|_{L^1_tL^2_x} \lesssim \|d\mathbf{g}\|_{L^2_tL^\infty_x} \|d \tilde{P}_{\lambda} \tilde{f} \|_{L^2_tL^2_x} \lesssim \epsilon_0 (\sum_{\omega,j} a^2_{\omega,j})^{\frac12}.
		\end{equation*}
		It only remains for us to estimate the second right term on \eqref{np1}. Set
		\begin{equation*}
			\Phi=\epsilon_0^{\frac12} P_\lambda T_\lambda \left(\sum_{\omega,j}a_{\omega,j}\cdot \sum_{m=0,1,2}\psi^{\omega,j}_m \delta^{(m)}_{x_{\omega}-\phi_{\omega,j}} \right).
		\end{equation*}
		By \eqref{np2}, we then have
		\begin{equation*}
			\begin{split}
				\| \Phi \|^2_{L^2_x}=&(1+{\rho}(t)\epsilon_0^{-1}) \sum_{\omega,j}a^2_{\omega,j} \sum_{m=0,1,2} \lambda^{m-1}\|\psi^{\omega,j}_m (t) \|^2_{H^{1+\mu}_a}
				\\
				\lesssim  &(1+{\rho}(t)\epsilon_0^{-1}) \epsilon_0^2  \sum_{\omega,j}a^2_{\omega,j}.
			\end{split}
		\end{equation*}
		So we further get
		\begin{equation*}
			\begin{split}
				\| \Phi \|_{L^1_t L^2_x}
				\lesssim & \epsilon_0 (\sum_{\omega,j} a^2_{\omega,j})^{\frac12} \left(\int^2_{-2}[1+{\rho}(t)\epsilon_0^{-1}]^{\frac12}dt \right)
				\\
				\lesssim & \epsilon_0 (\sum_{\omega,j} a^2_{\omega,j})^{\frac12} \left(\int^2_{-2}[1+{\rho}(t)\epsilon_0^{-1}]dt \right)^{\frac12}
				\\
				\lesssim & \epsilon_0 (\sum_{\omega,j} a^2_{\omega,j})^{\frac12} .
			\end{split}
		\end{equation*}
		Hence, the second right term on \eqref{np1} can be estimated by $\epsilon_0 (\sum_{\omega,j} a^2_{\omega,j})^{\frac12}$. So we have proved \eqref{ese1}. Using \eqref{ese2} and \eqref{ese1}, and classical energy estimates for linear wave equation, we obtain \eqref{ese}.
	\end{proof}
	\subsubsection{Matching the initial data}
	Although we have constructed the approximate solutions using superpositions of normalized wave packets, we also need to complete this construction, i.e. matching the initial data for this type of solutions. Since the metric $\mathbf{g}$ equals to the Minkowski metric for $t \in [-2,-\frac32]$, so it's natural for us to work with wave packets near $t=-2$ for the Minkowski wave operator. We refer to the construction from \cite[Proposition 8.7, page 346]{ST}
	or Smith \cite[page 815, Section 4]{Sm}.
	\begin{proposition}[\cite{ST}, Proposition 8.7]\label{szi}
		Given any initial data $(f_0,f_1) \in H^1 \times L^2$, there exists a function of the form
		\begin{equation*}
			f=\sum_{\omega,j}a_{\omega,j}f^{\omega,j},
		\end{equation*}
		where the function $f^{\omega,j}$ are normalized wave packets, such that
		\begin{equation*}
			f(-2)=P_\lambda f_0, \quad \partial_t f(-2)=P_\lambda f_1.
		\end{equation*}
		Furthermore,
		\begin{equation*}
			\sum_{\omega,j}a_{\omega,j}^2 \lesssim \| P_\lambda f_0 \|^2_{H^1}+ \| P_\lambda f_1 \|^2_{L^2}.
		\end{equation*}
	\end{proposition}
	\begin{remark}
		The original version of Proposition 8.7 (\cite{ST}, page 346) takes the initial data $(u_0,u_1)$. By replacing $(u_0,u_1)$ in Proposition 8.7 (\cite{ST}, page 346) to $(P_\lambda f_0,P_\lambda f_1)$, our Proposition \ref{szi} holds.
	\end{remark}
	\begin{remark}
		Considering the definition of a wave packet, together with the regularity of the $\Sigma_{\omega,r}$, then we can use the quantities $\psi_{\omega,r}$ and $\gamma_{\omega,r}$ to keep the type of $f$ for $t\in[-2,2]$.

		If $t_0=-2$ in Proposition \eqref{AA1}, then the function $f$ is the approximate solution matching the initial data by using Proposition \ref{szi}.

		If $t_0\in(-2,2)$, then we can use an iteration process as follows. Basically, we hope to find the exact solution $\Phi$ to the Cauchy problem
		\begin{equation*}
			\begin{cases}
				& \square_{\mathbf{g}_{\lambda}} \Phi=0, \qquad (t, x) \in [-2,2] \times \mathbb{R}^3,
				\\
				& (\Phi, \partial_t \Phi)|_{t=t_0}=(\Phi_0,\Phi_1).
			\end{cases}
		\end{equation*}
		Let $\Phi_\lambda=P_{\lambda} \Phi$ be the approximate solution with the initial data $ (\Phi_\lambda, \partial_t \Phi_\lambda)|_{t=-2}=({P}_\lambda \Phi(-2), {P}_\lambda \partial_t \Phi(-2))$. Then we have
		\begin{equation*}
			\square_{\mathbf{g}_{\lambda}} \Phi_\lambda= 
			[\square_{\mathbf{g}_{\lambda}},P_{\lambda}]\Phi.
		\end{equation*}
		The energy estimates tell us that
		\begin{equation*}
			\| \square_{\mathbf{g}_{\lambda}} \Phi_\lambda \|_{L^1_{[-2,2]} L^2_x} \lesssim \| d\mathbf{g}\|_{L^2_{[-2,2]} L^\infty_x}\| d\Phi\|_{L^\infty_{[-2,2]} L^2_x} \lesssim \epsilon_0 (\|\Phi_0\|_{H^1}+\|\Phi_1\|_{L^2}).
		\end{equation*}
		However, it does not match the data at $t=t_0$. Fortunately, we can obtain
		\begin{equation*}
			\begin{split}
				& P_\lambda \Phi_0 -\Phi_\lambda (t_0)=P_\lambda \Phi_0^1, \qquad \Phi_0^1=P_\lambda \Phi_0-\Phi_\lambda (t_0),
				\\
				& P_\lambda \Phi_1 -\partial_t \Phi_\lambda (t_0)=P_\lambda \Phi_1^1, \ \quad \Phi_1^1=P_\lambda \Phi_1-\partial_t \Phi_\lambda (t_0).
			\end{split}
		\end{equation*}
		By energy estimates and commutator estimates, we have
		\begin{equation*}
			\begin{split}
				\|\Phi_0^1\|_{H^1} + \|\Phi_1^1\|_{L^2} \lesssim & \| \square_{\mathbf{g}_{\lambda}} ( P_\lambda \Phi - \Phi) \|_{L^1_{[-2,2]} L^2_x}
				\\
				\lesssim & \| \square_{\mathbf{g}_{\lambda}}  \Phi_\lambda \|_{L^1_t L^2_x}+ \| [ \square_{\mathbf{g}_{\lambda}} , \Phi_\lambda] \Phi \|_{L^1_{[-2,2]} L^2_x}
				\\
				\lesssim & \epsilon_0 (\|\Phi_0\|_{H^1}+\|\Phi_1\|_{L^2}).
			\end{split}
		\end{equation*}
		Since the norm of the error is much smaller than the initial size of the data,
		we can repeat this process with data $(\Phi_0^1,\Phi_1^1)$, and sum this series to
		obtain a smooth function $\Phi_\lambda$ with data $(P_\lambda \Phi_0,P_\lambda \Phi_1)$ at time $t=t_0$. As a result, it can also match the given data at time $t=t_0$.
	\end{remark}
	\subsubsection{Overlap estimates}
	Since the foliations $\Sigma_{\omega,r}$ varing with $\omega$ and $r$, so a fixed $\Sigma_{\omega,r}$ may intersect with other $\Sigma_{\omega',r'}$. As a result, we should be clear about the number of $\lambda$-slabs which contain two given points in the space-time $[-2,2]\times \mathbb{R}^3$.
	\begin{corollary}[\cite{ST}, Proposition 9.2]\label{corl}
		For all points $P_1=(t_1,x_1)$ and $P_2=(t_2,x_2)$ in space-time $\mathbb{R}^{+} \times \mathbb{R}^3$, and $\epsilon_0 \lambda \geq 1$, the number $N_{\lambda}(P_1,P_2)$ of slabs of scale $\lambda$ that contain both $P_1$ and $P_2$ satisfies the bound
		\begin{equation*}
			\begin{split}
				N_{\lambda}(P_1,P_2)\lesssim & \epsilon_0^{-1} |t_1-t_2|^{-1}.
			\end{split}
		\end{equation*}
	\end{corollary}
	\begin{remark}
		Taking $n=3$ for Proposition 9.2 in \cite{ST}, we then obtain Corollary \ref{corl}.
	\end{remark}
	\subsubsection{The proof of Estimate \eqref{Ase}}
	After the construction of approximate solutions, we still need to prove the key estimate \eqref{Ase}. This can be directly derived by the following result. Before that, let us define $\mathcal{T}=\cup_{\omega,j}T_{\omega,j}$, where $T_{\omega,j}$ is set in \eqref{twj}. We also denote $\chi_{{J}}$ be a smooth cut-off function, and $\chi_J=1$ on a set $J$.
	\begin{proposition}\label{szt}
		Let $t \in [-2,2]$ and
		\begin{equation*}
			f={\sum}_{J \in \mathcal{T}}a_{J}\chi_{J}f_{J},
		\end{equation*}
		where $\sum_{J \in \mathcal{T}}a_{J}^2 \leq 1$ and $f_J$ are normalized wave packets in $J$. Then\footnote{We need $r>1$ in \eqref{Ase} for there is a factor $(\ln \lambda)^{\frac12}$ in \eqref{Aswr}.}
		\begin{equation}\label{Aswr}
			\|f \|_{L^2_t L^\infty_x} \lesssim \epsilon_0^{-\frac12}(\ln \lambda)^{\frac12}.
		\end{equation}
	\end{proposition}
	\begin{proof}
		This proof also follows Smith-Tataru \cite{ST} (Proposition 10.1 on page 353). Here we use slightly different factors in \eqref{Aswr}.
		
		Let us first make a partition of the time-interval $[-2,2]$. By decomposition, there exists a partition $\left\{ I_j \right\}$ of the  interval $[-2,2]$ into disjoint subintervals $I_j$ such that with the size of each $I_j$, $|I_j| \simeq \lambda^{-1}$, and the number of subintervals $j \simeq\lambda$. By mean value theorem, there exists a number $t_j$ such that
		\begin{equation}\label{Aswrq}
			\|f \|^2_{L^2_t L^\infty_x} \leq \textstyle{\sum}_j \|f \|^2_{L^2_{I_j} L^\infty_x} \leq \textstyle{\sum}_j \|f(t_j,\cdot) \|^2_{ L^\infty_x} \lambda^{-1},
		\end{equation}
		where $t_j$ is located in $I_j$, and $|t_{j+1}-t_j| \simeq \lambda^{-1}$. Let us explain the \eqref{Aswrq} as follows. For simplicity, let  $I_{j_0}=[0,\lambda^{-1}]$ and $I_{j_0+1}=[\lambda^{-1},2\lambda^{-1}]$. By mean value theorem, on $I_{j_0}$, we have
		\begin{equation*}
			\|f \|^2_{L^2_{I_{j_0}} L^\infty_x} = \|f(t_{j_0},\cdot) \|^2_{ L^\infty_x}\lambda^{-1}.
		\end{equation*}
		We also have
		\begin{equation*}
			\|f \|^2_{L^2_{I_{j_0+1}} L^\infty_x} = \|f(t_{j_0+1},\cdot) \|^2_{ L^\infty_x}\lambda^{-1}.
		\end{equation*}
		If $t_{j_0} \leq \frac12 \lambda^{-1}$ or $t_{j_0+1} \geq \frac32 \lambda^{-1}$, then $|t_{j_0+1}-t_{j_0}| \geq \frac12 \lambda^{-1}$. Otherwise, $t_{j_0} \in [\frac12 \lambda^{-1}, \lambda^{-1}] $ and $t_{j_0+1} \in [\lambda^{-1}, \frac32 \lambda^{-1}]$. In this case,
		we combine $I_{j_0}, I_{j_0+1}$ together and set $I^*_{j_0}= I_{j_0}\cup I_{j_0+1}$. On the new interval  $I^*_{j_0}$, we can see that
		\begin{equation*}
			\|f \|^2_{L^2_{I^*_{j_0}} L^\infty_x} = 2\|f(t^*_{j_0},\cdot) \|^2_{ L^\infty_x}\lambda^{-1},
		\end{equation*}
		and
		\begin{equation*}
			2\|f(t^*_{j_0},\cdot) \|^2_{ L^\infty_x} = \|f(t_{j_0},\cdot) \|^2_{ L^\infty_x}+\|f(t_{j_0+1},\cdot) \|^2_{ L^\infty_x}. 
		\end{equation*}
		Since $f$ is a contituous function, we have
		\begin{equation*}
			\|f(t^*_{j_0},\cdot) \|_{ L^\infty_x} = \left\{  \frac12 \big( \|f(t_{j_0},\cdot) \|^2_{ L^\infty_x} +\|f(t_{j_0+1},\cdot) \|^2_{ L^\infty_x} \big) \right\}^{\frac12}, \qquad t^*_{j_0} \in [\frac12\lambda^{-1}, \frac32\lambda^{-1}].
		\end{equation*}
		In case $t^*_{j_0} \in [\frac12\lambda^{-1}, \frac32\lambda^{-1}]$, we set $t_{j_0}:=t^*_{j_0}$. On the next time-interval $I_{j_0+2}=[2\lambda^{-1}, 3\lambda^{-1}]$, we obtain
		\begin{equation*}
			\|f \|^2_{L^2_{I_{j_0+2}} L^\infty_x} = \|f(t_{j_0+1},\cdot) \|^2_{ L^\infty_x}\lambda^{-1}, \quad t_{j_0+1} \in [2\lambda^{-1}, 3\lambda^{-1}].
		\end{equation*}
		Hence, we have proved $|t_{j_0+1}-t_{j_0}| \geq \frac12 \lambda^{-1}$. So we can use this way to decompose $[-2,2]$. Therefore, to prove \eqref{Aswr}, and taking \eqref{Aswrq} into account, we only need to show that
		\begin{equation}\label{wee0}
			\textstyle{\sum}_j |f(t_j,x_j)|^2 \lesssim \epsilon_0^{-1}\lambda \ln \lambda,
		\end{equation}
		where $x_j$ is arbitrarily chosen. We then set the points $P_j=(t_j,x_j)$.

		Since each points lies in at most $\approx \epsilon_0^{-\frac12}\lambda^{\frac12}$ slabs, so we may assume that $|a_J| \geq \epsilon_0^{\frac12}\lambda^{-\frac12}$. Then we decompose the sum $f=\sum_{J \in \mathcal{T}}a_{J}\chi_{J}f_{J}$ dyadically with respect to the size of $a_J$. 
		We next decompose the sum over $j$ via a dyadic decomposition in the numbers of slabs containing $(t_j,x_j)$. We may assume that we are summing over $M$ points\footnote{We also remark that $M$ is $\simeq \lambda$, for $t_j \in [-2,2]$ and $|t_{j+1}-t_j|\simeq \lambda^{-1}$.} $(t_j,x_j)$, each of which is contained in approximately $L$ slabs\footnote{So $L \lesssim  \epsilon_0^{-1}\lambda$ by using Corollary \ref{corl}.}. Then $|f(t_j,x_j)|\lesssim N^{-\frac12}L$ and
		\begin{equation}\label{wee1}
			\textstyle{\sum}_j |f(t_j,\bx_j)|^2 \lesssim L^2 M N^{-1}.
		\end{equation}
		Combining \eqref{wee0} with \eqref{wee1}, we only need to prove
		\begin{equation}\label{wee2}
			L^2 M N^{-1} \lesssim \epsilon_0^{-1}\lambda \ln \lambda.
		\end{equation}
		This is a counting problem, and we shall prove \eqref{wee2} by calculating in two different ways. First consider the number $K$ of pairs $(i,j)$ for which $P_i$ and $P_j$ are contained in a common slab, counted with multiplicity. For $J \in \mathcal{T}$, we denote by $n_J$ the number of points $P_j$ contained\footnote{For $J$, if it contains $3$ points $P_1,P_2,P_3$, then we will count it $(P_1,P_2), (P_2,P_1), (P_1,P_3), (P_3,P_1),(P_2,P_3),(P_3,P_2)$ in $K$. Therefore it's $3^2$.} in $J$. Then
		\begin{equation*}
			K = \sum_{n_J \geq 2} n_J^2.
		\end{equation*}
		By Jensen's inequality, we have
		\begin{equation*}
			K = \sum_{n_J \geq 2} n_J^2 \gtrsim N^{-1}  (\sum_{n_J \geq 2} n_J)^2.
		\end{equation*}
		Note that $ \sum_{J \in \mathcal{T}} n_J \approx ML$. We consider it into two cases. If
		\begin{equation*}
			\sum_{n_J \geq 2} n_J \leq \sum_{n_J =1 } n_J,
		\end{equation*}
		then $N\approx ML$. In this case, combining with the fact that $L \lesssim \epsilon_0^{-1}\lambda$, then \eqref{wee2} holds. Otherwise, we have
		\begin{equation}\label{wee3}
			K \gtrsim N^{-1}M^2L^2.
		\end{equation}
		In this case, by using Corollary \ref{corl}, we obtain
		\begin{equation}\label{wee4}
			K \lesssim \epsilon_0^{-1} \sum_{1\leq i,j \leq M, i\neq j} |t_i-t_j|^{-1}. 
		\end{equation}
		The sum is maximized in the case that $t_j$ are close as possible, i.e. if the $t_j$ are consecutive multiples of $\lambda^{-1}$. Therefore,  \eqref{wee4} yields
		\begin{equation}\label{wee5}
			K \lesssim \epsilon_0^{-1} \lambda \sum_{1\leq i,j \leq M, i\neq j} |i-j|^{-1} \lesssim M \epsilon_0^{-1} \lambda \ln \lambda, 
		\end{equation}
		where we use the fact that $M \simeq \lambda$. Combining \eqref{wee5} and \eqref{wee3}, we get \eqref{wee2}. So we have finished the proof of Proposition \ref{szt}.
	\end{proof}
	

\section{Continuous dependence}\label{cd}
In this part, we discuss the continuous dependence of solutions by using Tao's frequency envelope approach \cite{Tao1}. The approach was used in wave maps \cite{Tao1,Tao2}, incompressible Euler equations \cite{Tao}, quasilinear hyperbolic system \cite{IT1}, and compressible Euler equations \cite{AZ,Z2}. We should also mention that there are more classical methods for continuous dependence goes back to \cite{Kato, BS, AK, AM}. 
\begin{corollary}
	[Continuous dependence]\label{cv} Assume $2<s_0<s\leq \frac{5}{2}$. Let the initial data $(\bu_{0}, h_{0}, \bw_{0})$ be stated in Theorem \ref{dingli}. Then
	\begin{equation}\label{Id}
		\| u_0^0-1,\mathring{\bu}_0\|_{H^s}+ \| h_0\|_{H^s} + \| \bw_0 \|_{H^{s_0}} \leq M_0.
	\end{equation}
	Let $\{(\bu_{0m}, h_{0m}, \bw_{0m})\}_{m \in \mathbb{N}^+}$ be a sequence of initial data converging to $(\bu_0, h_0, \bw_0)$ in space $H^s \times H^s \times H^{s_0}$, where $\bw_{0m}=\mathrm{vort}(\mathrm{e}^{h_{0m}}\bu_{0m})$. Then for every $m \in \mathbb{N}^+$, there exists $T>0$ such that the Cauchy problem \eqref{WTe} with the initial data $(\bu_{0m}, h_{0m}, \bw_{0m})$ has a unique solution $(\bu_{m}, h_m, \bw_{m})$ on $[0,T]$, and the Cauchy problem \eqref{WTe} with the initial data $( \bu_{0},h_{0}, \bw_{0})$ has a unique solution $(\bu, h, \bw)$ on $[0,T]$. Moreover, the sequence $\{(\bu_{m}, h_m, \bw_{m})\}_{m \in \mathbb{N}^+}$ converges to $(\bu,h,\bw)$ on $[0,T]$ in $ H^s_x \times H_x^s \times H_x^{s_0}$.
\end{corollary}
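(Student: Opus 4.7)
The plan is to adapt the frequency envelope approach of Tao, as developed for quasilinear wave-transport systems in Ifrim-Tataru and in the author's joint work Andersson-Zhang, to the relativistic setting. The heuristic is that sharp $H^s \times H^s \times H^{s_0}$ convergence of data should be inherited by solutions, provided one can track how each dyadic frequency piece evolves, since the wave-transport structure already produces controllable bounds on each piece via Proposition~\ref{r5}-\ref{r6}. First I would fix an admissible frequency envelope $\{c_k\}_{k\geq 0}$ compatible with both the limit and the sequence, i.e.\ slowly varying, satisfying $\sum_k c_k^2 \lesssim M_0^2$, and controlling $\|P_k\bu_0\|_{H^s} + \|P_k h_0\|_{H^s} + \|P_k\bw_0\|_{H^{s_0}}$ as well as the analogous quantity for each $(\bu_{0m}, h_{0m}, \bw_{0m})$; this is possible because $(\bu_{0m}, h_{0m}, \bw_{0m}) \to (\bu_0, h_0, \bw_0)$ in the strong topology. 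Then I would regularize by setting $(\bu_0^{(j)}, h_0^{(j)}) = P_{\leq j}(\bu_0, h_0)$ with $u_0^{0,(j)} = \sqrt{1+|\mathring{\bu}_0^{(j)}|^2}$ and $\bw_0^{(j)} = \mathrm{vort}(\mathrm{e}^{h_0^{(j)}}\bu_0^{(j)})$, so that the constraints \eqref{muu} and \eqref{A01} are preserved. By Proposition~\ref{p3} each regularization produces a smooth solution $(\bu^{(j)}, h^{(j)}, \bw^{(j)})$ on a common interval $[0,T]$ with $T=T(M_0)$.

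The key step is to propagate the frequency envelope. I would show the uniform bound
\begin{equation*}
\|P_k(u^{0,(j)}-1,\mathring{\bu}^{(j)}, h^{(j)})\|_{L^\infty_{[0,T]} H_x^s}
+ \|P_k\bw^{(j)}\|_{L^\infty_{[0,T]} H_x^{s_0}} \lesssim c_k,
\end{equation*}
uniformly in $j\geq k$, together with the successive-difference estimate
\begin{equation*}
\|(\bu^{(j+1)}-\bu^{(j)}, h^{(j+1)}-h^{(j)})\|_{L^\infty_{[0,T]} H_x^{s-1}}
+ \|\bw^{(j+1)}-\bw^{(j)}\|_{L^\infty_{[0,T]} H_x^{s_0-1}} \lesssim 2^{-j}c_j.
\end{equation*}
The difference estimate follows from the low-regularity uniqueness argument already used in Corollary~\ref{cor}, applied to the linearization of the symmetric hyperbolic form \eqref{QHl} coupled to the transport equations \eqref{CEQ}-\eqref{CEQ0}, with the Strichartz norms of $d\bu^{(j)}, dh^{(j)}$ from Proposition~\ref{r4} providing the $L^2_t L^\infty_x$ integrability needed to run Gronwall. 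The frequency envelope bound, on the other hand, is proved by running the energy scheme of Theorem~\ref{VE} at each dyadic scale, using the commutator estimates of Lemmas~\ref{yx}, \ref{ceR}, \ref{ce} to handle $[P_k, \square_g]$ and $[P_k, \mathbf{T}]$, and invoking the wave-structure \eqref{fcr} for $\bu_+$ together with the Strichartz estimate \eqref{fgh1} to obtain the critical $L^2_t \dot{B}^{s_0-2}_{\infty,2}$ and $L^2_t L^\infty_x$ ingredients. Interpolating the $H^{s-1}$-Cauchy property against the uniform $H^s$ envelope bound yields Cauchy convergence of $\{(\bu^{(j)}, h^{(j)}, \bw^{(j)})\}$ in $L^\infty_{[0,T]}(H_x^s \times H_x^s \times H_x^{s_0})$, and the limit coincides with $(\bu, h, \bw)$ by uniqueness. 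Repeating the construction with $(\bu_{0m}, h_{0m}, \bw_{0m})$ in place of truncations of $(\bu_0, h_0, \bw_0)$, and applying a diagonal argument splitting high and low frequencies using the common envelope, gives $(\bu_m, h_m, \bw_m) \to (\bu, h, \bw)$.

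The hard part will be closing the frequency envelope estimate for the vorticity at the sharp level $H_x^{s_0}$. Because $\bw$ is a four-vector and its norm control in Theorem~\ref{VE} proceeds through the modified quantities $\bW$ and $\bG$ together with the delicate cancellation between $\mathrm{H}_{111}$ and $\mathrm{H}_{121}$ in \eqref{We60}-\eqref{We61}, one must re-run the entire energy argument with $P_k$ inserted and verify that the commutators $[P_k,\cdot]$ do not spoil this cancellation; each of the bilinear and trilinear terms in Lemmas~\ref{vor1}-\ref{vor2} and in the expression \eqref{YX1} for $\bE$ must be estimated in a frequency-localized form using the product rules of Lemmas~\ref{jh}-\ref{LPE} so as to produce an envelope-compatible bound. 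Combining this with the wave-side frequency envelope propagation (which follows the linear theory of Proposition~\ref{r5} at intermediate regularities $r \in [s-1, s+1]$ rather than the integer endpoints) will yield the claim.
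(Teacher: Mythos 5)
Your proposal follows essentially the same route as the paper: the Tao/Ifrim--Tataru frequency-envelope scheme, with Littlewood--Paley regularizations of the data (constraints preserved via $u^0=\sqrt{1+|\mathring{\bu}|^2}$ and $\bw_0=\mathrm{vort}(\mathrm{e}^{h_0}\bu_0)$), smooth solutions from Proposition~\ref{p3}, successive-difference bounds obtained from the symmetric hyperbolic system and the transport equation for $\bw$ together with the Strichartz estimates, telescoping to recover $H^s\times H^s\times H^{s_0}$ convergence of the regularizations, and a triangle-inequality/diagonal argument over the doubly indexed family to transfer this to the perturbed data. The only cosmetic difference is that the paper proves the difference bounds at the $L^2$ level (with gains $2^{-sj}c_j$, $2^{-(s_0)j}$-type factors) plus high-frequency $H^{s+1}/H^{s_0+1}$ bounds and interpolates, rather than re-running frequency-localized energy estimates, but this is the same machinery.
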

\begin{proof}
		For $\{(\bu_{0m}, h_{0m}, \bw_{0m})\}_{m \in \mathbb{N}^+}$ is a sequence which converges to $(\bu_0, h_0, \bw_0)$ in space $H^s \times H^s \times H^{s_0}$, by using \eqref{Id}, we  obtain
	\begin{equation*}
		\| u^0_{0m}-1,\mathring{\bu}_{0m}\|_{H^s}+  \| h_{0m}\|_{H^{s}} + \| \bw_{0m} \|_{H^{s_0}} \leq 100 M_0, \quad \text{for \ large} \ m,
	\end{equation*}
	By using the existence and uniqueness result in Theorem \ref{dingli} and Corollary \ref{cor}, there exists $T>0$ such that the Cauchy problem \eqref{WTe} with the initial data $(\bu_{0m}, h_{0m}, \bw_{0m})$ has a unique solution $(\bu_{m}, h_m, \bw_{m})$ on $[0,T]$, and the Cauchy problem \eqref{WTe} with the initial data $(\bu_{0}, h_0, \bw_{0})$ has a unique solution $(\bu, h, \bw)$ on $[0,T]$. Next, we divide the proof of convergence into three steps.

	\textit{Step 1: Convergence in weaker spaces}. By the $L^2_x$ energy estimates, and using the Strichartz estimate \eqref{SSr}, for $m,l \in \mathbb{N}^+$, we have
	\begin{equation*}
		\begin{split}
				\|(\bu_m-\bu_l, h_m-h_l,\bw_m-\bw_l)\|_{L_x^{2}}
			\lesssim \|(\bu_{0m}-\bu_{0l}, h_{0m}-h_{0l},\bw_{0m}-\bw_{0l})\|_{H^s \times H^s \times H^{s_0}}.
		\end{split}
	\end{equation*}
 As a result, for $t\in [0,T]$, we have
	\begin{equation*}
		\lim_{m \rightarrow \infty}(\bu_m, h_m, \bw_m)  =  (\bu, h, \bw) \quad \mathrm{in} \ \ L^{2}_x.
	\end{equation*}
Due to the interpolation formula, it follows
	\begin{equation*}
		\|\bu_m-\bu\|_{H_x^\sigma}  \lesssim \|\bu_m-\bu\|^{1-\frac{\sigma}{s}}_{L_x^{2}} \|\bu_m-\bu\|^{\frac{\sigma}{s}}_{H_x^s}, \quad 0 \leq \sigma <s.
	\end{equation*}
	This leads to
	\begin{equation*}
		\lim_{m\rightarrow \infty}\bu_m  =  \bu \ \ \mathrm{in} \ \ H^{\sigma}_x, \ \ 0 \leq \sigma <s,
	\end{equation*}
	In a similar way, we can obtain
	\begin{equation}\label{er0}
		\begin{split}
			& \lim_{m\rightarrow \infty}h_m =  h \ \ \mathrm{in} \ \ H^{\sigma}_x, \ \ 0 \leq \sigma <s,
			\\
			& \lim_{m\rightarrow \infty}\bw_m =  \bw \ \ \mathrm{in} \ \ H^{\theta}_x, \ \ 0 \leq \theta <s_0.
		\end{split}
	\end{equation}
	It remains for us to prove the convergence with the highest derivatives of $(\bu_m,h_m,\bw_m)$.

		\textit{Step 2: Constructing smooth solutions}. We set $\bU_0=(p(h_0),u^1_{0},u^2_{0},u^3_{0},)^{\mathrm{T}}$. By \cite{Tao,IT1}, there exists a sharp frequency envelope for $u^0_0-1, u^1_{0},u^2_{0}, u^3_{0}$ and $h_0$ respectively. Let $\{ c^{(a)}_{k} \}_{k \geq 0}$($a=0,1,2,3,$) be a sharp frequency envelope respectively for $u^0_0-1, u^1_{0},u^2_{0}$, and $u^3_{0}$ in the space $ H^s$. Let $\{ c^{(4)}_{k} \}_{k \geq 0}$ be a sharp frequency envelope respectively for $h_0$ in $ H^{s_0}$. Let $d^{(0)}_{k}, d^{(1)}_{k}, d^{(2)}_{k}, d^{(3)}_{k}$ be a sharp frequency envelope for $w^0_0, w^1_0, w^2_0$ and $w^3_0$ in the space $ H^{s_0}$. We choose a family of regularizations $\bU^{l}_0=(p(h^{l}_0),u^{1l}_0, u^{2l}_0,u^{3l}_0)^{\mathrm{T}}\in \cap^\infty_{a=0}H^a$ at frequencies $\lesssim 2^l$ where $l$ is a frequency parameter and $l \in \mathbb{N}^+$. Denote
	\begin{equation*}
		u^{0l}_0=\sqrt{1+ \mathring{\bu}_0^l}, \quad \bu^l_0=(u^{0l}_0, \mathring{\bu}_0^l)^\mathrm{T}, \quad \bw^l_0=\mathrm{vort}(\mathrm{e}^{h^{l}_0}\bw^l_0).
	\end{equation*}
	Then the functions $u^{0 l}_0, u^{1 l}_0, u^{2 l}_0, u^{3 l}_0, h^{l}_0$, and $\bw^l_0=(w^{0l}_0,w^{1l}_0,w^{2l}_0,w^{3l}_0)^{\mathrm{T}}$ have the following properties:

	(i)  uniform bounds
	\begin{equation}\label{pqr0}
		\begin{split}
			& \| P_k (u^{0 l}_0-1) \|_{H_x^s} \lesssim c^{(0)}_k, \qquad \| P_k u^{i l}_0 \|_{H_x^s} \lesssim c^{(i)}_k, \qquad \quad i=1,2,3,
			\\
			&  \| P_k h^{l}_0 \|_{H_x^s} \lesssim c^{(4)}_k, \qquad \qquad \| P_k  w^{\alpha l}_0 \|_{H_x^{s_0}} \lesssim d^{(\alpha)}_k, \qquad \alpha=0,1,2,3,
		\end{split}
	\end{equation}
	
	(ii)  high frequency bounds
	\begin{equation}\label{pqr1}
		\begin{split}
			& \| P_k (u^{0 l}_0-1) \|_{H_x^{s+1}} \lesssim 2^k c^{(0)}_k, \qquad \| P_k u^{i l}_0 \|_{H_x^{s+1}} \lesssim 2^kc^{(i)}_k, \qquad \quad i=1,2,3,
		\\
		&  \| P_k h^{l}_0 \|_{H_x^{s+1}} \lesssim 2^k c^{(4)}_k, \qquad \qquad \| P_k  w^{\alpha l}_0 \|_{H_x^{s_0+1}} \lesssim 2^k d^{(\alpha)}_k, \qquad \alpha=0,1,2,3,
		\end{split}
	\end{equation}
	
	(iii)  difference bounds
	\begin{equation}\label{pqr2}
		\begin{split}
			&\|  u^{\alpha(l+1)}_0- u^{\alpha l}_0\|_{L_x^{2}} \lesssim 2^{-sl}c^{(\alpha)}_l, \quad \ \ \ \alpha=0, 1,2,3,
			\\
			& \|  w^{\alpha(l+1)}_0- w^{\alpha l}_0\|_{L_x^{2}} \lesssim 2^{-s_0l}d^{(\alpha)}_l, \quad \alpha=0, 1,2,3,
			\\
			&\| h^{l+1}_0- h^{l}_0 \|_{L_x^{2}} \lesssim 2^{-{s}l}c^{(4)}_l,
		\end{split}
	\end{equation}
	
	(iv)  limit
	\begin{equation}\label{li}
		\begin{split}
			&\lim_{l \rightarrow \infty}(h_0^l, \bu^l_0,\bw^l_0) =(h_0, \bu_0, \bw_0) \ \ \ \mathrm{in} \ \ H^s\times H^s \times H^{s_0}.
		\end{split}
	\end{equation}
Consider the Cauchy problem \eqref{WTe} with the initial data
	\begin{equation*}
		\begin{split}
			& (h^l,\bu^l,\bw^l)|_{t=0}=(h^l_0,\bu^l_0,\bw^l_0).
		\end{split}
	\end{equation*}
By Proposition \ref{p3}, we can obtain a family of smooth solutions $(h^{l}, \bu^{l}, \bw^l)$ on a time interval $[0,T]$ ($T>0$). Furthermore, based on \eqref{pqr0}-\eqref{li}, using Proposition \ref{p3} again, we can claim that

$\bullet$  high frequency bounds
\begin{equation}\label{dbsh}
	\|  u^{0l}-1,\mathring{\bu}^{l}\|_{H_x^{s+1}}+\| h^{l} \|_{H_x^{s+1}}+ \| \bw^l \|_{H_x^{s_0+1}} \lesssim 2^{l} M_0 ,
\end{equation}

$\bullet$  difference bounds
\begin{equation}\label{dbs}
	\|  (\bu^{l+1}- \bu^{l}, h^{l+1}- h^{l}) \|_{L_x^{2}} \lesssim 2^{-sl}c_l,
\end{equation}
and
\begin{equation}\label{dbs2}
	\| \bw^{l+1}- \bw^{l} \|_{L_x^{2}} \lesssim 2^{-(s-1)l} c_l,
\end{equation}
and
\begin{equation}\label{dbs1}
	\| \bw^{l+1}- \bw^{l} \|_{\dot{H}_x^{s_0}} \lesssim c_l,
\end{equation}
where $c_l=\sum^{3}_{\alpha=0}c^{(a)}_l +c^{(4)}_l + \sum^{3}_{\alpha=0}d^{(\alpha)}_l$. By using \eqref{pqr1} and Proposition \ref{p3}, then the estimate \eqref{dbsh} holds. Let us postpone to prove \eqref{dbs}-\eqref{dbs1}. By using
\begin{equation*}
	h- h^{l}=\sum^\infty_{k=l}( h^{k+1}-h^k ),
\end{equation*}
and the estimate \eqref{dbs}, we can conclude
\begin{equation}\label{ei0}
	\| h- h^{l} \|_{H_x^{s}} \lesssim c_{\geq l}.
\end{equation}
Here $c_{\geq l}=\sum_{j \geq l}c_j$. Similarly, by using \eqref{dbs}, \eqref{dbs2} and \eqref{dbs1}, we also have
\begin{equation}\label{ei1}
	\|  \bu- \bu^{l}\|_{H_x^{s}} \lesssim c_{\geq l}, \quad \| \bw- \bw^{l} \|_{H_x^{s_0}} \lesssim c_{\geq l}.
\end{equation}

\textit{Step 3: Convergence of the highest derivatives}. For the initial data $(\bu_{0m}, h_{0m}, \bw_{0m})$ and $\bu_{0m}=(u^0_{0m},u^1_{0m},u^2_{0m},u^3_{0m})$, let $\{ c^{(i)m}_{k}\}_{k\geq 0}$ be frequency envelopes for the initial data $u^{i}_{0m}$ in $H^s (i=1,2,3)$. Let $\{ c^{(4)m}_{k}\}_{k\geq 0}$ be frequency envelopes for $u^0_{0m}-1$ in $H^{s}$. Let $\{ d^{(\alpha)m}_{k}\}_{k\geq 0}$ be frequency envelopes for the initial data $w^\alpha_{0m}$ in $H^{s_0}(\alpha=0,1,2,3)$. We choose a family of regularizations $(u^{1l}_{0m}, u^{2l}_{0m}, u^{3l}_{0m}, h^{l}_{0m})\in \cap^\infty_{a=0}H^a$ at frequencies $\lesssim 2^l$. Denote
\begin{equation*}
	u^{0l}_{0m}=\sqrt{1+|\mathring{\bu}^{l}_{0m}|^2}, \quad \bu^{l}_{0m}=(u^{0l}_{0m},\mathring{\bu}^{l}_{0m})^{\mathrm{T}}, \quad \bw^l_{0m}=\mathrm{vort}(\mathrm{e}^{h^{l}_{0m}}\bu^l_{0m}).
\end{equation*}
Then, $\bu^{l}_{0m}$, $h^{l}_{0m}$, and $\bw^l_{0m}$ satisfy

(1)  uniform bounds
\begin{equation*}
	\begin{split}
		& \| P_k (u^{0l}_{0m}-1) \|_{H_x^s} \lesssim c^{(0)m}_k, \quad \| P_k u^{il}_{0m} \|_{H_x^{s}} \lesssim c^{(i)m}_k, \ \quad i=1,2,3,
		\\
		&  \| P_k h^{l}_{0m} \|_{H_x^s} \lesssim c^{(4)m}_k, \qquad \| P_k w^{\alpha l}_{0m} \|_{H_x^{s_0}} \lesssim d^{(\alpha)m}_k,\qquad \alpha=0,1,2,3,
	\end{split}
\end{equation*}

(2)  high frequency bounds
\begin{equation*}
	\begin{split}
		& \|P_k   ( u^{0 l}_{0m} -1) \|_{H_x^{s+1}} \lesssim 2^{k}c^{(0)}_k, \quad \|P_k   u^{i l}_{0m} \|_{H_x^{s+1}} \lesssim 2^{k}c^{(i)}_k, \quad i=1,2, 3,
		\\
		& \|P_k   h^{l}_{0m} \|_{H_x^{s+1}} \lesssim 2^{k}c^{(4)}_k, \qquad  \|P_k w^{\alpha l}_{0m} \|_{H_x^{s_0}} \lesssim 2^{k}d^{(\alpha)}_k,\qquad \ \alpha=0,1,2,3,
	\end{split}
\end{equation*}

(3)  difference bounds
\begin{equation*}
	\begin{split}
		&\|  u^{\alpha(l+1)}_{0m}- u^{\alpha l}_{0m}\|_{L_x^{2}} \lesssim 2^{-sl}c^{(\alpha )m}_l, \quad \ \ \alpha=0, 1,2,3,
		\\
		&  \|  w^{\alpha (l+1)}_{0m}- w^{\alpha l}_{0m}\|_{L_x^{2}} \lesssim 2^{-s_0l}d^{(\alpha)m}_l, \quad \alpha=0, 1,2,3,
		\\
		&\| h^{l+1}_{0m}- h^{l}_{0m} \|_{L_x^{2}} \lesssim 2^{-sl}c^{(4)m}_l,
	\end{split}
\end{equation*}

(4)  limit
\begin{equation}\label{SSS}
	\begin{split}
		&\lim_{l\rightarrow \infty}(h^l_{0m},\bu_{0m}^l,\bw_{0m}^l) =(h_{0m},\bu_{0m},\bw_{0m}) \  \mathrm{in} \ H^s\times H^s \times H^{s_0}.
	\end{split}
\end{equation}
By Proposition \ref{p3}, we can obtain a family of smooth solutions $(h_j^{l}, \bu_j^{l}, \bw_j^l)$ on a time interval\footnote{For the initial sequence is uniformly bounded, then the time interval $[0,T]$ can be independent for $j$ and $l$.} $[0,T]$ with the initial data $ (h_{0m}^{l}, \bu_{0m}^{l}, \bw_{0m}^{l})$. Let us now prove
\begin{equation*}
	\lim_{m\rightarrow \infty} h_m = h,  \quad \mathrm{in} \ \ H^s_x.
\end{equation*}
Inserting $h^{l}_m$ into $h_{m}-h$, we have
\begin{equation}\label{cd1}
	\begin{split}
		\|h_{m}-h\|_{H_x^s}
		\leq & \|h^{l}_m-h^{l}\|_{H_x^s}+ \|h^{l}-h\|_{H_x^s}+ \|h^{l}_m-h_m\|_{H_x^s}.
	\end{split}
\end{equation}
Due to \eqref{SSS}, we have
\begin{equation}\label{cd4}
	\lim_{m\rightarrow \infty}h^l_{0m}= h^l_0 \quad  \mathrm{in} \ {H^\sigma_x}, \ 0\leq \sigma < \infty.
\end{equation}
Using similar idea of the proof of \eqref{er0}, we can derive that
\begin{equation}\label{cd5}
	\lim_{m\rightarrow \infty}h^l_{m}= h^l \quad  \mathrm{in} \ {H^\sigma_x}, \ 0 \leq \sigma < \infty.
\end{equation}
From \eqref{cd4}, we have
\begin{equation}\label{cd6}
	c^{(4)j}_{k} \rightarrow c^{(4)}_{k} , \ j\rightarrow \infty.
\end{equation}
By \eqref{ei0},  we then bound \eqref{cd1} by
\begin{equation}\label{cd3}
	\begin{split}
		\|h_{m}-h\|_{H_x^s}
		\lesssim \|h^{l}_m-h^{l}\|_{H_x^s}+ c_{\geq l}+ c^{(4)m}_{\geq l},
	\end{split}
\end{equation}
Taking the limit of \eqref{cd3} for $j\rightarrow \infty$, by using \eqref{cd5} and \eqref{cd6}, it leads to
\begin{equation}\label{Cd3}
	\begin{split}
		\lim_{m\rightarrow \infty }\|h_{m}-h\|_{H_x^s}
		\lesssim c_{\geq l}+ c^{(4)}_{\geq l} \lesssim c_{\geq l}.
	\end{split}
\end{equation}
Finally, taking $l\rightarrow \infty$ for \eqref{Cd3} and using $\lim_{l\rightarrow \infty} c_{\geq l}=0$, we have
\begin{equation*}
	\begin{split}
		\lim_{m\rightarrow \infty}\|h_{m}-h\|_{H_x^s}=0.
	\end{split}
\end{equation*}
Similarly, by using \eqref{dbs}, \eqref{dbs2}, and \eqref{dbs1}, we can also obtain
\begin{equation*}
	\lim_{m\rightarrow \infty}\|\bu_{m}-\bu\|_{H_x^s}=0, \quad
	\lim_{m\rightarrow \infty}\|\bw_{m}-\bw\|_{H_x^{s_0}}=0.
\end{equation*}
\quad \textit{Step 4: Proof of \eqref{dbsh}-\eqref{dbs1}}.
	Firstly, we can obtain \eqref{dbsh} by energy estimates and Strichartz estimate in Proposition \ref{r5}. To prove \eqref{dbs}-\eqref{dbs1}, we also need the hyperbolic system to discuss $\bu^{l+1}-\bu^l$ and $h^{l+1}-h^l$. But for $\bw^{l+1}-\bw^l$, we use it's transport equations of $\bw^{l+1}$ and $\bw^l$. However, if we consider the difference terms $(\bu^{l+1}-\bu^l, h^{l+1}-h^l, \bw^{l+1}-\bw^l)$, then the original system is destroyed or it's a disturbed system. To avoid the loss of derivatives of $\bu^{l+1}-\bu^l$ and $h^{l+1}-h^l$, the Strichartz estimates in Proposition \ref{r5} plays a key role. While, the difference $\bw^{l+1}-\bw^l$ is weaker than $\bv^{l+1}-\bv^l$ for there is no dispersion.

Let $\bU^{l}=(p(h^l), u^{1l}, u^{2l}, u^{3l})^\mathrm{T} $. Then $\bU^{l+1}-\bU^{l}$ satisfies
\begin{equation}\label{Fhe}
	\begin{cases}
		& A^0(\bU^{l+1}) \partial_t ( \bU^{l+1}- \bU^{l}) + A^i(\bU^{l+1}) \partial_i ( \bU^{l+1}- \bU^{l})=\Pi^l,
		\\
		& ( \bU^{l+1}-\bU^{l} )|_{t=0}= \bU_0^{l+1}-\bU_0^{l},
	\end{cases}
\end{equation}
where
\begin{equation}\label{Fh}
	\Pi^l=-[A^0(\bU^{l+1})-A^0(\bU^{l}) ]\partial_t  \bU^{l}- [A^i(\bU^{l+1})-A^i(\bU^{l}) ]\partial_i  \bU^{l}.
\end{equation}
Multiplying with $\bU^{l+1}-\bU^{l}$, integrating it on $[0,T] \times \mathbb{R}^3$, and using Strichartz estimates and Gronwall's inequality, we then get
\begin{equation}\label{fff1}
	\begin{split}
		\| \bU^{l+1}-\bU^{l} \|_{L^2_x} \lesssim \| \bU^{l+1}_0-\bU^{l}_0 \|_{L^2_x} \exp\left( \int^T_0  \| d\bU^{l+1}, d\bU^{l} \|_{L^\infty_x} d\tau \right) \lesssim 2^{-sl} c_l.
	\end{split}
\end{equation}
Hence, we have
\begin{equation}\label{fff2}
	\begin{split}
		\|h^{l+1}-h^l, \mathring{\bu}^{l+1}-\mathring{\bu}^{l} \|_{L^2_x} \lesssim 2^{-sl} c_l.
	\end{split}
\end{equation}
For $u^{0l}=\sqrt{1+ |\mathring{\bu}^l|^2}$, then we have
\begin{equation}\label{fff3}
	\begin{split}
		\| u^{0(l+1)}-u^{0l} \|_{L^2_x} \lesssim 2^{-sl} c_l.
	\end{split}
\end{equation}
So we have already proved \eqref{dbs}. For $0< t \leq T$, we assume
	\begin{equation}\label{lyr1}
		\begin{split}
			& \| \bw^{l+1}- \bw^{l} \|_{L^\infty_{[0,T]}L_x^{2}} \leq 10C 2^{-(s-1)l} c_l,\qquad \| \bw^{l+1}- \bw^{l} \|_{L^\infty_{[0,T]}\dot{H}_x^{s_0}} \leq 10C c_l,
		\end{split}
	\end{equation}
	and
	\begin{equation}\label{lyr2}
		\begin{split}
			& 2^l \|  \bu^{l+1}-  \bu^{l}, h^{l+1}-  h^{l} \|_{L^2_{[0,T]} L^\infty_x}
		 + 2^l\|  \bu^{l+1}-  \bu^{l}, h^{l+1}-  h^{l} \|_{L^2_{[0,T]} \dot{B}^{s_0-2}_{\infty,2}}  \leq Cc_l.
		\end{split}
	\end{equation}
By using Lemma \ref{tr0}, we get
\begin{equation}\label{lyr3}
	\begin{split}
		& \partial_t (\bw^{l+1}-\bw^l) + \left( (u^{0(l+1)})^{-1} \mathring{\bu}^{l+1} \cdot \nabla \right) (\bw^{l+1}-\bw^l)
		\\
		=&\left\{  \left( (u^{0(l+1)})^{-1} \mathring{\bu}^{l+1}-(u^{0l})^{-1} \mathring{\bu}^l \right) \cdot \nabla \right\}  \bw^l
		\\
		&-\left(  (u^{0(l+1)})^{-1} {\bu}^{l+1}-(u^{0l})^{-1} {\bu}^l \right) w^{\kappa(l+1)} \partial_\kappa h^{l+1}
		\\
		& -(u^{0l})^{-1} {\bu}^l ( w^{\kappa(l+1)}- w^(\kappa l)) \partial_\kappa h^{l+1}-(u^{0l})^{-1} {\bu}^l w^{\kappa l} \partial_\kappa ( h^{l+1} - h^l)
		\\
		& + \left( (u^{0(l+1)})^{-1}-(u^{0l})^{-1} \right) w^{\kappa(j+1)} \partial_\kappa  \bu^{l+1}+(u^{0l})^{-1} (w^{\kappa(l+1)}-w^{\kappa l}) \partial_\kappa  \bu^{l+1}
		\\
		&+(u^{0l})^{-1} w_{\kappa l} \partial_\kappa ( \bu^{l+1}-\bu^l)
		-\left( (u^{0(l+1)})^{-1} - (u^{0l})^{-1} \right) \bw^{l+1} \partial_\kappa u^{\kappa(j+1)}
		\\
		&-(u^{0l})^{-1} ( \bw^{l+1} - \bw^l) \partial_\kappa u^{\kappa(l+1)}
		-	(u^{0l})^{-1} \bw^l \partial_\kappa ( u^{\kappa(l+1)}- u^{\kappa l})
	\end{split}
\end{equation}
Multiplying \eqref{lyr3} with $\bw_{j+1}- \bw_{j}$ and integrating on $[0,T]\times \mathbb{R}^3$, we obtain
\begin{equation}\label{lyr4}
	\begin{split}
		& \| \bw^{l+1}- \bw^{l} \|^2_{L^\infty_{[0,T]} L^2_x}
		\\
		\leq  & \| \bw^{l+1}_0- \bw_{0}^l\|^2_{L^2_x}+ C\textstyle{\int}^{T}_0  \| \bw^{l+1}-\bw^l \|^2_{L^2_x}\| d h^{l+1},d\bu^{l+1}\|_{L^\infty_x} d\tau
		\\
		& + C\textstyle{\int}^{T}_0  \| \bu^{l+1}-\bu^l \|_{L^2_x}\| \nabla \bw^{l}\|_{L^\infty_x} \| \bw^{l+1}- \bw^{l} \|_{ L^2_x}d\tau
		\\
		& + C\textstyle{\int}^{T}_0 \| \bu^{l+1}-\bu^l\|_{L^2_x} \| \bw^{l+1}\|_{L^\infty_x}\|d h^{l+1},d\bu^{l+1} \|_{L^\infty} \| \bw^{l+1}- \bw^{l} \|_{ L^2_x}d\tau
		\\
		&+ C \textstyle{\int}^{T}_0 \| \bw^{l}\|_{L^\infty_x} \|d (h^{l+1}-h^l),d (\bu^{l+1}-\bu^l) \|_{L^2_x} \| \bw^{l+1}- \bw^{l} \|_{ L^2_x}d\tau.
	\end{split}
\end{equation}
By H\"older's inequality, we can bound \eqref{lyr4} by
\begin{equation}\label{lyr5}
	\begin{split}
		& \| \bw^{l+1}- \bw^{l} \|^2_{L^\infty_{[0,T]} L^2_x}
		\\
		\leq  & \| \bw^{l+1}_0- \bw_{0}^l\|^2_{L^2_x}+ C\textstyle{\int}^{T}_0  \| \bw^{l+1}-\bw^l \|^2_{L^2_x}\| d h^{l+1},d\bu^{l+1}\|_{L^\infty_x} d\tau
		\\
		& + C\textstyle{\int}^{T}_0  \| \bu^{l+1}-\bu^l \|^2_{L^2_x} ( \| \nabla \bw^{l}\|^2_{L^\infty_x} + \| \bw^{l+1}\|^2_{L^\infty_x}\|d h^{l+1},d\bu^{l+1} \|^2_{L^\infty}) d\tau
		\\
		&+ C \textstyle{\int}^{T}_0 \| \bw^{l}\|^2_{L^\infty_x}  \|\nabla (h^{l+1}-h^l),\nabla (\bu^{l+1}-\bu^l) \|^2_{L^2_x} d\tau.
	\end{split}
\end{equation}
Due to \eqref{lyr1} and \eqref{lyr2}, it yields
\begin{equation}\label{lyr6}
	\begin{split}
		\| \bw^{l+1}- \bw^{l} \|^2_{L^\infty_{[0,T]} L^2_x}
		\leq  & C^2 (2^{-sl}c_l)^2+ 100 C^2  (2^{-sl}c_l)^2 2^{-\frac{l}{2}}
		+ C^2 (2^{-(s-1)l}c_l)^2
		\\
		&  + C^2 \textstyle{\int}^{T}_0  \| \bw^{l+1}-\bw^l \|^2_{L^2_x}\| d h^{l+1},d\bu^{l+1}\|_{L^\infty_x} d\tau.
	\end{split}
\end{equation}
Using Gronwall's inequality for \eqref{lyr6}, we then get
\begin{equation}\label{lyr7}
	\| \bw^{l+1}- \bw^{l} \|_{L^\infty_{[0,T]} L_x^{2}} \leq 5C 2^{-(s-1)l} c_l.
\end{equation}
Similarly, we can also prove
\begin{equation}\label{lyr8}
	\| \bw^{l+1}- \bw^{l} \|_{L^\infty_{[0,T]}\dot{H}_x^{s_0}} \leq 5C c_l,
\end{equation}
and
\begin{equation}\label{lyr9}
\begin{split}
& 2^l \|  \bu^{l+1}-  \bu^{l}, h^{l+1}-  h^{l} \|_{L^2_{[0,T]} L^\infty_x}
+ 2^l\|  \bu^{l+1}-  \bu^{l}, h^{l+1}-  h^{l} \|_{L^2_{[0,T]} \dot{B}^{s_0-2}_{\infty,2}}  \leq \frac{C}{2}c_l.
\end{split}
\end{equation}
Therefore, \eqref{dbs2}-\eqref{dbs1} hold. At this stage, we have finished the whole proof of Corollary \ref{cv}.
\end{proof}
\section{Proof of Theorem \ref{dingli2}}\label{Sec5}
Our goal is to prove Theorem \ref{dingli2}. As a first step, we reduce the proof to Proposition \ref{DDL2}. Secondly, we prove Proposition \ref{DDL2} through Proposition \ref{DDL3}. Finally, we use Proposition \ref{p1} to obtain Proposition \ref{DDL3}.
\subsection{Proof of Theorem \ref{dingli2} by Proposition \ref{DDL2} }\label{keypra}
In Theorem \ref{dingli2}, it includes the existence, uniqueness, and continuous dependence of solutions. The key part for us is to prove the existence of solutions. To do that, we construct a sequence of smooth solutions by frequency truncation. After that, we prove the sequence converges in some Sobolev spaces. So the limit of this sequence is a solution. To find this sequence, for $j \in \mathbb{Z}^{+}$, we introduce the initial sequence $(\bu_{0j},h_{0j})$ by
\begin{equation}\label{Dss0}
	\begin{split}
			& \mathring{\bu}_{0j}=P_{\leq j}\mathring{\bu}_0, \quad u^0_{0j}=\sqrt{1+|\mathring{\bu}_{0j}|^2 },
			\\
			& h_{0j}=P_{\leq j}h_0, \quad \bu_{0j}=(u^0_{0j},\mathring{\bu}_{0j}),
	\end{split}
\end{equation}
where $h_0$ and $\mathring{\bu}_0$ is stated as \eqref{chuzhi3} in Theorem \ref{dingli2}, and $P_{\leq j}=\textstyle{\sum}_{k\leq j}P_k$. Following \eqref{VVd}, so we define
\begin{equation}\label{Qss0}
	\bw_{0j}=\mathrm{vort}( \mathrm{e}^{h_{0j}} \bu_{0j}).
\end{equation}
Calculate
\begin{equation*}
	\begin{split}
	w^\alpha_{0j}=&	\epsilon^{\alpha \beta \gamma \eta} \mathrm{e}^{h_{0j}}u_{0\beta j}\partial_\gamma u_{0 \eta j}
	\\
	=& \mathrm{e}^{h_{0j}}u_{0\beta j} ( \epsilon^{\alpha \beta \gamma \eta} \partial_\gamma u_{0 \eta j} )
	\\
	=& \mathrm{e}^{h_{0j}}u_{0\beta j} P_{\leq j }\left( \mathrm{e}^{h_0}w^\alpha_0 u^\beta_0- \mathrm{e}^{h_0}u^\alpha_0 w^\beta_0 \right).
	\end{split}
\end{equation*}
By H\"older's inequality, then we have
\begin{equation}\label{Ess0}
	\begin{split}
		\|\bw_{0j}\|_{H^2}
		\leq & C\|\bw_{0}\|_{H^2}( 1+ \|\rho_0\|^4_{H^2}+ \|\nabla {\bu}_{0}\|^4_{H^1} )
		\\
		\leq & C(M_*+M^5_*).
	\end{split}
\end{equation}
Adding \eqref{Dss0} and \eqref{Ess0}, we can see
\begin{equation}\label{pu0}
	\|\bu_{0j}\|_{H^s}+ \|h_{0j}\|_{H^s}+ \| \bw_{0j} \|_{H^2} \leq C(M_*+M^5_*)=E(0).
\end{equation}
Using \eqref{HEw} and \eqref{Dss0}, we get
\begin{equation}\label{pu00}
	| \mathring{\bu}_{0j}, h_{0j} | \leq C_0, \quad 1\leq u^0_{0j}\leq 1+C_0, \quad c_s|_{t=0}\geq c_0>0.
\end{equation}

Before we give a proof of Theorem \ref{dingli2}, let us now introduce Proposition \ref{DDL2}.
\begin{proposition}\label{DDL2}
	Let $s$ be stated in Theorem \ref{dingli2} and \eqref{HEw}-\eqref{chuzhi3} hold. For each $j\geq 1$, consider Cauchy problem \eqref{WTe} with the initial data $(h_{0j}, \bu_{0j}, \bw_{0j})$. Then for all $j \geq 1$, there exists two positive constants $T^{*}>0$ and ${M}_{2}>0$ ($T^*$ and ${M}_{2}$ only depends on $s,C_0, c_0$ and ${M}_{*}$) such that \eqref{WTe} has a unique solution $(h_{j},\bu_{j},\bw_j)$ satisfying $(h_{j},u^0_j-1,\mathring{\bu}_{j})\in C([0,T^*];H_x^s)\cap C^1([0,T^*];H_x^{s-1})$, $\bw_{j}\in C([0,T^*];H_x^2) \cap C^1([0,T^*];H_x^1)$. To be precise,

	$\mathrm{(1)}$ the solution $h_j, \bu_j$ and $\bw_j$ satisfy the energy estimates
	\begin{equation}\label{Duu0}
		\|h_j\|_{L^\infty_{[0,T^*]}H_x^{\sstar}}+\|(u^0_j-1,\mathring{\bu}_j)\|_{L^\infty_{[0,T^*]}H_x^{\sstar}}+ \|\bw_j\|_{L^\infty_{[0,T^*]}H_x^{2}} \leq {M}_{2},
	\end{equation}
	and
	\begin{equation}\label{Duu1}
		\|\partial_t \bu_j, \partial_th_j\|_{L^\infty_{[0,T^*]}H_x^{\sstar-1}}+ \|\partial_t \bw_j\|_{L^\infty_{[0,T^*]}H_x^{1}} \leq {M}_{2},
	\end{equation}
	and
	\begin{equation}\label{Duu00}
		\|h_j, u^0_j-1,\mathring{\bu}_j\|_{L^\infty_{[0,T^*]\times \mathbb{R}^3}} \leq 2+C_0, \quad u^0_j \geq 1.
	\end{equation}
	
	$\mathrm{(2)}$ the solution $h_j$ and $\bu_j$ satisfy the Strichartz estimate
	\begin{equation}\label{Duu2}
		\|dh_j, d\bu_j\|_{L^2_{[0,T^*]}L_x^\infty} \leq {M}_{2}.
	\end{equation}

	$\mathrm{(3)}$ for $\frac{s}{2} \leq r \leq 3$, consider the following linear wave equation
	\begin{equation}\label{Duu21}
		\begin{cases}
			\square_{{g}_j} f_j=0, \qquad [0,T^*]\times \mathbb{R}^3,
			\\
			(f_j,\partial_t f_j)|_{t=0}=(f_{0j},f_{1j}),
		\end{cases}
	\end{equation}
	where $(f_{0j},f_{1j})=(P_{\leq j}f_0,P_{\leq j}f_1)$ and $(f_0,f_1)\in H_x^r \times H^{r-1}_x$. Then there is a unique solution $f_j$ on $[0,T^*]\times \mathbb{R}^3$. Moreover, for $a\leq r-\frac{s}{2}$, we have
	\begin{equation}\label{Duu22}
		\begin{split}
			&\|\left< \nabla \right>^{a-1} d{f}_j\|_{L^2_{[0,T^*]} L^\infty_x}
			\leq  {M}_3(\|{f}_0\|_{{H}_x^r}+ \|{f}_1 \|_{{H}_x^{r-1}}),
			\\
			&\|{f}_j\|_{L^\infty_{[0,T^*]} H^{r}_x}+ \|\partial_t {f}_j\|_{L^\infty_{[0,T^*]} H^{r-1}_x} \leq  {M}_3 (\| {f}_0\|_{H_x^r}+ \| {f}_1\|_{H_x^{r-1}}),
		\end{split}
	\end{equation}
	where ${M}_3$ is a constant depends on $C_0, c_0, M_*, s$.
\end{proposition}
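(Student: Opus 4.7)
The approach follows the ``semi-classical summation'' strategy pioneered by Ai--Ifrim--Tataru and adapted by Andersson--Zhang, which converts the short time Strichartz window produced by Theorem \ref{dingli} into a Strichartz estimate with loss of derivatives on a $j$-independent time interval. The starting point is that the truncated data $(h_{0j},\bu_{0j},\bw_{0j})$ is smooth, with uniform bounds $\|(h_{0j},u^0_{0j}-1,\mathring{\bu}_{0j})\|_{H^s}\le CM_*$ and $\|\bw_{0j}\|_{H^2}\le C(M_*+M_*^5)$ by \eqref{pu0}, but the higher Sobolev norms grow like $2^{(s_0-s)j}$ and $2^{(s_0-2)j}$. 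Hence a naive application of Theorem \ref{dingli} with regularity $(s',s_0')\in(2,5/2)^2$ only yields an existence time $T_j$ that collapses as $j\to\infty$, and this is precisely the obstacle that the dyadic summation must overcome.

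The core plan is a continuous induction (bootstrap) on $[0,T^*]$ for the quantities that appear in \eqref{Duu0}--\eqref{Duu2}. I would fix $s_0\in(2,s)$ close to $s$ and introduce a semi-classical time scale $\tau_j=2^{-\delta_0 j}$ with $\delta_0=\delta_0(s,s_0)$ chosen so that Theorem \ref{dingli}, applied on an interval of length $\tau_j$ to the rescaled data, produces Strichartz bounds of size $O(1)$ independent of $j$. Concretely, I would partition $[0,T^*]$ into $N\simeq T^*/\tau_j$ adjacent intervals $I_k=[t_k,t_{k+1}]$. On each $I_k$, the ``initial'' data for the nonlinear problem satisfies $\|\bw_j(t_k)\|_{H^{s_0}}\lesssim 2^{(s_0-2)j}\mathcal{B}$, where $\mathcal{B}$ is the bootstrap constant for the $H^2$-norm of $\bw_j$; hence Theorem \ref{dingli}(\ref{point:2}) gives
\begin{equation*}
\|d\bu_j,dh_j\|_{L^2_{I_k}L^\infty_x}+\|d\bu_j,dh_j\|_{L^2_{I_k}\dot{B}^{s_0-2}_{\infty,2}}\le M_1,
\end{equation*}
uniformly in $j$ and $k$ after the rescaling. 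Squaring and summing over $k$ costs a factor $N^{1/2}\simeq \tau_j^{-1/2}$, but combined with the frequency-localised Bernstein gain $2^{-(s_0-2)j}$ for the high-frequency pieces this yields a global Strichartz estimate
\begin{equation*}
\|d\bu_j,dh_j\|_{L^2_{[0,T^*]}L^\infty_x}\le M_2,
\end{equation*}
which is the desired \eqref{Duu2} with a loss of derivatives absorbed by Littlewood--Paley decomposition. Feeding this bound into Theorem \ref{VEt} closes the bootstrap for \eqref{Duu0}--\eqref{Duu1}, and \eqref{Duu00} follows from the fundamental theorem of calculus once $T^*$ is small enough so that $T^{*1/2}M_2\le 1$.

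For part (3), the linear equation \eqref{Duu21} is handled via a parallel dyadic argument. On each short interval $I_k$ I would apply the smooth-data Strichartz estimates of Proposition \ref{p3}(2) (whose hypotheses are met because $g_j$ is smooth and its coefficients are controlled by $(h_j,\bu_j)$, already estimated in part (1)), obtaining $\|\langle\nabla\rangle^{a'-1}df_j\|_{L^2_{I_k}L^\infty_x}\lesssim\|f_{0j}\|_{H^{r'}}+\|f_{1j}\|_{H^{r'-1}}$ for $a'<r'-1$ with $r'=s'+1$. Taking $r'=s/2+r$ and $a'=r-s/2+1$, the loss $r'-1-a'=s/2-\varepsilon$ is exactly the loss absorbed by the Bernstein factor obtained from summing the $N\simeq 2^{\delta_0 j}$ intervals. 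Standard energy estimates for the wave equation $\square_{g_j}f_j=0$ with smooth coefficients give the $L^\infty_tH^r_x$ bound in \eqref{Duu22} directly via Gronwall, once $\|dg_j\|_{L^1_tL^\infty_x}$ is controlled by \eqref{Duu2}.

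The main obstacle will be book-keeping the semi-classical summation so that the loss of $N^{1/2}$ from adding $L^2$ norms over $N$ sub-intervals is balanced exactly by the gain coming from the frequency localisation of the data (and hence of the rescaled coefficients). This is delicate because the nonlinear coefficients $g_j$ appearing on $I_k$ are not themselves frequency localised --- one has to argue that only the high-frequency component of $g_j$ produces a contribution above the threshold $\tau_j^{-1/2}$, and that this contribution is controlled by the $H^2$-norm of $\bw_j$ via the elliptic representations of Lemma \ref{vor1}, Lemma \ref{vor2}, and the wave equation \eqref{UM} for $\bu_+$. Uniqueness on $[0,T^*]$ then follows from Corollary \ref{cor2} once the Strichartz bound \eqref{Duu2} is available.
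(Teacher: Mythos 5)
Your proposal correctly identifies the target (a Strichartz estimate with loss of derivatives on a $j$-independent interval, obtained from semi-classical time scales $T^*_j\sim 2^{-\delta_1 j}$) but the mechanism you describe for the summation does not close, and the missing piece is the central idea of the paper's proof. You propose to partition $[0,T^*]$ into $N\simeq T^*/\tau_j$ subintervals, apply the short-time Strichartz estimate to the \emph{single} solution $(h_j,\bu_j)$ on each piece, and sum, claiming the $N^{1/2}$ loss is offset by a Bernstein gain ``for the high-frequency pieces.'' But $d\bu_j$ is not frequency localised, and for its low-frequency part ($P_k$ with $k<j$) there is no Bernstein gain at all: summing $N\simeq 2^{\delta_1 j}$ unit-size contributions gives $2^{\delta_1 j/2}\to\infty$. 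You acknowledge this difficulty in your last paragraph but offer no mechanism to resolve it. The paper's resolution is structurally different: it telescopes over the whole \emph{family} of truncated solutions, writing $P_k d\bu_j=P_k d\bu_k+\sum_{m=k}^{j-1}P_k(d\bu_{m+1}-d\bu_m)$, and controls each difference by the $L^2$ difference energy estimates \eqref{yu0}--\eqref{yub} interpolated against the uniform $H^2$ bound on $\bw_{m+1}-\bw_m$, yielding a factor $2^{-6\delta_1 m}$ (cf. \eqref{see99}--\eqref{Sia}). That geometric decay in $m$, not a Bernstein gain, is what pays for the growth $2^{\delta_1 l}$ accumulated in the step-by-step extension from $[0,T^*_j]$ to $[0,T^*_{N_0}]$ carried out by induction in Subsection \ref{finalk}; and the term $P_kd\bu_k$ needs no extension at all for $k\le N_0$ since it already lives on $[0,T^*_k]\supseteq[0,T^*_{N_0}]$. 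Without the difference estimates between consecutive solutions, the low-frequency summation cannot be closed.

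Two further points. First, your claim that Theorem \ref{dingli}(\ref{point:2}) applied on $I_k$ with data satisfying $\|\bw_j(t_k)\|_{H^{s_0}}\lesssim 2^{(s_0-2)j}\mathcal{B}$ gives a bound ``uniformly in $j$ and $k$ after the rescaling'' conflates two incompatible steps: $M_1$ depends on the data size $M_0$, so with data of size $2^{(s_0-2)j}$ the constant is not uniform, and the rescaling that restores uniformity is exactly what shrinks the interval to length $T^*_j$ --- the paper in fact routes this through Proposition \ref{DDL3} (small, rescaled, localised data with $\bw_0\in H^{2+\delta_1}$, where the smallness of $\|\underline{\bw}_{0j}\|_{\dot H^{2+\delta_1}}$ is purchased precisely by the extra factor $2^{-\delta_1 j}$ in $T^*_j$, see \eqref{Dss2}--\eqref{pp7}). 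Second, the same gap recurs in your treatment of part (3): $f_j$ is not frequency localised either, and the paper again controls it by telescoping over $f_m$, using $\square_{g_{m+1}}(f_{m+1}-f_m)=(g^{\alpha i}_{m+1}-g^{\alpha i}_m)\partial_{\alpha i}f_m$, Duhamel, and the smallness of $\|g_{m+1}-g_m\|_{L^1_tL^\infty_x}$ coming from the nonlinear difference bounds (see \eqref{ru08}--\eqref{ru12}); your proposed balancing of the derivative loss $r'-1-a'$ against ``the Bernstein factor obtained from summing the $N$ intervals'' does not correspond to an actual estimate.
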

Based on Proposition \ref{DDL2}, we are now ready to prove Theorem \ref{dingli2}.
\medskip\begin{proof}[Proof of Theorem \ref{dingli2}] For there are four points in Theorem \ref{dingli2}, so we divide the proof into four steps, The following Step 1 and Step 2 is for Point 1 and Point 2 in Theorem \ref{dingli2}. Step 3 is for giving a proof to Point 3. At last, we prove Point 4 in Step 4.

	\textit{Step 1: Obtaining a solution of Theorem \ref{dingli2} in a weaker spaces by taking a limit.}
	If we set
	\begin{equation*}
		\bU_j=(p(h_j), u^1_j, u^2_j, u^3_j)^{\mathrm{T}},
	\end{equation*}
	by Lemma \ref{QH} and \eqref{CEQ}, for $j \in \mathbb{N}^{+}$ we have
	\begin{equation*}
		\begin{split}
			& A^0 (\bU_j) \partial_t \bU_j+ A^a (\bU_j) \partial_a \bU_j=0,
			\\
				& \partial_t w_j^\alpha + (u_j^0)^{-1} u_j^a \partial_a w_j^\alpha + (u_j^0)^{-1} u_j^\alpha w_j^\kappa \partial_\kappa h_j- (u_j^0)^{-1} w_j^\kappa \partial_\kappa u_j^\alpha+ (u_j^0)^{-1} w_j^\alpha \partial_\kappa u_j^\kappa=0.
		\end{split}
	\end{equation*}
	Then for any $j, l \in \mathbb{N}^{+}$, we have
	\begin{align}\label{ur}
			A^0 (\bU_j) \partial_t (\bU_j-\bU_l)+ A^a (\bU_j) \partial_a (\bU_j-\bU_l)=&-\{ A^\alpha (\bU_j)-A^\alpha (\bU_l)\} \partial_\alpha \bU_l,
			\\
			\nonumber
			\partial_t (w_j^\alpha-w_l^\alpha) + (u_j^0)^{-1} u_j^a \partial_a (w_j^\alpha-w_l^\alpha)=& \{ (u_j^0)^{-1}u_j^a-(u_l^0)^{-1} u_l^a\} \partial_a w_l^\alpha
			\\
			\label{ur0}
			& -(u_j^0)^{-1} u_j^\alpha w_j^\kappa \partial_\kappa h_j+(u_l^0)^{-1} u_l^\alpha w_l^\kappa \partial_\kappa h_l
			\\
			\nonumber
			& + (u_j^0)^{-1} w_j^\kappa \partial_\kappa u_j^\alpha-(u_l^0)^{-1} w_l^\kappa \partial_\kappa u_l^\alpha
			\\
			\nonumber
			& -(u_j^0)^{-1} w_j^\alpha \partial_\kappa u_j^\kappa+(u_l^0)^{-1} w_l^\alpha \partial_\kappa u_l^\kappa.
	\end{align}
	Using the estimates \eqref{Duu0}-\eqref{Duu2} in Proposition \ref{DDL2}, we can show that
	\begin{equation*}
	\small	\| \bU_{j}-\bU_{l}\|_{L^\infty_{[0, T^*]}H^1_x}+\| \bw_{j}-\bw_{l}\|_{L^\infty_{[0, T^*]}H^1_x} \leq C_{{M}_2} (\| \bu_{0j}-\bu_{0l}\|_{H^{\sstar}}+ \|h_{0j}-h_{0l}\|_{H^{\sstar}}+ \| \bw_{0j}-\bw_{0l}  \|_{H^{2}}).
	\end{equation*}
	Here $C_{{M}_2}$ is a constant depending on ${M}_2$. By Lemma \ref{jh0}, so we get
	\begin{equation*}
	\small	\| \mathring{\bu}_{j}-\mathring{\bu}_{l},h_{j}-h_{l}\|_{L^\infty_{[0, T^*]}H^1_x}+\| \bw_{j}-\bw_{l}\|_{L^\infty_{[0, T^*]}H^1_x} \leq C_{{M}_2} (\| \bu_{0j}-\bu_{0l}\|_{H^{\sstar}}+ \|h_{0j}-h_{0l}\|_{H^{\sstar}}+ \| \bw_{0j}-\bw_{0l}  \|_{H^{2}}).
	\end{equation*}
	For $u^0_j=\sqrt{1+ |\mathring{\bu}_{j}|^2 }$, so we can bound
	\begin{equation*}
	\small	\| {\bu}_{j}-{\bu}_{l},h_{j}-h_{l}\|_{L^\infty_{[0, T^*]}H^1_x}+\| \bw_{j}-\bw_{l}\|_{L^\infty_{[0, T^*]}H^1_x} \leq C_{{M}_2} (\| \bu_{0j}-\bu_{0l}\|_{H^{\sstar}}+ \|h_{0j}-h_{0l}\|_{H^{\sstar}}+ \| \bw_{0j}-\bw_{0l}  \|_{H^{2}}).
	\end{equation*}
	Using \eqref{ur} and \eqref{ur0} again, we have
	\begin{equation*}
	\small	\|\partial_t ( \bU_{j}-\bU_{l} )\|_{L^\infty_{[0, T^*]}L^2_x}+\|\partial_t (  \bw_{j}-\bw_{l} )\|_{L^\infty_{[0, T^*]}L^2_x} \leq C_{M_2} (\| \bv_{0j}-\bv_{0l}\|_{H^{\sstar}}+ \|\rho_{0j}-\rho_{0l}\|_{H^{\sstar}}+ \| \bw_{0j}-\bw_{0l}  \|_{H^{2}}).
	\end{equation*}
	By Lemma \ref{jh0} and $u^0_j=\sqrt{1+ |\mathring{\bu}_{j}|^2 }$, it follows
		\begin{equation*}
	\small	\|\partial_t ( \bu_{j}-\bu_{l} )\|_{L^\infty_{[0, T^*]}L^2_x}+\|\partial_t (  \bw_{j}-\bw_{l} )\|_{L^\infty_{[0, T^*]}L^2_x} \leq C_{M_2} (\| \bu_{0j}-\bu_{0l}\|_{H^{\sstar}}+ \|h_{0j}-h_{0l}\|_{H^{\sstar}}+ \| \bw_{0j}-\bw_{0l}  \|_{H^{2}}).
	\end{equation*}
	Therefore, the sequence $\{(h_{j}, \bu_{j}, \bw_{j})\}_{j\in \mathbb{N}^+}$ is a Cauchy sequence in $H^1_x$, and $\{(\partial_t h_{j}, \partial_t\bu_{j}, \partial_t\bw_{j})\}_{j\in \mathbb{N}^+}$ is a Cauchy sequence in $L^2_x$. So there is a limit $( h, \bu, \bw)$ such that
	\begin{equation}\label{rwe}
		\begin{split}
			(h_{j}, \bu_{j}, \bw_{j})\rightarrow & ( h, \bu, \bw) \quad \quad \ \ \quad \text{in} \ \ H_x^{1} \times H_x^{1} \times H_x^{1},
			\\
			(\partial_t h_{j}, \partial_t\bu_{j}, \partial_t\bw_{j})\rightarrow & ( \partial_t h, \partial_t\bu, \partial_t\bw) \quad \text{in} \ \ L_x^{2} \times L_x^{2} \times L_x^{2}.
		\end{split}
	\end{equation}
	By using \eqref{Duu0} and \eqref{Duu1}, then a subsequence of $\{(\bu_{j}, \rho_{j}, \bw_{j})\}_{j\in \mathbb{N}^+}$ and $\{(\partial_t \bu_{j}, \partial_t\rho_{j}, \partial_t\bw_{j})\}_{j\in \mathbb{N}^+}$ are weakly convergent. Therefore, if $m\rightarrow \infty$, then
	\begin{equation}\label{rwe0}
		\begin{split}
			(h_{j_m}, \bu_{j_m}, \bw_{j_m})\rightharpoonup & (h, \bu, \bw) \quad \qquad \ \ \text{in} \ \ H_x^{\sstar} \times H_x^{\sstar} \times H_x^{2},
			\\
			(\partial_t h_{j_m}, \partial_t\bu_{j_m}, \partial_t\bw_{j_m})\rightharpoonup & ( \partial_t h, \partial_t\bu, \partial_t\bw) \quad \text{in} \ \ H_x^{\sstar-1} \times H_x^{\sstar-1} \times H_x^{1}.
		\end{split}
	\end{equation}
	By \eqref{rwe}, \eqref{rwe0} and interpolation formula, we can obtain
	\begin{equation}\label{rwe1}
		(h_{j_m}, \bu_{j_m})\rightarrow( h, \bu) \quad \text{in} \ \ H_x^{s_1}(0 \leq s_1 <\sstar), \quad \bw_{j_m} \rightarrow \bw \quad \text{in} \ \ H_x^{s_2} (0\leq s_2<2).
	\end{equation}
	By \eqref{rwe1}, \eqref{ur}, and \eqref{ur0}, we also have
	\begin{equation}\label{rwe2}
		\begin{split}
			& (\partial_th_{j_m}, \partial_t\bu_{j_m})\rightarrow(\partial_t h, \partial_t \bu) \ \qquad \text{in} \ \ H_x^{s_3}(0 \leq s_3 <\sstar-1),
			\\
			& \partial_t\bw_{j_m} \rightarrow \partial_t\bw \qquad \qquad \qquad \qquad \text{in} \ \ H_x^{s_4} (0 \leq s_4<1).
		\end{split}
	\end{equation}
	The weak convergence \eqref{rwe0} and the strong convergence \eqref{rwe1}-\eqref{rwe2} makes us easy to check that $(h, \bu, \bw)$ is a strong solution of \eqref{WTe} with the initial data $(h, \bu,\bw)|_{t=0}=(h_0,\bu_0,\bw_0)$,
	and for $1\leq s_1<\sstar, 1 \leq s_2<2$,
	\begin{equation}\label{rwe4}
		\begin{split}
			& (h,u^0-1,\mathring{\bu})\in C([0,T^*];H_x^{s_1}),
			\qquad \qquad \quad \bw \in C([0,T^*];H_x^{s_2}),
			\\
			&(\partial_t h,\partial_t \bu)\in C([0,T^*];H_x^{s_1-1}),
			\qquad \qquad \ \ \partial_t\bw \in C([0,T^*];H_x^{s_2-1}),
			\\
			& (h,u^0-1,\mathring{\bu})\in L^\infty([0,T^*];H_x^{\sstar}), \qquad \qquad \ \ \bw \in L^\infty([0,T^*];H_x^2),
			\\
			& (\partial_t h,\partial_t \bu)\in L^\infty([0,T^*];H_x^{\sstar-1}), \ \quad \qquad \quad  \partial_t\bw \in L^\infty([0,T^*];H_x^1).
		\end{split}
	\end{equation}
	Furthermore, by using \eqref{Duu0} and \eqref{Duu1}, we obtain
	\begin{equation*}
		\begin{split}
			& {E}(t)= \|(h,u^0-1,\mathring{\bu})\|_{L^\infty_{[0,T^*]}H_x^{\sstar}}+ \|\bw\|_{L^\infty_{[0,T^*]}H_x^{2}} \leq {M}_2,
		\end{split}
	\end{equation*}
	and
	\begin{equation*}
		\begin{split}
			& \|d h\|_{L^2_{[0,T^*]}L_x^\infty}+\| d \bu\|_{L^2_{[0,T^*]}L_x^\infty}
			\leq {M}_2,
			\\
			& \|\partial_t h\|_{L^\infty_{[0,T^*]}H_x^{\sstar-1}}+ \|\partial_t \bu\|_{L^\infty_{[0,T^*]}H_x^{\sstar-1}}+ \|\partial_t \bw\|_{L^\infty_{[0,T^*]}H_x^{1}} \leq {M}_2.
		\end{split}
	\end{equation*}
	It also remains for us that the convergence \eqref{rwe1} and \eqref{rwe2} also hold in the highest derivatives.

	\textit{Step 2: Strong convergence of the limit in highest derivatives.} Denote
	\begin{equation}\label{Dss23}
		\begin{split}
			e^{(\alpha)}_{j} = & \| u^\alpha_{0(j+1)} - u^\alpha_{0j} \|_{H^s}, \quad \alpha=0,1,2,3,
			\\
			d^{(\alpha)}_{j} = & \| w^\alpha_{0(j+1)} - w^\alpha_{0j} \|_{H^2},\quad \alpha=0,1,2,3,
			\\
			e^{(4)}_{j} = & \| h_{0(j+1)} - h_{0j} \|_{H^s}.
		\end{split}
	\end{equation}
	Set
	\begin{equation}\label{Dss24}
		\begin{split}
			e_{j} = \textstyle{\sum}_{\alpha=0}^3 ( e^{(\alpha)}_{j}+d^{(\alpha)}_{j})+ e^{(4)}_{j}.
		\end{split}
	\end{equation}
	Therefore, by using \eqref{Dss0}, \eqref{Qss0}, \eqref{Dss23} and \eqref{Dss24}, it yields
	\begin{equation}\label{Dss25}
		\begin{split}
			\lim_{j \rightarrow \infty} e_{j} = 0.
		\end{split}
	\end{equation}
	Using \eqref{Dss0} and \eqref{Qss0} again, we can obtain:

	(i)  uniform bounds
	\begin{equation}\label{mp0}
		\begin{split}
			& \| ( h_{0j}, u^0_{0j}-1, \mathring{\bu}_{0j} )\|_{H^{\sstar}} \lesssim M_*+M^5_*, \quad \| \bw_{0j} \|_{H^{2}} \lesssim M_*+M^5_*,
		\end{split}
	\end{equation}
	
	(ii)  high frequency bounds
	\begin{equation}\label{mp1}
		\begin{split}
			& \|   ( h_{0j}, u^0_{0j}-1, \mathring{\bu}_{0j} ) \|_{H^{\sstar+1}}+\|  \bw_{0j} \|_{H^{3}} \lesssim 2^{j}(M_*+M^5_*),
		\end{split}	
	\end{equation}
	
	(iii)  difference bounds
	\begin{equation}\label{mp2}
		\begin{split}
			&\|  h_{0(j+1)}- h_{0j},\bu_{0(j+1)}- \bu_{0j}\|_{L_x^{2}} \leq 2^{-\sstar j}e_j,
			\\
			&\|  \bw_{0(j+1)}- \bw_{0j}\|_{L_x^{2}} \leq 2^{-2j}e_j,
		\end{split}
	\end{equation}
	
	(iv)  limit
	\begin{equation}\label{mp3}
		\lim_{l \rightarrow \infty}( h_{0j}, \bu_{0j}, \bw_{0j})=(h_0, \bu_0, \bw_0) \quad  \mathrm{in} \quad H^{\sstar} \times H^{\sstar}\times H^{2}.
	\end{equation}
	For the solutions $(h_j,\bu_j, \bw_j)$, we claim that

	$\bullet$  uniform bounds
	\begin{equation}\label{ebs0}
		\| h_{j} \|_{L^\infty_{[0,T^*]}H_x^{\sstar}}+\|(u^0_{j}-1, \mathring{\bu}_j) \|_{L^\infty_{[0,T^*]}H_x^{\sstar}}+\| \bw_{j}\|_{L^\infty_{[0,T^*]}H_x^{2}} \lesssim {M}_2,
	\end{equation}

	$\bullet$  higher-order norms
	\begin{equation}\label{ebs1}
		\|  h_{j} \|_{L^\infty_{[0,T^*]}H_x^{\sstar+1}}+\| (u^0_{j}-1, \mathring{\bu}_j) \|_{L^\infty_{[0,T^*]}H_x^{\sstar+1}}+\| \bw_{j}  \|_{L^\infty_{[0,T^*]}H_x^{3}} \lesssim 2^{j}{M}_2,
	\end{equation}
	
	$\bullet$  energy bounds of the difference
	\begin{equation}\label{ebs2}
		\|  (h_{j+1}- h_{j}, \bu_{j+1}- \bu_{j}) \|_{L^\infty_{[0,T^*]} L_x^{2}} \lesssim 2^{-\sstar j}e_j,
	\end{equation}
	and
	\begin{equation}\label{ebs00}
		\| \bw_{j+1}- \bw_{j} \|_{L^\infty_{[0,T^*]} L_x^{2}} \lesssim 2^{-(\sstar-1)j} e_j,
	\end{equation}
	and
	\begin{equation}\label{ebs3}
		\| \bw_{j+1}- \bw_{j} \|_{L^\infty_{[0,T^*]} \dot{H}_x^{2}} \lesssim e_j,
	\end{equation}

	$\bullet$  Strichartz estimates of the difference
	\begin{equation}\label{ebs4}
		2^j\| h_{j+1}- h_{j}, \bu_{j+1}- \bu_{j}\|_{L^2_{[0,T^*]} L_x^{\infty}} \lesssim e_j.
	\end{equation}
	By using \eqref{rwe}, \eqref{ebs2}, \eqref{ebs00} and \eqref{ebs3}, we get
	\begin{equation*}
		\|\bu_j-\bu\|_{H_x^{\sstar}}\lesssim \textstyle{\sum}_{l \geq j}e_{l}.
	\end{equation*}
	Hence, combining \eqref{Dss23} and \eqref{Dss24}, we have
	\begin{equation}\label{ebs5}
		\lim_{j\rightarrow \infty}\|\bu_j-\bu\|_{H_x^{\sstar}}\lesssim  \lim_{j\rightarrow \infty} \textstyle{\sum}_{l \geq j}e_{l}=0.
	\end{equation}
	Similarly, using \eqref{ebs2}, \eqref{ebs00} and \eqref{ebs3}, so we also conclude that
	\begin{equation}\label{ebs6}
		\begin{split}
			\lim_{j\rightarrow \infty}\|h_j-h\|_{H_x^{\sstar}}\lesssim  \lim_{j\rightarrow \infty} \textstyle{\sum}_{l \geq j}e_{l}=0,
			\\
			\lim_{j\rightarrow \infty}\|\bw_j-\bw\|_{H_x^{2}}\lesssim  \lim_{j\rightarrow \infty} \textstyle{\sum}_{l \geq j}e_{l}=0.
		\end{split}
	\end{equation}
	Combining \eqref{ebs5} and \eqref{ebs6}, we have proved the strong convergence $\lim_{j\rightarrow \infty}(h_j,\bu_j, \bw_j)=(h,\bu,\bw)$ in $H_x^{\sstar} \times H_x^{\sstar} \times H_x^{2}$. In a similar way, we can show that $\lim_{j\rightarrow \infty}(\partial_t h_j,\partial_t\bu_j, \partial_t\bw_j)=(\partial_t h,\partial_t\bu,\partial_t\bw)$ in $H_x^{\sstar-1} \times H_x^{\sstar-1} \times H_x^{1}$. Therefore, we conclude that $(h, \bu, \bw)$ is a strong solution of \eqref{WTe} with the initial data $(h, \bu,\bw)|_{t=0}=(h_0,\bu_0,\bw_0)$. Moreover, we have
	\begin{equation}\label{key1}
		\begin{split}
			& (h,u^0-1,\mathring{\bu})\in C([0,T^*];H^{\sstar}),
			\quad \quad \bw \in C([0,T^*];H^2),
			\\
			&(\partial_t h,\partial_t \bu)\in C([0,T^*];H_x^{\sstar-1}),
			\qquad \partial_t\bw \in C([0,T^*];H_x^{1}).
		\end{split}
	\end{equation}
	It now remains for us to prove \eqref{ebs0}-\eqref{ebs4}. By using \eqref{Duu0}, we get \eqref{ebs0}. By Corollary \ref{hes} , \eqref{Duu0}, \eqref{Duu2} and \eqref{mp1}, we can obtain \eqref{ebs1}.

	\textbf{The proof of \eqref{ebs2}.} By \eqref{ur}, we can see
\begin{equation*}
	\begin{split}
		A^0 (\bU_{j+1}) \partial_t (\bU_{j+1}-\bU_j)+ A^a (\bU_{j+1}) \partial_a (\bU_{j+1}-\bU_j)=&-\{ A^\alpha (\bU_{j+1})-A^\alpha (\bU_j)\} \partial_\alpha \bU_j.
	\end{split}
\end{equation*}
Multiplying with $\bU_{j+1}-\bU_j$, integrating it on $[0,T^*]\times \mathbb{R}^3$, and using \eqref{Duu2} and \eqref{mp2}, it follows
\begin{equation*}
	\begin{split}
		\|\bU_{j+1}-\bU_j\|^2_{L^\infty_{[0,T^*]}L^2_x}\lesssim & \|(\bU_{j+1}-\bU_j)(0,\cdot)\|^2_{L^2_x}\exp\left(   \int^{T^*}_0 \|d \bU_j, d\bU_{j+1}\|_{L^\infty_x}d\tau \right) \lesssim (2^{-sj}e_j)^2.
	\end{split}
\end{equation*}
Therefore, by Lemma \ref{jh0}, we can obtain
\begin{equation*}
	\begin{split}
		\|(h_{j+1}-h_j, \mathring{\bu}_{j+1}-\mathring{\bu}_j)\|_{L^\infty_{[0,T^*]}L^2_x}\lesssim  2^{-sj}e_j.
	\end{split}
\end{equation*}
For $u^0_j=\sqrt{1+ |\mathring{\bu}_{j}|^2 }$, we can also get $\|{u}^0_{j+1}-{u}^0_j)\|_{L^\infty_{[0,T^*]}L^2_x}\lesssim  2^{-sj}e_j$. To summarize the outcome, we have proved
\begin{equation}\label{esb12}
	\begin{split}
		\|(h_{j+1}-h_j, {\bu}_{j+1}-{\bu}_j)\|_{L^\infty_{[0,T^*]}L^2_x}\lesssim  2^{-sj}e_j.
	\end{split}
\end{equation}
It implies that we have proved \eqref{ebs2}.

\textbf{The proof of \eqref{ebs00}.} By using \eqref{ur0}, we get
\begin{equation}\label{Dss26}
	\begin{split}
		& \partial_t (\bw_{j+1}-\bw_j) + ( (u_{j+1}^0)^{-1} \mathring{\bu}_{j+1} \cdot \nabla) (\bw_{j+1}-\bw_j)
		\\
		=&\left\{  \left( (u_{j+1}^0)^{-1}\mathring{\bu}_{j+1}-(u_j^0)^{-1} \mathring{\bu}_j \right) \cdot \nabla \right\}  \bw_j
		\\
		&-\left(  (u_{j+1}^0)^{-1} {\bu}_{j+1}-(u_j^0)^{-1} {\bu}_j \right) w_{j+1}^\kappa \partial_\kappa h_{j+1}
		\\
		& -(u_j^0)^{-1} {\bu}_j ( w_{j+1}^\kappa- w_j^\kappa) \partial_\kappa h_{j+1}-(u_j^0)^{-1} {\bu}_j w_j^\kappa \partial_\kappa ( h_{j+1} - h_j)
		\\
		& + \left( (u_{j+1}^0)^{-1}-(u_j^0)^{-1} \right) w_{j+1}^\kappa \partial_\kappa  \bu_{j+1}+(u_j^0)^{-1} (w_{j+1}^\kappa-w_j^\kappa) \partial_\kappa  \bu_{j+1}
		\\
		&+(u_j^0)^{-1} w_j^\kappa \partial_\kappa ( \bu_{j+1}-\bu_j)
		-\left( (u_{j+1}^0)^{-1} - (u_j^0)^{-1} \right) \bw_{j+1} \partial_\kappa u_{j+1}^\kappa
		\\
		&-(u_j^0)^{-1} ( \bw_{j+1} - \bw_j) \partial_\kappa u_{j+1}^\kappa
		-	(u_j^0)^{-1} \bw_j \partial_\kappa ( u_{j+1}^\kappa- u_j^\kappa)
	\end{split}
\end{equation}
Multiplying \eqref{Dss26} with $\bw_{j+1}- \bw_{j}$ and integrating it on $[0,T^*]\times \mathbb{R}^3$, we obtain
\begin{equation}\label{esb15}
	\begin{split}
		& \| \bw_{j+1}- \bw_{j} \|^2_{L^\infty_{[0,T^*]} L^2_x}
		\\
		\leq  & \| \bw_{0(j+1)}- \bw_{0j}\|^2_{L^2_x}+ C\textstyle{\int}^{T^*}_0  \| \bw_{j+1}-\bw_j \|^2_{L^2_x}\| d h_{j+1},d\bu_{j+1}\|_{L^\infty_x} d\tau
		\\
		& + C\textstyle{\int}^{T^*}_0  \| \bu_{j+1}-\bu_j \|_{L^2_x}\| \nabla \bw_{j}\|_{L^\infty_x} \| \bw_{j+1}- \bw_{j} \|_{ L^2_x}d\tau
		\\
		& + C\textstyle{\int}^{T^*}_0 \| \bu_{j+1}-\bu_j\|_{L^2_x} \| \bw_{j+1}\|_{L^\infty_x}\|d h_{j+1},d\bu_{j+1} \|_{L^\infty} \| \bw_{j+1}- \bw_{j} \|_{ L^2_x}d\tau
		\\
		&+ C \textstyle{\int}^{T^*}_0 \| \bw_{j}\|_{L^\infty_x} \|d (h_{j+1}-h_j),d (\bu_{j+1}-\bu_j) \|_{L^2_x} \| \bw_{j+1}- \bw_{j} \|_{ L^2_x}d\tau.
	\end{split}
\end{equation}
By H\"older's inequality, we can bound \eqref{esb15} by
\begin{equation}\label{esb16}
	\begin{split}
		& \| \bw_{j+1}- \bw_{j} \|^2_{L^\infty_{[0,T^*]} L^2_x}
		\\
		\leq  & \| \bw_{0(j+1)}- \bw_{0j}\|^2_{L^2_x}+ C\textstyle{\int}^{T^*}_0  \| \bw_{j+1}-\bw_j \|^2_{L^2_x}\| d h_{j+1},d\bu_{j+1}\|_{L^\infty_x} d\tau
		\\
		& + C\textstyle{\int}^{T^*}_0  \| \bu_{j+1}-\bu_j \|^2_{L^2_x} ( \| \nabla \bw_{j}\|^2_{L^\infty_x} + \| \bw_{j+1}\|^2_{L^\infty_x}\|d h_{j+1},d\bu_{j+1} \|^2_{L^\infty}) d\tau
		\\
		&+ C \textstyle{\int}^{T^*}_0 \| \bw_{j}\|^2_{L^\infty_x}  \|\nabla (h_{j+1}-h_j),\nabla (\bu_{j+1}-\bu_j) \|^2_{L^2_x} d\tau.
	\end{split}
\end{equation}
Due to \eqref{Duu0}, \eqref{Duu1}, \eqref{Duu00}, \eqref{Duu2}, \eqref{mp2}, and \eqref{ebs2}, it yields
\begin{equation}\label{esb17}
	\begin{split}
		\| \bw_{j+1}- \bw_{j} \|^2_{L^\infty_{[0,T^*]} L^2_x}
		\leq  & C(2^{-sj}e_j)^2+ C M^2_2  (2^{-sj}e_j)^2 2^{-\frac{j}{2}}
		+ C M^2_2 (2^{-(s-1)j}e_j)^2
		\\
		&  + C\textstyle{\int}^{T^*}_0  \| \bw_{j+1}-\bw_j \|^2_{L^2_x}\| d h_{j+1},d\bu_{j+1}\|_{L^\infty_x} d\tau.
	\end{split}
\end{equation}
Using Gronwall's inequality for \eqref{esb17}, then \eqref{ebs00} holds.

It remains for us to prove \eqref{ebs3}-\eqref{ebs4}. By bootstrap arguments, if we assume
	\begin{equation}\label{ebs9}
	\begin{split}
	2^j\| (h_{j+1}- h_{j}, \bu_{j+1}- \bu_{j})\|_{L^2_{[0,T^*]} L_x^{\infty}} \lesssim &  2^{-\delta_1 j} e_j,
	\\
	 \| \bw_{j+1}- \bw_{j} \|_{L^\infty_{[0,T^*]} \dot{H}_x^{2}} \leq &  2C_0 e_j,
	\end{split}		
	\end{equation}
then we obtain
	\begin{equation}\label{ebs7}
		2^j\| (h_{j+1}- h_{j}, \bu_{j+1}- \bu_{j})\|_{L^2_{[0,T^*]} L_x^{\infty}} \lesssim 2^{-2\delta_1 j}e_j,
	\end{equation}
	and
		\begin{equation}\label{ebs10}
		\| \bw_{j+1}- \bw_{j} \|_{L^\infty_{[0,T^*]} \dot{H}_x^{2}} \leq C_0 e_j.
	\end{equation}
Hence, \eqref{ebs3} and \eqref{ebs7} holds. Above, $C_0=3C^2+C$, and $C$ is a universal constant which is defined in \eqref{ebst} below. So our goal now is to prove \eqref{ebs7} and \eqref{ebs10}.

	\textbf{The proof of \eqref{ebs10}.}  Set $\bW_{j}=\mathrm{vort}(\mathrm{e}^{h_j}\bw_j)$ and $\bG_{j}=\mathrm{vort}(\bW_j)$. From \eqref{Wd5}, \eqref{Wd6} and \eqref{QH}, we get
	\begin{equation*}
		\begin{split}
			\| \Delta (\bw_{j+1}-\bw_j) \|_{L_x^{2}} \leq & C	\| \mathrm{vort} \bw_{j+1}-\mathrm{vort} \bw_{j} \|_{\dot{H}_x^{1}}
			+ \| (\bw_{j+1}-\bw_j) \cdot (d\bu_j,dh_j)\|_{\dot{H}_x^{1}}
			\\
			&+\| \bw_j \cdot d(\bu_{j+1}-\bu_j)\|_{\dot{H}_x^{1}}+\| \bw_j \cdot d(h_{j+1}-h_j)\|_{\dot{H}_x^{1}}
			\\
			& +  \| d(\bu_{j+1}-\bu_j) \cdot \nabla \mathring{\bw}_j \|_{L_x^{2}}+   \| d\bu_j \cdot \nabla (\mathring{\bw}_{j+1}-\mathring{\bw}_{j+1}) \|_{L_x^{2}}
			\\
				\leq	 & C\| \bW_{j+1}-\bW_j \|_{\dot{H}_x^{1}}+ C \| \mathring{\bw}_{j+1} -\mathring{\bw}_{j}\|_{{H}_x^{\frac32}}\| (\nabla\bu_j,\nabla h_j) \|_{{H}_x^{s-1}}
			\\
			&+  C\| \mathring{\bw}_j \|_{{H}_x^{\frac32}}\| \nabla(\bu_{j+1}-\bu_j),\nabla(h_{j+1}-h_j)) \|_{{H}_x^{1}} .
		\end{split}
	\end{equation*}
		By using \eqref{ebs2}, \eqref{Duu0}, \eqref{ebs00}, and $\| f \|_{{H}_x^{\frac32}} \leq \| f \|^{\frac14}_{L_x^{2}} \| f \|^{\frac34}_{{H}_x^{2}}$, we can prove that
		\begin{equation}\label{ebs8}
		\begin{split}
			\| \Delta (\bw_{j+1}-\bw_j) \|_{L_x^{2}} 
			\leq	 & C\| \bW_{j+1}-\bW_j \|_{\dot{H}_x^{1}}+C M_2 C_0^{\frac34}2^{-\frac{(s-1)j}{4}}e_j+  C M_2 C_0^{\frac34} 2^{-(s-2)j}e_j.
		\end{split}
	\end{equation}
	Similarly, seeing from \eqref{Wf01}, \eqref{Wf03}, and \eqref{QH}, we have
	\begin{equation}\label{ebs15}
	\begin{split}
		& \| \bW_{j+1}-\bW_j \|_{\dot{H}_x^{1}}
		\\
		\leq	 & C\| \bG_{j+1}-\bG_j \|_{L_x^{2}}+ C \| \mathring{\bw}_{j+1} -\mathring{\bw}_{j}\|_{{H}_x^{\frac32}}\| (d\bu_j,d h_j) \|_{{H}_x^{s-1}}(1+\| (d\bu_j,d h_j) \|_{{H}_x^{s-1}} )
		\\
		&+  C \| \mathring{\bw}_{j} \|_{{H}_x^{\frac32}}\| d(\bu_{j+1}-\bu_j),d (h_{j+1}-h_j) \|_{{H}_x^{1}}(1+\| (d\bu_j,d h_j) \|_{{H}_x^{s-1}} )
		\\
	\leq	&  C\| \bG_{j+1}-\bG_j \|_{L_x^{2}} +C M_2 C_0^{\frac34}2^{-\frac{(s-1)j}{4}}e_j+  C M_2 C_0^{\frac34} 2^{-(s-2)j}e_j .
	\end{split}
\end{equation}
In view of \eqref{ebs8} and \eqref{ebs15}, for large $j$, we have
	\begin{equation}\label{ebsa}
	\begin{split}
		\| \Delta (\bw_{j+1}-\bw_j) \|_{L_x^{2}} \leq & C\| \bG_{j+1}-\bG_j \|_{L_x^{2}} +C M_2 C_0^{\frac34}2^{-\frac{(s-1)j}{4}}e_j+  C M_2 C_0^{\frac34} 2^{-(s-2)j}e_j
		\\
		\leq & C\| \bG_{j+1}-\bG_j \|_{L_x^{2}} + C e_j .
	\end{split}
\end{equation}
Now, we need to estimate $\| \bG_{j+1}-\bG_j \|_{L_x^{2}}$. By \eqref{We15}, we can directly have
\begin{equation*}
	\begin{split}
		(u_j^0)^{-1} u_j^\kappa \partial_\kappa \left(G_j^\alpha-F_j^\alpha \right)
		=&  (u_j^0)^{-1} E_j^\alpha
		+  \partial^\alpha \left( (u_j^0)^{-1} \Gamma_j \right)
		-\Gamma_j  \partial^\alpha \left( (u_j^0)^{-1}\right),
	\end{split}
\end{equation*}
where $F_j^\alpha$ and $E_j^\alpha$ is the $\alpha$-th component of $\bF_j$ and $\bE_j$, and $\bF_j$, $\bE_j$ is defined in the same way as above, but with $h,\bu,\bw,\bW$ replaced by $h_j,\bu_j,\bw_j,\bW_j$. As a result, the difference term $\bG_{j+1}-\bG_j$ satisfies
\begin{equation}\label{ebs17}
	\begin{split}
	& \partial_t \left\{  (G_{j+1}^\alpha-G_j^\alpha)-(F_{j+1}^\alpha-F_j^\alpha) \right\}+  (u_{j+1}^0)^{-1} u_{j+1}^a \partial_a \left\{  (G_{j+1}^\alpha-G_j^\alpha)-(F_{j+1}^\alpha-F_j^\alpha) \right\}
	\\
		=& \left\{  (u_{j+1}^0)^{-1} u_{j+1}^a-(u_{j}^0)^{-1} u_{j}^a \right\} \partial_a ( G_j^\alpha -F_j^\alpha ) +\left\{ (u_{j+1}^0)^{-1}-(u_{j}^0)^{-1}\right\} E_{j+1}^\alpha
		\\
		&+ (u_{j}^0)^{-1} ( E_{j+1}^\alpha-E_{j}^\alpha )
	 -(\Gamma_{j+1}-\Gamma_{j})  \partial^\alpha \left( (u_{j+1}^0)^{-1}\right)-\Gamma_j  \partial^\alpha \left( (u_{j+1}^0)^{-1}-(u_j^0)^{-1}\right)
	 \\
	 &	+  \partial^\alpha \left\{ \left( (u_{j+1}^0)^{-1} - (u_{j}^0)^{-1} \right) \Gamma_{j+1} \right\}
	 +  \partial^\alpha  \left\{  (u_{j}^0)^{-1}  \left( \Gamma_{j+1} - \Gamma_{j} \right) \right\}.
	\end{split}
\end{equation}
Multiplying with $(G_{j+1}^\alpha-G_j^\alpha)-(F_{j+1}^\alpha-F_j^\alpha)$ on \eqref{ebs17} and integrating it on $[0,T^*]\times \mathbb{R}^3$, so we get
\begin{equation}\label{ebs18}
	\begin{split}
		& \|  (\bG_{j+1}-\bG_j)-(\bF_{j+1}-\bF_j) \|^2_{L^2_x}
		-
		\|   (\bG_{0(j+1)}-\bG_{0j})-(\bF_{0(j+1)}-\bF_{0j}) \|^2_{L^2_x}
		\\
	=
		& \underbrace{-\textstyle{\int}^{T^*}_0\textstyle{\int}_{\mathbb{R}^3} (u_{j+1}^0)^{-1} u_{j+1}^a \partial_a \left\{  (G_{j+1}^\alpha-G_j^\alpha)-(F_{j+1}^\alpha-F_j^\alpha) \right\} (G_{j+1}^\alpha-G_j^\alpha)-(F_{j+1}^\alpha-F_j^\alpha)
			dx d\tau }_{\equiv J_1}
		\\
		& + \underbrace{ \textstyle{\int}^{T^*}_0 \textstyle{\int}_{\mathbb{R}^3} \left\{  (u_{j+1}^0)^{-1} u_{j+1}^a-(u_{j}^0)^{-1} u_{j}^a \right\} \partial_a ( G_j^\alpha -F_j^\alpha )
		(G_{j+1}^\alpha-G_j^\alpha)-(F_{j+1}^\alpha-F_j^\alpha) dx d\tau }_{\equiv J_2}
		\\
		&+  \underbrace{ \textstyle{\int}^{T^*}_0\textstyle{\int}_{\mathbb{R}^3} \left\{ (u_{j+1}^0)^{-1}-(u_{j}^0)^{-1}\right\} E_{j+1}^\alpha ( G_j^\alpha -F_j^\alpha )
		(G_{j+1}^\alpha-G_j^\alpha)-(F_{j+1}^\alpha-F_j^\alpha) dx d\tau  }_{\equiv J_3}
		\\
		&+ \underbrace{ \textstyle{\int}^{T^*}_0 \textstyle{\int}_{\mathbb{R}^3} (u_{j}^0)^{-1} ( E_{j+1}^\alpha-E_{j}^\alpha ) ( G_j^\alpha -F_j^\alpha )
		(G_{j+1}^\alpha-G_j^\alpha)-(F_{j+1}^\alpha-F_j^\alpha) dx d\tau   }_{\equiv J_4}
		\\
		&\underbrace{ - \textstyle{\int}^{T^*}_0 \textstyle{\int}_{\mathbb{R}^3} (\Gamma_{j+1}-\Gamma_{j})  \partial^\alpha \left( (u_{j+1}^0)^{-1}\right) (G_{j+1}^\alpha-G_j^\alpha)-(F_{j+1}^\alpha-F_j^\alpha) dx d\tau  }_{\equiv J_5}
		\\
		& \underbrace{ -  \textstyle{\int}^{T^*}_0 \textstyle{\int}_{\mathbb{R}^3} \Gamma_j  \partial^\alpha \left( (u_{j+1}^0)^{-1}-(u_j^0)^{-1}\right)( G_j^\alpha -F_j^\alpha )
		(G_{j+1}^\alpha-G_j^\alpha)-(F_{j+1}^\alpha-F_j^\alpha) dx d\tau }_{\equiv J_6}
		\\
		& +\underbrace{ \textstyle{\int}^{T^*}_0 \textstyle{\int}_{\mathbb{R}^3} \partial_\alpha \left\{ \left( (u_{j+1}^0)^{-1} - (u_{j}^0)^{-1} \right) \Gamma_{j+1} \right\} \left( (G_{j+1}^\alpha-G_j^\alpha)-(F_{j+1}^\alpha-F_j^\alpha) \right) dx d\tau }_{\equiv J_7}
		\\
		& +  \underbrace{ \textstyle{\int}^{T^*}_0 \textstyle{\int}_{\mathbb{R}^3}\partial_\alpha  \left\{  (u_{j}^0)^{-1}  \left( \Gamma_{j+1} - \Gamma_{j} \right) \right\} \left( (G_{j+1}^\alpha-G_j^\alpha)-(F_{j+1}^\alpha-F_j^\alpha) \right) dx d\tau }_{\equiv J_8}.
	\end{split}
\end{equation}

\textit{Bound of the right hand of \eqref{ebs18}}. We now turn to estimate $\bG_{j+1}-\bG_j$, $\bF_{j+1}-\bF_j$, $\Gamma_{j+1}-\Gamma_{j}$ and $\bE_{j+1}-\bE_{j}$. Recall that $\bG_j$ has the same formulations with $\bG$ only by replacing $(h,\bu,\bw,\bW)$ to $(h_j,\bu_j,\bw_j,\bW_j)$. Applying \eqref{MFd} and \eqref{YX0}, we can obtain
\begin{equation*}
	\begin{split}
		\|  \bG_{j+1}-\bG_j \|_{L^2_x} \leq  & C\| \bw_{j+1}-\bw_j\|_{H^{2}_x} (1+
		\|(d\bu_j,dh_j)\|^2_{H^{s_0-1}_x} )
		\\
		& + C\| \bw_j\|_{H^{2}_x} 
		\|d(\bu_{j+1}-\bu_j),d(h_{j+1}-h_j)\|^2_{H^{s_0-1}_x} ,
	\end{split}
\end{equation*}
and
\begin{equation*}
	\begin{split}
		\| \bF_{j+1}-\bF_j \|_{L^2_x} \lesssim  & \|(d\bw_{j+1}-d\bw_j) \cdot (du_j,dh_j)\|_{L^{2}_x} + \|d\bw_j \cdot (du_{j+1}-du_{j},dh_{j+1}-dh_{j})\|_{L^{2}_x} 
		\\
		& +  \|(\bw_{j+1}-\bw_{j}) \cdot dh_j \cdot dh_j \|_{L^{2}_x} +  \|\bw_{j} \cdot (dh_{j+1}-dh_j) \cdot dh_j \|_{L^{2}_x} .
	\end{split}
\end{equation*}
Due to H\"older's inequality, \eqref{Duu0}, \eqref{ebs2}, \eqref{ebs00}, and \eqref{ebs9}, we can show that
\begin{equation}\label{es0}
	\begin{split}
		\|  \bG_{j+1}-\bG_j \|_{L^2_x} \leq  & 2C(1+M_2^2) (1+C_0) e_j.
	\end{split}
\end{equation}
By H\"older's inequality, \eqref{Duu0}, \eqref{ebs2}, and \eqref{ebs00}, it yields
\begin{equation}\label{es1}
	\begin{split}
		& \| \bF_{j+1}-\bF_j \|_{L^2_x}
		\\
		 \leq  & C(M_2+M_2^2) \big( \|\bw_{j+1}-\bw_j\|_{H^{\frac32}_x} + \|(\bu_{j+1}-\bu_{j},h_{j+1}-h_{j})\|_{H^{\frac32}_x} \big)
		\\
		\leq & 
		C(M_2+M_2^2) \big( \|\bw_{j+1}-\bw_j\|^{\frac14}_{L^{2}_x}\|\bw_{j+1}-\bw_j\|^{\frac34}_{H^{2}_x} + \|(\bu_{j+1}-\bu_{j},h_{j+1}-h_{j})\|_{H^{\frac32}_x} \big)
		\\
		 \leq & 2C(M_2+M_2^2) ( C_0^{\frac34}2^{-\frac{(s-1)j}{4}}e_j+   2^{-(s-\frac32)j}e_j).
	\end{split}
\end{equation}
In a similar way, we can also bound $\Gamma_{j+1}-\Gamma_{j}$ by
\begin{equation}\label{es3}
	\begin{split}
		\| \Gamma_{j+1}-\Gamma_{j} \|_{L^2_x}  \leq & C\|(d\bu_{j+1}-d\bu_j) \cdot d\bw_j\|_{L^2_x}+\| d\bu_j \cdot (d\bw_{j+1}-d\bw_j)\|_{L^2_x}
		\\
		& + C\|(\bw_{j+1}-\bw_j) \cdot dh_j \cdot \bw_j\|_{L^2_x}+\|\bw_j \cdot (dh_{j+1}-dh_j) \cdot \bw_j\|_{L^2_x}
		\\
		\leq & 
		C(M_2+M_2^2) \big( \|\bw_{j+1}-\bw_j\|_{H^{\frac32}_x} + \|(\bu_{j+1}-\bu_{j},h_{j+1}-h_{j})\|_{H^{\frac32}_x} \big)
		\\
		\leq & 2C(M_2+M_2^2) ( C_0^{\frac34}2^{-\frac{(s-1)j}{4}}e_j+   2^{-(s-\frac32)j}e_j).
	\end{split}
\end{equation}
Using H\"older's inequality, we can estimate $\bE_{j+1}-\bE_{j}$ by
\begin{equation*}
	\begin{split}
		& \| \bE_{j+1}-\bE_{j} \|_{L^2_x}  
		\\
		\leq & C\|d(\bu_{j+1}-\bu_j),d(h_{j+1}-h_j)\|_{L^\infty_x}  ( \| \bw_j\|_{H^{2}_x}+\|(d\bu_j,dh_j)\|_{H^{s_0-1}_x}  \| \bw_j\|_{H^{2}_x})
		\\
		& + C\|(d\bu_j,dh_j)\|_{L^\infty_x}  ( \| \bw_{j+1}-\bw_{j}\|_{H^{2}_x}+\|(d\bu_j,dh_j)\|_{H^{s_0-1}_x}  \| \bw_{j+1}-\bw_{j} \|_{H^{2}_x})
		\\
		& + C\|(d\bu_j,dh_j)\|_{L^\infty_x}  \|d(\bu_{j+1}-\bu_j),d(h_{j+1}-h_j)\|_{H^{s_0-1}_x}  \| \bw_{j} \|_{H^{2}_x}
		\\
		& +C\| \bw_j\|_{H^{2}_x} ( \|d(\bu_{j+1}-\bu_j),d(h_{j+1}-h_j)\|^2_{H^{s_0-1}_x}  +\|d(\bu_{j+1}-\bu_j),d(h_{j+1}-h_j)\|^3_{H^{s_0-1}_x} )
		\\
		& +C\|d\bu_j,dh_j\|^2_{H^{s_0-1}_x}  \| \bw_{j+1}-\bw_{j} \|_{H^{2}_x}+\|d\bu_j,dh_j\|^3_{H^{s_0-1}_x}  \| \bw_{j+1}-\bw_{j} \|_{H^{2}_x}.
	\end{split}
\end{equation*}
Applying \eqref{Duu0}, \eqref{ebs2}, and \eqref{ebs00}, it follows
\begin{equation}\label{es2}
	\begin{split}
		 \| \bE_{j+1}-\bE_{j} \|_{L^2_x}  
		\leq & C(M_2+M_2^2) \|d(\bu_{j+1}-\bu_j),d(h_{j+1}-h_j)\|_{L^\infty_x} 
		\\
		& + C(1+M_2)\|(d\bu_j,dh_j)\|_{L^\infty_x}  \| \bw_{j+1}-\bw_{j}\|_{H^{2}_x}
		\\
		& + CM_2 \|(d\bu_j,dh_j)\|_{L^\infty_x}  \|\bu_{j+1}-\bu_j,h_{j+1}-h_j\|_{H^{s_0}_x}  
		\\
		& +CM_2 ( \|\bu_{j+1}-\bu_j,h_{j+1}-h_j\|^2_{H^{s_0}_x}  + \|\bu_{j+1}-\bu_j,h_{j+1}-h_j\|^3_{H^{s_0}_x} )
		\\
		& +C(M^2_2+M^3_2)  \| \bw_{j+1}-\bw_{j} \|_{H^{2}_x}.
	\end{split}
\end{equation}
Now, we are ready to bound the terms $J_1, J_2, \cdots$, and $J_6$. Integrating $J_1$ by parts, it's direct for us to obtain
\begin{equation*}
	\begin{split}
	|J_1| \leq	& C \int^{T^*}_0  \| \nabla \bu_{j+1}  \|_{L^\infty_x} ( \|  \bG_{j+1}-\bG_j \|^2_{L^2_x}+\| \bF_{j+1}-\bF_j \|^2_{L^2_x} ) d\tau .
	\end{split}
\end{equation*}
Let $T^*\leq 1$. When $j$ is large enough, applying \eqref{es1}, \eqref{es0} and \eqref{Duu2}, we get
\begin{equation}\label{ebsJ1}
	\begin{split}
		|J_1| \leq	 & CM_2 (T^*)^{\frac12}\{ 2C(1+M_2^2) (1+C_0) e_j \}^2
		\\
		& + \{ 2C(M_2+M_2^2) ( C_0^{\frac34}2^{-\frac{(s-1)j}{4}}e_j+   2^{-(s-\frac32)j}e_j) \}^2 CM_2 (T^*)^{\frac12}
		\\
		\leq & ( C e_j )^2.
	\end{split}
\end{equation}
For $j$ is large enough, by using H\"older's inequality, \eqref{es0}, and \eqref{ebs9} we can get
\begin{equation}\label{ebsJ2}
\begin{split}
		|J_2| \leq	
		&  C  \int^{T^*}_0 \|(\bu_{j+1}-\bu_{j}) \|_{L^\infty_x} \|\nabla ( \bG_j-\bF_j ) \|_{L^2_x} \|  (\bG_{j+1}-\bG_j)-(\bF_{j+1}-\bF_j) \|_{L^2_x} d\tau
\\
\leq &  C  \int^{T^*}_0 2^j \|(\bu_{j+1}-\bu_{j}) \|_{L^\infty_x} \| \bG_j-\bF_j  \|_{L^2_x} ( \|  \bG_{j+1}-\bG_j\|_{L^2_x} + \|\bF_{j+1}-\bF_j \|_{L^2_x} ) d\tau
\\
\leq &  C  (M_2+M^3_2)\int^{T^*}_0 2^j \|(\bu_{j+1}-\bu_{j}) \|_{L^\infty_x}  \|  \bG_{j+1}-\bG_j\|_{L^2_x} d\tau 
\\
& + C(M_2+M_2^2)2^{-\frac{(s-1)j}{4}}e_j \int^{T^*}_0 2^j \|(\bu_{j+1}-\bu_{j}) \|_{L^\infty_x}  d\tau
	\\
\leq & ( C e_j )^2.
\end{split}
\end{equation}
Similarly, we can also obtain
\begin{equation}\label{ebsJ3}
\begin{split}
	|J_3| \leq	
	&C  \int^{T^*}_0 \| \bu_{j+1}-\bu_{j}\|_{L^\infty_x} \|\bE_{j+1}\|_{L^2_x} \|  (\bG_{j+1}-\bG_j)-(\bF_{j+1}-\bF_j) \|_{L^2_x} d\tau
	\\
	\leq & ( C e_j )^2.
\end{split}
\end{equation}
\begin{equation}\label{ebsJ4}
	\begin{split}
	|J_4| \leq	
	& C  \int^{T^*}_0 \| \bE_{j+1}-\bE_{j} \|_{L^2_x} \|  (\bG_{j+1}-\bG_j)-(\bF_{j+1}-\bF_j) \|_{L^2_x} d\tau 	\\
	\leq & ( C e_j )^2.
\end{split}
\end{equation}
\begin{equation}\label{ebsJ5}
	\begin{split}
	|J_5| \leq	
	& C  \int^{T^*}_0 \|\Gamma_{j+1}-\Gamma_{j}\|_{L^2_x}  \|d  \bu_{j+1}\|_{L^\infty_x}\|  (\bG_{j+1}-\bG_j)-(\bF_{j+1}-\bF_j) \|_{L^2_x} d\tau 	\\
	\leq & ( C e_j )^2.
\end{split}
\end{equation}

\begin{equation}\label{ebsJ6}
	\begin{split}
	|J_6| \leq	
	&C  \int^{T^*}_0 \|\Gamma_j \|_{L^2_x} \|d \left( \bu_{j+1}-\bu_j \right)\|_{L^\infty_x} \|  (\bG_{j+1}-\bG_j)-(\bF_{j+1}-\bF_j) \|_{L^2_x} d\tau
		\\
	\leq & ( C e_j )^2.
\end{split}
\end{equation}
Let us consider $J_7$ and $J_8$, for it's different and difficult. Integrating $J_7$ by parts, we get
\begin{equation*}
	\begin{split}
		J_7= &  \int_{\mathbb{R}^3} \left( (u_{j+1}^0)^{-1} - (u_{j}^0)^{-1} \right) \Gamma_{j+1}  \left( (G_{j+1}^0-G_j^0)-(F_{j+1}^0-F_j^0) \right)dx|^{T^*}_0
			\\
		& -   \int^{T^*}_0  \int_{\mathbb{R}^3} \left\{ \left( (u_{j+1}^0)^{-1} - (u_{j}^0)^{-1} \right) \Gamma_{j+1} \right\} \partial_\alpha \left( (G_{j+1}^\alpha-G_j^\alpha)-(F_{j+1}^\alpha-F_j^\alpha) \right) dx d\tau .
	\end{split}
\end{equation*}
Applying H\"older's inequality, and Bernstein's inequality, we can obtain
\begin{equation*}
	\begin{split}
		J_7\leq &  C\| \bu_{j+1}- \bu_{j}\|_{L^\infty_{[0,T^*]}L^\infty_x} \|\Gamma_{j+1}\|_{L^\infty_{[0,T^*]}L^2_x}  (\| \bG_{j+1}-\bG_j\|_{L^\infty_{[0,T^*]}L^2_x} + \|\bF_{j+1}-\bF_j \|_{L^\infty_{[0,T^*]}L^2_x} )
		\\
		& +   \int^{T^*}_0  \| \bu_{j+1}- \bu_{j} \|_{L^\infty_x}  \|\Gamma_{j+1} \|_{L^2_x} \| \partial_\alpha  (G_{j+1}^\alpha-G_j^\alpha)\|_{L^2_x}+ \|\partial_\alpha (F_{j+1}^\alpha-F_j^\alpha) \|_{L^2_x} ) d\tau 
		\\
		\leq &  C\| \bu_{j+1}- \bu_{j}\|_{L^\infty_{[0,T^*]}L^\infty_x} \|\Gamma_{j+1}\|_{L^\infty_{[0,T^*]}L^2_x}  (\| \bG_{j+1}-\bG_j\|_{L^\infty_{[0,T^*]}L^2_x} + \|\bF_{j+1}-\bF_j \|_{L^\infty_{[0,T^*]}L^2_x} )
		\\
		& +   \int^{T^*}_0  2^j \| \bu_{j+1}- \bu_{j} \|_{L^\infty_x}  \|\Gamma_{j+1} \|_{L^2_x} \| \bG_{j+1}-\bG_j \|_{L^2_x}+ \|\bF_{j+1}-\bF_j \|_{L^2_x} ) d\tau .
	\end{split}
\end{equation*}
Due to \eqref{ebs2}, \eqref{ebs9}, \eqref{ebs00}, \eqref{Duu0}, \eqref{es0}, and \eqref{es1}, for large enough $j$, it follows
\begin{equation}\label{ebsJ7}
	\begin{split}
		|J_7|
		\leq &  C\| \bu_{j+1}- \bu_{j}\|_{L^\infty_{[0,T^*]}L^\infty_x} \|\Gamma_{j+1}\|_{L^\infty_{[0,T^*]}L^2_x}  (\| \bG_{j+1}-\bG_j\|_{L^\infty_{[0,T^*]}L^2_x} + \|\bF_{j+1}-\bF_j \|_{L^\infty_{[0,T^*]}L^2_x} )
		\\
		& +   \int^{T^*}_0  2^j \| \bu_{j+1}- \bu_{j} \|_{L^\infty_x}  \|\Gamma_{j+1} \|_{L^2_x} \| \bG_{j+1}-\bG_j \|_{L^2_x}+ \|\bF_{j+1}-\bF_j \|_{L^2_x} ) d\tau 
		\\
		\leq & C(M_2+M_2^2)2^{-\frac{j}{2}} e_j \left\{ C(M_2+M_2^2) ( C_0^{\frac34}2^{-\frac{(s-1)j}{4}}e_j+   2^{-(s-\frac32)j}e_j)+2C(1+M_2^2) (1+C_0) e_j \right\}
		\\
		& + C(M_2+M_2^2)2^{-\delta_1 j} e_j \left\{ C(M_2+M_2^2) ( C_0^{\frac34}2^{-\frac{(s-1)j}{4}}e_j+   2^{-(s-\frac32)j}e_j)+2C(1+M_2^2) (1+C_0) e_j \right\}
		\\
		\leq & (Ce_j)^2 .
	\end{split}
\end{equation}
For $J_8$, we also have
\begin{equation*}
	\begin{split}
	J_8=&  \int_{\mathbb{R}^3} (u_{j}^0)^{-1}  \left( \Gamma_{j+1} - \Gamma_{j} \right) \left( (G_{j+1}^0-G_j^0)-(F_{j+1}^0-F_j^0) \right) dx|^{T^*}_0
		\\
		& -   \int^{T^*}_0 \int_{\mathbb{R}^3}    (u_{j}^0)^{-1}  \left( \Gamma_{j+1} - \Gamma_{j} \right) \partial_\alpha \left( (G_{j+1}^\alpha-G_j^\alpha)-(F_{j+1}^\alpha-F_j^\alpha) \right) dx d\tau
			\\
		\leq &  C\| \Gamma_{j+1}- \Gamma_{j}\|_{L^\infty_{[0,T^*]}L^2_x} (\| \bG_{j+1}-\bG_j\|_{L^\infty_{[0,T^*]}L^2_x} + \|\bF_{j+1}-\bF_j \|_{L^\infty_{[0,T^*]}L^2_x} )
		\\
		& +   \int^{T^*}_0   \| \Gamma_{j+1}- \Gamma_{j} \|_{L^2_x}  ( \| \bG_{j+1}-\bG_j \|_{L^2_x}+ \|\bF_{j+1}-\bF_j \|_{L^2_x} ) d\tau  .
	\end{split}
\end{equation*}
Similarly, by using \eqref{ebs2}, \eqref{ebs9}, \eqref{ebs00}, and \eqref{Duu0}, we also obtain
\begin{equation}\label{ebsJ8}
	\begin{split}
		J_8 \leq (Ce_j)^2 .
	\end{split}
\end{equation}

\textit{Bound of the left hand of \eqref{ebs18}}. By \eqref{mp2} and \eqref{es1}, we deduce
\begin{equation}\label{ebsJ9}
	\begin{split}
		& \|  (\bG_{j+1}-\bG_j)-(\bF_{j+1}-\bF_j) \|^2_{L^2_x}
		-
		\|   (\bG_{0(j+1)}-\bG_{0j})-(\bF_{0(j+1)}-\bF_{0j}) \|^2_{L^2_x} 
		\\
		\geq & \|  \bG_{j+1}-\bG_j \|^2_{L^2_x}-\left(2C(M_2+M_2^2) ( C_0^{\frac34}2^{-\frac{(s-1)j}{4}}e_j+   2^{-(s-\frac32)j}e_j)\right)^2-  (Ce_j)^2
		\\
		\geq & \|  \bG_{j+1}-\bG_j \|^2_{L^2_x} -  (Ce_j)^2,
	\end{split}
\end{equation}
for $j$ is large enough. Combining \eqref{ebs18}, \eqref{ebsJ1}-\eqref{ebsJ9}, we get
\begin{equation}\label{ebsk}
	\begin{split}
		& \|  (\bG_{j+1}-\bG_j)-(\bF_{j+1}-\bF_j) \|_{L^2_x}
		 \leq  3Ce_j.
	\end{split}
\end{equation}
Since \eqref{ebsa} and \eqref{ebsk}, we have proved 
\begin{equation}\label{ebst}
	\begin{split}
		& \|  \bw_{j+1}-\bw_j\|_{\dot{H}^2_x} \leq (3C^2+C)e_j.
	\end{split}
\end{equation}
Therefore, we have proved \eqref{ebs10}.

	\textbf{The proof of \eqref{ebs7}.} By using \eqref{Duu21}-\eqref{Duu22} and taking $r=\frac{s}{2}=1+5\delta_{1}$, we can bound \eqref{ebs7} by
	\begin{equation*}
		\begin{split}
			&2^j\| h_{j+1}- h_{j}, \bu_{j+1}- \bu_{j}\|_{L^2_{[0,T^*]} L_x^{\infty}}
			\\
			\leq & C2^j ( \| h_{j+1}- h_{j}, \bu_{j+1}- \bu_{j}\|_{L^\infty_{[0,T^*]} H_x^{1+5\delta_{1}}} + \| \bw_{j+1}- \bw_{j} \|_{L^\infty_{[0,T^*]} H_x^{\frac12+5\delta_1}})
			\\
			\leq & C2^{-3\delta_1 j} e_j.
		\end{split}
	\end{equation*}
	Then \eqref{ebs7} holds. This implies that \eqref{ebs4} holds.

	\textit{Step 3: Obtaining a solution of linear wave by taking a limit.}
	Reapting the proof in Step 1 and Step 2, we shall also find
	\begin{equation}\label{key2}
		\begin{split}
			\lim_{j\rightarrow \infty} \|f_j-f\|_{H^r_x}=0, \quad \lim_{j\rightarrow \infty} \|\partial_t f_j- \partial_t f\|_{H^{r-1}_x}=0.
		\end{split}
	\end{equation}
	So taking $j\rightarrow \infty$, we have
	\begin{equation*}
		\begin{cases}
			& \square_g f=0, \qquad (t,x) \in (t_0,T^*]\times \mathbb{R}^3,
			\\
			&(f, \partial_t f)|_{t=0}=(f_0,f_1) \in H_x^r\times H_x^{r-1}.
		\end{cases}
	\end{equation*}
	Using \eqref{Duu22}, it follows
	\begin{equation}\label{key3}
		\begin{split}
			&\|\left< \nabla \right>^{a-1} d{f}\|_{L^2_{[0,T^*]} L^\infty_x}
			\leq  {M}_4 (\|{f}_0\|_{{H}_x^r}+ \|{f}_1 \|_{{H}_x^{r-1}}),
			\\
			&\|{f}\|_{L^\infty_{[0,T^*]} H^{r}_x}+ \|\partial_t {f}\|_{L^\infty_{[0,T^*]} H^{r-1}_x} \leq  {M}_4 (\| {f}_0\|_{H_x^r}+ \| {f}_1\|_{H_x^{r-1}}).
		\end{split}
	\end{equation}

	\textit{Step 4: Continuous dependence.}
	Based on \eqref{key3}, we can prove the continuous dependence of solutions on the initial data, which can follow Section \ref{cd}. Therefore, we finish the proof of Theorem \ref{dingli2}.
\end{proof}
\subsection{Proof of Proposition \ref{DDL3}}\label{pps}
\begin{proposition}\label{DDL3}
	Let $\epsilon_2$ and $\epsilon_3$ be stated in \eqref{a0}. Let\footnote{We take $\delta_1$ be a fixed number.} $\delta_1=\frac{\sstar-2}{40}>0$. Suppose the initial data $(h_0, \bu_0, \bw_0)$ be smooth, supported in $B(0,c+2)$ and satisfying\footnote{To obtain the smallness norm of $\| \bw_0\|_{H^{2+\delta_1}}$, the price we pay is that the time interval is smaller, please see \eqref{DTJ}.}
	\begin{equation}\label{DP30}
		\begin{split}
			&\| (h_0,u_0^0-1,\mathring{\bu}_0) \|_{H^{\sstar}} + \| \bw_0\|_{H^{2+\delta_1}}  \leq \epsilon_3,
		\end{split}
	\end{equation}
	there exists a smooth solution $(h, \bu, \bw)$ to \eqref{WTe} on $[-2,2] \times \mathbb{R}^3$ satisfying
	\begin{equation}\label{DP31}
		\|(h,u^0-1,\mathring{\bu})\|_{L^\infty_{[-2,2]}H_x^{\sstar}}+\|\bw\|_{L^\infty_{[-2,2]}H_x^{2+\delta_1}} \leq \epsilon_2.
	\end{equation}
	Furthermore, the solution has the following properties

	$\mathrm{(1)}$ dispersive estimate for $h$, $\bu$, and $\bu_+$
	\begin{equation}\label{DP32}
		\|d h,d \bu_+, d \bu\|_{L^2_{[-2,2]} C^{\delta}_x}+\|d h,d \bu_+, d \bu\|_{L^2_{[-2,2]} L^\infty_x} \leq \epsilon_2,
	\end{equation}

	$\mathrm{(2)}$ Let $f$ satisfy Equation \eqref{linearB}\footnote{Here the acoustic metric is better than in Proposition \ref{DDL2}, for we improve the regularity of vorticity to $2+\delta_1$, and $\delta_1$ is a fixed number.}. For each $1 \leq r \leq s+1$, the Cauchy problem \eqref{linear} is well-posed in $H_x^r \times H_x^{r-1}$, and the following estimate holds\footnote{This Strichartz estimates is stronger than \eqref{Duu2}, for the regularity of vorticity is different between Proposition \ref{DDL2} and Proposition \ref{DDL3}.}:
	\begin{equation}\label{DP33}
		\|\left< \nabla \right>^k f\|_{L^2_{[-2,2]} L^\infty_x} \lesssim  \| f_0\|_{H^r}+ \| f_1\|_{H^{r-1}},\quad \ k<r-1,
	\end{equation}
	and the same estimates hold with $\left< \nabla \right>^k$ replaced by $\left< \nabla \right>^{k-1}d$.
\end{proposition}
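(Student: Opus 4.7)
The plan is to recognize Proposition \ref{DDL3} as a particular specialization of Proposition \ref{p1} corresponding to the choice $s_0 = 2+\delta_1$. Since $\delta_1 = (s-2)/40 > 0$ for $s \in (2,5/2]$, one has $2 < 2+\delta_1 < s \leq 5/2$, which falls within the regime $2 < s_0 < s \leq 5/2$ treated in Proposition \ref{p1}. The smallness hypothesis \eqref{DP30} is exactly the hypothesis \eqref{300} of Proposition \ref{p1} after relabeling $s_0 := 2+\delta_1$. Thus the existence of a smooth solution on $[-2,2]\times\mathbb{R}^3$ together with the bootstrap-controlled norms follows directly, and it remains to match the conclusions \eqref{DP31}--\eqref{DP33} with those delivered by Proposition \ref{p1}.

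First I would extract \eqref{DP31} from \eqref{402}, which gives the same bound on $[-1,1]$. To propagate the energy bound from $[-1,1]$ to the full interval $[-2,2]$, I would re-run the bootstrap of Proposition \ref{p4}, using that the cut-off $\chi$ in \eqref{AMd3} makes the metric $\mathbf{g}$ exactly Minkowski outside $[-3/2,3/2]$, so the solution on the extension $[-2,-1]\cup[1,2]$ is controlled by a flat-metric argument from the data at $t=\pm 1$, which is still of size $\lesssim \epsilon_2\ll\epsilon_1$. Next, for the Strichartz-type bound \eqref{DP32}, the $dh, d\bu$ terms are immediate from the $C^\delta_x$ part of \eqref{s403}. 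For $d\bu_+$, the $\nabla\bu_+$ bound in $L^2_t\dot{B}^{\delta_1}_{\infty,2}$ yields $L^2_tC^\delta_x$ and $L^2_tL^\infty_x$ control via the Besov embedding $\dot{B}^{\delta_1}_{\infty,2}\hookrightarrow C^{\delta}$ (using $\delta < \delta_1$); the time derivative $\partial_t\bu_+$ is handled via the decomposition $\partial_t\bu_+ = \partial_t\bu - \mathbf{T}\bu_- + (u^i/u^0)\partial_i\bu_-$, where $\mathbf{T}\bu_-$ is controlled by Lemma \ref{tu} and $\partial_i\bu_-$ is bounded elliptically in $H^s_x$ by $\|\bw\|_{H^{s-1}_x}$, both of which are $\lesssim\epsilon_2$. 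Finally, \eqref{DP33} is precisely the statement \eqref{304} with $s_0 = 2+\delta_1$, so it transfers without modification.

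The main technical obstacle is the fact that fixing $s_0 = 2+\delta_1$ specifically at the lower edge of the $(2,s)$ range means all the product, commutator and Littlewood-Paley estimates used in the bootstrap (e.g.\ Lemmas \ref{jh}--\ref{yx}) must be verified to hold uniformly with constants independent of how close $s_0$ is to $2$, which in turn controls the admissible size of the smallness parameter $\epsilon_3$. The estimates in Section \ref{sec:energyest}, in particular Theorem \ref{VE}, Lemma \ref{vor2} and Lemma \ref{te20} (characteristic energy of $\bW$), should be revisited in the regime $s_0 = 2+\delta_1$ to confirm that the embedding $H^{s_0-1/2}\hookrightarrow L^\infty$ and the Besov bound of Lemma \ref{LPE} remain available, which is true since $s_0 - 1/2 = 3/2+\delta_1 > 3/2$. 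Given this uniformity, the entire apparatus of Sections \ref{Sec4}.2--\ref{Sec4}.5 (regularity of the characteristic hypersurfaces of Propositions \ref{r1}--\ref{r2}, connection coefficient bound Lemma \ref{chi}, and the Strichartz Propositions \ref{r5}--\ref{r6}) applies verbatim and delivers both the energy estimate \eqref{DP31} and the Strichartz estimates \eqref{DP32}--\eqref{DP33} on $[-2,2]$.
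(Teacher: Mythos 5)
Your proposal is correct and is essentially the paper's own argument: the paper proves Proposition \ref{DDL3} in one line by taking $s_0=2+\delta_1$ in Proposition \ref{p1}, exactly as you do. The additional care you take in reconciling the time intervals ($[-1,1]$ in \eqref{402}--\eqref{s403} versus $[-2,2]$ in \eqref{DP31}--\eqref{DP32}) and in recovering the time derivative $\partial_t\bu_+$ from $\nabla\bu_+$ via $\mathbf{T}\bu_-$ and the elliptic bound on $\bu_-$ fills in steps the paper leaves implicit, and is consistent with the underlying bootstrap (Proposition \ref{p4}), which already controls $[-2,2]$.
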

\begin{proof}
		By replacing the regularity exponent $s$ and $s_0$ to $s=\sstar, s_0=2+\delta_1$ in Proposition \ref{p1}, we can directly get Proposition \ref{DDL3}.
	\end{proof}

\subsection{Proof of Proposition \ref{DDL2} by Proposition \ref{DDL3}}\label{keypro}
In this part, we prove Proposition \ref{DDL2} through Proposition \ref{DDL3}. We observe that the corresponding statement In Proposition \ref{DDL3} considers the small, supported data. While, the initial data is large in Proposition \ref{DDL2}, and regularity of the vorticity in Proposition \ref{DDL2} is weaker than Proposition \ref{DDL3}. So we divide the proof into three subparts. Firstly, we can only get a solution of Proposition \ref{DDL2} on a short time interval, which is presented in subsection \ref{esess}. Then we are able to get some good Strichartz estimates for low-frequency and mid-frequency of solutions on these short time intervals by paying a loss of derivatives, which is proved in subsection \ref{esest}. After that, we can extend these solutions on a regular time-interval in subsection \ref{finalk}. Moreover, the solutions of linear wave can also be extended to the regular time-interval, which is obtained in \ref{finalq}.
\subsubsection{Energy estimates and Strichartz estimates on a short time-interval.} \label{esess}
Firstly, by using a scaling method, we can reduce the initial data in Proposition \ref{DDL2} to be small. Considering \eqref{Dss0}, for $T>0$, take space-time scaling
\begin{equation*}
	\begin{split}
		& \underline{\mathring{\bu}}_{0j}=\mathring{\bu}_{0j}(Tt,Tx), \quad \underline{h}_{0j}=h_{0j}(Tt,Tx).
	\end{split}
\end{equation*}
Referring to \eqref{muu} and \eqref{VVd}, we set
\begin{equation*}
	\begin{split}
		& \underline{u}^0_{0j}=\sqrt{ 1+|\underline{\mathring{\bu}}_{0j}|^2 }, \qquad \underline{{\bu}}_{0j}=(\underline{u}^0_{0j},\underline{\mathring{\bu}}_{0j})^{\mathrm{T}},
	\end{split}
\end{equation*}
and
\begin{equation*}
	\underline{\bw}_{0j}=\mathrm{vort}( \mathrm{e}^{\underline{h}_0} \underline{\bu}_{0j} ).
\end{equation*}
then we have the bounds
\begin{equation}\label{Dss2}
	\begin{split}
		\| (\underline{u}^0_{0j}-1, \underline{\mathring{\bu}}_{0j}) \|_{\dot{H}^{\sstar}} +  \| \underline{h}_{0j} \|_{\dot{H}^{\sstar}} \leq  & T^{\sstar-\frac32} (\| ({u}^0_{0j}-1, {\mathring{\bu}}_{0j})\|_{\dot{H}^{\sstar}}+\| {h}_{0j} \|_{\dot{H}^{\sstar}}),
		\\
		\|\underline{\bw}_{0j}\|_{\dot{H}^{2+\delta_1}}
		\leq & T^{\frac32+\delta_{1}} \|\mathrm{vort}( \mathrm{e}^{{h}_0} {\bu}_{0j} ) \|_{\dot{H}^{2+\delta_1}}
		\\
		= & T^{\frac32+\delta_{1}} \|{\bw}_{0j} \|_{\dot{H}^{2+\delta_1}}(1+ \|h_{0j} \|^4_{\dot{H}^{2+\delta_1}}+ \|{\bu}_{0j} \|^4_{\dot{H}^{2+\delta_1}})
		\\
		\leq & T^{\frac32+\delta_{1}} 2^{\delta_{1} j }  \|{\bw}_{0j} \|_{{H}^{2}}(1+ \|h_{0j} \|^4_{{H}^{s}}+
		\|( u^0_0-1,\mathring{\bu}_{0j} )\|^4_{{H}^{s}}).
	\end{split}
\end{equation}
Set
\begin{equation}\label{DTJ}
	T^*_j =2^{-\delta_1j}[E(0)]^{-1}.
\end{equation}
Above, $E(0)$ is stated in \eqref{pu0}. Taking $T$ in \eqref{Dss2} as $T^*_j$,  then \eqref{Dss2} becomes
\begin{equation}\label{pp7}
	\begin{split}
			\| (\underline{u}^0_{0j}-1, \underline{\mathring{\bu}}_{0j}) \|_{\dot{H}^{\sstar}} +  \| \underline{h}_{0j} \|_{\dot{H}^{\sstar}} \leq & 2^{-\delta_1 j({\frac12+10\delta_{1}})}[E(0)]^{-(\frac12+10\delta_{1})},
		\\
		 \| \underline{\bw}_{0j}\|_{\dot{H}^{2+\delta_1}}  \leq  & 2^{-(\frac{\delta_1}{2}+\delta_{1}^2)j} [E(0)]^{-(\frac12+\delta_{1})}.
	\end{split}
\end{equation}
Set
\begin{equation*}
	\underline{c}_{sj}=c_s(\underline{h}_j).
\end{equation*}
For $\| \underline{h}_{0j}\|_{L^\infty}  \leq  \|{h}_{0j}\|_{L^\infty} $, using \eqref{pu00}, we have
\begin{equation}\label{pp60}
	\begin{split}
		| \underline{u}^0_{0j}-1,\underline{\mathring{\bu}}_{0j}, \underline{h}_{0j} | \leq C_0, \quad \underline{u}^0_{0j}\geq 1, \quad \underline{c}_{sj}|_{t=0}\geq c_0>0.
	\end{split}
\end{equation}
For a small parameter $\epsilon_3$ stated in Proposition \ref{DDL3}, we can choose\footnote{Recall that $\delta_{1}=\frac{s-2}{40}$, $E(0)=C(M_*+M_*^5)$. So $N_0$ depends on $\delta_1, s, C_0, c_0$ and $M_*$.} $N_0=N_0(\delta_{1},E(0))$ such that
\begin{equation}\label{pp8}
	\begin{split}
		& 2^{-\delta_1 N_0}(1+E^6(0))\ll \epsilon_3,
		\\
		& 2^{-\delta_{1} N_0 } (1+C^3_*)\{ 1+ \frac{C_*}{E(0)}(1+ E^3(0))^{-1} \} \leq 1,
		\\
		& C2^{-\frac{\delta_1}{2}N_0} (1+E^3(0))E^{-\frac12}(0)  [\frac{1}{3}(1-2^{-\delta_{1}})]^{-2} \leq 2.
	\end{split}	
\end{equation}
Above, $C_*=C_*(\delta_{1}, E(0))$ is denoted by
\begin{equation}\label{Cstar}
	\begin{split}
		C_*=C(E(0)+E^5(0))\exp( 5 \textrm{e}^5 ) .
	\end{split}	
\end{equation}
As a result, for $j \geq N_0$, we have
\begin{equation}\label{Dss3}
	\begin{split}
		\| (\underline{u}^0_{0j}-1, \underline{\mathring{\bu}}_{0j}) \|_{\dot{H}^{\sstar}}+\| \underline{h}_{0j}\|_{\dot{H}^{\sstar}}+\| \underline{\bw}_{0j}\|_{\dot{H}^{2+\delta_1}}  \leq  \epsilon_3.
	\end{split}
\end{equation}
For \eqref{Dss3} is about the homogeneous norm, so we need to use the physical localization to get the same bound for the inhomogeneous norm. Utilize the standard physical localization as in Section 4. Since the speed of propagation of \eqref{WTe} is finite, we may set $c$ be the largest speed of propagation of \eqref{WTe}. Then the solution in a unit cylinder $[-1,1]\times B(y,1)$ is uniquely determined by the initial data in the ball $B(y,1+c)$. Hence
it is natural to truncate the initial data in a slightly larger region. Set $\chi$ be a smooth function supported in $B(0,c+2)$, and which equals $1$ in $B(0,c+1)$. For any given $y \in \mathbb{R}^3$, we define the localized initial data for the velocity and density near $\by$:
\begin{equation}\label{yyy0}
	\begin{split}
		\mathring{\widetilde{\bu}}_{0j}(x)=&\chi(x-y)\left( \underline{\mathring{\bu}}_{0j}(x)- \underline{\mathring{\bu}}_{0j}(y)\right),
		\\
		\widetilde{h}_{0j}(x)=&\chi(x-y)\left( \underline{h}_{0j}(x)-\underline{h}_{0j}(y)\right).
	\end{split}
\end{equation}
Referring to \eqref{muu} and \eqref{VVd}, we set
\begin{equation}\label{yyy1}
	\begin{split}
		\widetilde{u}^0_{0j}=&\sqrt{1+|\widetilde{\mathring{\bu}}_{0j}|^2}, \qquad \widetilde{\bu}_{0j}=(\widetilde{u}^0_{0j}, \mathring{\widetilde{\bu}}_{0j})^\mathrm{T},
		\\
		\widetilde{\bw}_{0j}= & -\epsilon^{\alpha}_{\ \beta \gamma \delta}\mathrm{e}^{\widetilde{ h }_{0j}+\widetilde{h}_{0j}(y) }(\widetilde{u}^\beta_{0j}+ \underline{u}^\beta_{0j}(y)) \partial^{\gamma}\widetilde{u}^\delta_{0j}.
	\end{split}
\end{equation}
By \eqref{Dss3}, \eqref{yyy0} and \eqref{yyy1}, so we have
\begin{equation}\label{sS4}
	\begin{split}
		& \|(\widetilde{u}^0_{0j}-1, \mathring{\widetilde{\bu}}_{0j} ) \|_{{H}^{\sstar}}+\| \widetilde{\rho}_{0j}\|_{{H}^{\sstar}}+\| \widetilde{\bw}_{0j}
		\|_{{H}^{2+\delta_1}}
		\\ \leq  & C(\| (\underline{u}^0_{0j}-1, \underline{\mathring{\bu}}_{0j} ) \|_{\dot{H}^{\sstar}}+\| \underline{\rho}_{0j}\|_{\dot{H}^{\sstar}}+\| \underline{\bw}_{0j} \|_{\dot{H}^{2+\delta_1}})
		\\
		\leq & \epsilon_3.
	\end{split}
\end{equation}
For every $j$, by Proposition \ref{DDL3}, there is a smooth solution $(\widetilde{\bu}_j, \widetilde{h}_j, \widetilde{\bw}_j)$ on $[-2,2]\times \mathbb{R}^3$ satisfying
\begin{equation}\label{yz0}
	\begin{cases}
\square_{\widetilde{g}_j} \widetilde{h}_j=\widetilde{D}_j,
\\
\square_{\widetilde{g}_j} \widetilde{u}_j^\alpha=-\widetilde{c}^2_{sj} \widetilde{\Omega}_j \mathrm{e}^{- (\widetilde{h}_j + \underline{h}_{0j}(y))} \widetilde{W}_j^\alpha+\widetilde{Q}_j^\alpha,
\\
(\widetilde{u}_j^\kappa+\underline{u}_{0j}^\kappa(y)) \partial_\kappa \widetilde{w}_j^\alpha= -(\widetilde{u}_j^\alpha+\underline{u}_{0j}^\alpha(y)) \widetilde{w}_j^\kappa \partial_\kappa \widetilde{h}_j+ \widetilde{w}_j^\kappa \partial_\kappa \widetilde{u}_j^\alpha- \widetilde{w}_j^\alpha \partial_\kappa \widetilde{u}_j^\kappa,
\\
\partial_\alpha \widetilde{w}_j^\alpha =-\widetilde{w}_j^\kappa \partial_\kappa \widetilde{h}_j,
\\
(\widetilde{h}_j,\widetilde{\bu}_j,\widetilde{\bw}_j)|_{t=0}=(\widetilde{h}_{0j},\widetilde{\bu}_{0j},\widetilde{\bw}_{0j}),
	\end{cases}
\end{equation}
Above, we define
\begin{equation}\label{QRY}
	\begin{split}
		\widetilde{c}^2_{sj} &= \frac{dp}{d{h}}(\widetilde{h}_j+\underline{h}_{0j}(y)),
		\\
		\widetilde{g}_j &=g({\widetilde\bu}_j+\underline{\bu}_{0j}(y), {\widetilde{h}}_j+\underline{h}_{0j}(y)),
		\\
\widetilde{\Omega}_j&=\Omega(\widetilde{h}_j+\underline{h}_{0j}(y), \widetilde{\bu}_j+\underline{\bu}_{0j}(y)),
\end{split}
\end{equation}
and
\begin{equation}\label{DDE0}
\begin{split}
\widetilde{W}_j^\alpha& =-\epsilon^{\alpha}_{ \ \beta \gamma \delta} (\widetilde{u}_{j}^\beta+\underline{u}^\beta_{0}(y)) \partial^{\gamma} \widetilde{w}_j^\delta+\widetilde{c}_{sj}^{-2}\epsilon^{\alpha}_{\ \beta\gamma\delta}(\widetilde{u}_j^\beta+\underline{u}^\beta_{0j}(y)) \widetilde{w}_j^{\delta}\partial^{\gamma}\widetilde{h}_j,
\end{split}
\end{equation}
and
\begin{equation*}
\begin{split}
\widetilde{Q}_j^\alpha=& \widetilde{Q}_{1 \kappa j}^{\alpha \beta \gamma } \partial_\beta \widetilde{h}_j \partial_\gamma \widetilde{u}_j^\kappa+\widetilde{Q}_{2\kappa  \delta j}^{\alpha \beta \gamma } \partial_\beta \widetilde{u}_j^\kappa \partial_\gamma u_j^\delta +\widetilde{Q}_{3 j}^{\alpha \beta \gamma } \partial_\beta \widetilde{h}_j \partial_\gamma \widetilde{h}_j,
\\
\widetilde{D}_j^\alpha=&\widetilde{D}_{1 \kappa j}^{\beta \gamma } \partial_\beta \widetilde{h}_j \partial_\gamma \widetilde{u}_j^\kappa+\widetilde{D}_{2\kappa  \delta j}^{ \beta \gamma } \partial_\beta \widetilde{u}_j^\kappa \partial_\gamma \widetilde{u}_j^\delta +\widetilde{D}_{3 j}^{ \beta \gamma } \partial_\beta \widetilde{h}_j \partial_\gamma \widetilde{h}_j,
\end{split}
\end{equation*}
and the functions $\widetilde{Q}_{1 \kappa j}^{\alpha \beta \gamma }$, $\widetilde{Q}_{2\kappa  \delta j}^{\alpha \beta \gamma }$, $\widetilde{Q}_{3 j}^{\alpha \beta \gamma }$, $\widetilde{D}_{1 \kappa j}^{\beta \gamma }$, $\widetilde{D}_{2\kappa  \delta j}^{ \beta \gamma }$, $\widetilde{D}_{3 j}^{ \beta \gamma }$ have the same formulations with ${Q}_{1 \kappa }^{\alpha \beta \gamma }$, ${Q}_{2\kappa  \delta}^{\alpha \beta \gamma }$, $\widetilde{Q}_{3}^{\alpha \beta \gamma }$, ${D}_{1 \kappa }^{\beta \gamma }$, ${D}_{2\kappa  \delta}^{ \beta \gamma }$, ${D}_{3}^{ \beta \gamma }$ by replacing $(\bu, {h})$ to $(\widetilde{\bu}_j+\underline{\bu}_{0j}(y), \widetilde{h}_j+\underline{h}_{0j}(y))$. Using Proposition \ref{DDL3} again, we find that
\begin{equation}\label{Dsee0}
	\|(\widetilde{u}^0_j-1, \mathring{\widetilde{\bu}}_j)\|_{L^\infty_{[-2,2]}H_x^{\sstar}}+ \|\widetilde{h}_j\|_{L^\infty_{[-2,2]}H_x^{\sstar}}+ \| \widetilde{\bw}_j \|_{L^\infty_{[-2,2]}H_x^{2+\delta_1}} \leq \epsilon_2,
\end{equation}
and
\begin{equation}\label{Dsee1}
	\|d\widetilde{\bu}_j, d\widetilde{h}_j \|_{L^2_{[-2,2]}C^{\delta}_x} \leq \epsilon_2.
\end{equation}
Above, $\delta \in (0,s-2)$ is stated in \eqref{a1}. Furthermore, using \eqref{DP33} in Proposition \ref{DDL3}, for $1\leq r_1 \leq s+1$ and $t_0 \in [-2,2]$, the linear equation
\begin{equation}\label{Dsee2}
	\begin{cases}
		&\square_{ \widetilde{g}_j } f=0,\quad [-2,2]\times \mathbb{R}^3,
		\\
		&(f, \partial_t f)|_{t=t_0}=(f_0, f_1),
	\end{cases}
\end{equation}
admits a solution $f \in C([-2,2],H_x^{r_1})\times C^1([-2,2],H_x^{r_1-1})$ and for $k<r_1-1$, the following estimate holds\footnote{For all $j$, the initial norm of the initial data \eqref{sS4} is uniformly controlled by the same small parameter $\epsilon_3$, and the regularity of the initial data \eqref{sS4} only depends on $\sstar$, so the constant in \eqref{3s1} is uniform for all $\widetilde{{g}}$ depending on $j$.}:
\begin{equation}\label{3s1}
	\begin{split}
		\|\left< \nabla \right>^{k-1}df\|_{L^2_{[-2,2]} L^\infty_x}+ \|\left< \nabla \right>^{k-1}df\|_{L^2_{[-2,2]} L^\infty_x} \leq  & C(\| f_0\|_{H_x^r}+ \| f_1\|_{H_x^{r-1}} ).
	\end{split}
\end{equation}
Here $\widetilde{g}_j$ is given by \eqref{QRY}.

Note \eqref{yz0}. Then the function $(\widetilde{\bu}_j+\underline{\bu}_{0j}(y), \widetilde{h}_j+\underline{h}_{0j}(y), \widetilde{\bw}_j)$ is also a solution of System \eqref{yz0} with the initial data $(h_{0j}, \bu_{0j}, \bw_{0j})$. Consider the restrictions, for $y\in \mathbb{R}^3$,
\begin{equation}\label{Dsee3}
	\left( \widetilde{h}_j+\underline{h}_{0j}(y) \right)|_{\mathrm{K}^y}, \quad \left( \widetilde{\bu}_j+\underline{\bu}_{0j}(y) \right)|_{\mathrm{K}^y},
	\quad \widetilde{\bw}_j|_{\mathrm{K}^y},
\end{equation}
where $\mathrm{K}^y=\left\{ (t,x): ct+|x-y| \leq c+1, |t| <1 \right\}$, then the restriction \eqref{Dsee3} solves \eqref{WTe} on $\mathrm{K}^y$. By finite speed of propagation, a smooth solution $(\bar{\bv}_j+\widetilde{\bv}_{0j}(y), \bar{\rho}_j+ \widetilde{\rho}_{0j}(y), \widetilde{\bw}_j)$ solves \eqref{WTe} on $\mathrm{K}^y$. Let us use the cartesian grid $3^{-\frac12} \mathbb{Z}^3$ in $\mathbb{R}^3$, and a corresponding smooth partition of unity
 \begin{equation*}
 	\textstyle{\sum}_{y \in 3^{-\frac12} \mathbb{Z}^3 } \psi(x-y)=1.
 \end{equation*}
Therefore, if we set
\begin{equation}\label{Dsee4}
	\begin{split}
		& \bar{h}_j(t,x)  =\textstyle{\sum}_{y \in 3^{-\frac12} \mathbb{Z}^3}\psi(x-y) ( \widetilde{h}_j+\underline{h}_{0j}(y) ),
		\\
		&	\mathring{\bar{\bu}}_j(t,x)  =\textstyle{\sum}_{y \in 3^{-\frac12} \mathbb{Z}^3 }\psi(x-y) (\mathring{\widetilde{\bu}}_j+\mathring{\underline{\bu}}_{0j}(y)),
			\\
		& \bar{u}^0_j =\sqrt{1+|\mathring{\bar{\bu}}_j|^2 },\quad \bar{\bu}_j=( \bar{u}^0_j,\mathring{\bar{\bu}}_j)^{\mathrm{T}}, \quad
		\bar{\bw}_j(t,x)  =\mathrm{vort}( \mathrm{e}^{\bar{h}_j} \bar{\bu}_j).
	\end{split}
\end{equation}
By \eqref{Dsee4}, and time-space scaling $((T^*_j)^{-1}t,(T^*_j)^{-1}x)$, we then obtain
\begin{align}\label{Dsek}
		&(h, {\bu}_{j}, {\bw}_{j})=(\bar{h}_{j},\bar{\bu}_{j}, \bar{\bw}_{j}) ((T^*_j)^{-1}t,(T^*_j)^{-1}x),
		\\\label{Dsej}
		&(h, {\bu}_{j}, {\bw}_{j})|_{t=0}=(\bar{h}_{j},\bar{\bu}_{j}, \bar{\bw}_{j}) (0,(T^*_j)^{-1}x)=(h_{0j},\bu_{0j},\bw_{0j}).
\end{align}
Thus, $(h_j,{\bu}_j,{\bw}_j)$ is a smooth solution of \eqref{WTe} on $[0,T^*_j]\times \mathbb{R}^3$ with the initial data \eqref{Dsej}. Referring \eqref{Dsee4} and using \eqref{Dsee2}-\eqref{3s1}, we can see
\begin{equation}\label{Dsee5}
	\begin{split}
		\|d\bar{h}_j, d\bar{\bu}_j\|_{L^2_{[0,1]}C^{\delta}_x}
		\leq & \sup_{y \in 3^{-\frac12} \mathbb{Z}^3}  \|d\widetilde{h}_j, d\widetilde{\bu}_j\|_{L^2_{[0,1]}{C^{\delta}_x}}
		\\
		\leq & C(\|(\widetilde{u}^0_{0j}-1, \mathring{\widetilde{\bu}}_{0j})\|_{H_x^s}+ \|\widetilde{h}_{0j}\|_{H_x^s}+ \| \widetilde{\bw}_{0j} \|_{H_x^{2}}).
	\end{split}
\end{equation}
By changing of coordinates $(t,x)\rightarrow ((T_j^*)^{-1}t,(T_j^*)^{-1}x)$, for each $j\geq 1$, we therefore get
\begin{equation}\label{Dsee6}
	\begin{split}
		\|d{h}_j, d{\bu}_j\|_{L^2_{[0,T^*_j]}C^{\delta}_x}
		\leq & (T^*_j)^{-(\frac12+{\delta})}\|d\bar{h}_j, d\bar{\bu}_j\|_{L^2_{[0,1]}C^{\delta}_x}.
	\end{split}
\end{equation}
Using \eqref{Dsee5}, \eqref{Dsee6}, and \eqref{DTJ}, and ${\delta} \in (0,s-2)$, it follows
\begin{equation}\label{Dsee7}
	\begin{split}
		\|d{h}_j, d{\bu}_j\|_{L^2_{[0,T^*_j]}C^{\delta}_x}
		\leq	&C (T^*_j)^{-(\frac12+{\delta})}(\|(\widetilde{u}^0_{0j}-1, \mathring{\widetilde{\bu}}_{0j})\|_{H_x^s}+ \|\widetilde{h}_{0j}\|_{H_x^s}+ \| \widetilde{\bw}_{0j} \|_{H_x^{2}})
		\\
		\leq & C (T^*_j)^{-(\frac12+{\delta})} \{  (T^*_j)^{s-\frac32}\|(h, {u}^0_{0j}-1, {\mathring{\bu}}_{0j})\|_{\dot{H}_x^s}
		+ (T^*_j)^{\frac32}\| {\bw}_{0j} \|_{\dot{H}_x^{2}} \}
		\\
		\leq & C (\|(h_0, {u}^0_{0j}-1, {\mathring{\bu}}_{0j})\|_{H_x^s}+ \| {\bw}_{0j} \|_{H_x^{2}}).
	\end{split}
\end{equation}
Comining \eqref{Dsee7} and \eqref{pu0}, we obtain
\begin{equation}\label{yz4}
	\begin{split}
		\|d{h}_j, d{\bu}_j\|_{L^2_{[0,T^*_j]}C^{\delta}_x}
		\leq & C (1+E(0)).
	\end{split}
\end{equation}
Set
\begin{equation}\label{Etr}
	E(T^*_{j})=\|h_{j}\|_{L^\infty_{[0,T^*_{j}]} H^{\sstar}_x}+\|(u^0_j-1,\mathring{\bu}_j)\|_{L^\infty_{[0,T^*_{j}]} H^{\sstar}_x}+
	\|\bw_{j}\|_{L^\infty_{[0,T^*_{j}]} H^{2}_x}.
\end{equation}
By using \eqref{yz4} and Theorem \ref{VEt}, we have\footnote{Note that $\| \bw_j \|_{L^\infty_{[0,T^*_{j}]}H_x^{2+}}$ is not uniform bounded. But $\| \widetilde{\bw}_j \|_{L^\infty_{[-2,2]}H_x^{2+}} \lesssim \epsilon_2$ for all $j \geq 1$.}
\begin{equation}\label{AMM3}
	\begin{split}
		E(T^*_j) \leq & C( E(0)+E^5(0) )\exp \left( 5(1+E(0))\mathrm{e}^{5(1+E(0))} \right).
	\end{split}
\end{equation}
We also expect some estimates for the homogeneous linear wave equation on a short time interval $[0,T^*_j]$. Following the proof of \eqref{po10} and \eqref{po13} for Equation \eqref{po3}, we deduce that for $1\leq r_1 \leq s+1$, then
\begin{equation}\label{ru03}
	\begin{cases}
		\square_{{g}_j} f=0, \quad [0,T^*_j]\times \mathbb{R}^3,
		\\
		(f,f_t)|_{t=0}=(f_0,f_1)\in H_x^{r_1} \times H^{r_1-1}_x,
	\end{cases}
\end{equation}
has a unique solution on $[0,T^*_j]$. Moreover, for $a_1<r_1-1$, we have
\begin{equation}\label{ru04}
	\begin{split}
		\|\left< \nabla \right>^{a_1-1} df\|_{L^2_{[0,T^*_j]} L^\infty_x}
		\leq & C(\|{f}_0\|_{{H}_x^{r_1}}+ \| {f}_1 \|_{{H}_x^{r_1-1}}),
	\end{split}
\end{equation}
and
\begin{equation}\label{ru05}
	\begin{split}
		\|f\|_{L^\infty_{[0,T^*_j]} H^{r_1}_x}+ \|\partial_t f\|_{L^\infty_{[0,T_j]} H^{r_1-1}_x} \leq  C(\| f_0\|_{H_x^{r_1}}+ \| f_1\|_{H_x^{r_1-1}}).
	\end{split}
\end{equation}
Seeing from \eqref{ru04} and \eqref{ru05}, there is a uniform bound for $h_{j}, \bu_{j}$ and $\bw_{j}$. Unfortunately, the time interval $[0,T^*_{j}]$ is not uniform for $j$. If the sequence $(h_{j}, \bu_{j}, \bw_{j})$ can be applied in the analysis, then we have to extend this solution on a regular time interval. In fact, \eqref{yz4} is a strong Strichartz estimate given the derivatives. If we only obtain a loss of derivative for Strichartz estimates, the time interval could be regular.

\subsubsection{A loss of Strichartz estimates on a short time-interval.} \label{esest}
To get a loss of Strichartz estimates on a short-time-interval, we will discuss it by using phase truncation\footnote{Here, we mainly inspired by Tataru \cite{T1}, Bahouri-Chemin \cite{BC2}, Ai-Ifrim-Tataru \cite{AIT}, and Andersson-Zhang \cite{AZ}. Similar to \cite{AZ}, we conclude the proof by induction method.}. Next, we divide it into two cases: the high and low frequency for $(h_{j}, {\bu}_{j})$.

\textit{Case 1: High frequency($k \geq j$).} Using \eqref{yz4}, we get
\begin{equation}\label{yz6}
	\begin{split}
		& \| d h_j, d \bu_j \|_{L^2_{[0,T^*_j]}C^a_x} \leq C(1+E(0)), \quad  a \in [0,s-2).
	\end{split}
\end{equation}
For $k \geq j$, by Bernstein inequality and \eqref{yz6}, we have
\begin{equation}\label{yz9}
	\begin{split}
		\| P_{k} d h_j, P_{k} d \bu_j \|_{L^2_{[0,T^*_j]}L^\infty_x}
		\leq & 2^{-ka}\| P_k d h_j, P_k d \bu_j \|_{L^2_{[0,T^*_j]}C^a_x}
		\\
		\leq & C2^{-ja} \| d h_j, d \bu_j \|_{L^2_{[0,T^*_j]}C^a_x}
		\\
		\leq &  C2^{-ja} (1+E(0)).
	\end{split}
\end{equation}
Taking $a=9\delta_1$ in \eqref{yz9}, so we obtain
\begin{equation}\label{yz10}
	\begin{split}
		\| P_{k} d h_j, P_{k} d \bu_j \|_{L^2_{[0,T^*_j]}L^\infty_x}
		\leq & C   2^{-\delta_{1}k} \cdot (1+E^3(0)) 2^{-7\delta_{1}j}, \quad k \geq j,
	\end{split}
\end{equation}
\textit{Case 2: Low frequency($k<j$).} In this case, it's a little different from $k \geq j$. Fortunately, there is some good estimates for difference terms $P_k(d{h}_{m+1}-d{h}_m)$ and $P_k(d{\bu}_{m+1}-d{\bu}_{m})$. Following \eqref{De} and \eqref{De0}, we set
\begin{equation}\label{etad0}
\begin{split}
	& u_j^\alpha= u^\alpha_{j+}+u^\alpha_{j-}, \quad
	{\mathbf{P}}_j= -{\mathrm{P}}_j^{\beta \gamma} \partial^2_{\beta \gamma}
	\\
	& \mathbf{P}_j u_{j-}^\alpha = \mathrm{e}^{-h_j}W_j^\alpha, \quad	\mathrm{P}_j^{\beta \gamma} =  m^{\beta \gamma}+2u_j^{\beta}u_j^{\gamma}.
\end{split}	
\end{equation}
We will obtain some good estimates of $P_k(d{h}_{m+1}-d{h}_m)$ and $P_k(d{\bu}_{m+1}-d{\bu}_m)$ by using a Strichartz estimates \eqref{ru04} of \eqref{ru03}. The good estimate is from a loss of derivatives of Strichartz estimates.

For $m\in \mathbb{Z}^+$, by Bernstein inequality, we get
\begin{equation}\label{yz12}
	\begin{split}
		& \|{\bu}_{0(m+1)}-{\bu}_{0m}\|_{L^2}
		\lesssim  2^{-sm} \|{\bu}_{0(m+1)}-{\bu}_{0m}\|_{\dot{H}^s},
\\
		& \|{h}_{0(m+1)}-{h}_{0m}\|_{L^2} \lesssim   2^{-sm}\|h_{0(m+1)}-h_{0m}\|_{\dot{H}^s},
		\\
		& \|{\bw}_{0(m+1)}-{\bw}_{0m}\|_{L^2} \lesssim   2^{-2m}\|{\bw}_{0(m+1)}-{\bw}_{0m}\|_{\dot{H}^2}.
	\end{split}	
\end{equation}
We claim that
\begin{align}\label{yu0}
	 \|\bu_{m+1}-\bu_{m}, h_{m+1}-h_{m}\|_{L^\infty_{ [0,T^*_{m+1}] } L^2_x} \leq & C2^{-\sstar m}E(0),
	\\\label{yu1}
	 \|\bw_{m+1}-\bw_{m}\|_{L^\infty_{ [0,T^*_{m+1}]  } L^2_x} \leq & C2^{- (\sstar-1)m}E(0),
	 \\\label{yub}
	 \|\bw_{m+1}-\bw_{m}\|_{L^\infty_{ [0,T^*_{m+1}]  } \dot{H}^2_x} \leq & C E(0).
\end{align}
We remark that the proof of \eqref{yu0}-\eqref{yub} can repeat the proof of \eqref{ebs2}-\eqref{ebs3}. So we omit the details.

On the other hand, using Strichartz estimates \eqref{ru04} and interpolation formula, then
\begin{equation}\label{see80}
	\begin{split}
		& \|\nabla({h}_{m+1}-{h}_{m}) \|_{L^2_{[0,T^*_{m+1}]} C^{\delta_{1}}_x}+\|\nabla ({\bu}_{+(m+1)}-{\bu}_{+m}) \|_{L^2_{[0,T^*_{m+1}]} C^{\delta_{1}}_x}
		\\
		\leq  &  C\| h_{m+1}-h_{m}, {\bu}_{m+1}-{\bu}_{m} \|_{L^\infty_{[0,T^*_{m+1}]} H^{2+2\delta_{1}}_x}+\| {\bw}_{m+1}-{\bw}_{m} \|_{L^2_{[0,T^*_{m+1}]} H^{\frac32+2\delta_{1}}_x}
		\\
		\leq  &  C\| h_{m+1}-h_{m}, {\bu}_{m+1}-{\bu}_{m} \|_{L^\infty_{[0,T^*_{m+1}]} H^{2+2\delta_{1}}_x}
		\\
		& +\| {\bw}_{m+1}-{\bw}_{m} \|^{\frac14-\delta_1}_{L^2_{[0,T^*_{m+1}]} L^{2}_x}\| {\bw}_{m+1}-{\bw}_{m} \|^{\frac34+\delta_1}_{L^\infty_{[0,T^*_{m+1}]} \dot{H}^{2}_x}.
	\end{split}
\end{equation}
Noting $s=2+40\delta_{1}$ and using \eqref{yu0}-\eqref{yub}, we can bound \eqref{see80} by
\begin{equation}\label{see99}
	\begin{split}
		& \|\nabla(h_{m+1}-h_{m}) \|_{L^2_{[0,T^*_{m+1}]} C^{\delta_{1}}_x}+\|\nabla ({\bu}_{+(m+1)}-{\bu}_{+m}) \|_{L^2_{[0,T^*_{m+1}]} C^{\delta_{1}}_x}
		\leq   2^{-7\delta_1m} E(0).
	\end{split}
\end{equation}
By \eqref{yu1} and Sobolev imbeddings, we shall get
\begin{equation}\label{siq}
	\begin{split}
		\|\nabla ({\bu}_{-(m+1)}-{\bu}_{-m}) \|_{L^2_{[0,T^*_{m+1}]} L^\infty_x}
		\leq & \| \nabla ({\bu}_{-(m+1)}-{\bu}_{-m}) \|_{L^2_{[0,T^*_{m+1}]} L^\infty_x}
		\\
		\leq  & C \|{\bw}_{m+1}- {\bw}_{m} \|_{L^\infty_{[0,T^*_{m+1}]} H^{\frac32+\delta_{1}}_x}
		\\
		\leq & C (1+E^2(0)) 2^{-8\delta_{1} m} .
	\end{split}
\end{equation}
If we add \eqref{see99} and \eqref{siq}, then we have
\begin{equation}\label{siw}
	\begin{split}
		& \|\nabla( {\rho}_{m+1}-{\rho}_{m} ) \|_{L^2_{[0,T^*_{m+1}]} C^{\delta_{1}}_x}+\| \nabla ({\bv}_{m+1}-{\bv}_{m}) \|_{L^2_{[0,T^*_{m+1}]} C^{\delta_{1}}_x}\leq  C (1+E^2(0)) 2^{-6\delta_{1} m}.
	\end{split}
\end{equation}
By using \eqref{QH} and \eqref{siw}, we shall obtain
\begin{equation}\label{sie}
	\begin{split}
		& \|\partial_t(h_{m+1}-h_{m})\|_{L^2_{[0,T^*_{m+1}]} C^{\delta_{1}}_x}+\| \partial_t({\bu}_{m+1}-{\bu}_{m}) \|_{L^2_{[0,T^*_{m+1}]} C^{\delta_{1}}_x}
		\\
		\leq  & \|\nabla(h_{m+1}-h_{m}) , \nabla ({\bu}_{m+1}-{\bu}_{m}) \|_{L^2_{[0,T^*_{m+1}]} C^{\delta_{1}}_x} \cdot (1+ \|(h_m, u^0_m-1,\mathring{\bu}_m)\|_{L^\infty_{[0,T^*_{m+1}]} H^s_x} )
		\\
		\leq & C (1+E^3(0)) 2^{-6\delta_{1} m}.
	\end{split}
\end{equation}
Due to \eqref{siw} and \eqref{sie}, for $k<j$, it yields
\begin{equation}\label{Sia}
	\|P_k d(h_{m+1}-h_{m}), P_k d ({\bu}_{m+1}-{\bu}_{m}) \|_{L^2_{[0,T^*_{m+1}]} L^\infty_x}
	\leq   2^{-\delta_1k}\cdot C (1+E^3(0)) 2^{-6\delta_1m},
\end{equation}
and
\begin{equation}\label{kz4}
	\|P_k d(h_{m+1}-h_{m}), P_k d ({\bu}_{m+1}-{\bu}_{m}) \|_{L^1_{[0,T^*_{m+1}]} L^\infty_x}
	\leq   2^{-\delta_1k} \cdot C (1+E^3(0)) 2^{-6\delta_1m}.
\end{equation}
Based on \eqref{yz10}, \eqref{Sia}, \eqref{kz4}, following section 10.3.3 in \cite{AZ}, we can extend the solution $(h_j, \bu_j, \bw_j)$ to a uniformly regular time-interval.
\subsubsection{Uniform energy and Strichartz estimates on a regular time-interval $[0,T^*_{N_0}]$.}\label{finalk}
We recall $T_{N_0}^*=[E(0)]^{-1}2^{-\delta_{1}N_0}$, which is stated in \eqref{DTJ}. Note \eqref{yz10} and \eqref{Sia}. Therefore, when $k\geq j$ and $j\geq N_0$, using \eqref{yz10} and H\"older's inequality, we have
\begin{equation}\label{kf01}
	\begin{split}
		\| P_{k} d h_j, P_{k} d \bu_j \|_{L^1_{[0,T^*_j]}L^\infty_x} \leq & (T^*_j)^{\frac12}\| P_{k} d h_j, P_{k} d \bu_j \|_{L^2_{[0,T^*_j]}L^\infty_x}
		\\
		\leq & (T^*_{N_0})^{\frac12} \cdot C  (1+E^3(0))2^{-\delta_{1}k} 2^{-7\delta_1j}.
	\end{split}
\end{equation}
When $k< j$ and $m\geq N_0$, using \eqref{Sia} and H\"older's inequality, we also see
\begin{equation}\label{kf02}
	\|P_k d(h_{m+1}-h_{m}), P_k d({\bu}_{m+1}-{\bu}_{m}) \|_{L^1_{[0,T^*_{m+1}]} L^\infty_x}
	\leq    (T^*_{N_0})^{\frac12} \cdot C  (1+E^3(0))2^{-\delta_{1}k} 2^{-6\delta_1m}.
\end{equation}
On the other side, when $k\geq j$ and $j< N_0$, using \eqref{yz10}, we have
\begin{equation}\label{kf03}
	\begin{split}
		\| P_{k} d h_j, P_{k} d \bu_j \|_{L^1_{[0,T^*_{N_0}]}L^\infty_x} \leq & (T^*_{N_0})^{\frac12} \| P_{k} d h_j, P_{k} d \bu_j \|_{L^2_{[0,T^*_{N_0}]}L^\infty_x}
		\\
		\leq & (T^*_{N_0})^{\frac12} \| P_{k} d h_j, P_{k} d \bu_j \|_{L^2_{[0,T^*_j]}L^\infty_x}
		\\
		\leq & (T^*_{N_0})^{\frac12} \cdot C  (1+E^3(0))2^{-\delta_{1}k} 2^{-7\delta_1j}.
	\end{split}
\end{equation}
When $k< j$ and $m< N_0$, using \eqref{Sia} and H\"older's inequality, we can see
\begin{equation}\label{kf04}
	\begin{split}
		& \|P_k d(h_{m+1}-h_{m}), P_k d({\bu}_{m+1}-{\bu}_{m}) \|_{L^1_{[0,T^*_{N_0}]} L^\infty_x}
		\\
		\leq & (T^*_{N_0})^{\frac12} \|P_k d(h_{m+1}-h_{m}), P_k d({\bu}_{m+1}-{\bu}_{m}) \|_{L^2_{[0,T^*_{N_0}]} L^\infty_x}
		\\
		\leq & (T^*_{N_0})^{\frac12} \|P_k d(h_{m+1}-h_{m}), P_k d({\bu}_{m+1}-{\bu}_{m}) \|_{L^2_{[0,T^*_{m+1}]} L^\infty_x}
		\\
		\leq    & (T^*_{N_0})^{\frac12} \cdot C  (1+E^3(0))2^{-\delta_{1}k} 2^{-6\delta_1m}.
	\end{split}
\end{equation}
Due to a different time-interval for the sequency  $(h_j,\bu_j,\bw_j)$, so we discuss the solutions $(h_j,\bu_j,\bw_j)$ if  $j \leq N_0$ or  $j \geq N_0+1$ as follows.

\textit{Case 1: $j \leq N_0$.} In this case, $(h_j, \bu_j, \bw_j)$ exists on $[0,T_j^*]$, and $[0,T^*_{N_0}]\subseteq [0,T^*_{j}]$. So we don't need to extend the solution $(h_j, \bu_j, \bw_j)$ if $j \leq N_0$. Using \eqref{yz4} and H\"older's inequality, we get
\begin{equation*}
	\| dh_j, d\bu_j\|_{L^1_{[0,T^*_{N_0}]}L^\infty_x} \leq (T^*_{N_0})^{\frac12} \| dh_j, d\bu_j\|_{L^2_{[0,T^*_{N_0}]}L^\infty_x} \leq C(T^*_{N_0})^{\frac12} (1+E(0)).
\end{equation*}
By \eqref{pp8}, this yields
\begin{equation}\label{ky0}
	\| dh_j, d\bu_j\|_{L^1_{[0,T^*_{N_0}]}L^\infty_x} \leq 2.
\end{equation}
By the Newton-Leibniz formula and \eqref{pu00}, it follows that
\begin{equation}\label{ky1}
	\|h_j, u^0_j-1,\mathring{\bu}_j\|_{L^\infty_{[0,T^*_{N_0}] \times \mathbb{R}^3}}\leq |h_{0j}, u^0_{0j}-1,\mathring{\bu}_{0j}|+ \| dh_j, d\bu_j\|_{L^1_{[0,T^*_{N_0}]}L^\infty_x} \leq 2+C_0.
\end{equation}
Using \eqref{ky0} and Theorem \ref{VEt}, we get
\begin{equation}\label{ky2}
	E(T^*_{N_0}) \leq C(E(0)+E^5(0))\exp(  5 \textrm{e}^5 )=C_* .
\end{equation}
Here $C_*$ is stated as in \eqref{Cstar}.

\textit{Case 2: $j \geq N_0+1$.} In this case, we have to extend the time interval $I_1=[0,T^*_j]$ to $[0,T^*_{N_0}]$. So we will use \eqref{yz10}, \eqref{Sia}, and Theorem \ref{VEt} to calculate the energy at time $T_j^*$. Then we start at $T_j^*$ and get a new time-interval.

To be simple, we set
\begin{equation}\label{ti1}
	I_1=[0,T^*_j]=[t_0,t_1], \quad \quad |I_1|=[E(0)]^{-1}2^{-\delta_{1} j}.
\end{equation}
By a frequency decomposition, we get
\begin{equation}\label{kz03}
	\begin{split}
		d \bu_j= & \textstyle{\sum}^{\infty}_{k=j} d \bu_j+ \textstyle{\sum}^{j-1}_{k=1}P_k d \bu_j
		\\
		=& P_{\geq j}d \bu_j+ \textstyle{\sum}^{j-1}_{k=1} \textstyle{\sum}_{m=k}^{j-1} P_k  (d\bu_{m+1}-d\bu_m)+ \textstyle{\sum}^{j-1}_{k=1}P_k d \bu_k .
	\end{split}
\end{equation}
Similarly, we also have
\begin{equation}\label{kz04}
	\begin{split}
		d h_j= & P_{\geq j}d h_j+ \textstyle{\sum}^{j-1}_{k=1} \textstyle{\sum}_{m=k}^{j-1} P_k  (d h_{m+1}-d h_m)+ \textstyle{\sum}^{j-1}_{k=1}P_k d h_k.
	\end{split}
\end{equation}
When $k\geq j$, using \eqref{kf01} and \eqref{kf03}, we have\footnote{In the case $j\geq N_0$ we use \eqref{kf01}. In the case $j < N_0$, we take $T^*_j = T^*_{N_0}$ and use \eqref{kf03} to give a bound on $[0,T^*_{N_0}]$.}
\begin{equation}\label{kz01}
	\begin{split}
		\| P_{k} d h_j, P_{k} d \bu_j \|_{L^1_{[0, T^*_j]}L^\infty_x} \leq   &   (T^*_{N_0})^{\frac12} \cdot C(1+E^3(0)) 2^{-\delta_{1}k} 2^{-7\delta_{1}j}.
	\end{split}
\end{equation}
When $k< j$, using \eqref{kf02} and \eqref{kf04}, we also see\footnote{In the case $m\geq N_0$ we use \eqref{kf02}. In the case $m < N_0$, we take $T^*_{m+1} = T^*_{N_0}$ and use \eqref{kf04} to give a bound on $[0,T^*_{N_0}]$.}
\begin{equation}\label{kz02}
	\|P_k d(h_{m+1}-h_{m}), P_k d({\bu}_{m+1}-{\bu}_{m}) \|_{L^1_{[0,T^*_{m+1}]} L^\infty_x}
	\leq    (T^*_{N_0})^{\frac12} \cdot C  (1+E^3(0))2^{-\delta_{1}k} 2^{-6\delta_1m}.
\end{equation}
We will use  and \eqref{kz01}-\eqref{kz02} to give a precise analysis of \eqref{kz03}-\eqref{kz04} and get some new time-intervals, and then we try to extend $h_j$ from $[0,T^*_j]$ to $[0,T^*_{N_0}]$. Our strategy is as follows. In step 1, we extend it from $[0,T^*_j]$ to $[0,T^*_{j-1}]$. In step 2, we use induction methods to conclude these estimates and also extend it to $[0,T_{N_0}^*]$.

\textbf{Step 1: Extending $[0,T^*_j]$ to $[0,T^*_{j-1}] \ (j \geq N_0+1)$.} To start, referring \eqref{DTJ}, so we need to calculate $E(T^*_j)$ for obtaining a length of a new time-interval. Then we shall calculate $\|d h_j, d \bu_j\|_{L^1_{[0,T^*_j]}L^\infty_x}$. Using \eqref{kz03} and \eqref{kz04}, we derive that
\begin{equation*}
	\begin{split}
		\|d h_j, d \bu_j\|_{L^1_{[0,T^*_j]} L^\infty_x }
		\leq & 	\|P_{\geq j}dh_j, P_{\geq j}d \bu_j\|_{L^1_{[0,T^*_j]} L^\infty_x }  + \textstyle{\sum}^{j-1}_{k=1} \|P_k dh_k, P_k d\bu_k\|_{L^1_{[0,T^*_j]}L^\infty_x}\\
		& + \textstyle{\sum}^{j-1}_{k=1} \textstyle{\sum}_{m=k}^{j-1}  \|P_k  (dh_{m+1}-dh_m), P_k  (d\bu_{m+1}-d\bu_m)\|_{L^1_{[0,T^*_j]}L^\infty_x}
		\\
		\leq & 	\|P_{\geq j}dh_j, P_{\geq j}d \bu_j\|_{L^1_{[0,T^*_j]}L^\infty_x} + \textstyle{\sum}^{j-1}_{k=1} \|P_k dh_k, P_k d\bu_k\|_{L^1_{[0,T^*_k]}L^\infty_x}\\
		& + \textstyle{\sum}^{j-1}_{k=1} \textstyle{\sum}_{m=k}^{j-1}\| P_k  (dh_{m+1}-dh_m), P_k  (d\bu_{m+1}-d\bu_m)\|_{L^1_{[0,T^*_{m+1}]}L^\infty_x}
		\\
		\leq & C(1+E^3(0))(T^*_{N_0})^{\frac12}  \textstyle{\sum}_{k=j}^{\infty} 2^{-\delta_{1} k} 2^{-7\delta_{1} j}
		\\
		& + C(1+E^3(0))(T^*_{N_0})^{\frac12} \textstyle{\sum}^{j-1}_{k=1} 2^{-\delta_{1} k} 2^{-7\delta_{1} k}
		\\
		& +  C(1+E^3(0))(T^*_{N_0})^{\frac12} \textstyle{\sum}^{j-1}_{k=1} \textstyle{\sum}_{m=k}^{j-1} 2^{-\delta_{1} k} 2^{-6\delta_{1} m}
		\\
		\leq & C(1+E^3(0))(T^*_{N_0})^{\frac12} [\frac{1}{3}(1-2^{-\delta_{1}})]^{-2}.
	\end{split}
\end{equation*}
So we get
\begin{equation}
	\begin{split}\label{kz05}
		\|d h_j, d \bu_j\|_{L^1_{[0,T^*_j]}L^\infty_x}
		\leq  C(1+E^3(0))(T^*_{N_0})^{\frac12} [\frac{1}{3}(1-2^{-\delta_{1}})]^{-2}.
	\end{split}
\end{equation}
By using \eqref{kz05}, \eqref{pp8} and \eqref{pu00}, we get
\begin{equation}
	\begin{split}\label{kp05}
		\| h_j,  u^0_j-1, \mathring{\bu}_j\|_{L^\infty_{[0,T^*_j]}L^\infty_x} \leq \| h_{0j},  u^0_{0j}-1, \mathring{\bu}_{0j}\|+	\|d h_j, d \bu_j\|_{L^1_{[0,T^*_j]}L^\infty_x}
		\leq  C_0+2.
	\end{split}
\end{equation}
By \eqref{kz05} and \eqref{pp8} and Theorem \ref{VEt}, we have
\begin{equation}\label{kz06}
	\begin{split}
		E(T^*_{j}) \leq C(E(0)+E^5(0))\exp(  5\textrm{e}^5  ) =C_*.
	\end{split}
\end{equation}
Above, $C_*$ has stated in \eqref{Cstar}. Starting at the time $T^*_j$, seeing \eqref{DTJ} and \eqref{kz06}, we shall get an extending time-interval with a length of $(C_*)^{-1}2^{-\delta_{1} j}$. But, if $T^*_j + (C_*)^{-1}2^{-\delta_{1} j} \geq T^*_{j-1}$, so we have finished this step. Else, we need to extend it again.

$\mathit{Case 1: T^*_j + (C_*)^{-1}2^{-\delta_{1} j} \geq T^*_{j-1} }$, then we get a new interval
\begin{equation}\label{deI2}
	I_2=[T^*_j, T^*_{j-1}], \quad |I_2|= (2^{\delta_{1}}-1) E(0)^{-1}2^{-\delta_{1} j}.
\end{equation}
Moreover, referring \eqref{kz01} and \eqref{kz02}, we can deduce
\begin{equation}\label{kz07}
	\| P_{k} d h_j, P_{k} d \bu_j \|_{L^1_{[T^*_j, T^*_{j-1}]}L^\infty_x}
	\leq  (T^*_{N_0})^{\frac12}  \cdot C  (1+C^3_*) 2^{-\delta_{1}k} 2^{-7\delta_{1}j}, \quad k \geq j,
\end{equation}
and $k<j$,
\begin{equation}\label{kz08}
	\|P_k (d{h}_{j}-d{h}_{j-1}), P_k (d{\bu}_{j}-d{\bu}_{j-1}) \|_{L^1_{[T^*_j, T^*_{j-1}]} L^\infty_x}
	\leq   (T^*_{N_0})^{\frac12}  \cdot C  (1+C^3_*) 2^{-\delta_{1}k} 2^{-6\delta_1(j-1)}.
\end{equation}
Using \eqref{pp8} and $j \geq N_0+1$, \eqref{kz07} and \eqref{kz08} yields
\begin{equation}\label{kz09}
	\| P_{k} d h_j, P_{k} d \bu_j \|_{L^1_{[T^*_j, T^*_{j-1}]}L^\infty_x}
	\leq  (T^*_{N_0})^{\frac12}  \cdot C  (1+E^3(0)) 2^{-\delta_{1}k} 2^{-6\delta_{1}j}, \quad k \geq j,
\end{equation}
and $k<j$,
\begin{equation}\label{kz10}
	\|P_k (d{h}_{j}-d{\bu}_{j-1}), P_k (d{\bu}_{j}-d{\bu}_{j-1}) \|_{L^1_{[T^*_j, T^*_{j-1}]} L^\infty_x}
	\leq    (T^*_{N_0})^{\frac12}  \cdot C   (1+E^3(0)) 2^{-\delta_{1}k} 2^{-5\delta_1(j-1)}.
\end{equation}
Therefore, we could get the following estimate
\begin{equation}\label{kz11}
	\begin{split}
		& \|d h_j, d\bu_j\|_{L^1_{[0,T^*_{j-1}]} L^\infty_x}
		\\
		\leq & 	\|P_{\geq j}dh_j, P_{\geq j}d \bu_j\|_{L^1_{[0,T^*_{j-1}]} L^\infty_x}  + \textstyle{\sum}^{j-1}_{k=1} \|P_k dh_k, P_k d\bu_k\|_{L^1_{[0,T^*_{j-1}]} L^\infty_x}
		\\
		& + \textstyle{\sum}^{j-1}_{k=1} \|P_k  (d h_{j}-d h_{j-1}), P_k  (d\bu_{j}-d\bu_{j-1})\|_{L^1_{[0,T^*_{j-1}]} L^\infty_x}
		\\
		& + \textstyle{\sum}^{j-2}_{k=1} \textstyle{\sum}_{m=k}^{j-2}  \|P_k  (d h_{m+1}-d h_m), P_k  (d\bu_{m+1}-d\bu_m)\|_{L^1_{[0,T^*_{j-1}]} L^\infty_x}.
	\end{split}
\end{equation}
Due to \eqref{kz01} and \eqref{kz09}, it yields
\begin{equation}\label{kz12}
	\begin{split}
		\|P_{\geq j}dh_j, P_{\geq j}d\bu_j\|_{L^1_{[0,T^*_{j-1}]} L^\infty_x}
		\leq & C   (1+E^3(0)) (T^*_{N_0})^{\frac12}  \textstyle{\sum}^{\infty}_{k=j}2^{-\delta_{1}k} (2^{-7\delta_1 j}+ 2^{-6\delta_{1} j})
		\\
		\leq & C   (1+E^3(0)) (T^*_{N_0})^{\frac12}  \textstyle{\sum}^{\infty}_{k=j} 2^{-\delta_{1}k} 2^{-6\delta_{1} j}\times 2.
	\end{split}
\end{equation}
Due to \eqref{kz02} and \eqref{kz10}, it yields
\begin{equation}\label{kz13}
	\begin{split}
		& \textstyle{\sum}^{j-1}_{k=1} \|P_k  (dh_{j}-dh_{j-1}), P_k  (d\bu_{j}-d\bu_{j-1})\|_{L^1_{[0,T^*_{j-1}]} L^\infty_x}
		\\
		\leq & C   (1+E^3(0)) (T^*_{N_0})^{\frac12}  \textstyle{\sum}^{j-1}_{k=1} 2^{-\delta_{1}k} (2^{-6\delta_1 (j-1)}+ 2^{-5\delta_{1} (j-1)})
		\\
		\leq & C   (1+E^3(0)) (T^*_{N_0})^{\frac12}  \textstyle{\sum}^{j-1}_{k=1} 2^{-\delta_{1}k} 2^{-5\delta_{1} (j-1)} \times 2.
	\end{split}
\end{equation}
Inserting \eqref{kz12}-\eqref{kz13} into \eqref{kz11}, we derive that
\begin{equation}\label{kz14}
	\begin{split}
		\|d h_j, d \bu_j\|_{L^1_{[0,T^*_{j-1}]} L^\infty_x}
		\leq & 	C(1+E^3(0)) (T^*_{N_0})^{\frac12}   \textstyle{\sum}_{k=j}^{\infty} 2^{-\delta_{1} k} 2^{-6\delta_{1} j}\times 2
		\\
		& + C   (1+E^3(0)) (T^*_{N_0})^{\frac12}  \textstyle{\sum}^{j-1}_{k=1} 2^{-\delta_{1}k} 2^{-5\delta_{1} (j-1)} \times 2
		\\
		& + C(1+E^3(0)) (T^*_{N_0})^{\frac12}  \textstyle{\sum}^{j-1}_{k=1} 2^{-\delta_{1} k} 2^{-7\delta_{1} k}
		\\
		& +  C(1+E^3(0)) (T^*_{N_0})^{\frac12}  \textstyle{\sum}^{j-2}_{k=1} \textstyle{\sum}_{m=k}^{j-2} 2^{-\delta_{1} k} 2^{-6\delta_{1} m}
		\\
		\leq & 	C(1+E^3(0))(T^*_{N_0})^{\frac12}  [\frac13(1-2^{-\delta_{1}})]^{-2}.
	\end{split}
\end{equation}
By \eqref{kz14}, \eqref{pp8} and Theorem \ref{VEt}, we also prove
\begin{equation}\label{kz15}
	\begin{split}
		&| h_j, u^0_j-1, \mathring{\bu}_j | \leq 2+C_0,
		\\
		& E(T^*_{j-1}) \leq C_*.
	\end{split}
\end{equation}
$\mathit{Case 2: T^*_j + (C_*)^{-1}2^{-\delta_{1} j} < T^*_{j-1} }$. In this situation, we will record
\begin{equation*}
	I_2=[T^*_j, t_2], \quad |I_2| = (C_*)^{-1}2^{-\delta_{1} j}.
\end{equation*}
Referring \eqref{kz01} and \eqref{kz02}, we also deduce that
\begin{equation}\label{kz16}
	\| P_{k} d h_j, P_{k} d \bu_j \|_{L^1_{I_2}L^\infty_x}
	\leq  (T^*_{N_0})^{\frac12} \cdot C  (1+C^3_*) 2^{-\delta_{1}k} 2^{-7\delta_{1}j}, \quad k \geq j,
\end{equation}
\begin{equation}\label{kz17}
	\|P_k (d{h}_{j}-d{h}_{j-1}), P_k  (d{\bu}_{j}-d{\bu}_{j-1}) \|_{L^1_{I_2} L^\infty_x}
	\leq    (T^*_{N_0})^{\frac12} \cdot C  (1+C^3_*) 2^{-\delta_{1}k} 2^{-6\delta_1(j-1)}, \quad k<j.
\end{equation}
Similarly, we can also get
\begin{equation*}
	\begin{split}
		\|dh_j, d\bu_j\|_{L^1_{ I_1 \cup I_2 } L^\infty_x }
		\leq & 	\|P_{\geq j}dh_j, P_{\geq j}d \bu_j\|_{L^1_{I_1 \cup I_2} L^\infty_x }   + \textstyle{\sum}^{j-1}_{k=1} \|P_k dh_k, P_k d\bu_k\|_{L^1_{[0,T^*_k]} L^\infty_x }
		\\
		& + \textstyle{\sum}^{j-1}_{k=1} \|P_k  (dh_{j}-dh_{j-1}), P_k  (d\bu_{j}-d\bu_{j-1})\|_{L^1_{I_1 \cup I_2 } L^\infty_x }
		\\
		& + \textstyle{\sum}^{j-2}_{k=1} \textstyle{\sum}_{m=k}^{j-2}  \|P_k  (dh_{m+1}-dh_m), P_k  (d\bu_{m+1}-d\bu_m)\|_{L^1_{I_1 \cup I_2} L^\infty_x } .
	\end{split}
\end{equation*}
Noting $ I_1 \cup I_2 \subseteq T^*_k$ if $k \leq j-1$, then it follows that
\begin{equation}\label{kz11A}
	\begin{split}
		& \|d h_j, d \bu_j\|_{L^1_{ I_1 \cup I_2 } L^\infty_x }
		\\
		\leq & 	\|P_{\geq j}dh_j, P_{\geq j}d \bu_j\|_{L^1_{I_1 \cup I_2} L^\infty_x }   + \textstyle{\sum}^{j-1}_{k=1} \|P_k dh_k, P_k d\bu_k\|_{L^1_{I_1 \cup I_2 } L^\infty_x }
		\\
		& + \textstyle{\sum}^{j-1}_{k=1} \|P_k  (d h_{j}-d h_{j-1}), P_k  (d\bu_{j}-d\bu_{j-1})\|_{L^1_{I_1 \cup I_2 } L^\infty_x }
		\\
		& + \textstyle{\sum}^{j-2}_{k=1} \textstyle{\sum}_{m=k}^{j-2}  \|P_k  (dh_{m+1}-dh_m), P_k  (d\bu_{m+1}-d\bu_m)\|_{L^1_{[0,T^*_{m+1}]} L^\infty_x }.
	\end{split}
\end{equation}
Inserting \eqref{kz01}, \eqref{kz02} and \eqref{kz16} and \eqref{kz17} to \eqref{kz11A}, we have
\begin{equation}\label{kz11a}
	\begin{split}		
		\|d h_j, d \bu_j\|_{L^1_{ I_1 \cup I_2 } L^\infty_x }
		\leq & 	C(1+E^3(0)) (T^*_{N_0})^{\frac12} \textstyle{\sum}_{k=j}^{\infty} 2^{-\delta_{1} k} 2^{-6\delta_{1} j}\times 2
		\\
		& + C   (1+E^3(0)) (T^*_{N_0})^{\frac12} \textstyle{\sum}^{j-1}_{k=1} 2^{-\delta_{1}k} 2^{-5\delta_{1} (j-1)} \times 2
		\\
		& + C(1+E^3(0)) (T^*_{N_0})^{\frac12} \textstyle{\sum}^{j-1}_{k=1} 2^{-\delta_{1} k} 2^{-7\delta_{1} k}
		\\
		& +  C(1+E^3(0)) (T^*_{N_0})^{\frac12} \textstyle{\sum}^{j-2}_{k=1} \textstyle{\sum}_{m=k}^{j-2} 2^{-\delta_{1} k} 2^{-6\delta_{1} m}
		\\
		\leq & 	C(1+E^3(0))(T^*_{N_0})^{\frac12} [\frac13(1-2^{-\delta_{1}})]^{-2}.
	\end{split}
\end{equation}
By \eqref{kz11a}, \eqref{pp8} and Theorem \ref{VEt}, we also prove
\begin{equation}\label{kz15f}
	\begin{split}
		& |h_j,u^0_j-1,\mathring{\bu}_j| \leq 2+C_0,
		\\
		& E(t_2) \leq C_*.
	\end{split}
\end{equation}
Therefore, we can repeat the process with a length with $(C_*)^{-1}2^{-\delta_{1} j}$ till extending it to $T^*_{j-1}$. Moreover, on every new time-interval with $(C_*)^{-1}2^{-\delta_{1} j}$ \eqref{kz16} and \eqref{kz17} hold. Set
\begin{equation}\label{times1}
	X_1= \frac{T^*_{j-1}-T^*_j}{C^{-1}_*2^{-\delta_{1} j}}= (2^{\delta_{1}}-1)C_* E^{-1}(0).
\end{equation}
So we need a maximum of $X_1$-times to reach the time $T^*_{j-1}$ both in case 2(it's also adapt to case 1 for calculating the times). As a result, we shall calculate
\begin{equation*}
	\begin{split}
		\|d h_j, d \bu_j\|_{L^1_{[0,T^*_{j-1}]} L^\infty_x}
		\leq & 	\|P_{\geq j}dh_j, P_{\geq j}d \bu_j\|_{L^1_{[0,T^*_{j-1}]} L^\infty_x}
		+ \textstyle{\sum}^{j-1}_{k=1} \|P_k dh_k, P_k d\bu_k\|_{L^1_{[0,T^*_{k}]} L^\infty_x}
		\\
		+ & \textstyle{\sum}^{j-1}_{k=1} \|P_k  (dh_{j}-dh_{j-1}), P_k  (d\bu_{j}-d\bu_{j-1})\|_{L^1_{[0,T^*_{j-1}]} L^\infty_x}.
		\\
		& + \textstyle{\sum}^{j-2}_{k=1} \textstyle{\sum}_{m=k}^{j-2}  \|P_k  (dh_{m+1}-dh_m), P_k  (d\bu_{m+1}-d\bu_m)\|_{L^1_{[0,T^*_{m}]} L^\infty_x}
	\end{split}
\end{equation*}	
Due to \eqref{kz01}, \eqref{kz02}, \eqref{kz16}, \eqref{kz17}, and \eqref{pp8}, we obtain
\begin{equation}\label{kzqt}
	\begin{split}
		& \|d h_j, d \bu_j\|_{L^1_{[0,T^*_{j-1}]} L^\infty_x}
		\\
		\leq & C(1+E^3(0)) (T^*_{N_0})^{\frac12} \textstyle{\sum}_{k=j}^{\infty} 2^{-\delta_{1} k} 2^{-6\delta_{1} j}\times (2^{\delta_{1}}-1)C_* E^{-1}(0)
		\\
		& + C   (1+E^3(0)) (T^*_{N_0})^{\frac12} \textstyle{\sum}^{j-1}_{k=1} 2^{-\delta_{1}k} 2^{-5\delta_{1} (j-1)} \times (2^{\delta_{1}}-1)C_* E^{-1}(0)
		\\
		& + C(1+E^3(0))(T^*_{N_0})^{\frac12} \textstyle{\sum}^{j-1}_{k=1} 2^{-\delta_{1} k} 2^{-7\delta_{1} k}
		\\
		& +  C(1+E^3(0))(T^*_{N_0})^{\frac12} \textstyle{\sum}^{j-2}_{k=1} \textstyle{\sum}_{m=k}^{j-2} 2^{-\delta_{1} k} 2^{-6\delta_{1} m}
		\\
		\leq & 	C(1+E^3(0))(T^*_{N_0})^{\frac12}[\frac13(1-2^{-\delta_{1}})]^{-2}.
	\end{split}
\end{equation}
Therefore, by \eqref{kzqt}, \eqref{pp8} and Theorem \eqref{VEt}, we can see that
\begin{equation*}
	\begin{split}
		& \| h_j,u^0_j-1,\mathring{\bu}_j \|_{L^\infty_{ [0,T^*_{j-1}]\times \mathbb{R}^3}} \leq 2+C_0, \quad E(T^*_{j-1}) \leq C_*.
	\end{split}
\end{equation*}
Since $\bu_j$ satisfies \eqref{muu}, so we have
\begin{equation}\label{kz270}
	\begin{split}
		& \| h_j,u^0_j-1,\mathring{\bu}_j \|_{L^\infty_{ [0,T^*_{j-1}]\times \mathbb{R}^3}} \leq 2+C_0, \quad, u^0_j \geq 1, \quad E(T^*_{j-1}) \leq C_*.
	\end{split}
\end{equation}
At this moment, both in case 1 or case 2, seeing from \eqref{kz270}, \eqref{kzqt}, \eqref{kz14}, \eqref{kz15}, through a maximum of $X_1=(2^{\delta_{1}}-1)C_* E^{-1}(0)$ times with each length $C_*^{-1}2^{-\delta_{1}j}$ or $(2^{\delta_{1}}-1)E(0)^{-1}2^{-\delta_{1} j}$, we shall extend the solutions $(\bv_j,\rho_j,\bw_j)$ from $[0,T^*_j]$ to $[0,T^*_{j-1}]$, and
\begin{equation}\label{kz27}
	\begin{split}
		& \| h_j,u^0_j-1,\mathring{\bu}_j \|_{L^\infty_{ [0,T^*_{j-1}]\times \mathbb{R}^3}} \leq 2+C_0, \quad u^0_j \geq 1,
	\quad E(T^*_{j-1}) \leq C_*,
		\\
		& \|d h_j, d \bu_j\|_{L^1_{[0,T^*_{j-1}]} L^\infty_x}
		\leq  	C(1+E^3(0))(T^*_{N_0})^{\frac12}[\frac13(1-2^{-\delta_{1}})]^{-2}.
	\end{split}		
\end{equation}
As a result, we also have extended the solutions $(\bv_m,\rho_m,\bw_m)$ ($m\in[N_0, j-1]$) from $[0,T^*_m]$ to $[0,T^*_{m-1}]$. Moreover, referring \eqref{kz01} and \eqref{kz02}, \eqref{kz27}, and \eqref{times1}, we get
\begin{equation}\label{kz29}
	\begin{split}
		& \| P_{k} d h_m, P_{k} d \bu_m \|_{L^1_{[0,T^*_{m-1}]}L^\infty_x}
		\\
		\leq  & \| P_{k} dh_m, P_{k} d \bu_m \|_{L^1_{[0,T^*_{m}]}L^\infty_x}
		 +\| P_{k} d h_m, P_{k} d \bu_m \|_{L^1_{[T^*_{m},T^*_{m-1}]}L^\infty_x}
		\\
		\leq  & C(1+E^3(0))(T^*_{N_0})^{\frac12} 2^{-\delta_{1}k} 2^{-7\delta_{1}m} + C(1+C^2_*) 2^{-\delta_{1}k} 2^{-7\delta_{1}m} \times (2^{\delta_{1}}-1)C_* E^{-1}(0)
		\\
		\leq  & C(1+E^3(0))(T^*_{N_0})^{\frac12} 2^{-\delta_{1}k} 2^{-7\delta_{1}m} + C(1+E^2(0)) 2^{-\delta_{1}k} 2^{-6\delta_{1}m} \times (2^{\delta_{1}}-1).
	\end{split}	
\end{equation}
For $k \geq m\geq N_0+1$, using \eqref{pp8},  \eqref{kz29} yields
\begin{equation}\label{kz31}
	\begin{split}
		\| P_{k} d h_m, P_{k} d \bu_m \|_{L^1_{[0,T^*_{m-1}]}L^\infty_x}
		\leq & C(1+E^3(0))(T^*_{N_0})^{\frac12} 2^{-\delta_{1}k} 2^{-5\delta_{1}m} \times 2^{\delta_{1}},
	\end{split}	
\end{equation}
Similarly, if $m\geq N_0+1$, using \eqref{pp8}, then the following estimate holds for $k<j$
\begin{equation}\label{kz34}
	\begin{split}
		& \|P_k (d{h}_{m}-d{h}_{m-1}), P_k (d{\bu}_{m}-d{\bu}_{m-1}) \|_{L^1_{[0,T^*_{m-1}]} L^\infty_x}
		\\
		\leq    &  \|P_k (d{h}_{m}-d{h}_{m-1}), P_k  (d{\bu}_{m}-d{\bu}_{m-1}) \|_{L^1_{[0,T^*_{m}]} L^\infty_x}
		\\
		& +\|P_k (d {h}_{m}-d {h}_{m-1}), P_k (d {\bu}_{m}-d {\bu}_{m-1}) \|_{L^1_{[T^*_m,T^*_{m-1}]} L^\infty_x}
		\\
		\leq & C  (1+E(0)^3)(T^*_{N_0})^{\frac12} 2^{-\delta_{1}k} 2^{-6\delta_1(m-1)}
		\\
		& + C  (1+C^3_*) 2^{-\delta_{1}k} 2^{-6\delta_1(m-1)} \times (2^{\delta_{1}}-1)C_* E^{-1}(0)
		\\
		\leq & C(1+E^3(0))(T^*_{N_0})^{\frac12} 2^{-\delta_{1}k} 2^{-5\delta_{1}(m-1)} \times 2^{\delta_{1}}.
	\end{split}	
\end{equation}
\quad \textbf{Step 2: Extending time interval $[0,T^*_j]$ to $[0,T^*_{N_0}]$}. Based the above analysis in Step 1, we can give a induction by achieving the goal. We assume the solutions $(h_j, \bu_j, \bw_j)$ can be extended from $[0,T^*_j]$ to $[0,T^*_{j-l}]$ through a maximam $X_l$ times and
\begin{equation}\label{kz41}
	\begin{split}
		X_l=& \frac{T_{j-l}^*- T_{j}^*}{C^{-1}_* 2^{-\delta_{1} j}}
		\\
		=& \frac{E(0)^{-1}( 2^{-\delta_{1}(j-l)} - 2^{-\delta_{1}j} )}{C^{-1}_* 2^{-\delta_{1} j}}
		\\
		=& \frac{C_*}{E(0)} (2^{\delta_{1}l}-1).
	\end{split}	
\end{equation}
Moreover, the following bounds
\begin{equation}\label{kz42}
	\begin{split}
		\| P_{k} d h_j, P_{k} d \bu_j \|_{L^1_{[0,T^*_{j-l}]}L^\infty_x}
		\leq & C(1+E^3(0))(T^*_{N_0})^{\frac12} 2^{-\delta_{1}k} 2^{-5\delta_{1}j} \times 2^{\delta_{1}l}, \qquad k \geq j,
	\end{split}	
\end{equation}
and
\begin{equation}\label{kz43}
	\begin{split}
		& \|P_k (d{h}_{m+1}-d{h}_{m}), P_k (d{\bu}_{m+1}-d{\bu}_{m}) \|_{L^1_{[0,T^*_{m-l}]} L^\infty_x}
		\\
		\leq  & C(1+E^3(0))(T^*_{N_0})^{\frac12} 2^{-\delta_{1}k} 2^{-5\delta_{1}m } \times 2^{\delta_{1}l}, \quad k<j,
	\end{split}	
\end{equation}
and
\begin{equation}\label{kz44}
	\|d h_j, d \bu_j\|_{L^1_{[0,T^*_{j-l}]} L^\infty_x}
	\leq  	C(1+E^3(0))(T^*_{N_0})^{\frac12}[\frac13(1-2^{-\delta_{1}})]^{-2}.
\end{equation}
and
\begin{equation}\label{kz45}
	|h_j,u^0_j-1,\mathring{\bu}_j| \leq 2+C_0, \quad u^0_j \geq 1, \quad  E(T^*_{j-l}) \leq C_*.
\end{equation}
In the following, we will check these estimates \eqref{kz41}-\eqref{kz45} hold when $l=1$, and then also holds when we replace $l$ by $l+1$.

Using \eqref{kz27}, \eqref{kz31}, \eqref{times1}, and \eqref{kz34}, then \eqref{kz42}-\eqref{kz45} hold by taking $l=1$. Let us now check it for $l+1$. In this case, it implies that  $T^*_{j-l} \leq T^*_{N_0}$. Therefore, $j-l\geq N_0+1$ should hold. Starting at the time $T^*_{j-l}$, seeing \eqref{DTJ} and \eqref{kz45}, we shall get an extending time-interval of $(h_j,\bu_j,\bw_j)$ with a length of $(C_*)^{-1}2^{-\delta_{1} j}$. So we can go to the case 2 in step 1, and the length every new time-interval is $C_*^{-1} 2^{-\delta_{1} j}$. So the times is
\begin{equation}\label{kz46}
	X= \frac{T^*_{j-(l+1)}-T^*_{j-l}}{C^{-1}_*2^{-\delta_{1} j}}= 2^{\delta_{1} l}(2^{\delta_{1}}-1)C_* E^{-1}(0).
\end{equation}
Thus, we can deduce that
\begin{equation}\label{kz40}
	X_{l+1}=X_l+X= (2^{\delta_{1}(l+1)}-1)C_* E^{-1}(0).
\end{equation}
Moreover, for $k \geq j$, we have
\begin{equation}\label{kz48}
	\begin{split}
		\| P_{k} d h_j, P_{k} d\bu_j \|_{L^1_{[0, T^*_{j-(l+1)}]}L^\infty_x}\leq & \| P_{k} d h_j, P_{k} d\bu_j \|_{L^1_{[0,T^*_{j-l}]}L^\infty_x}
		\\
		& +	\| P_{k} d h_j, P_{k} d\bu_j \|_{L^1_{[T^*_{j-l}, T^*_{j-(l+1)}]}L^\infty_x}.
	\end{split}
\end{equation}
Using \eqref{kz01} and \eqref{kz45}
\begin{equation}\label{kz49}
	\begin{split}
		& \| P_{k} d h_j, P_{k} d\bu_j \|_{L^1_{[T^*_{j-l}, T^*_{j-(l+1)}]}L^\infty_x}
		\\
		\leq   & C  (1+C^3_*)(T^*_{N_0})^{\frac12} 2^{-\delta_{1}k} 2^{-7\delta_{1}j}\times 2^{\delta_{1} l}(2^{\delta_{1}}-1)C_* E^{-1}(0), \quad k\geq j.
	\end{split}
\end{equation}
Due to \eqref{pp8}, we can see
\begin{equation*}
	\begin{split}
		(1+C^3_*) C_* E^{-1}(0)(1+E^3(0))^{-1}2^{-\delta_{1} N_0} \leq 1.
	\end{split}
\end{equation*}
Hence, from \eqref{kz49} we have
\begin{equation}\label{kz50}
	\begin{split}
		& \| P_{k} d h_j, P_{k} d\bu_j \|_{L^1_{[T^*_{j-l}, T^*_{j-(l+1)}]}L^\infty_x}
		\\
		\leq   & C  (1+E^3(0)) (T^*_{N_0})^{\frac12} 2^{-\delta_{1}k} 2^{-5\delta_{1}j}\times 2^{\delta_{1} l}(2^{\delta_{1}}-1), \quad k\geq j.
	\end{split}
\end{equation}
Using \eqref{kz42} and \eqref{kz50}, so we get
\begin{equation}\label{kz51}
	\begin{split}
		\| P_{k} d h_j, P_{k} d \bu_j \|_{L^1_{[0,T^*_{j-(l+1)}]}L^\infty_x}
		\leq & C(1+E^3(0)) (T^*_{N_0})^{\frac12} 2^{-\delta_{1}k} 2^{-5\delta_{1}j} \times 2^{\delta_{1}(l+1)}, \qquad k \geq j.
	\end{split}	
\end{equation}
If $k<j$, then we have
\begin{equation}\label{kz52}
	\begin{split}
		& \|P_k (d{h}_{m+1}-d{h}_{m}), P_k (d{\bu}_{m+1}-d{\bu}_{m}) \|_{L^1_{[0,T^*_{m-(l+1)}]} L^\infty_x}
		\\
		\leq  & \|P_k (d{h}_{m+1}-d{h}_{m}), P_k (d{\bu}_{m+1}-d{\bu}_{m}) \|_{L^1_{[0,T^*_{m-l}]} L^\infty_x}
		\\
		& + \|P_k (d{h}_{m+1}-d{h}_{m}), P_k (d{\bu}_{m+1}-d{\bu}_{m}) \|_{L^1_{[T^*_{m-l},T^*_{m-(l+1)}]} L^\infty_x}.
	\end{split}	
\end{equation}
When we extend the solutions $(h_j, \bu_j, \bw_j)$ from $[0,T^*_{j-l}]$ to $[0,T^*_{j-(l+1)}]$, then the solutions $(h_m, \bu_m, \bw_m)$ is also extended from $[0,T^*_{m-l}]$ to $[0,T^*_{m-(l+1)}]$. Seeing \eqref{kz02} and \eqref{kz45}, we can obtain
\begin{equation}\label{kz53}
	\begin{split}
		& \|P_k (d{h}_{m+1}-d{h}_{m}), P_k (d{\bu}_{m+1}-d{\bu}_{m}) \|_{L^1_{[T^*_{m-l},T^*_{m-(l+1)}]} L^\infty_x}
		\\
		\leq  & C(1+C_*^3)(T^*_{N_0})^{\frac12} 2^{-\delta_{1}k} 2^{-6\delta_{1}m } \times 2^{\delta_{1} l}(2^{\delta_{1}}-1)C_* E^{-1}(0).
	\end{split}	
\end{equation}
Hence, for $m-l \geq N_0+1$, using \eqref{pp8}, it yields
\begin{equation*}
	(1+C_*^3)  C_* E^{-1}(0) (1+E^3(0))^{-1} 2^{-\delta_{1}N_0 } \leq 1.
\end{equation*}
So \eqref{kz53} becomes
\begin{equation}\label{kz54}
	\begin{split}
		& \|P_k (d{h}_{m+1}-d{h}_{m}), P_k (d{\bu}_{m+1}-d{\bu}_{m}) \|_{L^1_{[T^*_{m-l},T^*_{m-(l+1)}]} L^\infty_x}
		\\
		\leq  & C(1+E^3(0))(T^*_{N_0})^{\frac12} 2^{-\delta_{1}k} 2^{-5\delta_{1}m } \times 2^{\delta_{1} l}(2^{\delta_{1}}-1).
	\end{split}	
\end{equation}
Inserting \eqref{kz43} and \eqref{kz54} to \eqref{kz52}, we have
\begin{equation}\label{kz55}
	\begin{split}
		& \|P_k (d{h}_{m+1}-d{h}_{m}), P_k (d{\bu}_{m+1}-d{\bu}_{m}) \|_{L^1_{[0,T^*_{m-(l+1)}]} L^\infty_x}
		\\
		\leq  & C(1+E^3(0))(T^*_{N_0})^{\frac12} 2^{-\delta_{1}k} 2^{-5\delta_{1}m } \times 2^{\delta_{1}(l+1)}, \quad k<j,
	\end{split}	
\end{equation}
Let us now bound the following Strichartz estimate
\begin{equation}\label{kz56}
	\begin{split}
		& \|d h_j, d \bu_j\|_{L^1_{[0,T^*_{j-(l+1)}]} L^\infty_x}
		\\
		\leq & \|P_{\geq j}dh_j, P_{\geq j}d \bu_j\|_{L^1_{[0,T^*_{j-(l+1)}]} L^\infty_x}
		\\
		& + \textstyle{\sum}^{j-1}_{k=j-l}\textstyle{\sum}^{j-1}_{m=k} \|P_k  (dh_{m+1}-dh_{m}), P_k  (d\bu_{m+1}-d\bu_{m})\|_{L^1_{[0,T^*_{j-(l+1)}]} L^\infty_x}
		\\
		& + \textstyle{\sum}^{j-1}_{k=1} \|P_k dh_k, P_k d\bu_k\|_{L^1_{[0,T^*_{j-(l+1)}]} L^\infty_x}
		\\
		=& 	\|P_{\geq j}dh_j, P_{\geq j}d \bu_j\|_{L^1_{[0,T^*_{j-(l+1)}]} L^\infty_x}
		\\
		& + \textstyle{\sum}^{j-1}_{k=j-l} \|P_k dh_k, P_k d\bu_k\|_{L^1_{[0,T^*_{j-(l+1)}]} L^\infty_x}
		+ \textstyle{\sum}^{j-(l+1)}_{k=1} \|P_k dh_k, P_k d\bu_k\|_{L^1_{[0,T^*_{j-(l+1)}]} L^\infty_x}
		\\
		& + \textstyle{\sum}^{j-1}_{k=1} \textstyle{\sum}^{j-1}_{m=j-(l+1)} \|P_k  (dh_{m+1}-dh_{m}), P_k  (d\bu_{m+1}-d\bu_{m})\|_{L^1_{[0,T^*_{j-(l+1)}]} L^\infty_x}
		\\
		& + \textstyle{\sum}^{j-(l+2)}_{k=1} \textstyle{\sum}_{m=k}^{j-(l+2)}  \|P_k  (dh_{m+1}-dh_m), P_k  (d\bu_{m+1}-d\bu_m)\|_{L^1_{[0,T^*_{j-(l+1)}]} L^\infty_x}
		\\
		=& \Theta_1+  \Theta_2+ \Theta_3+ \Theta_4+ \Theta_5,
	\end{split}
\end{equation}
where
\begin{equation}\label{Theta}
	\begin{split}
		\Theta_1= &  \|P_{\geq j}dh_j, P_{\geq j}d \bu_j\|_{L^1_{[0,T^*_{j-(l+1)}]} L^\infty_x} ,
		\\
		\Theta_2= & \textstyle{\sum}^{j-(l+2)}_{k=1}\textstyle{\sum}^{j-(l+2)}_{m=k} \|P_k  (dh_{m+1}-dh_{m}), P_k  (d\bu_{m+1}-d\bu_{m})\|_{L^1_{[0,T^*_{j-(l+1)}]} L^\infty_x},
		\\
		\Theta_3= & \textstyle{\sum}^{j-1}_{k=1}\textstyle{\sum}^{j-1}_{m=j-(l+1)} \|P_k  (dh_{m+1}-dh_{m}), P_k  (d\bu_{m+1}-d\bu_{m})\|_{L^1_{[0,T^*_{j-(l+1)}]} L^\infty_x},
		\\
		\Theta_4 =& \textstyle{\sum}^{j-(l+1)}_{k=1}	\|P_{k}dh_k, P_{k}d\bu_k\|_{L^1_{[0,T^*_{j-(l+1)}]} L^\infty_x},
		\\
		\Theta_5 =& \textstyle{\sum}^{j-1}_{k=j-l}	\|P_{k}dh_k, P_{k}d\bu_k\|_{L^1_{[0,T^*_{j-(l+1)}]} L^\infty_x}.
	\end{split}
\end{equation}
To get the estimate on time-interval $[0,T^*_{j-(l+1)}]$ for $\Theta_1, \Theta_2, \Theta_3, \Theta_4$ and $\Theta_5$, we should note that
there is no growth for $\Theta_2$ and $\Theta_4$ in this extending process. For example, considering $\Theta_2$, the existing time-interval of $P_k  (d\bu_{m+1}-d\bu_{m})$ is actually $[0,T^*_{m+1}]$, and $[0,T^*_{j-(l+1)}] \subseteq [0,T^*_{m+1}]$ if $m \geq j-(l+2)$. So we can use the initial bounds \eqref{kz01} and \eqref{kz02} to handle $\Theta_2$ and $\Theta_4$. While, considering $\Theta_1, \Theta_3$, and $\Theta_5$, we need to calculate the growth in the Strichartz estimate. Based on this idea, let us give a precise analysis on \eqref{Theta}.

According to \eqref{kz51}, we can estimate $\Theta_1$ as
\begin{equation}\label{kz57}
	\begin{split}
		\Theta_1 \leq & C(1+E^3(0))(T^*_{N_0})^{\frac12} \textstyle{\sum}^{\infty}_{k=j} 2^{-\delta_{1}k} 2^{-5\delta_{1}j} \times 2^{\delta_{1}(l+1)}.
	\end{split}
\end{equation}
Due to \eqref{kz01}, we have
\begin{equation}\label{kz58}
	\begin{split}
		\Theta_2\leq &	\textstyle{\sum}^{j-(l+2)}_{k=1}\textstyle{\sum}^{j-(l+2)}_{m=k} \|P_k  (dh_{m+1}-dh_{m}), P_k  (d\bu_{m+1}-d\bu_{m})\|_{L^1_{[0,T^*_{m+1}]} L^\infty_x},
		\\
		\leq &  C(1+E^3(0))(T^*_{N_0})^{\frac12} \textstyle{\sum}^{j-(l+2)}_{k=1}\textstyle{\sum}^{j-(l+2)}_{m=k} 2^{-\delta_{1}k} 2^{-6\delta_{1}m}.
	\end{split}	
\end{equation}
For $1 \leq k \leq j-1$ and $m \leq j-1$, using \eqref{kz55}, it follows
\begin{equation}\label{kz59}
	\begin{split}
		\Theta_3\leq & \textstyle{\sum}^{j-1}_{k=1}\textstyle{\sum}^{j-1}_{m=j-(l+1)} \|P_k  (dh_{m+1}-dh_{m}), P_k  (d\bu_{m+1}-d\bu_{m})\|_{L^1_{[0,T^*_{m-(l+1)}]} L^\infty_x}
		\\
		\leq &  C  (1+E^3(0))(T^*_{N_0})^{\frac12} \textstyle{\sum}^{j-1}_{k=1}\textstyle{\sum}^{j-1}_{m=j-(l+1)}  2^{-\delta_{1}k} 2^{-5\delta_{1}m} \times 2^{\delta_{1}(l+1)}.
	\end{split}
\end{equation}
Due to \eqref{kz01}, we can see
\begin{equation}\label{kz60}
	\begin{split}
		\Theta_4 \leq & \textstyle{\sum}^{j-(l+1)}_{k=1}	\|P_{k}dh_k, P_{k}d\bu_k\|_{L^1_{[0,T^*_{k}]} L^\infty_x}
		\\
		\leq & C  (1+E^3(0))(T^*_{N_0})^{\frac12} \textstyle{\sum}^{j-(l+1)}_{k=1} 2^{-\delta_{1}k} 2^{-7\delta_{1}k}.
	\end{split}
\end{equation}
If $j-l \leq k\leq j-1$, then $k+l+1-j \leq l$. By using \eqref{kz42}, we can estimate
\begin{equation}\label{kz61}
	\begin{split}
		\Theta_5 =& \textstyle{\sum}^{j-1}_{k=j-l}	\|P_{k}dh_k, P_{k}d\bu_k\|_{L^1_{[0,T^*_{j-(l+1)}]} L^\infty_x}
		\\
		= & \textstyle{\sum}^{j-1}_{k=j-l}	\|P_{k}dh_k, P_{k}d\bu_k\|_{L^1_{[0,T^*_{k-(k+l+1-j)}]} L^\infty_x}
		\\
		\leq	& C  (1+E^3(0))(T^*_{N_0})^{\frac12} \textstyle{\sum}^{j-1}_{k=j-l} 2^{-\delta_{1}k} 2^{-5\delta_{1}j}2^{\delta_{1}(k+l+1-j)}.
	\end{split}
\end{equation}
Inserting \eqref{kz57}-\eqref{kz61} to \eqref{kz56}, it follows
\begin{equation}\label{kz62}
	\begin{split}
		& \|d h_j, d \bu_j\|_{L^1_{[0,T^*_{j-(l+1)}]} L^\infty_x}
		\\
		\leq & C(1+E^3(0)) (T^*_{N_0})^{\frac12} \textstyle{\sum}^{\infty}_{k=j} 2^{-\delta_{1}k} 2^{-5\delta_{1}j} \times 2^{\delta_{1}(l+1)}
		\\
		& + C(1+E^3(0))(T^*_{N_0})^{\frac12} \textstyle{\sum}^{j-(l+2)}_{k=1}\textstyle{\sum}^{j-(l+2)}_{m=k} 2^{-\delta_{1}k} 2^{-6\delta_{1}m}
		\\
		&+C  (1+E^3(0))(T^*_{N_0})^{\frac12} \textstyle{\sum}^{j-1}_{k=1}\textstyle{\sum}^{j-1}_{m=j-(l+1)}  2^{-\delta_{1}k} 2^{-5\delta_{1}m} \times 2^{\delta_{1}(l+1)}
		\\
		& + C  (1+E^3(0))(T^*_{N_0})^{\frac12} \textstyle{\sum}^{j-(l+1)}_{k=1} 2^{-\delta_{1}k} 2^{-7\delta_{1}k}
		\\
		& + C  (1+E^3(0))(T^*_{N_0})^{\frac12} \textstyle{\sum}^{j-1}_{k=j-l} 2^{-\delta_{1}k} 2^{-5\delta_{1}k}2^{\delta_{1}(k+l+1-j)}
	\end{split}
\end{equation}
In the case of $j-l \geq N_0+1$ and $j\geq N_0+1$, \eqref{kz62} yields
\begin{equation}\label{kz63}
	\begin{split}
		& \|d h_j, d \bu_j\|_{L^1_{[0,T^*_{j-(l+1)}]} L^\infty_x}
		\\
		\leq & C(1+E^3(0))(T^*_{N_0})^{\frac12}(1-2^{-\delta_{1}})^{-2} \big\{
		2^{-\delta_{1}j} 2^{\delta_{1}(l+1)}+ 2^{-6\delta_{1}}
		\\
		& \quad +2^{-5\delta_{1}[j-(l+1)]}  2^{\delta_{1}(l+1)}+ 2^{-6\delta_{1}}+ 2^{-5\delta_{1}(j-l)}2^{\delta_{1}(l+1-j)}
		\big\}
		\\
		\leq & C(1+E^3(0))(T^*_{N_0})^{\frac12}(1-2^{-\delta_{1}})^{-2} \left\{
		2^{-6\delta_{1}N_0} + 2^{-6\delta_{1}} +	2^{-5\delta_{1}N_0} + 2^{-6\delta_{1}}+ 	2^{-6\delta_{1}N_0} 	\right\}
		\\
		\leq & C(1+E^3(0))(T^*_{N_0})^{\frac12}[\frac13(1-2^{-\delta_{1}})]^{-2}.
	\end{split}
\end{equation}
By using \eqref{kz63}, \eqref{pu00}, \eqref{pp8}, and Theorem \ref{VEt}, we have proved
\begin{equation*}
	\begin{split}
		|h_j,u^0_j-1,\mathring{\bu}_j|\leq 2+C_0, \quad  E(T^*_{j-(l+1)}) \leq C_*.
	\end{split}
\end{equation*}
For $\bu_j$ always satisfies \eqref{muu}, so we have
\begin{equation*}\label{kz64}
	\begin{split}
		|h_j,u^0_j-1,\mathring{\bu}_j|\leq 2+C_0, \quad  u^0_j \geq 1, \quad E(T^*_{j-(l+1)}) \leq C_*.
	\end{split}
\end{equation*}
Gathering \eqref{kz40}, \eqref{kz51}, \eqref{kz55}, \eqref{kz63} and \eqref{kz64}, we know that \eqref{kz41}-\eqref{kz45} holding for $l+1$. So our induction hold \eqref{kz41}-\eqref{kz45} for $l=1$ to $l=j-N_0$. Therefore, we can extend the solutions $(h_j,\bu_j,\bw_j)$ from $[0,T^*_j]$ to $[0,T^*_{N_0}]$ when $j \geq N_0$. Let us denote
\begin{equation}\label{Tstar}
	T^*=T^*_{N_0}=[E(0)]^{-1}2^{-\delta_{1} N_0}.
\end{equation}
Setting $l=j-N_0$ in \eqref{kz44}-\eqref{kz45}, we therefore get
\begin{equation}\label{kz65}
	\begin{split}
		& E(T^*) \leq  C_*, \quad \|h_j,u^0_j-1,\mathring{\bu}_j\|_{L^\infty_{[0,T^*]} L^\infty_x} \leq 2+C_0, \quad u^0_j \geq 1,
		\\
		& \|d h_j, d \bu_j\|_{L^1_{[0,T^*]} L^\infty_x}
		\leq  C(1+E^3(0))(T^*_{N_0})^{\frac12}[\frac13(1-2^{-\delta_{1}})]^{-2},
	\end{split}
\end{equation}
where $N_0$ and $C_*$ depends on $C_0, c_0, s, M_*$. It is stated in \eqref{pp8} and \eqref{Cstar}. In this process, we can also conclude
\begin{equation}\label{kz66}
	\begin{split}
		\|d h_j, d \bu_j\|_{L^2_{[0,T^*]} L^\infty_x}
		\leq & C(1+E^3(0))[\frac13(1-2^{-\delta_{1}})]^{-2}.
	\end{split}
\end{equation}
\subsubsection{Strichartz estimates of linear wave equation on time-interval $[0,T^*_{N_0}]$.}\label{finalq}
We still expect the behaviour of a linear wave equation endowed with $g_j=g(h_j,\bu_j)$. So we claim a theorem as follows
\begin{proposition}\label{rut}
	For $\frac{s}{2} \leq r \leq 3$, there is a solution $f_j$ on $[0,T^*_{N_0}]\times \mathbb{R}^3$ satisfying the following linear wave equation
	\begin{equation}\label{ru01}
		\begin{cases}
			\square_{{g}_j} f_j=0,
			\\
			(f_j,\partial_t f_j)|_{t=0}=(f_{0j},f_{1j}),
		\end{cases}
	\end{equation}
	where $(f_{0j},f_{1j})=(P_{\leq j}f_0,P_{\leq j}f_1)$ and $(f_0,f_1)\in H_x^r \times H^{r-1}_x$. Moreover, for $a\leq r-\frac{s}{2}$, we have
	\begin{equation}\label{ru02}
		\begin{split}
			&\|\left< \nabla \right>^{a-1} d{f}_j\|_{L^2_{[0,T^*_{N_0}]} L^\infty_x}
			\leq  C_{M_*}(\|{f}_0\|_{{H}_x^r}+ \|{f}_1 \|_{{H}_x^{r-1}}),
			\\
			&\|{f}_j\|_{L^\infty_{[0,T^*_{N_0}]} H^{r}_x}+ \|\partial_t {f}_j\|_{L^\infty_{[0,T^*_{N_0}]} H^{r-1}_x} \leq  C_{M_*}(\| {f}_0\|_{H_x^r}+ \| {f}_1\|_{H_x^{r-1}}).
		\end{split}
	\end{equation}
\end{proposition}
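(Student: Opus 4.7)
The plan is to transfer the strong short-time Strichartz estimates \eqref{ru04}--\eqref{ru05}, valid on the microlocal time scale $[0,T^*_j]$ coming from Proposition \ref{DDL3}, to the fixed macroscopic interval $[0,T^*_{N_0}]$ by the same frequency-decoupling/gluing mechanism used in Subsection \ref{finalk} to extend the Strichartz bounds on $(h_j,\bu_j)$ themselves. The key observation is that the initial data $(f_{0j},f_{1j})=(P_{\leq j}f_0,P_{\leq j}f_1)$ is frequency-localized at scale $\lesssim 2^j$; summing short-time Strichartz bounds in $L^2_t$ over the $\sim 2^{\delta_1(j-N_0)}$ subintervals of length $\sim T^*_j$ that tile $[0,T^*_{N_0}]$ introduces a multiplicative factor that can be absorbed by Bernstein, provided we concede $\tfrac{s}{2}-1$ derivatives, which is exactly the range $a\leq r-\tfrac{s}{2}$ in the statement.

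Concretely I would proceed in four steps. \textbf{Step 1} (short-time Strichartz): Assume $j>N_0$, since otherwise $[0,T^*_{N_0}]\subseteq[0,T^*_j]$ and \eqref{ru04}--\eqref{ru05} apply directly. Shift the time origin to each $t_m$ and rerun the scaling/localization of Subsection \ref{esess} centered at $t_m$; this is legitimate because the uniform bounds \eqref{kz65}--\eqref{kz66} imply that the rescaled acoustical metric still satisfies the smallness hypothesis \eqref{DP30} of Proposition \ref{DDL3}. The output is
\begin{equation*}
\|\langle\nabla\rangle^{a-1}df_j\|_{L^2_{I_m}L^\infty_x}\lesssim \|f_j(t_m)\|_{H^{a+1+\delta_1}}+\|\partial_t f_j(t_m)\|_{H^{a+\delta_1}},
\end{equation*}
valid for any admissible $a+1+\delta_1\leq r_1\leq s+1$. \textbf{Step 2} (energy propagation): Iterate the energy inequality \eqref{ru05}; the cumulative Gronwall factor is $\exp(C\|dg_j\|_{L^1_{[0,T^*_{N_0}]}L^\infty_x})$, which is controlled by a constant $C_{M_*}$ thanks to the already-established bound $\|dh_j,d\bu_j\|_{L^1_{[0,T^*_{N_0}]}L^\infty_x}\lesssim 1$ from \eqref{kz65}. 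Hence
\begin{equation*}
\|f_j(t_m)\|_{H^{a+1+\delta_1}}+\|\partial_t f_j(t_m)\|_{H^{a+\delta_1}}\lesssim \|f_{0j}\|_{H^{a+1+\delta_1}}+\|f_{1j}\|_{H^{a+\delta_1}}.
\end{equation*}
\textbf{Step 3} (Bernstein trade-off): Exploit $f_{0j}=P_{\leq j}f_0$ to deduce $\|f_{0j}\|_{H^{a+1+\delta_1}}\lesssim 2^{j(a+1+\delta_1-r)}\|f_0\|_{H^r}$, and analogously for $f_{1j}$. \textbf{Step 4} ($\ell^2$-summation): Sum the squared $L^2_{t}L^\infty_x$ bounds over the $\lesssim 2^{\delta_1(j-N_0)}$ subintervals; this costs a factor $2^{\delta_1(j-N_0)/2}$. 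The total power of $2^j$ is therefore $\tfrac{\delta_1}{2}+(a+1+\delta_1-r)$, and since $a\leq r-\tfrac{s}{2}=r-1-20\delta_1$ this exponent is $\leq -\tfrac{37\delta_1}{2}<0$, giving a bound uniform in $j$ and establishing \eqref{ru02}.

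The main obstacle I anticipate is the uniformity in $j$ of the constants in Step 2. A priori, iterating the energy inequality over $O(2^{\delta_1(j-N_0)})$ windows could produce a multiplicative blow-up, but the rescue is structural: the $L^1_tL^\infty_x$ norm of the metric coefficients is an \emph{integral} quantity that is insensitive to how we subdivide the time interval, and it is already controlled uniformly in $j$ by the Strichartz bound \eqref{kz65} proved earlier for $(h_j,\bu_j)$. Once that is in hand, the Bernstein/summation bookkeeping in Steps 3--4 is essentially mechanical and follows the same induction scheme \eqref{kz41}--\eqref{kz45} used for the nonlinear Strichartz extension; the derivative loss built into $a\leq r-\tfrac{s}{2}$ leaves a safety margin of roughly $\tfrac{37\delta_1}{2}$ derivatives, so the argument closes comfortably and the constant $C_{M_*}$ depends only on $C_0,c_0,s,M_*$ as required.
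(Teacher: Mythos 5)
There is a genuine gap, and it sits at the heart of the argument: your Step 3 is false, and without it Step 4 does not close. Since $a\leq r-\tfrac{s}{2}$ gives $a+1+\delta_1<r$, the operator $P_{\leq j}$ is a \emph{low-pass} projection and therefore $\|P_{\leq j}f_0\|_{H^{a+1+\delta_1}}$ is comparable to $\|f_0\|_{H^{a+1+\delta_1}}\leq \|f_0\|_{H^r}$; it is \emph{not} $O\bigl(2^{j(a+1+\delta_1-r)}\bigr)\|f_0\|_{H^r}$ with a negative exponent. Bernstein only converts extra derivatives into frequency gains for \emph{high-pass} localizations. Consequently the only $j$-dependence left in your scheme is the factor $2^{\delta_1(j-N_0)/2}$ from summing the squared $L^2_tL^\infty_x$ bounds over the $\sim 2^{\delta_1(j-N_0)}$ subintervals, and your final estimate degenerates to $C\,2^{\delta_1(j-N_0)/2}(\|f_0\|_{H^r}+\|f_1\|_{H^{r-1}})$, which blows up as $j\to\infty$. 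Uniformity in $j$ is exactly what the proposition needs (the constant $C_{M_*}$ must be independent of $j$ so that one can pass to the limit), so the argument as written fails.

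The paper generates the needed decay in $j$ by a different mechanism, which your proposal omits. For high frequencies it applies Bernstein to the frequency pieces $P_k df_j$ of the \emph{solution}: since $a-1+\tfrac92\delta_1<r-1$, the short-time Strichartz estimate \eqref{ru04} holds with $\tfrac92\delta_1$ spare derivatives, and for $k\geq j$ this converts into $\|\langle\nabla\rangle^{a-1}P_k df_j\|_{L^2L^\infty}\lesssim 2^{-\frac12\delta_1 k}2^{-4\delta_1 j}(\|f_0\|_{H^r}+\|f_1\|_{H^{r-1}})$ (see \eqref{ru070}); the factor $2^{-4\delta_1 j}$ is what absorbs the $2^{\delta_1(j-N_0)/2}$ interval-counting loss. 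For low frequencies $k<j$ one cannot get decay in $j$ at all from a single solution; instead the paper telescopes $P_k df_j=P_k df_k+\sum_{m=k}^{j-1}P_k(df_{m+1}-df_m)$, where $f_{m+1}-f_m$ solves $\square_{g_{m+1}}(f_{m+1}-f_m)=(g_{m+1}^{\alpha i}-g_m^{\alpha i})\partial_{\alpha i}f_m$, and controls each difference via Duhamel using $2^m\|g_{m+1}-g_m\|_{L^1_tL^\infty_x}\lesssim 2^{-7\delta_1 m}$ (which compensates the $2^m$ growth of $\|\nabla df_m\|_{H^{r-1}}$), yielding the decay $2^{-4\delta_1 m}$ in \eqref{ru12}; each such piece moreover lives on the longer interval $[0,T^*_{m+1}]$, so its interval-counting loss is only $2^{\delta_1(m+1-N_0)/2}$. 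If you want to repair your proof you must incorporate both ingredients: the frequency decomposition of $df_j$ itself (not of the data) and the telescoping over the metric sequence $g_m$. Your Steps 1--2 (re-centering the short-time estimate and propagating energy via the $j$-uniform $L^1_tL^\infty_x$ control of $dg_j$) are sound and coincide with what the paper does implicitly.
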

\begin{proof}[Proof of Proposition \ref{rut}.] Our proof also relies a Strichartz estimates on a short time-interval. Then a loss of derivatives are then obtained by summing up the short time estimates with respect to these time intervals.

	Using \eqref{ru01}, \eqref{ru03} and \eqref{ru04}, we have
	\begin{equation}\label{ru06}
		\begin{split}
			\| \left< \nabla \right>^{a-1+\frac92\delta_{1}} df_j\|_{L^2_{[0,T^*_j]} L^\infty_x} \leq & C( \| f_{0j} \|_{H^r_x} + \| f_{1j} \|_{H^{r-1}_x}  )
			\\
			\leq & C ( \| f_{0} \|_{H^r_x} + \| f_{1} \|_{H^{r-1}_x}  ),
		\end{split}
	\end{equation}
	where we use
	\begin{equation}\label{ru060}
		a-1+\frac92\delta_{1} \leq r-\frac{s}{2}+\frac92\delta_{1} <r-1.
	\end{equation}
	By Bernstein inequality, for $k\geq j$, we shall obtain
	\begin{equation}\label{ru07}
		\begin{split}
			\| \left< \nabla \right>^{a-1} P_k df_j\|_{L^2_{[0,T^*_j]} L^\infty_x}
			=& C2^{-\frac92\delta_{1} k}\| \left< \nabla \right>^{a-1+\frac92\delta_{1}} P_k df_j\|_{L^2_{[0,T^*_j]} L^\infty_x}
			\\
			\leq & C2^{-\frac12\delta_{1} k} 2^{-4\delta_{1} j} \| \left< \nabla \right>^{a-1+\frac92\delta_{1}} df_j\|_{L^2_{[0,T^*_j]} L^\infty_x}.
		\end{split}
	\end{equation}
	Combining \eqref{ru06} and \eqref{ru07}, for $a \leq r-\frac{s}{2}$, we get
	\begin{equation}\label{ru070}
		\begin{split}
			\| \left< \nabla \right>^{a-1} P_k df_j\|_{L^2_{[0,T^*_j]} L^\infty_x}
			\leq & C2^{-\frac12\delta_{1} k} 2^{-4\delta_{1} j} ( \| f_{0} \|_{H^r_x} + \| f_{1} \|_{H^{r-1}_x}  ), \quad k\geq j.
		\end{split}
	\end{equation}
	On the other hand, for any integer $m\geq 1$, we also have
	\begin{equation*}
		\begin{cases}
			\square_{{g}_m} f_m=0, \quad [0,T^*_{m}]\times \mathbb{R}^3,
			\\
			(f_m,\partial_t f_m)|_{t=0}=(f_{0m},f_{1m}),
		\end{cases}
	\end{equation*}
	and
	\begin{equation*}
		\begin{cases}
			\square_{{g}_{m+1}} f_{m+1}=0, \quad [0,T^*_{m+1}]\times \mathbb{R}^3,
			\\
			(f_{m+1},\partial_t f_{m+1})|_{t=0}=(f_{0(m+1)},f_{1(m+1)}).
		\end{cases}
	\end{equation*}
	So the difference term $f_{m+1}-f_m$ satisfies
	\begin{equation}\label{ru08}
		\begin{cases}
			\square_{{g}_{m+1}} (f_{m+1}-f_m)=({g}^{\alpha i}_{m+1}-{g}^{\alpha i}_{m} )\partial_{\alpha i} f_m, \quad [0,T^*_{m+1}]\times \mathbb{R}^3,
			\\
			(f_{m+1}-f_m,\partial_t (f_{m+1}-f_m))|_{t=0}=(f_{0(m+1)}-f_{0m},f_{1(m+1)}-f_{1m}).
		\end{cases}
	\end{equation}
	By Duhamel's principle, \eqref{ru04} and \eqref{ru060}, it yields
	\begin{equation}\label{ru09}
		\begin{split}
			& \| \left< \nabla \right>^{a-1} P_k (d f_{m+1}-df_m)\|_{L^2_{[0,T^*_{m+1}]} L^\infty_x}
			\\
			\leq & C\| f_{0(m+1)}-f_{0m}\|_{H_x^{r-\frac92\delta_{1}}} + C\|f_{1(m+1)}-f_{1m}\|_{H_x^{r-1-\frac92\delta_{1}}}
			\\
			& \ + C\|({g}_{m+1}-{g}_{m}) \cdot\nabla d f_m\|_{L^1_{[0,T^*_{m+1}]} H^{r}_x}
			\\
			\leq & C2^{-\frac92\delta_{1} m}( \| f_{0} \|_{H^r_x} + \| f_{1} \|_{H^{r-1}_x}  ) + C \| {g}_{m+1}-{g}_{m}\|_{L^1_{[0,T^*_{m+1}]} L^\infty_x} \| \nabla d f_m\|_{L^\infty_{[0,T^*_{m+1}]} H^{r-1}_x} .
		\end{split}
	\end{equation}
	By using the energy estimates
	\begin{equation}\label{ru10}
		\begin{split}
			\| \nabla d f_m\|_{L^\infty_{[0,T^*_{m+1}]} H^{r-1}_x} \leq & C\| d f_m\|_{L^\infty_{[0,T_{m+1}]} H^{r}_x}
			\\
			\leq & C(\|f_{0m}\|_{H^{r+1}_x}+\|f_{1m}\|_{H^{r}_x})
			\\
			\leq & C2^m (\|f_{0m}\|_{H^r_x}+\|f_{1m}\|_{H^{r-1}_x}) .
		\end{split}
	\end{equation}
	By using Strichartz estimates \eqref{ru04} and Lemma \ref{LD} again, we shall prove that
	\begin{equation}\label{ru11}
		\begin{split}
			& 2^m \| {g}_{m+1}-{g}_{m}\|_{L^1_{[0,T^*_{m+1}]} L^\infty_x}
			\\
			\leq & 2^m (T^*_{m+1})^{\frac12}\| {\bv}_{m+1}-{\bv}_{m},  {\rho}_{m+1}-{\rho}_{m}\|_{L^2_{[0,T^*_{m+1}]} L^\infty_x}
			\\
			\leq & C2^m ( \| {\bv}_{m+1}-{\bv}_{m},  {\rho}_{m+1}-{\rho}_{m}\|_{L^\infty_{[0,T^*_{m+1}]} H^{1+\delta_{1}}_x} + \| {\bw}_{m+1}-{\bw}_{m}\|_{L^2_{[0,T^*_{m+1}]} H^{\frac12+\delta_{1}}_x})
			\\
			\leq & C2^{-7\delta_{1}m } (1+E^3(0)).
		\end{split}
	\end{equation}
	Above, we use \eqref{yu0}, \eqref{yu1}, and \eqref{DTJ}.

	Due to \eqref{ru10} and \eqref{ru11}, for $k<j$, and $k\leq m$,  \eqref{ru09} yields
	\begin{equation}\label{ru12}
		\begin{split}
			\| \left< \nabla \right>^{a-1} P_k (df_{m+1}-df_m)\|_{L^2_{[0,T^*_{m+1}]} L^\infty_x}
			\leq  C2^{-\frac12\delta_{1} k}2^{-4\delta_{1} m} \| (f_{0},f_1) \|_{H^r_x\times H^{r-1}_x}  (1+E^3(0)).
		\end{split}
	\end{equation}
	
	In the following part, let us consider the energy estimates. Taking the operator $\left< \nabla \right>^{r-1}$ on \eqref{ru01}, then we get
	\begin{equation}\label{ru15}
		\begin{cases}
			\square_{{g}_j} \left< \nabla \right>^{r-1}f_j=[ \square_{{g}_j} ,\left< \nabla \right>^{r-1}] f_j,
			\\
			(\left< \nabla \right>^{r-1}f_j,\partial_t \left< \nabla \right>^{r-1}f_j)|_{t=0}=(\left< \nabla \right>^{r-1}f_{0j},\left< \nabla \right>^{r-1}f_{1j}).
		\end{cases}
	\end{equation}
	To estimate $[ \square_{{g}_j} ,\left< \nabla \right>^{r-1}] f_j$, we will divide it into two cases $\frac{s}{2}<r\leq s$ and $s<r\leq 3$.

	\textit{Case 1: $\frac{s}{2}<r\leq s$.} For $\frac{s}{2}<r\leq s$, note that
	\begin{equation}\label{ru14}
		\begin{split}
			[ \square_{{g}_j} ,\left< \nabla \right>^{r-1}] f_j
			=& [{g}^{\alpha i}_j-\mathbf{m}^{\alpha i}, \left< \nabla \right>^{r-1} \partial_i ] \nabla_{\alpha}f_j + \left< \nabla \right>^{r-1}( \partial_i g_j \partial_{\alpha}f_j )
			\\
			= &[{g}_j-\mathbf{m}, \left< \nabla \right>^{r-1} \nabla] df_j + \left< \nabla \right>^{r-1}( \nabla g_j df).
		\end{split}
	\end{equation}
	By \eqref{ru14} and Kato-Ponce estimates, we have\footnote{If $r=s$, then $L^{\frac{3}{s-r}}_x=L^\infty_x$.}
	\begin{equation}\label{ru16}
		\| [ \square_{{g}_j} ,\left< \nabla \right>^{r-1}] f_j \|_{L^2_x} \leq C ( \|dg_j\|_{L^\infty_x} \|d f_j \|_{H^{r-1}_x} + \|\left< \nabla \right>^{r} (g_j-\mathbf{m}) \|_{L^{\frac{3}{\frac32-s+r}}_x} \| df_j \|_{L^{\frac{3}{s-r}}_x} )
	\end{equation}
	By Sobolev's inequality, it follows
	\begin{equation}\label{ru17}
		\|\left< \nabla \right>^{r} (g_j-\mathbf{m}) \|_{L^{\frac{3}{\frac32-s+r}}_x} \leq C\|g_j-\mathbf{m}\|_{H^s_x}.
	\end{equation}
	By Gagliardo-Nirenberg inequality and Young's inequality, we can get
	\begin{equation}\label{ru18}
		\begin{split}
			\| df_j \|_{L^{\frac{3}{s-r}}_x} \leq & C\|\left< \nabla \right>^{r-1} df_j \|^{\frac{s-2}{3-s}}_{L^{2}_x} \|\left< \nabla \right>^{r-\frac{s}{2}-1} df_j \|^{\frac{5-2s}{3-s}}_{L^{\infty}_x}
			\\
			\leq & C( \|df_j \|_{H^{r-1}_x}+ \|\left< \nabla \right>^{r-\frac{s}{2}-1} df_j \|_{L^{\infty}_x})
		\end{split}
	\end{equation}
	\textit{Case 2: $s<r\leq 3$.} For $s<r\leq 3$, using Kato-Ponce estimates, we have
	\begin{equation}\label{ru150}
		\begin{split}
			&  \| [ \square_{{g}_j} ,\left< \nabla \right>^{r-1}] f_j \|_{L^2_x}
			\\
			= & \| {g}^{\alpha i}_j-\mathbf{m}^{\alpha i},\left< \nabla \right>^{r-1}] \partial_{\alpha i}f_j \|_{L^2_x}
			\\
			\leq & C ( \|dg_j\|_{L^\infty_x} \|d f_j \|_{H^{r-1}_x} + \|\left< \nabla \right>^{r-1} (g_j-\mathbf{m}) \|_{L^{\frac{3}{\frac12-s+r}}_x} \|\nabla df_j \|_{L^{\frac{3}{1+s-r}}_x} ) .
		\end{split}
	\end{equation}
	By Sobolev's inequality, it follows
	\begin{equation}\label{ru151}
		\|\left< \nabla \right>^{r-1} (g_j-\mathbf{m}) \|_{L^{\frac{3}{\frac12-s+r}}_x} \leq C\|g_j-\mathbf{m}\|_{H^s_x}.
	\end{equation}
	By Gagliardo-Nirenberg inequality and Young's inequality, we can get
	\begin{equation}\label{ru152}
		\begin{split}
			\| \nabla df_j \|_{L^{\frac{3}{1+s-r}}_x} \leq & C\|\left< \nabla \right>^{r-1} df_j \|^{\frac{s-2}{3-s}}_{L^{2}_x} \|\left< \nabla \right>^{r-1-\frac{s}{2}} df_j \|^{\frac{5-2s}{3-s}}_{L^{\infty}_x}
			\\
			\leq & C( \|df_j \|_{H^{r-1}_x}+ \|\left< \nabla \right>^{r-\frac{s}{2}-1} df_j \|_{L^{\infty}_x})
		\end{split}
	\end{equation}
	For $\frac{s}{2}<r \leq 3$, gathering \eqref{ru16}-\eqref{ru18} with \eqref{ru150}- \eqref{ru152}, so we have
	\begin{equation}\label{ru19}
		\begin{split}
			\| [ \square_{{g}_j} ,\left< \nabla \right>^{r-1}] f_j \|_{L^2_x}\|d f_j \|_{H^{r-1}_x} \leq & C  \|dg_j\|_{L^\infty_x} \|d f_j \|^2_{H^{r-1}_x}
			+ C  \| g_j-\mathbf{m} \|_{H^s_x}\|d f_j \|^2_{H^{r-1}_x}
			\\
			& + C  \|g_j-\mathbf{m}\|_{H^s_x} \|\left< \nabla \right>^{r-\frac{s}{2}-1} df_j \|_{L^{\infty}_x} \|d f_j \|_{H^{r-1}_x}
			\\
			\leq & C  ( \|dg_j\|_{L^\infty_x}+ \|g_j-\mathbf{m}\|_{H^s_x}+ \|\left< \nabla \right>^{r-\frac{s}{2}-1} df_j \|_{L^{\infty}_x} ) \|d f_j \|^2_{H^{r-1}_x}
			\\
			&+ C  \|g_j-\mathbf{m}\|^2_{H^s_x}  \|\left< \nabla \right>^{r-\frac{s}{2}-1} df_j \|_{L^{\infty}_x} .
		\end{split}	
	\end{equation}
	By \eqref{ru15} and \eqref{ru19}, we get
	\begin{equation}\label{ru20}
		\begin{split}
			\frac{d}{dt}\|d f_j \|^2_{H^{r-1}_x}
			\leq & C  (\|dg_j\|_{L^\infty_x}+ \|g_j-\mathbf{m}\|_{H^s_x}+ \|\left< \nabla \right>^{r-\frac{s}{2}-1} df_j \|_{L^{\infty}_x} ) \|d f_j \|^2_{H^{r-1}_x}
			\\
			&+ C  \|g_j-\mathbf{m}\|^2_{H^s_x}  \|\left< \nabla \right>^{r-\frac{s}{2}-1} df_j \|_{L^{\infty}_x} .
		\end{split}	
	\end{equation}
	Using Gronwall's inequality, we get
	\begin{equation}\label{ru21}
		\begin{split}
			\|d f_j(t) \|^2_{H^{r-1}_x}
			\leq & C  (\|d f_j(0) \|^2_{H^{r-1}_x} + C \int^t_0 \|g_j-\mathbf{m}\|^2_{H^s_x}  \|\left< \nabla \right>^{r-\frac{s}{2}-1} df_j \|_{L^{\infty}_x}d\tau)
			\\
			& \qquad \cdot\exp\{\int^t_0  \|dg_j\|_{L^\infty_x}+ \|g_j-\mathbf{m}\|_{H^s_x}+ \|\left< \nabla \right>^{r-\frac{s}{2}-1} df_j \|_{L^{\infty}_x} d\tau \} .
		\end{split}	
	\end{equation}
	Note
	\begin{equation}\label{ru22}
		\begin{split}
			\|f_j(t) \|_{L^{2}_x}
			\leq & \| f_j(0) \|_{L^2_x} + \int^t_0 \| \partial_t f_j \|_{L^2_x} d\tau.
		\end{split}	
	\end{equation}
	By \eqref{ru21}, \eqref{ru22} and \eqref{kz65}, if $t\in[0,T^*_{N_0}]$, then it follows
	\begin{equation}\label{ru23}
		\begin{split}
			\| f_j(t) \|^2_{H^{r}_x} + \| d f_j(t) \|^2_{H^{r-1}_x}
			\leq & C  \textrm{e}^{C_*}(\|f_0 \|^2_{H^{r}_x}+\|f_1 \|^2_{H^{r-1}_x} + C_*)
			\\
			& \quad \cdot\exp\{\int^t_0   \|\left< \nabla \right>^{r-\frac{s}{2}-1} df_j \|_{L^{\infty}_x} d\tau \cdot \textrm{e}^{\int^t_0   \|\left< \nabla \right>^{r-\frac{s}{2}-1} df_j \|_{L^{\infty}_x} d\tau}   \} .
		\end{split}	
	\end{equation}
	Based on \eqref{ru070}, \eqref{ru12},  and \eqref{ru23}, following the extending method in subsecion \ref{esest}, for $a\leq r-\frac{s}{2}$ we shall obtain
	\begin{equation}\label{ru234}
		\begin{split}
			\| \left< \nabla \right>^{a-1} P_k df_j\|_{L^2_{[0,T^*_{N_0}]} L^\infty_x}
			\leq & C2^{-\frac12\delta_{1} k} 2^{-4\delta_{1} j} ( \| f_{0} \|_{H^r_x} + \| f_{1} \|_{H^{r-1}_x}  ) \times \{ 2^{\delta_{1} (j-N_0)} \}^{\frac12}
			\\
			\leq & C2^{-\frac12\delta_{1}N_0} 2^{-\frac12\delta_{1} k} 2^{-2\delta_{1} j} ( \| f_{0} \|_{H^r_x} + \| f_{1} \|_{H^{r-1}_x}  ), \quad k \geq j,
		\end{split}
	\end{equation}
	and
	\begin{equation}\label{ru25}
		\begin{split}
			& \| \left< \nabla \right>^{a-1} P_k (df_{m+1}-df_m)\|_{L^2_{[0,T^*_{N_0}]} L^\infty_x}
			\\
			\leq & C2^{-\frac12\delta_{1} k}2^{-3\delta_{1} m}( \| f_{0} \|_{H^r_x} + \| f_{1} \|_{H^{r-1}_x}  )(1+E^3(0)) \times \{ 2^{\delta_{1} (m+1-N_0)} \}^{\frac12}
			\\
			\leq & C2^{-\frac12\delta_{1} k}2^{-2\delta_{1} m}2^{-\frac12\delta_{1}N_0}( \| f_{0} \|_{H^r_x} + \| f_{1} \|_{H^{r-1}_x}  ) (1+E^3(0)), \quad k<j.
		\end{split}
	\end{equation}
	By phase decomposition, we have
	\begin{equation*}
		df_j= P_{\geq j} df_j+ \textstyle{\sum}^{j-1}_{k=1} \textstyle{\sum}^{j-1}_{m=k} P_k (df_{m+1}-df_m)+ \textstyle{\sum}^{j-1}_{k=1} P_k df_k.
	\end{equation*}
	By using \eqref{ru234} and \eqref{ru25}, we get
	\begin{equation}\label{ru230}
		\begin{split}
			\| \left< \nabla \right>^{a-1} df_{j} \|_{L^2_{[0,T^*_{N_0}]} L^\infty_x}
			\leq & C( \| f_{0} \|_{H^r_x} + \| f_{1} \|_{H^{r-1}_x}  )(1+E^3(0))[\frac13(1-2^{-\delta_{1}})]^{-2}.
		\end{split}
	\end{equation}
	Using \eqref{ru23} and \eqref{ru230}, we get
	\begin{equation}\label{ru26}
		\begin{split}
			\|f_j\|_{L^\infty_{[0,T^*_{N_0}]} H^{r}_x}+ \|d f_j\|_{L^\infty_{[0,T^*_{N_0}]} H^{r-1}_x }
			\leq & A_* \exp ( B_* \mathrm{e}^{B_*} ),
		\end{split}	
	\end{equation}
	where $A_{*}=C  \textrm{e}^{C_*}(\|f_0 \|_{H^{r}_x}+\|f_1 \|_{H^{r-1}_x} + C_*)$, $B_*=C(\| f_{0} \|_{H^r_x} + \| f_{1} \|_{H^{r-1}_x})(1+E^3(0))[\frac13(1-2^{-\delta_{1}})]^{-2}$ and $C_*$ is stated in \eqref{Cstar}. Therefore, the conclusion \eqref{ru02} hold.
\end{proof}

\section{Appendix}\label{Ap}
\subsection{Proofs for some lemmas}\label{appa}
We will prove Lemma \ref{WT} and Lemma \ref{tr0} as follows.
\begin{proof}[proof of Lemma \ref{WT}]
	Operate $u^\beta \partial_{\beta}$ on \eqref{REE}, we have
	\begin{equation}\label{a00}
			u^\beta \partial_{\beta}(u^\kappa \partial_{\kappa} h) + u^\beta \partial_{\beta}( c^2_s \partial_\kappa u^\kappa)=0,
	\end{equation}
	and
	\begin{equation}\label{a01}
		u^\beta \partial_{\beta}(u^\kappa \partial_{\kappa} u^\alpha)  + u^\beta \partial_{\beta}\{ (m^{\alpha \kappa}+u^{\alpha}u^{\kappa}) \partial_\kappa h  \}=0 .
	\end{equation}
We begin with the first equation \eqref{a00}. A direct calculation on \eqref{a00} tells us
\begin{equation}\label{a03}
	u^\beta u^\kappa \partial_{\beta \kappa} h  + \underbrace{c^2_s u^\beta \partial_{\beta \kappa} u^\kappa}_{\equiv R_1} + u^\beta \partial_{\beta}u^\kappa  \partial_{\kappa} h +  u^\beta \partial_{\kappa} u^\kappa \partial_{\beta}(c^2_s)=0.
\end{equation}
For $R_1$, we can compute
\begin{align}\label{a04}
	 R_1= & c^2_s \partial_\kappa  ( u^\beta \partial_{\beta}  u^\kappa ) -c^2_s \partial_\kappa  u^\beta \partial_{\beta}  u^\kappa  \nonumber
	 \\
	 =& -c^2_s \partial_\kappa \{ (m^{\beta \kappa}+u^\beta u^\kappa)\partial_{\beta}h  \}-c^2_s \partial_\kappa  u^\beta \partial_{\beta}  u^\kappa  \nonumber
	 \\
	 =& -c^2_s  m^{\beta \kappa} \partial_{\beta \kappa}h  - c^2_s u^\beta u^\kappa \partial_{\beta \kappa}h
	 -c^2_s  u^\kappa \partial_{\kappa} u^\beta \partial_\beta h \nonumber
	 \\
	 &- c^2_s  u^\beta \partial_{\kappa} u^\kappa \partial_\beta h
	 -c^2_s \partial_\kappa  u^\beta \partial_{\beta}  u^\kappa
\end{align}
Inserting \eqref{a04} to \eqref{a03}, it yields
\begin{equation}\label{a05}
\begin{split}
	 0=& -c^2_s  m^{\beta \kappa} \partial_{\beta \kappa}h + (1- c^2_s) u^\beta u^\kappa \partial_{\beta \kappa}h  + u^\beta \partial_{\beta}u^\kappa  \partial_{\kappa} h +  u^\beta \partial_{\kappa} u^\kappa \partial_{\beta}(c^2_s)
	\\
	& \ -c^2_s  u^\kappa \partial_{\kappa} u^\beta \partial_\beta h
	- c^2_s  u^\beta \partial_{\kappa} u^\kappa \partial_\beta h
	-c^2_s \partial_\kappa  u^\beta \partial_{\beta}  u^\kappa.
\end{split}
\end{equation}
Multiplying \eqref{a05} with $\Omega$ (defined in \eqref{AMd2}) and using $\square_g=g^{\alpha \beta} \partial^2_{ \alpha \beta}$, we can rewritten \eqref{a05} as
\begin{equation}\label{a06}
		\square_g h=\Omega \left\{ (1-c^2_s)u^\beta \partial_{\beta}u^\kappa  \partial_{\kappa} h +  u^\beta \partial_{\kappa} u^\kappa \partial_{\beta}(c^2_s)
		- c^2_s  u^\beta \partial_{\kappa} u^\kappa \partial_\beta h
		-c^2_s \partial_\kappa  u^\beta \partial_{\beta}  u^\kappa \right\}.
\end{equation}
So we have proved \eqref{err1}. We still need to prove \eqref{err}. So we next consider \eqref{a01}. By direct calculation, we obtain
\begin{equation}\label{a07}
	\begin{split}
	& u^\beta u^\kappa \partial_{\beta \kappa} u^\alpha + u^\beta \partial_{\beta} u^\kappa \partial_{\kappa} u^\alpha
 + u^\beta \partial_{\beta} (m^{\alpha \kappa}+u^{\alpha}u^{\kappa}) \partial_\kappa h
	  + \underbrace{u^\beta  (m^{\alpha \kappa}+ u^{\alpha}u^{\kappa}) \partial_{\beta \kappa} h }_{\equiv R_2} =0.
	\end{split}
\end{equation}
We continue to consider $R_2$. Inserting \eqref{REE}, it yields
\begin{equation}\label{a08}
	\begin{split}
		R_2= & (m^{\alpha \kappa}+ u^{\alpha}u^{\kappa}) \partial_{\kappa} ( u^\beta \partial_{\beta} h)-(m^{\alpha \kappa}+ u^{\alpha}u^{\kappa}) \partial_{\kappa} u^\beta  \partial_{\beta} h
		\\
		=& (m^{\alpha \kappa}+ u^{\alpha}u^{\kappa}) \partial_{\kappa} ( -c^2_s \partial_\beta u^\beta )-(m^{\alpha \kappa}+ u^{\alpha}u^{\kappa}) \partial_{\kappa} u^\beta  \partial_{\beta} h
		\\
		=& \underbrace{ -c^2_s m^{\alpha \kappa}\partial_{\kappa \beta} u^\beta }_{\equiv R_3}
		 \underbrace{ -c^2_s u^{\alpha}u^{\kappa}\partial_{\kappa \beta} u^\beta }_{\equiv R_4}
		-(m^{\alpha \kappa}+ u^{\alpha}u^{\kappa}) \partial_{ \beta} u^\beta \partial_{\kappa} (c^2_s)   -(m^{\alpha \kappa}+ u^{\alpha}u^{\kappa}) \partial_{\kappa} u^\beta  \partial_{\beta} h.
	\end{split}
\end{equation}
For $R_3$, using \eqref{OE00} and \eqref{MFd}, we have
\begin{equation}\label{a09}
	\begin{split}
		R_3=& -c^2_s m^{\alpha \kappa}\partial_{ \beta} ( \partial_\kappa u^\beta-\partial^\beta u_\kappa)-c^2_s m^{\alpha \kappa}\partial^{\beta}\partial_\beta u_\kappa
		\\
		=& -c^2_s m^{\alpha \kappa}\partial^{ \beta} ( \epsilon_{\kappa \beta \gamma \delta}\mathrm{e}^{-h}u^\gamma w^\delta-u_\beta \partial_\kappa h+ u_\kappa \partial_{ \beta} h )-c^2_s m^{\alpha \kappa}\partial^{\beta}\partial_\beta u_\kappa
		\\
		=& -c^2_s \epsilon^{\alpha \beta \gamma \delta}\mathrm{e}^{-h}u_\gamma \partial_{ \beta} w_\delta +  c^2_s u^\beta \partial_\beta\partial^\alpha h  -c^2_su^\alpha \partial^\beta\partial_\beta h -c^2_s \partial^{\beta}\partial_\beta u^\alpha
		\\
		& -c^2_s \epsilon^{\alpha \beta \gamma \delta}\mathrm{e}^{-h}u_\gamma  \partial_{ \beta} h w_\delta+c^2_s \epsilon^{\alpha \beta \gamma \delta}\mathrm{e}^{-h} \partial_{ \beta} u_\gamma  w_\delta+  c^2_s \partial_\beta u^\beta \partial^\alpha h -c^2_s \partial^\beta u^\alpha \partial_\beta h
		\\
		=
		& -c^2_s \epsilon^{\alpha \beta \gamma \delta}\mathrm{e}^{-h}\partial_\beta u_\gamma w_\delta    +  c^2_s \partial_\beta u^\beta \partial^\alpha h -c^2_s \partial^\beta u^\alpha \partial_\beta h
		\\
		& -c^2_s \mathrm{e}^{-h} W^\alpha + \underbrace{ c^2_s u^\beta \partial_\beta\partial^\alpha h }_{\equiv R_5} \underbrace{-c^2_su^\alpha \partial^\beta\partial_\beta h}_{\equiv R_7} -c^2_s \partial^{\beta}\partial_\beta u^\alpha.
	\end{split}
\end{equation}
It remains for us to calculate $R_4$ and $R_5$. For $R_4$, we compute
\begin{equation}\label{a10}
	\begin{split}
		R_4=& -c^2_s u^{\alpha}u^{\kappa}\partial_{\kappa \beta} u^\beta
		\\
		=& -c^2_s u^{\alpha} \partial_{\beta} ( u^{\kappa}\partial_{\kappa} u^\beta )+c^2_s u^{\alpha} \partial_{\beta}  u^{\kappa} \partial_{\kappa} u^\beta
		\\
		=& c^2_s u^{\alpha} \partial_{\beta} \{ (m^{\beta \kappa}+u^\beta u^\kappa )\partial_{\kappa} h  \} +c^2_s u^{\alpha} \partial_{\beta}  u^{\kappa} \partial_{\kappa} u^\beta
		\\
		=& \underbrace{ c^2_s u^{\alpha}  \partial^{\beta } \partial_{\beta} h }_{\equiv R_8}  + \underbrace{ c^2_s u^{\alpha} u^\beta u^\kappa \partial_{\beta \kappa} h  }_{\equiv R_6}
		+c^2_s u^{\alpha} u^\kappa \partial_{\beta} u^\beta  \partial_{\kappa} h
		 + c^2_s u^{\alpha}  u^\beta  \partial_{\beta} u^\kappa \partial_{\kappa} h
		 +c^2_s u^{\alpha} \partial_{\beta}  u^{\kappa} \partial_{\kappa} u^\beta.
	\end{split}
\end{equation}
Note that
\begin{equation}\label{a11a}
	R_7+R_8=0.
\end{equation}
We now turn to calculate $R_5$. Inserting \eqref{REE} to $R_5$, we get
\begin{equation}\label{a11}
	\begin{split}
		R_5=& c^2_s u^\beta \partial_\beta(-u^\kappa \partial_\kappa u^\alpha-u^\alpha u^\kappa \partial_\kappa h)
		\\
		=& -c^2_s u^\beta u^\kappa \partial_{\beta \kappa} u^\alpha - c^2_s u^\alpha u^\beta u^\kappa \partial_{\beta \kappa} h
		 -c^2_s u^\beta \partial_{\beta} u^\kappa \partial_{\kappa} u^\alpha
		 \\
		 & - c^2_s  u^\beta u^\kappa \partial_{\beta} u^\alpha \partial_{\kappa} h- c^2_s  u^\beta u^\alpha  \partial_{\beta} u^\kappa \partial_{\kappa} h.
	\end{split}
\end{equation}
From \eqref{a11}, we can see
\begin{equation}\label{a12}
	\begin{split}
		R_5+R_6
		=& -c^2_s u^\beta u^\kappa \partial_{\beta \kappa} u^\alpha
		-c^2_s u^\beta \partial_{\beta} u^\kappa \partial_{\kappa} u^\alpha
		\\
		& - c^2_s  u^\beta u^\kappa \partial_{\beta} u^\alpha \partial_{\kappa} h- c^2_s  u^\beta u^\alpha  \partial_{\beta} u^\kappa \partial_{\kappa} h.
	\end{split}
\end{equation}
Gathering \eqref{a07}-\eqref{a12}, we get
\begin{equation}\label{a13}
	\begin{split}
		c^2_s m^{\beta \gamma}\partial_{\beta \gamma} u^\alpha+(c^2_s-1) u^{\beta}u^{\gamma}\partial_{\beta \gamma} u^\alpha
		=& -c^2_s \mathrm{e}^{-h} W^\alpha+Q_*^\alpha,
	\end{split}
\end{equation}
where
\begin{equation}\label{a14}
	\begin{split}
			Q_*^\alpha=&u^\beta \partial_{\beta} u^\kappa \partial_{\kappa} u^\alpha
			+ u^\beta \partial_{\beta} (m^{\alpha \kappa}+u^{\alpha}u^{\kappa}) \partial_\kappa h-(m^{\alpha \kappa}+ u^{\alpha}u^{\kappa}) \partial_{ \beta} u^\beta \partial_{\kappa} (c^2_s)
			\\
			& -(m^{\alpha \kappa}+ u^{\alpha}u^{\kappa}) \partial_{\kappa} u^\beta  \partial_{\beta} h+c^2_s \epsilon^{\alpha \beta \gamma \delta}\mathrm{e}^{-h} w_\delta \partial_{ \beta} u_\gamma
			+
		  c^2_s \partial_\beta u^\beta \partial^\alpha h
		   -c^2_s \partial^\beta u^\alpha \partial_\beta h
		   \\
		   & +c^2_s u^{\alpha} u^\kappa \partial_{\beta} u^\beta  \partial_{\kappa} h
			 +c^2_s u^{\alpha} \partial_{\beta}  u^{\kappa} \partial_{\kappa} u^\beta
			-c^2_s u^\beta \partial_{\beta} u^\kappa \partial_{\kappa} u^\alpha
			 - c^2_s  u^\beta u^\kappa \partial_{\beta} u^\alpha \partial_{\kappa} h.
	\end{split}
\end{equation}
Multiplying $\eqref{a13}$ with $\Omega$ (defined in \eqref{AMd2})
, then we can write \eqref{a14} as \eqref{err}, where $Q^\alpha=\Omega \cdot Q_*^\alpha$.
\end{proof}
\begin{remark}
	(1) The right terms in \eqref{a06} and \eqref{a13} are a little different from \cite{DS}(Theorem 3.1), for the definition of $\square_g$ is different. (2) From the second-order derivative terms in \eqref{a05}, so it's clear for us to define the acoustical metric $g$ such that $g^{00}=-1$.
\end{remark}
\begin{proof}[proof of Lemma \ref{tr0}]
Operating $\partial_\gamma$ on \eqref{REE}, we have
\begin{equation}\label{a20}
	\begin{split}
		u^\kappa \partial_{\kappa}(\partial_\gamma u_{\delta})+\partial_{\gamma}u^\kappa \partial_\kappa u_\delta+\partial_\gamma\partial_\delta h+u_{\delta} \partial_\gamma u_\kappa \partial^\kappa h + u_\kappa \partial_\gamma u_\delta \partial^\kappa h+u_\kappa u_\delta \partial_\gamma \partial^\kappa h=0
	\end{split}
\end{equation}
Multiplying $\epsilon^{\alpha \beta \gamma \delta}\mathrm{e}^h u_\beta$ on \eqref{a20}, it follows
\begin{equation}\label{a21}
	\begin{split}
		0=& u^\kappa \partial_{\kappa}( \epsilon^{\alpha \beta \gamma \delta}\mathrm{e}^h u_\beta \partial_\gamma u_{\delta})
		  -\epsilon^{\alpha \beta \gamma \delta}\mathrm{e}^h u_\beta  u^\kappa \partial_{\kappa}h \partial_\gamma u_{\delta}
		\underbrace{ -\epsilon^{\alpha \beta \gamma \delta}\mathrm{e}^h   u^\kappa \partial_{\kappa} u_\beta \partial_\gamma u_{\delta} }_{\equiv R_1}
		\\
		& +\underbrace{ \epsilon^{\alpha \beta \gamma \delta}\mathrm{e}^h u_\beta \partial_{\gamma}u^\kappa \partial_\kappa u_\delta }_{\equiv R_2}+\underbrace{\epsilon^{\alpha \beta \gamma \delta}\mathrm{e}^h u_\beta \partial_\gamma\partial_\delta h }_{\equiv 0}
		 +\underbrace{\epsilon^{\alpha \beta \gamma \delta}\mathrm{e}^h u_\beta u_{\delta} \partial_\gamma u_\kappa \partial^\kappa h }_{\equiv 0}
		 \\
		 & +  \epsilon^{\alpha \beta \gamma \delta}\mathrm{e}^h u_\beta u_\kappa \partial_\gamma u_\delta \partial^\kappa h +\underbrace{ \epsilon^{\alpha \beta \gamma \delta}\mathrm{e}^h u_\beta u_\kappa u_\delta \partial_\gamma \partial^\kappa h }_{\equiv 0}.
	\end{split}
\end{equation}
Then \eqref{a21} tells us
\begin{equation}\label{a22}
		 u^\kappa \partial_{\kappa}w^\alpha=R_1+R_2.
\end{equation}
Let us calculate $R_1$ and $R_2$ as follows. For $R_1$, we have
\begin{equation}\label{a23}
\begin{split}
	R_1=& -\epsilon^{\alpha \beta \gamma \delta}\mathrm{e}^h   u^\kappa  \partial_\gamma u_{\delta} (\partial_{\kappa} u_\beta- \partial_{\beta} u_\kappa)-\epsilon^{\alpha \beta \gamma \delta}\mathrm{e}^h   u^\kappa  \partial_\gamma u_{\delta} \partial_{\beta} u_\kappa
	\\
	=& -\epsilon^{\alpha \beta \gamma \delta}\mathrm{e}^h   u^\kappa  \partial_\gamma u_{\delta} (\partial_{\kappa} u_\beta- \partial_{\beta} u_\kappa)
	\\
	=& -\epsilon^{\alpha \beta \gamma \delta}\mathrm{e}^h   u^\kappa  \partial_\gamma u_{\delta} \left( \epsilon_{\kappa \beta \eta \mu} \mathrm{e}^{-h}u^\eta w^\mu-u_\beta \partial_\kappa h +u_\kappa \partial_\beta h \right)
	\\
	=& \epsilon^{\alpha \beta \gamma \delta}\mathrm{e}^h   u^\kappa    u_\beta \partial_\kappa h \partial_\gamma u_{\delta} -\epsilon^{\alpha \beta \gamma \delta}\mathrm{e}^h   u^\kappa u_\kappa \partial_\gamma u_{\delta} \partial_\beta h
	\\
	=& -w^\alpha u^\kappa \partial_\kappa h + \epsilon^{\alpha \beta \gamma \delta}\mathrm{e}^h \partial_\gamma u_{\delta} \partial_\beta h .
\end{split}
\end{equation}
By using \eqref{cra1}, we can get
\begin{equation}\label{a24}
	\begin{split}
	\epsilon^{\alpha \beta \gamma \delta}\mathrm{e}^h \partial_\gamma u_{\delta} \partial_\beta h=& w^\alpha u^\beta \partial_\beta h-u^\alpha w^\beta \partial_\beta h   -\epsilon^{\alpha \beta \gamma \delta} \mathrm{e}^h u_\delta \partial_\beta h  \partial_\gamma h
	\\
	=& w^\alpha u^\beta-u^\alpha w^\beta \partial_\beta h .
	\end{split}
\end{equation}
Inserting \eqref{a24} to \eqref{a23}, we know
\begin{equation}\label{a25}
	\begin{split}
		R_1=-u^\alpha w^\beta \partial_\beta h.
	\end{split}
\end{equation}
For $R_2$, we calculate it by
\begin{equation}\label{a26}
	\begin{split}
		R_2=& \epsilon^{\alpha \beta \gamma \delta}\mathrm{e}^h u_\beta ( \partial_{\gamma}u^\kappa - \partial^\kappa u_\gamma) \partial_\kappa u_\delta+  \epsilon^{\alpha \beta \gamma \delta}\mathrm{e}^h u_\beta \partial^\kappa u_\gamma \partial_\kappa u_\delta
		\\
		=&  \epsilon^{\alpha \beta \gamma \delta}\mathrm{e}^h u_\beta ( \partial_{\gamma}u^\kappa - \partial^\kappa u_\gamma) \partial_\kappa u_\delta
		\\
		=& \epsilon^{\alpha \beta \gamma \delta}\mathrm{e}^h u_\beta ( \epsilon_{\gamma}^{\ \kappa \eta \mu} \mathrm{e}^{-h} u_\eta w_\mu- u^\kappa \partial_\gamma h+ u_\gamma \partial^\kappa h ) \partial_\kappa u_\delta
		\\
		=& \underbrace{\epsilon^{\alpha \beta \gamma \delta} \epsilon_{\gamma}^{\ \kappa \eta \mu} u_\beta u_\eta w_\mu \partial_\kappa u_\delta}_{\equiv R_3} \underbrace{- \epsilon^{\alpha \beta \gamma \delta} \mathrm{e}^h u_\beta u^\kappa \partial_\gamma h \partial_\kappa u_\delta }_{\equiv R_4}.
	\end{split}
\end{equation}
Let us first consider $R_4$. Using \eqref{REE}, we get
\begin{equation}\label{a27}
	\begin{split}
		R_4= \epsilon^{\alpha \beta \gamma \delta} \mathrm{e}^h u_\beta \partial_\gamma h \partial_\delta h+\epsilon^{\alpha \beta \gamma \delta} \mathrm{e}^h u_\beta u_\delta u_\kappa \partial_\gamma h  \partial^\kappa h=0.
	\end{split}
\end{equation}
For $R_3$, it yields
\begin{equation}\label{a28}
	\begin{split}
		R_3=& \left( \delta_\kappa^\alpha \delta^\delta_\eta \delta^\beta_\mu - \delta_\kappa^\alpha \delta^\beta_\eta \delta^\delta_\mu
		+\delta^\beta_\kappa \delta^\alpha_\eta \delta^\delta_\mu - \delta_\kappa^\beta \delta^\delta_\eta \delta^\alpha_\mu
		+ \delta_\kappa^\delta \delta^\beta_\eta \delta^\alpha_\mu - \delta_\kappa^\delta \delta^\alpha_\eta \delta^\beta_\mu  \right) u_\beta u_\eta w_\mu \partial_\kappa u_\delta
		\\
		=&  w^\mu \partial^\alpha u_\mu - w^\alpha \partial_\kappa u^\kappa+  u^\alpha w^\mu u^\kappa \partial_\kappa u_\mu.
	\end{split}
\end{equation}
Let us discuss the term $w^\mu \partial^\alpha u_\mu$ and $u^\alpha w^\mu u^\kappa \partial_\kappa u_\mu$ of \eqref{a28} in a further way. Using \eqref{REE} again, we get
\begin{equation}\label{a29}
	\begin{split}
	u^\alpha w^\mu u^\kappa \partial_\kappa u_\mu= u^\alpha w^\mu(-\partial_\mu h - u_\mu u_\kappa \partial^\kappa h)=-u^\alpha w^\mu \partial_\mu
	 h.
	\end{split}
\end{equation}
While, for $w^\mu \partial^\alpha u_\mu$, we calculate
\begin{equation}\label{a30}
	\begin{split}
		w^\mu \partial^\alpha u_\mu= & w^\mu ( \partial^\alpha u_\mu - \partial_\mu u^\alpha)+ w^\mu \partial_\mu u^\alpha
		\\
		=&  w^\mu ( \mathrm{e}^{-h} \epsilon^\alpha_{\ \mu \beta \gamma} u^\beta w^\gamma-u_\mu \partial^\alpha h +u^\alpha \partial_\mu h  )+ w^\mu \partial_\mu u^\alpha
		\\
		=& u^\alpha w^\mu \partial_\mu h+ w^\mu \partial_\mu u^\alpha.
	\end{split}
\end{equation}
Substituting \eqref{a29} and \eqref{a30} to \eqref{a28}, we have
\begin{equation}\label{a31}
	\begin{split}
R_3= w^\mu \partial_\mu u^\alpha - w^\alpha \partial_\kappa u^\kappa.
	\end{split}
\end{equation}
Combining \eqref{a22}, \eqref{a25}, \eqref{a26}, \eqref{a27} and \eqref{a31}, it follows
\begin{equation}\label{a32}
	u^\kappa \partial_\kappa w^\alpha = -u^\alpha w^\kappa \partial_\kappa h+ w^\kappa \partial_\kappa u^\alpha- w^\alpha \partial_\kappa u^\kappa.
\end{equation}
By the definition ${w}^\alpha=-\epsilon^{\alpha \beta \gamma \delta}\mathrm{e}^{h}u_{\beta}\partial_{\gamma}u_\delta$, we can directly compute
\begin{equation}\label{a33}
	\begin{split}
		\partial_\alpha {w}^\alpha = & -\epsilon^{\alpha \beta \gamma \delta}\mathrm{e}^{h} u_{\beta} \partial_\alpha h \partial_{\gamma}u_\delta
		-\epsilon^{\alpha \beta \gamma \delta}\mathrm{e}^{h} \partial_\alpha u_{\beta} \partial_{\gamma}u_\delta
		-\epsilon^{\alpha \beta \gamma \delta}\mathrm{e}^{h}  u_{\beta} \partial_\alpha \partial_{\gamma}u_\delta
		\\
		= & -\epsilon^{\alpha \beta \gamma \delta}\mathrm{e}^{h} u_{\beta} \partial_\alpha h \partial_{\gamma}u_\delta
		\\
		= & w^{\alpha} \partial_\alpha h.
	\end{split}
\end{equation}
In the following, we need to derive the transport equation of $\bW$. To achieve this goal, we operate $\partial_\gamma$ on \eqref{a32}. Then we have
\begin{equation}\label{a34}
	\begin{split}
		u^\kappa \partial_\kappa ( \partial_\gamma  w_\delta ) = & -\partial_\gamma u^\kappa \partial_\kappa w_\delta-\partial_\gamma u_\delta w^\kappa \partial_\kappa h - u_\delta \partial_\gamma w^\kappa \partial_\kappa h
		\\
		&-u_\delta w^\kappa \partial_\gamma \partial_\kappa h
		+ \partial_\gamma w^\kappa \partial_\kappa u_\delta+  w^\kappa \partial_\gamma \partial_\kappa u_\delta
		\\
		&- \partial_\gamma w_\delta \partial_\kappa u^\kappa- w_\delta \partial_\gamma  \partial_\kappa u^\kappa.
	\end{split}
\end{equation}
Multiplying $ \epsilon^{\alpha \beta \gamma \delta}u_\beta$ on \eqref{a34} and setting $ \overline{W}^\alpha=-\epsilon^{\alpha \beta \gamma \delta}u_\beta \partial_\gamma  w_\delta$, we obtain
\begin{equation}\label{a35}
	\begin{split}
		-u^\kappa \partial_\kappa \overline{W}^\alpha = & \underbrace{ \epsilon^{\alpha \beta \gamma \delta} u^\kappa \partial_\kappa u_\beta \partial_\gamma w_\delta }_{\equiv I_1}
	     \underbrace{ -\epsilon^{\alpha \beta \gamma \delta}u_\beta \partial_\gamma u^\kappa \partial_\kappa w_\delta }_{\equiv I_2}
		 \underbrace{-\epsilon^{\alpha \beta \gamma \delta}u_\beta w^\kappa  \partial_\gamma u_\delta \partial_\kappa h }_{\equiv I_3}
		 \\
		 & \underbrace{ - \epsilon^{\alpha \beta \gamma \delta}u_\beta u_\delta \partial_\gamma w^\kappa \partial_\kappa h }_{\equiv I_4}
		\underbrace{-\epsilon^{\alpha \beta \gamma \delta}u_\beta u_\delta w^\kappa \partial_\gamma \partial_\kappa h }_{\equiv I_5}
		+ \underbrace{\epsilon^{\alpha \beta \gamma \delta}u_\beta \partial_\gamma w^\kappa \partial_\kappa u_\delta }_{\equiv I_6}
		\\
		& +  \underbrace{\epsilon^{\alpha \beta \gamma \delta}u_\beta w^\kappa \partial_\gamma \partial_\kappa u_\delta }_{\equiv I_7}
		\underbrace{-\epsilon^{\alpha \beta \gamma \delta}u_\beta\partial_\gamma w_\delta \partial_\kappa u^\kappa }_{\equiv I_8}
			\underbrace{- \epsilon^{\alpha \beta \gamma \delta}u_\beta w_\delta \partial_\gamma  \partial_\kappa u^\kappa }_{\equiv I_9}.
	\end{split}
\end{equation}
Next, let us calculate the above terms $I_1, I_2, \cdots, I_9$. Firstly, it's easy for us to see
\begin{equation}\label{a36}
	I_4=0, \quad I_5=0.
\end{equation}
According to the definition of $\bw$ and $\overline{W}$, we can get
\begin{equation}\label{a37}
	I_3=\mathrm{e}^{-h}w^\alpha w^\kappa \partial_\kappa h,
\end{equation}
and also
\begin{equation}\label{a38}
	I_8=\overline{W}^\alpha \partial_\kappa u^\kappa.
\end{equation}
Then we compute $I_2$ by
\begin{equation}\label{a39}
	\begin{split}
		I_2=&\underbrace{ -\epsilon^{\alpha \beta \gamma \delta}u_\beta \partial_\gamma u^\kappa ( \partial_\kappa w_\delta - \partial_\delta w_\kappa) }_{\equiv I_{21}}
		-\epsilon^{\alpha \beta \gamma \delta}u_\beta \partial_\gamma u^\kappa \partial_\delta w_\kappa.
	\end{split}
\end{equation}
Using \eqref{cr04}, we get
\begin{equation}\label{a40}
	\begin{split}
		I_{21}=& -\epsilon^{\alpha \beta \gamma \delta}u_\beta \partial_\gamma u^\kappa ( \epsilon_{\kappa \delta \mu \nu} u^\mu \overline{W}^\nu- u^\mu u_\kappa \partial_\mu w_\delta  + u^\mu u_\kappa \partial_\delta w_\mu + u^\mu u_\delta \partial_\mu w_\kappa  - u^\mu u_\delta \partial_\kappa w_\mu )
		\\
		=& -\epsilon^{\alpha \beta \gamma \delta}u_\beta \partial_\gamma u^\kappa \cdot \epsilon_{\kappa \delta \mu \nu} u^\mu \overline{W}^\nu
		\\
		=& (-\delta_\kappa^\alpha \delta^\gamma_\mu \delta^\beta_\nu + \delta_\kappa^\alpha \delta^\beta_\mu \delta^\gamma_\nu
		-\delta^\beta_\kappa \delta^\alpha_\mu \delta^\gamma_\nu + \delta_\kappa^\beta \delta^\gamma_\mu \delta^\alpha_\nu
		- \delta^\gamma_\kappa \delta^\beta_\mu \delta^\alpha_\nu + \delta_\kappa^\gamma \delta^\alpha_\mu \delta^\beta_\nu  )u_\beta u^\mu  \partial_\gamma u^\kappa \overline{W}^\nu
		\\
		=& u_\mu u^\mu \partial_\nu u^\alpha \overline{W}^\nu-u_\mu u^\mu \partial_\kappa u^\kappa \overline{W}^\alpha
		\\
		=&-\overline{W}^\kappa \partial_\kappa u^\alpha+ \overline{W}^\alpha \partial_\kappa u^\kappa.
	\end{split}
\end{equation}
Inserting \eqref{a40} to \eqref{a39}, it follows
\begin{equation}\label{a41}
		I_2=-\overline{W}^\kappa \partial_\kappa u^\alpha+ \overline{W}^\alpha \partial_\kappa u^\kappa
		+\epsilon^{\alpha \beta \gamma \delta}u_\beta \partial_\delta u^\kappa \partial_\gamma w_\kappa.
\end{equation}
For $I_6$, we have
\begin{equation}\label{a42}
	\begin{split}
		I_6=& \epsilon^{\alpha \beta \gamma \delta}u_\beta  \partial_\gamma  w^\kappa ( \partial_\kappa u_\delta - \partial_\delta u_\kappa)
		+ \epsilon^{\alpha \beta \gamma \delta}u_\beta  \partial_\gamma  w^\kappa \partial_\delta u_\kappa
		\\
		=& \epsilon^{\alpha \beta \gamma \delta}u_\beta  \partial_\gamma  w^\kappa ( \epsilon_{\kappa \delta \mu \nu} \mathrm{e}^{-h}u_\mu w_\nu- u_\delta \partial_\kappa h + u_\kappa \partial_\delta h )
		+ \epsilon^{\alpha \beta \gamma \delta}u_\beta  \partial_\gamma  w^\kappa \partial_\delta u_\kappa
		\\
		=& \epsilon^{\alpha \beta \gamma \delta}\epsilon_{\kappa \delta \mu \nu}\mathrm{e}^{-h} u_\beta u_\mu w_\nu \partial_\gamma  w^\kappa+\epsilon^{\alpha \beta \gamma \delta} u_\beta u_\kappa \partial_\delta h \partial_\gamma  w^\kappa
		+ \epsilon^{\alpha \beta \gamma \delta}u_\beta  \partial_\gamma  w^\kappa \partial_\delta u_\kappa.
	\end{split}
\end{equation}
In a similar way, we can compute
\begin{equation}\label{a43}
	\begin{split}
		I_7=& \epsilon^{\alpha \beta \gamma \delta}u_\beta  w^\kappa \partial_\gamma ( \partial_\kappa u_\delta - \partial_\delta u_\kappa)
		+ \epsilon^{\alpha \beta \gamma \delta}u_\beta  w^\kappa \partial_\gamma \partial_\delta u_\kappa
		\\
		=& \epsilon^{\alpha \beta \gamma \delta}u_\beta  w^\kappa \partial_\gamma ( \partial_\kappa u_\delta - \partial_\delta u_\kappa)
		\\
		=& \epsilon^{\alpha \beta \gamma \delta}u_\beta  w^\kappa \partial_\gamma ( \epsilon_{\kappa \delta \mu \nu} \mathrm{e}^{-h}u_\mu w_\nu- u_\delta \partial_\kappa h + u_\kappa \partial_\delta h )
		\\
		=& \underbrace{ \epsilon^{\alpha \beta \gamma \delta}\epsilon_{\kappa \delta \mu \nu} u_\beta  w^\kappa \partial_\gamma (  \mathrm{e}^{-h}u_\mu w_\nu) }_{\equiv I_{71}} - \epsilon^{\alpha \beta \gamma \delta}u_\beta  w^\kappa \partial_\gamma u_\delta \partial_\kappa h + \epsilon^{\alpha \beta \gamma \delta}u_\beta  w^\kappa \partial_\gamma u_\kappa \partial_\delta h .
	\end{split}
\end{equation}
Above, we have used \eqref{OE00}. On the other hand, we can calculate
\begin{equation}\label{a44}
	\begin{split}
		I_{71}=& \epsilon^{\alpha \beta \gamma \delta}\epsilon_{\kappa \delta \mu \nu} u_\beta \left\{   \partial_\gamma (  \mathrm{e}^{-h} w^\kappa u_\mu w_\nu) - \mathrm{e}^{-h}  u_\mu w_\nu \partial_\gamma w^\kappa \right\}
		\\
		=& -\epsilon^{\alpha \beta \gamma \delta}\epsilon_{\kappa \delta \mu \nu}  \mathrm{e}^{-h} u_\beta  u_\mu w_\nu \partial_\gamma w^\kappa.
	\end{split}
\end{equation}
Inserting \eqref{a44} to \eqref{a43}, we can see
\begin{equation}\label{a45}
	\begin{split}
		I_7
		=& -\epsilon^{\alpha \beta \gamma \delta}\epsilon_{\kappa \delta \mu \nu}  \mathrm{e}^{-h} u_\beta  u_\mu w_\nu \partial_\gamma w^\kappa - \epsilon^{\alpha \beta \gamma \delta}u_\beta  w^\kappa \partial_\gamma u_\delta \partial_\kappa h + \epsilon^{\alpha \beta \gamma \delta}u_\beta  w^\kappa \partial_\gamma u_\kappa \partial_\delta h .
	\end{split}
\end{equation}
Adding \eqref{a42} and \eqref{a45}, then using $u^\kappa w_\kappa=0$, it yields
\begin{equation}\label{a46}
	\begin{split}
		I_6+I_7
		=& \epsilon^{\alpha \beta \gamma \delta} u_\beta u_\kappa \partial_\delta h \partial_\gamma  w^\kappa
		+ \epsilon^{\alpha \beta \gamma \delta}u_\beta  \partial_\gamma  w^\kappa \partial_\delta u_\kappa
		\\
		& - \epsilon^{\alpha \beta \gamma \delta}u_\beta  w^\kappa \partial_\gamma u_\delta \partial_\kappa h + \epsilon^{\alpha \beta \gamma \delta}u_\beta  w^\kappa \partial_\gamma u_\kappa \partial_\delta h
		\\
		=& \epsilon^{\alpha \beta \gamma \delta} u_\beta  \partial_\delta h \partial_\gamma  ( u_\kappa w^\kappa)
		- \epsilon^{\alpha \beta \gamma \delta} u_\beta w^\kappa  \partial_\delta h \partial_\gamma   u_\kappa
		+ \epsilon^{\alpha \beta \gamma \delta}u_\beta  \partial_\gamma  w^\kappa \partial_\delta u_\kappa
		\\
		& - \epsilon^{\alpha \beta \gamma \delta}u_\beta  w^\kappa \partial_\gamma u_\delta \partial_\kappa h + \epsilon^{\alpha \beta \gamma \delta}u_\beta  w^\kappa \partial_\gamma u_\kappa \partial_\delta h
		\\
		=&  \epsilon^{\alpha \beta \gamma \delta}u_\beta  \partial_\gamma  w^\kappa \partial_\delta u_\kappa
		 - \epsilon^{\alpha \beta \gamma \delta}u_\beta  w^\kappa \partial_\gamma u_\delta \partial_\kappa h
		 \\
		 =&  \epsilon^{\alpha \beta \gamma \delta}u_\beta  \partial_\gamma  w^\kappa \partial_\delta u_\kappa
		 + \mathrm{e}^{-h}  w^\alpha w^\kappa \partial_\kappa h.
	\end{split}
\end{equation}
It remains for us to compute $I_1$ and $I_9$. Using \eqref{cr04}, we have
\begin{equation}\label{a47}
	\begin{split}
		I_1=& u^\kappa \partial_\kappa u_\beta \left\{ \overline{W}^\alpha u^\beta -u^\alpha \overline{W}^\beta
		+ \epsilon^{\alpha \beta \gamma \delta} u^\mu \partial_\mu w_\gamma u_\delta -\epsilon^{\alpha \beta \gamma \delta} u^\mu \partial_\gamma w_\mu u_\delta \right\}
		\\
		=& -u^\alpha \overline{W}^\beta u^\kappa \partial_\kappa u_\beta + \underbrace{  \epsilon^{\alpha \beta \gamma \delta} u^\kappa u_\delta \partial_\kappa u_\beta \cdot u^\mu \partial_\mu w_\gamma }_{\equiv I_{11} }  \underbrace{ -\epsilon^{\alpha \beta \gamma \delta}u_\delta  u^\kappa \partial_\kappa u_\beta \cdot u^\mu \partial_\gamma w_\mu }_{\equiv I_{12} }  .
	\end{split}
\end{equation}
Using \eqref{a32}, we get
\begin{equation}\label{a48}
	\begin{split}
		I_{11}=&  \epsilon^{\alpha \beta \gamma \delta} u^\kappa u_\delta \partial_\kappa u_\beta (-u_\gamma w^\mu \partial_\mu h+ w^\mu \partial_\mu u_\gamma- w_\gamma \partial_\mu u^\mu)
		\\
		=& \epsilon^{\alpha \beta \gamma \delta} u^\kappa u_\delta w^\mu \partial_\kappa u_\beta   \partial_\mu u_\gamma
		-\epsilon^{\alpha \beta \gamma \delta} u^\kappa u_\delta w_\gamma \partial_\kappa u_\beta   \partial_\mu u^\mu.
	\end{split}
\end{equation}
Using $u^\mu \partial_\gamma w_\mu=\partial_\gamma(u^\mu w_\mu)- \partial_\gamma u^\mu w_\mu=- \partial_\gamma u^\mu w_\mu$, we derive
\begin{equation}\label{a49}
	\begin{split}
		I_{12}=&  \epsilon^{\alpha \beta \gamma \delta}u_\delta  w_\mu u^\kappa   \partial_\kappa u_\beta \partial_\gamma u^\mu.
\end{split}
\end{equation}
Inserting \eqref{a48} and \eqref{a49} to \eqref{a47}, it follows
\begin{equation}\label{a50}
	\begin{split}
		I_1
		=& -u^\alpha \overline{W}^\beta u^\kappa \partial_\kappa u_\beta  + \epsilon^{\alpha \beta \gamma \delta} u^\kappa u_\delta w^\mu \partial_\kappa u_\beta   \partial_\mu u_\gamma
		\\
		& -\epsilon^{\alpha \beta \gamma \delta} u^\kappa u_\delta w_\gamma \partial_\kappa u_\beta   \partial_\mu u^\mu +\epsilon^{\alpha \beta \gamma \delta}u_\delta  w_\mu u^\kappa   \partial_\kappa u_\beta \partial_\gamma u^\mu.
	\end{split}
\end{equation}
We still need to consider $I_9$. Using \eqref{REE}, we have
\begin{equation}\label{a51}
	\begin{split}
		I_9
		=
		& \epsilon^{\alpha \beta \gamma \delta} u_\beta w_\delta \partial_\gamma( c^{-2}_s u^\kappa \partial_\kappa h )
		\\
		=& -2\epsilon^{\alpha \beta \gamma \delta} c'_s c^{-3}_s u_\beta w_\delta u^\kappa \partial_\kappa h \partial_\gamma h
		+\epsilon^{\alpha \beta \gamma \delta} c^{-2}_s u_\beta w_\delta   \partial_\gamma u^\kappa \partial_\kappa h
		+ \underbrace{ \epsilon^{\alpha \beta \gamma \delta} c^{-2}_s u_\beta  w_\delta u^\kappa  \partial_\gamma  \partial_\kappa h }_{\equiv I_{91}}.
	\end{split}
\end{equation}
For $I_{91}$, we obtain
\begin{equation}\label{a52}
	\begin{split}
		I_{91}= & \epsilon^{\alpha \beta \gamma \delta} c^{-2}_s u_\beta  w_\delta u^\kappa \partial_\kappa (  \partial_\gamma   h)
		\\
		=& u^\kappa \partial_\kappa\{ \epsilon^{\alpha \beta \gamma \delta} c^{-2}_s u_\beta  w_\delta   \partial_\gamma  h \}
		+2 \epsilon^{\alpha \beta \gamma \delta} c'_s c^{-3}_s u^\kappa u_\beta  w_\delta   \partial_\gamma  h \partial_\kappa h
		\\
		& - \epsilon^{\alpha \beta \gamma \delta} c^{-2}_s u^\kappa \partial_\kappa u_\beta  w_\delta   \partial_\gamma  h
		- \epsilon^{\alpha \beta \gamma \delta} c^{-2}_s u^\kappa  u_\beta \partial_\kappa  w_\delta   \partial_\gamma  h .
	\end{split}
\end{equation}
Substituting \eqref{a52} to \eqref{a51}, we get
\begin{equation}\label{a53}
	\begin{split}
		I_9
		=&
		u^\kappa \partial_\kappa \left\{ \epsilon^{\alpha \beta \gamma \delta} c^{-2}_s u_\beta  w_\delta   \partial_\gamma  h \right\}
		 - \epsilon^{\alpha \beta \gamma \delta} c^{-2}_s u^\kappa \partial_\kappa u_\beta  w_\delta   \partial_\gamma  h
		\\
		& +\mathrm{e}^{-h}w^\alpha w^\kappa \partial_\kappa h- \epsilon^{\alpha \beta \gamma \delta} c^{-2}_s u^\kappa  u_\beta \partial_\kappa  w_\delta   \partial_\gamma  h
		 + \epsilon^{\alpha \beta \gamma \delta} c^{-2}_s u_\beta w_\delta   \partial_\gamma u^\kappa \partial_\kappa h  .
	\end{split}
\end{equation}
In a result, if we insert \eqref{a36}, \eqref{a37}, \eqref{a38}, \eqref{a41}, \eqref{a46}, \eqref{a50}, \eqref{a53} to \eqref{a35}, then we obtain
\begin{equation*}\label{a54}
	\begin{split}
		-u^\kappa \partial_\kappa \overline{W}^\alpha
		=& -u^\alpha \overline{W}^\beta u^\kappa \partial_\kappa u_\beta  + \epsilon^{\alpha \beta \gamma \delta} u^\kappa u_\delta w^\mu \partial_\kappa u_\beta   \partial_\mu u_\gamma
		 -\epsilon^{\alpha \beta \gamma \delta} u^\kappa u_\delta w_\gamma \partial_\kappa u_\beta   \partial_\mu u^\mu
		 \\
		 &  +\epsilon^{\alpha \beta \gamma \delta}u_\delta  w_\mu u^\kappa   \partial_\kappa u_\beta \partial_\gamma u^\mu
		 -\overline{W}^\kappa \partial_\kappa u^\alpha+ \overline{W}^\alpha \partial_\kappa u^\kappa
		+\epsilon^{\alpha \beta \gamma \delta}u_\beta \partial_\delta u^\kappa \partial_\gamma w_\kappa
		\\
		&  +\mathrm{e}^{-h}  w^\alpha w^\kappa \partial_\kappa h
		 + \epsilon^{\alpha \beta \gamma \delta}u_\beta  \partial_\gamma  w^\kappa \partial_\delta u_\kappa
		+ \mathrm{e}^{-h}  w^\alpha w^\kappa \partial_\kappa h
		+ \overline{W}^\alpha \partial_\kappa u^\kappa
		\\
		 & + u^\kappa \partial_\kappa \left\{ \epsilon^{\alpha \beta \gamma \delta} c^{-2}_s u_\beta  w_\delta   \partial_\gamma  h \right\}
		- \epsilon^{\alpha \beta \gamma \delta} c^{-2}_s u^\kappa \partial_\kappa u_\beta  w_\delta   \partial_\gamma  h
		\\
		& - \epsilon^{\alpha \beta \gamma \delta} c^{-2}_s u^\kappa  u_\beta \partial_\kappa  w_\delta   \partial_\gamma  h
		+ \epsilon^{\alpha \beta \gamma \delta} c^{-2}_s u_\beta w_\delta   \partial_\gamma u^\kappa \partial_\kappa h  .
	\end{split}
\end{equation*}
Noting $W^\alpha= \overline{W}^\alpha +\epsilon^{\alpha \beta \gamma \delta} c^{-2}_s u_\beta  w_\delta   \partial_\gamma  h $, so we get
\begin{equation}\label{a55}
	\begin{split}
		-u^\kappa \partial_\kappa {W}^\alpha
		=& -u^\alpha ( {W}^\beta -\epsilon^{\beta \nu \gamma \delta}c^{-2}_s u_\nu w_\delta \partial_\gamma h ) u^\kappa \partial_\kappa u_\beta -( {W}^\kappa  - \epsilon^{\kappa \beta \gamma \delta}c^{-2}_s u_\beta w_\delta \partial_\gamma h) \partial_\kappa u^\alpha
		\\
		& + {2}( {W}^\alpha - \epsilon^{\alpha \beta \gamma \delta}c^{-2}_s u_\beta w_\delta \partial_\gamma h ) \partial_\kappa u^\kappa
		 + \epsilon^{\alpha \beta \gamma \delta} u^\kappa u_\delta w^\mu \partial_\kappa u_\beta   \partial_\mu u_\gamma
		\\
		&-\epsilon^{\alpha \beta \gamma \delta} u^\kappa u_\delta w_\gamma \partial_\kappa u_\beta   \partial_\mu u^\mu
		  +\epsilon^{\alpha \beta \gamma \delta}u_\delta  w_\mu u^\kappa   \partial_\kappa u_\beta \partial_\gamma u^\mu		
		+{2} \epsilon^{\alpha \beta \gamma \delta}u_\beta \partial_\delta u^\kappa \partial_\gamma w_\kappa
		\\
		&  +{2}\mathrm{e}^{-h}  w^\alpha w^\kappa \partial_\kappa h
		- \epsilon^{\alpha \beta \gamma \delta} c^{-2}_s u^\kappa \partial_\kappa u_\beta  w_\delta   \partial_\gamma  h
		 - \epsilon^{\alpha \beta \gamma \delta} c^{-2}_s u^\kappa  u_\beta \partial_\kappa  w_\delta   \partial_\gamma  h
		\\
		& + \epsilon^{\alpha \beta \gamma \delta} c^{-2}_s u_\beta w_\delta   \partial_\gamma u^\kappa \partial_\kappa h .
	\end{split}
\end{equation}
After a simple calculation on \eqref{a55}, we have
\begin{equation}\label{a56}
	\begin{split}
		-u^\kappa \partial_\kappa {W}^\alpha
		=& \sum\nolimits_{a=0}^6 J_a-u^\alpha  {W}^\beta  u^\kappa \partial_\kappa u_\beta - {W}^\kappa   \partial_\kappa u^\alpha
		+  \epsilon^{\alpha \beta \gamma \delta} c^{-2}_s u_\beta w_\delta   \partial_\gamma u^\kappa \partial_\kappa h
		+{2} {W}^\alpha \partial_\kappa u^\kappa
		\\
		&  +{2}\mathrm{e}^{-h}  w^\alpha w^\kappa \partial_\kappa h +{2} \epsilon^{\alpha \beta \gamma \delta}u_\beta \partial_\delta u^\kappa \partial_\gamma w_\kappa
		+ \epsilon^{\kappa \beta \gamma \delta}c^{-2}_s u_\beta w_\delta \partial_\gamma h \partial_\kappa u^\alpha,
	\end{split}
\end{equation}
where
\begin{equation*}\label{a57}
	\begin{split}
	J_0= &  \epsilon^{\beta \nu \gamma \delta}c^{-2}_s u^\alpha u_\nu w_\delta \partial_\gamma h (u^\kappa \partial_\kappa u_\beta) ,
	\\
	J_1= &   \epsilon^{\alpha \beta \gamma \delta}  u_\delta w^\mu     \partial_\mu u_\gamma (u^\kappa \partial_\kappa u_\beta) ,
		\\
	J_2=	&  -\epsilon^{\alpha \beta \gamma \delta}  u_\delta w_\gamma  \partial_\mu u^\mu (u^\kappa \partial_\kappa u_\beta ) ,
	 \\
	J_3=& \epsilon^{\alpha \beta \gamma \delta}u_\delta  w_\mu  \partial_\gamma u^\mu (u^\kappa   \partial_\kappa u_\beta)	,
		\\
	J_4=	&  - {2}\epsilon^{\alpha \beta \gamma \delta}c^{-2}_s u_\beta w_\delta \partial_\gamma h  \partial_\kappa u^\kappa ,
	\\
	J_5= &	 - \epsilon^{\alpha \beta \gamma \delta} c^{-2}_s   w_\delta   \partial_\gamma  h (u^\kappa \partial_\kappa u_\beta) ,
	\\
	J_6=&	 - \epsilon^{\alpha \beta \gamma \delta} c^{-2}_s   u_\beta  \partial_\gamma  h (u^\kappa \partial_\kappa  w_\delta) .
	\end{split}
\end{equation*}
Inserting \eqref{REE} to $J_0$, we obtain
\begin{equation}\label{a58}
	\begin{split}
		J_0= &  \epsilon^{\beta \nu \gamma \delta}c^{-2}_s u^\alpha u_\nu w_\delta \partial_\gamma h (-\partial_{ \beta} h- u_\beta u^\kappa \partial_\kappa h) =0.
	\end{split}
\end{equation}
Similarly, we can also derive
\begin{equation}\label{a59}
	\begin{split}
		J_1= &  -\epsilon^{\alpha \beta \gamma \delta}  u_\delta w^\mu     \partial_\mu u_\gamma \partial_\beta h
		\\
		=& -\epsilon^{\alpha \beta \gamma \delta}  u_\delta (w^\mu     \partial_\gamma u_\mu- w^\kappa u_\gamma \partial_\kappa h ) \partial_\beta h
		\\
		=& -\epsilon^{\alpha \beta \gamma \delta}  u_\delta  w^\mu     \partial_\gamma u_\mu  \partial_\beta h.
	\end{split}
\end{equation}
where we use \eqref{cra1}. In a similar way, we also obtain
\begin{equation}\label{a60}
	\begin{split}
		J_2=	& -\epsilon^{\alpha \beta \gamma \delta}  u_\delta w_\gamma  \partial_\mu u^\mu (-\partial_\beta h- u_\beta u^\kappa \partial_\kappa h)
		\\
		=& \epsilon^{\alpha \beta \gamma \delta}  u_\delta w_\gamma  \partial_\mu u^\mu \partial_\beta h
		\\
		=& \epsilon^{\alpha \beta \gamma \delta}  u_\beta w_\delta  \partial_\kappa u^\kappa \partial_\gamma h,
	\end{split}
\end{equation}
and
\begin{equation}\label{a61}
	\begin{split}
		J_3=& \epsilon^{\alpha \beta \gamma \delta}u_\delta  w_\mu  \partial_\gamma u^\mu (-\partial_\beta h- u_\beta u^\kappa \partial_\kappa h)
		\\
		=& -\epsilon^{\alpha \beta \gamma \delta}u_\delta  w_\mu  \partial_\gamma u^\mu \partial_\beta h,
		\\
		J_5= &	 - \epsilon^{\alpha \beta \gamma \delta} c^{-2}_s   w_\delta   \partial_\gamma  h (-\partial_\beta h- u_\beta u^\kappa \partial_\kappa h)
		\\
		= & \epsilon^{\alpha \beta \gamma \delta} c^{-2}_s  u_\beta  w_\delta   \partial_\gamma  h  ( u^\kappa \partial_\kappa h )
		\\
		=& -\epsilon^{\alpha \beta \gamma \delta} u_\beta  w_\delta   \partial_\gamma  h   \partial_\kappa u^\kappa ,
		\\
		J_6=&	 - \epsilon^{\alpha \beta \gamma \delta} c^{-2}_s   u_\beta  \partial_\gamma  h (-u_\delta u^\kappa \partial_\kappa h+ w^\kappa \partial_\kappa u_\delta- w_\delta \partial_\kappa u^\kappa)
		\\
		=&  - \epsilon^{\alpha \beta \gamma \delta} c^{-2}_s   u_\beta w^\kappa \partial_\gamma  h   \partial_\kappa u_\delta
		 +  \epsilon^{\alpha \beta \gamma \delta} c^{-2}_s   u_\beta  w_\delta  \partial_\gamma  h\partial_\kappa u^\kappa.
	\end{split}
\end{equation}
Above, we also use \eqref{a32}. From \eqref{a59}, \eqref{a60}, and \eqref{a61}, it's easy for us to see
\begin{equation}\label{a62}
	\begin{split}
		J_2+J_5=	& 0,
		\\
	J_1+J_3+J_4+J_6=& - \epsilon^{\alpha \beta \gamma \delta} c^{-2}_s u_\beta w_\delta \partial_\gamma h \partial_\kappa u^\kappa
	-( c^{-2}_s +2 ) \epsilon^{\alpha \beta \gamma \delta} u_\beta w^\kappa \partial_\delta u_\kappa \partial_\gamma h.
	\end{split}
\end{equation}
Inserting \eqref{a58} and \eqref{a62} to \eqref{a56}, we prove that
\begin{equation}\label{a63}
	\begin{split}
		u^\kappa \partial_\kappa {W}^\alpha
		=& u^\alpha  {W}^\beta  u^\kappa \partial_\kappa u_\beta + {W}^\kappa   \partial_\kappa u^\alpha-{2} {W}^\alpha \partial_\kappa u^\kappa
		-  \epsilon^{\alpha \beta \gamma \delta} c^{-2}_s u_\beta w_\delta   \partial_\gamma u^\kappa \partial_\kappa h
		\\
		&  -{2}\mathrm{e}^{-h}  w^\alpha w^\kappa \partial_\kappa h -{2} \epsilon^{\alpha \beta \gamma \delta}u_\beta \partial_\delta u^\kappa \partial_\gamma w_\kappa
		- \epsilon^{\kappa \beta \gamma \delta}c^{-2}_s u_\beta w_\delta \partial_\gamma h \partial_\kappa u^\alpha
		\\
		& + \epsilon^{\alpha \beta \gamma \delta} c^{-2}_s u_\beta w_\delta \partial_\gamma h \partial_\kappa u^\kappa
		+( c^{-2}_s +2 ) \epsilon^{\alpha \beta \gamma \delta} u_\beta w^\kappa \partial_\delta u_\kappa \partial_\gamma h.
	\end{split}
\end{equation}
Combining \eqref{a32}, \eqref{a33}, and \eqref{a63}, we have proved Lemma \ref{tr0}.
\end{proof}
In the next, we prove some lemmas as a complement.
\begin{Lemma}\label{equ0}
	Under the settings \eqref{mo5}-\eqref{mo9}, the system \eqref{OREE} can be written as \eqref{REE}.
\end{Lemma}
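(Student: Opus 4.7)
The plan is to derive two differential identities from the constitutive definitions \eqref{mo4}--\eqref{mo9}, and then substitute them into the system \eqref{OREE} to reach \eqref{REE}. The key technical step is the identity
\begin{equation*}
	dh = \frac{dp}{p+\varrho},
\end{equation*}
which together with $dp = c_s^2\, d\varrho$ allows one to trade derivatives of $\varrho$ and $p$ for derivatives of $h$.

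First I would establish the identity above. By \eqref{mo6} we have $H = (p+\varrho)/q$, hence
\begin{equation*}
	dH = \frac{(dp+d\varrho)q - (p+\varrho)\,dq}{q^2} = \frac{dp+d\varrho}{q} - \frac{p+\varrho}{q}\cdot\frac{dq}{q}.
\end{equation*}
Using \eqref{mo5} in the form $p+\varrho = q\,d\varrho/dq$, the last term is $(d\varrho/dq)(dq/q) = d\varrho/q$, so $dH = dp/q$. Combining this with $H = (p+\varrho)/q$ and $h=\log H$ from \eqref{mo9} gives $dh = dH/H = dp/(p+\varrho)$, as claimed. The relation $dp = c_s^2\,d\varrho$ is immediate from \eqref{mo4}, and together these yield
\begin{equation*}
	d\varrho = \frac{p+\varrho}{c_s^2}\,dh, \qquad dp = (p+\varrho)\,dh.
\end{equation*}

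Next, I would substitute these into the two equations of \eqref{OREE}. For the continuity equation, applying the chain rule via $d\varrho = ((p+\varrho)/c_s^2)\,dh$ converts $u^\kappa\partial_\kappa\varrho$ into $((p+\varrho)/c_s^2)\,u^\kappa\partial_\kappa h$, and dividing the resulting equation through by $(p+\varrho)/c_s^2 > 0$ (which is legitimate under \eqref{mo7} and the positivity of $p+\varrho$) produces the first equation of \eqref{REE}. For the momentum equation, $\partial_\kappa p = (p+\varrho)\,\partial_\kappa h$ allows one to factor $(p+\varrho)$ out of both terms; dividing through yields the second equation of \eqref{REE}, once one reads the typo $u^\alpha u^\beta$ in \eqref{OREE} as $u^\alpha u^\kappa$ (consistent with the Einstein summation on $\kappa$).

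I do not anticipate a genuine obstacle here: the lemma is a purely algebraic rewriting of the same PDE system under a change of the thermodynamic variable from $\varrho$ to $h$. The only mild subtlety is the bookkeeping that shows $dh = dp/(p+\varrho)$ from the implicit definition \eqref{mo5} of $q$, since the number density $q$ never appears in the final equations \eqref{REE} and must be eliminated. All divisions are justified by \eqref{mo7} together with the standing assumption that we work away from vacuum so that $p+\varrho>0$.
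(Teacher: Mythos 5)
Your proof is correct and follows essentially the same route as the paper: both derive the thermodynamic identity relating $d\varrho$, $dp$, and $dh$ (your $dh=dp/(p+\varrho)$ is exactly the paper's $c_s^2=q\mathrm{e}^h\,dh/d\varrho$ in disguise, obtained by differentiating $H=(p+\varrho)/q$ rather than $p+\varrho=q\mathrm{e}^h$), then substitute into \eqref{OREE} and divide by $p+\varrho$. Your remark about reading $u^\alpha u^\beta$ in \eqref{OREE} as $u^\alpha u^\kappa$ is also the correct reading.
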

\begin{proof}
By using \eqref{mo6} and \eqref{mo9}, we get
\begin{equation}\label{mo10}
	\begin{split}
		p+\varrho=q\text{e}^h.
	\end{split}
\end{equation}
Operating derivatives to $\varrho$ for \eqref{mo10}, we can see
\begin{equation}\label{mo11}
	\begin{split}
		p'(\varrho)+1=\text{e}^h \frac{dq}{d\varrho}+q\text{e}^h \frac{dh}{d\varrho}.
	\end{split}
\end{equation}
Inserting \eqref{mo4}, \eqref{mo5} to \eqref{mo11}, it yields
\begin{equation}\label{mo12}
	\begin{split}
		c^2_s+1=\text{e}^h \frac{q}{p+\varrho}+q\text{e}^h \frac{dh}{d\varrho}.
	\end{split}
\end{equation}
Using \eqref{mo10}, then \eqref{mo12} tells us that
\begin{equation}\label{mo13}
	\begin{split}
		c^2_s=q\text{e}^h \frac{dh}{d\varrho}.
	\end{split}
\end{equation}
From \eqref{mo13}, we find
\begin{equation}\label{mo14}
	\begin{split}
		\partial_\kappa \varrho=c^{-2}_sq\text{e}^h\partial_\kappa h.
	\end{split}
\end{equation}
Using \eqref{mo10}, we can derive that
\begin{equation}\label{mo15}
	\begin{split}
		 (p+\varrho) \partial_\kappa u^\kappa  = & q\text{e}^h \partial_\kappa u^\kappa,
		\\
		 (p+\varrho)  u^\kappa \partial_\kappa u^\alpha=& q\text{e}^h  u^\kappa \partial_\kappa u^\alpha.
	\end{split}
\end{equation}
From \eqref{mo14}, we obtain
\begin{equation}\label{mo16}
	\begin{split}
		 u^\kappa \partial_\kappa \varrho=& c^{-2}_sq\text{e}^h u^\kappa \partial_\kappa h,
		\\
	 \partial_\kappa p=& c^2_s \partial_\kappa \varrho=q\text{e}^h\partial_\kappa h.
	\end{split}
\end{equation}
Combining \eqref{mo15}-\eqref{mo16} and using \eqref{OREE} yields the system \eqref{REE}.
\end{proof}
\begin{Lemma}\label{app1}
	Let the second-order operator $\mathbf{P}=\mathrm{I}-\mathrm{P}^{\beta \gamma} \partial^2_{\beta \gamma}$, where $\mathrm{P}^{\beta \gamma} = m^{\beta \gamma}+2u^{\beta}u^{\gamma}$. Then $\mathbf{P}$ is an elliptic operator on $[0,T]\times \mathbb{R}^{3}$, where $[0,T]$ is the liftime interval of the solution of \eqref{REE}. Furthermore, for any function $f$ defined on $[0,T]\times \mathbb{R}^{3}$, then the following estimate
		\begin{equation}\label{appl}
		\| f \|_{L_{[0,T]}^2 H_x^a} \lesssim \|\mathbf{P} f \|_{L_{[0,T]}^2 H_x^{a-2}}, \quad a \in \mathbb{R},
	\end{equation}
	holds.
\end{Lemma}
\begin{proof}
	We only need to prove that the matrix $\mathrm{P}=(\mathrm{P}^{\beta \gamma})_{4\times 4}$ is a positive definite matrix. So let us pick four leading principal minors
	\begin{equation*}
		\begin{split}
			p_1=\left | \begin{matrix}
				-1+2(u^0)^2
			\end{matrix} \right | , \qquad p_2=\left | \begin{matrix}
				-1+2(u^0)^2 &2u^0u^1    \\
				2u^0u^1 &1+2(u^1)^2   \\
			\end{matrix} \right |,
		\end{split}
	\end{equation*}
	and
		\begin{equation*}
		\begin{split}
		p_3= \left | \begin{matrix}
			-1+2(u^0)^2 &2u^0u^1   & 2u^0u^2 \\
			2u^0u^1 &1+2(u^1)^2 & 2u^1u^2  \\
			2u^0u^2 & 2u^1u^2 &1+2(u^2)^2 \\
		\end{matrix} \right |  ,
		\end{split}
	\end{equation*}
	and
		\begin{equation*}
		\begin{split}
			p_4= \left | \begin{matrix}
				-1+2(u^0)^2 &2u^0u^1   & 2u^0u^2 & 2u^0u^3 \\
				2u^0u^1 &1+2(u^1)^2 & 2u^1u^2 & 2u^1u^3  \\
				2u^0u^2 & 2u^1u^2 &1+2(u^2)^2  & 2u^2u^3  \\
				2u^0u^3 & 2u^1u^3 &2u^2u^3  & 1+2(u^3)^2 \\
			\end{matrix} \right | .
		\end{split}
	\end{equation*}
For $p_1$, using $m_{\alpha \beta}u^\alpha u^\beta=-1$, so we get
\begin{equation}\label{app2}
	\begin{split}
		p_1=-1+2(u^0)^2 \geq 1.
	\end{split}
\end{equation}
By direct calculations and using $m_{\alpha \beta}u^\alpha u^\beta=-1$, it also yields
		\begin{equation}\label{app3}
	\begin{split}
		p_2=& -1+2\{(u^0)^2 - (u^1)^2\} \geq 1,
		\\
		p_3=& -1+2\{(u^0)^2 - (u^1)^2- (u^2)^2 \} \geq 1,
		\\
		p_4=& -1+2\{ (u^0)^2 - (u^1)^2- (u^2)^2- (u^3)^2  \} \geq 1.
	\end{split}
\end{equation}
Combing \eqref{app2} and \eqref{app3}, so $\mathrm{P}$ is a positive definite matrix. Therefore, $\mathbf{P}$ is an elliptic operator.

To prove \eqref{appl}, we give a extension for the relativistic velocity $\bu$ and $f$:
\begin{equation*}
	\bar{f}=\begin{cases}
		 & f, \quad   t\in [0,T],
		\\
		 & 0,  \quad t\in (-\infty,0)\cup (T,\infty),
	\end{cases}
\end{equation*}
and
\begin{equation*}
	\bar{\bu}=\begin{cases}
		 & \bu, \quad \quad \quad \quad \quad t\in [0,T],
		\\
		 & (1,0,0,0)^{\mathrm{T}}, \quad t \in (-\infty,0)\cup (T,\infty).
	\end{cases}
\end{equation*}
Set
\begin{equation*}
	\mathbf{\bar{P}} f=f+ ( m^{\alpha \beta} + 2 \bar{u}^\alpha  \bar{u}^\beta) \partial^2_{\alpha \beta} f. 
\end{equation*}
Then we have
\begin{equation*}
	\mathbf{\bar{P}}= \begin{cases}
		& \mathbf{P}, \quad \quad \quad \quad \quad \quad \quad \quad t\in [0,T],
		\\
		& \mathrm{I}+\partial^2_{t}+\Delta, \quad t \in (-\infty,0)\cup (T,\infty),
	\end{cases} 
\end{equation*}
where $\Delta=\partial^2_1+\partial^2_2+\partial^2_3$. As a result, $\mathbf{\bar{P}}$ is an elliptic operator on $\mathbb{R}^{1+3}$. By space-time Fourier transform, it yields
\begin{equation*}
	\begin{split}
		\| f \|_{L^2_{[0,T]}H^a_x} = & \| \bar{f}\|_{L^2_{\mathbb{R}} H^a_x}
		\\
		=& \| (I+\Delta)^{\frac{a}{2}}\mathbf{P}^{-1} (\mathbf{\bar{P}} \bar{f})\|_{L^2_{\mathbb{R}} L^2_x}
		\\
		\lesssim & \| (1+|\xi|^2)^{\frac{a}{2}} (1+|\tau|^2+|\xi|^2)^{-1}   \widetilde{\mathbf{\bar{P}} \bar{f}}\|_{L^2_{\tau,\xi}}
			\\
		\lesssim & \| (1+|\xi|^2)^{\frac{a-2}{2}}  \widetilde{\mathbf{\bar{P}} \bar{f}}\|_{L^2_{\tau,\xi}}
		\\
		=& \| \mathbf{\bar{P}} \bar{f} \|_{L^2_{\mathbb{R}}H^{a-2}_x}
		\\
		=& \| \mathbf{{P}} {f} \|_{L^2_{[0,T]}H^{a-2}_x},
	\end{split}
\end{equation*}
where $\widetilde{\cdot}$ represents the Fourier transform of space-time. Therefore, the estimate \eqref{appl} holds.
\end{proof}

\subsection{Guidance of notations} There are many notations in our paper. So we give a guidance on some fixed notations.
\begin{equation*}
	\begin{split}
		& \bu=(u^0,u^1,u^2,u^3)^{\mathrm{T}}: \mathrm{velocity} \qquad \qquad \qquad \mathring{\bu}=(u^1,u^2,u^3)^{\mathrm{T}}, 
		\\
		& \varrho: \mathrm{energy \ density} \qquad \qquad \qquad \qquad \qquad  \  h: \mathrm{logarithmic \ enthalpy},
		\\
		& c_s: \mathrm{sound \ speed}, \mathrm{c.f.} \eqref{mo4}, \qquad \qquad \qquad  \bw=(w^0,w^1,w^2,w^3)^{\mathrm{T}}: \mathrm{modified \ vorticity}, \mathrm{c.f.} \eqref{VVd},
		\\
		& g: \mathrm{acoustic \ metric}, \mathrm{c.f.} \eqref{AMd}, \qquad \qquad \qquad \Omega: \mathrm{a \ function \ relying \ on} \ h, \bu, \mathrm{c.f.} \eqref{AMd2}, 
		\\
		& \bW=(W^0,W^1,W^2,W^3)^{\mathrm{T}}, \mathrm{c.f.} \eqref{MFd}\qquad \qquad 
		\bG=(G^0,G^1,G^2,G^3), \mathrm{c.f.} \eqref{MFd},
		\\
		& \mathring{\bw}=(w^1,w^2,w^3)^{\mathrm{T}}, \mathrm{c.f.} \eqref{MFda}\qquad \qquad \qquad \qquad \mathring{\bW}=(W^1,W^2,W^3)^{\mathrm{T}}, \mathrm{c.f.} \eqref{MFda}\qquad \qquad
		\\
		& \bQ=(Q^0,Q^1,Q^2,Q^3)^{\mathrm{T}}: \mathrm{c.f.} \eqref{err}, \qquad \qquad \qquad \mathring{\bG}=(G^1,G^2,G^3)^{\mathrm{T}}, \mathrm{c.f.} \eqref{MFda}, 
		\\
		& D:\ \mathrm{quadrtic \ terms \ of} \ du, dh, \mathrm{c.f.} \eqref{err1},\qquad \quad \Gamma: \mathrm{a \ scalar \ function \ for \ } d\bu\cdot d\bw, \mathrm{c.f.} \eqref{YXg},
		\\
		& \bF=(F^0,F^1,F^2,F^3)^{\mathrm{T}}: \mathrm{c.f.} \eqref{YX0}, \qquad \qquad \qquad \bE=(F^0,F^1,F^2,F^3)^{\mathrm{T}}: \mathrm{c.f.} \eqref{YX1},
		\\
		& \mathbf{Z}=(Z^0,Z^1,Z^2,Z^3)^{\mathrm{T}}, \mathrm{c.f.} \eqref{fdwZ},\qquad \qquad \qquad \mathbf{B}=(B^0,B^1,B^2,B^3)^{\mathrm{T}}, \mathrm{c.f.} \eqref{fdwB},
	\end{split}
\end{equation*}
\begin{equation*}
	\begin{split}
		& \mathrm{I}: \mathrm{identity \ operator}, \qquad \qquad \qquad \qquad \mathbf{P}=\mathrm{I}-(m^{ \beta \gamma}+ 2u^\beta u^\gamma)\partial^2_{\beta \gamma},
		\\
		& \square_g= g^{\alpha \beta} \partial^2_{\alpha \beta}, \qquad \qquad \qquad \qquad \qquad \nabla=(\partial_1,\partial_2,\partial_3)^{\mathrm{T}},
		\\
		& \mathbf{T}=\partial_t + \frac{u^i}{u^0}\partial_i,\qquad \qquad \qquad \qquad  \mathbf{{P}}^{-1}: \mathrm{the \ inverse \ operator \ for} \ \mathbf{{P}},
		\\
		& \bu_{-}=\mathbf{{P}}^{-1} (\mathrm{e}^{-h}\bW), \qquad \qquad \qquad \qquad \qquad \bu_{+}=\bu-\bu_{-},
		\\
		& \Delta=\partial^1_1+\partial^1_2+\partial^1_2, \qquad \qquad \qquad \qquad \qquad \Lambda_x=(-\Delta)^{\frac12},
		\\
		& \mathcal{{H}}: \mathrm{c.f.} \eqref{401}-\eqref{403},\qquad \qquad \qquad \qquad \qquad \Re: \mathrm{c.f.} \eqref{500},
		\\
		& \Sigma_{\theta,r}: \mathrm{c.f.} \eqref{sig}, \qquad \qquad \qquad \qquad \delta, \delta_0,\delta_1: \mathrm{c.f.} \eqref{a1},
		\\
		& \epsilon_0, \epsilon_1, \epsilon_2, \epsilon_3: \mathrm{c.f.} \eqref{a0}.
			\end{split}
\end{equation*}
\section*{Acknowledgments} The author would like to express great gratitude to the reviewers, who helped us improve the paper. We also express thanks to Lars Andersson for his encouragement and support on studying low regularity problems. We also thank Philippe G. LeFloch and Siyuan Ma for some suggestions. The author is also supported by National Natural Science Foundation of China (Grant No. 12101079), and the Fundamental Research Funds for the Central Universities.

\end{document}